\newcommand{\R}{\mathbb R}
\newcommand{\CF}{\widetilde{CF}}
\def \colonequals {\mathrel{\mathop:}=}
\def \equalscolon {\mathrel =:}
 \numberwithin{equation}{section}
 \newcommand{\Lip}{\mathrm{Lip}}
 \newcommand{\supp}{\mathrm{supp}\;}
\newtheorem{thm}{Theorem}[section]
\newtheorem{defin}[thm]{Definition}
\newtheorem{lem}[thm]{Lemma}
\newtheorem{cor}[thm]{Corollary}
\newtheorem{prop}[thm]{Proposition}
\newtheorem{rem}[thm]{Remark}
\newtheorem*{thm*}{Theorem}
\newtheorem*{lem*}{Lemma}
\title{A Free Boundary Problem for the Parabolic Poisson Kernel}
\date{\today}
\author{Max Engelstein}
\address{Department of Mathematics, University of Chicago, 5734 S. University Avenue, Chicago, IL, 60637}
\email{maxe@math.uchicago.edu}
\begin{document}

\maketitle

\begin{abstract}
We study parabolic chord arc domains, introduced by Hofmann, Lewis and Nystr\"om \cite{hlncaloricmeasure}, and prove a free boundary regularity result below the continuous threshold.  More precisely, we show that a Reifenberg flat, parabolic chord arc domain whose Poisson kernel has logarithm in VMO must in fact be a vanishing chord arc domain (i.e. satisfies a vanishing Carleson measure condition). This generalizes, to the parabolic setting, a result of Kenig and Toro \cite{kenigtoro} and answers in the affirmative a question left open in the aforementioned paper of Hofmann et al. A key step in this proof is a classification of ``flat" blowups for the parabolic problem. 
\end{abstract}

\tableofcontents

\section{Introduction}\label{introduction}
In this paper we prove a parabolic analogue of Kenig and Toro's Poisson kernel characterization of vanishing chord arc domains, \cite{kenigtoro}. This continues a program started by Hofmann, Lewis and Nystr\"om, \cite{hlncaloricmeasure}, who introduced the concept of parabolic chord arc domains and proved parabolic versions of results in \cite{kenigtoroannals} and \cite{kenigtoroduke} (see below for more details). Precisely, we show that if $\Omega$ is a $\delta$-Reifenberg flat parabolic chord arc domain, with $\delta > 0$ small enough, and the logarithm of the Poisson kernel has vanishing mean oscillation, then $\Omega$ actually satisfies a vanishing Carleson measure condition (see \eqref{vanishingcarlesoncondition}).  The key step in this proof is a classification of ``flat" blowups (see Theorem \ref{poissonkernel1impliesflat} below), which itself was an open problem of interest (see, e.g., the remark at the end of Section 5 in \cite{hlncaloricmeasure}).

Let us recall the defintions and concepts needed to state our main results. In these we mostly follow the conventions established in \cite{hlncaloricmeasure}. We then briefly sketch the contents of the paper, taking special care to highlight when the difficulties introduced by the parabolic setting require substantially new ideas. Throughout, we work in two or more spacial dimensions ($n\geq 2$); the case of one spacial dimension is addressed in \cite{nequals2}. Finally, for more historical background on free boundary problems involving harmonic or caloric measure we suggest the introduction of \cite{hlncaloricmeasure}. 

We denote points $(x_1,...,x_n, t) = (X, t)\in \mathbb R^{n+1}$ and the parabolic distance between them is $d((X,t), (Y,s)) \colonequals |X-Y| + |t-s|^{1/2}$. For $r > 0$, the parabolic cylinder $C_r(X,t) \colonequals \{(Y,s)\mid |s-t| < r^2, |X-y| < r\}$. Our main object of study will be $\Omega$, an unbounded, connected open set in $\mathbb R^{n+1}$ such that $\Omega^c$ is also unbounded and connected. As the time variable has a ``direction" we will often consider $\Omega^{t_0} \colonequals \Omega \cap \{(X,s) \mid s < t_0\}$.  Finally, for any Borel set $F$ we will define $\sigma(F) = \int_{F} d\sigma_t dt$ where $d\sigma_t  \colonequals \mathcal H^{n-1}|_{\{s= t\}}$, the $(n-1)$-dimension Hausdorff measure restricted to the time-slice $t$. We normalize $\mathcal H^{n-1}$ so that $\sigma(C_1(0,0)\cap V) = 1$ for any $n$-plane through the origin containing a direction parallel to the time axis (we also normalize the Lebesgue measure, $dXdt$, by the same multiplicative factor). 

\begin{defin}\label{reifenbergflat}
We say that $\Omega$ is {\bf $\delta$-Reifenberg flat}, $\delta > 0$, if for $R > 0$ and $(Q,\tau) \in \partial \Omega$ there exists a $n$-plane $L(Q,\tau, R)$, containing a direction parallel to the time axis and passing through $(Q,\tau)$,  such that \begin{equation}\label{seperationequation}
\begin{aligned}
\{(Y,s) + r\hat{n} \in C_R(Q,\tau)\mid r > \delta R, (Y,s) \in L(Q,\tau, R)\} &\subset \Omega\\
\{(Y,s) - r\hat{n} \in C_R(Q,\tau)\mid r > \delta R, (Y,s) \in L(Q,\tau, R)\} &\subset \mathbb R^{n+1}\backslash \overline{\Omega}.
\end{aligned}
\end{equation}
Where $\hat{n}$ is the normal vector to $L(Q,\tau, R)$ pointing into $\Omega$ at $(Q,\tau)$. 
\end{defin}

The reader may be more familiar with a definition of Reifenberg flatness involving the Hausdorff distance between two sets (recall that the Hausdorff distance between $A$ and $B$ is defined as $D(A,B) = \sup_{a \in A} d(a, B) + \sup_{b\in B} d(b,A)$). These two notions are essentially equivalent as can be seen  in the following remark (which follows from the triangle inequality).

\begin{rem}
If $\Omega$ is a $\delta$-Reifenberg flat domain, then for any $R > 0$ and $(Q,\tau) \in \partial \Omega$ there exists a plane $L(Q,\tau, R)$, containing a line parallel to the time axis and through $(Q,\tau)$, such that $D[C_{R}(Q,\tau)\cap L(Q,\tau, R), C_R(Q,\tau) \cap \partial \Omega] \leq 4\delta R$.

Similarly, if \eqref{seperationequation} holds for some $\delta_0$ and there always exists an $L(Q,\tau, R)$ such that $D[C_{R}(Q,\tau)\cap L(Q,\tau, R), C_R(Q,\tau) \cap \partial \Omega] \leq \delta R$ then \eqref{seperationequation} holds for $2\delta$. 
\end{rem}

Let $\theta((Q,\tau), R) \colonequals \inf_{P} \frac{1}{R} D[C_{R}(Q,\tau)\cap P, C_R(Q,\tau) \cap \partial \Omega]$ where the infimun is taken over all planes containing a line parallel to the time axis and through $(Q,\tau)$. 

\begin{defin}\label{vanishingreifenbergflat}
We say that $\Omega$ (or $\Omega^{t_0}$) is {\bf vanishing Reifenberg flat} if $\Omega$ is $\delta$-Reifenberg flat for some $\delta > 0$ and for any compact set $K$ (alternatively $K \subset\subset \{t < t_0\}$), \begin{equation}\label{vanishingrefflatdef}\lim_{r\downarrow 0} \sup_{(Q,\tau) \in K\cap \partial \Omega} \theta((Q,\tau),r) = 0.\end{equation}
\end{defin}

\begin{defin}\label{ahlforsregularity}
$\partial \Omega$ is {\bf Ahflors regular} if there exists an $M \geq 1$ such that for all $(Q,\tau) \in \partial \Omega$ and $R > 0$ we have $$\left(\frac{R}{2}\right)^{n+1} \leq \sigma(C_R(Q,\tau)\cap \partial \Omega) \leq MR^{n+1}.$$
\end{defin}

Note that left hand inequality follows immediately in a $\delta$-Reifenberg flat domain for $\delta > 0$ small enough (as Hausdorff measure decreases under projection, and $C_{R/2}(Q,\tau)\cap L(Q,\tau,R) \subset \mathrm{proj}_{L(Q,\tau,R)}(C_R(Q,\tau)\cap \partial \Omega)$). 

Following \cite{hlncaloricmeasure}, define, for $r > 0$ and $(Q,\tau) \in \partial \Omega$, \begin{equation}\label{gammainuniformrectifiability}
\gamma(Q,\tau, r) = \inf_P \left(r^{-n-3}\int_{\partial \Omega \cap C_r(Q,\tau)} d((X,t), P)^2 d\sigma(X,t)\right)
\end{equation} where the infimum is taken over all $n$-planes containing a line parallel to the $t$-axis and going through $(Q,\tau)$. This is an $L^2$ analogue of Jones' $\beta$-numbers (\cite{jones}).  We want to measure how $\gamma$, ``on average", grows in $r$ and to that end introduce \begin{equation}\label{whatisnu} d\nu(Q,\tau, r) = \gamma(Q,\tau, r)d\sigma(Q,\tau)r^{-1}dr.\end{equation} Recall that $\mu$ is a Carleson measure with norm $\|\mu\|_+$ if \begin{equation}\label{carlesonmeasure}
\sup_{R > 0} \sup_{(Q,\tau) \in \partial \Omega} \mu((C_R(Q,\tau) \cap \partial \Omega) \times [0,R]) \leq \|\mu\|_+ R^{n+1}.
\end{equation}

In analogy to David and Semmes \cite{davidandsemmes} (who defined uniformly rectifiable domains in the isotropic setting) we define a parabolic uniformly rectifiable domain;

\begin{defin}\label{uniformlyrectifiable}
If $\Omega \subset \mathbb R^{n+1}$ is such that $\partial \Omega$ is Ahlfors regular and $\nu$ is a Carleson measure then we say that $\Omega$ is a {\bf (parabolic) uniformly rectifiable domain.}

As in \cite{hlncaloricmeasure}, if $\Omega$ is a parabolic uniformly rectifiable domain which is also $\delta$-Reifenberg flat for some $\delta > 0$ we say that $\Omega$ is a {\bf parabolic regular domain}. We may also refer to them as {\bf parabolic chord arc domains}. 

Finally, if $\Omega$ is a parabolic regular domain and satisfies a vanishing Carleson measure condition, \begin{equation}\label{vanishingcarlesoncondition}\lim_{R \downarrow 0} \sup_{0 < \rho < R} \sup_{(Q,\tau) \in \partial \Omega} \rho^{-n-1} \nu((C_\rho(Q,\tau) \cap \partial \Omega) \times [0,\rho]) = 0\end{equation} we call $\Omega$ a {\bf vanishing chord arc domain}. Alternatively, if \eqref{vanishingcarlesoncondition} holds when the $\partial \Omega$ is replaced by $K$ for any $K \subset \subset \{s < t_0\}$ then we say that $\Omega^{t_0}$ is a vanishing chord arc domain. 
\end{defin}

Readers familiar with the elliptic theory will note that these definitions differ from, e.g. Definition 1.5 in \cite{kenigtoroduke}. It was observed in \cite{hlnbigpieces} that these definitions are equivalent in the time independent setting whereas the elliptic definition is weaker when $\Omega$ changes with time. Indeed, in the time independent setting, uniform recitifiability with small Carleson norm and being a chord arc domain with small constant are both equivalent to the existence of big pieces of Lipschitz graphs, in the sense of Semmes \cite{semmeslipschitz}, at every scale (see Theorem 2.2 in \cite{kenigtoroduke} and Theorem 1.3 in Part IV of \cite{davidandsemmes}). On the other hand, in the time dependent case, even $\sigma(\Delta_R(Q,\tau))  \equiv R^{n+1}$ does not imply the Carleson measure condition (see the example at the end of \cite{hlnbigpieces}). 

The role of this Carleson measure condition becomes clearer when we consider domains of the form $\Omega = \{(X,t)\mid x_n > f(x,t)\}$ for some function $f$. Dahlberg \cite{dahlberg} proved that surface measure and harmonic measure are mutually absolutely continuous in a Lipschitz domain. However, Kaufman and Wu \cite{kaufmanandwu} proved that surface measure and caloric measure are not necessarily mutually absolutely continuous when $f \in \Lip(1, 1/2)$. To ensure mutual absolute continuity one must also assume that the $1/2$ time derivative of $f$ is in $\mathrm{BMO}$ (see \cite{lewisandmurray}). In \cite{hlncaloricmeasure} it is shown that the BMO norm of the $1/2$ time derivative of $f$ can be controlled by the Carleson norm of $\nu$. Morally, the growth of $\sigma(\Delta_R(Q,\tau))$ controls the $\Lip(1,1/2)$ norm of $f$ but cannot detect the BMO norm of the $1/2$-time derivative of $f$ (for $n=1$ this is made precise in \cite{nequals2}). 

For $(X,t) \in \Omega$, the caloric measure with a pole at $(X,t)$, denoted $\omega^{(X,t)}(-)$, is the measure associated to the map $f \mapsto U_f(X,t)$ where $U_f$ solves the heat equation with Dirichlet data $f\in C_0(\partial \Omega)$. If $\Omega$ is Reifenberg flat the Dirichlet problem has a unique solution and this measure is well defined (in fact, weaker conditions on $\Omega$ suffice to show $\omega^{(X,t)}$ is well defined c.f. the discussion at the bottom of page 283 in \cite{hlncaloricmeasure}). Associated to $\omega^{(X,t)}$ is the parabolic Green function $G(X,t,-,-)\in C(\Omega \backslash \{(X,t)\})$, which satisfies \begin{equation}\label{fpgreenfunction}\tag{FP} \left\{ \begin{aligned}
G(X,t,Y,s) \geq& 0, \;\forall (Y,s) \in \Omega \backslash \{(X,t)\},\\
G(X,t,Y,s) \equiv& 0,\; \forall (Y,s) \in \partial \Omega,\\
-(\partial_s + \Delta_Y)G(X,t, Y,s) =& \delta_0((X,t) - (Y,s)),\\
\int_{\partial \Omega} \varphi d\omega^{(X,t)} =& \int_\Omega G(X,t, Y,s) (\Delta_Y - \partial_s)\varphi dYds, \; \forall \varphi \in C^\infty_c(\mathbb R^{n+1}).
\end{aligned} \right. \end{equation} (Of course there are analogous objects for the adjoint equation; $G(-,-, Y,s)$ and $\hat{\omega}^{(Y,s)}$.)  We are interested in what the regularity of $\omega^{(X,t)}$ can tell us about the regularity of $\partial \Omega$. Observe that by the parabolic maximum theorem the caloric measure with a pole at $(X,t)$ can only ``see" points $(Q,\tau)$ with $\tau < t$. Thus, any regularity of $\omega^{(X,t)}$ will only give information about $\Omega^{t}$ (recall $\Omega^{t} \colonequals \Omega \cap \{(X,s) \mid s < t\}$). Hence, our results and proofs will often be clearer when we work with $\omega$, the caloric measure with a pole at infinity, and $u \in C(\Omega)$, the associated Green function, which satisfy  \begin{equation}\label{ipgreenfunction}\tag{IP} \left\{ \begin{aligned}
u(Y,s) \geq& 0, \;\forall (Y,s) \in \Omega,\\
u(Y,s) \equiv& 0,\; \forall (Y,s) \in \partial \Omega,\\
-(\partial_s + \Delta_Y)u(Y,s) =& 0, \; \forall (Y,s) \in \Omega\\
\int_{\partial \Omega} \varphi d\omega =& \int_\Omega u(Y,s) (\Delta_Y - \partial_s)\varphi dYds, \; \forall \varphi \in C^\infty_c(\mathbb R^{n+1}).
\end{aligned} \right. \end{equation} (For the existence, uniqueness and some properties of this measure/function, see Appendix \ref{appendix:caloricmeasure}). However, when substantial modifications are needed, we will also state and prove our theorems in the finite pole setting.  

Let us now recall some salient concepts of ``regularity" for  $\omega^{(X,t)}$. Denote the {\it surface ball} at a point $(Q,\tau) \in \partial \Omega$ and for radius $r$ by $\Delta_r(Q,\tau) \colonequals C_r(Q,\tau)\cap \partial \Omega$. 

\begin{defin}\label{doubling}
Let $(X_0,t_0) \in \Omega$. We say $\omega^{(X,t)}$ is a {\bf doubling measure} if, for every $A \geq 2$, there exists an $c(A) > 0$ such that for any $(Q,\tau) \in \partial \Omega$ and $r > 0$, where $|X_0-Q|^2 < A(t_0 - \tau)$ and $t_0- \tau \geq 8r^2$, we have \begin{equation}\label{fpdoublingmeasure} \omega^{(X_0,t_0)}(\Delta_{2r}(Q,\tau)) \leq c(A) \omega^{(X_0,t_0)}(\Delta_r(Q,\tau)).\end{equation}

Alternatively, we say $\omega$ is a doubling measure if there exists a $c > 0$ such that $\omega(\Delta_{2r}(Q,\tau)) \leq c\omega(\Delta_r(Q,\tau))$ for all $r > 0$ and $(Q,\tau) \in \partial \Omega$. 
\end{defin}

\begin{defin}\label{apweight}
Let $(X_0,t_0) \in \Omega$ and let $\omega^{(X_0,t_0)}$ be a doubling measure such that $\omega^{(X_0,t_0)} << \sigma$ on $\partial \Omega$ and $k^{(X_0,t_0)}(Q,\tau) \colonequals \frac{d\omega^{(X_0,t_0)}}{d\sigma}(Q,\tau)$.  We say that $\omega^{(X_0,t_0)} \in A_\infty(d\sigma)$ ({\bf is an $A_\infty$-weight}) if it satisfies a ``reverse H\"older inequality." That is, if there exists a $p > 1$ such that if $A \geq 2, (Q,\tau)\in \partial \Omega, r > 0$ are as in Definition \ref{doubling} then there exists a $c \equiv c(p, A) > 0$ where \begin{equation}\label{reverseholderinequality} \fint_{\Delta_{2r}(Q,\tau)} k^{(X_0,t_0)}(Q,\tau)^p d\sigma(Q,\tau) \leq c \left(\fint_{\Delta_r(Q,\tau)} k^{(X_0,t_0)}(Q,\tau)d\sigma(Q,\tau)\right)^p.\end{equation}

We can similarly say $\omega \in A_\infty(d\sigma)$ if $\omega << \sigma$ on $\partial \Omega, h(Q,\tau) \colonequals \frac{d\omega}{d\sigma}$, and there exists a $c > 0$ such that \begin{equation}\label{reverseholderinequality2} \fint_{\Delta_{2r}(Q,\tau)} h(Q,\tau)^p d\sigma(Q,\tau) \leq c \left(\fint_{\Delta_r(Q,\tau)} h(Q,\tau)d\sigma(Q,\tau)\right)^p.\end{equation}
\end{defin}

In analogy to the results of David and Jerison \cite{davidandjerison}, it was shown in \cite{hlncaloricmeasure} that if $\Omega$ is a ``flat enough" parabolic regular domain, then the caloric measure is an $A_\infty$ weight (note that \cite{hlncaloricmeasure} only mentions the finite pole case but the proof works unchanged for a pole at infinity, see Proposition \ref{hlnformeasureatinfinity}). 

\begin{thm*}[Theorem 1 in \cite{hlncaloricmeasure} ]
If $\Omega$ is a parabolic regular domain with Reifenberg constant $\delta_0 > 0$ sufficiently small (depending on $M, \|\nu\|_+$) then $\omega^{(X_0,t_0)}$ is an $A_\infty$-weight. 
\end{thm*}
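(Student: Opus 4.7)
My plan is to adapt the David--Jerison approach \cite{davidandjerison} to the parabolic setting. The guiding principle is that small Reifenberg flatness together with a small Carleson condition on $\nu$ forces $\partial\Omega$ to contain, at every scale, a big piece of a regular $\mathrm{Lip}(1,1/2)$ graph whose $1/2$-time derivative has small $\mathrm{BMO}$ norm; on such pieces the Lewis--Murray theorem \cite{lewisandmurray} gives $A_\infty$, and a comparison argument then transfers this property to $\omega^{(X_0,t_0)}$.

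First I would carry out the approximation step. Fix a surface ball $\Delta_R(Q,\tau)$ and align coordinates with the Reifenberg plane $L(Q,\tau,R)$, so that $\partial\Omega\cap C_R(Q,\tau)$ lies within $4\delta_0 R$ of $L$. A parabolic Dorronsoro-type construction, driven by the $L^2$ coefficients $\gamma(Q,\tau,r)$ from \eqref{gammainuniformrectifiability}, then produces a graph function $f : L \to L^\perp$ with $\|\nabla_X f\|_\infty$ small and $\|D^{1/2}_t f\|_{\mathrm{BMO}}$ controlled by $\delta_0^{1/2}+\|\nu\|_+^{1/2}$, such that $\Gamma_f$ agrees with $\partial\Omega$ on a subset of $\Delta_R(Q,\tau)$ of relative surface measure at least $1-\eta$, with $\eta\to 0$ as $\delta_0,\|\nu\|_+\to 0$. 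This is the parabolic analogue of Semmes' approximation \cite{semmeslipschitz}; crucially, the Carleson condition on $\nu$ is what controls the half-order time smoothness of $f$, since by the example at the end of \cite{hlnbigpieces} Ahlfors regularity alone does not suffice.

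With $f$ in hand, Lewis--Murray \cite{lewisandmurray} applied to the graph domain $\Omega_f = \{x_n > f(X',t)\}$ yields $\omega_{\Omega_f}\in A_\infty(d\sigma_f)$ with uniform constants depending only on $n$, $\|\nabla_X f\|_\infty$ and $\|D^{1/2}_t f\|_{\mathrm{BMO}}$. Next, on the region where $\partial\Omega$ and $\Gamma_f$ coincide, I would use the parabolic boundary Harnack principle together with the parabolic comparison lemma (taking care to respect the pole restrictions $t_0-\tau\geq 8r^2$ and $|X_0-Q|^2<A(t_0-\tau)$ from Definition~\ref{doubling}) to show that $\omega^{(X_0,t_0)}$ and $\omega_{\Omega_f}^{(X_0,t_0)}$ are mutually comparable on the common piece. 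Combined with Ahlfors regularity of $\partial\Omega$, this produces a reverse H\"older inequality for $\omega^{(X_0,t_0)}$ on a subset of $\Delta_R(Q,\tau)$ of almost full surface measure.

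The final step is to promote this ``big pieces of $A_\infty$'' estimate to the global reverse H\"older inequality \eqref{reverseholderinequality}. A parabolic version of the standard Coifman--Fefferman self-improvement lemma, iterated across scales via the Carleson packing structure furnished by $\nu$, accomplishes this. The main obstacle is clearly the first step: producing a parabolic graph approximation that captures simultaneously the spatial flatness (from Reifenberg) and the half-order temporal smoothness (from $\nu$), in a scale-invariant way. Once this approximation is available, the remaining steps are essentially parabolic adaptations of well-understood elliptic techniques, and the $\delta_0$ smallness assumption enters only through Step~1 and the dependence of the Lewis--Murray constants on $\|D^{1/2}_t f\|_{\mathrm{BMO}}$.
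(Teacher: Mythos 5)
You should note first that the paper does not actually prove this statement: it is quoted as Theorem 1 of \cite{hlncaloricmeasure}, and the only proof-type content in the present paper is Proposition \ref{hlnformeasureatinfinity}, which records that the pole-at-infinity version follows by the same argument. Your outline does reproduce the David--Jerison architecture underlying the cited proof --- big pieces of $\Lip(1,1/2)$ graphs with $D^{1/2}_t$ of the graph function in $\mathrm{BMO}$ (the content of \cite{hlnbigpieces}), the Lewis--Murray theorem \cite{lewisandmurray} on such graph domains, a comparison step, and a ``big pieces of $A_\infty$ implies $A_\infty$'' iteration --- so the skeleton is the right one. Two steps, however, contain genuine gaps.

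First, the hypothesis makes only $\delta_0$ small, \emph{depending on} $M$ and $\|\nu\|_+$; the Carleson norm $\|\nu\|_+$ itself may be large. So you cannot expect a graph capturing a $1-\eta$ fraction of $\sigma(\Delta_R(Q,\tau))$ with $\eta\to 0$, nor a small $\mathrm{BMO}$ norm for $D^{1/2}_t f$: the approximation theorem of \cite{hlnbigpieces} delivers only a fixed proportion $\theta(n,M,\|\nu\|_+)\in(0,1)$ and a bounded (not small) $\mathrm{BMO}$ norm, and Lewis--Murray must be invoked in that generality. Any argument that genuinely needs $\eta$ or $\|D^{1/2}_t f\|_{\mathrm{BMO}}$ small would fail under the stated hypotheses. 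Second, the transfer step is not a ``boundary Harnack'' statement: boundary Harnack principles compare two nonnegative solutions in a single domain, whereas $\Omega_f$ neither contains nor is contained in $\Omega$, so no maximum principle applies to the two caloric measures as you have set things up. The correct device is an \emph{interior} approximating graph (or sawtooth) domain $\Omega'\subset\Omega$ whose boundary shares a set of proportional $\sigma$-measure with $\partial\Omega$; then the maximum principle gives $\omega^{(X,t)}_{\Omega}(E)\geq \omega^{(X,t)}_{\Omega'}(E)$ for $E\subset \partial\Omega'\cap\partial\Omega$, Lewis--Murray is applied in $\Omega'$, and together with the corkscrew comparison (Lemma \ref{comparisontheorem}) and the doubling property (Lemma \ref{doublingmeasure}) one obtains that subsets of $\Delta_r(Q,\tau)$ of proportional surface measure carry caloric measure bounded below. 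The promotion to the reverse H\"older inequality then runs on doubling of $\omega^{(X_0,t_0)}$ and a stopping-time argument over surface balls (the classical big-pieces iteration), not on the ``Carleson packing structure furnished by $\nu$'' as you propose; $\nu$ has already done its work in producing the graphs.
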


Closely related to $A_\infty$ weights are the $\mathrm{BMO}$ and $\mathrm{VMO}$ function classes. 

\begin{defin}\label{BMOVMOdefinitions}
We say that $f\in \mathrm{BMO}(\partial \Omega)$ with norm $\|f\|_*$ if $$\sup_{r > 0} \sup_{(Q,\tau) \in \partial \Omega} \fint_{C_r(Q,\tau)}|f(P,\eta) - f_{C_r(Q,\tau)}| d\sigma(P,\eta) \leq \|f\|_*,$$ where $f_{C_r(Q,\tau)} \colonequals \fint_{C_r(Q,\tau)} f(P,\eta) d\sigma(P,\eta)$, the average value of $f$ on $C_r(Q,\tau)$. 

Define $\mathrm{VMO}(\partial \Omega)$ to be the closure of uniformly continuous functions vanishing at infinity in $\mathrm{BMO}(\partial \Omega)$ (analogously we say that $k^{(X_0,t_0)}\in \mathrm{VMO}(\partial \Omega^{t_0})$ if $k^{(X_0,t_0)} \in \mathrm{VMO}(\Delta_r(Q,\tau))$ for any $(Q,\tau)\in \partial \Omega, r > 0$ which satisfies the hypothesis of Definition \ref{doubling} for some $A \geq 2$). 
\end{defin}

This definition looks slightly different than the one given by equation 1.11 in \cite{hlncaloricmeasure}. In the infinite pole setting it gives control over the behavior of $f$ on large scales. In the finite pole setting it is actually equivalent to the definition given in \cite{hlncaloricmeasure} as can be seen by a covering argument. 

In analogy with the elliptic case, if $\Omega$ is a vanishing chord arc domain then we expect control on the small scale oscillation of $\log(k^{(X_0,t_0)})$.  

\begin{thm*}[Theorem 2 in \cite{hlncaloricmeasure}]
If $\Omega$ is chord arc domain with vanishing constant and $(X_0,t_0)\in \Omega$ then $\log(k^{(X_0,t_0)}) \in \mathrm{VMO}(\partial \Omega^{t_0})$. 
\end{thm*}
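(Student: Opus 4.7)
The plan is to argue by contradiction using a blowup/compactness scheme, with Theorem~1 of \cite{hlncaloricmeasure} serving as the crucial a priori input. Suppose $\log k^{(X_0,t_0)} \notin \mathrm{VMO}(\partial \Omega^{t_0})$: then there exist $\epsilon_0 > 0$, a compact $K \subset \subset \{s < t_0\}$, radii $r_j \downarrow 0$, and points $(Q_j,\tau_j) \in K \cap \partial \Omega$ such that
$$\fint_{\Delta_{r_j}(Q_j,\tau_j)} \bigl|\log k^{(X_0,t_0)} - (\log k^{(X_0,t_0)})_{\Delta_{r_j}(Q_j,\tau_j)}\bigr|\, d\sigma \geq \epsilon_0.$$
Apply the parabolic dilations $T_j(X,t) = ((X-Q_j)/r_j,\,(t-\tau_j)/r_j^2)$ and set $\Omega_j = T_j(\Omega)$. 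Both the vanishing Reifenberg hypothesis \eqref{vanishingrefflatdef} and the vanishing Carleson hypothesis \eqref{vanishingcarlesoncondition} are scale invariant, and the fact that they degenerate at scale $r_j$ translates into Reifenberg constants $\delta_j \to 0$ on $C_1(0,0) \subset \Omega_j$ and Carleson norms $\|\nu_j\|_+|_{C_1(0,0)} \to 0$.

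The next step is to identify the limit. A standard compactness argument on Reifenberg flat sets (in the spirit of \cite{kenigtoroannals}) produces a subsequence along which $\Omega_j \to \Omega_\infty$ in the local Hausdorff sense; since $\gamma \equiv 0$ on $\partial \Omega_\infty$ and $\Omega_\infty$ is $0$-Reifenberg flat at every scale, $\partial \Omega_\infty$ must coincide with an $n$-plane through the origin containing the time axis, so $\Omega_\infty$ is a half-space. Normalize $\tilde\omega_j = r_j^{n+1}\,\omega^{(X_0,t_0)}(\Delta_{r_j}(Q_j,\tau_j))^{-1}\,(T_j)_*\omega^{(X_0,t_0)}$ and rescale the Green functions $G(X_0,t_0,\cdot,\cdot)$ accordingly to obtain $\tilde u_j$. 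Because $|X_0-Q_j|/r_j$ and $(t_0-\tau_j)/r_j^2$ both blow up, the pole escapes to infinity in the rescaled coordinates, and the $\tilde u_j$ converge locally uniformly to the pole-at-infinity Green function on $\Omega_\infty$, which on a half-space is a constant multiple of the distance to the boundary; consequently $\tilde\omega_j \rightharpoonup c\, d\sigma_{\partial \Omega_\infty}$ for some $c > 0$.

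The final step is to upgrade this weak convergence of measures to $L^1$ convergence of $\log \tilde k_j$, so as to contradict the invariant lower bound $\epsilon_0$. Here Theorem~1 of \cite{hlncaloricmeasure} is essential: it furnishes an $A_\infty$ reverse H\"older inequality for $\tilde k_j = d\tilde\omega_j/d\sigma_{\Omega_j}$ with constants depending only on $M$ and $\|\nu\|_+$, both of which are uniformly controlled under $T_j$. Combined with doubling of $\sigma$ and a quantitative interior lower bound on $\tilde u_j$ (which translates to a pointwise lower bound on $\tilde k_j$ away from zero), this forces $\tilde k_j \to c$ in $L^p_{loc}(\Delta_1(0,0))$ and hence $\log \tilde k_j \to \log c$ in $L^1(\Delta_1(0,0))$, contradicting the $T_j$-invariant oscillation bound.

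The main obstacle is this last upgrade from weak-$*$ convergence of caloric measures to strong $L^1$ convergence of the logarithms of their densities: one needs a uniform upper bound from reverse H\"older together with a uniform pointwise lower bound to prevent the $\log \tilde k_j$ from developing logarithmic singularities in the limit, and the latter is more delicate in the parabolic setting than in the elliptic one because the heat equation only propagates forward in time and interior Harnack requires a time lag. A secondary, comparatively easier, obstacle is the half-space classification of the blowup $\Omega_\infty$, which is immediate here because $\gamma \equiv 0$ pins $\partial \Omega_\infty$ to a single $n$-plane; by contrast, the main theorem of the present paper provides only the much weaker flatness input $\log k \in \mathrm{VMO}$ and so requires the substantive ``flat blowups are half-spaces'' classification announced in the abstract.
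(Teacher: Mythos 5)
This statement is not proved in the paper at all: it is Theorem 2 of \cite{hlncaloricmeasure}, quoted as background, so there is no internal proof to compare against. Judged on its own terms, your compactness scheme has a genuine gap at exactly the step you flag as the ``main obstacle,'' and the fix you sketch does not work. Weak-$*$ convergence $\tilde\omega_j \rightharpoonup c\,d\sigma_{\partial\Omega_\infty}$ together with a \emph{uniform} reverse H\"older inequality (Theorem 1 of \cite{hlncaloricmeasure} gives an $A_\infty$ constant depending only on $M$ and $\|\nu\|_+$, but it is a fixed constant, not one tending to $1$) does not force $\tilde k_j \to c$ in $L^p_{loc}$, nor $\log\tilde k_j \to \log c$ in $L^1$. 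Already on a fixed half-space, weights such as $k_j(x)=1+\tfrac12\sin(2^j x_1)$ are uniformly bounded above and below, uniformly $A_\infty$, and satisfy $k_j\,d\sigma \rightharpoonup d\sigma$, yet $\fint_{\Delta_1}|\log k_j - (\log k_j)_{\Delta_1}|\,d\sigma$ stays bounded away from zero. So nothing in your list of ingredients rules out persistent unit-scale oscillation of $\log\tilde k_j$, and the contradiction with $\epsilon_0$ is never reached. A secondary inaccuracy feeds into this: a quantitative interior lower bound on $\tilde u_j$ only yields, via the comparison lemma, lower bounds on $\tilde\omega_j(\Delta_r)/r^{n+1}$, i.e.\ on \emph{averages} of $\tilde k_j$ over surface balls, not the pointwise lower bound on $\tilde k_j$ that your argument invokes; and in any case two-sided pointwise bounds would still not save the step, as the example shows.

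What is actually needed --- and what the real proof in \cite{hlncaloricmeasure} (adapting \cite{kenigtoroduke}/\cite{kenigtoroannals}) supplies --- is quantitative information tied to the vanishing geometry at the scale in question: one shows the caloric measure is asymptotically optimally doubling and that the comparison of $u^{(X_0,t_0)}$ with the distance to the boundary holds with constants tending to $1$ as the scale shrinks, which upgrades the $A_\infty$ information to reverse H\"older inequalities whose constants tend to $1$; smallness of the mean oscillation of $\log k^{(X_0,t_0)}$ at small scales then follows directly, with no compactness argument. In other words, the flattening of the domain must be converted into a statement about the oscillation of $k$ \emph{at comparable scales} before any limit is taken; soft blowup plus a scale-invariant $A_\infty$ bound cannot substitute for this. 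Your identification of the blowup limit (half-space, Green function $c\,x_n^+$, pole escaping to infinity since $(t_0-\tau_j)/r_j^2 \to \infty$) is fine, but it only controls averages of $\tilde k_j$ at unit scale, which is strictly weaker than the VMO-type conclusion you need to contradict.
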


Our main result is the converse to the above theorem and the parabolic analogue of the Main Theorem in \cite{kenigtoro}. 

\begin{thm}\label{loghvmoisvanishingchordarc}[Main Theorem]
Let $\Omega\subset \mathbb R^{n+1}$ be a $\delta$-Reifenberg flat parabolic regular domain with $\log(h) \in \mathrm{VMO}(\partial \Omega)$ (or $\log(k^{(X_0,t_0)}) \in \mathrm{VMO}(\partial \Omega^{t_0})$). There is a $\delta_n > 0$ such that if $\delta < \delta_n $, then $\Omega$ is a parabolic vanishing chord arc domain (alternatively, $\Omega^{t_0}$ is a vanishing chord arc domain). 
\end{thm}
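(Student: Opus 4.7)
The plan is to argue by contradiction via a blow-up analysis in the style of Kenig--Toro. Suppose $\Omega$ satisfies the hypotheses of the theorem but fails to be a vanishing chord arc domain. Then there exist $\varepsilon_0 > 0$, boundary points $(Q_k,\tau_k) \in \partial\Omega$, and radii $\rho_k \downarrow 0$ such that
\[
\rho_k^{-n-1}\,\nu\bigl((\Delta_{\rho_k}(Q_k,\tau_k))\times[0,\rho_k]\bigr) \geq \varepsilon_0.
\]
Parabolically rescale $\Omega$ about $(Q_k,\tau_k)$ at scale $\rho_k$ to obtain domains $\Omega_k$ with boundaries $\partial\Omega_k$ through the origin. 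The $\delta$-Reifenberg flatness (with $\delta$ small) ensures that, along a subsequence, $\partial\Omega_k$ converges in Hausdorff distance on compacta to an $n$-plane $L_\infty$ through the origin containing a line parallel to the time axis; Ahlfors regularity and the Reifenberg separation property pass to the limit.

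Next I would extract a limiting Green function/caloric measure pair. In the infinite pole case, one rescales $u$ (and its pole-at-infinity measure $\omega$) and normalizes by $\omega(C_{\rho_k}(Q_k,\tau_k))$; uniform Ahlfors regularity together with the $A_\infty$ theorem of \cite{hlncaloricmeasure} produces uniform upper and lower bounds. Parabolic interior estimates together with boundary Hölder estimates on Reifenberg flat domains (Appendix \ref{appendix:caloricmeasure}) yield a nonnegative caloric limit $u_\infty$ on the half-space $\Omega_\infty$ bounded by $L_\infty$, vanishing on $L_\infty$, and a weak limit $\omega_\infty$ of the rescaled caloric measures. The VMO hypothesis on $\log h$ forces the rescaled logarithms of the Poisson kernels to have oscillation tending to zero on compacta, so their exponentials converge (after normalization) to the constant $1$; equivalently, $\omega_\infty = \sigma_{L_\infty}$.

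At this point I would invoke the classification of flat blow-ups, Theorem \ref{poissonkernel1impliesflat}, to conclude that $u_\infty$ is the linear pole-at-infinity Green function of the half-space $\Omega_\infty$. The strong flatness of the limit, together with Hausdorff convergence of boundaries and Ahlfors regularity, implies that the $L^2$ $\beta$-numbers $\gamma_k$ satisfy $\gamma_k(0,0,1) \to 0$, since in the limit $\partial\Omega_\infty = L_\infty$ contributes zero to the defining integral. A standard scaling argument converts this into
\[
\rho_k^{-n-1}\,\nu\bigl((\Delta_{\rho_k}(Q_k,\tau_k))\times[0,\rho_k]\bigr) \longrightarrow 0,
\]
contradicting our assumption. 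The finite pole statement for $\Omega^{t_0}$ is handled identically by restricting the blow-up centers to compact subsets $K \subset\subset \{s < t_0\}$, where the uniform doubling and Harnack estimates needed to extract limits hold.

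The main obstacle I anticipate is the compactness step and the identification of the limit. Unlike the elliptic setting, caloric measure is one-sided in time, so one must be careful that the rescaled Green functions carry information about the correct half of the parabolic domain and that the limit truly inherits the VMO cancellation; in particular, the $\tfrac{1}{2}$-time derivative of a graphical parametrization of $\partial\Omega$ is invisible to surface measure growth and Reifenberg flatness alone, and it is precisely this quantity that the Carleson condition on $\nu$ controls. The role of the classification Theorem \ref{poissonkernel1impliesflat} is exactly to supply, in the limit, the rigidity needed to defeat this ``hidden'' parabolic degree of freedom; once the limit is pinned down to a half-space, the contradiction with failure of \eqref{vanishingcarlesoncondition} follows by now-standard arguments.
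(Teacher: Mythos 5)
The first half of your plan (blow up at the alleged bad points and scales, extract a limit, use the VMO hypothesis to verify the hypotheses of Theorem \ref{poissonkernel1impliesflat}, and conclude the blowup is a half-space) is essentially the paper's route through Sections \ref{boundingthegradient} and \ref{blowupanalysis}, modulo one ordering issue: fixed small $\delta$-Reifenberg flatness only makes the blowup limit a $4\delta$-Reifenberg flat chord arc domain; it is the classification theorem itself (after proving $|\nabla u_\infty|\leq 1$ and $h_\infty\geq 1$, which takes real work with the gradient representation and the reverse H\"older/VMO decomposition) that forces the limit boundary to be a plane, not Hausdorff compactness alone.

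The genuine gap is the last step. From the blowup being a half-space you get vanishing Reifenberg flatness and $\sigma(\Delta_r(Q,\tau))/r^{n+1}\to 1$, hence $\gamma(Q,\tau,r)\to 0$ uniformly on compacta; but the quantity you must contradict, $\rho_k^{-n-1}\nu(\Delta_{\rho_k}(Q_k,\tau_k)\times[0,\rho_k])$, is the multi-scale integral $\rho_k^{-n-1}\int_0^{\rho_k}\int_{\Delta_{\rho_k}}\gamma(Q,\tau,r)\,d\sigma\,\frac{dr}{r}$, and smallness of $\gamma_k(0,0,1)$ (or even of $\gamma$ at every fixed relative scale) says nothing about the contribution from scales $r\ll\rho_k$, where the $dr/r$ weight is not integrable against a merely small $\gamma$. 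There is no ``standard scaling argument'' here: the example at the end of \cite{hlnbigpieces} has $\sigma(\Delta_r)\equiv r^{n+1}$ (so it is as flat as surface-measure growth can detect) and yet $\nu$ is not even a Carleson measure, so flatness plus weak convergence $\sigma_i\rightharpoonup\sigma_\infty$ cannot control $\int\gamma_i\,d\sigma_i$ without an extra uniform integrability input. This is exactly what Section \ref{sec: vanishingcarlesoncondition} supplies: one approximates $\Omega$ near $(Q_k,\tau_k)$ at scale $\rho_k$ by $\Lip(1,1/2)$ graphs whose Lipschitz and Carleson norms are bounded uniformly in $k$ (Lemma \ref{adaptedbigpiecesofgraphs}), controls $\nu$ of $\Omega$ by the Carleson measure of the graph domain up to an $\varepsilon^{1/2}$ error (Lemma \ref{carlesonnormofomegabounded}), and only then runs Nystr\"om's dominated-convergence argument on the graph functions, where uniform integrability of $\gamma/r$ is available. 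Without this (or an equivalent mechanism defeating the $\frac12$-time-derivative degree of freedom at sub-blowup scales), your contradiction does not close.
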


Contrast this result to Theorem 3 in \cite{hlncaloricmeasure};

\begin{thm*}[Theorem 3 in \cite{hlncaloricmeasure}]
Let $\Omega$ be a $\delta$-Reifenberg flat parabolic regular domain with $(\hat{X}, \hat{t}) \in \Omega$.  Assume that
\begin{enumerate}[(i)]
\item $\omega^{(\hat{X}, \hat{t})}(-)$ asymptotically optimally doubling,
\item $\log k^{(\hat{X}, \hat{t})} \in \mathrm{VMO}(\partial \Omega^{\hat{t}})$,
\item $\|\nu\|_+$ small enough.
\end{enumerate}
Then $\Omega^{\hat{t}}$ is a vanishing chord arc domain.
\end{thm*}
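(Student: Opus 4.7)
The plan is a compactness-and-contradiction argument modeled on \cite{kenigtoro}, with the elliptic dilation replaced by the parabolic scaling $(X,t) \mapsto (X/r, t/r^2)$. Suppose the conclusion fails. Then there exist $\varepsilon_0 > 0$ and sequences $(Q_k,\tau_k) \in \partial\Omega$, $r_k \downarrow 0$, $\rho_k \in (0, r_k]$ with
$$ \rho_k^{-n-1}\,\nu\bigl((C_{\rho_k}(Q_k,\tau_k)\cap\partial\Omega)\times[0,\rho_k]\bigr) \geq \varepsilon_0. $$
Translate $(Q_k,\tau_k)$ to the origin and parabolically dilate by $r_k^{-1}$. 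Ahlfors regularity, $\delta$-Reifenberg flatness, and the normalized $\nu$ are all scale invariant, so after passing to a subsequence the rescaled domains $\Omega_k$ converge in the local Hausdorff sense to a $\delta$-Reifenberg flat parabolic regular limit $\Omega_\infty$. Normalizing the caloric Green functions $u_k$ (say $u_k \equiv 1$ at a parabolic corkscrew point at unit scale) and the associated measures $\omega_k$, one extracts a limit $(u_\infty,\omega_\infty)$ solving \eqref{ipgreenfunction} in $\Omega_\infty$; the finite-pole case is treated identically once the pole is pushed to infinity, using the lower bound $t_0 - \tau_k \gg r_k^2$ provided by the doubling setup.

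The next step is to convert the hypothesis $\log h \in \mathrm{VMO}$ into a rigid statement about the blow-up. On any fixed cylinder $C_R$, the rescaled densities $h_k$ satisfy $\|\log h_k - \langle \log h_k\rangle_{\Delta_R}\|_* \to 0$, so by the $A_\infty$ theorem of Hofmann--Lewis--Nystr\"om (Theorem 1 in \cite{hlncaloricmeasure}) together with the self-improvement of the reverse H\"older inequality under vanishing oscillation, the $h_k$ converge (after normalizing each $\omega_k$ by its mass on $\Delta_1$) to the constant $1$ in $L^p$ on every compact subset of $\partial\Omega_\infty$. Hence $d\omega_\infty = d\sigma_\infty$, so the Poisson kernel of the blow-up is identically constant.

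Now invoke the classification of flat blow-ups, Theorem \ref{poissonkernel1impliesflat}: a $\delta_n$-Reifenberg flat parabolic regular domain with constant Poisson kernel must be a parabolic half-space $\{X\cdot \hat\nu > 0\}$ for some spatial unit vector $\hat\nu$. Since $\partial\Omega_\infty$ is then a plane, every $\gamma_\infty(Q,\tau,\rho)$ vanishes and $\nu_\infty \equiv 0$. To contradict the uniform lower bound $\rho_k^{-n-1}\nu_k(\cdots) \geq \varepsilon_0$, one needs a lower semicontinuity of the normalized $\nu$ under Hausdorff convergence of the boundaries: this follows from the uniform Ahlfors regularity of the $\partial\Omega_k$ together with a Fatou argument for the $L^2$ $\beta$-integrals defining $\gamma$, localized by a smooth cutoff and then integrated in $r$.

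The genuinely hard step, and the place where the parabolic case diverges sharply from \cite{kenigtoro}, is the blow-up classification (Theorem \ref{poissonkernel1impliesflat}). One lacks a clean Alt--Caffarelli-type monotonicity formula, and the one-sided nature of caloric measure (which cannot see the future) forces one to rule out limiting domains that drift in time even when the spatial trace is flat. The remaining ingredients --- extraction of blow-ups, convergence of $(u_k,\omega_k)$, the passage from VMO to a constant limit kernel, and the semicontinuity of $\nu$ --- are parabolic adaptations of \cite{kenigtoro} built on the machinery already developed in \cite{hlncaloricmeasure}, so I expect the bulk of the new work to concentrate in that single classification step.
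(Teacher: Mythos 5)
This statement is not proved in the paper at all: it is Theorem 3 of \cite{hlncaloricmeasure}, quoted in the introduction only to contrast with the Main Theorem (which removes hypotheses (i) and (iii)), so there is no in-paper proof to compare against. What you have written is, in effect, a sketch of the present paper's Main Theorem via pseudo-blowups and the classification Theorem \ref{poissonkernel1impliesflat}, rather than the Hofmann--Lewis--Nystr\"om argument, which leans precisely on the asymptotic optimal doubling and small $\|\nu\|_+$ hypotheses that you never use (those hypotheses were there because the flat-blowup classification was open at the time). That substitution would be fine as a ``different route'' if the sketch closed, but it does not.

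The genuine gap is the last step, where you pass from ``the blow-up limit is a half-space, so $\nu_\infty\equiv 0$'' to a contradiction with $\rho_k^{-n-1}\nu(\cdots)\ge\varepsilon_0$ by invoking ``lower semicontinuity of the normalized $\nu$ \dots\ by a Fatou argument.'' Fatou for the $L^2$ $\beta$-integrals gives exactly the opposite inequality, $\nu_\infty \le \liminf_k \nu_k$ (this is what the paper proves in \eqref{settingupfatouslemma}--\eqref{fatouonthenus}); to kill the lower bound $\varepsilon_0$ you need an upper semicontinuity statement, i.e.\ that no Carleson mass of $\nu_k$ at unit scale is carried by oscillations invisible in the Hausdorff limit, and this is false in general: the example at the end of \cite{hlnbigpieces} has $\sigma(\Delta_r(Q,\tau))\equiv r^{n+1}$ and boundaries as flat as you like in the Hausdorff/measure sense, yet $\nu$ fails to be Carleson, because the obstruction lives in the half-order time regularity that $\gamma$ detects but Hausdorff convergence and Ahlfors regularity do not. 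This is exactly why the paper needs all of Section \ref{sec: vanishingcarlesoncondition}: one approximates $\partial\Omega$ at scale $r_k$ by $\Lip(1,1/2)$ graphs whose Carleson norms are bounded uniformly in $k$ (Lemma \ref{adaptedbigpiecesofgraphs}), transfers the Carleson bound between the domain and the graph (Lemma \ref{carlesonnormofomegabounded}), and only then runs a dominated-convergence argument on the graph functions, using the \emph{a priori} bound $\|\nu\|_+<\infty$ for uniform integrability. A secondary, smaller issue: your claim that $h_k\to 1$ in $L^p$ and hence $d\omega_\infty=d\sigma_\infty$ before the classification is not what the blow-up analysis yields directly; the paper establishes only $|\nabla u_\infty|\le 1$ and $h_\infty\ge 1$ (Proposition \ref{maingradientbound}, Lemma \ref{hatinfinityisgreaterthan1}) as input to Theorem \ref{poissonkernel1impliesflat}, and obtains $\sigma_i\rightharpoonup\sigma_\infty=\omega_\infty$ only afterwards (Proposition \ref{weakconvergenceinpseudoblowups}), so your ordering quietly assumes part of the conclusion.
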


Our main theorem removes the asymptotically optimally doubling and small Carleson measure hypotheses. As mentioned above, this requires a classification of  the ``flat" limits of pseudo-blowups (Definition \ref{parabolicpseudoblowup} below), which was heretofore open in the parabolic setting.  

\begin{thm}\label{poissonkernel1impliesflat}[Classification of ``flat" Blowups]
Let $\Omega_\infty$ be a $\delta$-Reifenberg flat parabolic regular domain with Green function at infinity, $u_\infty$, and associated parabolic Poisson kernel, $h_\infty$ (i.e.  $h_\infty = \frac{d\omega_\infty}{d\sigma}$). Furthermore, assume that $|\nabla u_\infty| \leq 1$ in $\Omega_\infty$ and $|h_\infty| \geq 1$ for $\sigma$-almost every point on $\partial \Omega_\infty$. There exists a $\delta_n > 0$ such that if $\delta_n \geq \delta > 0$ we may conclude that, after a potential rotation and translation, $\Omega_\infty = \{(X,t)\mid x_n > 0\}$. 
\end{thm}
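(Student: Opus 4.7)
My strategy is to promote the boundary bounds $|\nabla u_\infty|\leq 1$ and $h_\infty \geq 1$ to pointwise equalities on $\partial\Omega_\infty$, propagate the equality $|\nabla u_\infty|=1$ from the boundary into the interior via (adjoint-)parabolic subharmonicity of $w \colonequals |\nabla u_\infty|^2$, and then use rigidity of the adjoint heat equation to conclude that $u_\infty$ is spatially affine, which identifies $\Omega_\infty$ as a half-space.

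\textbf{Boundary analysis (Step 1).} I would first observe that in a $\delta$-Reifenberg flat, parabolic regular domain, a Riesz/Rellich type identity---justifiable by approximation using \eqref{ipgreenfunction} together with the parabolic divergence theorem---should identify the Poisson kernel with the inward normal derivative: $h_\infty = -\partial_\nu u_\infty$ for $\sigma$-a.e.\ point on $\partial \Omega_\infty$. Combined with $h_\infty \geq 1$ and $|\nabla u_\infty|\leq 1$, the inequality chain $1 \leq h_\infty^2 = (\partial_\nu u_\infty)^2 \leq |\nabla u_\infty|^2 \leq 1$ forces $|\nabla u_\infty|\equiv 1$ on $\partial\Omega_\infty$, with gradient aligned with $\nu$, and moreover $h_\infty \equiv 1$ $\sigma$-a.e.

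\textbf{Interior propagation and rigidity (Steps 2--3).} Next, set $w \colonequals |\nabla u_\infty|^2$. A direct computation using $(\partial_s + \Delta) u_\infty = 0$ yields
\begin{equation*}
(\partial_s + \Delta) w \;=\; 2 |\nabla^2 u_\infty|^2 \;\geq\; 0,
\end{equation*}
so $w$ is an adjoint-parabolic subsolution with $w \leq 1$ in $\Omega_\infty$ and $w \equiv 1$ on $\partial \Omega_\infty$. Applying the parabolic strong maximum principle to $v \colonequals 1 - w \geq 0$ (combined with a Phragm\'en--Lindel\"of argument at infinity, using the linear growth of $u_\infty$ coming from $|\nabla u_\infty| \leq 1$ and the Reifenberg corkscrew structure) should give $w \equiv 1$, and hence $\nabla^2 u_\infty \equiv 0$ throughout $\Omega_\infty$. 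Thus $u_\infty(X,t) = a(t) \cdot X + b(t)$; substituting back into $(\partial_s + \Delta)u_\infty = 0$ and comparing coefficients in $X$ gives $\dot a \equiv 0$ and $\dot b \equiv 0$, so $a,b$ are constants with $|a|=1$. Nonnegativity together with $u_\infty \equiv 0$ on $\partial\Omega_\infty$ then identifies $\Omega_\infty = \{a \cdot X + b > 0\}$ and $u_\infty = (a \cdot X + b)_+$, which after rotation and translation is exactly $\Omega_\infty = \{x_n > 0\}$ and $u_\infty(X,t) = x_n^+$.

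\textbf{Main obstacle.} I expect the principal difficulty to be Step~1: the identity $h_\infty = -\partial_\nu u_\infty$ is classical in smooth domains, but in a merely Reifenberg flat parabolic regular domain one must make sense of the boundary normal derivative and justify the identity (or at least an integrated weak version of it) using only the measure-theoretic caloric machinery of \cite{hlncaloricmeasure}. This is precisely where the smallness of $\delta_n$ enters---$\delta$ must be small enough that $\partial\Omega_\infty$ admits a well-defined measure-theoretic normal at $\sigma$-a.e.\ point and that a Rellich-type integration by parts closes. A likely route is first to prove the integrated identity $\int_{\Delta_R(Q,\tau)} h_\infty\, d\sigma = \sigma(\Delta_R(Q,\tau))$ at every scale---by testing \eqref{ipgreenfunction} against well-chosen parabolic cut-offs and integrating by parts against $u_\infty$ and its spatial derivatives---and then to upgrade to pointwise equality using $h_\infty \geq 1$ and a Lebesgue differentiation argument on the Ahlfors-regular $\partial\Omega_\infty$.
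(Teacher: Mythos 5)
There is a genuine gap, and it sits exactly at the heart of your argument: Step~2. From $(\partial_s+\Delta)w = 2|\nabla^2 u_\infty|^2 \ge 0$ you get that $w=|\nabla u_\infty|^2$ is an adjoint \emph{subsolution}, so $v=1-w\ge 0$ is a nonnegative \emph{supersolution} vanishing (non-tangentially, $\sigma$-a.e.) on $\partial\Omega_\infty$. The maximum principle runs the wrong way here: for a subsolution it only yields $w\le \sup_{\partial\Omega_\infty} w = 1$, which you already know, and the strong maximum principle would force $w\equiv 1$ only if $w$ attained the value $1$ at an \emph{interior} point, which is not given. Equivalently, a nonnegative supersolution that vanishes on the boundary need not vanish identically --- $u_\infty$ itself is such a function, and even in the model half-space $\{x_n>0\}$ the function $\min(x_n,1)$ is a bounded nonnegative supersolution with zero boundary values. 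A Phragm\'en--Lindel\"of argument cannot rescue this, since $v$ is bounded between $0$ and $1$ and there is no growth to exploit; nor is $\nabla u_\infty$ known to be continuous up to $\partial\Omega_\infty$ in a merely Reifenberg flat domain, so ``$w\equiv 1$ on the boundary'' is only an a.e.\ non-tangential statement to begin with. Your Step~1, by contrast, is consistent with what the paper actually proves (the identification of the non-tangential limit of $\nabla u$ with $h\hat n$ is Lemma \ref{equaltoh}, and the lower bound on the normal derivative at tangent-ball points is Lemma \ref{normalderivativeatregularpoint}), but since the hypotheses already hand you $|\nabla u_\infty|\le 1$ and $h_\infty\ge 1$, the entire content of the theorem is the interior rigidity that your maximum-principle step does not deliver.

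This failure is precisely why the paper (following Andersson--Weiss and Alt--Caffarelli) does not argue by propagating $|\nabla u|=1$ inward, but instead runs an improvement-of-flatness iteration: Lemma \ref{centeredflatonzerosideisflatonpositiveside} converts flatness of the zero set into quantitative linear growth on the positive side, using barriers in a perturbed slab together with the normal-derivative lower bound of Lemma \ref{normalderivativeatregularpoint} and a Hopf-type comparison; Lemma \ref{centeredflatonbothsidesisextraflatonzeroside} then improves the flatness parameter by a factor $\theta$ at a comparable smaller scale, via a compactness argument in which rescaled free boundaries converge to the graph of a function solving the adjoint heat equation with vanishing Neumann data (Lemmas \ref{blowupfunctionisagraph}--\ref{normalderivativeiszero}), hence smooth. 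Iterating these two lemmas at every scale, starting from the Reifenberg hypothesis with $\delta\le\delta_n$, shows that $\partial\Omega_\infty$ is approximated by planes with geometrically decaying error at all scales and all points, which forces it to be a single plane. If you want to salvage your approach you would need a mechanism that produces an interior touching point for $w$ (or a quantitative boundary Harnack/Hopf estimate comparing $1-|\nabla u_\infty|^2$ with $u_\infty$ near the boundary); as written, the deduction ``$w\le 1$ inside, $w=1$ on the boundary $\Rightarrow w\equiv 1$'' is not a valid application of any maximum principle.
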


Nystr\"om \cite{nystromindiana} proved a version of Theorem \ref{poissonkernel1impliesflat} under the additional assumptions that $\Omega$ is a graph domain and that the Green function is comparable with the distance function from the boundary. Furthermore, under the additional assumption that $\Omega$ is a graph domain, Nystr\"om \cite{nystromgraphdomains} also proved that Theorem \ref{poissonkernel1impliesflat} implies Theorem \ref{loghvmoisvanishingchordarc}.  Our proof of Theorem \ref{poissonkernel1impliesflat} (given in Appendix \ref{proofofflatprop}) is heavily inspired by the work of Andersson and Weiss \cite{anderssonweiss}, who studied a related free boundary problem arising in combustion. However, we are unable to apply their results directly as they consider solutions in the sense of ``domain variations" and it is not clear if the parabolic Green function is a solution in this sense. For example, solutions in the sense of domain variations satisfy the bound $$\int_{C_r(X,t)} |\partial_t u|^2 \leq C_1 r^n,\; \forall C_r(X,t) \subset \Omega,$$ and it is unknown if this inequality holds in a parabolic regular domain (see, e.g., the remark at the end of Section 1 in \cite{nystromgraphdomains}). Furthermore, the results in \cite{anderssonweiss} are local, whereas Theorem \ref{poissonkernel1impliesflat} is a global result. Nevertheless, we were able to adapt the ideas in \cite{anderssonweiss} to our setting. For further discussion of exactly how our work fits in with that of \cite{anderssonweiss} see the beginning of Appendix \ref{proofofflatprop} below.

 Let us now briefly outline this paper and sketch the contents of each section. The paper follows closely the structure, and often the arguments, of \cite{kenigtoro}. In Section \ref{preliminaryestimates} we prove some technical estimates which will be used in Sections \ref{boundingthegradient} and \ref{blowupanalysis}. Section \ref{boundingthegradient} is devoted to proving an integral bound for the gradient of the Green function. The arguments in this section are much like those in the elliptic case. However, we were not able to find the necessary results on non-tangential convergence in parabolic Reifenberg flat domains (e.g. Fatou's theorem) in the literature.  Therefore, we prove them in Appendix \ref{fatouetcforparaboliccase}. Of particular interest may be Proposition \ref{parabolicsawtoothdomains} which constructs interior ``sawtooth" domains (the elliptic construction does not seem to generalize to the parabolic setting). Section \ref{blowupanalysis} introduces the blowup procedure and uses estimates from Sections \ref{preliminaryestimates} and \ref{boundingthegradient} to show that the limit of this blowup satisfies the hypothesis of Theorem \ref{poissonkernel1impliesflat}. This allows us to conclude that $\Omega$ is vanishing Reifenberg flat and, after an additional argument, gives the weak convergence of surface measure under pseudo-blowup. 
 
By combining the weak convergence of $\sigma$ with the weak convergence of $\hat{n}\sigma$ under pseudo-blowups (the latter follows from the theory of sets with finite perimeter) we can conclude easily that $\hat{n} \in \mathrm{VMO}$ (morally, bounds on the growth of $\sigma(\Delta_r(Q,\tau))$ give bounds on the $\mathrm{BMO}$ norm of $\hat{n}$, see Theorem 2.1 in \cite{kenigtoroduke}). Therefore, in the elliptic setting, the weak convergence of surface measure is essentially enough to prove that $\Omega$ is a vanishing chord arc domain. On the other hand, to show that $\Omega$ is a parabolic vanishing chord arc domain one must establish a vanishing Carleson measure condition (equation \eqref{vanishingcarlesoncondition}). Furthermore, the aforementioned example at the end of \cite{hlnbigpieces} shows that control of the growth of $\sigma(\Delta_r(Q,\tau))$ does not necessarily give us control on $\|\nu\|_+$. In Section \ref{sec: vanishingcarlesoncondition} we use purely geometric measure theoretical arguments to prove Theorem \ref{chordarctovanishingchordarc}; that a vanishing Reifenberg flat parabolic chord arc domain whose surface measure converges weakly under pseudo-blowups must be a parabolic vanishing chord arc domain. To establish this (and thus finish the proof of Theorem \ref{loghvmoisvanishingchordarc}), we adapt approximation theorems of Hofmann, Lewis and Nystr\"om, \cite{hlnbigpieces}, and employ a compactness argument. 

 The remainder of our paper is devoted to free boundary problems with conditions above the continuous threshold. In particular, we prove (stated here in the infinite pole setting),
 \begin{thm}\label{loghholdercontinuous}
Let $\Omega\subset \mathbb R^{n+1}$ be a parabolic regular domain with $\log(h) \in \mathbb C^{k+\alpha, (k+\alpha)/2}(\mathbb R^{n+1})$ for $k \geq 0$ and $\alpha \in (0,1)$.  There is a $\delta_n > 0$ such that if $\delta_n \geq \delta > 0$ and $\Omega$ is $\delta$-Reifenberg flat then $\Omega$ is a $\mathbb C^{k+1+\alpha, (k+1+\alpha)/2}(\mathbb R^{n+1})$ domain. 

Furthermore, if $\log(h)$ is analytic in $X$ and in the second Gevrey class (see Definition \ref{gevrydef}) in $t$ then, under the assumptions above, we can conclude that $\Omega$ is the region above the graph of a function which is analytic in the spatial variables and in the second Gevrey class in $t$. Similarly, if $\log(h) \in C^{\infty}$ then $\partial \Omega$ is locally the graph of a $C^\infty$ function.
\end{thm}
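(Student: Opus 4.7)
The plan is to leverage the Main Theorem to reduce the problem to a boundary regularity question for a parabolic PDE on a fixed, smooth reference domain, and then to bootstrap. Since $C^{k+\alpha,(k+\alpha)/2} \subset \mathrm{VMO}(\partial \Omega)$, Theorem \ref{loghvmoisvanishingchordarc} applies, so $\Omega$ is a vanishing chord arc domain. By the graph representation for such domains established in \cite{hlncaloricmeasure}, about every $(Q_0,\tau_0)\in\partial\Omega$ there is a small parabolic cylinder in which $\partial\Omega$ is the graph, over the best-fitting time-parallel $n$-plane, of a function $f$ whose $\Lip(1,1/2)$ norm and whose half-time-derivative BMO norm are both as small as we wish (by shrinking the cylinder).

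The first, and main, step is to upgrade this qualitative smallness into $f\in C^{1+\alpha,(1+\alpha)/2}$. The idea is that Theorem \ref{poissonkernel1impliesflat} says all blowups of $u$ at a boundary point are of the form $h(Q_0,\tau_0)\,x_n^+$ in suitable coordinates; the Hölder continuity of $\log h$ (hence of $h$) lets one upgrade this compactness-only statement to a quantitative dyadic decay, at a geometric rate $\sim r^{\alpha}$, of both the $L^2$ flatness $\gamma(Q,\tau,r)$ and the oscillation of the approximating $n$-plane. This is carried out by a standard contradiction/compactness argument: if the decay failed at some scale, rescaling and extracting a subsequential limit would produce a nontrivial blowup violating the classification in Theorem \ref{poissonkernel1impliesflat}. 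The resulting geometric decay is exactly the Campanato/Morrey characterization of $C^{1+\alpha,(1+\alpha)/2}$ regularity of the graph $f$.

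Once $\partial\Omega$ is locally a $C^{1+\alpha,(1+\alpha)/2}$ graph, I would apply a partial hodograph-Legendre transform $\Phi$ that diffeomorphically maps a one-sided parabolic neighborhood of $(Q_0,\tau_0)$ onto the upper half-cylinder $\{x_n\geq 0\}$ and $\partial\Omega$ onto $\{x_n=0\}$. Under $\Phi$ the Green function $u$ becomes a function $v$ which satisfies a uniformly parabolic non-divergence equation with $C^{k+\alpha,(k+\alpha)/2}$ coefficients on the half-cylinder, together with $v=0$ and $\partial_{x_n}v = h\circ\Phi^{-1}$ on the flat face $\{x_n=0\}$. The hodograph relation further expresses $f$ itself as a smooth algebraic function of the boundary trace of $\Phi^{-1}$. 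Iterating parabolic Schauder estimates up to the flat boundary $k$ times yields $v\in C^{k+1+\alpha,(k+1+\alpha)/2}$ and therefore $f\in C^{k+1+\alpha,(k+1+\alpha)/2}$, proving the first half of the theorem.

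The $C^\infty$ statement follows by iterating the Schauder step indefinitely. For the analytic-in-$X$ and second-Gevrey-in-$t$ statement, one applies the corresponding Friedman/Komatsu-type analytic regularity theorems for linear parabolic equations with analytic/Gevrey coefficients on a half-space; the Gevrey-$2$ class in $t$ is dictated by the parabolic scaling $t\sim |X|^2$ and matches the standard hypoellipticity class for the heat operator. The main obstacle is the Hölder boost to $C^{1+\alpha,(1+\alpha)/2}$: the compactness statement in Theorem \ref{poissonkernel1impliesflat} must be made quantitative, and in the parabolic setting one cannot take the shortcut of a monotonicity formula (as noted in the paper, $u$ is not known to be a solution in the sense of domain variations), so the improvement-of-flatness has to be done directly by the contradiction/compactness argument sketched above.
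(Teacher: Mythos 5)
Your overall architecture does match the paper's (an improvement-of-flatness step giving $\mathbb C^{1+\alpha,(1+\alpha)/2}$, then a hodograph transform plus Schauder bootstrapping, then a Kinderlehrer--Nirenberg type result for the analytic/Gevrey statement), but the central step is not carried by the mechanism you describe. A contradiction/compactness argument run directly against Theorem \ref{poissonkernel1impliesflat} can only show that flatness tends to zero, i.e.\ vanishing Reifenberg flatness -- which is already available from Theorem \ref{loghvmoisvanishingchordarc} -- and it cannot produce a geometric rate $r^{\alpha}$: the classification has no modulus in it. What actually yields the rate is a quantified improvement-of-flatness iteration (Lemmas \ref{currentholderflatonzerosideisflatonpositiveside} and \ref{currentholderflatonbothsidesisextraflatonzeroside}, together with the gradient bound of Lemma \ref{holdergradientbound}), in which the compactness limit is not a blowup of $u$ violating the classification but the \emph{linearized} problem: the normalized free boundaries converge to the boundary trace of a bounded adjoint-caloric function $w$ on $\{x_n>0\}$ with $\partial_n w=0$, and the interior estimates for $w$ give flatness $\theta\sigma$ at a definite smaller scale $c_\theta\rho$, with the H\"older oscillation of $h$ entering as an error $\kappa\le \sigma_\theta\sigma^2$ that must itself be shown to decay ($\kappa\to\theta^2\kappa$) along the iteration; this bookkeeping is where the exponent $\alpha$ is captured. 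Separately, your opening claim that a vanishing chord arc domain is locally a $\Lip(1,1/2)$ graph with small norms is unjustified: \cite{hlnbigpieces} only gives \emph{big pieces} of such graphs, and the paper never has a graph representation before the $\mathbb C^{1+\alpha}$ step (this is repairable, since the flatness decay itself produces the graph, but as written it is an incorrect input).

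The bootstrap and analyticity steps also have gaps as stated. After the hodograph transform the problem is not ``$v=0$, $\partial_{x_n}v=h\circ\Phi^{-1}$ for a linear equation with $\mathbb C^{k+\alpha}$ coefficients'': the unknown is $\psi$, the boundary condition is the nonlinear oblique relation $\psi_n\, h = \sqrt{1+\sum_i\psi_i^2}$ (equation \eqref{transformedboundary}), and the coefficients depend on $\nabla\psi$; if instead you merely flatten by $f$, the coefficients have only the regularity of $\nabla f$, which is circular. Moreover, since the starting regularity is $\mathbb C^{1+s}$ with $s$ possibly below $\alpha$ and in any case below $\mathbb C^{2}$, classical Schauder theory cannot simply be ``iterated $k$ times''; the paper must pass through Lieberman's intermediate (weighted) Schauder estimates (Theorems \ref{liebermandirichletestimates}, \ref{liebermanschauderestimate}) and a difference-quotient/second-difference scheme to reach $\mathbb C^{2+\gamma,1+\gamma/2}$ and the optimal exponent before the classical theory takes over. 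Finally, for the analytic/second-Gevrey statement, linear analytic-regularity theorems do not apply: the coefficients and the boundary condition involve the unknown itself, so knowing $\log(h)$ analytic/Gevrey gives no analyticity of the coefficients a priori. One needs the nonlinear boundary regularity theorem of Kinderlehrer and Nirenberg (Theorem \ref{gevreyclasstheorem}) for the fully nonlinear equation with nonlinear oblique boundary data, which is precisely what the paper invokes.
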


The case of $k = 0$ follows in much the same manner as the proof of Theorem \ref{poissonkernel1impliesflat} but nevertheless is done in full detail in Section \ref{initialholderregularity}. For larger values of $k$, we use the techniques of Kinderlehrer and Nirenberg (see e.g. \cite{kinderlehrernirenberganalyticity}), parabolic Schauder-type estimates (see e.g. \cite{liebermanintermediateschauder}) and an iterative argument inspired by Jerison \cite{jerison}. These arguments are presented in Section \ref{higherregularity}. 

Finally, let us comment on the hypothesis of Theorem \ref{loghholdercontinuous}. For $n \geq 3$, this theorem is sharp. In particular, Alt and Caffarelli, \cite{altcaf}, constructed an Ahlfors regular domain $\Omega \subset \mathbb R^3$ with $\log(h) = 0$ but which is not a $C^1$ domain (it has a cone point at the origin). A cylinder over this domain shows that the flatness condition is necessary. On the other hand, Keldysh and Lavrentiev (see \cite{keldyshandlavrentiev})  constructed a domain in $\mathbb R^2$ which is rectifiable but not Ahlfors regular, where $h \equiv 1$ but the domain is not a $C^1$ domain. A cylinder over this domain shows that the Parabolic regular assumption is necessary. In one spatial dimension, our upcoming preprint \cite{nequals2} shows that the the flatness condition is not necessary (as topology implies that a parabolic NTA domain is a graph domain). When $n =2$ it is not known if the flatness assumption is necessary and we have no intuition as to what the correct answer should be. 

\medskip

\noindent {\bf Acknowledgements:} This research was partially supported by the National Science Foundation's Graduate Research Fellowship, Grant No. (DGE-1144082). We thank Abdalla Nimer for helpful comments regarding Section \ref{sec: vanishingcarlesoncondition} and Professor Tatiana Toro for helping us overcome a technical difficulty in Section \ref{initialholderregularity}. Finally, we owe a debt of gratitude to Professor Carlos Kenig who introduced us to free boundary problems and whose patience and guidance made this project possible.

\section{Notation and Preliminary Estimates}\label{preliminaryestimates}

As mentioned above, all our theorems will concern a $\delta$-Reifenberg flat, parabolic regular domain $\Omega$. Throughout, $\delta > 0$ will be small enough such that $\Omega$ is a {\it non-tangentially accesible} (NTA) domain  (for the definition see \cite{lewisandmurray}, Chapter 3, Sec 6, and Lemma 3.3 in \cite{hlncaloricmeasure}). In particular, for each $(Q,\tau) \in \partial \Omega$ and $r > 0$ there exists two ``corkscrew" points, $A^{\pm}_r(Q,\tau) \colonequals (X^\pm_r(Q,\tau), t^{\pm}_r(Q,\tau))\in C_r(Q,\tau) \cap \Omega$ such that $d(A^{\pm}_r(Q,\tau), \partial \Omega) \geq r/100$ and $\min\{t^+_r(Q,\tau) - \tau, \tau - t^-_r(Q,\tau)\} \geq r^2/100$. 

Our theorems apply both to finite and infinite pole settings. Unfortunately, we will often have to treat these instances seperately. Fix (for the remainder of the paper) $(X_0,t_0) \in \Omega$ and define $u^{(X_0,t_0)}(-,-) = G(X_0,t_0,-,-)$, the Green function (which is adjoint-caloric), with a pole at $(X_0,t_0)$.  As above, $\omega^{(X_0,t_0)}$ is the associated caloric measure and $k^{(X_0,t_0)}$ the corresponding Poisson kernel (which exists by \cite{hlncaloricmeasure}, Theorem 1).  In addition, $u$ is the Green function with a pole at $\infty$, $\omega$ the associated caloric measure and $h$ the corresponding Poisson kernel. We will always assume (unless stated otherwise) that $\log(h) \in \mathrm{VMO}(\partial \Omega)$ or $\log(k^{(X_0,t_0)}) \in \mathrm{VMO}(\partial \Omega^{t_0})$. 

Finally, define, for convenience, the distance from $(X,t) \in \Omega$ to the boundary $$\delta(X,t) = \inf_{(Q,\tau) \in \partial \Omega} \|(X,t) - (Q,\tau)\|.$$

\subsection{Estimates for Green Functions in Parabolic Reifenberg Flat Domains}

Here we will state some estimates on the Green function of a parabolic Reifenberg flat domain that will be essential for the gradient bounds of Section \ref{boundingthegradient}. Corresponding estimates for the Green function with a pole at infinity are discussed in Appendix \ref{appendix:caloricmeasure}. 

We begin by bounding the growth of caloric functions which vanish on surface balls. The reader should note this result appears in different forms elsewhere in the literature  (e.g. \cite{lewisandmurray}, Lemma 6.1 and \cite{hlncaloricmeasure} Lemma 3.6), so we present the proof here for the sake of completeness.

\begin{lem}\label{growthattheboundary}
Let $\Omega$ be a $\delta$-Reifenberg flat domain and $(Q,\tau) \in \partial \Omega$. Let $w$ be a continuous non-negative solution to the (adjoint)-heat equation in $C_{2r}(Q,\tau)\cap \Omega$ such that $w = 0$ on $C_{2r}(Q,\tau)\cap \partial \Omega$. Then for any $\varepsilon > 0$ there exists a $\delta_0 = \delta_0(\varepsilon) > 0$ such that if $\delta < \delta_0$ there exists a $c = c(\delta_0) > 0$ such that \begin{equation}\label{growthatboundaryequation}
w(X,t) \leq c \left(\frac{d((X,t), (Q,\tau))}{r}\right)^{1-\varepsilon}\sup_{(Y,s) \in C_{2r}(Q,\tau)} w(Y,s)
\end{equation}
whenever $(X,t) \in C_{r}(Q,\tau)\cap \Omega$. 
\end{lem}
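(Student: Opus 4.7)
The plan is to prove this lemma by a standard dyadic iteration of a single-scale decay estimate. Set $r_k = 2^{-k} r$ and $M_k := \sup_{C_{r_k}(Q,\tau) \cap \Omega} w$. The target is to show $M_{k+1} \leq \theta(\delta) M_k$ for some $\theta(\delta) \in (0, 1)$ satisfying $\theta(\delta) \to 1/2$ as $\delta \to 0$, so that for $\delta < \delta_0(\varepsilon)$ we have $\theta(\delta) \leq 2^{-(1-\varepsilon)}$; iterating then gives $M_k \leq 2^{-k(1-\varepsilon)} M_0 = (r_k/r)^{1-\varepsilon} M_0$, and the lemma follows for any $(X,t) \in C_r(Q,\tau) \cap \Omega$ by choosing $k$ with $r_{k+1} \leq d((X,t),(Q,\tau)) < r_k$.

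For the single-scale estimate, fix $k$ and let $U_k = C_{r_k}(Q,\tau)\cap \Omega$. Because $w$ is (adjoint) caloric in $U_k$, continuous up to $\partial U_k$, and vanishes on $C_{r_k}(Q,\tau) \cap \partial \Omega$, the parabolic maximum principle yields
\begin{equation*}
w(X,t) \leq M_k \cdot \omega^{(X,t)}_{U_k}(\Gamma_k), \qquad (X,t) \in U_k,
\end{equation*}
where $\omega^{(X,t)}_{U_k}$ is the (adjoint) caloric measure of $U_k$ with pole at $(X,t)$ and $\Gamma_k = (\partial_p U_k) \setminus \partial \Omega$ is the portion of the parabolic boundary where $w$ need not vanish. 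It then suffices to show $\omega^{(X,t)}_{U_k}(\Gamma_k) \leq \theta(\delta)$ uniformly for $(X,t) \in C_{r_{k+1}}(Q,\tau) \cap \Omega$. I would obtain this by comparison with the reference half-space: let $L = L(Q,\tau, r_k)$ be the Reifenberg plane with inward unit normal $\hat n$; by \eqref{seperationequation}, the set $\{(Y,s) : (Y-Q)\cdot \hat n > \delta r_k\} \cap C_{r_k}(Q,\tau)$ lies in $\Omega$, while $\partial \Omega \cap C_{r_k}(Q,\tau)$ sits in the thin slab $\{|(Y-Q)\cdot\hat n| \leq \delta r_k\}$. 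In the true half-space $H = \{(Y-Q)\cdot \hat n > 0\}$, the caloric function on $H\cap C_{r_k}(Q,\tau)$ with value $1$ on the lateral boundary and $0$ on the flat face satisfies a linear decay $\leq c_n ((X-Q)\cdot \hat n)/r_k$ by an explicit barrier argument. Pulling this barrier back into $\Omega$, with the $\delta$-Reifenberg mismatch contributing an $O(\delta)$ correction, yields $\omega^{(X,t)}_{U_k}(\Gamma_k) \leq 1/2 + C\delta$ on the smaller cylinder, as required.

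The main technical obstacle is the Reifenberg perturbation step: transferring the half-space barrier into $\Omega$ requires controlling the slab of thickness $\delta r_k$ where the actual boundary and the plane $L$ may disagree. I would handle this either by shifting the barrier by $\delta r_k$ in the $\hat n$ direction and absorbing the error into $\theta(\delta)$, or by invoking a non-degeneracy (Bourgain-type) estimate for parabolic measure in Reifenberg flat domains of the form found in \cite{hlncaloricmeasure} or \cite{lewisandmurray}, which gives the complementary bound $\omega^{(X,t)}_{U_k}(\Delta_{r_k}(Q,\tau)) \geq 1-\theta(\delta)$ with $\theta(\delta)\to 1/2$. Once the one-step decay is secured, the iteration is routine and the constant $c(\delta_0)$ in \eqref{growthatboundaryequation} arises as the sum of a geometric series; the adjoint-caloric versus caloric distinction is cosmetic, amounting only to swapping the past and future time slices in $\partial_p U_k$.
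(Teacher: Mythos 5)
Your overall skeleton (maximum principle to reduce to the caloric measure of the lateral boundary of $C_{r_k}(Q,\tau)\cap\Omega$, comparison with a flat model via Reifenberg flatness, then iteration over scales) is the same as the paper's, but the quantitative heart of your argument fails: the claimed one-step bound $\omega^{(X,t)}_{U_k}(\Gamma_k)\leq 1/2+C\delta$ for all $(X,t)\in C_{r_{k+1}}(Q,\tau)\cap\Omega$ is false even in the exact half-space model. At points of $C_{r_k/2}$ whose distance to the flat face is comparable to $r_k/2$, the caloric measure of the lateral boundary is a dimensional constant that is in general much larger than $1/2$; equivalently, your linear-decay barrier reads $\leq c_n\,\big((X-Q)\cdot\hat n\big)/r_k$ with a dimensional constant $c_n>1$ (it comes from an interior gradient estimate or boundary comparison, not from the explicit slope-one function, since $x_n/r_k$ is not an upper barrier for data $1$ on the lateral boundary), so at dyadic ratio $1/2$ you only get the factor $c_n/2+C\delta$, which need not even be below $1$. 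The Bourgain-type nondegeneracy estimate you mention as an alternative only yields $\omega^{(X,t)}_{U_k}(\Delta_{r_k}(Q,\tau))\geq c_0$ for some small dimensional $c_0>0$, i.e.\ a fixed contraction $\theta_n<1$ independent of $\delta$; iterating that gives some small H\"older exponent, not the exponent $1-\varepsilon$ the lemma requires.

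The repair is to iterate at a small scale ratio rather than the dyadic one, which is exactly what the paper does. Keeping your reduction $w\leq M_0\,v_0$ with $v_0$ the caloric measure of the lateral boundary, one encloses $C_r(Q,\tau)\cap\Omega$ in the slab $\Lambda=\{x_n\geq-4\delta r\}$ and compares the corresponding lateral caloric measure $h_0$ of $\Lambda\cap C_{2r}$ with the linear adjoint-caloric function $g_0=x_n+4\delta r$ (both vanish on $\partial\Lambda$), using the boundary comparison principle for cylinders (Theorem 1.6 in \cite{fabesgarofalosalsa}); this yields $v_0\leq h_0\leq C\,(a+4\delta r)/r$ at height $a$, hence $v_0\leq C(\theta+\delta)$ on $C_{\theta r}(Q,\tau)$ for any $\theta<1$. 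Choosing the ratio $\theta$ small (the paper takes $\theta=\delta$) makes the one-step factor $C(\theta+\delta)\leq\theta^{1-\varepsilon}$ once $\delta<\delta_0(\varepsilon)$, and iterating at ratio $\theta$ then gives \eqref{growthatboundaryequation}; with your fixed ratio $1/2$ no choice of small $\delta$ can produce a contraction close to $1/2$, so the exponent $1-\varepsilon$ is out of reach.
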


\begin{proof}
We argue is in the proof of Lemma 2.1 in \cite{kenigtoro}. Let $(Q,\tau) \in \partial \Omega$ and $r > 0$. Let $v_0$ be adjoint caloric in $C_{2r}(Q,\tau)\cap \Omega$ such that $v_0 = 0$ on $\Delta_{2r}(Q,\tau)$ and $v_0 \equiv 1$ on $\partial_p C_{2r}(Q,\tau) \cap \Omega$. By the maximum principle, \begin{equation}\label{transfertov} w(X,t) \leq [\sup_{(Y,s) \in C_{2r}(Q,\tau)} w(Y,s)] v_0(X,t).\end{equation} We will now attempt to bound $v_0$ from above. 

Assume, without loss of generality, that the plane of best fit at $(Q,\tau)$ for scale $2r$ is $\{x_n = 0\}$ and that $(Q,\tau) = (0,0)$. Define $\Lambda  = \{(X,t) = (x,x_n, t)\mid x_n \geq - 4r\delta\}$. It is a consequence of Reifenberg flatness that $C_r(0,0) \cap \Omega \subset C_r(0,0)\cap \Lambda$. Define $h_0$ to be an adjoint-caloric function in $\Lambda\cap C_{2r}(0,0)$ such that $h_0 = 0$ on $\partial \Lambda\cap C_{2r}(0,0)$ and $h_0 = 1$ on $\partial_pC_{2r}(0,0)\cap \Lambda$. By the maximum principle $h_0(X,t) \geq v_0(X,t)$ for all $(X,t) \in C_{2r}(0,0)\cap \Omega$. 

Finally, consider the function $g_0$ defined by $g_0(x, x_n, t) = x_n+ 4\delta r$. It is clear that $g_0$ is an adjoint caloric function on $\Lambda \cap C_{2r}(0,0)$. Furthermore, $h_0, g_0$ both vanish on  $\partial \Lambda \cap C_{2r}(Q,\tau)$. Recall that (adjoint-)caloric functions in a cylinder satsify a comparison principle (see Theorem 1.6 in \cite{fabesgarofalosalsa}). Hence, there is a constant $C > 0$ such that \begin{equation}\label{comparehandg} \frac{h_0(X,t)}{g_0(X,t)} \leq C\frac{h_0(0,r/2, 0)}{r},\; \forall (X,t) \in C_{r/4}(0,0)\cap \Lambda. \end{equation} Let $(X,t) = (x, a, t)$. Then equation \eqref{comparehandg} becomes \begin{equation}\label{boundingh} h_0(X,t) \leq C\frac{a+4\delta r}{r}.\end{equation} It is then easy to see, for any $\theta < 1$ and $(X,t) \in C_{\theta r}(0,0)$, that $v_0(X,t) \leq h_0(X,t) \leq C(\theta + \delta)$. Let $\theta = \delta$ and iterate this process. The desired result follows. 
\end{proof}

Using the parabolic Harnack inequality, we can say more about $\sup_{(Y,s)\in C_{2r}(Q,\tau)} w(Y,s)$.

\begin{lem}\label{biggeratNTApoint}[Lemma 3.7 in \cite{hlncaloricmeasure}]
Let $\Omega, w, (Q,\tau), \delta_0$ be as in Lemma \ref{growthattheboundary}. There is a universal constant $c(\delta_0) \geq 1$ such that if $(Y,s) \in \Omega\cap C_{r/2}(Q,\tau)$ then $$w(Y,s) \leq cw(A^{\pm}_r(Q,\tau)),$$ where we choose $A^-$ if $w$ is a solution to the adjoint-heat equation and $A^+$ otherwise. 
\end{lem}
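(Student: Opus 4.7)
The statement is a parabolic Carleson estimate, and the plan is to combine Lemma \ref{growthattheboundary} (the quantitative H\"older decay at the boundary) with iterated applications of the interior parabolic Harnack inequality along a Harnack chain built from the NTA structure of $\Omega$, which is available since $\delta < \delta_0$.

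Set $M \colonequals \sup_{C_{r/2}(Q,\tau) \cap \Omega} w$; by the strong maximum principle and the vanishing of $w$ on $\Delta_{2r}(Q,\tau)$, $M$ is attained or approached at a point $(Y^{*},s^{*}) \in \overline{C_{r/2}(Q,\tau)} \cap \Omega$. First I would reduce to the case $d((Y^{*},s^{*}), \partial\Omega) \gtrsim r$. To this end, if instead $d((Y^{*},s^{*}), \partial\Omega) \leq \eta r$ for small $\eta$, I would apply Lemma \ref{growthattheboundary} at a nearest boundary point with a scale comparable to $r$, producing a bound of the form $w(Y^{*},s^{*}) \leq c\,\eta^{1-\varepsilon}\,\sup_{C_{r}(Q,\tau) \cap \Omega} w$. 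Controlling the enlarged supremum by $M$ (using the same argument on a slightly bigger cylinder) and absorbing, this forces $\eta \gtrsim 1$, so that after harmless adjustments of constants one may assume $(Y^{*},s^{*})$ lies at distance $\gtrsim r$ from the boundary.

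Once $(Y^{*},s^{*})$ and $A^{\pm}_{r}(Q,\tau)$ both lie in $C_{r}(Q,\tau)\cap\Omega$ at distance $\gtrsim r$ from $\partial\Omega$, the NTA property furnishes a parabolic Harnack chain of uniformly bounded length connecting them, whose links have radii comparable to their distance to the boundary. The time-ordering along the chain is chosen to match the direction of propagation of the equation: for $w$ caloric we terminate at $A^{+}_{r}$ (with $t^{+}_{r} - \tau \geq r^{2}/100$) and traverse with increasing time, while for $w$ adjoint-caloric we terminate at $A^{-}_{r}$ (with $\tau - t^{-}_{r} \geq r^{2}/100$) and traverse with decreasing time. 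Iterating the parabolic Harnack inequality across each of the boundedly many links then yields $M = w(Y^{*},s^{*}) \leq c\, w(A^{\pm}_{r}(Q,\tau))$, which is the claim.

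The principal obstacle is coordinating the direction of the Harnack chain with the temporal location of the corkscrew. The parabolic Harnack inequality propagates non-negative solutions in only one time direction, and that direction is opposite for the heat and adjoint-heat operators; this is precisely why the statement splits into the $A^{+}$ and $A^{-}$ cases. The time-gap $|t^{\pm}_{r} - \tau| \geq r^{2}/100$ built into the definition of the corkscrew is exactly what is needed to fit an admissible chain, and verifying the existence of such a chain of uniformly bounded length (using the NTA/corkscrew geometry guaranteed by small Reifenberg flatness) is the most delicate ingredient of the argument.
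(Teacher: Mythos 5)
The paper does not actually prove this statement; it quotes it as Lemma 3.7 of \cite{hlncaloricmeasure}, so your proposal has to be judged against the standard argument (boundary H\"older decay plus Harnack), which is indeed the right toolkit. However, as written your reduction to an interior maximum has a genuine gap. From $d((Y^*,s^*),\partial\Omega)\leq \eta r$ and Lemma \ref{growthattheboundary} you get $M=w(Y^*,s^*)\leq c\,\eta^{1-\varepsilon}\sup_{C_{r}(Q,\tau)\cap\Omega}w$, but the supremum on the right is over a \emph{larger} cylinder and is not controlled by $M$; repeating the argument on a slightly bigger cylinder only pushes the problem to $C_{2r}$, beyond which you have no information, so nothing gets absorbed and no lower bound on $\eta$ follows. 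In fact the assertion that the maximum over $C_{r/2}\cap\Omega$ sits at distance $\gtrsim r$ from $\partial\Omega$ is essentially equivalent to the lemma itself. The standard way to close this (Caffarelli--Fabes--Mortola--Salsa style, as in \cite{hlncaloricmeasure}) is a contradiction iteration: if $w(Y_0,s_0)\geq K\,w(A^{\pm}_r)$ with $K$ huge, the interior Harnack-chain estimate forces $d((Y_0,s_0),\partial\Omega)\leq \epsilon(K)r$ with $\epsilon(K)\to 0$ polynomially, and then Lemma \ref{growthattheboundary} at scale a fixed multiple of $d((Y_0,s_0),\partial\Omega)$ produces a nearby point with $w\geq 2\,w(Y_0,s_0)$; iterating gives points with geometrically growing values and summably small displacements converging to a point of $\Delta_{2r}(Q,\tau)$, contradicting the continuous vanishing of $w$ there.

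There is also a problem with the Harnack-chain step itself. The chain's time ordering is not something you are free to ``choose'': for $w$ caloric you must travel forward in time to $A^{+}_r(Q,\tau)$, and points of $C_{r/2}(Q,\tau)$ have time coordinates up to $\tau+r^2/4$, whereas the stated normalization only guarantees $t^{+}_r-\tau\geq r^2/100$. So your claim that the $r^2/100$ gap is ``exactly what is needed'' is not correct: if $s^*>t^{+}_r$ no admissible forward chain exists, and indeed with a corkscrew whose time offset is smaller than the temporal half-height of $C_{r/2}$ the inequality can fail outright (take data on the parabolic boundary that is zero up to a time after $t^{+}_r$ and large afterwards). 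The lemma implicitly relies on the corkscrew normalization of \cite{hlncaloricmeasure}, in which $t^{\pm}_r$ lies beyond the time extent of $C_{r/2}(Q,\tau)$ by a fixed multiple of $r^2$ (or else late-time points must be handled by a further application of the decay estimate), and this bookkeeping is precisely the part your argument leaves unverified.
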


As the heat equation is anisotropic, given a boundary point $(Q,\tau)$ it will behoove us to distinguish the points in $\Omega$ which are not much closer to $(Q,\tau)$ in time than in space. 

\begin{defin}\label{farenoughaway}
For $(Q,\tau) \in \partial \Omega$ and $A \geq 100$ define the {\bf time-space cone at scale $r$ with constant $A$}, $T_{A,r}^{\pm}(Q,\tau)$, by $$T_{A,r}^{\pm}(Q,\tau) \colonequals \{(X,t) \in \Omega\mid |X-Q|^2 \leq A |t-\tau|, \pm (t-\tau) \geq 4r^2\}.$$
\end{defin}

The next four estimates are presented, and proven, in \cite{hlncaloricmeasure}. We will simply state them here. The first compares the value of the Green function at a corkscrew point with the caloric or adjoint caloric measure of a surface ball. 

\begin{lem}\label{comparisontheorem}[Lemma 3.10 in \cite{hlncaloricmeasure}]
Let $\Omega, (Q,\tau), \delta_0$ be as in Lemma \ref{growthattheboundary}. Additionally suppose from some $A\geq 100, r > 0$ that $(X,t) \in T_{A,r}^+(Q,\tau)$.  There exists some $c = c(A) \geq 1$ (independent of $(Q,\tau)$) such that $$c^{-1}r^nG(X,t, A^+_r(Q,\tau)) \leq \omega^{(X,t)}(\Delta_{r/2}(Q,\tau)) \leq cr^nG(X,t, A^-_r(Q,\tau)).$$ Similarly if $(X,t) \in T_{A,r}^-(Q,\tau)$ we have $$c^{-1}r^nG(A^-_r(Q,\tau),X,t) \leq \hat{\omega}^{(X,t)}(\Delta_{r/2}(Q,\tau)) \leq cr^nG(A^+_r(Q,\tau),X,t).$$
\end{lem}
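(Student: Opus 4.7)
The plan is to prove the two inequalities separately; in each case I only treat $(X,t) \in T_{A,r}^+(Q,\tau)$, since the $T_{A,r}^-$ assertion follows by running the identical argument with heat replaced by adjoint heat. The upper bound
$$\omega^{(X,t)}(\Delta_{r/2}(Q,\tau)) \leq c r^n G(X,t,A_r^-(Q,\tau))$$
is the more routine of the two. First I would choose a smooth cutoff $\varphi \in C^\infty_c(\mathbb R^{n+1})$ with $\varphi \equiv 1$ on $C_{r/2}(Q,\tau)$, $\varphi \equiv 0$ outside $C_r(Q,\tau)$, and $|(\Delta_Y-\partial_s)\varphi|\le C/r^2$. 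The last identity of \eqref{fpgreenfunction} gives
$$\omega^{(X,t)}(\Delta_{r/2}(Q,\tau)) \leq \int \varphi\, d\omega^{(X,t)} = \int_\Omega G(X,t,Y,s)(\Delta_Y - \partial_s)\varphi \, dY\, ds.$$
The integrand is supported in $C_r(Q,\tau)\cap\Omega$, which has parabolic volume $\lesssim r^{n+2}$. Since $G(X,t,\cdot)$ is a nonnegative adjoint-caloric function vanishing on $\Delta_{2r}(Q,\tau)$, Lemma \ref{biggeratNTApoint} (applied at scale $2r$) gives $G(X,t,Y,s) \leq c\,G(X,t,A_{2r}^-(Q,\tau))$ throughout $C_r(Q,\tau)\cap\Omega$, and a short Harnack chain (of length depending only on $\delta$ and $n$) replaces $A_{2r}^-$ by $A_r^-$. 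Multiplying through by the $C/r^2$ and $r^{n+2}$ factors yields the claim.

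For the lower bound the strategy is to compare both sides as caloric functions of the pole $(X,t)$ via the parabolic maximum principle. Set $f(X,t) := r^n G(X,t,A_r^+(Q,\tau))$ and $g(X,t) := \omega^{(X,t)}(\Delta_{r/2}(Q,\tau))$; both are caloric in $(X,t)$ on $\Omega \setminus \{A_r^+(Q,\tau)\}$, and both vanish on $\partial\Omega \setminus \Delta_{r/2}(Q,\tau)$. The first step is a nondegeneracy estimate at the corkscrew, $g(A_r^+(Q,\tau)) \geq c_0 > 0$. I would deduce this by applying Lemma \ref{growthattheboundary} to the caloric function $1-g = \omega^{(\cdot)}(\partial\Omega\setminus\Delta_{r/2})$ on a slightly larger cylinder $C_{2r}(Q,\tau)$ to show the boundary contribution near $(Q,\tau)$ is strictly less than $1$ at $A_r^+$, combined with $\omega^{(X,t)}(\partial\Omega)\le 1$ to absorb the far boundary. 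The second step is a maximum principle argument on the region $U := T_{A,r}^+(Q,\tau)\setminus C_{r/200}(A_r^+(Q,\tau))$: on $\partial U \cap \partial\Omega$ both $f$ and $g$ vanish; on $\partial C_{r/200}(A_r^+(Q,\tau))$, Lemma \ref{biggeratNTApoint} applied to $f$ away from its pole gives $f \leq C$, while $g \geq c_0$ follows from the nondegeneracy together with a Harnack chain to $A_r^+(Q,\tau)$. Hence $f \leq (C/c_0)g$ on $\partial U$, and the parabolic maximum principle propagates this bound to all of $U$, which contains $T_{A,r}^+(Q,\tau)$.

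The main technical obstacle is the nondegeneracy step. In the elliptic setting this is essentially trivial because one may propagate positivity from the corkscrew to any nearby point using the interior Harnack inequality without regard to direction. In the parabolic regime, by contrast, the caloric measure with pole at $(X,t)$ is supported on the strict past $\{\tau < t\}$, so Harnack chains cannot move the pole backward in time; this is precisely why Definition \ref{farenoughaway} restricts $T^{\pm}_{A,r}$ both to $\pm(t-\tau) \geq 4r^2$ and to the time-space cone $|X-Q|^2 \leq A|t-\tau|$. Inside this cone one can in fact connect $A_r^+(Q,\tau)$ to any $(X,t) \in T_{A,r}^+(Q,\tau)$ by a parabolic Harnack chain of length bounded purely in terms of $A$, provided $\delta$ is small enough that $\delta$-Reifenberg flatness keeps the chain a definite distance from $\partial\Omega$. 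This is the source of the $c(A)$ dependence appearing in the conclusion, and it is here that the most careful geometric bookkeeping is required.
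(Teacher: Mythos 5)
The paper itself does not prove this lemma: it is quoted verbatim from Lemma 3.10 of \cite{hlncaloricmeasure}, and your overall strategy (cutoff plus the representation formula in \eqref{fpgreenfunction} for the upper bound; a maximum-principle comparison of $f(X,t)=r^nG(X,t,A^+_r(Q,\tau))$ with $g(X,t)=\omega^{(X,t)}(\Delta_{r/2}(Q,\tau))$, both viewed as caloric functions of the pole, for the lower bound) is indeed the standard route. However, your execution of the lower bound has a genuine gap in the choice of comparison region. The corkscrew $A^+_r(Q,\tau)$ lies in $C_r(Q,\tau)$, so its time coordinate is at most $\tau+r^2$, while every point of $T^+_{A,r}(Q,\tau)$ has time at least $\tau+4r^2$; hence $C_{r/200}(A^+_r(Q,\tau))$ is \emph{disjoint} from $T^+_{A,r}(Q,\tau)$, your set $U$ is just the cone, and $\partial C_{r/200}(A^+_r(Q,\tau))$ is not part of $\partial U$ at all. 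More seriously, the parabolic boundary of the cone $T^+_{A,r}(Q,\tau)$ contains the interior bottom slice $\{t=\tau+4r^2\}$ and the lateral surface $\{|X-Q|^2=A(t-\tau)\}$, on which you have established no inequality between $f$ and $g$ (doing so is essentially the lemma itself), so the maximum principle cannot be run on this region. The correct region is (an exhaustion of) $\Omega\setminus\overline{C_{cr}(A^+_r(Q,\tau))}$: there $f$ vanishes on $\partial\Omega$, vanishes identically on the initial slice $\{t\le s_A-(cr)^2\}$ because $G(X,t,Y,s)=0$ for $t<s$, satisfies $r^nG\le C$ off the small cylinder by $G\le\Gamma\le c_n\,\|\cdot\|^{-n}$, and tends to $0$ at spatial infinity, so the only nontrivial boundary piece is $\partial_p C_{cr}(A^+_r(Q,\tau))$.

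The nondegeneracy step also needs repair, in both location and chaining direction. Lemma \ref{growthattheboundary} applies to $1-g$ only on cylinders whose boundary portion is contained in $\Delta_{r/2}(Q,\tau)$, i.e.\ of radius at most $r/4$ (not $C_{2r}(Q,\tau)$, where $1-g$ does not vanish on the boundary), and it then gives $g\ge 1/2$ only at points whose distance to $(Q,\tau)$ is a small multiple of $r$ --- not at $A^+_r(Q,\tau)$, which sits at distance comparable to $r$. Moreover, for the maximum principle you need $g\ge c_0$ on all of $\partial_p C_{cr}(A^+_r(Q,\tau))$, including its bottom face, whose times may be \emph{earlier} than the time of $A^+_r(Q,\tau)$; since $g$ is caloric in the pole, Harnack only propagates lower bounds forward in time, so a chain ``to $A^+_r$'' and then outward fails for those points. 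The fix is to prove $g\ge c_0$ at a low forward corkscrew such as $A^+_{r/20}(Q,\tau)$ and chain forward from there to every point of $\partial_p C_{cr}(A^+_r(Q,\tau))$ (all such points lie at times $\ge\tau+cr^2$, within distance $2r$ of $(Q,\tau)$, and at distance $\ge cr$ from $\partial\Omega$, so the chains have length depending only on $A$, $\delta$, $n$ --- this is where $c(A)$ enters, as you anticipated). Finally, a smaller wrinkle in your upper bound: applying Lemma \ref{biggeratNTApoint} ``at scale $2r$'' requires $G(X,t,\cdot,\cdot)$ to be adjoint caloric in $C_{4r}(Q,\tau)\cap\Omega$, but since only $t-\tau\ge4r^2$ is assumed the pole may lie inside $C_{4r}(Q,\tau)$; one must either shrink the cutoff and scales so that all cylinders used stay strictly below time $\tau+4r^2$, or bound $G$ on the support of the cutoff by a covering argument at scales $\le r/4$ together with (backward) Harnack estimates to pass to $A^-_r(Q,\tau)$.
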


We now recall what it means for an (adjoint-)caloric function to satisfy a backwards in time Harnack inequality (see e.g. \cite{fabesgarofalosalsa}). 

\begin{defin}\label{backwardsharnack}
If $(Q,\tau) \in \partial \Omega$ and $\rho > 0$ we say that $w > 0$ satisfies a backwards Harnack inequality in $C_\rho(Q,\tau)\cap \Omega$ provided $w$ is a solution to the (adjoint-)heat equation in $C_\rho(Q,\tau)\cap \Omega$ and there exists $1 \leq \lambda < \infty$ such that $$w(X,t) \leq \lambda w(\tilde{X},\tilde{t}),\; \forall (X,t), (\tilde{X},\tilde{t}) \in C_r(Z,s),$$ where $(Z,s), r$ are such that $C_{2r}(Z,s) \subset C_\rho(Q,\tau)\cap \Omega$. 
\end{defin}

In Reifenberg flat domains, the Green function satisfies a backwards Harnack inequality.

\begin{lem}\label{backwardsharnack}[Lemma 3.11 in \cite{hlncaloricmeasure}]
Let $\Omega, (Q,\tau), \delta_0$ be as in Lemma \ref{growthattheboundary}. Additionally suppose from some $A\geq 100, r > 0$ that $(X,t) \in T_{A,r}^+(Q,\tau)$.  There exists some $c = c(A) \geq 1$ such that $$G(X,t, A^-_r(Q,\tau)) \leq c G(X,t, A^+_r(Q,\tau)).$$ On the other hand, if  $(X,t) \in T_{A,r}^-(Q,\tau)$ we conclude $$G(A^+_r(Q,\tau), X,t) \leq c G(A^-_r(Q,\tau),X,t).$$ 
\end{lem}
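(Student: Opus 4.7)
Plan: Let $v(Y,s) := G(X,t,Y,s)$, a nonnegative adjoint-caloric function on $\Omega \setminus \{(X,t)\}$ vanishing on $\partial\Omega$. Since $(X,t) \in T^+_{A,r}(Q,\tau)$ — in particular $t-\tau \geq 4r^2$ — the pole lies strictly in the future of the cylinder $C_{2r}(Q,\tau)$, so $v$ is a classical nonnegative adjoint-caloric solution on $C_{2r}(Q,\tau) \cap \Omega$ vanishing on $\Delta_{2r}(Q,\tau)$. The goal is to compare the two values $v(A^-_r(Q,\tau))$ and $v(A^+_r(Q,\tau))$.

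The ``trivial'' comparison $v(A^+_r) \leq c\, v(A^-_r)$ is the standard direction: for adjoint-caloric functions the interior parabolic Harnack inequality bounds later-time values by earlier-time ones, and since the Reifenberg-flat domain $\Omega$ is NTA, together with $(X,t) \in T^+_{A,r}$, one can construct a Harnack chain of interior balls of length depending only on $A$ and the NTA constants connecting the corkscrews $A^+_r(Q,\tau)$ and $A^-_r(Q,\tau)$ in the correct time-ordering. Iterating Harnack along the chain closes this direction.

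The nontrivial direction $v(A^-_r) \leq c\, v(A^+_r)$ — the true ``backwards'' Harnack content — is the assertion of the lemma. My plan is to reduce it to two applications of the comparison theorem (Lemma \ref{comparisontheorem}) at different base points. Using $\delta$-Reifenberg flatness just below $(Q,\tau)$, choose $(Q_*,\tau_*) \in \partial\Omega$ with $\tau_* = \tau - \kappa r^2$ for a small absolute constant $\kappa > 0$ and $|Q_*-Q| \lesssim \delta r$, selected so that the forward corkscrew $A^+_\rho(Q_*,\tau_*)$ at scale $\rho \sim r$ has time coordinate comparable to that of $A^-_r(Q,\tau)$ and is NTA-Harnack-chain-equivalent to it inside $\Omega$. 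Then
\[
v(A^-_r(Q,\tau)) \;\lesssim\; v(A^+_\rho(Q_*,\tau_*)) \;\lesssim\; \rho^{-n}\,\omega^{(X,t)}(\Delta_{\rho/2}(Q_*,\tau_*)) \;\lesssim\; r^{-n}\,\omega^{(X,t)}(\Delta_{Cr}(Q,\tau)),
\]
where the first step is the NTA Harnack chain, the second is the lower-bound half of Lemma \ref{comparisontheorem} at the shifted base point, and the third is an inclusion of surface balls. A second application of Lemma \ref{comparisontheorem} at $(Q,\tau)$ at scale $Cr$ — after chaining $A^-_{Cr}(Q,\tau)$ back to $A^+_r(Q,\tau)$ by an interior Harnack chain — converts the caloric measure on the right into $r^n\, G(X,t,A^+_r(Q,\tau))$, closing the loop. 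The second half of the lemma, for $(X,t) \in T^-_{A,r}$, follows by swapping the roles of caloric and adjoint-caloric functions.

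The main obstacle is the geometric bookkeeping: verifying that the shifted corkscrew $A^+_\rho(Q_*,\tau_*)$ is genuinely NTA-Harnack-chain-connected to $A^-_r(Q,\tau)$ by a chain whose length is independent of $\delta$, and that the constants introduced by the time shift, the surface-ball inclusions, and the pole location $(X,t) \in T^+_{A,r}$ accumulate into a single universal constant $c=c(A)$. The parabolic anisotropy — precisely the asymmetry between $A^+$ and $A^-$ corkscrews, which has no elliptic analogue — is what makes this reduction subtle and is the reason the easy direction is not enough.
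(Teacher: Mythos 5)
First, note that the paper does not prove this lemma at all: it is quoted verbatim from Hofmann--Lewis--Nystr\"om (``The next four estimates are presented, and proven, in \cite{hlncaloricmeasure}''), so the only question is whether your reduction is sound on its own terms. It is not: the closing step is circular. After you reach $r^{-n}\omega^{(X,t)}(\Delta_{Cr}(Q,\tau))$, the only half of Lemma \ref{comparisontheorem} that bounds caloric measure \emph{above} by a Green function value is $\omega^{(X,t)}(\Delta_{\rho/2}(Q',\tau')) \leq c\rho^{n}G(X,t,A^{-}_{\rho}(Q',\tau'))$, which lands you at a \emph{backward} corkscrew, at time roughly $\tau' - \rho^{2}$; and since containing $\Delta_{Cr}(Q,\tau)$ forces $\rho \gtrsim r$, that point lies a definite amount ($\gtrsim r^{2}$) in the past of $A^{+}_{r}(Q,\tau)$ no matter how you shift the base point $(Q',\tau')$. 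Passing from $G(X,t,A^{-}_{\cdot})$ to $G(X,t,A^{+}_{r}(Q,\tau))$ then means bounding the adjoint-caloric function $v=G(X,t,\cdot,\cdot)$ at an earlier time by its value at a later time, and the interior Harnack inequality for adjoint-caloric functions runs in exactly the opposite direction (later values are controlled by earlier ones); so the ``interior Harnack chain from $A^{-}_{Cr}(Q,\tau)$ back to $A^{+}_{r}(Q,\tau)$'' you invoke is precisely the backward Harnack inequality you are trying to prove. The two halves of Lemma \ref{comparisontheorem} are arranged ($A^{+}$ on the lower bound, $A^{-}$ on the upper bound) exactly so that they cannot be concatenated into the backward inequality; the genuinely new ingredient in \cite{hlncaloricmeasure} is an estimate of the form $\omega^{(X,t)}(\Delta_{r}(Q,\tau)) \leq c\,r^{n}G(X,t,A^{+}_{r}(Q,\tau))$, valid because the pole satisfies $t-\tau \geq 4r^{2}$, and proved by a separate barrier/stopping-time argument in the time slab between $\tau$ and $t$ -- this is the content your plan silently assumes.

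A secondary, fixable issue: your first chain $v(A^{-}_{r}(Q,\tau)) \lesssim v(A^{+}_{\rho}(Q_{*},\tau_{*}))$ also needs the time ordering, not merely ``comparable time coordinates.'' Since $v$ is adjoint-caloric, $A^{+}_{\rho}(Q_{*},\tau_{*})$ must lie \emph{earlier} than $A^{-}_{r}(Q,\tau)$ by an amount comparable to the square of their spatial separation ($\sim r^{2}$); with $\tau_{*}=\tau-\kappa r^{2}$, $\kappa$ small, and $\rho\sim r$, the point $A^{+}_{\rho}(Q_{*},\tau_{*})$ sits at time about $\tau-\kappa r^{2}+\rho^{2}$, i.e.\ \emph{later} than $A^{-}_{r}(Q,\tau)$, so no admissible Harnack chain exists; you would need $\kappa$ a definite constant larger than $1$ and $\rho$ correspondingly constrained. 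But even with that repaired, the argument cannot be closed for the reason above.
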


Lemmas \ref{comparisontheorem} and \ref{backwardsharnack}, imply that (adjoint-)caloric measure is doubling.

\begin{lem}\label{doublingmeasure}[Lemma 3.17 in \cite{hlncaloricmeasure}]
Let $\Omega,(Q,\tau), \delta_0$ be as in Lemma \ref{growthattheboundary}. Additionally suppose from some $A\geq 100, r > 0$ that $(X,t) \in T_{A,r}^+(Q,\tau)$. Then there exists a constant $c= c(A)\geq 1$ such that $$\omega^{(X,t)}(\Delta_r(Q,\tau)) \leq c \omega^{(X,t)}(\Delta_{r/2}(Q,\tau)).$$ If $(X,t) \in T_{A,r}^-(Q,\tau)$ a similar statement holds for $\hat{\omega}$. 
\end{lem}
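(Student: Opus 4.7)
The plan is to chain together the comparison principle of Lemma \ref{comparisontheorem}, which translates the caloric measure of a surface ball into Green function values at corkscrew points, with the backwards Harnack inequality of Lemma \ref{backwardsharnack}, which compares the Green function at the forward and backward corkscrews. Since the conclusion is symmetric under time reversal (swapping the heat equation for its adjoint), I would focus on the case $(X,t) \in T^+_{A,r}(Q,\tau)$; the statement for $\hat{\omega}$ on $T^-_{A,r}$ then follows by the analogous argument for the adjoint equation.

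First I would apply Lemma \ref{comparisontheorem} at scale $2r$ to get the upper bound
$$\omega^{(X,t)}(\Delta_r(Q,\tau)) \leq c(A)\, r^n\, G(X,t, A^-_{2r}(Q,\tau)),$$
and at scale $r$ to get the matching lower bound
$$\omega^{(X,t)}(\Delta_{r/2}(Q,\tau)) \geq c(A)^{-1}\, r^n\, G(X,t, A^+_r(Q,\tau)).$$
The lemma therefore reduces to showing
$$G(X,t, A^-_{2r}(Q,\tau)) \leq C(A)\, G(X,t, A^+_r(Q,\tau)).$$
For this I would first invoke Lemma \ref{backwardsharnack} at scale $2r$ to replace $A^-_{2r}(Q,\tau)$ by the forward corkscrew $A^+_{2r}(Q,\tau)$ (losing a multiplicative constant depending on $A$), and then connect $A^+_{2r}(Q,\tau)$ to $A^+_r(Q,\tau)$ by a standard parabolic Harnack chain. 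Both corkscrews lie a definite parabolic distance from $\partial \Omega$ and strictly forward in time from $(Q,\tau)$, so one can traverse between them along a path of non-tangential points that respects the forward-in-time direction required by the parabolic Harnack inequality, using a number of balls controlled by $A$ and the NTA constants.

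The one genuine technical obstacle is that applying Lemma \ref{comparisontheorem} at scale $2r$ requires $(X,t)$ to lie in the time-space cone $T^+_{A',2r}(Q,\tau)$ at that larger scale, i.e.\ $t-\tau \geq 16 r^2$, whereas the hypothesis of the present lemma only gives $t-\tau \geq 4 r^2$. I would resolve this either by enlarging the aperture $A$ at the cost of a worse constant $c(A)$, or by first proving the intermediate inequality $\omega^{(X,t)}(\Delta_{(1+\eta)\rho}(Q,\tau)) \leq c\, \omega^{(X,t)}(\Delta_\rho(Q,\tau))$ for some small $\eta>0$ for which both scales fall simultaneously inside the appropriate time-space cone, and then iterating finitely many times to produce the factor of $2$. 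Once this parabolic bookkeeping is handled, the rest is just the familiar elliptic-style ``comparison + Harnack'' chain transported to the anisotropic setting.
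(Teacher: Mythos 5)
Your overall strategy is the intended one: the paper does not prove this lemma at all, but quotes it as Lemma 3.17 of \cite{hlncaloricmeasure} with the remark that doubling follows from Lemma \ref{comparisontheorem} together with Lemma \ref{backwardsharnack}, which is exactly the ``comparison + backwards Harnack + Harnack chain'' scheme you outline, and your treatment of the $\hat{\omega}$ case by time reversal is fine.

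The gap is in how you handle the scale mismatch, which you correctly flag but do not actually resolve. Membership in $T^{+}_{A,2r}(Q,\tau)$ requires $t-\tau \geq 4(2r)^2 = 16r^2$, and this time separation is completely independent of the aperture $A$; enlarging $A$ only relaxes the condition $|X-Q|^2 \leq A|t-\tau|$, so your first fix cannot work. Your second fix fares no better as described: to bound $\omega^{(X,t)}(\Delta_{(1+\eta)\rho})$ from above using Lemma \ref{comparisontheorem} as stated (surface ball of radius $\rho/2$ at scale $\rho$) you must invoke it at scale $2(1+\eta)\rho$, so at the last step of the iteration, where the larger ball is $\Delta_r(Q,\tau)$, you are back to needing the comparison lemma (and likewise Lemma \ref{backwardsharnack}) at scale $2r$, i.e.\ the same unavailable condition $t-\tau\geq 16r^2$. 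The standard repair keeps all scales at or below $r$: cover $\Delta_r(Q,\tau)$ by boundedly many surface balls $\Delta_{r/8}(Q_i,\tau_i)$ with $(Q_i,\tau_i)\in\partial\Omega\cap\Delta_r(Q,\tau)$; since $|\tau_i-\tau|\leq r^2$ and $t-\tau\geq 4r^2$, one checks $(X,t)\in T^{+}_{A',r/4}(Q_i,\tau_i)$ with $A'$ depending only on $A$, so Lemma \ref{comparisontheorem} at scale $r/4$ gives $\omega^{(X,t)}(\Delta_{r/8}(Q_i,\tau_i))\leq c\,(r/4)^n G(X,t,A^-_{r/4}(Q_i,\tau_i))$; then Lemma \ref{backwardsharnack} and a Harnack chain of bounded length (all corkscrew points lie at distance comparable to $r$ from $\partial\Omega$ and within a bounded multiple of $r$ of each other) compare each $G(X,t,A^-_{r/4}(Q_i,\tau_i))$ with $G(X,t,A^+_{r}(Q,\tau))$, and Lemma \ref{comparisontheorem} at scale $r$ — which the hypothesis $(X,t)\in T^{+}_{A,r}(Q,\tau)$ does permit — converts this back into $c\,\omega^{(X,t)}(\Delta_{r/2}(Q,\tau))$. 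With that substitution your argument closes; as written, the key quantitative step rests on applications of the two lemmas outside their stated range.
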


In analogy to Lemma 4.10 in \cite{jerisonandkenig}, there is a boundary comparison theorem for (adjoint-)caloric functions in Reifenberg flat domains (see also Theorem 1.6 in \cite{fabesgarofalosalsa}, which gives a comparison theorem for caloric functions in cylinders). 

\begin{lem}\label{quotienttheorem}[Lemma 3.18 in \cite{hlncaloricmeasure}]
Let $\Omega, (Q,\tau), \delta_0$ be as in Lemma \ref{growthattheboundary}. Let $w, v \geq 0$ be continuous solutions to the (adjoint)-heat equations in $\overline{C}_{2r}(Q,\tau)\cap \overline{\Omega}$ with $w,v > 0$ in $\Omega \cap C_{2r}(Q,\tau)$ and $w= v = 0$ on $C_{2r}(Q,\tau)\cap \partial \Omega$. If $w,v$ satisfy a backwards Harnack inequality in $C_{2r}(Q,\tau)\cap \Omega$ for some $\lambda \geq 1$ then $$\frac{w(Y,s)}{v(Y,s)} \leq c(\lambda)\frac{w(A^{\pm}_r(Q,\tau))}{v(A^{\pm}_r(Q,\tau))},\; \forall (Y,s) \in C_{r/2}(Q,\tau)\cap \overline{\partial \Omega}.$$ Where we choose $A^-$ if $w,v$ are solutions to the adjoint heat equation and $A^+$ otherwise. 
\end{lem}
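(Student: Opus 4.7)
The plan is to adapt the classical boundary-comparison argument (Jerison--Kenig in the elliptic NTA setting, and Fabes--Garofalo--Salsa for cylindrical parabolic domains) to Reifenberg-flat parabolic regular domains, leaning on the package of estimates already assembled in Lemmas \ref{growthattheboundary}--\ref{doublingmeasure}. Concretely, I would exhibit a single reference function $\phi$ on $C_{r/2}(Q,\tau) \cap \Omega$ such that
\[
w(Y,s) \le C\, w(A^{\pm}_r(Q,\tau))\, \phi(Y,s) \quad\text{and}\quad v(Y,s) \ge c\, v(A^{\pm}_r(Q,\tau))\, \phi(Y,s),
\]
with $C,c$ depending only on $\lambda$ and $\delta_0$. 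Dividing these pointwise would yield the stated quotient bound at interior $(Y,s)$, and the conclusion on the closure would follow by continuity. The natural choice is $\phi(Y,s) \colonequals \mu^{(Y,s)}(\Delta_{r/2}(Q,\tau))$, where $\mu$ denotes the (adjoint-)caloric measure dual to whichever equation $w$ and $v$ solve; by Lemma \ref{comparisontheorem} this is comparable (at points in an appropriate time--space cone) to $r^n G(\cdot, A^{\pm}_r(Q,\tau))$, and Lemma \ref{doublingmeasure} will let us move freely between the various surface balls that arise.

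For the upper bound on $w$, I would apply the parabolic maximum principle on $\Omega \cap C_{2r}(Q,\tau)$: since $w$ vanishes on $\Delta_{2r}(Q,\tau)$, only the lateral portion $\partial_p C_{2r}(Q,\tau) \cap \Omega$ of the parabolic boundary contributes, giving $w(Y,s) \le (\sup_{\Omega \cap C_{2r}} w)\, \mu^{(Y,s)}(\partial_p C_{2r}(Q,\tau) \cap \Omega)$. Lemma \ref{biggeratNTApoint} replaces the sup by a constant times $w(A^{\pm}_r(Q,\tau))$, while Lemmas \ref{comparisontheorem} and \ref{doublingmeasure} bound the remaining measure factor by $C\,\phi(Y,s)$. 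The matching lower bound for $v$ is the main obstacle: one must produce a subset $\Sigma \subset \partial_p(\Omega \cap C_{2r}(Q,\tau))$ on which $v \ge c\, v(A^{\pm}_r(Q,\tau))$ and whose $\mu^{(Y,s)}$-mass is comparable to $\phi(Y,s)$. The backwards Harnack hypothesis is essential here, since the natural candidate points lie in the ``future'' of $A^-_r$ for adjoint-caloric $v$ (respectively the past of $A^+_r$ for caloric $v$); combined with an interior Harnack chain of the sort provided by the NTA structure and with Lemma \ref{comparisontheorem}, it forces $v \gtrsim v(A^{\pm}_r(Q,\tau))$ on a cylindrical shell near $\Delta_{r/4}(Q,\tau)$, after which one more application of Lemma \ref{doublingmeasure} upgrades $\Delta_{r/4}$ to $\Delta_{r/2}$.

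The principal subtlety specific to the parabolic setting is the time asymmetry: one must choose corkscrews, Harnack chains and comparison pairs in a way consistent with the sign conventions of Lemmas \ref{comparisontheorem}--\ref{doublingmeasure}, and the time--space cones of Definition \ref{farenoughaway} dictate exactly where those lemmas are available. Once the upper and lower sandwich is in place, dividing the two inequalities produces $w(Y,s)/v(Y,s) \le (C/c)\, w(A^{\pm}_r(Q,\tau))/v(A^{\pm}_r(Q,\tau))$ uniformly on $C_{r/2}(Q,\tau)\cap\Omega$; extended by continuity to the closure, this is the content of the lemma.
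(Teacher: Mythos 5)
The paper does not actually prove this lemma: it is one of the four estimates imported verbatim from Hofmann--Lewis--Nystr\"om (Lemma 3.18 of \cite{hlncaloricmeasure}), so there is no internal proof to compare against. Your plan --- sandwich $w$ from above and $v$ from below against a common reference function built from (adjoint-)caloric measure of a surface ball, then divide --- is the classical boundary-Harnack strategy (Jerison--Kenig in the elliptic NTA setting, Fabes--Garofalo--Salsa and \cite{hlncaloricmeasure} in the parabolic one), and you correctly locate the role of the backward Harnack hypothesis: for adjoint-caloric $v$ the points where you need $v\gtrsim v(A^{-}_r(Q,\tau))$ lie in the future of $A^-_r(Q,\tau)$, where the plain Harnack inequality runs the wrong way.

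Two steps, however, are asserted rather than argued, and as written they do not follow from the lemmas you cite. First, Lemmas \ref{comparisontheorem} and \ref{doublingmeasure} are only available when the pole lies in a time--space cone $T^{\pm}_{A,r}(Q,\tau)$, in particular $|s-\tau|\geq 4r^2$; every $(Y,s)\in C_{r/2}(Q,\tau)\cap\Omega$ fails this, so neither ``the measure factor is $\leq C\phi(Y,s)$'' in the upper bound nor ``the $\mu^{(Y,s)}$-mass of $\Sigma$ is comparable to $\phi(Y,s)$'' in the lower bound can be obtained by quoting those lemmas at the points where you need them. What is actually used there is a maximum-principle comparison in the truncated cylinder together with the uniform lower bound $\hat{\omega}^{(Y,s)}(C_{\alpha\delta(Y,s)}(P,s))\geq c$ for the nearest boundary point $(P,s)$ (estimate (3.9) of \cite{hlncaloricmeasure}, invoked elsewhere in this paper's appendix); relatedly, Lemma \ref{biggeratNTApoint} controls $\sup_{C_{r/2}(Q,\tau)\cap\Omega}w$ by $w(A^{\pm}_r(Q,\tau))$, not $\sup_{C_{2r}(Q,\tau)\cap\Omega}w$, so your maximum principle must be run at the smaller scale. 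Second, the claim that $v\gtrsim v(A^{\pm}_r(Q,\tau))$ holds ``on a cylindrical shell near $\Delta_{r/4}(Q,\tau)$'' cannot be literally true, since $v$ vanishes on $C_{2r}(Q,\tau)\cap\partial\Omega$; the set $\Sigma$ must be kept at distance comparable to $r$ from $\partial\Omega$, and then the heart of the matter --- showing that the caloric measure of this smaller set, seen from every $(Y,s)\in C_{r/2}(Q,\tau)\cap\Omega$, still dominates the same reference function $\phi(Y,s)$ used in the upper bound --- is precisely the step your sketch leaves open. The strategy is sound and completable along these lines, but these two points are where the real work of the proof in \cite{hlncaloricmeasure} lies.
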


As in the elliptic setting, a boundary comparison theorem leads to a growth estimate. 

\begin{lem}\label{growhtofquotientatboundary}[Lemma 3.19 in \cite{hlncaloricmeasure}]
Let $\Omega, (Q,\tau), \delta_0, w, v$ be as in Lemma \ref{quotienttheorem}. There exists a $0 < \gamma \equiv \gamma(\lambda)\leq 1/2$ and a $c \equiv c(\lambda) \geq 1$ such that $$\left|\frac{w(X,t)v(Y,s)}{w(Y,s)v(X,t)}-1\right| \leq c\left(\frac{\rho}{r}\right)^\gamma,\; \forall (X,t), (Y,s) \in C_\rho(Q,\tau)\cap \Omega$$ whenever $0 < \rho \leq r/2$. 
\end{lem}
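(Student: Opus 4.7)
The plan is to derive the Hölder-type estimate by iterating the boundary comparison principle of Lemma \ref{quotienttheorem} at dyadic scales and extracting a geometric decay of the oscillation of $w/v$ at $(Q,\tau)$. This is the classical Jerison--Kenig strategy (cf.\ Lemma 4.11 in \cite{jerisonandkenig}) adapted to the (adjoint-)caloric setting.

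Set
$$M(\rho) \colonequals \sup_{(Y,s) \in C_\rho(Q,\tau) \cap \Omega} \frac{w(Y,s)}{v(Y,s)}, \quad m(\rho) \colonequals \inf_{(Y,s) \in C_\rho(Q,\tau) \cap \Omega} \frac{w(Y,s)}{v(Y,s)}, \quad \omega(\rho) \colonequals M(\rho) - m(\rho),$$
all finite thanks to Lemma \ref{quotienttheorem}. The key device is the pair of non-negative (adjoint-)caloric functions
$$\phi_1 \colonequals M(r)\,v - w, \qquad \phi_2 \colonequals w - m(r)\,v,$$
each vanishing on $\Delta_r(Q,\tau)$ and satisfying $\phi_1(A^\pm_r) + \phi_2(A^\pm_r) = \omega(r)\,v(A^\pm_r)$, so that without loss of generality $\phi_1(A^\pm_r) \geq \tfrac{1}{2}\omega(r)\,v(A^\pm_r)$. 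Before invoking Lemma \ref{quotienttheorem}, one needs to verify that $\phi_1, \phi_2$ satisfy a backwards Harnack inequality with constant controlled by $\lambda$; this is standard for non-negative (adjoint-)caloric functions vanishing on the boundary of a parabolic NTA domain and follows from the doubling/comparison properties already recorded in Lemmas \ref{comparisontheorem}--\ref{doublingmeasure} (cf.\ also Theorem 1.6 of \cite{fabesgarofalosalsa}).

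Applying Lemma \ref{quotienttheorem} to the pair $(v,\phi_1)$ (so as to extract a lower bound on $\phi_1/v$) will give
$$\frac{\phi_1(Y,s)}{v(Y,s)} \geq c^{-1}\,\frac{\phi_1(A^\pm_r)}{v(A^\pm_r)} \geq \frac{\omega(r)}{2c}, \qquad \forall\, (Y,s) \in C_{r/2}(Q,\tau) \cap \Omega,$$
which rearranges to $w(Y,s)/v(Y,s) \leq M(r) - (2c)^{-1}\omega(r)$ throughout $C_{r/2}$, and hence $M(r/2) \leq M(r) - (2c)^{-1}\omega(r)$. Combined with the trivial lower bound $m(r/2) \geq m(r)$, this produces the oscillation decay
$$\omega(r/2) \leq \Bigl(1 - \frac{1}{2c}\Bigr)\omega(r) \equalscolon \theta\,\omega(r), \qquad \theta \in (0,1),$$
(the symmetric case $\phi_2(A^\pm_r) \geq \tfrac{1}{2}\omega(r)v(A^\pm_r)$ being handled identically with $\phi_2$ in place of $\phi_1$). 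Iterating this argument at each dyadic scale yields $\omega(r/2^k) \leq \theta^k \omega(r)$ for all $k \geq 0$, and a standard interpolation then gives $\omega(\rho) \leq C(\rho/r)^\gamma \omega(r)$ for all $0 < \rho \leq r/2$, where $\gamma \colonequals \log_2(1/\theta) > 0$ (diminished if necessary to satisfy $\gamma \leq 1/2$ at the cost of enlarging $C$).

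To convert the oscillation bound into the stated multiplicative form, two further applications of Lemma \ref{quotienttheorem} in opposite directions give $M(r) \leq c^2\, w(Y,s)/v(Y,s)$ for every $(Y,s) \in C_{r/2}(Q,\tau) \cap \Omega$; hence $\omega(r) \leq M(r) \leq C'\, w(Y,s)/v(Y,s)$, and therefore, for all $(X,t), (Y,s) \in C_\rho(Q,\tau) \cap \Omega$,
$$\Bigl|\frac{w(X,t)}{v(X,t)} - \frac{w(Y,s)}{v(Y,s)}\Bigr| \leq \omega(\rho) \leq C(\rho/r)^\gamma\,\frac{w(Y,s)}{v(Y,s)},$$
and dividing through by $w(Y,s)/v(Y,s)$ produces the claimed inequality. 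The principal obstacle is the quantitative backwards Harnack hypothesis for the auxiliary functions $\phi_1, \phi_2$: the time-anisotropy of the heat equation prevents a verbatim transfer of the elliptic proof, so one must lean on the parabolic NTA structure theory (doubling of caloric measure, comparison with the Green function at corkscrew points) to push the constant through with dependence only on $\lambda$ and the NTA data.
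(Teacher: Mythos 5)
First, note that the paper does not actually prove this statement: it is quoted verbatim as Lemma 3.19 of \cite{hlncaloricmeasure}, so there is no in-paper proof to compare against. Your outline is the classical oscillation-decay iteration (the parabolic analogue of Jerison--Kenig), which is indeed the standard route to such estimates, and the arithmetic of the iteration (dyadic decay of $\omega(\rho)=M(\rho)-m(\rho)$, then conversion to the multiplicative form via two applications of Lemma \ref{quotienttheorem} with the roles of $w$ and $v$ exchanged) is fine modulo routine bookkeeping -- e.g.\ $M(r)$, $m(r)$ are defined over $C_r(Q,\tau)$, while an application of Lemma \ref{quotienttheorem} whose conclusion holds on $C_{r/2}(Q,\tau)$ needs its hypotheses on $C_{2r}(Q,\tau)$, so you must either define $M,m$ at scale $2r$ or apply the lemma at half the scale; this only changes the constants and $\gamma$.

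The genuine gap is exactly the point you flag and then wave off: the backwards Harnack inequality for $\phi_1=M(r)v-w$ and $\phi_2=w-m(r)v$. This does \emph{not} follow from the backwards Harnack inequality for $w$ and $v$ individually -- Harnack-type bounds are destroyed by cancellation, and one can easily arrange $w,v$ with $\sup/\inf$ ratio $\lambda$ on a cube while $M v-w$ is of size $\varepsilon$ at one point of the cube and of size comparable to $v$ at another. Nor does it follow from Lemmas \ref{comparisontheorem}--\ref{doublingmeasure} of this paper: those are statements about the Green function and caloric measure evaluated at corkscrew points under the time--space cone restriction $(X,t)\in T^{\pm}_{A,r}(Q,\tau)$, not a backwards Harnack principle (in the sense of Definition \ref{backwardsharnack}, i.e.\ in every interior cube) for an arbitrary non-negative (adjoint-)caloric function vanishing on a surface ball. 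That principle is a substantive theorem in its own right -- in Lipschitz cylinders it is essentially the main result of \cite{fabesgarofalosalsa} (and Fabes--Safonov--Yuan), proved via doubling of caloric measure, and one must also rule out interior vanishing of $\phi_i$ and check that the constant obtained is uniform at every scale of the iteration, since Lemma \ref{quotienttheorem} is invoked with the \emph{same} $\lambda$ at each dyadic step. As written, the central step of your iteration is therefore unjustified; to close it you would need to state and prove (or correctly cite from \cite{hlncaloricmeasure}) a backwards Harnack principle, with constants depending only on the chord-arc/Reifenberg character, valid for all non-negative adjoint-caloric functions that are positive in $C_{2r}(Q,\tau)\cap\Omega$ and vanish on $\Delta_{2r}(Q,\tau)$, and then apply it to $\phi_1,\phi_2$.
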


\subsection{VMO functions on Parabolic Chord Arc Domains}
Here we state some consequences of the condition $\log(h) \in \mathrm{VMO}(\partial \Omega)$ or $\log(k^{(X_0,t_0)}) \in \mathrm{VMO}(\partial \Omega^{t_0})$. Our first theorem is a reverse H\"older inequality for every exponent. This is a consequence of the John-Nirenberg inequality \cite{johnnirenberg}, in the Euclidean case (see Garnett and Jones, \cite{garnettandjones}). However, per a remark in \cite{garnettandjones}, the result remains true in our setting as $\partial \Omega$ is a ``space of homogenous type". For further remarks and justification, see Theorem 2.1 in \cite{kenigtoro}, which is the analogous result for the elliptic problem. 

\begin{lem}\label{hinvmosatisfiesreverseholder}
Let $\Omega\subset \mathbb R^{n+1}$ be a parabolic chord arc domain and $\log(f)\in \mathrm{VMO}(\partial \Omega)$. Then for all $(Q,\tau) \in \partial \Omega$ and $r > 0$ and $1 < q < \infty$ we have \begin{equation}\label{reverseholderinequality} \left(\fint_{\Delta_{r}(Q,\tau)} f^qd\sigma\right)^{1/q} \leq C \fint_{\Delta_r(Q,\tau)} fd\sigma. \end{equation} Here $C$ depends only on the $\mathrm{VMO}$ character of $f$, the chord arc constants of $\Omega$, $n$ and $q$.
\end{lem}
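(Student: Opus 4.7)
\medskip

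\noindent\textbf{Proof proposal.} The plan is to reduce the statement to the standard John--Nirenberg argument on the space of homogeneous type $(\partial\Omega, d, \sigma)$, with the crucial role of the VMO hypothesis being to absorb the BMO norm of $\log f$, which a priori may be too large for John--Nirenberg to give reverse Hölder for every $q$.

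First I would verify that $(\partial\Omega, d, \sigma)$ is a space of homogeneous type: the parabolic distance $d$ is a quasi-metric, and Ahlfors regularity (Definition \ref{ahlforsregularity}) immediately yields a doubling estimate $\sigma(\Delta_{2r}(Q,\tau))\le C_0\,\sigma(\Delta_r(Q,\tau))$ with $C_0=C_0(M,n)$. Consequently the John--Nirenberg inequality holds in the form: for every $g\in\mathrm{BMO}(\partial\Omega)$ and every surface ball $\Delta$,
\begin{equation*}
\sigma\bigl(\{(P,\eta)\in\Delta:\,|g(P,\eta)-g_\Delta|>\lambda\}\bigr)\;\le\; c_1\,e^{-c_2\lambda/\|g\|_*}\sigma(\Delta),
\end{equation*}
with $c_1,c_2$ depending only on the homogeneous-type constants. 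Integrating in $\lambda$ gives the exponential integrability bound $\fint_\Delta e^{p|g-g_\Delta|}\,d\sigma\le C$ whenever $p\|g\|_*<c_2$.

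Next I would use the VMO hypothesis to decompose $\log f=g+h$ where $g$ is bounded and $\|h\|_*$ is as small as desired. Precisely, given $q$, choose $\varepsilon=\varepsilon(q)>0$ so that the John--Nirenberg range $p\|h\|_*<c_2$ accommodates $p=q$ once $\|h\|_*<\varepsilon$. By Definition \ref{BMOVMOdefinitions}, $\log f$ lies in the closure (in $\mathrm{BMO}$) of uniformly continuous functions vanishing at infinity, so there is such a $g$ with $\|\log f-g\|_*<\varepsilon$; since $g$ is UC and vanishes at infinity it is automatically bounded, say $|g|\le M=M(\varepsilon,f)$. Setting $h\colonequals\log f-g$, John--Nirenberg applied to $h$ yields, for every surface ball $\Delta$,
\begin{equation*}
\fint_\Delta e^{qh}\,d\sigma\;\le\;e^{qh_\Delta}\fint_\Delta e^{q(h-h_\Delta)}\,d\sigma\;\le\;C(q)\,e^{qh_\Delta}\;\le\;C(q)\Bigl(\fint_\Delta e^{h}\,d\sigma\Bigr)^{q},
\end{equation*}
where the last inequality is Jensen's inequality $h_\Delta\le\log\fint_\Delta e^{h}\,d\sigma$.

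Finally, the $L^\infty$ bound on $g$ lets us pass from $e^h$ to $f=e^g e^h$ while losing only a multiplicative factor:
\begin{equation*}
\fint_\Delta f^q\,d\sigma\;\le\;e^{qM}\fint_\Delta e^{qh}\,d\sigma\;\le\;e^{qM}C(q)\Bigl(\fint_\Delta e^{h}\,d\sigma\Bigr)^{q}\;\le\;e^{2qM}C(q)\Bigl(\fint_\Delta f\,d\sigma\Bigr)^{q},
\end{equation*}
since $\fint_\Delta e^h\,d\sigma\le e^{M}\fint_\Delta f\,d\sigma$. Taking $q$-th roots yields \eqref{reverseholderinequality} with constant depending only on $q$, $M(\varepsilon(q))$ (i.e.\ on the VMO character of $f$), and the homogeneous-type constants (i.e.\ the chord arc constants and $n$). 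The main conceptual point, rather than obstacle, is the decomposition step: the plain BMO norm of $\log f$ is fixed and only grants reverse Hölder for $q$ in a bounded range, so the VMO assumption is used precisely to replace $\|\log f\|_*$ by the arbitrarily small quantity $\|h\|_*$ at the cost of the bounded perturbation $g$; the analytic content is otherwise the classical Coifman--Rochberg/Garnett--Jones argument, here implemented on the homogeneous-type space $\partial\Omega$ as remarked in \cite{garnettandjones}.
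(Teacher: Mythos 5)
Your argument is correct and is exactly the route the paper intends: it implements the John--Nirenberg/Garnett--Jones mechanism on the homogeneous-type space $(\partial\Omega,d,\sigma)$ that the paper cites (via \cite{johnnirenberg}, \cite{garnettandjones} and Theorem 2.1 of \cite{kenigtoro}), with the VMO hypothesis entering through the decomposition $\log f=g+h$, $g$ bounded and $\|h\|_*$ small. The resulting constant depends on $q$, the doubling/Ahlfors data, and the approximant $g$ (i.e.\ on the VMO character of $f$), which is precisely the dependence asserted in the lemma.
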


For the Poisson kernel with finite pole a localized analogue of the above Lemma holds (and is proved in much the same way):

\begin{lem}\label{hinvmosatisfiesreverseholderfinitepole}
Let $(X_0,t_0)\in \Omega$ with $(Q,\tau) \in \partial \Omega, A \geq 100, r > 0$ such that $(X_0,t_0) \in T^+_{A,r}(Q,\tau)$. If $\log(k^{(X_0,t_0)}) \in \mathrm{VMO}(\Omega^{t_0})$ then, for any $1 < q < \infty$  \begin{equation}\label{reverseholderinequalityfinitepole} \left(\fint_{\Delta_{r}(Q,\tau)} (k^{X_0,t_0})^qd\sigma\right)^{1/q} \leq C \fint_{\Delta_r(Q,\tau)} k^{(X_0,t_0)}d\sigma. \end{equation} Here $C > 0$ depends on $n,q, A$ the $\mathrm{VMO}$ character of $k^{(X_0,t_0)}$ and the chord arc constants of $\Omega$. 
\end{lem}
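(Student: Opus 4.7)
The plan is to adapt the proof of Lemma \ref{hinvmosatisfiesreverseholder} to the finite-pole setting, with care taken because the hypothesis $\log(k^{(X_0,t_0)}) \in \mathrm{VMO}(\partial\Omega^{t_0})$ only provides VMO control on surface balls $\Delta_s(P,\eta)$ for which $(X_0,t_0)$ sees $(P,\eta)$ non-tangentially in the sense of Definition \ref{doubling}.

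First I would verify that the non-tangential access condition propagates to all smaller scales centered at nearby boundary points. Suppose $(X_0,t_0) \in T^+_{A,r}(Q,\tau)$. For any $(P,\eta) \in \Delta_r(Q,\tau)$ and any $s \in (0, r]$, the triangle inequality in the parabolic metric, together with $t_0 - \tau \geq 4r^2$ and $|\eta - \tau|^{1/2}, |P - Q| \leq r$, implies that $(X_0,t_0)$ lies in $T^+_{A',s}(P,\eta)$ for some $A' = A'(A,n) \geq 100$; in particular the hypothesis of Definition \ref{doubling} is satisfied at scale $s$ about $(P,\eta)$. Hence the $\mathrm{VMO}$ hypothesis applies with a uniform character on $\Delta_r(Q,\tau)$ and all of its sub-balls, which is the content required by the John-Nirenberg step below.

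Next, I would run the standard $\mathrm{VMO}$-to-reverse-H\"older argument on the space of homogeneous type $(\partial\Omega, d, \sigma)$. By Ahlfors regularity $\sigma$ is a doubling measure, so the John-Nirenberg inequality holds for $\mathrm{BMO}$ functions on surface balls. The Garnett-Jones characterization of $\mathrm{VMO}$ allows the decomposition $\log(k^{(X_0,t_0)}) = b + g$ on $\Delta_r(Q,\tau)$, where $b$ is bounded and $\|g\|_{\mathrm{BMO}}$ is as small as desired; applying John-Nirenberg to $g$ yields exponential integrability of $(k^{(X_0,t_0)})^q$ for any $q \in (1,\infty)$ and, combined with a Jensen-type lower bound on the mean, gives
$$\fint_{\Delta_r(Q,\tau)} (k^{(X_0,t_0)})^q \, d\sigma \leq C \left(\fint_{\Delta_r(Q,\tau)} k^{(X_0,t_0)} \, d\sigma\right)^q,$$
with $C$ depending on $n$, $q$, $A$, the $\mathrm{VMO}$ character of $k^{(X_0,t_0)}$, and the chord arc constants of $\Omega$.

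The main obstacle, and the only point where the proof differs substantively from the infinite-pole case, is the propagation of the non-tangential access condition carried out in the first step; the constant $A'$ ends up depending on $A$ and this dependence has to be tracked through the iteration underlying John-Nirenberg. Once this bookkeeping is in place, the remainder of the argument is identical to that of Lemma \ref{hinvmosatisfiesreverseholder}, and no genuinely new analytic input is needed, since the relevant harmonic analysis on parabolic spaces of homogeneous type is classical.
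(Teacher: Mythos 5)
Your proposal is correct and follows essentially the same route the paper intends: the paper proves the finite-pole lemma "in much the same way" as Lemma \ref{hinvmosatisfiesreverseholder}, i.e. by the John--Nirenberg/Garnett--Jones argument on the space of homogeneous type $(\partial\Omega,\sigma)$, localized via the observation that the non-tangential access condition from $(X_0,t_0)$ persists (with a constant $A'$ depending only on $A$) at all sub-balls of $\Delta_r(Q,\tau)$. The only caveat is trivial bookkeeping: at scales $s$ comparable to $r$ the requirement $t_0-\eta\geq 8s^2$ in Definition \ref{doubling} may just fail, so one restricts to $s\leq r/2$, say, and recovers scale $r$ by the same covering argument the paper already invokes when comparing its VMO definition with that of \cite{hlncaloricmeasure}.
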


Substitute the poisson kernel, $h \colonequals \frac{d\omega}{d\sigma}$, for $f$ in Lemma \ref{hinvmosatisfiesreverseholder} to glean information on the concentration of harmonic measure in balls. This is the parabolic analogue of Corollary 2.4 in \cite{kenigtoro}. 

\begin{cor}\label{vmomeasurecomparison}
If $\Omega, h$ are as above, then for all $\varepsilon > 0, (Q,\tau) \in \partial \Omega, r> 0$ and $E \subset \Delta_{r}(Q,\tau)$ \begin{equation}\label{measurecomparison} C^{-1} \left(\frac{\sigma(E)}{\sigma(\Delta_r(Q,\tau))}\right)^{1+\varepsilon} \leq \frac{\omega(E)}{\omega(\Delta_r(Q,\tau))} \leq C \left(\frac{\sigma(E)}{\sigma(\Delta_r(Q,\tau))}\right)^{1-\varepsilon}. \end{equation} Here $C$ depends on $n, \varepsilon$, the chord arc constants of $\Omega$ and the $\mathrm{VMO}$ character of $h$. 
\end{cor}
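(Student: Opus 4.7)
The plan is to derive both inequalities from H\"older's inequality combined with the reverse H\"older bound of Lemma \ref{hinvmosatisfiesreverseholder}, paralleling the proof of Corollary 2.4 in \cite{kenigtoro} in the elliptic case. The key observation is that $\log h \in \mathrm{VMO}(\partial \Omega)$ if and only if $\log(h^{-1}) = -\log h \in \mathrm{VMO}(\partial \Omega)$, so Lemma \ref{hinvmosatisfiesreverseholder} applies equally to $h$ and to $h^{-1}$ with any exponent $q > 1$.

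For the upper bound, I would write $\omega(E) = \int_E h\, d\sigma$ and apply H\"older's inequality with exponents $q$ and $q/(q-1)$:
\[
\omega(E) \le \Bigl(\int_{\Delta_r(Q,\tau)} h^q\, d\sigma\Bigr)^{1/q} \sigma(E)^{1-1/q}.
\]
Rewriting the first factor as $\sigma(\Delta_r(Q,\tau))^{1/q}\bigl(\fint h^q\, d\sigma\bigr)^{1/q}$ and invoking Lemma \ref{hinvmosatisfiesreverseholder} bounds it by $C\,\sigma(\Delta_r(Q,\tau))^{1/q-1}\omega(\Delta_r(Q,\tau))$, since $\fint h\, d\sigma = \omega(\Delta_r(Q,\tau))/\sigma(\Delta_r(Q,\tau))$. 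Collecting terms and choosing $q$ with $1/q < \varepsilon$ gives the upper estimate
\[
\frac{\omega(E)}{\omega(\Delta_r(Q,\tau))} \le C \left(\frac{\sigma(E)}{\sigma(\Delta_r(Q,\tau))}\right)^{1-\varepsilon}.
\]

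For the lower bound, I would run the symmetric argument in the $d\omega$ picture. Using $d\sigma = h^{-1} d\omega$, write $\sigma(E) = \int_E h^{-1}\, d\omega$ and apply H\"older's inequality:
\[
\sigma(E) \le \Bigl(\int_E h^{-q}\, d\omega\Bigr)^{1/q} \omega(E)^{1-1/q} = \Bigl(\int_E h^{-(q-1)}\, d\sigma\Bigr)^{1/q} \omega(E)^{1-1/q}.
\]
Lemma \ref{hinvmosatisfiesreverseholder} applied to $h^{-1}$ with exponent $q-1$ then bounds $\int_{\Delta_r(Q,\tau)} h^{-(q-1)}\, d\sigma$ by $C\,\sigma(\Delta_r(Q,\tau))\bigl(\fint h^{-1}\, d\sigma\bigr)^{q-1}$; combining with the $A_2$-type comparison $\fint h\, d\sigma \cdot \fint h^{-1}\, d\sigma \le C$ and simplifying yields
\[
\frac{\sigma(E)}{\sigma(\Delta_r(Q,\tau))} \le C \left(\frac{\omega(E)}{\omega(\Delta_r(Q,\tau))}\right)^{1-1/q},
\]
and raising both sides to the power $q/(q-1) = 1 + 1/(q-1)$ (with $1/(q-1) < \varepsilon$) produces the lower bound.

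The only delicate point in the above is the $A_2$-type comparison $\fint h \cdot \fint h^{-1} \le C$ uniformly over surface balls, which is not automatic from $\log h \in \mathrm{BMO}$ alone. In our setting it is available by invoking Lemma \ref{hinvmosatisfiesreverseholder} for both $h$ and $h^{-1}$ and applying the self-improvement of the Muckenhoupt $A_\infty$ condition on spaces of homogeneous type \cite{garnettandjones}, or equivalently by using the distance-to-$L^\infty$ characterization of VMO; this is the same technical ingredient used in the elliptic setting in \cite{kenigtoro}. Apart from this step, the argument is routine bookkeeping and requires no genuinely parabolic ideas.
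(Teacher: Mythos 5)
Your argument is correct and is essentially the paper's intended proof, which simply defers to the elliptic argument of Corollary 2.4 in \cite{kenigtoro}: H\"older's inequality together with the reverse H\"older estimates of Lemma \ref{hinvmosatisfiesreverseholder} applied to $h$ for the upper bound and to $h^{-1}$ (legitimate, since $-\log h \in \mathrm{VMO}$) for the lower bound. The $A_2$-type step you flag is not a gap: since Lemma \ref{hinvmosatisfiesreverseholder} gives $h, h^{-1} \in A_\infty(d\sigma)$, the reverse Jensen characterization $\fint h \, d\sigma \leq C \exp\left(\fint \log h \, d\sigma\right)$ applied to both $h$ and $h^{-1}$ yields $\fint h \, d\sigma \cdot \fint h^{-1} \, d\sigma \leq C$ uniformly over surface balls, exactly as in the elliptic case.
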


Similarly, in the finite pole case we can conclude: 

\begin{cor}\label{vmomeasurecomparisonfinitepole}
Let $(X_0,t_0) \in \Omega, \log(k^{(X_0,t_0)}) \in \mathrm{VMO}(\partial \Omega^{t_0})$, and $A \geq 100, r >0$ and $(Q,\tau) \in \partial \Omega$ such that $(X_0,t_0) \in T^+_{A,r}(Q,\tau)$. Then for all $\varepsilon > 0$ and $E \subset \Delta_{r}(Q,\tau)$ \begin{equation}\label{measurecomparisonfinitepole} C^{-1} \left(\frac{\sigma(E)}{\sigma(\Delta_r(Q,\tau))}\right)^{1+\varepsilon} \leq \frac{\omega^{(X_0,t_0)}(E)}{\omega^{(X_0,t_0)}(\Delta_r(Q,\tau))} \leq C \left(\frac{\sigma(E)}{\sigma(\Delta_r(Q,\tau))}\right)^{1-\varepsilon}. \end{equation} Here $C$ depends on $n, \varepsilon, A$, the chord arc constants of $\Omega$ and the $\mathrm{VMO}$ character of $k^{(X_0,t_0)}$. 
\end{cor}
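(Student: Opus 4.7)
The plan is to mimic, in the parabolic finite-pole setting, the proof of Corollary 2.4 in \cite{kenigtoro} (which is the elliptic analogue of the statement). The two inequalities of \eqref{measurecomparisonfinitepole} come from running the same H\"older/reverse H\"older scheme twice: once for $k^{(X_0,t_0)}$ against $d\sigma$ to obtain the upper bound, and once for $(k^{(X_0,t_0)})^{-1}$ against $d\omega^{(X_0,t_0)}$ to obtain the lower bound.

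For the upper bound, I would fix $q=1/\varepsilon >1$ with conjugate exponent $q'$, write $\omega^{(X_0,t_0)}(E)=\int_E k^{(X_0,t_0)}\,d\sigma$, and apply H\"older's inequality to get
\[
\omega^{(X_0,t_0)}(E)\leq \sigma(E)^{1/q'}\Bigl(\int_{\Delta_r(Q,\tau)}(k^{(X_0,t_0)})^q\,d\sigma\Bigr)^{1/q}.
\]
The hypothesis $(X_0,t_0)\in T^+_{A,r}(Q,\tau)$ is exactly what is needed to invoke Lemma \ref{hinvmosatisfiesreverseholderfinitepole}, which bounds the last factor by $C\sigma(\Delta_r)^{1/q}\fint_{\Delta_r} k^{(X_0,t_0)}\,d\sigma=C\sigma(\Delta_r)^{-1/q'}\omega^{(X_0,t_0)}(\Delta_r)$. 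Rearranging yields the right-hand inequality of \eqref{measurecomparisonfinitepole}, with $1/q'=1-\varepsilon$.

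For the lower bound I would reverse the roles of the measures: write $\sigma(E)=\int_E (k^{(X_0,t_0)})^{-1}\,d\omega^{(X_0,t_0)}$ and apply H\"older with respect to $d\omega^{(X_0,t_0)}$ with exponent $p'=1+\varepsilon$. The remaining piece $\int_E (k^{(X_0,t_0)})^{-p}\,d\omega^{(X_0,t_0)}=\int_E(k^{(X_0,t_0)})^{1-p}\,d\sigma$ must be controlled on $\Delta_r(Q,\tau)$ by a power of $\fint_{\Delta_r}(k^{(X_0,t_0)})^{-1}\,d\sigma=\sigma(\Delta_r)/\omega^{(X_0,t_0)}(\Delta_r)$. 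Securing this bound requires a reverse H\"older inequality for $(k^{(X_0,t_0)})^{-1}$ against $d\sigma$; once it is established, substituting and simplifying mirrors the upper-bound computation and yields $\omega^{(X_0,t_0)}(E)/\omega^{(X_0,t_0)}(\Delta_r)\geq C^{-1}(\sigma(E)/\sigma(\Delta_r))^{p'}$, which is the left-hand inequality of \eqref{measurecomparisonfinitepole}.

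The only step that is not pure bookkeeping --- and hence the main (if minor) obstacle --- is precisely that reverse H\"older inequality for $(k^{(X_0,t_0)})^{-1}$. Lemma \ref{hinvmosatisfiesreverseholderfinitepole} is stated only for the Poisson kernel itself, but its proof is just Garnett--Jones \cite{garnettandjones} on the space of homogeneous type $\partial\Omega$ and uses nothing special about $k^{(X_0,t_0)}$ beyond $\log k^{(X_0,t_0)}\in\mathrm{VMO}(\partial \Omega^{t_0})$. Since $\mathrm{VMO}(\partial\Omega^{t_0})$ is a vector space, $\log (k^{(X_0,t_0)})^{-1}=-\log k^{(X_0,t_0)}$ is also in $\mathrm{VMO}(\partial\Omega^{t_0})$, so the identical proof applied to $(k^{(X_0,t_0)})^{-1}$ delivers, for every $q>1$,
\[
\Bigl(\fint_{\Delta_r(Q,\tau)} (k^{(X_0,t_0)})^{-q}\,d\sigma\Bigr)^{1/q}\leq C\fint_{\Delta_r(Q,\tau)} (k^{(X_0,t_0)})^{-1}\,d\sigma,
\]
with $C$ depending on $n$, $q$, $A$, the chord arc constants of $\Omega$, and the VMO character of $k^{(X_0,t_0)}$. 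I would record this as a brief auxiliary observation and then combine it with the two H\"older estimates above to complete the proof.
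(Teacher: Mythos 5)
Your strategy is the one the paper intends (the parabolic version of Corollary 2.4 in \cite{kenigtoro}, run through Lemma \ref{hinvmosatisfiesreverseholderfinitepole}), and the upper bound together with the exponent bookkeeping $1/q'=1-\varepsilon$ is correct, as is your observation that $\log\bigl((k^{(X_0,t_0)})^{-1}\bigr)=-\log k^{(X_0,t_0)}\in\mathrm{VMO}(\partial\Omega^{t_0})$ so the reverse H\"older lemma applies to $(k^{(X_0,t_0)})^{-1}$ as well. The gap is in the last substitution of the lower bound: you assert the identity $\fint_{\Delta_r(Q,\tau)}(k^{(X_0,t_0)})^{-1}\,d\sigma=\sigma(\Delta_r(Q,\tau))/\omega^{(X_0,t_0)}(\Delta_r(Q,\tau))$. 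This is false: the right-hand side equals $\bigl(\fint_{\Delta_r(Q,\tau)}k^{(X_0,t_0)}\,d\sigma\bigr)^{-1}$, and Jensen (Cauchy--Schwarz) gives only $\fint k^{-1}\,d\sigma\geq\bigl(\fint k\,d\sigma\bigr)^{-1}$, with equality essentially only for constant kernels. What your computation actually needs is the opposite inequality up to a constant, i.e. the local $A_2$-type bound $\fint_{\Delta_r} k\,d\sigma\cdot\fint_{\Delta_r}k^{-1}\,d\sigma\leq C$. That bound is \emph{not} a formal consequence of the two reverse H\"older inequalities you have (for $k$ and for $k^{-1}$): a weight equal to $\epsilon$ on half of a surface ball and $\epsilon^{-1}$ on the other half satisfies reverse H\"older at every exponent with constant $2$, while its $A_2$ constant blows up as $\epsilon\to 0$. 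So, as written, the lower bound does not close.

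The fix is available from the same John--Nirenberg/Garnett--Jones machinery behind Lemma \ref{hinvmosatisfiesreverseholderfinitepole}: with $\lambda$ denoting the average of $\log k^{(X_0,t_0)}$ over $\Delta_r(Q,\tau)$, the exponential integrability of functions with small mean oscillation gives, on the surface balls where that lemma applies, $\fint_{\Delta_r} e^{\log k^{(X_0,t_0)}-\lambda}\,d\sigma\leq C$ and $\fint_{\Delta_r} e^{-(\log k^{(X_0,t_0)}-\lambda)}\,d\sigma\leq C$, hence $\fint_{\Delta_r}k\,d\sigma\leq Ce^{\lambda}$ and $\fint_{\Delta_r}k^{-1}\,d\sigma\leq Ce^{-\lambda}$, which is exactly the missing $A_2$ bound (this is also the sense in which the paper later uses ``$h^2\in A_2(d\sigma)$'' in the proof of Proposition \ref{maingradientbound}). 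Equivalently, you can bypass $\fint k^{-1}$ altogether and prove directly the $A_p$-type estimate $\fint_{\Delta_r}k^{-(p-1)}\,d\sigma\leq C\bigl(\fint_{\Delta_r}k\,d\sigma\bigr)^{-(p-1)}$ from the same exponential bounds; this is precisely the reverse H\"older inequality for $k^{-1}$ with respect to $d\omega^{(X_0,t_0)}$ that the lower bound requires. Once this single auxiliary estimate is recorded, the rest of your argument (both H\"older applications, with $p'=1+\varepsilon$) is correct and yields \eqref{measurecomparisonfinitepole}.
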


Finally, the John-Nirenberg inequality and the definition of VMO lead to the following decomposition (see the discussion in the proof of Lemma 4.3 in \cite{kenigtoro} for more detail--specifically equations 4.95 and 4.96). 

\begin{lem}\label{vmodecomposition}
Let $\Omega, h$ be as above. Given $\varepsilon > 0$ and $(Q_0,\tau_0) \in \partial \Omega$ there exists an $r(\varepsilon) > 0$ such that for $\rho \in (0, r(\varepsilon))$ and $(Q,\tau) \in \Delta_1(Q_0,\tau_0)$ there exists a $G(Q,\tau, \rho) \subset \Delta_{\rho}(Q,\tau)$ such that $\sigma(\Delta_\rho(Q,\tau)) \leq (1+\varepsilon)\sigma(G(Q,\tau, \rho))$ and, for all $(P,\eta) \in G(Q,\tau, \rho)$, \begin{equation}\label{goodpoints} (1+\varepsilon)^{-1}\fint_{\Delta_\rho(Q,\tau)} hd\sigma \leq h(P,\eta) \leq (1+\varepsilon)\fint_{\Delta_\rho(Q,\tau)} hd\sigma.\end{equation}
\end{lem}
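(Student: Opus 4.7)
The plan is to translate to the parabolic setting the John--Nirenberg-based argument used in the proof of Lemma 4.3 of \cite{kenigtoro} (see specifically their equations (4.95)--(4.96)). Since $(\partial\Omega, d, \sigma)$ is a space of homogeneous type (by Ahlfors regularity), both the John--Nirenberg inequality and the Sarason characterization of VMO by vanishing mean oscillation at small scales hold there in the standard form. We use these facts without further comment.

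Write $(\log h)_\rho := \fint_{\Delta_\rho(Q,\tau)} \log h \, d\sigma$ and $b_\rho := \log h - (\log h)_\rho$, so that $h = e^{(\log h)_\rho} e^{b_\rho}$. The VMO hypothesis on $\log h$, combined with the compactness of $\overline{\Delta_1(Q_0,\tau_0)}$, produces $r(\theta) > 0$ such that whenever $\rho < r(\theta)$ and $(Q,\tau) \in \Delta_1(Q_0,\tau_0)$, the localized BMO norm of $b_\rho$ on $\Delta_\rho(Q,\tau)$ is at most $\theta$. Applying John--Nirenberg with $\lambda := \log(1+\varepsilon/4)$ and taking $\theta$ sufficiently small yields
\[
\sigma\bigl\{(P,\eta) \in \Delta_\rho(Q,\tau) : |b_\rho(P,\eta)| > \log(1+\varepsilon/4)\bigr\} \leq (\varepsilon/2)\, \sigma(\Delta_\rho(Q,\tau)).
\]
Setting $G(Q,\tau,\rho) := \{(P,\eta) \in \Delta_\rho(Q,\tau) : |b_\rho(P,\eta)| \leq \log(1+\varepsilon/4)\}$ then gives $\sigma(\Delta_\rho(Q,\tau)) \leq (1+\varepsilon)\sigma(G(Q,\tau,\rho))$ (after an elementary adjustment of $\varepsilon$), which is the first conclusion of the lemma.

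It remains to convert the control of $e^{b_\rho}$ on $G$ into control by the average $\fint_{\Delta_\rho} h\, d\sigma$. Jensen's inequality gives $\fint e^{b_\rho}\, d\sigma \geq 1$, while integrating the John--Nirenberg level-set estimate from $0$ to $\infty$ produces $\fint e^{b_\rho}\, d\sigma \leq 1 + \varepsilon/4$ provided $\theta$ is small. Hence
\[
(1+\varepsilon/4)^{-1} \fint_{\Delta_\rho} h\,d\sigma \;\leq\; e^{(\log h)_\rho} \;\leq\; \fint_{\Delta_\rho} h\,d\sigma.
\]
For $(P,\eta) \in G$, multiplying the pointwise bound $(1+\varepsilon/4)^{-1} \leq e^{b_\rho(P,\eta)} \leq 1+\varepsilon/4$ by $e^{(\log h)_\rho}$ then yields
\[
(1+\varepsilon/4)^{-2} \fint_{\Delta_\rho} h \, d\sigma \;\leq\; h(P,\eta) \;\leq\; (1+\varepsilon/4)^2 \fint_{\Delta_\rho} h \, d\sigma,
\]
which implies \eqref{goodpoints} after rechoosing $\varepsilon$ so that $(1+\varepsilon/4)^2 \leq 1+\varepsilon$.

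The only genuinely technical point is the uniform vanishing of the local BMO norm of $b_\rho$ over $\Delta_1(Q_0,\tau_0)$ as $\rho \downarrow 0$; this is Sarason's characterization in disguise and goes through in our setting because $(\partial\Omega, d, \sigma)$ is doubling. Everything else is a direct import of the elliptic argument, the parabolic geometry entering only through Ahlfors regularity of $\sigma$.
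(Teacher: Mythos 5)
Your proof is correct and follows essentially the same route the paper takes: the paper proves Lemma \ref{vmodecomposition} simply by invoking the John--Nirenberg inequality together with the definition of VMO, citing equations (4.95)--(4.96) in the proof of Lemma 4.3 of \cite{kenigtoro}, and your argument is exactly that decomposition $h = e^{(\log h)_\rho}e^{b_\rho}$ with the small-scale oscillation, level-set, and exponential-integrability estimates written out in the space of homogeneous type $(\partial\Omega,\sigma)$. The constants and the Jensen/John--Nirenberg comparison of $e^{(\log h)_\rho}$ with $\fint_{\Delta_\rho} h\,d\sigma$ all check out, so nothing further is needed.
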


And in the finite pole setting: 

\begin{lem}\label{vmodecompositionfinitepole}
Let $\Omega, k^{(X_0,t_0)}, (Q,\tau)\in \partial \Omega, r > 0, A\geq 100$ be as above. Given $\varepsilon > 0$ exists an $r(\varepsilon) > 0$ such that for $\rho \in (0, r(\varepsilon))$ and $(\tilde{Q},\tilde{\tau}) \in C_r(Q,\tau)$ there exists a $G(\tilde{Q},\tilde{\tau}, \rho) \subset \Delta_{\rho}(\tilde{Q},\tilde{\tau})$ such that $\sigma(\Delta_\rho(\tilde{Q},\tilde{\tau})) \leq (1+\varepsilon)\sigma(G(\tilde{Q},\tilde{\tau}, \rho))$ and, for all $(P,\eta) \in G(\tilde{Q},\tilde{\tau}, \rho)$, \begin{equation}\label{goodpoints} (1+\varepsilon)^{-1}\fint_{\Delta_\rho(\tilde{Q},\tilde{\tau})} k^{(X_0,t_0)}d\sigma \leq k^{(X_0,t_0)}(P,\eta) \leq (1+\varepsilon)\fint_{\Delta_\rho(\tilde{Q},\tilde{\tau})} k^{(X_0,t_0)}d\sigma.\end{equation}
\end{lem}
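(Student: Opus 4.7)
The plan is to mirror the proof of Lemma \ref{vmodecomposition}, the only extra care being that $\log(k^{(X_0,t_0)})$ is only assumed to lie in $\mathrm{VMO}(\partial\Omega^{t_0})$ — that is, BMO control holds on surface balls satisfying the hypothesis of Definition \ref{doubling} for some $A\ge 100$. The constraints $(\tilde Q,\tilde\tau)\in C_r(Q,\tau)$ and $(X_0,t_0)\in T^+_{A,r}(Q,\tau)$ guarantee that for $\rho<r(\varepsilon)\ll r$ every $\Delta_\rho(\tilde Q,\tilde\tau)$ falls inside the region where $\log k^{(X_0,t_0)}$ has controlled oscillation.

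First, since $\partial\Omega$ equipped with parabolic distance and surface measure $\sigma$ is a space of homogeneous type (Ahlfors regularity, Definition \ref{ahlforsregularity}), the John--Nirenberg inequality of \cite{johnnirenberg} applies. Writing $\|\log k^{(X_0,t_0)}\|_{*,\rho}$ for the supremum of the BMO modulus of $\log k^{(X_0,t_0)}$ over surface balls of radius at most $\rho$ inside $C_r(Q,\tau)$, we have, for every $\lambda>0$,
\[
\sigma\!\left(\left\{(P,s)\in\Delta_\rho(\tilde Q,\tilde\tau):\;\bigl|\log k^{(X_0,t_0)}(P,s)-m_\rho\bigr|>\lambda\right\}\right)\le c_1 e^{-c_2\lambda/\|\log k^{(X_0,t_0)}\|_{*,\rho}}\sigma(\Delta_\rho(\tilde Q,\tilde\tau)),
\]
where $m_\rho\colonequals(\log k^{(X_0,t_0)})_{\Delta_\rho(\tilde Q,\tilde\tau)}$ and $c_1,c_2$ depend only on the doubling constants. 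Because $\log k^{(X_0,t_0)}\in\mathrm{VMO}(\partial\Omega^{t_0})$, $\|\log k^{(X_0,t_0)}\|_{*,\rho}\to 0$ as $\rho\downarrow 0$. Fix $\beta=\beta(\varepsilon)>0$ with $e^\beta\le\sqrt{1+\varepsilon}$ and set
\[
G(\tilde Q,\tilde\tau,\rho)\colonequals\bigl\{(P,s)\in\Delta_\rho(\tilde Q,\tilde\tau):\;|\log k^{(X_0,t_0)}(P,s)-m_\rho|\le\beta\bigr\}.
\]
Choose $r(\varepsilon)$ so small that for all $\rho<r(\varepsilon)$, applying the John--Nirenberg bound with $\lambda=\beta$ gives $\sigma(\Delta_\rho\setminus G)\le\tfrac{\varepsilon}{2(1+\varepsilon)}\sigma(\Delta_\rho)$, so that $\sigma(\Delta_\rho)\le(1+\varepsilon)\sigma(G)$.

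It remains to pass from the logarithmic average $e^{m_\rho}$ to the arithmetic average $\fint_{\Delta_\rho}k^{(X_0,t_0)}d\sigma$. Jensen's inequality yields $e^{m_\rho}\le\fint_{\Delta_\rho}k^{(X_0,t_0)}d\sigma$. For the reverse comparison split: on $G$ the pointwise bound from the previous step gives $\int_G k^{(X_0,t_0)}d\sigma\le e^\beta e^{m_\rho}\sigma(G)$, while the bad piece is handled by H\"older's inequality together with the reverse H\"older bound of Lemma \ref{hinvmosatisfiesreverseholderfinitepole}: for any $q>1$,
\[
\int_{\Delta_\rho\setminus G}k^{(X_0,t_0)}d\sigma\le\sigma(\Delta_\rho\setminus G)^{1-1/q}\Bigl(\int_{\Delta_\rho}(k^{(X_0,t_0)})^q d\sigma\Bigr)^{1/q}\le C\!\left(\frac{\sigma(\Delta_\rho\setminus G)}{\sigma(\Delta_\rho)}\right)^{\!1-1/q}\!\!\int_{\Delta_\rho}k^{(X_0,t_0)}d\sigma.
\]
Shrinking $r(\varepsilon)$ if necessary makes this prefactor arbitrarily small, so $\fint_{\Delta_\rho}k^{(X_0,t_0)}d\sigma$ and $e^{m_\rho}$ agree up to a multiplicative factor arbitrarily close to $1$. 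Combining this with the pointwise bound on $G$ and renaming $\varepsilon$ produces \eqref{goodpoints}.

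The main technical obstacle is the last step: the John--Nirenberg inequality controls the distribution of $\log k^{(X_0,t_0)}$ but not that of $k^{(X_0,t_0)}$ itself, so a priori the bad set could carry a large fraction of the arithmetic mass. It is the reverse H\"older inequality of Lemma \ref{hinvmosatisfiesreverseholderfinitepole} that lets us absorb the bad set and pass from the logarithmic average to the arithmetic average; this is exactly the analogue, in the finite-pole setting, of equations 4.95 and 4.96 in \cite{kenigtoro}.
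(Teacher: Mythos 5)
Your argument is correct and is essentially the paper's own route: the paper proves this decomposition exactly via the John--Nirenberg inequality on the space of homogeneous type $(\partial\Omega,\sigma)$ together with the reverse H\"older inequality to pass from the logarithmic to the arithmetic average, deferring the details to equations 4.95--4.96 in the proof of Lemma 4.3 of \cite{kenigtoro}. The only step worth making explicit is that for $(\tilde Q,\tilde\tau)\in C_r(Q,\tau)$ and $\rho\ll r$ one has $(X_0,t_0)\in T^+_{A',\rho}(\tilde Q,\tilde\tau)$ for some $A'$ comparable to $A$, which is what licenses applying Lemma \ref{hinvmosatisfiesreverseholderfinitepole} on the small ball $\Delta_\rho(\tilde Q,\tilde\tau)$.
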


\section{Bounding the Gradient of the Green Function}\label{boundingthegradient}
As mentioned in the introduction, the first step in our proof is to establish an integral bound for $\nabla u$ (and $\nabla G(X_0,t_0, -,-)$). Later, this will aid in demonstrating that our blowup satisfies the hypothesis of the classification result, Theorem \ref{poissonkernel1impliesflat}.

We begin by estimating the non-tagential maximal function of the gradient. Recall the definition of a non-tangetial region: 

\begin{defin}\label{nontangentialregion}
For $\alpha > 0, (Q,\tau) \in \partial \Omega$ define, $\Gamma_\alpha(Q,\tau)$, the non-tangential region at $(Q,\tau)$ with aperture $\alpha$, as $$\Gamma_\alpha(Q,\tau) = \{(X,t) \in \Omega \mid \|(X,t) - (Q,\tau)\| \leq (1+\alpha)\delta(X,t)\}.$$ For $R > 0$ let $\Gamma_\alpha^R(Q,\tau) \colonequals \Gamma_\alpha(Q,\tau) \cap C_{R}(Q,\tau)$ denote the truncated non-tangential region. 

Associated with these non-tangential regions are maximal functions $$\begin{aligned} N_\alpha(f)(Q,\tau) \colonequals& \sup_{(X,t) \in \Gamma_\alpha(Q,\tau)} |f(X,t)|\\ N^R_\alpha(f)(Q,\tau) \colonequals& \sup_{(X,t) \in \Gamma^R_\alpha(Q,\tau)} |f(X,t)|.\end{aligned}$$

Finally, we say that $f$ has a non-tangential limit, $L$, at $(Q,\tau) \in \partial \Omega$ if for any $\alpha > 0$ $$\lim_{\stackrel{(X,t)\rightarrow (Q,\tau)}{(X,t) \in \Gamma_\alpha(Q,\tau)}} f(X,t) = L.$$
\end{defin}

In order to apply Fatou's theorem and Martin's repesentation theorem (see Appendix \ref{fatouetcforparaboliccase})  we must bound the non-tagential maximal function by a function in $L^2$. We argue as in the proof of Lemma 3.1 in \cite{kenigtoro} (which proves the analogous result in the elliptic setting). 

\begin{lem}\label{nontangetiallybounded}
For any $\alpha > 0, R > 0, N^R_\alpha(|\nabla u|) \in L^2_{loc}(d\sigma)$. 
\end{lem}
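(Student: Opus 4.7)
The plan is to dominate $N^R_\alpha(|\nabla u|)$ pointwise by a parabolic Hardy--Littlewood maximal function of the Poisson kernel $h$, and then use the reverse H\"older inequality (Lemma \ref{hinvmosatisfiesreverseholder}) together with the $L^2$ boundedness of the maximal operator on the space of homogeneous type $(\partial\Omega,\sigma)$ to conclude.

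First I would fix $(Q,\tau)\in \partial\Omega$ and $(X,t)\in \Gamma^R_\alpha(Q,\tau)$, and let $d=\delta(X,t)$ and $(Q^*,\tau^*)\in \partial\Omega$ be a (nearly) closest boundary point, so that $d((X,t),(Q^*,\tau^*))\le 2d$ and $d\le d((X,t),(Q,\tau))\le(1+\alpha)d$. Standard interior estimates for the adjoint-heat equation on $C_{d/2}(X,t)\subset \Omega$ give
\[
|\nabla u(X,t)|\;\lesssim\; \frac{1}{d}\sup_{C_{d/2}(X,t)} u.
\]
Since $C_{d/2}(X,t)\subset \Omega\cap C_{cd}(Q^*,\tau^*)$ for some absolute constant $c$, Lemma \ref{biggeratNTApoint} applied to the adjoint-caloric function $u$ yields
\[
\sup_{C_{d/2}(X,t)} u \;\lesssim\; u\bigl(A^-_{c d}(Q^*,\tau^*)\bigr).
\]

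Next, invoking the pole-at-infinity analogue of Lemma \ref{comparisontheorem} (which is stated in Appendix \ref{appendix:caloricmeasure} via Proposition \ref{hlnformeasureatinfinity}), one has
\[
u\bigl(A^-_{cd}(Q^*,\tau^*)\bigr)\;\lesssim\; \frac{\omega\bigl(\Delta_{cd}(Q^*,\tau^*)\bigr)}{d^{n}},
\]
and combining these three estimates with Ahlfors regularity (Definition \ref{ahlforsregularity}), I obtain
\[
|\nabla u(X,t)|\;\lesssim\; \frac{\omega(\Delta_{cd}(Q^*,\tau^*))}{d^{n+1}}\;\asymp\; \fint_{\Delta_{cd}(Q^*,\tau^*)} h\, d\sigma.
\]
Since $d((Q,\tau),(Q^*,\tau^*))\le(2+\alpha)d$, the surface ball $\Delta_{cd}(Q^*,\tau^*)$ is contained in $\Delta_{c'd}(Q,\tau)$ with $c'=c'(\alpha)$, and (using Ahlfors regularity again to compare averages on comparable balls) this average is dominated by the centered parabolic maximal function
\[
Mh(Q,\tau)\;\colonequals\;\sup_{r>0} \fint_{\Delta_r(Q,\tau)} h\, d\sigma.
\]
Taking the supremum over $(X,t)\in\Gamma^R_\alpha(Q,\tau)$ (so that $d\le cR$) gives the pointwise bound
\[
N^R_\alpha(|\nabla u|)(Q,\tau)\;\lesssim\; M_{cR}h(Q,\tau),
\]
where $M_{cR}$ is the truncation of $M$ at scale $cR$.

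Finally, to pass to the $L^2_{\mathrm{loc}}(d\sigma)$ statement, I would use the fact that $\log h\in \mathrm{VMO}(\partial\Omega)$ implies the reverse H\"older inequality of Lemma \ref{hinvmosatisfiesreverseholder} for every exponent $1<q<\infty$, so in particular $h\in L^q_{\mathrm{loc}}(d\sigma)$ for any $q>2$. Since $(\partial\Omega,d,\sigma)$ is a space of homogeneous type (by Ahlfors regularity), the Hardy--Littlewood maximal operator is bounded on $L^q$, hence $Mh\in L^2_{\mathrm{loc}}(d\sigma)$, and the pointwise bound above yields $N^R_\alpha(|\nabla u|)\in L^2_{\mathrm{loc}}(d\sigma)$. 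The finite-pole case is identical, replacing $u$ by $u^{(X_0,t_0)}$, $h$ by $k^{(X_0,t_0)}$, and Lemma \ref{hinvmosatisfiesreverseholder} by Lemma \ref{hinvmosatisfiesreverseholderfinitepole}, after first restricting to boundary balls in which the relevant corkscrew condition $(X_0,t_0)\in T^+_{A,r}(Q,\tau)$ holds.

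The main technical issue I expect is ensuring the corkscrew-and-comparison step works uniformly with the parabolic time-direction asymmetry: one must verify that the closest boundary point $(Q^*,\tau^*)$ to $(X,t)$ lies in a cone $T^-_{A,d}$-type region relative to $(X,t)$ so that Lemma \ref{comparisontheorem} (for the adjoint-caloric Green function) applies with controlled constants. This should follow from Reifenberg flatness, but it has to be checked at both the infinite-pole stage and, in the finite-pole case, along the entire chain from $(X,t)$ back to the pole $(X_0,t_0)$ (using Lemma \ref{backwardsharnack} and a Harnack chain) so that the comparison loses only a controlled multiplicative factor.
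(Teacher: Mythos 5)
Your argument is correct and is essentially the paper's own proof: interior parabolic estimates give $|\nabla u|\lesssim u/\delta$, Lemma \ref{biggeratNTApoint} and the pole-at-infinity comparison (Corollary \ref{comparisonformeasureatinfinity}, together with the backwards Harnack property of $u$) convert this into an average of $h$ over a surface ball of radius comparable to $\|(X,t)-(Q,\tau)\|$, which is dominated by the truncated Hardy--Littlewood maximal function of $h$, and the reverse H\"older inequality from $\log h\in\mathrm{VMO}$ plus the maximal theorem on the homogeneous space $(\partial\Omega,\sigma)$ finish the proof. The only cosmetic differences are that the paper applies Lemma \ref{biggeratNTApoint} directly at the cone vertex $(Q,\tau)$ rather than detouring through the nearest boundary point, and works with $h\in L^2_{\mathrm{loc}}$ directly; also note your worry about cone conditions is vacuous for the infinite-pole comparison, which holds at every boundary point and scale.
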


\begin{proof}
Let $K \subset \R^{n+1}$ be a compact set and $\hat{K}$ be the compact set of all points parabolic distance $\leq 4R$ away from $K$.  Pick $(X,t) \in \Gamma_\alpha^R(Q,\tau)$.  Standard estimates for adjoint-caloric functions, followed by Lemmas \ref{biggeratNTApoint} and  \ref{comparisonformeasureatinfinity} yield $$|\nabla u(X,t)| \leq C\frac{u(X,t)}{\delta(X,t)} \leq C\frac{u(A^-_{4\|(Q,\tau)- (X,t)\|}(Q,\tau))}{\delta(X,t)} \leq C\frac{\omega(\Delta_{2\|(Q,\tau)-(X,t)\|}(Q,\tau))}{\delta(X,t)\|(Q,\tau)-(X,t)\|^n}.$$ In the non-tangential region, $\delta(X,t)\sim_\alpha \|(Q,\tau) - (X,t)\|$, which, as $\sigma$ is Ahlfors regular and $\omega$ is doubling, implies $$|\nabla u(X,t)|\leq C_\alpha \fint_{\Delta_{\|(Q,\tau) - (X,t)\|}(Q,\tau)} hd\sigma \leq C_\alpha M_R(h)(Q,\tau).$$ Here, $M_R(h)(Q,\tau) \colonequals \sup_{0 < r \leq R} \fint_{C_r(Q,\tau)} |h(P,\eta)| d\sigma(P,\eta)$ is the truncated Hardy-Littlewood maximal operator at scale R. $\partial \Omega$ is a space of homogenous type and $h \in L^2_{\mathrm{loc}}(d\sigma)$, so we may apply the Hardy-Littlewood maximal theorem to conclude $$\int_{K} M_R(h)^2d\sigma \leq C\int_{\hat{K}} h^2 d\sigma < \infty.$$
\end{proof}

The result in the finite pole setting follows in the same way;

\begin{lem}\label{nontangentiallyboundedfinitepole}
For $(X_0,t_0) \in \Omega$ let $(Q,\tau) \in \partial \Omega, R> 0$ and $A \geq 100$ be such that $(X_0,t_0) \in T_{A,R}^+(Q,\tau)$. Then for any $\alpha > 0, N^{R/8}_\alpha(|\nabla u^{(X_0,t_0)}|)|_{\Delta_{R/2}(Q,\tau)} \in L^2(d\sigma)$.
\end{lem}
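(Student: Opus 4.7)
The plan is to mirror the proof of Lemma~\ref{nontangetiallybounded} nearly verbatim, swapping each infinite-pole tool for its finite-pole counterpart. The Poisson kernel $h$ is replaced by $k^{(X_0,t_0)}$, which lies in $L^q(\Delta_R(Q,\tau), d\sigma)$ for every $q<\infty$ by Lemma~\ref{hinvmosatisfiesreverseholderfinitepole}; the role of Lemma~\ref{comparisonformeasureatinfinity} is played by Lemma~\ref{comparisontheorem}; and Lemmas~\ref{backwardsharnack} and~\ref{doublingmeasure} handle the backwards Harnack step and the doubling step respectively.

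The first step is a bookkeeping check on the time-space cones. Fix $(\tilde Q,\tilde\tau) \in \Delta_{R/2}(Q,\tau)$ and $(X,t) \in \Gamma_\alpha^{R/8}(\tilde Q,\tilde\tau)$, and set $\rho \colonequals \|(X,t) - (\tilde Q,\tilde\tau)\| \leq R/8$. Using $|\tilde Q - Q| \leq R/2$, $|\tilde\tau - \tau| \leq R^2/4$, and the hypothesis $(X_0,t_0) \in T^+_{A,R}(Q,\tau)$, I would verify that there exists $A' = A'(A) \geq 100$, independent of $(\tilde Q,\tilde\tau)$ and of $\rho$, such that $(X_0,t_0) \in T^+_{A',4\rho}(\tilde Q,\tilde\tau)$. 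With this in hand, all of Lemmas~\ref{comparisontheorem}, \ref{backwardsharnack}, and~\ref{doublingmeasure} apply at $(\tilde Q,\tilde\tau)$ and scale $4\rho$ with constants depending only on $A$.

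Next, the interior estimate for the adjoint-caloric function $u^{(X_0,t_0)}$, together with $\delta(X,t) \sim_\alpha \rho$ in the nontangential region, yields $|\nabla u^{(X_0,t_0)}(X,t)| \leq C_\alpha\, u^{(X_0,t_0)}(X,t)/\rho$. Chaining Lemma~\ref{biggeratNTApoint} (replacing $(X,t)$ by the adjoint corkscrew $A^-_{4\rho}(\tilde Q,\tilde\tau)$), Lemma~\ref{backwardsharnack} (passing from $A^-$ to $A^+$), and Lemma~\ref{comparisontheorem} (bounding $G(X_0,t_0,A^+_{4\rho}(\tilde Q,\tilde\tau))$ by $C\rho^{-n}\omega^{(X_0,t_0)}(\Delta_{2\rho}(\tilde Q,\tilde\tau))$) produces, after invoking Ahlfors regularity of $\sigma$,
$$|\nabla u^{(X_0,t_0)}(X,t)| \leq C_{\alpha,A}\, \rho^{-n-1}\omega^{(X_0,t_0)}(\Delta_{2\rho}(\tilde Q,\tilde\tau)) \leq C_{\alpha,A}\fint_{\Delta_{2\rho}(\tilde Q,\tilde\tau)} k^{(X_0,t_0)}\, d\sigma.$$
Taking the supremum over $(X,t) \in \Gamma_\alpha^{R/8}(\tilde Q,\tilde\tau)$ gives the pointwise bound $N^{R/8}_\alpha(|\nabla u^{(X_0,t_0)}|)(\tilde Q,\tilde\tau) \leq C\, M_{R/4}(k^{(X_0,t_0)}\mathbf{1}_{\Delta_R(Q,\tau)})(\tilde Q,\tilde\tau)$, where $M$ is the Hardy--Littlewood maximal operator on the homogeneous space $(\partial\Omega,d,\sigma)$.

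The argument is then closed by squaring, integrating over $\Delta_{R/2}(Q,\tau)$, and invoking $L^2$-boundedness of $M$; finiteness of the right-hand side is exactly Lemma~\ref{hinvmosatisfiesreverseholderfinitepole} at exponent $q=2$. The only real obstacle is the initial cone bookkeeping --- once uniform membership in $T^+_{A',4\rho}(\tilde Q,\tilde\tau)$ is secured, the estimates of Section~\ref{preliminaryestimates} take over and the proof collapses to the one already written for Lemma~\ref{nontangetiallybounded}.
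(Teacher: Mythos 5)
Your proposal is correct and follows essentially the same route as the paper: interior parabolic estimates plus Lemma \ref{biggeratNTApoint}, then Lemmas \ref{backwardsharnack} and \ref{comparisontheorem} (after the same uniform cone-membership check $(X_0,t_0)\in T^+_{A',4\rho}(\tilde Q,\tilde\tau)$) and Ahlfors regularity to dominate $N^{R/8}_\alpha(|\nabla u^{(X_0,t_0)}|)$ by a truncated Hardy--Littlewood maximal function of $k^{(X_0,t_0)}$, concluding with the maximal theorem on the homogeneous space and the finite-pole reverse H\"older inequality at $q=2$. The only cosmetic differences (truncation at $R/4$ versus $R/2$, and the superfluous citation of Lemma \ref{doublingmeasure}) are harmless.
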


\begin{proof}
Let $(P,\eta) \in \Delta_{R/2}(Q,\tau)$ and pick $(X,t) \in \Gamma_\alpha^{R/8}(P,\eta)$.  Standard estimates for adjoint-caloric functions, followed by Lemma \ref{biggeratNTApoint} yield $$|\nabla u^{(X_0,t_0)}(X,t)| \leq C\frac{u^{(X_0,t_0)}(X,t)}{\delta(X,t)} \leq C\frac{u^{(X_0,t_0)}(A^-_{4\|(P,\eta)- (X,t)\|}(P,\eta))}{\delta(X,t)}.$$

Note $(X,t) \in \Gamma_\alpha^{R/8}(P,\eta)$ hence $4\|(P,\eta) - (X,t)\| \leq R/2$. By our assumption on $(Q,\tau)$ and $(P,\eta) \in \Delta_{R/2}(Q,\tau)$ we can compute that $R/2, (P,\eta), (X_0,t_0)$ satisfy the hypothesis of  Lemmas \ref{backwardsharnack} and \ref{comparisontheorem} for some $A \geq 100$ which can be taken uniformly over $(P,\eta) \in \Delta_{R/2}(Q,\tau)$. Therefore, \begin{equation}\label{boundbymeasurequotient} |\nabla u^{(X_0,t_0)}(X,t)| \leq C(A)\frac{\omega^{(X_0,t_0)}(\Delta_{2\|(P, \eta)-(X,t)\|}(P,\eta))}{\delta(X,t)\|(P,\eta)-(X,t)\|^n}.\end{equation}

In the non-tangential region, $\delta(X,t)\sim_\alpha \|(P,\eta) - (X,t)\|$, which, as $\sigma$ is Ahlfors regular and $\|(P,\eta) - (X,t)\| \leq R/8$  implies $$|\nabla u^{(X_0,t_0)}(X,t)|\leq C_{\alpha,A} \fint_{\Delta_{2\|(P,\eta)- (X,t)\|}(P,\eta)} k^{(X_0,t_0)}d\sigma \leq C_{\alpha, A} M_{R/2}(k^{(X_0,t_0)})(P,\eta).$$ The result then follows as in Lemma \ref{nontangetiallybounded}.
\end{proof}

Unfortunately, the above argument only bounds the {\it truncated} non-tangential maximal operators. We need a cutoff argument to transfer this estimate to the untruncated non-tangential maximal operator. We will do this argument first for the infinite pole case and then in the finite pole setting. The following lemma is a parabolic version of Lemma 3.3 in  \cite{kenigtoro} or Lemma 3.5 in \cite{kenigtorotwophase}, whose exposition we will follow quite closely.

\begin{lem}\label{cutoffargument}
Assume that $(0,0) \in \partial \Omega$ and fix $R > 1$ large. Let $\varphi_R \in C_c^\infty(\R^{n+1}), \varphi_R \equiv 1$ on $C_R(0,0)$, $0 \leq \varphi_R \leq 1$ and assume $\mathrm{spt}(\varphi_R) \subset C_{2R}(0,0)$. It is possible to ensure that $|\nabla \varphi_R| \leq C/R$ and $|\partial_t \varphi_R| , |\Delta \varphi_R| \leq C/R^2$. For $(X,t) \in \Omega$ define \begin{equation}\label{definitionofw}
w_R(X,t) = \int_{\Omega}G(Y,s,X,t) (\partial_s+\Delta_Y)[\varphi_R(Y,s)\nabla u(Y,s)]dYds,
\end{equation}
here, as before $G(Y,s,X,t)$ is the Green's function for the heat equation with a pole at $(X,t)$ (i.e. $(\partial_s- \Delta_Y)(G(Y,s,X,t)) = \delta_{(X,t),(Y,s)}$ and $G(Y,s,-,-) \equiv 0$ on $\partial \Omega$). Then, $w_R|_{\partial \Omega} \equiv 0$ and $w_R\in C(\overline{\Omega})$. Furthermore, if $\|(X,t)\| \leq \frac{R}{2}$, then
\begin{equation}\label{growthofwclose}
|w_R(X,t)| \leq C \frac{\delta(X,t)^{3/4}}{R^{1/2}}.
\end{equation}
Additionally, if $\|(X,t)\| \geq 4R$, there is a constant $C \equiv C(R) > 0$ such that \begin{equation}\label{growthofwfar}
|w_R(X,t)| \leq C
\end{equation}
\end{lem}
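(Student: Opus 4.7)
The plan is to follow the schema of Lemma 3.3 in \cite{kenigtoro}, making careful adjustments to account for the weaker (H\"older rather than Lipschitz) boundary regularity available in the parabolic setting. First I would note that because $u$ is adjoint-caloric, so is $\nabla u$, and applying the product rule gives
$$(\partial_s + \Delta_Y)[\varphi_R \nabla u] = (\partial_s\varphi_R + \Delta\varphi_R)\nabla u + 2\nabla \varphi_R \cdot \nabla^2 u,$$
which is supported entirely in the parabolic annulus $A_R \colonequals C_{2R}(0,0)\setminus C_R(0,0)$. The boundary vanishing $w_R|_{\partial \Omega} \equiv 0$ follows from the fact that $G(Y,s,X,t) = 0$ whenever $(X,t)\in\partial\Omega$ (via parabolic reciprocity with the adjoint Green function), and continuity on $\overline{\Omega}$ follows by dominated convergence, since the integrand is compactly supported in $(Y,s)$ and the pole stays uniformly separated from $A_R$ for $(X,t)$ in any compact subset of $\overline{\Omega}\setminus A_R$.

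For the estimate \eqref{growthofwclose}, the crucial observation is that since the source term vanishes inside $C_R(0,0)$, the function $w_R$ is itself adjoint-caloric on $C_R(0,0)\cap \Omega$ and vanishes on the portion of $\partial \Omega$ contained there. Letting $(Q_*,\tau_*)\in\partial\Omega$ be a closest boundary point to $(X,t)$, I would apply Lemma \ref{growthattheboundary} with $\varepsilon = 1/4$ (on a cylinder of radius $\sim R$ around $(Q_*,\tau_*)$) to conclude
$$|w_R(X,t)| \leq C\left(\frac{\delta(X,t)}{R}\right)^{3/4}\sup_{C_{R/2}(Q_*,\tau_*)\cap \Omega}|w_R|.$$
The task then reduces to establishing a crude ``sup'' bound $\sup_{C_{R}(0,0)\cap \Omega}|w_R| \leq C R^{1/4}$. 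For this I would estimate the defining integral of $w_R$ directly on the annulus $A_R$: use $|(\partial_s+\Delta)\varphi_R|\lesssim R^{-2}$ and $|\nabla \varphi_R|\lesssim R^{-1}$, employ interior estimates $|\nabla^k u(Y,s)|\lesssim u(Y,s)/\delta(Y,s)^k$ (trading $\nabla^2 u$ for $\nabla u$ or $u$ via integration by parts against $G$ where needed), pass from pointwise values of $u$ to averages of the Poisson kernel on surface balls through Lemmas \ref{biggeratNTApoint} and \ref{comparisontheorem}, and finally invoke Cauchy--Schwarz together with the $L^2$ control on the non-tangential maximal function from Lemma \ref{nontangetiallybounded}. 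Combining $(\delta/R)^{3/4}$ with $R^{1/4}$ yields the claimed $\delta^{3/4}R^{-1/2}$.

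For \eqref{growthofwfar}, when $\|(X,t)\|\geq 4R$ the pole of $G(Y,s,X,t)$ is at parabolic distance at least $2R$ from the support $A_R$ of the integrand, so standard Gaussian/heat-kernel bounds yield $G(Y,s,X,t)\leq C(R)$ uniformly on $A_R$. Integrating against the same pointwise bounds on $|(\partial_s+\Delta)[\varphi_R\nabla u]|$ used above produces the desired uniform constant $C(R)$.

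The main obstacle is the bookkeeping of exponents that produces the crude $R^{1/4}$ bound in the second step. Unlike the elliptic setting, Lemma \ref{growthattheboundary} only gives H\"older decay of exponent $1-\varepsilon$, which forces the choice $\varepsilon = 1/4$; in addition, $\nabla u$ is not Lipschitz up to the boundary, so derivatives must be traded via $L^2$-estimates on non-tangential maximal functions rather than pointwise $L^\infty$ bounds. Ensuring that the resulting powers of $R$ and $\delta(X,t)$ balance precisely to yield $\delta(X,t)^{3/4}R^{-1/2}$---while the integrand genuinely blows up like $1/\delta(Y,s)^2$ near the portion of $A_R$ touching $\partial \Omega$---is the most delicate portion of the argument.
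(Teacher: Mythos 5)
Your overall architecture is close to the paper's, and your one genuinely different step --- harvesting the $\delta(X,t)^{3/4}$ factor by applying Lemma \ref{growthattheboundary} to $w_R$ itself (adjoint-caloric in $C_R(0,0)\cap\Omega$, vanishing on $\partial\Omega$ there), so that only a crude sup bound on $|w_R|$ is needed --- is a legitimate alternative to the paper's route, which instead threads that factor through the Green function via $G(A^+_{3R}(0,0),X,t)\leq C(\delta(X,t)/R)^{3/4}R^{-n}$ (equations \eqref{harnackchainargument} and \eqref{estimateongreensfunctionforclosext}). (Two small caveats: $w_R$ is vector-valued and sign-changing, so the barrier argument must be run on $|w_R|$, which is an adjoint subsolution; and the comparison cylinder around the nearest boundary point must be kept inside $C_R(0,0)$, which forces a case distinction when $\delta(X,t)\sim R$.) The first genuine gap is in your estimation of the integral itself. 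With only $|\nabla\nabla u|\lesssim u/\delta^2$, the boundary growth $u(Y,s)\lesssim(\delta(Y,s)/R)^{3/4}u(A^-_{CR}(0,0))$, and your bound $G(Y,s,X,t)\leq C(R)$ on the annulus, the integrand is of order $\delta(Y,s)^{-5/4}$, and by Ahlfors regularity ($|\{\delta<\lambda\}\cap C_{2R}\cap\Omega|\lesssim R^{n+1}\lambda$) the integral $\int_{A_R\cap\Omega}\delta(Y,s)^{-5/4}\,dYds$ diverges; even the sharper exponent $1-\varepsilon$ from Lemma \ref{growthattheboundary} applied to $u$ cannot make $u/\delta^2$ integrable on its own, and integrating by parts only trades $\nabla\nabla u$ for $\nabla_Y G$, which costs another factor $\delta(Y,s)^{-1}$. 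So your argument for \eqref{growthofwfar} does not converge as written, and the same problem is hidden in your crude sup bound. The indispensable extra ingredient, which the paper uses in both regimes, is the boundary decay of $G$ in the $(Y,s)$ variable, $G(Y,s,X,t)\lesssim(\delta(Y,s)/\rho)^{3/4}(\cdots)$, obtained from Lemma \ref{growthattheboundary} and Harnack chains applied to $G(\cdot,\cdot,X,t)$; this raises the exponent to the integrable $\delta(Y,s)^{-1/2}$.

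The second gap is the source of the quantitative sup bound. After the (repaired) integral estimate one is left with roughly $u(A^-_{CR}(0,0))/R$, so your scheme needs $u(A^-_{CR}(0,0))\lesssim R^{5/4}$, equivalently $\omega(\Delta_R(0,0))\lesssim R^{n+1+\alpha}$ with $\alpha\leq 1/4$; this is unavoidable, since \eqref{growthofwclose} itself asserts $|w_R|\lesssim R^{1/4}$ at points with $\delta(X,t)\sim R$. This is precisely where the hypothesis $\log h\in\mathrm{VMO}$ enters the lemma: the paper obtains the growth bound by arguing as in Lemma \ref{nontangetiallybounded} and then invoking Corollary \ref{vmomeasurecomparison} (with $E=\Delta_1(0,0)$ inside $\Delta_R(0,0)$), which gives $\omega(\Delta_R)\lesssim R^{(n+1)(1+\varepsilon)}$ for every $\varepsilon>0$. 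Your proposed finish --- Cauchy--Schwarz together with the $L^2$ control of the non-tangential maximal function from Lemma \ref{nontangetiallybounded} --- only yields finiteness at each fixed scale, with constants governed by $\int_{\Delta_{CR}}h^2\,d\sigma$ whose dependence on $R$ is not tracked; doubling or a generic $A_\infty$ comparison only gives some large power of $R$, not $1+\alpha$. Without the VMO-based comparison the exponents cannot balance to $\delta(X,t)^{3/4}R^{-1/2}$. With these two ingredients added, your variant does go through.
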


\begin{proof}
That $w_R|_{\partial \Omega} \equiv 0$ and $w_R\in C(\overline{\Omega})$ follows immediately from the definition of $w_R$. For ease of notation let $V(x,t) \colonequals \nabla u(X,t)$. By the product rule $$(\partial_s+\Delta_Y)[\varphi_R(Y,s)V(Y,s)] = V(Y,s)(\Delta_Y + \partial_s)\varphi_R(Y,s) + 2\left\langle \nabla \varphi_R(Y,s), \nabla V(Y,s)\right\rangle.$$  Split $w_R(X,t) = w^1_R(X,t) + w^2_R(X,t)$ where, \begin{equation*}\begin{aligned}w_R^1(X,t)&\colonequals \int_{\Omega}G(Y,s, X,t)V(Y,s)(\Delta_Y + \partial_s)\varphi_R(Y,s)dYds\\
w_R^2(X,t)&\colonequals 2\int_{\Omega}G(Y,s, X,t) \left\langle \nabla \varphi_R(Y,s), \nabla V(Y,s)\right\rangle dYds.
  \end{aligned}
  \end{equation*}
  
Regularity theory gives $|\nabla u(Y,s)| \leq C\frac{u(Y,s)}{\delta(Y,s)}$ and $|\nabla \nabla u(Y,s)| \leq C\frac{u(Y,s)}{\delta^2(Y,s)}$. This, along with our bounds on $\varphi_R$, yields \begin{equation}\label{firstestimateforw}\begin{aligned}
  |w_R^1(X,t)| &\leq \frac{C}{R^2}\int_{\{(Y,s)\in \Omega\mid R < \|(Y,s)\| \leq 2R\}}\frac{u(Y,s)}{\delta(Y,s)}G(Y,s,X,t)dYds\\
    |w_R^2(X,t)| &\leq \frac{C}{R}\int_{\{(Y,s)\in \Omega\mid R < \|(Y,s)\| \leq 2R\}} \frac{u(Y,s)}{\delta^2(Y,s)}G(Y,s,X,t) dYds.
  \end{aligned}
  \end{equation}

Any upper bound on $w_R^2$ will also be an upper bound for $w_R^1$ (as $R \gtrsim \delta(Y,s)$). Assume first, in order to prove \eqref{growthofwclose}, that $\|(X,t)\| < R/2$.  We start by showing that there is  a universal constant, $C> 1$ such that  \begin{equation}\label{harnackchainargument}
G(Y,s, X,t) \leq C \left(\frac{\delta(Y,s)}{R}\right)^{3/4} G(A^+_{3R}(0,0),X,t)
\end{equation}
 for all $(Y,s) \in \Omega \cap (C_{2R}(0,0)\backslash C_R(0,0))$ and $(X,t) \in \Omega \cap C_{R/2}(0,0)$. To prove this, first assume that $\delta(Y,s) \geq R/10$.  We would like to construct a Harnack chain between $A^+_{3R}(0,0)$ and $(Y,s)$. To do so, we need to verify that the parabolic distance between the two points is less than 100 times the square root of the distance between the two points along the time axis. As we are in a $\delta$-Reifenberg flat domain the $t$ coordinate of $A^+_{3R}(0,0)$ is equal to $(3R)^2$ and so $A^+_{3R}(0,0)$ and $(Y,s)$ are seperated in the $t$-direction by a distance of $5R^2$. On the other hand $\|A^+_{3R}(0,0) - (Y,s)\| \leq 20R < 100(5R^2)^{1/2}$. So there is a Harnack chain connecting $(Y,s)$ and $A^+_{3R}(0,0)$. In a $\delta$-Reifenberg flat domain the chain can be constructed to stay outside of $C_{R/2}(0,0)$ (see the proof of Lemma 3.3 in \cite{hlncaloricmeasure}). Furthermore, as $\delta(Y,s)$ is comparable to $R$, the length of this chain is bounded by some constant. Therefore, by the Harnack inequality, we have equation \eqref{harnackchainargument} (note in this case $\left(\frac{\delta(Y,s)}{R}\right)^{3/4} $ is greater than some constant, and so can be included on the right hand side). 

If $\delta(Y,s) < R/10$, there is a point $(Q,\tau) \in \partial \Omega$ such that $C_{R/5}(Q,\tau) \cap C_{R/2}(0,0) = \emptyset$, and $(Y,s) \in C_{R/10}(Q,\tau)$. Lemma \ref{growthattheboundary} yields $G(Y,s, X,t) \leq \left(\frac{\delta(Y,s)}{R}\right)^{3/4} G(A^+_{R/5}(Q,\tau), X,t).$ We can then create a Harnack chain, as above, connecting $A^+_{R/5}(Q,\tau)$ and $A^+_{3R}(0,0)$ to obtain equation \eqref{harnackchainargument}.

$G(A^+_{3R}(0,0), -,-)$ is an adjoint caloric function in $\Omega \cap C_{R}(0,0)$, whence, 
\begin{equation}\label{estimateongreensfunctionforclosext}\begin{aligned}
G(Y,s,X,t) \leq& C \left(\frac{\delta(Y,s)}{R}\right)^{3/4} G(A^+_{3R}(0,0),X,t)\\
 \leq& C \left(\frac{\delta(Y,s)}{R}\right)^{3/4}\left(\frac{\delta(X,t)}{R}\right)^{3/4}  G(A^+_{3R}(0,0), A^-_{R}(0,0))\\ \leq& C \left(\frac{\delta(Y,s)}{R}\right)^{3/4}\left(\frac{\delta(X,t)}{R}\right)^{3/4}R^{-n},\end{aligned}
\end{equation} where the penultimate inequality follows from Lemma \ref{growthattheboundary} applied in $(X,t)$. The bound on $G(A^+_{3R}(0,0), A^-_{R}(0,0))$ and, therefore, the last inequality above, is a consequence of Lemmas \ref{backwardsharnack} and \ref{comparisontheorem}: $G(A^+_{3R}(0,0), A^-_{R}(0,0)) \leq  G(A^+_{3R}(0,0), A^+_{R}(0,0)) \leq cR^{-n}\omega^{A^+_{3R}(0,0)}(C_{R/2}(0,0)) \leq cR^{-n}$.

Lemma \ref{growthattheboundary} applied to $u(Y,s)$ and equation \eqref{estimateongreensfunctionforclosext} allow us to bound $$\frac{C}{R}\int_{C_{2R}(0,0)\backslash C_R(0,0)} \frac{u(Y,s)}{\delta^2(Y,s)}G(Y,s,X,t) dYds \leq \frac{C}{R^{n+2+1/2}}\left(\frac{\delta(X,t)}{R}\right)^{3/4}\int_{\Omega\cap C_{2R}(0,0)} \frac{u(A^-_{2R}(0,0))}{\delta(Y,s)^{1/2}} dYds.$$

Ahlfors regularity implies, for any $\beta > (1/(2R))^{1/2}$, that $$|\{(Y,s) \in \Omega\cap C_{2R}(0,0)\mid \delta(Y,s)^{-1/2} > \beta\}| = |\{(Y,s) \in \Omega\cap C_{2R}(0,0)\mid \delta(Y,s) < \beta^{-2} \}| \lesssim R^{n+1} \beta^{-2}.$$ Therefore, $$\int_{\Omega\cap C_{2R}(0,0)} \frac{1}{\delta(Y,s)^{1/2}}dYds \lesssim R^{n+1} \int_{(1/(2R))^{1/2}}^\infty \frac{1}{\beta^2}d\beta \simeq R^{n+1+1/2}.$$

Putting everything together we get that \begin{equation}\label{puttingitalltogetherforwclose}
|w(X,t)| \leq C\frac{u(A^-_{2R}(0,0))}{R} \left(\frac{\delta(X,t)}{R}\right)^{3/4} \leq C \frac{\delta(X,t)^{3/4}}{R^{1/2}},
\end{equation}
where the last inequality follows from the fact that $u(A^-_{2R}(0,0))$ cannot grow faster than $R^{1+\alpha}$ for any $\alpha > 0$. This can be established by arguing as in proof of Lemma \ref{nontangetiallybounded} and invoking Corollary \ref{vmomeasurecomparison}. 

We turn to proving equation \eqref{growthofwfar}, and assume that $\|(X,t)\| \geq 4R$. Following the proof of equation \eqref{harnackchainargument} we can show \begin{equation}\label{farharnackchainargument}
G(Y,s, X,t) \leq C G(Y,s,A^-_{2^jR}(0,0)).
\end{equation} Above, $j$ is such that $2^{j-2}R \leq \|(X,t)\|\leq 2^{j-1}R$. Note that $G(Y,s, A^-_{2^jR})$ is a caloric function in $C_{2^{j-1}R}(0,0)$ and apply Lemma \ref{growthattheboundary} to obtain $$G(Y,s, X,t) \leq C\left(\frac{\delta(Y,s)}{2^{j-1}R}\right)^{3/4} G(A^+_{2^{j-1}R}(0,0), A^-_{2^jR}(0,0)) \leq C\delta(Y,s)^{3/4}(2^jR)^{-n-3/4}.$$ The last inequality follows from estimating the Green's function as we did in the proof of equation \eqref{estimateongreensfunctionforclosext}. Proceeding as in the proof of equation \eqref{growthofwclose} we write $$\frac{C}{R}\int_{\Omega \cap (C_{2R}(0,0)\backslash C_R(0,0))} \frac{u(Y,s)}{\delta^2(Y,s)}G(Y,s,X,t) dYds \leq (2^jR)^{-n-3/4} R^{-1-3/4} R^{n+1 + 1/2} u(A^-_{4R}(0,0)).$$ Putting everything together, \begin{equation}\label{wgetssmallfarout} |w_R(X,t)| \leq C \frac{u(A^-_{4R}(0,0))}{R} 2^{-nj} \leq C(R).\end{equation}
\end{proof}

\begin{cor}\label{nontangentiallimiteverywhere}
For any $(Y, s) \in \Omega$, $\nabla u$ has a non-tangential limit, $F(Q,\tau)$, for $d\hat{\omega}^{(Y,s)}$-almost every $(Q,\tau) \in \Omega$. In particular, the non-tangential limit exists for $\sigma$-almost every $(Q,\tau) \in \partial \Omega$. Furthermore, $F(Q,\tau) \in L^1_{\mathrm{loc}}(d\hat{\omega}^{(Y,s)})$ and $F(Q,\tau) \in L^2_{\mathrm{loc}}(d\sigma)$. 
\end{cor}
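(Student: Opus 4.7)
The plan is to exploit the cutoff decomposition of Lemma \ref{cutoffargument} to write $\nabla u$ locally as the sum of a globally adjoint-caloric function and a small correction that tends non-tangentially to zero at the boundary, and then invoke the parabolic Fatou and Martin theory of Appendix \ref{fatouetcforparaboliccase} for the adjoint-caloric piece. Fix a boundary point, which after translation we may assume is $(0,0)$, and fix $R > 1$ large. Setting $v_R \colonequals \varphi_R \nabla u - w_R$ on $\Omega$, the defining integral \eqref{definitionofw} makes each component of $v_R$ adjoint-caloric in $\Omega$, and on $\Omega \cap C_R(0,0)$ (where $\varphi_R\equiv 1$) we have the decomposition $\nabla u = v_R + w_R$.

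First I would observe that $w_R$ has non-tangential limit zero at every point of $\partial \Omega \cap C_{R/2}(0,0)$: the estimate \eqref{growthofwclose} gives $|w_R(X,t)| \leq C \delta(X,t)^{3/4}/R^{1/2}$ in $\Omega \cap C_{R/2}(0,0)$, and $\delta(X,t)\to 0$ along any non-tangential approach. Moreover $w_R$ is uniformly bounded (by a constant depending on $R$) on $\Omega \cap C_{R/2}(0,0)$. Combining this with Lemma \ref{nontangetiallybounded}, the non-tangential maximal function $N^{R/8}_\alpha(v_R)$ is in $L^2(d\sigma)$ locally on $\Delta_{R/2}(0,0)$.

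Next I would apply the parabolic Fatou theorem from Appendix \ref{fatouetcforparaboliccase}, component-wise to $v_R$: an adjoint-caloric function in $\Omega$ whose non-tangential maximal function is locally in $L^2(d\sigma)$ admits a non-tangential limit at $d\hat{\omega}^{(Y,s)}$-a.e.\ boundary point. Adding the zero non-tangential limit of $w_R$ then shows that $\nabla u$ has a non-tangential limit at $d\hat{\omega}^{(Y,s)}$-a.e.\ $(Q,\tau) \in \Delta_{R/2}(0,0)$; the limits produced by different choices of $R$ must agree where both exist, so letting $R \to \infty$ and covering $\partial\Omega$ by countably many cylinders globalizes the conclusion to $d\hat{\omega}^{(Y,s)}$-a.e.\ on $\partial\Omega$. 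The $\sigma$-a.e.\ statement then follows because Theorem 1 of \cite{hlncaloricmeasure} (together with its infinite-pole analogue Proposition \ref{hlnformeasureatinfinity}) gives $\hat{\omega}^{(Y,s)} \in A_\infty(d\sigma)$, so the two measures are mutually absolutely continuous on compact subsets of $\partial \Omega$.

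The integrability statements are then immediate from the construction: pointwise $|F(Q,\tau)| \leq N^{R/8}_\alpha(|\nabla u|)(Q,\tau)$ in $\Delta_{R/2}(0,0)$, so Lemma \ref{nontangetiallybounded} gives $F \in L^2_{\mathrm{loc}}(d\sigma)$, while $F \in L^1_{\mathrm{loc}}(d\hat{\omega}^{(Y,s)})$ follows by combining this with H\"older's inequality and the reverse H\"older bound of Lemma \ref{hinvmosatisfiesreverseholder}. The subtlest step is the application of the Fatou theorem to $v_R$, since standard parabolic Fatou results are typically stated for non-negative caloric functions in cylindrical or Lipschitz domains; promoting them to signed, $L^2$-non-tangentially-bounded adjoint-caloric functions in Reifenberg-flat parabolic chord-arc domains is precisely what Appendix \ref{fatouetcforparaboliccase} is designed to supply, and it is also the reason the preparatory work in Lemmas \ref{nontangetiallybounded} and \ref{cutoffargument} was necessary.
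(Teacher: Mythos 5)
Your proposal is correct and follows essentially the same route as the paper: the same decomposition $H_R=\varphi_R\nabla u - w_R$ from Lemma \ref{cutoffargument}, adjoint-caloricity of $H_R$, the maximal function bound of Lemma \ref{nontangetiallybounded}, the Fatou/representation results of Appendix \ref{fatouetcforparaboliccase}, continuity and vanishing boundary values of $w_R$, globalization in $R$, and the $A_\infty$ relation between $\hat{\omega}^{(Y,s)}$ and $\sigma$. The only cosmetic difference is that the paper verifies the hypothesis of the appendix results in the form $N(H_R)\in L^1(d\hat{\omega}^{(Y,s)})$ (using the global bound \eqref{growthofwfar} on $w_R$ together with the truncated $L^2(d\sigma)$ estimate), rather than quoting a Fatou theorem phrased for locally $L^2(d\sigma)$-bounded maximal functions, but the ingredients you list supply exactly that conversion.
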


\begin{proof}
Theorem 1 in \cite{hlncaloricmeasure} implies that for any compact set $K \subset \partial \Omega$ there exists $(Y,s) \in \Omega$ such that $\hat{\omega}^{(Y,s)}|_K \in A_\infty(\sigma|_{K})$. Therefore, if the non-tangential limit exists $d\hat{\omega}^{(Y,s)}$-almost everywhere for any $(Y,s) \in \Omega$ we can conclude that it exists $\sigma$-almost everywhere. Additionally, Lemma \ref{nontangetiallybounded} implies that if $\nabla u$ has a non-tangential limit, that limit is in $L^2_{\mathrm{loc}}(d\sigma)$ and therefore $L^1_{\mathrm{loc}}(d\hat{\omega}^{(Y,s)})$ for any $(Y,s) \in \Omega$.

Thus it suffices to prove, for any $(Y,s) \in \Omega$, that $\nabla u$ has a non-tangential limit $d\hat{\omega}^{(Y,s)}$-almost everywhere. Let $R > 0$ and define, for $(X,t) \in \Omega$, $H_R(X,t) = \varphi_R(X,t)\nabla u(X,t) - w_R(X,t)$, where $w_R,\varphi_R$ were introduced in Lemma \ref{cutoffargument}. Equation \eqref{growthofwfar} and $w_R(X,t) \in C(\overline{\Omega})$ imply that $w_R(X,t) \in L^\infty(\Omega)$, which, with Lemma \ref{nontangetiallybounded}, gives that $N(H_R)(X,t) \in L^1(d\hat{\omega}^{(Y,s)})$ for any $(Y,s) \in \Omega$. By construction, $H_R$ is a solution to the adjoint heat equation in $\Omega$, hence, by Lemma \ref{nontangentiallimitsdae}, $H_R(X,t)$ has a non-tangential limit $\hat{\omega}^{(Y,s)}$-almost everywhere. Finally, because $w_R, \varphi_R \in C(\overline{\Omega})$ we can conclude that $\nabla u$ has a non-tangential limit for $\hat{\omega}^{(Y,s)}$-almost every point in $C_{R}(0,0)$. As $R$ is arbitrary the result follows. 
\end{proof}

If we assume higher regularity in $\partial \Omega$, it is easy to conclude that $\nabla u(Q,\tau) = h(Q,\tau)\hat{n}(Q,\tau)$ for every $(Q,\tau) \in \partial \Omega$. The following lemma, proved in Appendix \ref{appendix:nontangentiallimit}, says that this remains true in our (low regularity) setting.

\begin{lem}\label{equaltoh}
For $\sigma$-a.e. $(Q,\tau)\in \partial \Omega$ we have $F(Q,\tau) = h(Q,\tau)\hat{n}(Q,\tau)$
\end{lem}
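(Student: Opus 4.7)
The plan is to extract $F = h\hat{n}$ from the distributional definition of $\omega$ in \eqref{ipgreenfunction}, by combining an interior approximation with Green's identity to identify the boundary flux, and then by a blowup argument to fix the direction.

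I would approximate $\Omega$ from inside by a sequence of regular domains $\Omega_k\nearrow\Omega$, for instance the parabolic sawtooth domains built in Proposition \ref{parabolicsawtoothdomains}, with $C^1$-in-space boundaries, outward unit normals $\nu_k$ (which approach $-\hat{n}$ near $\partial\Omega$, since $\hat{n}$ is inward to $\Omega$), and surface measures $\sigma_k$ converging weakly to $\sigma$ on compact subsets of $\partial\Omega$. Fix $\varphi\in C_c^\infty(\R^{n+1})$. Green's identity on $\Omega_k$ applied to the adjoint-caloric $u$ and $\varphi$ gives
\[
\int_{\Omega_k} u(\Delta_Y - \partial_s)\varphi\, dY\,ds \;=\; \int_{\partial\Omega_k} \bigl( u\,\nabla_Y\varphi - \varphi\,\nabla_Y u\bigr)\cdot \nu_k\,d\sigma_k,
\]
the bulk $(\Delta+\partial_s)u$-term and all time-boundary contributions vanishing (since $u$ is adjoint-caloric and $u|_{\partial\Omega}\equiv 0$). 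As $k\to\infty$ the left-hand side tends to $\int\varphi\,d\omega$ by \eqref{ipgreenfunction}. On the right, $u\,\nabla\varphi\cdot\nu_k \to 0$ uniformly because $u\in C(\overline{\Omega})$ with $u|_{\partial\Omega}\equiv 0$; and $\varphi\,\nabla u\cdot\nu_k \to -\varphi\,F\cdot\hat{n}$ pointwise $\sigma$-a.e.\ by Corollary \ref{nontangentiallimiteverywhere}, with the $L^2_{\mathrm{loc}}(d\sigma)$ bound on the non-tangential maximal function from Lemma \ref{nontangetiallybounded} serving as a dominating majorant. Equating with $\int \varphi h\,d\sigma$ yields $F\cdot\hat{n} = h$ $\sigma$-a.e.

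To upgrade this to $F = h\hat{n}$, I would argue at $\sigma$-a.e.\ boundary point $(Q,\tau)$ that the tangential components of $F$ vanish via a blowup. The parabolic rescalings $u_r(X,t)\colonequals r^{-1}u(Q+rX,\tau+r^2 t)$ form an equicontinuous family by the H\"older decay of Lemma \ref{growthattheboundary}, with at most linear growth inherited from the same non-tangential bounds, so subsequential limits are non-negative adjoint-caloric functions on the half-space $\{X\cdot\hat{n}>0\}$, vanishing on the bounding plane, of linear growth. Odd reflection across that plane together with the parabolic Liouville theorem forces such a limit to be of the form $c\, X\cdot\hat{n}$; comparing gradients with the non-tangential limit $F$ (which is preserved under parabolic rescaling along non-tangential sequences) yields $F = c\hat{n}$, and the previous step identifies $c = h(Q,\tau)$. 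The main obstacle is the dominated-convergence step in the limit passage: one must arrange the sawtooth approximation so that every boundary point of $\Omega_k$ lies in a uniform-in-$k$ non-tangential cone of some nearby point of $\partial\Omega$, enabling the $L^2$-maximal-function bound to act as a majorant. Since the parabolic sawtooth construction itself does not follow from the elliptic theory---this is explicitly flagged in the introduction---the whole scheme rests on the independent construction carried out in Appendix B.
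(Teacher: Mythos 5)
Your second step (the blowup) is essentially sound: at a $\sigma$-a.e.\ point where a tangent plane exists (Lemma \ref{prdomainshavetangentplanes}) and where the truncated maximal function of $h$ is finite (giving the linear growth bound $u(Y,s)\lesssim M(h)(Q,\tau)\,\|(Y,s)-(Q,\tau)\|$ needed for compactness), the rescalings converge to $c\,X\cdot\hat n$, and interior parabolic estimates let you read off $F(Q,\tau)=c\,\hat n(Q,\tau)$ from non-tangential convergence of $\nabla u$ along the blowup points. This is a genuinely different, and arguably slicker, route to what the paper proves as Lemma \ref{onlynormaldirection} by a real-variable averaging argument over discs in the tangent plane.

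The gap is in your first step, and it is not cosmetic: the identification $c=h(Q,\tau)$ rests entirely on it. The Green's identity passage needs approximating domains with (i) well-defined surface measures and spatial normals, (ii) $\nu_k\to-\hat n$ and $\sigma_k\rightharpoonup\sigma$, and (iii) uniform upper bounds on $\sigma_k(\mathrm{supp}\,\varphi)$ so that the $u\,\nabla\varphi\cdot\nu_k$ term, the time-boundary term (which does \emph{not} vanish for finite $k$, since $u>0$ on the lateral part of $\partial\Omega_k$), and a dominated-convergence argument for $\varphi\,\nabla u\cdot\nu_k$ can be run. Proposition \ref{parabolicsawtoothdomains} supplies none of this: the sawtooth domains $S_\alpha(F)$ are only shown to be $\delta$-Reifenberg flat (no Ahlfors regularity, no surface-measure convergence, no normals), their boundaries contain $F\subset\partial\Omega$ itself (so they are not interior approximations and $\nabla u$ has no classical trace there), and for a fixed aperture $\alpha$ the normals on the lateral cone-like boundary are tilted away from $\hat n$ and do not converge to $-\hat n$. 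Moreover ``pointwise convergence $\sigma$-a.e.'' of integrands living on the varying surfaces $\partial\Omega_k$ is not meaningful without a parametrization with Jacobian control, and dominated convergence does not apply across the varying measures $\sigma_k$. The paper avoids every one of these issues by never taking a boundary flux of $\nabla u$: it applies the divergence theorem on time slices (sets of locally finite perimeter) to $u\,\nabla\phi$, using $u|_{\partial\Omega}=0$, to get $\int_{\partial\Omega}\phi h\,d\sigma=-\int_\Omega\nabla\phi\cdot\nabla u+u\,\phi_t$, and then extracts $|F|=h$ at density points by choosing $\phi$ as a product of radial cutoffs in space and time, differentiating in the two radii, and estimating the resulting terms with the tangent plane, the doubling of $\omega$, and the non-tangential convergence of $\nabla u$. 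To repair your argument you would either have to build smooth interior approximations with quantitative convergence of normals and surface measures (a substantial task in this low-regularity parabolic setting), or replace the flux computation by the paper's slice-wise integration by parts.
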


Finally, we can prove the integral estimate.  

\begin{lem}\label{gradientrepresentation}
Let $\Omega$ be a $\delta$-Reifenberg flat parabolic regular domain. Let $u,h$ be the Green function and parabolic Poisson kernel with poles at infinity respectively.  Fix $R >>1$, then for any $(X,t) \in \Omega$ with $\|(X,t)\| \leq R/2$ \begin{equation}\label{gradientbound}
|\nabla u(X,t)| \leq \int_{\Delta_{2R}(0,0)} h(Q,\tau) d\hat{\omega}^{(X,t)}(Q,\tau) + C\frac{\|(X,t)\|^{3/4}}{R^{1/2}}.
\end{equation}
\end{lem}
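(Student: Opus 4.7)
The plan is to use the auxiliary function $H_R = \varphi_R \nabla u - w_R$ from the proof of Corollary \ref{nontangentiallimiteverywhere} and recover $\nabla u$ via a boundary representation. By construction each component of $H_R$ is adjoint-caloric in $\Omega$, vanishes continuously on $\partial\Omega$ away from where $\varphi_R \nabla u$ survives, and in fact the boundary trace is
\[
H_R(Q,\tau) \;=\; \varphi_R(Q,\tau)\,F(Q,\tau) \;=\; \varphi_R(Q,\tau)\,h(Q,\tau)\,\hat n(Q,\tau)
\]
for $\sigma$-a.e.\ (and hence $\hat\omega^{(X,t)}$-a.e., by the $A_\infty$ property) $(Q,\tau)\in\partial\Omega$, using Lemma \ref{equaltoh} and $w_R|_{\partial\Omega}\equiv 0$. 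Since $w_R$ is bounded (by \eqref{growthofwfar}) and $N(\varphi_R \nabla u)\in L^2_{\mathrm{loc}}(d\sigma)$ (by Lemma \ref{nontangetiallybounded} together with the compact support of $\varphi_R$), $H_R$ has non-tangentially bounded maximal function that is integrable against $d\hat\omega^{(X,t)}$ for every $(X,t)\in\Omega$.

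Appealing to the Fatou/Martin representation theorem for adjoint-caloric functions on parabolic Reifenberg flat domains developed in Appendix \ref{fatouetcforparaboliccase}, we may therefore represent, componentwise,
\[
H_R(X,t) \;=\; \int_{\partial\Omega} \varphi_R(Q,\tau)\,h(Q,\tau)\,\hat n(Q,\tau)\,d\hat\omega^{(X,t)}(Q,\tau).
\]
Taking absolute values on both sides and using $|\hat n|=1$ together with $0\le \varphi_R \le \mathbf{1}_{C_{2R}(0,0)}$, we obtain
\[
|H_R(X,t)| \;\le\; \int_{\Delta_{2R}(0,0)} h(Q,\tau)\,d\hat\omega^{(X,t)}(Q,\tau).
\]

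Now for $\|(X,t)\|\le R/2$ we have $\varphi_R(X,t)=1$, so $H_R(X,t)=\nabla u(X,t)-w_R(X,t)$ and the triangle inequality gives
\[
|\nabla u(X,t)| \;\le\; \int_{\Delta_{2R}(0,0)} h(Q,\tau)\,d\hat\omega^{(X,t)}(Q,\tau) \;+\; |w_R(X,t)|.
\]
The estimate \eqref{growthofwclose} from Lemma \ref{cutoffargument} controls the last term by $C\,\delta(X,t)^{3/4}R^{-1/2}$; because $(0,0)\in\partial\Omega$, the distance $\delta(X,t)\le\|(X,t)\|$, and we conclude the bound \eqref{gradientbound}.

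The main technical obstacle is the representation step: we need to know that an adjoint-caloric function with non-tangentially bounded (in $L^1(d\hat\omega^{(X,t)})$) maximal function and a well-defined boundary trace is actually the Poisson integral of that trace against $d\hat\omega^{(X,t)}$. In the elliptic setting this is classical Fatou/Martin theory, but for caloric measure on parabolic Reifenberg flat domains the analogous machinery is not in the literature; we therefore rely on the parabolic versions proved in Appendix \ref{fatouetcforparaboliccase}. Once that is in hand, every other ingredient (the identification $F=h\hat n$, the $L^\infty$ and non-tangential bounds on $H_R$, and the growth estimate on $w_R$) has already been supplied by Lemmas \ref{nontangetiallybounded}, \ref{cutoffargument} and \ref{equaltoh}.
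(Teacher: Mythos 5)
Your proposal is correct and follows essentially the same route as the paper: define $H_R = \varphi_R \nabla u - w_R$, apply the representation theorem of Appendix \ref{fatouetcforparaboliccase} (Proposition \ref{generalrepresentationtheorem}) using the integrability of $N(H_R)$ established in Corollary \ref{nontangentiallimiteverywhere}, identify the trace as $\varphi_R h\hat n$ via Lemma \ref{equaltoh}, and control $|w_R|$ by \eqref{growthofwclose}. The only cosmetic difference is that you spell out $\delta(X,t)\le \|(X,t)\|$ explicitly, which the paper leaves implicit.
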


\begin{proof}
For $(X,t) \in \Omega$ define $H_R(X,t) = \varphi_R(X,t)\nabla u(X,t) - w_R(X,t)$ where $w_R,\varphi_R$ were introduced in Lemma \ref{cutoffargument}. $H_R(X,t)$ is a solution to the adjoint heat equation (by construction) and $N(H_R) \in L^1(d\hat{\omega}^{(Y,s)})$ for every $(Y,s) \in \Omega$ (as shown in the proof of Corollary \ref{nontangentiallimiteverywhere}). Hence, by Proposition \ref{generalrepresentationtheorem} we have $H_R(X,t) = \int_{\partial \Omega} g(Q,\tau) d\hat{\omega}^{(X,t)}(Q,\tau)$, where $g(Q,\tau)$ is the non-tangential limit of $H_R$. 

Lemma \ref{equaltoh} and $w_R|_{\partial \Omega} \equiv 0$ imply that $g(Q,\tau) = \varphi_R(Q,\tau)h(Q,\tau)\hat{n}(Q,\tau)$.  Estimate \eqref{growthofwclose} on the growth of $w_R$ allows us to conclude
\begin{equation*}\label{nontangentiallimitofh}
|\nabla u(X,t)| \leq |H_R(X,t)| + |w_R(X,t)| \leq \int_{\partial \Omega \cap C_{2R}(0,0)} h(Q,\tau)d\hat{\omega}^{(X,t)}(Q,\tau) + C\frac{\|(X,t)\|^{3/4}}{R^{1/2}}.
\end{equation*}
\end{proof}

The finite pole case begins similarly; we start with a cut-off argument much in the style of Lemma \ref{cutoffargument}. 

\begin{lem}\label{cutoffargumentfinitepole}
Let $(X_0, t_0) \in \Omega$ and fix any $(Q,\tau) \in \partial \Omega, R > 0, A \geq 100$ such that $(X_0,t_0) \in T_{A,R}^+(Q,\tau)$. Let $\varphi \in C^\infty_c(C_{R/2}(Q,\tau))$. Furthermore, it is possible to ensure that $\varphi \equiv 1$ on $C_{R/4}(Q,\tau)$, $0 \leq \varphi \leq 1, |\nabla \varphi| \leq C/R$ and $|\partial_t \varphi| , |\Delta \varphi| \leq C/R^2$. 

For $(X,t) \in \Omega$ define \begin{equation}\label{definitionofwfinitepole}
W(X,t) = \int_{\Omega}G(Y,s,X,t) (\partial_s+\Delta_Y)[\varphi(Y,s)\nabla u^{(X_0,t_0)}(Y,s)]dYds.
\end{equation}
Then, $W|_{\partial \Omega} \equiv 0$ and $W\in C(\overline{\Omega})$. Additionally, if $\|(X,t)-(Q,\tau)\| \leq R/8$ then
\begin{equation}\label{growthofwclosefinitepole}
|W(X,t)| \leq C(A) \left(\frac{\delta(X,t)}{R}\right)^{3/4}\frac{\omega^{(X_0,t_0)}(\Delta_{R}(Q,\tau))}{R^{n+1}}.
\end{equation}
Finally, if $\|(X,t) - (Q, \tau)\| \geq 32R$ there is a constant $C > 0$ (which might depend on $(X_0,t_0), (Q,\tau)$ but is independent of $(X,t)$) such that \begin{equation}\label{growthofwfarfinitepole}
|W(X,t)| \leq C
\end{equation}
\end{lem}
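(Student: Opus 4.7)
The plan is to mirror the proof of Lemma \ref{cutoffargument} essentially line by line, with one key substitution. In the infinite-pole argument the crucial ingredient was the VMO-based growth bound $u(A^-_{2R}(0,0)) \leq R^{1+\alpha}$; in the finite-pole setting the analogous quantity is controlled directly by Lemma \ref{comparisontheorem}, namely
\[
u^{(X_0,t_0)}(A^+_R(Q,\tau)) = G(X_0,t_0,A^+_R(Q,\tau)) \leq C R^{-n}\omega^{(X_0,t_0)}(\Delta_{R/2}(Q,\tau)),
\]
which is precisely what produces the factor $\omega^{(X_0,t_0)}(\Delta_R(Q,\tau))/R^{n+1}$ in \eqref{growthofwclosefinitepole}. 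The assertions $W|_{\partial\Omega} \equiv 0$ and $W \in C(\overline{\Omega})$ follow from the corresponding properties of $G(Y,s,\cdot,\cdot)$ once one notes that the integrand is smooth on its support: derivatives of $\varphi$ are supported in the annulus $\mathcal{A} \colonequals C_{R/2}(Q,\tau) \setminus C_{R/4}(Q,\tau)$, and the hypothesis $(X_0,t_0) \in T^+_{A,R}(Q,\tau)$ forces $t_0 - \tau \geq 4R^2$, so $(X_0,t_0) \notin C_{R/2}(Q,\tau)$ and $\nabla u^{(X_0,t_0)}$ has no singularity on $\mathcal{A}$.

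Expanding via the product rule decomposes $W = W^1 + W^2$ exactly as in the proof of Lemma \ref{cutoffargument}. Standard interior regularity for adjoint-caloric functions yields $|\nabla u^{(X_0,t_0)}(Y,s)| \leq C u^{(X_0,t_0)}(Y,s)/\delta(Y,s)$ and $|\nabla^2 u^{(X_0,t_0)}(Y,s)| \leq C u^{(X_0,t_0)}(Y,s)/\delta(Y,s)^2$ on $\mathcal{A}$, so that $|W^1|$ is dominated by $|W^2|$ and the estimate reduces to bounding
\[
I(X,t) \colonequals \frac{1}{R}\int_{\Omega \cap \mathcal{A}} \frac{u^{(X_0,t_0)}(Y,s)}{\delta(Y,s)^2}\, G(Y,s,X,t)\, dY\,ds.
\]
Lemma \ref{growthattheboundary}, applied to $u^{(X_0,t_0)}$ at the nearest boundary point to $(Y,s)$ and then combined with Lemma \ref{biggeratNTApoint} via a short Harnack chain, gives $u^{(X_0,t_0)}(Y,s) \leq C(\delta(Y,s)/R)^{1-\varepsilon}\, u^{(X_0,t_0)}(A^+_R(Q,\tau))$ for any $\varepsilon > 0$, and Lemma \ref{comparisontheorem} converts the corkscrew value into $CR^{-n}\omega^{(X_0,t_0)}(\Delta_R(Q,\tau))$.

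For the close regime $\|(X,t)-(Q,\tau)\| \leq R/8$ I reuse the two-sided Green function estimate at the heart of Lemma \ref{cutoffargument}. Lemma \ref{growthattheboundary} in the $(Y,s)$ variable together with a Harnack chain (constructible as in \eqref{harnackchainargument} in any $\delta$-Reifenberg flat domain) to $A^+_{3R/2}(Q,\tau)$ gives $G(Y,s,X,t) \leq C(\delta(Y,s)/R)^{3/4}\, G(A^+_{3R/2}(Q,\tau),X,t)$. A second application of Lemma \ref{growthattheboundary} in the $(X,t)$ variable, combined with Lemmas \ref{backwardsharnack} and \ref{comparisontheorem}, then yields $G(A^+_{3R/2}(Q,\tau),X,t) \leq C(\delta(X,t)/R)^{3/4}R^{-n}$. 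Substituting these estimates into $I(X,t)$ and performing the layer-cake integration of $\delta(Y,s)^{-\alpha}$ on $\Omega \cap \mathcal{A}$ (using that $|\{(Y,s) \in \Omega \cap \mathcal{A} : \delta(Y,s) < \lambda\}| \leq CR^n\lambda$ by Ahlfors regularity), exactly as in the passage from \eqref{firstestimateforw} to \eqref{puttingitalltogetherforwclose} with $\varepsilon$ chosen small enough to make the resulting power of $\delta(Y,s)$ integrable, produces the advertised bound $(\delta(X,t)/R)^{3/4}\,\omega^{(X_0,t_0)}(\Delta_R(Q,\tau))/R^{n+1}$.

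For the far regime $\|(X,t)-(Q,\tau)\| \geq 32R$ I mimic the argument yielding \eqref{farharnackchainargument}--\eqref{wgetssmallfarout}: writing $\rho \colonequals \|(X,t)-(Q,\tau)\|$, a Harnack chain from $(Y,s) \in \mathcal{A}$ to $A^-_{\rho/4}(Q,\tau)$ followed by Lemma \ref{growthattheboundary} in the $(X,t)$ variable yields $G(Y,s,X,t) \leq C\delta(Y,s)^{3/4}\rho^{-n-3/4}$. Integrating against the preceding bounds on $u^{(X_0,t_0)}$ then gives $|W(X,t)| \leq C$, with the constant allowed to depend on $(X_0,t_0)$ and $(Q,\tau)$ but not on $(X,t)$ (and in fact decaying as $\rho \to \infty$). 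The main obstacle throughout is purely bookkeeping: at every invocation of Lemmas \ref{comparisontheorem}, \ref{backwardsharnack}, and \ref{doublingmeasure} one must verify that the pole $(X_0,t_0)$ lies in the relevant time-space cone, which follows from the fact that $(X_0,t_0) \in T^+_{A,R}(Q,\tau)$ forces $(X_0,t_0) \in T^+_{A',r}(P,\eta)$ for every $(P,\eta) \in C_R(Q,\tau)$ and every $r$ comparable to $R$, with $A'$ depending only on $A$.
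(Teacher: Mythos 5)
Your proposal is correct and follows essentially the same route as the paper, whose proof of Lemma \ref{cutoffargumentfinitepole} simply repeats the argument of Lemma \ref{cutoffargument} (with the modified cutoff, noting $\varphi \equiv \varphi_{R/4}(\cdot-(Q,\tau))$) to reach $|W(X,t)| \leq C\,u^{(X_0,t_0)}(A^-_{R/4}(Q,\tau))\,R^{-1}\left(\delta(X,t)/R\right)^{3/4}$ and then applies Lemmas \ref{backwardsharnack} and \ref{comparisontheorem} to convert the corkscrew value into $C(A)R^{-n}\omega^{(X_0,t_0)}(\Delta_{R}(Q,\tau))$ --- exactly the key substitution you identify. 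Two cosmetic slips worth fixing but not affecting the argument: the neighborhood estimate should read $|\{(Y,s)\in\Omega\cap\mathcal{A}\,:\,\delta(Y,s)<\lambda\}|\leq CR^{n+1}\lambda$ (this is what produces the $R^{-n-1}$ in \eqref{growthofwclosefinitepole}), and in the far regime the Harnack chain is taken in the $(X,t)$ variable (replacing the distant pole by a corkscrew point at scale comparable to $\|(X,t)-(Q,\tau)\|$) while Lemma \ref{growthattheboundary} is then applied in the $(Y,s)$ variable, not the reverse.
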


\begin{proof}
Using the notation from Lemma \ref{cutoffargumentfinitepole}, observe that $\varphi(X,t)\equiv \varphi_{R/4}((X,t)-(Q,\tau))$. Therefore, the continuity and boundary values of $W$ follows as in the infinite pole case. Furthermore, arguing exactly as in the proof of equation \eqref{growthofwclosefinitepole} (taking into account our modifications on $\varphi$) we establish an analogue to equation \eqref{puttingitalltogetherforwclose} in the finite pole setting;
\begin{equation}\label{puttingitalltogetherforwclosefinitepole}
|W(X,t)| \leq C\frac{u^{(X_0,t_0)}(A^-_{R/4}(Q,\tau))}{R} \left(\frac{\delta(X,t)}{R}\right)^{3/4}.
\end{equation}
By assumption, $(X_0,t_0), (Q,\tau)$ and $R$ satisfy the hypothesis of Lemmas \ref{comparisontheorem} and \ref{backwardsharnack}. As such, we may apply these lemmas to obtain the desired inequality \eqref{growthofwclosefinitepole}. 

To prove equation \eqref{growthofwfarfinitepole} we follow the proof of equation \eqref{growthofwfar} to obtain an analogue of \eqref{wgetssmallfarout};
\begin{equation}\label{wgetssmallfaroutfinitepole}
|W(X,t)| \leq C\frac{u^{(X_0,t_0)}(A^-_{R/4}(Q,\tau))}{R} \frac{R}{\|(X,t)- (Q,\tau)\|^n} \leq C.
\end{equation}
\end{proof}

\begin{cor}\label{nontangentiallimiteverywherefinitepole}
For $(X_0,t_0) \in \Omega$, let $(Q,\tau) \in \partial \Omega, R > 0, A \geq 100$ be as in Lemma \ref{cutoffargumentfinitepole}.  For any $(Y, s) \in \Omega$, $\nabla u^{(X_0,t_0)}(-,-)$ has a non-tangential limit, $F^{(X_0,t_0)}(P,\eta)$, for $d\hat{\omega}^{(Y,s)}$-almost every $(P,\eta) \in \Delta_{R/4}(Q,\tau)$. In particular, the non-tangential limit exists for $\sigma$-almost every $(P,\eta) \in \Delta_{R/4}(Q,\tau)$. Furthermore, $F^{(X_0,t_0)}|_{\Delta_{R/4}(Q,\tau)} \in L^1(d\hat{\omega}^{(Y,s)})$ and $F^{(X_0,t_0)}|_{\Delta_{R/4}(Q,\tau)} \in L^2(d\sigma)$. 
\end{cor}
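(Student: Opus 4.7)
The plan is to mirror the proof of Corollary \ref{nontangentiallimiteverywhere}, substituting the localized cutoff $W$ built in Lemma \ref{cutoffargumentfinitepole} for the global $w_R$. Concretely, set
\[
H^{(X_0,t_0)}(X,t) \colonequals \varphi(X,t)\,\nabla u^{(X_0,t_0)}(X,t) - W(X,t),\qquad (X,t)\in \Omega.
\]
By the very definition of $W$ in \eqref{definitionofwfinitepole}, the distributional source produced when one applies the adjoint heat operator to $\varphi\,\nabla u^{(X_0,t_0)}$ is exactly what $W$ absorbs, so $H^{(X_0,t_0)}$ is a solution to the adjoint heat equation in $\Omega$; and it vanishes on $\partial \Omega$ since $W|_{\partial\Omega}\equiv 0$ and $\nabla u^{(X_0,t_0)}$ is applied against $\varphi$, which has compact support in $C_{R/2}(Q,\tau)$.

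Next I would verify the hypotheses of the Fatou-type result (Lemma \ref{nontangentiallimitsdae}), namely that $N(H^{(X_0,t_0)}) \in L^1_{\mathrm{loc}}(d\hat{\omega}^{(Y,s)})$ for each $(Y,s)\in\Omega$. Because $\mathrm{spt}\,\varphi\subset C_{R/2}(Q,\tau)$, the non-tangential maximal function of $\varphi\,\nabla u^{(X_0,t_0)}$ is supported in a slightly enlarged surface ball and is dominated there by $N^{R/8}_\alpha(|\nabla u^{(X_0,t_0)}|)$, which lies in $L^2(\Delta_{R/2}(Q,\tau), d\sigma)$ by Lemma \ref{nontangentiallyboundedfinitepole}. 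The estimates \eqref{growthofwclosefinitepole} and \eqref{growthofwfarfinitepole}, together with $W\in C(\overline{\Omega})$, give $W\in L^\infty(\Omega)$ and hence $N(W)\in L^\infty(\partial\Omega)$. Since $\hat{\omega}^{(Y,s)}$ is a finite Radon measure and $\sigma$ is Ahlfors regular, local $L^2(d\sigma)$ plus $L^\infty$ bounds imply local $L^1(d\hat{\omega}^{(Y,s)})$ bounds, as needed.

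Applying Lemma \ref{nontangentiallimitsdae} then produces a non-tangential limit of $H^{(X_0,t_0)}$ at $\hat{\omega}^{(Y,s)}$-a.e.\ boundary point. Because $W$ extends continuously to $\overline{\Omega}$ and $\varphi\equiv 1$ on $C_{R/4}(Q,\tau)$, this conclusion transfers to $\nabla u^{(X_0,t_0)}$ at $\hat{\omega}^{(Y,s)}$-a.e.\ $(P,\eta)\in \Delta_{R/4}(Q,\tau)$, which we take as the definition of $F^{(X_0,t_0)}(P,\eta)$. To pass from ``$\hat{\omega}^{(Y,s)}$-a.e.'' to ``$\sigma$-a.e.'' I would invoke Theorem 1 of \cite{hlncaloricmeasure} exactly as in Corollary \ref{nontangentiallimiteverywhere}: for any compact $K\subset \Delta_{R/4}(Q,\tau)$ there is a pole $(Y,s)$ making $\hat{\omega}^{(Y,s)}|_K\in A_\infty(\sigma|_K)$, so null sets are shared. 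The integrability $F^{(X_0,t_0)}\in L^2(\Delta_{R/4}(Q,\tau), d\sigma)$, and thus $L^1(\Delta_{R/4}(Q,\tau), d\hat{\omega}^{(Y,s)})$, then follows from Fatou's lemma applied to the bound in Lemma \ref{nontangentiallyboundedfinitepole}.

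The only point that demands genuine care is the geometric localization: an $\alpha$-aperture non-tangential cone at $(P,\eta)\in \Delta_{R/4}(Q,\tau)$ may protrude beyond $C_{R/2}(Q,\tau)$, where the relation $H^{(X_0,t_0)} = \nabla u^{(X_0,t_0)} - W$ fails. However, the non-tangential limit depends only on the behavior of $\nabla u^{(X_0,t_0)}$ in arbitrarily small neighborhoods of $(P,\eta)$, where $\varphi\equiv 1$, so the identity holds on the relevant tail of any approach sequence and the argument goes through unchanged.
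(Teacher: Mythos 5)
Your argument is correct and follows essentially the same route as the paper: form $H = \varphi\,\nabla u^{(X_0,t_0)} - W$ with the cutoff pair from Lemma \ref{cutoffargumentfinitepole}, verify $N(H)\in L^1(d\hat{\omega}^{(Y,s)})$ using Lemma \ref{nontangentiallyboundedfinitepole} together with the boundedness of $W$ from \eqref{growthofwclosefinitepole}--\eqref{growthofwfarfinitepole}, apply the Fatou-type Lemma \ref{nontangentiallimitsdae}, and pass from $\hat{\omega}^{(Y,s)}$-a.e.\ to $\sigma$-a.e.\ via the $A_\infty$ property from Theorem 1 of \cite{hlncaloricmeasure}, exactly as in the paper's proof. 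One inessential slip: $H$ does not in fact vanish on $\partial\Omega$ (on $\Delta_{R/4}(Q,\tau)$ one has $\varphi\equiv 1$ and $\nabla u^{(X_0,t_0)}$ has the nonzero boundary limit $k^{(X_0,t_0)}\hat{n}$), but this claim is never used and Lemma \ref{nontangentiallimitsdae} does not require it.
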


\begin{proof}
Theorem 1 in \cite{hlncaloricmeasure} implies that there exists $(Y,s) \in \Omega$ such that $\hat{\omega}^{(Y,s)}|_{\Delta_{\delta(X_0,t_0)/4}(Q,\tau)} \in A_\infty(\sigma|_{\Delta_{R}(Q,\tau)})$. Therefore, if the non-tangential limit exists $d\hat{\omega}^{(Y,s)}$-almost everywhere on $\Delta_{R/4}(Q,\tau)$ for any $(Y,s) \in \Omega$ we can conclude that it exists $\sigma$-almost everywhere on $\Delta_{R/4}(Q,\tau)$. Additionally, Lemma \ref{nontangentiallyboundedfinitepole} implies that if $\nabla u^{(X_0,t_0)}(-,-)$ has a non-tangential limit on $\Delta_{R/4}(Q,\tau)$, that limit is in $L^2(d\sigma)$-integrable on $\Delta_{R/4}(Q,\tau)$ and therefore $L^1(d\hat{\omega}^{(Y,s)})$-integrable on $\Delta_{R/4}(Q,\tau)$ for any $(Y,s) \in \Omega$.

Thus it suffices to prove, for any $(Y,s) \in \Omega$, that $\nabla u^{(X_0,t_0)}(-,-)$ has a non-tangential limit $d\hat{\omega}^{(Y,s)}$-almost everywhere on $\Delta_{R/4}(Q,\tau)$. Let $\varphi, W$ be as in Lemma \ref{cutoffargumentfinitepole} and define, for $(X,t) \in \Omega$, $H(X,t) = \varphi(X,t)\nabla u^{(X_0,t_0)}(X,t) - W(X,t)$. Equation \eqref{growthofwfarfinitepole} and $W(X,t) \in C(\overline{\Omega})$ imply that $W(X,t) \in L^\infty(\Omega)$. Lemma \ref{nontangentiallyboundedfinitepole} implies that $N(H)(P,\eta) \in L^1(d\hat{\omega}^{(Y,s)})$ for any $(Y,s) \in \Omega$  (as outside of $C_{R/2}(Q,\tau)$ we have $H = -W$, which is bounded).  By construction, $H$ is a solution to the adjoint heat equation in $\Omega$, hence, by Lemma \ref{nontangentiallimitsdae}, $H(X,t)$ has a non-tangential limit $d\hat{\omega}^{(Y,s)}$-almost everywhere. Finally, because $W, \varphi \in C(\overline{\Omega})$ we can conclude that $\nabla u^{(X_0,t_0)}(-,-)$ has a non-tangential limit for $d\hat{\omega}^{(Y,s)}$-almost every point in $\Delta_{R/4}(Q,\tau)$.
\end{proof}

As in the infinite pole case, if we assume higher regularity in $\partial \Omega$, it is easy to conclude that $\nabla u^{(X_0,t_0)}(-,-)(P,\eta) = k^{(X_0,t_0)}(P,\eta)\hat{n}(P,\eta)$ for every $(P,\eta) \in \partial \Omega$. The following lemma, proved in Appendix \ref{appendix:nontangentiallimit}, says that this remains true in our (low regularity) setting.

\begin{lem}\label{equaltohfinitepole}
For $(X_0,t_0) \in \Omega$ let $(Q,\tau) \in \partial \Omega, R> 0, A\geq 100$ be as in Lemma \ref{cutoffargumentfinitepole}. Then for $\sigma$-a.e. $(P,\eta)\in \Delta_{R/4}(Q,\tau)$ we have $F^{(X_0,t_0)}(P,\eta)= k^{(X_0,t_0)}(P,\eta)\hat{n}(P,\eta)$
\end{lem}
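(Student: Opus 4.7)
This lemma is the finite-pole analog of Lemma \ref{equaltoh}, whose proof is deferred to Appendix A. My plan is to mirror that argument, replacing its global representation theorem by the localized analog afforded by Lemma \ref{cutoffargumentfinitepole} and Corollary \ref{nontangentiallimiteverywherefinitepole}. Fix a generic $(P,\eta) \in \Delta_{R/4}(Q,\tau)$ which simultaneously is a Lebesgue point of $k^{(X_0,t_0)}$ with respect to $\sigma$, a point at which the measure-theoretic inner normal $\hat n(P,\eta)$ exists, and a point at which the non-tangential limit $F^{(X_0,t_0)}(P,\eta)$ is attained. The complement of this set is $\sigma$-null by Corollary \ref{nontangentiallimiteverywherefinitepole} together with the fact that Reifenberg-flat, Ahlfors-regular boundaries admit measure-theoretic normals a.e. The statement $F^{(X_0,t_0)}(P,\eta) = c(P,\eta)\hat n(P,\eta)$ for some $c \ge 0$ is the easy half: since $u^{(X_0,t_0)}$ vanishes on $\partial\Omega$ and is nonnegative in $\Omega$, tangential directional derivatives (in the non-tangential sense) at $(P,\eta)$ must vanish while the normal component must be nonnegative; the only subtlety is controlling the deviation of $\partial\Omega$ from its tangent plane by $\delta R$, which is done via the growth estimate of Lemma \ref{growthattheboundary}.

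The magnitude $c(P,\eta)$ is identified by a blow-up. Set
\[
u_r(X,t) \colonequals \frac{u^{(X_0,t_0)}(P + rX, \eta + r^2 t)}{r\,\fint_{\Delta_r(P,\eta)} k^{(X_0,t_0)}\,d\sigma}
\]
on the rescaled domains $\Omega_r \colonequals r^{-1}(\Omega - (P,\eta))$. Reifenberg flatness gives $\Omega_r \to \Pi \colonequals \{X \cdot \hat n(P,\eta) > 0\}$ in Hausdorff distance; the comparison and doubling estimates of Lemmas \ref{comparisontheorem}--\ref{doublingmeasure} give uniform $L^\infty$ bounds for $u_r$ on compacts of $\Pi$; and Lemma \ref{nontangentiallyboundedfinitepole} gives uniform $L^2$ bounds on the non-tangential maximal function of $\nabla u_r$. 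Extract a subsequential local uniform limit $u_*$: it is nonnegative, adjoint caloric, and vanishes on $\partial\Pi$. The rescaled Poisson kernels $k_r \colonequals k^{(X_0,t_0)}(P + r\,\cdot,\eta + r^2\cdot)\big/\fint_{\Delta_r(P,\eta)}k^{(X_0,t_0)}d\sigma$ converge to the constant $1$ in $L^1_{\mathrm{loc}}(d\sigma_{\partial\Pi})$ by the Lebesgue point property together with the reverse H\"older inequality (Lemma \ref{hinvmosatisfiesreverseholderfinitepole}) and the VMO decomposition (Lemma \ref{vmodecompositionfinitepole}). A standard half-space classification (adjoint caloric, nonnegative, vanishing on $\partial\Pi$, with constant Poisson kernel $\equiv 1$) then forces $u_*(X,t) = (X \cdot \hat n(P,\eta))_+$, whose boundary gradient is precisely $\hat n(P,\eta)$. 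Unwinding the normalization gives $c(P,\eta) = k^{(X_0,t_0)}(P,\eta)$.

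The main obstacle I anticipate is the passage to the non-tangential limit through the blow-up, i.e., concluding that $\nabla u_r(A^+_1(0,0)) \to \nabla u_*(A^+_1(0,0))$ at a corkscrew point, where a priori the non-tangential convergence is only a qualitative a.e.\ statement. I would handle this with the quotient boundary comparison of Lemma \ref{growhtofquotientatboundary} applied to $u_r$ together with a fixed comparison solution (for instance, the adjoint-caloric function comparable to the distance to $\partial\Pi$): this produces quantitative H\"older control of $\nabla u_r$ near the origin with constants uniform in $r$, enough to identify the non-tangential limit at $(P,\eta)$ with the boundary value of $\nabla u_*$ on $\partial\Pi$. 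A secondary concern is the half-space classification step, which is classical and follows from Phragm\'en--Lindel\"of together with the explicit adjoint heat kernel on a half-space.
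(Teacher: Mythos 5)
Your route is genuinely different from the paper's: the paper proves Lemma \ref{equaltohfinitepole} (via Lemma \ref{equaltoh}) by the Kenig--Toro two-phase scheme, first showing $F\parallel \hat n$ by an Egoroff-type averaging argument over small discs (Lemma \ref{onlynormaldirection}) and then identifying $\langle F,\hat n\rangle$ with the kernel through an integration-by-parts identity with separate space and time cutoffs, differentiated in the two radii. A blow-up argument along the lines you propose can be made to work, and it has the advantage of delivering direction and magnitude simultaneously (indeed, since Corollary \ref{nontangentiallimiteverywherefinitepole} already gives existence of the non-tangential limit $\sigma$-a.e., it suffices to evaluate it along the single ray $(P+s\hat n,\eta)$, which lies in a non-tangential cone for small $s$ at a tangent point; interior parabolic estimates then give $\nabla u_r\to\nabla u_*$ locally in $C^1$, so the ``main obstacle'' you flag is not an obstacle, and the proposed use of Lemma \ref{growhtofquotientatboundary} with a comparison function living on $\Pi$ rather than on $\Omega_r$ is both ill-posed and unnecessary).

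However, as written there are two genuine gaps. First, $\delta$-Reifenberg flatness does \emph{not} give $\Omega_r\to\{X\cdot\hat n(P,\eta)>0\}$: it only gives closeness to some plane at each scale, and these planes may rotate as $r\downarrow 0$; likewise the existence of the measure-theoretic spatial normal on a time slice does not by itself give Hausdorff convergence of the space-time blow-ups. What you need is the $\sigma$-a.e.\ existence of a true space-time tangent plane, which comes from the Carleson condition on $\gamma$ (parabolic uniform rectifiability), i.e.\ the paper's Lemma \ref{prdomainshavetangentplanes}. Second, and more seriously, to conclude that the blow-up limit has Poisson kernel $\equiv 1$ you must control the weak limit of the rescaled \emph{surface measures} $\sigma_r$, not just of the densities $k_r$: you need $\omega_r=k_r\sigma_r\rightharpoonup\sigma_{\partial\Pi}$, and at this stage of the paper the weak convergence of $\sigma$ under pseudo-blow-ups is not available (that is Proposition \ref{weakconvergenceinpseudoblowups}, proved later and relying on Theorem \ref{poissonkernel1impliesflat}). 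If the weak limit of $\sigma_r$ were $c\,\sigma_{\partial\Pi}$ with $c\neq 1$, your classification would yield $u_*=c\,x_n$ and hence only $F=c\,k^{(X_0,t_0)}\hat n$ with an undetermined constant. The repair is the paper's Corollary \ref{densityofsigma}: at $\sigma$-a.e.\ point $\sigma(\Delta_r(P,\eta))/r^{n+1}\to 1$, which together with lower semicontinuity of the slice-wise perimeters upgrades to $\sigma_r\rightharpoonup\sigma_{\partial\Pi}$. With Lemma \ref{prdomainshavetangentplanes} and Corollary \ref{densityofsigma} inserted (and a uniform Lipschitz bound for $u_r$ on compacta of $\Pi$, which follows from \eqref{boundbymeasurequotient} at a Lebesgue point of $k^{(X_0,t_0)}$, so that odd reflection plus a Liouville theorem classifies $u_*$ as a linear plane solution), your argument closes; without them the stated conclusions do not follow from the hypotheses you invoke.
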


Finally, we have the integral estimate (the proof follows as in the infinite pole case and so we omit it). 

\begin{lem}\label{gradientrepresentationfinitepole}
For $(X_0,t_0) \in \Omega$ let $(Q,\tau) \in \partial \Omega, R > 0, A \geq 100$ be as in Lemma \ref{cutoffargumentfinitepole}. Then for any $(X,t) \in \Omega$ with $\|(X,t)-(Q,\tau)\| \leq \delta R/8$ \begin{equation}\label{gradientbound}
|\nabla u^{(X_0,t_0)}(X,t)| \leq \int_{\Delta_{R/2}(Q,\tau)} k^{(X_0,t_0)}d\hat{\omega}^{(X,t)} + C \left(\frac{\delta(X,t)}{R}\right)^{3/4}\frac{\omega^{(X_0,t_0)}(\Delta_{R}(Q,\tau))}{R^{n+1}}.
\end{equation}
Here $C \equiv C(A) < \infty$. 
\end{lem}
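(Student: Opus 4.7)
The plan is to mirror the proof of Lemma \ref{gradientrepresentation} in the finite pole setting, with Lemmas \ref{cutoffargumentfinitepole} and \ref{equaltohfinitepole} playing the roles of their infinite pole analogues. The structure is essentially identical and no genuinely new analytic ideas are needed; one substitutes $u, h, w_R, \varphi_R$ with $u^{(X_0,t_0)}, k^{(X_0,t_0)}, W, \varphi$ throughout.

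First, I would define $H(X,t) \colonequals \varphi(X,t)\nabla u^{(X_0,t_0)}(X,t) - W(X,t)$. By construction of $W$ in \eqref{definitionofwfinitepole}, the inhomogeneity $(\partial_s + \Delta_Y)[\varphi \nabla u^{(X_0,t_0)}]$ produced by multiplying the adjoint-caloric vector $\nabla u^{(X_0,t_0)}$ against the cutoff is precisely cancelled, so $H$ is adjoint-caloric on $\Omega$. Moreover $H\in C(\overline{\Omega})$, and $H\equiv 0$ on $\partial \Omega \setminus \mathrm{supp}(\varphi)$ since both $W$ and $\varphi$ vanish there.

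Second, I would verify that $N(H)\in L^1(d\hat{\omega}^{(Y,s)})$ for every $(Y,s)\in \Omega$. Outside $C_{R/2}(Q,\tau)$ one has $H = -W$, which is globally bounded by \eqref{growthofwfarfinitepole}; inside $C_{R/2}(Q,\tau)$, Lemma \ref{nontangentiallyboundedfinitepole} furnishes $L^2(d\sigma)$ control of $N_\alpha^{R/8}(|\nabla u^{(X_0,t_0)}|)$ on $\Delta_{R/2}(Q,\tau)$, and the $A_\infty$ relation between $\hat{\omega}^{(Y,s)}$ and $\sigma$ on compact subsets of $\partial \Omega$ (Theorem 1 of \cite{hlncaloricmeasure}) upgrades this to $L^1(d\hat{\omega}^{(Y,s)})$.

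With these two ingredients the representation theorem Proposition \ref{generalrepresentationtheorem} applies and gives
\[
H(X,t) = \int_{\partial \Omega} g(P,\eta)\,d\hat{\omega}^{(X,t)}(P,\eta),
\]
where $g$ is the non-tangential boundary value of $H$. Because $W$ vanishes continuously on $\partial \Omega$, Lemma \ref{equaltohfinitepole} identifies $g(P,\eta) = \varphi(P,\eta) k^{(X_0,t_0)}(P,\eta)\hat{n}(P,\eta)$, a function supported in $\Delta_{R/2}(Q,\tau)$. Taking moduli and using $0\le \varphi \le 1$ and $|\hat{n}|=1$ yields $|H(X,t)| \le \int_{\Delta_{R/2}(Q,\tau)} k^{(X_0,t_0)}\,d\hat{\omega}^{(X,t)}$. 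The hypothesis $\|(X,t)-(Q,\tau)\| \le \delta R/8 \le R/4$ places $(X,t)$ in the region where $\varphi \equiv 1$, so $\nabla u^{(X_0,t_0)}(X,t) = H(X,t) + W(X,t)$, and combining this with the growth estimate \eqref{growthofwclosefinitepole} for $W$ delivers the claimed bound. The only real bookkeeping point — not so much an obstacle as a sanity check — is confirming that the merely local non-tangential bound from Lemma \ref{nontangentiallyboundedfinitepole} suffices to legitimize the global representation formula, which it does precisely because $H$ is globally bounded off $\mathrm{supp}(\varphi)$.
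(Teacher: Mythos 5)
Your proposal is correct and follows exactly the route the paper intends: the paper omits this proof with the remark that it "follows as in the infinite pole case," and your argument is precisely that transposition, using Lemma \ref{cutoffargumentfinitepole} and Lemma \ref{equaltohfinitepole} in place of Lemma \ref{cutoffargument} and Lemma \ref{equaltoh}, together with the $N(H)\in L^1(d\hat{\omega}^{(Y,s)})$ verification already carried out in Corollary \ref{nontangentiallimiteverywherefinitepole}, Proposition \ref{generalrepresentationtheorem}, and the growth bound \eqref{growthofwclosefinitepole}. No discrepancies with the paper's approach.
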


\section{$\Omega$ is Vanishing Reifenberg Flat}\label{blowupanalysis}

In this section we use a blowup argument to prove Proposition \ref{vanishingreifenbergflat}, that $\Omega$ is vanishing Reifenberg flat, and Lemma \ref{weakconvergenceinpseudoblowups}, that $\lim_{r\downarrow 0} \sup_{(Q,\tau) \in K\cap \partial \Omega}\frac{\sigma(C_r(Q,\tau))}{r^{n+1}} = 1$. To do this, we invoke Theorem \ref{poissonkernel1impliesflat}, the classification of ``flat blow-ups". 

We now describe the blowup process,
\begin{defin}\label{parabolicpseudoblowup}
Let $K$ be a compact set (in the finite pole case we require $K = \Delta_R(Q,\tau)$ where $(Q,\tau) \in \partial \Omega, R > 0$ satisfy $(X_0,t_0) \in T_{A,4R}^+(Q,\tau)$ for some $A\geq 100$), $(Q_i,\tau_i) \in K \cap \partial \Omega$ and $r_i \downarrow 0$. Then we define 

\begin{subequations}
\label{eq:blowups}
\begin{align}
\Omega_i \colonequals& \{(X,t)\mid (r_iX + Q_i, r_i^2t + \tau_i)\in \Omega\} \\
u_i(X,t) \colonequals& \frac{u(r_iX + Q_i, r_i^2t+\tau_i)}{r_i \fint_{C_{r_i}(Q_i,\tau_i)\cap \partial \Omega} h d\sigma} \label{ublowup}\\
\omega_i(E) \colonequals& \frac{\sigma(C_{r_i}(Q_i,\tau_i))}{r_i^{n+1}} \frac{\omega(\{(P,\eta)\in \Omega\mid ((P-Q_i)/r_i, (\eta-\tau_i)/r_i^2)\in E\})}{\omega(C_{r_i}(Q_i,\tau_i))}\label{omegablowup}\\
h_i(P,\eta) \colonequals& \frac{h(r_iP + Q_i, r_i^2\eta+\tau_i)}{\fint_{C_{r_i}(Q_i,\tau_i)\cap \partial \Omega} h d\sigma}\label{hblowup}\\
\sigma_i \colonequals& \sigma|_{\partial \Omega_i}. \label{sigmablowup}
    \end{align}
\end{subequations}
Similarly we can define $u_i^{(X_0,t_0)}, \omega_i^{(X_0,t_0)}$ and $k_i^{(X_0,t_0)}$.
\end{defin}

\begin{rem}\label{whattheblowupsmean}
Using the uniqueness of the Green function and caloric measure it follows by a change of variables that $u_i$ is the adjoint-caloric Green's function for $\Omega_i$ with caloric measure $\omega_i$ and $$d\omega_i = h_i d\sigma_i.$$

Similarly, $u_i^{(X_0,t_0)}$ is the Green function for $\Omega_i$ with a pole at $(\frac{X_0-Q_i}{r_i}, \frac{t_0-\tau_i}{r_i^2})$ with associated caloric measure $\omega_i^{(X_0,t_0)}$ and Poisson kernel $k_i^{(X_0,t_0)}$. 
\end{rem}

We first need to show that (perhaps passing to a subsequence) the blowup process limits to a parabolic chord arc domain. In the elliptic setting this is Theorem 4.1 in \cite{kenigtoro}. Additionally, in \cite{nystrom1}, Nystr\"om considered a related parabolic blowup to the one above and proved similar convergence results. 

\begin{lem}\label{limitsexistforblowups}
Let $\Omega_i, u_i, h_i, \omega_i$ (or $u_i^{(X_0,t_0)}, k_i^{(X_0,t_0)}$ and $\omega_i^{(X_0,t_0)}$) be as in Definition \ref{parabolicpseudoblowup}. Then there exists a subsequence (which we can relabel for convenience) such that for any compact $K$ \begin{equation}\label{limitofthedomains}
D[K\cap\Omega_i, K\cap \Omega_\infty]\rightarrow 0
\end{equation} where $\Omega_\infty$ is a parabolic chord arc domain which is $4\delta$-Reifenberg flat. 

Moreover there is a $u_\infty \in C(\overline{\Omega}_\infty)$ such that  \begin{equation}\label{limitofus}u_i \rightarrow u_\infty\end{equation} uniformly on compacta. Additionally, there is a Radon measure $\omega_\infty$ supported on $\partial \Omega_\infty$ such that \begin{equation}\label{limitofomegas}
\omega_i \rightharpoonup \omega_\infty.
\end{equation}
Finally, $u_\infty, \omega_\infty$ are the Green function and caloric measure with poles at infinity for $\Omega_\infty$ (i.e. they satisfy equation \eqref{ipgreenfunction}).\end{lem}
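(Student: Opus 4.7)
The plan is a sequential compactness argument in four stages: convergence of the domains, of the Green functions $u_i$, of the measures $\omega_i$, and identification of the limiting pair. Throughout, one exploits scale-invariance: each $\Omega_i$ is a $\delta$-Reifenberg flat parabolic regular domain with the \emph{same} constants as $\Omega$, with $(0,0) \in \partial \Omega_i$, and satisfies Ahlfors regularity with the same constant $M$ and a Carleson bound on $\nu_i$ with the same norm $\|\nu\|_+$.

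For the domain convergence, the planes of best fit at $(0,0)$ for $\partial \Omega_i$ at each fixed radius $R$ lie in a compact family, so a diagonal argument across a countable dense set of radii extracts a subsequence along which $\partial \Omega_i \to \Sigma$ in Hausdorff distance on compacta. The two-sided separation \eqref{seperationequation} passes to the limit, producing an open set $\Omega_\infty$ with $\partial \Omega_\infty = \Sigma$ and connected complement; the Remark after Definition \ref{reifenbergflat} shows $\Omega_\infty$ is $4\delta$-Reifenberg flat, and the separation upgrades Hausdorff convergence of boundaries to $D[K\cap \Omega_i, K\cap \Omega_\infty]\to 0$. Ahlfors regularity of $\partial \Omega_\infty$ and the Carleson bound on $\nu_\infty$ follow from the scale-invariance of those hypotheses and lower semicontinuity under Hausdorff limits, so $\Omega_\infty$ is a parabolic chord arc domain.

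For $u_i$, the normalization \eqref{ublowup} is designed exactly so that $u_i(A^+_1(0,0))\asymp 1$ uniformly in $i$. Indeed, by Lemma \ref{comparisontheorem} (infinite-pole version from Appendix \ref{appendix:caloricmeasure}) $u(A^+_{r_i}(Q_i,\tau_i)) \asymp r_i^{-n}\omega(\Delta_{r_i/2}(Q_i,\tau_i))$, and doubling of $\omega$ with Ahlfors regularity of $\sigma$ converts this to $r_i \fint_{\Delta_{r_i}(Q_i,\tau_i)} h\,d\sigma$ up to multiplicative constants. Harnack chains within the $\Omega_i$ (which are NTA with uniform constants, per the discussion in Section \ref{preliminaryestimates}), the backward Harnack inequality (Lemma \ref{backwardsharnack}), and the boundary decay estimate of Lemma \ref{growthattheboundary} then yield uniform $L^\infty$ bounds for $u_i$ on any compact subset of $\overline{\Omega_\infty}$. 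Interior parabolic regularity supplies equicontinuity on compacta, and the boundary decay extends this up to $\partial \Omega_\infty$; a diagonal extraction gives $u_i \to u_\infty$ locally uniformly with $u_\infty \in C(\overline{\Omega_\infty})$, $u_\infty \equiv 0$ on $\partial \Omega_\infty$, and $u_\infty$ adjoint-caloric in $\Omega_\infty$ (the latter passes through uniform convergence).

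For $\omega_i$, the normalization \eqref{omegablowup} gives $\omega_i(C_1(0,0)) = \sigma(\Delta_{r_i}(Q_i,\tau_i))/r_i^{n+1} \leq M$, and iterated doubling (Lemma \ref{doublingmeasure}) produces uniform bounds $\omega_i(C_R(0,0)) \leq C(R)$ for every $R$. Radon-measure compactness extracts a further subsequence along which $\omega_i \rightharpoonup \omega_\infty$, with $\mathrm{spt}(\omega_\infty) \subset \partial \Omega_\infty$ by Hausdorff convergence of $\partial \Omega_i$. To identify $(u_\infty,\omega_\infty)$ as the infinite-pole pair for $\Omega_\infty$, pass to the limit in the identity from \eqref{ipgreenfunction} tested against $\varphi \in C^\infty_c(\mathbb{R}^{n+1})$: the left side converges by weak convergence of $\omega_i$, while the right side converges by local uniform convergence of $u_i$ together with $\mathbf{1}_{\Omega_i} \to \mathbf{1}_{\Omega_\infty}$ a.e.\ (using $|\partial \Omega_\infty| = 0$). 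Uniqueness of the Green-function/caloric-measure-at-infinity pair up to a positive multiplicative constant, established in Appendix \ref{appendix:caloricmeasure}, then pins down $u_\infty$ and $\omega_\infty$.

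The main obstacle is establishing the two-sided comparison $u_i(A^+_1(0,0))\asymp 1$ uniformly in $i$, since without a nondegenerate lower bound on $u_\infty$ the entire argument collapses. The upper bound is routine from Lemma \ref{comparisontheorem} and doubling; the lower bound, however, forces one to control $\fint_{\Delta_{r_i}(Q_i,\tau_i)} h\,d\sigma$ from below by quantities comparable to $\omega(\Delta_{r_i}(Q_i,\tau_i))/\sigma(\Delta_{r_i}(Q_i,\tau_i))$, which is exactly where the reverse H\"older estimate for $h$ (Lemma \ref{hinvmosatisfiesreverseholder}) and the $A_\infty$ property of caloric measure (Theorem 1 of \cite{hlncaloricmeasure}) enter. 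In the finite-pole case the argument is identical in structure, but the hypothesis $(X_0,t_0) \in T_{A,4R}^+(Q,\tau)$ is what makes Lemmas \ref{comparisontheorem}, \ref{backwardsharnack}, and \ref{doublingmeasure} apply uniformly along the sequence; the rescaled pole $((X_0-Q_i)/r_i,(t_0-\tau_i)/r_i^2)$ escapes to infinity as $r_i\downarrow 0$, which is precisely why the limiting object is the infinite-pole pair regardless of whether we started with a finite or infinite pole.
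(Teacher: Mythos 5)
Most of your outline (Hausdorff convergence of the domains, the normalization $u_i(A^\pm_1(0,0))\asymp 1$, uniform mass bounds for $\omega_i$, and passing to the limit in the identity \eqref{ipgreenfunction}) is sound and is essentially what the paper delegates to Lemmas 16 and 17 of \cite{nystrom1} (see also Theorem 4.1 of \cite{kenigtoro}). One small remark there: the ``main obstacle'' you describe is not where the difficulty lies. Since $h = d\omega/d\sigma$, one has $\fint_{\Delta_{r_i}(Q_i,\tau_i)} h\,d\sigma = \omega(\Delta_{r_i})/\sigma(\Delta_{r_i})$ identically, so the two-sided bound $u_i(A^+_1(0,0))\asymp 1$ follows from Lemma \ref{comparisonformeasureatinfinity}, backwards Harnack, doubling and Ahlfors regularity alone; no reverse H\"older inequality or $A_\infty$ input is needed.

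The genuine gap is the sentence claiming that Ahlfors regularity of $\sigma_\infty$ and the Carleson bound on $\nu_\infty$ ``follow from the scale-invariance of those hypotheses and lower semicontinuity under Hausdorff limits.'' Hausdorff convergence of closed sets carries no semicontinuity of their surface measures (finite sets converge in Hausdorff distance to a segment while their $\mathcal H^1$ measures remain zero), and it gives even less control of the $L^2$ flatness quantities $\gamma^{(i)}$ whose integrals define $\nu_i$; so as stated this step fails, and it is precisely where the paper's new work lies. To get $\sigma_\infty(U)\le \liminf_i\sigma_i(U)$ the paper works time slice by time slice: for a.e.\ $t_0$ the slices $(\Omega_i)_{t_0}$ are sets of locally finite perimeter, $\chi_{\Omega_i}(\cdot,t_0)\to\chi_{\Omega_\infty}(\cdot,t_0)$ in $L^1_{\mathrm{loc}}$ (using the Hausdorff convergence of slices, which in turn uses Remark \ref{closestatsametimepoint}), lower semicontinuity of perimeter applies on each slice, and Fatou's lemma in $t$ gives \eqref{limitsigmaisahlforsregular}; the lower Ahlfors bound comes from Reifenberg flatness. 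For the Carleson condition one must then prove the comparison \eqref{settingupfatouslemma}, $\gamma^{(\infty)}(P,\eta,r)\le\liminf_i\gamma^{(i)}(P_i,\eta_i,r+\varepsilon_i)$, via a layer-cake representation of $\gamma^{(i)}$, Hausdorff convergence of the optimal planes $V_i\to V_\infty$, and the lower semicontinuity of $\sigma_i$ on open sets just established; integrating this over the surface ball and in $r$ and applying Fatou twice yields $\nu_\infty(C_\rho(P,\eta)\times[0,\rho))\le C\rho^{n+1}\limsup_i\|\nu_i\|_+$. Without an argument of this type your $\Omega_\infty$ is only known to be Reifenberg flat, not a parabolic chord arc domain, and the later applications of Theorem \ref{poissonkernel1impliesflat} (which require parabolic regularity of the blowup) would not go through.
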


\begin{proof}
Lemma 16 in \cite{nystrom1} proves that $\Omega_j \rightarrow \Omega_\infty$ and that $\Omega_\infty$ is $4\delta$-Reifenberg flat. In the same paper, Lemma 17 proves that $u_j \rightarrow u_\infty, \omega_j \rightarrow \omega_\infty$ and that $u_\infty, \omega_\infty$ satisfy equation \eqref{ipgreenfunction}. A concerned reader may point out that their blowup differs slightly from ours (as their $\Omega$ is not necessarily a chord arc domain). However, using Ahlfors regularity their argument works virtually unchanged in our setting (see also the proof of Theorem 4.1 in \cite{kenigtoro}). 

Therefore, to finish the proof it suffices to show that $\Omega_\infty$ is a parabolic regular domain. That is, $\sigma_\infty \equiv \sigma|_{\partial \Omega_\infty}$ is Ahlfors regular and $\partial \Omega$ is uniformly parabolic rectifiable. Let us first concentrate on $\sigma_\infty$. Note that for each $t_0 \in \mathbb R$ we have that $(\Omega_j)_{t_0} \equiv \Omega_j \cap \{(Y,s)\mid s= t_0\}$ is $\delta$-Reifenberg flat (and thus  the topological boundary coincides with measure theoretic boundary). Furthermore, we claim that $(\Omega_j)_{t_0} \rightarrow (\Omega_\infty)_{t_0}$ in the Hausdorff distance sense.  This follows from the observation that, in a Reifenberg flat domain $\Omega$, the closest point on $\partial \Omega$ to $(X,t) \in \Omega$ is also at time $t$ (see Remark \ref{closestatsametimepoint}). 

For almost every $s_0$ we know that $\Omega_{s_0}$ is a set of locally finite perimeter in $X\in \mathbb R^n$ and thus $(\Omega_j)_{t_0}$ is a set of locally finite perimeter for almost every $t_0$. We claim, for those $t_0$, that $\chi_{\Omega_j}(X,t_0) \rightarrow \chi_{\Omega_\infty}(X,t_0)$ in $L^1_{\mathrm{loc}}(\mathbb R^n)$. Indeed, by compactness, there exists an $E_{t_0}$ such that $\chi_{\Omega_j}(X,t_0) \rightarrow \chi_{E_{t_0}}$. That $E_{t_0} = (\Omega_\infty)_{t_0}$ is a consequence of $(\Omega_j)_{t_0}\rightarrow (\Omega_\infty)_{t_0}$ in the Hausdorff distance sense (for more details, see the bottom of page 351 in \cite{kenigtoro}). Hence, for almost every $t_0$, $(\Omega_\infty)_{t_0}$ is a set of locally finite perimeter. In addition, lower semicontinuity and Fatou's lemma imply \begin{equation}\label{limitsigmaisahlforsregular}\begin{aligned} \sigma_\infty(\Delta_R(P,\eta)) &= \int_{\eta-R^2}^{\eta+R^2} \mathcal H^{n-1}(\{(X,s) \mid (X,s) \in \partial \Omega_\infty, |X-P| \leq R\})ds\\ &\leq \int_{\eta-R^2}^{\eta+R^2} \liminf_{i\rightarrow \infty} \mathcal H^{n-1}(\{(X,s) \mid (X,s) \in \partial \Omega_i, |X-P| \leq R\})ds\\ &\leq \liminf_{i\rightarrow \infty} \int_{\eta-R^2}^{\eta+R^2} \mathcal H^{n-1}(\{(X,s) \mid (X,s) \in \partial \Omega_i, |X-P| \leq R\})ds\\ &\leq MR^{n+1}.\end{aligned}\end{equation} (The last inequality above follows from the fact that $\sigma$ is Ahflors regular and the definition of the blowup). The lower Ahlfors regularity is given immediately by the $\delta$-Reifenberg flatness of $\Omega_\infty$.

It remains to show that $\nu_\infty$ (defined as in \eqref{whatisnu} but with respect to $\Omega_\infty$) is a Carleson measure. Define $\gamma^{(\infty)}(Q,\tau, r) \colonequals \inf_P r^{-n-3}\int_{C_{r}(Q,\tau)\cap \partial \Omega_\infty} d((Y,s), P)^2 d\sigma_\infty(Y,s)$ where the infinum is taken over all $n$-planes $P$ containing a line parallel to the $t$-axis. Similarly define $\gamma^{(i)}(Q,\tau, r)$. 
We claim that \begin{equation}\label{settingupfatouslemma} \gamma^{(\infty)}(P,\eta, r)\leq \liminf_{i\rightarrow \infty} \gamma^{(i)}(P_i,\eta_i, r+\varepsilon_i), \forall (P,\eta) \in \partial \Omega_\infty, \end{equation} where $(P_i,\eta_i) \in \partial \Omega_i$ is the closest point in $\partial \Omega_i$ to $(P,\eta)$ and $\varepsilon_i \downarrow 0$ is any sequence such that $\varepsilon_i \geq 2D[\partial \Omega_i\cap C_{2r}(P,\eta), \partial \Omega_\infty \cap C_{2r}(P,\eta)] $. 

 Let $V_i$ be a plane which achieves the infinum in $\gamma^{(i)}(P_i,\eta_i, r+\varepsilon_i)$. Passing to a subsequence, the $V_i$ converge in the Hausdorff distance to some $V_\infty$. As such, there exists $\delta_i \downarrow 0$ with $D[V_i\cap C_1(0,0), V_\infty\cap C_1(0,0)] < \delta_i$. Estimate, \begin{equation*}\label{gammaibiggerthangammainfinity} \begin{aligned} \gamma^{(i)}(P_i,\eta_i, r+\varepsilon_i) &= (r+ \varepsilon_i)^{-n-3}\int_0^\infty 2\lambda\sigma_i(\{(Y,s) \in C_{r + \varepsilon_i}(P_i,\eta_i) \mid d((Y,s), V_i) > \lambda\})d\lambda\\ &\geq (r+\varepsilon_i)^{-n-3} \int_0^\infty 2\lambda \sigma_i(\{(Y,s)\in C_r(P,\eta) \mid d((Y,s), V_i) > \lambda\})d\lambda \\ &\geq (r+\varepsilon_i)^{-n-3} \int_0^\infty 2\lambda \sigma_i(\{(Y,s) \in C_r(P,\eta) \mid d((Y,s), V_\infty) > \lambda + r\delta_i\})d\lambda\\ \stackrel{\gamma = \lambda +r\delta_i}{\geq}& (r+\varepsilon_i)^{-n-3} \int_{o(1)}^\infty 2(\gamma - o(1)) \sigma_i(\{(Y,s) \in C_r(P,\eta) \mid d((Y,s), V_\infty) > \gamma\})d\gamma.\end{aligned}\end{equation*} Take $\liminf$s of both sides and recall, as argued above, that for all open $U$, $\sigma_\infty(U) \leq \liminf_{i\rightarrow \infty} \sigma_i(U)$. Equation \eqref{settingupfatouslemma} then follows from dominated convergence theorem, Fatou's lemma and Ahlfors regularity. 

 We claim, for any $\rho > 0$, \begin{equation}\label{settingupfatouslemma2} \int_{C_\rho(P,\eta)} \gamma^{(\infty)}(Y,s, r) d\sigma_\infty(Y,s) \leq \liminf_i \int_{C_{\rho+ \varepsilon_i}(P_i,\eta_i)}\gamma^{(i)}(Y,s, r+ \varepsilon_i)d\sigma_i(Y,s)\equalscolon F^i(r) \end{equation}where $\rho > r > 0,\; (P,\eta) \in \partial \Omega_\infty$, the $(P_i,\eta_i)$ are as above and $\varepsilon_i \downarrow 0$ with $\varepsilon_i \geq 3D[\partial \Omega_i\cap C_{3\rho}(P,\eta), \partial \Omega_\infty \cap C_{3\rho}(P,\eta)] \rightarrow 0$. The proof of equation \eqref{settingupfatouslemma2} is in the same vein as that of equation \eqref{settingupfatouslemma}, and thus we will omit it. Observing that the $\|\nu_i\|_+$ are bounded uniformly in $i$, Fatou's lemma implies \begin{equation}\label{fatouonthenus} \begin{aligned} \nu_\infty(C_\rho(P,\eta)\times [0,\rho)) =& \int_0^\rho F^\infty(r)\frac{dr}{r} \stackrel{\mathrm{eq}\; \eqref{settingupfatouslemma2}}{\leq} \int_0^\rho \liminf_{i\rightarrow \infty} F^i(r) \frac{dr}{r}\\ \leq &\liminf_{i\rightarrow \infty} \nu_i(C_{\rho+\varepsilon_i}(P_i,\eta_i)\times [0,\rho + \varepsilon_i)) \leq C\rho^{n+1}\limsup_{i} \|\nu_i\|_+.\end{aligned}\end{equation}
 \end{proof}

We now want to show a bound on $\nabla u_\infty$ (in hopes of applying Proposition \ref{poissonkernel1impliesflat}). Here we follow \cite{nystromgraphdomains} (see, specifically, the proof of Lemma 3.3 there). 

\begin{prop}\label{maingradientbound}
$|\nabla u_\infty(X,t)| \leq 1$ for all $(X,t) \in \Omega_\infty$. 
\end{prop}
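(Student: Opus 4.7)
The plan is to transfer the integral gradient bound of Lemma \ref{gradientrepresentation}, which holds on each $\Omega_i$, to the limit domain $\Omega_\infty$, use the VMO hypothesis to show that the Poisson kernels $h_i$ tend to $1$ in a quantitative sense, and finally send $R\to\infty$. Fix $R\gg 1$ and any $(X,t)\in \Omega_\infty$ with $\|(X,t)\|\leq R/4$. By Lemma \ref{limitsexistforblowups} and Remark \ref{whattheblowupsmean}, each $\Omega_i$ is a $4\delta$-Reifenberg flat parabolic regular domain (with constants uniform in $i$), contains $(0,0)$ on its boundary, and has Green function and Poisson kernel at infinity equal to $u_i$ and $h_i$. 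Lemma \ref{gradientrepresentation} applied in $\Omega_i$ therefore yields
\[
|\nabla u_i(X,t)|\ \leq\ \int_{C_{2R}(0,0)\cap \partial\Omega_i} h_i(P,\eta)\,d\hat{\omega}_i^{(X,t)}(P,\eta)\ +\ C\,\frac{\|(X,t)\|^{3/4}}{R^{1/2}},
\]
and by Lemma \ref{limitsexistforblowups} combined with interior regularity for adjoint caloric functions, the left-hand side converges to $|\nabla u_\infty(X,t)|$. It thus suffices to control the $\limsup$ of the right-hand side.

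The normalization in \eqref{hblowup} forces $\fint_{C_1(0,0)\cap \partial\Omega_i} h_i\,d\sigma_i=1$. Since $\log h\in\mathrm{VMO}(\partial\Omega)$ and $r_i\downarrow 0$, unraveling the blowup shows
\[
\fint_{C_\rho(P,\eta)\cap \partial\Omega_i} h_i\,d\sigma_i\ \longrightarrow\ 1 \qquad \text{as } i\to\infty,
\]
uniformly for $(P,\eta)$ and $\rho$ in any bounded range. Coupling this with Lemmas \ref{hinvmosatisfiesreverseholder} and \ref{vmodecomposition} upgrades the convergence to $h_i\to 1$ in $L^p(\sigma_i\!\restriction_{C_{2R}(0,0)})$ for every $p<\infty$.

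Now decompose
\[
\int_{C_{2R}(0,0)\cap\partial\Omega_i} h_i\,d\hat{\omega}_i^{(X,t)}\ =\ \hat{\omega}_i^{(X,t)}\!\bigl(C_{2R}(0,0)\cap\partial\Omega_i\bigr)\ +\ \int_{C_{2R}(0,0)\cap\partial\Omega_i}(h_i-1)\,\hat{k}_i^{(X,t)}\,d\sigma_i.
\]
The first summand converges to $\hat{\omega}_\infty^{(X,t)}(C_{2R}(0,0)\cap\partial\Omega_\infty)\leq 1$ by weak convergence of the adjoint caloric measures with a fixed finite pole $(X,t)$, which follows from an analogue of Lemma \ref{limitsexistforblowups} in the finite pole setting (itself a routine Green function comparison). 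For the second summand, Theorem 1 of \cite{hlncaloricmeasure} applied uniformly in $i$ produces a reverse H\"older bound, so $\|\hat{k}_i^{(X,t)}\|_{L^q(\sigma_i\restriction_{C_{2R}(0,0)})}$ is bounded independent of $i$; H\"older's inequality paired with the $L^p$-smallness of $h_i-1$ forces this cross term to vanish as $i\to\infty$.

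Putting these ingredients together yields $|\nabla u_\infty(X,t)|\leq 1 + C\|(X,t)\|^{3/4}/R^{1/2}$, and sending $R\to\infty$ completes the proof. The principal obstacle is handling the two passages to the limit simultaneously: the $L^p$-smallness of $h_i-1$ and the uniform reverse H\"older bound on $\hat{k}_i^{(X,t)}$. Both hinge on the scale invariance of the VMO, chord arc, and Reifenberg flatness constants, which guarantees that none of the relevant norms degrades under the pseudo-blowup; verifying this carefully (particularly the weak convergence of the finite-pole adjoint caloric measures on the moving domains) is the technical heart of the argument.
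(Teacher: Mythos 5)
Your overall strategy is the same as the paper's (gradient representation formula on $\Omega_i$, VMO-driven smallness of $h_i-1$, a duality/H\"older estimate against the adjoint caloric density, and the fact that $\hat{\omega}_i^{(X,t)}$ is a probability measure), and your claim that $h_i\to 1$ in $L^p(\sigma_i)$ on bounded sets for every $p<\infty$ is correct and is proved exactly by the tools you cite (Lemmas \ref{hinvmosatisfiesreverseholder} and \ref{vmodecomposition}, via the good/bad set splitting $G_i, F_i$). Incidentally, the first summand in your decomposition needs no limit argument at all: $\hat{\omega}_i^{(X,t)}$ is a probability measure, so it is $\leq 1$ for each fixed $i$.

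The genuine gap is the step where you assert that Theorem 1 of \cite{hlncaloricmeasure}, ``applied uniformly in $i$,'' gives a reverse H\"older bound for $\hat{k}_i^{(X,t)}=d\hat{\omega}_i^{(X,t)}/d\sigma_i$ on $C_{2R}(0,0)$. That theorem only yields the $A_\infty$/reverse H\"older estimate when the pole is separated from the surface ball in time by an amount comparable to the square of its radius (the condition $|X_0-Q|^2< A(t_0-\tau)$, $|t_0-\tau|\geq 8r^2$ in Definitions \ref{doubling} and \ref{apweight}, in its adjoint form). Here the pole $(X,t)$ is a fixed interior point with $\|(X,t)\|\leq R/4$ and $\delta(X,t)=\delta_\infty\ll R$, so for the ball $C_{2R}(0,0)$ these hypotheses fail badly, and no uniform $L^q(\sigma_i\restriction_{C_{2R}})$ bound for $\hat{k}_i^{(X,t)}$ follows from the cited theorem; even the absolute continuity implicit in writing $\int (h_i-1)\hat{k}_i^{(X,t)}d\sigma_i$ at that scale is not justified this way. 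The paper circumvents this by first comparing $d\hat{\omega}_i^{(X,t)}\leq C(n,M,\delta_\infty)\, d\hat{\omega}_i^{A_i}$ on $C_{2M}(0,0)\cap\partial\Omega_i$ via a Harnack chain to a backwards corkscrew point $A_i$ at scale $30M$ (checking the admissibility of the chain), and only then applies the uniform reverse H\"older inequality to $\hat{k}^{A_i}$, which does satisfy $A_i\in T^-_{100,M}(0,0)$; the constant picked up depends only on $M$, $\delta_\infty$ and $n$, all fixed before letting $i\to\infty$. Your argument needs this intermediate comparison (or some substitute) to close; as written, the key $L^q$ bound is unsupported. A smaller omission: the finite pole case of the proposition requires a separate verification that the rescaled pole $\left(\frac{X_0-Q_i}{r_i},\frac{t_0-\tau_i}{r_i^2}\right)$ lies in $T^+_{2A,M}$ of a nearby boundary point before Lemma \ref{gradientrepresentationfinitepole} can be invoked, which your proposal does not address.
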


\begin{proof}[Proof: Infinite Pole case]
Let $(X,t) \in \Omega_\infty$ and define $d((X,t), \partial \Omega_\infty) \equalscolon \delta_\infty$. There exists an $i_0 > 0$ such that if $i > i_0$ then $C_{3\delta_\infty/4}(X,t) \subset \Omega_i$. By standard parabolic regularity theory, $\nabla u_i \rightarrow \nabla u_\infty$ uniformly on $C_{\delta_\infty/2}(X,t)$.

$\Omega_i$ is a parabolic regular domain with $(0,0) \in \partial \Omega_i$ so by Lemma \ref{gradientrepresentation} we can write \begin{equation}\label{representationofgradui}
|\nabla u_i(X,t)| \leq \int_{\partial \Omega_i \cap C_{M}(0,0)} h_i(Q,\tau) d\hat{\omega}^{(X,t)}_i(Q,\tau) + C\frac{\|(X,t)\|^{3/4}}{M^{1/2}}
\end{equation}
as long as $M/2 \geq \max\{2\|(X,t)\|, 1\}$. For $\varepsilon > 0$, let $M \geq 2$ be such that $C\frac{\|(X,t)\|^{3/4}}{M^{1/2}} < \varepsilon/2$ and let $\varepsilon' = \varepsilon'(M, \delta_\infty, \varepsilon) > 0$ be a small constant to be chosen later. By Lemma \ref{vmodecomposition} there exists an $r(\varepsilon') > 0$ such that if $(Q_i,\tau_i) \in K\cap \partial \Omega$ and $r_iM < r(\varepsilon')$ there is a set $G_i \colonequals G((Q_i,\tau_i), Mr_i) \subset C_{Mr_i}(Q_i,\tau_i)\cap \partial \Omega$ with the properties that $(1+\varepsilon')\sigma(G_i) \geq \sigma(C_{Mr_i}(Q_i,\tau_i))$ and, for all $(P,\eta) \in G_i$, $$(1+\varepsilon')^{-1}\fint_{C_{Mr_i}(Q_i,\tau_i)} hd\sigma \leq h(P,\eta) \leq (1+\varepsilon')\fint_{C_{Mr_i}(Q_i,\tau_i)} hd\sigma.$$ Throughout we will assume that $i$ is large enough such that $Mr_i < r(\varepsilon')$. 

Define $\tilde{G}_i \colonequals \{(P,\eta) \in \partial \Omega_i\mid (r_iP + Q_i, r_i^2\eta + \tau_i) \in G_i\}$, the image of $G_i$ under the blowup. Then \begin{equation}\label{pointsingi}
h_i(P,\eta) = \frac{h(r_iP + Q_i, r_i^2\eta + \tau_i)}{\fint_{\Delta_{r_i}(Q_i,\tau_i)} hd\sigma} \simeq_{\varepsilon'} \frac{\fint_{\Delta_{Mr_i}(Q_i,\tau_i)} hd\sigma}{\fint_{\Delta_{r_i}(Q_i,\tau_i)} hd\sigma},\; \forall (P,\eta) \in \tilde{G}_i
\end{equation} where, as in \cite{kenigtoro}, we write $a\simeq_{\varepsilon'} b$ if $\frac{1}{1+\varepsilon'} \leq \frac{a}{b} \leq (1+\varepsilon')$. 

Observe \begin{equation}\label{hrivshMri}\fint_{\Delta_{r_i}(Q_i,\tau_i)} hd\sigma \geq \frac{1}{\sigma(\Delta_{r_i}(Q_i,\tau_i))} \int_{G_i\cap \Delta_{r_i}(Q_i,\tau_i)}hd\sigma \geq \frac{\sigma(G_i\cap \Delta_{r_i}(Q_i,\tau_i))}{(1+\varepsilon')\sigma(\Delta_{r_i}(Q_i,\tau_i))} \fint_{\Delta_{Mr_i}(Q_i,\tau_i)} hd\sigma.\end{equation} Combining this with equation \eqref{pointsingi} we can conclude $$h_i(P,\eta) \leq (1+\varepsilon')^2 \frac{\sigma(\Delta_{r_i}(Q_i,\tau_i))}{\sigma(G_i\cap \Delta_{r_i}(Q_i,\tau_i))}, \; \forall (P,\eta) \in \tilde{G}_i.$$ Ahlfors regularity implies\begin{equation}\label{sigmaGvssigmari} \begin{aligned} \sigma(G_i\cap \Delta_{r_i}(Q_i,\tau_i)) &= \sigma(\Delta_{r_i}(Q_i,\tau_i)) - \sigma(\Delta_{r_i}(Q_i,\tau_i)\backslash G_i)\\ &\geq\sigma(\Delta_{r_i}(Q_i,\tau_i)) - \sigma(\Delta_{Mr_i}(Q_i,\tau_i)\backslash G_i)\\ &\geq  \sigma(\Delta_{r_i}(Q_i,\tau_i)) - \varepsilon'\sigma(\Delta_{Mr_i}(Q_i,\tau_i))\\ &\geq \sigma(\Delta_{r_i}(Q_i,\tau_i))(1-CM^{n+1}\varepsilon').\end{aligned}\end{equation}

Putting everything together $h_i(P,\eta) \leq (1+\varepsilon')^2 (1-CM^{n+1}\varepsilon')^{-1},\; \forall (P,\eta) \in \tilde{G}_i.$ Hence,
\begin{equation}\label{estimategoodpart}
\int_{\tilde{G}_i}h_id\hat{\omega}_i^{(X,t)} \leq \frac{(1+\varepsilon')^2}{(1-CM^{n+1}\varepsilon')} \hat{\omega}_i^{(X,t)}(\tilde{G}_i) \leq \frac{(1+\varepsilon')^2}{ (1-CM^{n+1}\varepsilon')},
\end{equation}
as $\hat{\omega}_i^{(X,t)}$ is a probability measure. 

Define $F_i \colonequals (C_{Mr_i}(Q_i,\tau_i)\cap \partial \Omega) \backslash G_i$ and $\tilde{F_i}$ analogously to $\tilde{G}_i$. Let $A_i \in \Omega_i$ be the backwards non-tangential point at $(0,0)$ and scale $30M$. We want to connect $A_i$ with $(X,t)$ by a Harnack chain in $\Omega_i$. Thus we need to show that that the square root of the difference in the $t$-coordinates of $A_i$ and $(X,t)$ is greater than $\frac{d(A_i, (X,t))}{100}$. This follows after observing that the $t$-coordinate of $A_i$ is $\leq -9M^2$.  The Harnack inequality then tells us that there is a $C = C(n, M, \delta_\infty) > 0$ such that $d\hat{\omega}^{(X,t)} \leq C d\hat{\omega}^{A_i}$ on $C_{2M}(0,0)\cap \partial \Omega_i$. Furthermore, $A_i \in T^-_{100, M}(0,0)$ which implies, by Theorem 1 in \cite{hlncaloricmeasure}, that there is a $p > 1$ such that $\hat{k}^{A_i} \colonequals \frac{d\hat{\omega}^{A_i}}{d\sigma}$ satisfies a reverse H\"older inequality with exponent $p$ and constant $C$ (as the $\Omega_i$ are uniformly parabolic regular and $\delta$-Reifenberg flat, the arguments in \cite{hlncaloricmeasure} ensure that $p, C$ can be taken independent of $i$). Let $q$ be the dual exponent; then, by H\"older's inequality, \begin{equation}\label{splithebadpart}
\int_{\tilde{F}_i} h_i d\hat{\omega}_i^{(X,t)} \leq C\int_{\tilde{F}_i} h_i d\hat{\omega}_i^{A_i} \leq C\left(\int_{\tilde{F}_i} h_i^qd\sigma_i\right)^{1/q}\left(\int_{\tilde{F}_i} (\hat{k}^{A_i})^p d\sigma_i\right)^{1/p}.
\end{equation}

To bound the first term in the product note, \begin{equation}\label{hiinfi}\int_{\tilde{F}_i} h_i^qd\sigma_i = r_i^{-n-1} \frac{\int_{F_i} h^qd\sigma}{\left(\fint_{\Delta_{r_i}(Q_i,\tau_i)} hd\sigma\right)^q}.\end{equation} Per Lemma \ref{hinvmosatisfiesreverseholder}, $h^2 \in A_2(d\sigma)$  (as $\log(h^2) \in \mathrm{VMO}(\partial \Omega)$). Apply H\"older's inequality and then the reverse H\"older inequality with exponent $2$ to obtain \begin{equation}\label{reverseholderexponent2}\begin{aligned}\int_{F_i} h^qd\sigma &\leq \sigma(\Delta_{Mr_i}(Q_i,\tau_i))^{1/2}\sigma(F_i)^{1/2} \left(\fint_{\Delta_{Mr_i}(Q_i,\tau_i)} h^{2q}d\sigma\right)^{1/2}\\
 &\leq C\left(\frac{\sigma(F_i)}{\Delta_{Mr_i}(Q_i,\tau_i)}\right)^{1/2} \int_{\Delta_{Mr_i}(Q_i,\tau_i)} h^{q}d\sigma\\
 & \leq C\sqrt{\varepsilon'} \sigma(\Delta_{Mr_i}(Q_i,\tau_i)) \fint_{\Delta_{Mr_i}(Q_i,\tau_i)} h^{q}d\sigma,\end{aligned}\end{equation} where that last inequality comes from the fact that $F_i$ is small in $\Delta_{Mr_i}(Q_i,\tau_i)$. Invoking Lemma \ref{hinvmosatisfiesreverseholder} again, $h$ satisfies a reverse H\"older inequality with exponent $q$. This fact, combined with equations \eqref{hiinfi}, \eqref{reverseholderexponent2}, implies \begin{equation}\label{boundonbadparth}
 \begin{aligned}
\int_{\tilde{F}_i} h_i^qd\sigma_i \leq& C\sqrt{\varepsilon'} \frac{\sigma(\Delta_{Mr_i}(Q_i,\tau_i))}{r_i^{n+1}} \left(\frac{\fint_{\Delta_{Mr_i}(Q_i,\tau_i)} hd\sigma}{\fint_{\Delta_{r_i}(Q_i,\tau_i)} hd\sigma}\right)^q\\
\stackrel{\mathrm{eq}\; \eqref{hrivshMri}}{\leq}& C\sqrt{\varepsilon'}M^{n+1} \left(\frac{(1+\varepsilon')\sigma(\Delta_{r_i}(Q_i,\tau_i))}{\sigma(G_i\cap \Delta_{r_i}(Q_i,\tau_i))}\right)^q\\
\stackrel{\mathrm{eq}\; \eqref{sigmaGvssigmari}}{\leq}& C\sqrt{\varepsilon'}M^{n+1}(1-CM^{n+1}\varepsilon')^{-q}.
\end{aligned}
\end{equation}

To bound the second term of the product in equation \eqref{splithebadpart} we recall that $\hat{k}^{A_i}$ satisfies a reverse H\"older inequality with exponent $p$ at scale $M$,
\begin{equation}\label{boundonthebadpartk}
\begin{aligned}
\left(\int_{\tilde{F}_i} (\hat{k}^{A_i})^p d\sigma_i\right)^{1/p} \leq& \left(\int_{C_{M}(0,0)\cap \partial \Omega_i} (\hat{k}^{A_i})^pd\sigma_i \right)^{1/p}\\
\leq& C\sigma_i(C_M(0,0)\cap \partial \Omega_i)^{1/p} \left(\fint_{C_M(0,0)\cap \partial \Omega_i}\hat{k}^{A_i}d\sigma_i\right)\\
\leq& C\sigma_i(C_M(0,0)\cap \partial \Omega_i)^{1/p-1} \hat{\omega}^{A_i}(C_M(0,0))\\
\leq& C\left(\frac{\sigma(\Delta_{Mr_i}(Q_i,\tau_i))}{r_i^{n+1}}\right)^{1/p-1} \leq CM^{-(n+1)/q}.
\end{aligned}
\end{equation} 

Together, equations \eqref{splithebadpart}, \eqref{boundonbadparth} and \eqref{boundonthebadpartk}, say \begin{equation}\label{finalboundonbadpart}
\int_{\tilde{F}_i} h_i d\omega_i^{(X,t)} \leq CM^{-(n+1)/q}\left(C\sqrt{\varepsilon'}M^{n+1}(1-CM^{n+1}\varepsilon')^{-q}\right)^{1/q} =C\frac{(\varepsilon')^{1/(2q)}}{1-CM^{n+1}\varepsilon'}.
\end{equation}

Having estimated the integral over $\tilde{G}_i$ in equation \eqref{estimategoodpart} and the integral over $\tilde{F}_i$ in equation \eqref{finalboundonbadpart} we can invoke equation \eqref{representationofgradui} to conclude \begin{equation}\label{boundonnablaufinal}
\begin{aligned}
|\nabla u_\infty(X,t)| \leq& \limsup_i \int_{\partial \Omega_i \cap C_{M}(0,0)} h_i(Q,\tau) d\hat{\omega}^{(X,t)}_i(Q,\tau) + C\frac{\|(X,t)\|^{3/4}}{M^{1/2}} \\ \leq& \frac{(1+\varepsilon')^2}{ (1-CM^{n+1}\varepsilon')} + C\frac{(\varepsilon')^{1/(2q)}}{1-CM^{n+1}\varepsilon'} + \varepsilon/2\\
\leq& 1+ \varepsilon.
\end{aligned}
\end{equation}
The last inequality above is justified by picking $\varepsilon' > 0$ small so that $\frac{(1+\varepsilon')^2}{ (1-CM^{n+1}\varepsilon')} + C\frac{(\varepsilon')^{1/(2q)}}{1-CM^{n+1}\varepsilon'}< 1+ \varepsilon/2$. 
\end{proof}

\begin{proof}[Proof: Finite Pole Case]
Let $(X,t) \in \Omega_\infty$ and define $d((X,t), \partial \Omega_\infty) \equalscolon \delta_\infty$. There exists an $i_0 > 0$ such that if $i > i_0$ then $C_{3\delta_\infty/4}(X,t) \subset \Omega_i$ but $C_{3\delta_\infty/2}(X,t)\cap \partial \Omega_i \neq \emptyset$. By standard parabolic regularity theory, $\nabla u^{(X_0,t_0)}_i \rightarrow \nabla u_\infty$ uniformly on $C_{\delta_\infty/2}(X,t)$. Let $(\hat{X}_i,\hat{t}_i) \in \partial \Omega_i$ be a point on $\partial \Omega_i$ closest to $(X,t)$. Note that $u^{(X_0,t_0)}_i$ is the Green function of $\Omega_i$ with a pole at $\left(\frac{X_0-Q_i}{r_i}, \frac{t_0-\tau_i}{r_i^2}\right)$. Let $M \geq \max\{4R, 100 \delta_\infty\}$, be arbitrarily large to be chosen later. We want to check that $\left(\frac{X_0-Q_i}{r_i}, \frac{t_0-\tau_i}{r_i^2}\right) \in T_{2A,M}^+(\hat{X}_i, \hat{t}_i)$ for $i$ large enough (recall our assumption that $(Q_i,\tau_i) \in \Delta_R(Q,\tau)$ where $(X_0,t_0) \in T^+_{A,4R}(Q,\tau)$). 

Observe $$\frac{t_0-\tau_i}{r_i^2}-\hat{t}_i > 4M^2 \Leftrightarrow t_0 - \tau_i - r_i^2\hat{t}_i > 4r_i^2M^2$$ But $t_0 -\tau_i > 0$ and $|\hat{t_i}| <C(|t| + \delta_\infty) < C$. So for $i$ large enough $\frac{t_0-\tau_i}{r_i^2}-\hat{t}_i  > 4 M^2$. Similarly, $$\left|\frac{X_0-Q_i}{r_i} - \hat{X}_i\right|^2 \leq 2A\left|\frac{t_0-\tau_i}{r_i^2}-\hat{t}_i\right| \Leftrightarrow |X_0-Q_i - r_i\hat{X}_i|^2 \leq 2A|t_0-\tau_i - r_i^2\hat{t_i}|.$$ As $|X_0-Q_i|^2 \leq \frac{3}{2}A|t_0-\tau_i|$ we may conclude, for large $i$, that $\left(\frac{X_0-Q_i}{r_i}, \frac{t_0-\tau_i}{r_i^2}\right) \in T_{2A,M}^+(\hat{X}_i, \hat{t}_i)$. Invoking Lemma \ref{gradientrepresentationfinitepole},  \begin{equation}\label{representationofgraduifinitepole}
|\nabla u_i^{(X_0,t_0)}(X,t)| \leq \int_{C_M(\hat{X}_i,\hat{t}_i)\cap \partial \Omega_i} k_i^{(X_0,t_0)}d\hat{\omega}_i^{(X,t)} + C \left(\frac{\delta_\infty}{M}\right)^{3/4}\frac{\omega_i^{(X_0,t_0)}(C_{M}(Q,\tau)\cap \partial \Omega_i)}{M^{n+1}}.
\end{equation}

For any $\varepsilon > 0$, pick an $M \equiv M(\varepsilon) > 0$ large such that $$C \left(\frac{\delta_\infty}{M}\right)^{3/4}\frac{\omega_i^{(X_0,t_0)}(C_{M}(Q,\tau)\cap \partial \Omega_i)}{M^{n+1}} \leq \varepsilon/2.$$ For large enough $i$, \begin{equation}\label{wheredoesplive} \begin{aligned}(P,\eta) &\in C_{M}(\hat{X}_i,\hat{t}_i)\cap \partial \Omega_i \Rightarrow \\ (r_iP + Q_i, r_i^2 \eta + \tau_i) &\in \Delta_{r_iM}(r_i\hat{X}_i + Q_i, r_i^2\hat{t}_i + \tau_i) \subset \Delta_{2r_iM}(Q_i,\tau_i) \subset \Delta_{2R}(Q,\tau). \end{aligned}\end{equation} Therefore, we can apply Lemmas \ref{hinvmosatisfiesreverseholderfinitepole}, \ref{vmomeasurecomparisonfinitepole} and \ref{vmodecompositionfinitepole} to $k^{(X_0,t_0)}$ on $\Delta_{2r_iM}(Q_i,\tau_i)$ for large enough $i$. Let $\varepsilon'  \equiv \varepsilon'(M,\varepsilon) > 0$ be small and chosen later. There exists an $i_0 \in \mathbb N$ such that $i \geq i_0$ implies that equations \eqref{representationofgraduifinitepole} and \eqref{wheredoesplive} hold and that $2r_iM \leq r(\varepsilon')$ (where $r(\varepsilon')$ is given by Lemma \ref{vmodecompositionfinitepole}). We may now proceed as in the infinite pole case to get the desired conclusion. 
\end{proof}

To invoke Theorem \ref{poissonkernel1impliesflat} we must also show that $h_\infty \geq 1$ almost everywhere. Here we follow closely the method of \cite{kenigtoro}.

\begin{lem}\label{hatinfinityisgreaterthan1}
Let $\Omega_\infty, u_\infty, \omega_\infty$ be as above. Then $h_\infty = \frac{d\omega_\infty}{d\sigma_\infty}$ exists and $$h_\infty(Q,\tau) \geq 1$$ for $\sigma_\infty$-a.e. $(Q,\tau)\in \partial \Omega_\infty$. 
\end{lem}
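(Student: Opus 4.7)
The plan is to apply Lebesgue differentiation on the Ahlfors regular (hence homogeneous-type) surface $\partial\Omega_\infty$ to reduce the pointwise bound $h_\infty\geq 1$ to the mass comparison
\begin{equation}\label{eq:masscomp}
\omega_\infty(\Delta_\rho(P_0,\eta_0))\;\geq\;\sigma_\infty(\Delta_\rho(P_0,\eta_0))
\end{equation}
for all but countably many $\rho>0$. The Radon--Nikodym derivative of the absolutely continuous part of $\omega_\infty$ against $\sigma_\infty$ equals $\lim_{\rho\downarrow 0}\omega_\infty(\Delta_\rho)/\sigma_\infty(\Delta_\rho)$ at $\sigma_\infty$-a.e.\ point, so \eqref{eq:masscomp} along any sequence $\rho_k\downarrow 0$ immediately forces $h_\infty\geq 1$ almost everywhere.

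To produce \eqref{eq:masscomp}, fix $(P_0,\eta_0)\in\partial\Omega_\infty$ and $0<\rho'<\rho$, and let $(P_i,\eta_i)\in\partial\Omega_i$ converge to $(P_0,\eta_0)$. For $i$ large enough that $r_i\rho'<r(\varepsilon_i)$, I would apply Lemma~\ref{vmodecomposition} at the blown-down point $(r_iP_i+Q_i,r_i^2\eta_i+\tau_i)$ and scale $r_i\rho'$; blowing back up produces a good set $\tilde G_i\subset\Delta_{\rho'}(P_i,\eta_i)$ of $\sigma_i$-measure at least $(1+\varepsilon_i)^{-1}\sigma_i(\Delta_{\rho'}(P_i,\eta_i))$ on which
\[
h_i(P,\eta)\;\geq\;(1+\varepsilon_i)^{-1}\cdot\frac{\fint_{\Delta_{r_i\rho'}(r_iP_i+Q_i,r_i^2\eta_i+\tau_i)}h\,d\sigma}{\fint_{\Delta_{r_i}(Q_i,\tau_i)}h\,d\sigma}.
\]
The crucial VMO step is that this ratio tends to $1$ as $i\to\infty$. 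Both balls sit inside $\Delta_{Cr_i}(Q_i,\tau_i)$ for a constant $C$ depending on $\rho'$ and the compact range of $(P_i,\eta_i)$, so two applications of the standard BMO estimate $|(\log h)_{B_1}-(\log h)_{B}|\lesssim\log(|B|/|B_1|)\,\|\log h\|_{\mathrm{BMO}(B)}$ give $|(\log h)_{\Delta_{r_i\rho'}}-(\log h)_{\Delta_{r_i}}|\to 0$ from the VMO hypothesis, and the exponential John--Nirenberg comparison $\fint_B h\asymp e^{(\log h)_B}$ (with multiplicative constants tending to $1$ as the local BMO seminorm vanishes) converts this to the ratio of $h$-averages. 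Combining yields $h_i\geq 1-\eta_i$ on $\tilde G_i$ with $\eta_i\downarrow 0$, so
\begin{equation}\label{eq:finitelevel}
\omega_i(\bar\Delta_{\rho'}(P_i,\eta_i))\;\geq\;(1-\eta_i)(1+\varepsilon_i)^{-1}\sigma_i(\bar\Delta_{\rho'}(P_i,\eta_i)).
\end{equation}

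For the passage to the limit I would use the two one-sided convergences already available. Since $\bar C_{\rho'}(P_i,\eta_i)\subset\bar C_\rho(P_0,\eta_0)$ eventually, the Portmanteau theorem applied to $\omega_i\rightharpoonup\omega_\infty$ on the closed set $\bar C_\rho(P_0,\eta_0)\subset\mathbb R^{n+1}$ gives $\omega_\infty(\bar\Delta_\rho(P_0,\eta_0))\geq\limsup_i\omega_i(\bar\Delta_{\rho'}(P_i,\eta_i))$; and for any $\rho''<\rho'$, the lower-semicontinuity bound $\sigma_\infty(\Delta_{\rho''}(P_0,\eta_0))\leq\liminf_i\sigma_i(\Delta_{\rho''}(P_0,\eta_0))$ from the proof of Lemma~\ref{limitsexistforblowups}, combined with $\Delta_{\rho''}(P_0,\eta_0)\subset\bar\Delta_{\rho'}(P_i,\eta_i)$ eventually, plugs into \eqref{eq:finitelevel} to give $\omega_\infty(\bar\Delta_\rho)\geq\sigma_\infty(\Delta_{\rho''})$. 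Letting $\rho''\uparrow\rho$ and noting that $\omega_\infty(\bar\Delta_\rho)=\omega_\infty(\Delta_\rho)$ for all but countably many $\rho$ completes \eqref{eq:masscomp}. The finite-pole case proceeds identically, with Lemma~\ref{vmodecompositionfinitepole} replacing Lemma~\ref{vmodecomposition}.

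The main obstacle is precisely the VMO-to-scale stability of $h$-averages in the middle step: the normalization baked into \eqref{hblowup} uses $\fint_{\Delta_{r_i}(Q_i,\tau_i)}h\,d\sigma$, while the VMO decomposition compares $h_i$ on the good set to the average of $h$ over an off-center ball at the strictly smaller scale $r_i\rho'$. Controlling this cross-scale ratio uniformly in $i$ is the only place where the full VMO hypothesis (and not merely $A_\infty$) is essential; the remainder of the argument is soft geometric-measure theory plus the weak convergence from Lemma~\ref{limitsexistforblowups}.
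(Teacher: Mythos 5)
Your argument is correct, but it takes a genuinely different route from the paper's. The paper never forms a ball-level mass comparison: it instead proves the functional inequality \eqref{basicallyhbiggerthan1}, $\int\varphi h_\infty\,d\sigma_\infty\geq\limsup_i\int\varphi\,d\sigma_i$, pairs it with the divergence-theorem computation \eqref{messingaroundwithnormals}, which costs an error $\tfrac12\int\varphi|\vec{n}_\infty-e|^2\,d\sigma_\infty$, and then differentiates at common Lebesgue points of $h_\infty$ and $\vec{n}_\infty$ with $e=\vec{n}_\infty(Q_0,\tau_0)$; the cross-scale stability of the $h$-averages is obtained there (equations \eqref{hrivshMri} and \eqref{dealingwiththefraction}) from the good/bad splitting of Lemma \ref{vmodecomposition} together with the reverse H\"older inequality of Lemma \ref{hinvmosatisfiesreverseholder} with exponent $2$, rather than your exponential John--Nirenberg comparison $\fint_B h\asymp e^{(\log h)_B}$ --- both are legitimate, and yours is arguably more direct. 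You localize at a fixed $(P_0,\eta_0)$, run Lemma \ref{vmodecomposition} at scale $r_i\rho'$ about the blown-down points, and close with the Portmanteau upper bound for $\omega_i\rightharpoonup\omega_\infty$ on compact sets plus the lower semicontinuity $\sigma_\infty(U)\leq\liminf_i\sigma_i(U)$ recorded in the proof of Lemma \ref{limitsexistforblowups}; this avoids the finite-perimeter/normal-vector machinery entirely. What the paper's formulation buys is that \eqref{basicallyhbiggerthan1} is reused verbatim in the proof of Proposition \ref{weakconvergenceinpseudoblowups}, so on your route that inequality would still have to be proved there. Two small repairs: the statement also asserts that $h_\infty=\frac{d\omega_\infty}{d\sigma_\infty}$ exists, i.e.\ $\omega_\infty\ll\sigma_\infty$, and your differentiation argument only controls the absolutely continuous part, so you should add the paper's one-line appeal to Theorem 1 of \cite{hlncaloricmeasure} (Proposition \ref{hlnformeasureatinfinity}), applicable because Lemma \ref{limitsexistforblowups} makes $\Omega_\infty$ a Reifenberg flat parabolic regular domain; and you should state explicitly that VMO gives vanishing of the local BMO seminorm of $\log h$ uniformly over the centers $(r_iP_i+Q_i,r_i^2\eta_i+\tau_i)$ (which stay in a fixed compact set), since your off-center, cross-scale comparison of averages, with radius ratio depending on $\rho'$ and $\|(P_0,\eta_0)\|$, needs exactly this uniformity --- the same uniformity already implicit in Lemma \ref{vmodecomposition}.
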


\begin{proof}
In Lemma \ref{limitsexistforblowups} we prove that $\Omega_\infty$ is a $\delta$-Reifenberg flat parabolic regular domain. By Theorem 1 in \cite{hlncaloricmeasure} (see Proposition \ref{hlnformeasureatinfinity} for remarks when the pole is at infinity) $\omega_\infty \in A_\infty(d\sigma_\infty)$; thus $h_\infty$ exists. 

By the divergence theorem, the limiting process described in Lemma \ref{limitsexistforblowups}, and $\left\langle e, \vec{n}_\infty\right\rangle = 1 - \frac{1}{2}|\vec{n}_\infty - e|^2$ we have, for any positive $\varphi \in C_c^\infty(\R^{n+1})$ and any $e \in \mathbb S^{n-1}$, \begin{equation}\label{messingaroundwithnormals}
\begin{aligned}
\int_{\partial \Omega_i}\varphi d\sigma_i &\geq \int_{\partial \Omega_i}\varphi \left\langle e, \vec{n}_i \right\rangle d\sigma_i = -\int_{\Omega_i} \mathrm{div}(\varphi e)dXdt\\
&\stackrel{i\rightarrow \infty}{\rightarrow} - \int_{\Omega_\infty} \mathrm{div}(\varphi e) dX dt = \int_{\partial \Omega_\infty} \varphi \left\langle e, \vec{n}_\infty \right\rangle d\sigma_\infty\\
&\geq \int_{\partial \Omega_\infty} \varphi d\sigma_\infty - \frac{1}{2}\int_{\partial \Omega_\infty}\varphi |\vec{n}_\infty - e|^2d\sigma_\infty.
\end{aligned}
\end{equation}

We claim, for any positive $\varphi \in C_c^\infty(\R^{n+1})$,  \begin{equation}\label{basicallyhbiggerthan1}
\int_{\partial \Omega_\infty} \varphi h_\infty d\sigma_\infty \geq \limsup_{i\rightarrow \infty} \int_{\partial \Omega_i} \varphi d\sigma_i.
\end{equation}

If our claim is true then $\int_{\partial \Omega_\infty} \varphi h_\infty d\sigma_\infty \geq \int_{\partial \Omega_\infty} \varphi d\sigma_\infty - \frac{1}{2}\int_{\partial \Omega_\infty}\varphi |\vec{n}_\infty - e|^2d\sigma_\infty.$ For $(Q_0,\tau_0) \in \partial \Omega_\infty$, let $e =\vec{n}_\infty(Q_0,\tau_0)$ and $\varphi \rightarrow \chi_{C_{r}(Q_0,\tau_0)}$ to obtain \begin{equation*}\begin{aligned}\int_{C_r(Q_0,\tau_0)} h_\infty d\sigma_\infty &\geq \sigma_\infty(C_r(Q_0,\tau_0)) - \frac{1}{2} \int_{C_r(Q_0,\tau_0)} |\vec{n}_\infty(P,\eta) - \vec{n}_\infty(Q_0,\tau_0)|^2 d\sigma_\infty(P,\eta) \Rightarrow\\ \fint_{C_{r}(Q_0,\tau_0)} h_\infty d\sigma_\infty &\geq 1 - \frac{1}{2} \fint_{C_r(Q_0,\tau_0)} |\vec{n}_\infty(P,\eta) - \vec{n}_\infty(Q_0,\tau_0)|^2 d\sigma_\infty(P,\eta).\end{aligned}\end{equation*} If $(Q_0,\tau_0)$ is a point of density for $\vec{n}_\infty, h_\infty$ letting $r\downarrow 0$ gives $h_\infty(Q_0,\tau_0) \geq 1$ for $\sigma_\infty$-a.e. $(Q_0,\tau_0)$. 

Thus we need only to establish equation \eqref{basicallyhbiggerthan1}. Pick any positive $\varphi \in C_c^\infty(\R^{n+1}), \varepsilon > 0$ and let $M \geq 0$ be large enough such that $\varphi \in C_c^\infty(C_M(0,0))$ and $\|\varphi\|_{L^\infty} \leq M$. Let $\varepsilon' > 0$ to be choosen later (depending on $M, \varepsilon$). We will prove equation \eqref{basicallyhbiggerthan1} for the infinite pole blowup. However, the arguments in the finite pole setting are completely unchanged; for large enough $i$ we have $C_{Mr_i}(Q_i,\tau_i) \subset C_{2R}(Q,\tau)$ and hence can apply Lemmas \ref{hinvmosatisfiesreverseholderfinitepole}, \ref{vmomeasurecomparisonfinitepole} and \ref{vmodecompositionfinitepole} to $k^{(X_0,t_0)}$ on $\Delta_{Mr_i}(Q_i,\tau_i)$.

$\log(h) \in \mathrm{VMO}(\partial \Omega)$, so Lemma \ref{vmodecomposition} gives an $R = R(\varepsilon') > 0$ such that for $r_iM \leq R$ we can split $C_{Mr_i}(Q_i, \tau_i)\cap \partial \Omega$ into $G_i, F_i$ with $\sigma(\Delta_{Mr_i}(Q_i, \tau_i)) \leq (1+\varepsilon')\sigma(G_i)$ and $\fint_{\Delta_{Mr_i}(Q_i,\tau_i)} hd\sigma \sim_{\varepsilon'} h(P,\eta)$ for $(P,\eta) \in G_i$. Define $\tilde{G}_i$ and $\tilde{F}_i$ as in the proof of Proposition \ref{maingradientbound}. 

For $(P,\eta) \in \tilde{G}_i$ we have $h_i(P,\eta) \sim_{\varepsilon'} \frac{\fint_{\Delta_{Mr_i}(Q_i,\tau_i)} h d\sigma}{\fint_{\Delta_{r_i}(Q_i,\tau_i)} hd\sigma}$, and consequently \begin{equation}\label{comparableong}\int_{\tilde{G}_i} h_i\varphi d\sigma_i \sim_{\varepsilon'} \frac{\fint_{\Delta_{Mr_i}(Q_i,\tau_i)} h d\sigma}{\fint_{\Delta_{r_i}(Q_i,\tau_i)} hd\sigma} \int_{\tilde{G}_i} \varphi d\sigma_i.\end{equation} We can then estimate \begin{equation}\label{gisalmostall}\int_{\tilde{G}_i}\varphi d\sigma_i = \int_{\partial \Omega_i} \varphi d\sigma_i - \int_{\tilde{F}_i} \varphi d\sigma_i \geq \int_{\partial \Omega_i} \varphi d\sigma_i - C\varepsilon' M^{n+2},\end{equation} using the Ahflors regularity of $\partial \Omega_i$ and the definition of $\sigma_i, \tilde{F}_i$. 

Therefore, \begin{equation}\label{allbutthefraction}\begin{aligned} \int_{\partial \Omega_\infty} \varphi h_\infty d\sigma_\infty &= \lim_{i\rightarrow \infty} \int_{\partial \Omega_i}\varphi h_i d\sigma_i \geq \limsup_{i \rightarrow \infty} \int_{\tilde{G}_i} h_i\varphi d\sigma_i\\
& \stackrel{\eqref{comparableong}}{\geq} \limsup_{i\rightarrow \infty} (1+\varepsilon')^{-1} \frac{\fint_{\Delta_{Mr_i}(Q_i,\tau_i)} h d\sigma}{\fint_{\Delta_{r_i}(Q_i,\tau_i)} hd\sigma} \int_{\tilde{G}_i} \varphi d\sigma_i\\ &\stackrel{\eqref{gisalmostall}}{\geq} \limsup_{i\rightarrow \infty} (1+\varepsilon')^{-1} \frac{\fint_{\Delta_{Mr_i}(Q_i,\tau_i)} h d\sigma}{\fint_{\Delta_{r_i}(Q_i,\tau_i)} hd\sigma} \left(\int_{\partial \Omega_i}\varphi d\sigma_i - CM^{n+2}\varepsilon'\right).
\end{aligned}
\end{equation}

To estimate $\frac{\fint_{\Delta_{Mr_i}(Q_i,\tau_i)} h d\sigma}{\fint_{\Delta_{r_i}(Q_i,\tau_i)} hd\sigma}$ from below, we write \begin{equation}\label{dealingwiththefraction}
\begin{aligned}
\fint_{\Delta_{r_i}(Q_i,\tau_i)} hd\sigma =& \frac{1}{\sigma(\Delta_{r_i}(Q_i,\tau_i))}\left(\int_{\Delta_{r_i}(Q_i,\tau_i) \cap G_i} h d\sigma + \int_{\Delta_{r_i}(Q_i,\tau_i) \cap F_i} h d\sigma \right)\\
\leq& (1+\varepsilon') \frac{\sigma(\Delta_{r_i}(Q_i,\tau_i)\cap G_i)}{\sigma(\Delta_{r_i}(Q_i,\tau_i))} \fint_{\Delta_{Mr_i}(Q_i,\tau_i)} hd\sigma + \frac{\omega(F_i\cap \Delta_{r_i}(Q_i,\tau_i))}{\sigma(\Delta_{r_i}(Q_i,\tau_i))}\\
\leq& (1+\varepsilon') \fint_{\Delta_{Mr_i}(Q_i,\tau_i)} hd\sigma + \left(\frac{\sigma(F_i)}{\sigma(\Delta_{r_i}(Q_i,\tau_i))}\right)^{1/2} \left(\fint_{\Delta_{r_i}(Q_i,\tau_i)} h^2d\sigma\right)^{1/2}\\
\leq& (1+\varepsilon') \fint_{\Delta_{Mr_i}(Q_i,\tau_i)} hd\sigma + (C\varepsilon'M^{n+1})^{1/2}\fint_{\Delta_{r_i}(Q_i,\tau_i)} hd\sigma.
\end{aligned}
\end{equation}
To justify the penultimate inequality above note, for any set $E\subset \Delta_{r_i}(Q_i,\tau_i)$, H\"older's inequality gives $$\omega(E) \leq \sigma(E)^{1/2} \left(\int_{\Delta_{r_i}(Q_i,\tau_i)} h^2d\sigma\right)^{1/2}.$$ The last inequality in equation \eqref{dealingwiththefraction} follows from the fact that $F_i$ has small volume and $h$ satisfies a reverse H\"older inequality with exponent 2 (Lemma \ref{hinvmosatisfiesreverseholder}). 

After some algebraic manipulation, equation \eqref{dealingwiththefraction} implies $$\frac{\fint_{\Delta_{Mr_i}(Q_i,\tau_i)} h d\sigma}{\fint_{\Delta_{r_i}(Q_i,\tau_i)} hd\sigma} \geq (1+\varepsilon')^{-1}(1-(C\varepsilon'M^{n+1})^{1/2}).$$ Hence, in light of equation \eqref{allbutthefraction}, and choosing $\varepsilon'$ wisely, $$ \int_{\partial \Omega_\infty} \varphi h_\infty d\sigma_\infty  \geq (1-\varepsilon) \limsup_{i\rightarrow \infty} \int_{\partial \Omega_i}\varphi d\sigma_i -\varepsilon.$$ Let $\varepsilon \downarrow 0$  to prove equation \eqref{basicallyhbiggerthan1}.
\end{proof}

We have shown that our blowup satisfies the hypothesis of  Proposition \ref{poissonkernel1impliesflat} (the classification of flat blowups).

\begin{prop}\label{vanishingreifenbergflat}
After a rotation (which may depend on on the sequences $(Q_i,\tau_i), r_i$), $\Omega_\infty = \{x_n > 0\}, u_\infty = x_n^+$ and $d\omega_\infty = d\sigma_\infty = \mathcal H^{n-1}|_{\{x_n =0\}\cap \{s = t\}}dt$. In particular, $\Omega$ is vanishing Reifenberg flat.
\end{prop}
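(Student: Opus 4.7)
The plan is to combine the preparatory results of this section with Theorem \ref{poissonkernel1impliesflat} and then extract vanishing Reifenberg flatness by contradiction. By Lemma \ref{limitsexistforblowups}, after passing to a subsequence $\Omega_i \to \Omega_\infty$ in Hausdorff distance on compacta, $u_i \to u_\infty$ locally uniformly, $\omega_i \rightharpoonup \omega_\infty$, and $u_\infty, \omega_\infty$ solve \eqref{ipgreenfunction} on $\Omega_\infty$; moreover $\Omega_\infty$ is a $4\delta$-Reifenberg flat parabolic regular domain. Proposition \ref{maingradientbound} gives $|\nabla u_\infty|\leq 1$, and Lemma \ref{hatinfinityisgreaterthan1} gives $h_\infty \geq 1$ for $\sigma_\infty$-a.e. point on $\partial \Omega_\infty$. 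Provided $\delta$ was chosen small enough that $4\delta < \delta_n$ (the threshold in Theorem \ref{poissonkernel1impliesflat}), the classification result applies and yields, after a (subsequence-dependent) rotation, $\Omega_\infty = \{x_n > 0\}$; no translation is needed since each $(Q_i,\tau_i) \in \partial \Omega$, so the origin is always a limit of boundary points.

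Once the domain is identified, I would pin down $u_\infty$ and the measures. The function $x_n^+$ solves both the heat and adjoint heat equations on $\{x_n > 0\}$, vanishes on $\{x_n = 0\}$, and has linear growth, so it is a Green function with pole at infinity there; uniqueness up to a positive scalar follows from a maximum-principle argument for the adjoint heat equation on the half-space (cf.\ Appendix \ref{appendix:caloricmeasure}), giving $u_\infty = c\, x_n^+$ for some $c > 0$. To pin down $c$, I would invoke Lemma \ref{equaltoh} on $\partial \Omega_\infty$: the non-tangential boundary value of $\nabla u_\infty$ is $h_\infty\, e_n$ $\sigma_\infty$-a.e., while a direct computation gives this value as $c\, e_n$. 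Hence $c = h_\infty \geq 1$, and $|\nabla u_\infty|\leq 1$ forces $c \leq 1$. So $c = 1$, $u_\infty = x_n^+$, $h_\infty \equiv 1$, and $d\omega_\infty = d\sigma_\infty = \mathcal H^{n-1}|_{\{x_n=0\}\cap\{s=t\}}\, dt$, as required.

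For the ``in particular'' clause I would argue by contradiction. If \eqref{vanishingrefflatdef} failed, there would be a compact $K$, points $(Q_i,\tau_i) \in K \cap \partial \Omega$, scales $r_i \downarrow 0$, and $\eta > 0$ with $\theta((Q_i,\tau_i), r_i) \geq \eta$ for every $i$. Running the pseudo-blowup of Definition \ref{parabolicpseudoblowup} along this sequence and applying the first part of the proposition (after extracting a subsequence) gives $\Omega_\infty = \{x_n > 0\}$ up to rotation, so $\partial \Omega_i \cap C_1(0,0) \to \{x_n=0\} \cap C_1(0,0)$ in Hausdorff distance by Lemma \ref{limitsexistforblowups}. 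Unwinding the rescaling, this says exactly $\theta((Q_i,\tau_i), r_i) \to 0$, contradicting $\theta((Q_i,\tau_i), r_i) \geq \eta$.

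The main obstacle I anticipate is the uniqueness step $u_\infty = c\, x_n^+$ in paragraph two: one must rule out time-dependent non-negative adjoint-caloric functions on the parabolic half-space which vanish on $\{x_n = 0\}$ and have linear growth. Modulo that uniqueness statement (which is standard but worth recording carefully, given the backward-in-time direction of the adjoint equation), the identification of the measures is routine and vanishing Reifenberg flatness follows from the compactness argument above.
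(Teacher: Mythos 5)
Your proposal is correct and follows exactly the route the paper intends: the paper states Proposition \ref{vanishingreifenbergflat} without a separate proof precisely because it is the combination you give — Lemma \ref{limitsexistforblowups}, Proposition \ref{maingradientbound} and Lemma \ref{hatinfinityisgreaterthan1} verify the hypotheses of Theorem \ref{poissonkernel1impliesflat}, the uniqueness of the Green function at infinity (Appendix \ref{appendix:caloricmeasure}) identifies $u_\infty = c\,x_n^+$ with $c=1$ forced by $|\nabla u_\infty|\le 1$ and $h_\infty\ge 1$, and vanishing Reifenberg flatness follows by the standard compactness/contradiction argument. The only stylistic simplification available is that $h_\infty \equiv c$ can be read off directly from the weak identity \eqref{ipgreenfunction} applied to $c\,x_n^+$ on the half-space, without invoking Lemma \ref{equaltoh}.
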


In the above we have shown that any pseudo-blowup (i.e. a blowup described in Definition \ref{parabolicpseudoblowup}) of $\Omega$ is a half space. However, we will need a slightly stronger result, namely that under this blowup $\sigma_i \rightharpoonup \sigma_\infty$. In the elliptic setting this is Theorem 4.4 in \cite{kenigtoro}.

\begin{prop}\label{weakconvergenceinpseudoblowups}
For any blowup described in Definition \ref{parabolicpseudoblowup}, $\sigma_i \rightharpoonup \sigma_\infty$. In particular, for any compact set $K$ (in the finite pole case $K$ is as in Definition \ref{parabolicpseudoblowup}), we have \begin{equation}\label{ellipticvanishingchordarc}
\lim_{r\downarrow 0} \sup_{(Q,\tau) \in K\cap \partial \Omega} \frac{\sigma(C_r(Q,\tau)\cap \partial \Omega)}{r^{n+1}} = 1.
\end{equation}
\end{prop}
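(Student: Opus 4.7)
The plan is to first establish the weak convergence $\sigma_i \rightharpoonup \sigma_\infty$ and then deduce \eqref{ellipticvanishingchordarc} by a compactness/Portmanteau argument. The essential input is Proposition \ref{vanishingreifenbergflat}, which tells us that $\Omega_\infty = \{x_n > 0\}$ and $d\omega_\infty = d\sigma_\infty$, so in particular the Poisson kernel of the blowup satisfies $h_\infty \equiv 1$.

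For the weak convergence, I would recycle the computation used to prove Lemma \ref{hatinfinityisgreaterthan1}. Inequality \eqref{basicallyhbiggerthan1} asserts
\[
\int_{\partial\Omega_\infty}\varphi\, h_\infty\, d\sigma_\infty \;\geq\; \limsup_{i\to\infty}\int_{\partial\Omega_i}\varphi\, d\sigma_i
\]
for every non-negative $\varphi \in C_c^\infty(\mathbb R^{n+1})$, and with $h_\infty \equiv 1$ this becomes the upper bound $\int\varphi\,d\sigma_\infty \geq \limsup_i \int\varphi\, d\sigma_i$. The matching lower bound $\int\varphi\,d\sigma_\infty \leq \liminf_i \int\varphi\, d\sigma_i$ is already implicit in the proof of Lemma \ref{limitsexistforblowups} (compare \eqref{limitsigmaisahlforsregular}): time-slice $L^1$ convergence $\chi_{(\Omega_i)_t} \to \chi_{(\Omega_\infty)_t}$ together with the lower semicontinuity of perimeter on each slice, followed by Fatou in $t$, yields this estimate for non-negative continuous test functions. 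Combining the two inequalities and extending by linearity gives $\sigma_i \rightharpoonup \sigma_\infty$.

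For \eqref{ellipticvanishingchordarc}, I would argue by contradiction. Suppose there exist $\varepsilon_0 > 0$, points $(Q_i,\tau_i) \in K \cap \partial\Omega$, and scales $r_i \downarrow 0$ with $|\sigma(C_{r_i}(Q_i,\tau_i))/r_i^{n+1} - 1| \geq \varepsilon_0$. Form the pseudo-blowup along these sequences; by Lemma \ref{limitsexistforblowups} and the step above, after passing to a subsequence, $\Omega_i \to \Omega_\infty$ and $\sigma_i \rightharpoonup \sigma_\infty$, where $\sigma_\infty$ is the surface measure of a half-space. A direct check shows $\sigma_\infty(\partial C_1(0,0)) = 0$---the lateral boundary $\{|x|=1,\,x_n=0,\,|t|\leq 1\}$ is $\mathcal H^{n-1}$-null within each time slice, while the temporal caps $\{|t|=1\}$ have $dt$-measure zero---so $C_1(0,0)$ is a $\sigma_\infty$-continuity set. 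Portmanteau's theorem then gives
\[
\frac{\sigma(C_{r_i}(Q_i,\tau_i))}{r_i^{n+1}} \;=\; \sigma_i(C_1(0,0)) \;\longrightarrow\; \sigma_\infty(C_1(0,0)) \;=\; 1,
\]
contradicting the choice of $\varepsilon_0$.

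The main obstacle is the upper bound $\limsup_i \int\varphi\,d\sigma_i \leq \int\varphi\,d\sigma_\infty$; this is where the VMO hypothesis on $\log h$ genuinely enters, via the reverse Hölder decomposition of Lemma \ref{vmodecomposition} that underlies the proof of Lemma \ref{hatinfinityisgreaterthan1}. The lower bound and the Portmanteau step are essentially routine once the half-space classification of pseudo-blowups is available. One minor care point is the passage from non-negative $C_c^\infty$ test functions to arbitrary $\varphi \in C_c$, but this is standard given the uniform local control $\sigma_i(C_R(0,0)) \leq M R^{n+1}$ coming from Ahlfors regularity.
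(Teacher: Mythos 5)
Your proposal is correct, and two of its three steps coincide with the paper's argument: the paper likewise deduces \eqref{ellipticvanishingchordarc} from $\sigma_i \rightharpoonup \sigma_\infty$ by blowing up along a sequence realizing the supremum and using $\sigma_\infty(\partial C_1(0,0)) = 0$, and it likewise obtains the upper bound $\limsup_i \int \varphi\, d\sigma_i \leq \int \varphi\, d\sigma_\infty$ from \eqref{basicallyhbiggerthan1} combined with $\omega_\infty = \sigma_\infty$ (Proposition \ref{vanishingreifenbergflat}). Where you genuinely diverge is the lower bound, which is the heart of the paper's proof: there, $\liminf_i \int \varphi\, d\sigma_i \geq \int \varphi\, d\omega_\infty$ is proved by re-running the VMO good/bad decomposition of Lemma \ref{vmodecomposition} --- on $\tilde G_i$ one compares $\varphi\, d\sigma_i$ with $\varphi\, d\omega_i$ up to $(1+\varepsilon')$ factors, the contribution of $\tilde F_i$ is controlled by Cauchy--Schwarz and the reverse H\"older inequality (estimates \eqref{startingtoshowitsbigger} and \eqref{estimateforthefterm}), and then one invokes $\omega_i \rightharpoonup \omega_\infty$. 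You instead extract the lower bound from geometric measure theory: a.e.-time-slice $L^1_{\mathrm{loc}}$ convergence of $\chi_{\Omega_i}$, lower semicontinuity of perimeter on each slice (noting $\sigma_i$ dominates the slice perimeter measure), Fatou in $t$, and the fact that for the half space the slice perimeter measure is exactly $\sigma_\infty$; this is indeed the same mechanism already used inside the proof of Lemma \ref{limitsexistforblowups}, where the inequality $\sigma_\infty(U) \leq \liminf_i \sigma_i(U)$ for open $U$ is recorded, so nothing new needs proving. Your route is more elementary and confines the use of the VMO hypothesis to \eqref{basicallyhbiggerthan1} and to the classification in Proposition \ref{vanishingreifenbergflat} (so your closing remark that VMO ``genuinely enters'' the lower bound is inaccurate for your own argument, though it is true of the paper's); its price is that it leans on the explicit half-space identification of the blowup, so that $\partial \Omega_\infty$ carries no $\mathcal H^{n-1}$ mass beyond the reduced boundary, whereas the paper's comparison of $\sigma_i$ with $\omega_i$ needs only $\omega_\infty = \sigma_\infty$ and runs in parallel with the proof of Lemma \ref{hatinfinityisgreaterthan1}.
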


\begin{proof}
Observe that $\sigma_i\rightharpoonup \sigma_\infty$ implies equation \eqref{ellipticvanishingchordarc}: let $(Q_i,\tau_i) \in K\cap \partial \Omega$ and $r_i\downarrow 0$ be such that $$\lim_{i\rightarrow \infty} \frac{\sigma(C_{r_i}(Q_i,\tau_i)\cap \partial \Omega)}{r_i^{n+1}} = \lim_{r\downarrow 0} \sup_{(Q,\tau) \in K\cap \partial \Omega} \frac{\sigma(C_r(Q,\tau)\cap \partial \Omega)}{r^{n+1}}.$$ 

Blowing up along this sequence (possibly passing to subsequences) we get $\Omega_i \rightarrow \Omega_\infty$ and, by Proposition \ref{vanishingreifenbergflat}, we have that $\Omega_\infty = \{x_n > 0\}$ (after a rotation). Since $\sigma_\infty(\partial C_1(0,0)) = 0$, if $\sigma_i \rightharpoonup \sigma_\infty$ we have $\lim_i \sigma_i(C_1(0,0)) = \sigma_\infty(C_1(0,0)) = 1$ (recall our normalization from the introduction). By definition, $\sigma_i(C_1(0,0)) = \frac{1}{r_i^{n+1}} \sigma(C_{r_i}(Q_i,\tau_i))$ which implies equation \eqref{ellipticvanishingchordarc}. 

Proposition \ref{vanishingreifenbergflat} proved that $\sigma_\infty =\omega_\infty$ so to show $\sigma_i \rightharpoonup \sigma_\infty$ it suffices to prove, for any positive $\varphi \in C_{c}^\infty(\R^{n+1})$, $$\lim_{i\rightarrow \infty} \int_{\partial \Omega_i} \varphi d\sigma_i = \int_{\partial \Omega} \varphi d\omega_\infty.$$ Equation \eqref{basicallyhbiggerthan1} says that the right hand side is larger than the left. Hence, it is enough to show $$\liminf_{i\rightarrow \infty} \int_{\partial \Omega_i} \varphi d\sigma_i \geq \int_{\partial \Omega_\infty} \varphi d\omega_\infty.$$ We will work in the infinite pole setting. The finite pole setting follows similarly (i.e. for large $i$ we may assume $C_{Mr_i}(Q_i,\tau_i) \subset C_R(Q,\tau)$, where $(X_0,t_0) \in T_{A,2R}^+(Q,\tau)$, and then adapt the arguments below). 

Keeping the notation from the proof of Lemma \ref{hatinfinityisgreaterthan1}, it is true that, for large $i$, \begin{equation}\label{startingtoshowitsbigger} \begin{aligned} \int_{\partial \Omega_i} \varphi d\sigma_i \geq \int_{\tilde{G_i}} \varphi d\sigma_i &\stackrel{\eqref{comparableong}}{\geq} (1+\varepsilon')^{-1} \frac{\fint_{\Delta_{r_i}(Q_i,\tau_i)} hd\sigma}{\fint_{\Delta_{Mr_i}(Q_i,\tau_i)} h d\sigma} \int_{\tilde{G}_i}h_i\varphi d\sigma_i\\ &= (1+\varepsilon')^{-1} \frac{\fint_{\Delta_{r_i}(Q_i,\tau_i)} hd\sigma}{\fint_{\Delta_{Mr_i}(Q_i,\tau_i)} h d\sigma} \left(\int_{\partial \Omega_i} \varphi d\omega_i - \int_{\tilde{F}_i} h_i\varphi d\sigma_i\right)\\ \stackrel{\eqref{hrivshMri}+\eqref{sigmaGvssigmari}}{\geq}& (1+\varepsilon')^{-2}(1-CM^{n+1}\varepsilon')\left(\int_{\partial \Omega_i} \varphi d\omega_i - \int_{\tilde{F}_i} h_i\varphi d\sigma_i\right).\end{aligned}\end{equation}

We need to bound from above the integral of $h_i\varphi$ on $\tilde{F}_i$, \begin{equation}\label{estimateforthefterm}
\begin{aligned}
\int_{\tilde{F}_i} h_i\varphi d\sigma_i\leq& M \omega_i(\tilde{F}_i) = M  \frac{\sigma(\Delta_{r_i}(Q_i,\tau_i))}{r_i^{n+1}} \frac{\omega(F_i)}{\omega(\Delta_{r_i}(Q_i,\tau_i))}\\
\stackrel{\mathrm{H\ddot{o}lder's\; Inequality}}{\leq}& M  \frac{\sigma(\Delta_{r_i}(Q_i,\tau_i))\sigma(\Delta_{Mr_i}(Q_i,\tau_i))^{1/2}}{\omega(\Delta_{r_i}(Q_i,\tau_i))r_i^{n+1}} \sigma(F_i)^{1/2}\left(\fint_{\Delta_{Mr_i}(Q_i,\tau_i)} h^2d\sigma \right)^{1/2}\\
\stackrel{h \in A_2(d\sigma)}{\leq}& CM \frac{\sigma(\Delta_{r_i}(Q_i,\tau_i))\sigma(\Delta_{Mr_i}(Q_i,\tau_i))^{1/2}}{\omega(\Delta_{r_i}(Q_i,\tau_i))r_i^{n+1}} \sigma(F_i)^{1/2} \fint_{\Delta_{Mr_i}(Q_i,\tau_i)} hd\sigma\\
\stackrel{\sigma(F_i)\leq C\varepsilon (r_iM)^{n+1}}{\leq}& C(\varepsilon')^{1/2}M^{n+2} \frac{\fint_{\Delta_{Mr_i}(Q_i,\tau_i)} h d\sigma}{\fint_{\Delta_{r_i}(Q_i,\tau_i)} hd\sigma}\\
\stackrel{\eqref{hrivshMri}+\eqref{sigmaGvssigmari}}{\leq}& C(\varepsilon')^{1/2}M^{n+2}(1+\varepsilon')(1- CM^{n+1}\varepsilon')^{-1}.
\end{aligned}
\end{equation}

From equations \eqref{startingtoshowitsbigger} and \eqref{estimateforthefterm} we can conclude, for large $i$, that $$\int_{\partial \Omega_i} \varphi d\sigma_i \geq (1+\varepsilon')^{-2}(1- CM^{n+1}\varepsilon')\int_{\partial \Omega_i} \varphi d\omega_i  - CM^{n+2}(\varepsilon')^{1/2}(1+\varepsilon')^{-1}.$$ $\omega_i \rightharpoonup \omega_\infty$, consequently, let $i\rightarrow \infty$ and then $\varepsilon'\downarrow 0$ to obtain the desired result. 
\end{proof}

\section{The Vanishing Carleson Condition}\label{sec: vanishingcarlesoncondition}

In this section we prove the following geometric measure theory proposition to finish our proof of Theorem \ref{loghvmoisvanishingchordarc}. 

\begin{prop}\label{chordarctovanishingchordarc}
Let $\Omega$ be a parabolic uniformly rectifiable domain which is also vanishing Reifenberg flat. Furthermore, assume that $$\lim_{r\downarrow 0} \sup_{(Q,\tau) \in K\cap \partial \Omega} \frac{\sigma(\Delta_r(Q,\tau))}{r^{n+1}} = 1$$ holds for all compact sets $K$. Then $\Omega$ is actually a vanishing chord arc domain.
\end{prop}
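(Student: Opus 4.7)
My plan is to argue by contradiction and compactness, exploiting the scale invariance of $\nu$ together with the weak convergence of surface measures in a pseudo-blowup. Suppose the vanishing Carleson condition fails: then there exist $\varepsilon_0 > 0$, a compact set $K$, points $(Q_i,\tau_i) \in K \cap \partial \Omega$, and radii $\rho_i \downarrow 0$ with
\[
\rho_i^{-n-1} \nu\bigl((\Delta_{\rho_i}(Q_i,\tau_i)) \times [0,\rho_i]\bigr) \geq \varepsilon_0.
\]
Perform the pseudo-blowup of Definition \ref{parabolicpseudoblowup} at $(Q_i,\tau_i)$ with radii $\rho_i$, producing domains $\Omega_i$ with $(0,0) \in \partial\Omega_i$ and surface measures $\sigma_i$. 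A direct change of variables shows $\gamma_i(Q,\tau,r) = \gamma(\rho_iQ + Q_i, \rho_i^2\tau + \tau_i, \rho_ir)$ and consequently
\[
\nu_i\bigl(\Delta_R(0,0) \times [0,R]\bigr) = \rho_i^{-n-1} \nu\bigl(\Delta_{\rho_iR}(Q_i,\tau_i) \times [0,\rho_iR]\bigr),
\]
so $\nu_i(\Delta_1(0,0)\times[0,1]) \geq \varepsilon_0$ and, since $\Omega$ is parabolic uniformly rectifiable, the $\nu_i$ are uniformly Carleson with norm $\|\nu_i\|_+ \le \|\nu\|_+$.

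Passing to a subsequence and using vanishing Reifenberg flatness, I may assume (after applying a single rotation for the subsequence) that $\Omega_i \to \Omega_\infty = \{x_n > 0\}$ in the local Hausdorff metric on compacta. The density hypothesis $\sigma(\Delta_r(Q,\tau))/r^{n+1} \to 1$ uniformly on $K$ yields $\sigma_i(\Delta_R(P,\eta)) \to R^{n+1} = \sigma_\infty(\Delta_R(P,\eta))$ for every $R > 0$ and every $(P,\eta) \in \partial\Omega_\infty$; combined with Ahlfors regularity and the lower semicontinuity already established in the proof of Lemma \ref{limitsexistforblowups}, this gives the weak convergence $\sigma_i \rightharpoonup \sigma_\infty$ as Radon measures on $\mathbb R^{n+1}$.

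To obtain a contradiction I will show $\nu_i(\Delta_1(0,0)\times[0,1]) \to 0$. Split the integration into $r \in [0,\delta]$ and $r \in [\delta,1]$. The uniform Carleson bound gives $\nu_i(\Delta_1(0,0)\times[0,\delta]) \leq \|\nu\|_+ \delta^{n+1}$, which can be made arbitrarily small by choosing $\delta > 0$ small, uniformly in $i$. For the remaining part I use $P = \{x_n = 0\}$ as an admissible test plane in the definition of $\gamma_i$; Fubini then yields
\[
\int_\delta^1 \int_{\Delta_1(0,0)} \gamma_i(Q,\tau,r)\, d\sigma_i(Q,\tau)\, \tfrac{dr}{r} \leq C\,\delta^{-2}\log(1/\delta) \int_{\Delta_2(0,0)} X_n^2\, d\sigma_i(X,t),
\]
where I also used Ahlfors regularity of $\sigma_i$ to control $\sigma_i(\Delta_r(X,t)) \leq Cr^{n+1}$. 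Since $X_n^2$ is continuous, $\sigma_\infty$-integrable, and vanishes on $\partial \Omega_\infty$, the weak convergence $\sigma_i \rightharpoonup \sigma_\infty$ combined with the fact that $\sigma_\infty$ charges no boundary of $\Delta_2(0,0)$ shows the last integral tends to $0$. So the large-$r$ part tends to $0$ for fixed $\delta$, and letting $\delta \downarrow 0$ afterwards contradicts the lower bound $\varepsilon_0$.

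The main obstacle will be securing the weak convergence $\sigma_i \rightharpoonup \sigma_\infty$ with enough regularity to pass weak limits through the integrand $X_n^2\,\chi_{\Delta_r(Q,\tau)}$; the indicator function is not continuous, so I must verify $\sigma_\infty(\partial \Delta_r(Q,\tau)) = 0$ (which holds because the intersection of the cylindrical boundary with $\{x_n = 0\}$ has $(n-1)$-dimensional slices only at isolated times) and either use a mollification argument or appeal to the standard equivalence between weak convergence and convergence on continuity sets. A closely related obstacle, alluded to in the introduction, is that control on $\sigma(\Delta_r(Q,\tau))/r^{n+1}$ does not by itself control $\|\nu\|_+$, which is exactly why the splitting into small $r$ (handled by the uniform Carleson bound inherited from $\Omega$) and bounded-away-from-zero $r$ (handled by flatness) is essential; dispensing with either half would leave the argument incomplete.
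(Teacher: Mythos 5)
Your reduction by scaling is fine: the identity $\nu_i(\Delta_R(0,0)\times[0,R]) = \rho_i^{-n-1}\nu(\Delta_{\rho_i R}(Q_i,\tau_i)\times[0,\rho_i R])$ is correct, and your treatment of the range $r\in[\delta,1]$ essentially works (modulo the small point that the competitor plane in $\gamma_i(Q,\tau,r)$ must pass through $(Q,\tau)$, so you should use the translate of $\{x_n=0\}$ through $(Q,\tau)$ and absorb the error, which tends to zero by flatness). The fatal gap is the small-$r$ piece. The Carleson condition \eqref{carlesonmeasure} bounds $\nu(\Delta_\rho(Q,\tau)\times[0,\rho])$ by $\|\nu\|_+\rho^{n+1}$ with the \emph{same} $\rho$ in the surface ball and in the $r$-interval; it does not give $\nu_i(\Delta_1(0,0)\times[0,\delta])\leq \|\nu\|_+\delta^{n+1}$. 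The best you can extract is a covering bound: covering $\Delta_1(0,0)$ by roughly $\delta^{-(n+1)}$ surface balls of radius $\delta$, each contributing at most $\|\nu_i\|_+\delta^{n+1}$, yields $\nu_i(\Delta_1(0,0)\times[0,\delta])\leq C\|\nu\|_+$, uniformly in $\delta$ but not small in $\delta$. So your splitting shows only that the $\nu_i$-mass concentrates at small $r$; it produces no contradiction. Indeed, if finiteness of $\|\nu\|_+$ alone made the small-scale contribution small after blowup, the vanishing Carleson condition would follow almost trivially from Reifenberg flatness, which is exactly what the example at the end of \cite{hlnbigpieces} (recalled in the introduction and at the start of Section \ref{sec: vanishingcarlesoncondition}) warns against: pointwise $\gamma_i\to 0$ together with $\sigma_i\rightharpoonup\sigma_\infty$ does not control $\int_0^\delta\int\gamma_i\,d\sigma_i\,\frac{dr}{r}$ without some uniform integrability in $r$ near $0$, and the plain Carleson bound does not supply it.

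This missing uniform integrability is precisely what the paper's proof manufactures. It approximates $\Omega$ near $(Q_i,\tau_i)$ at scale $r_i$ by the graph of a $\Lip(1,1/2)$ function $\psi_i$ with uniformly bounded Lipschitz constant and uniformly bounded graph Carleson measure $\tilde\nu_i$ (Lemma \ref{adaptedbigpiecesofgraphs}, whose construction discards, via a maximal-function and Markov argument, the set where $\int_0^{2R}\gamma\,r^{-1}dr$ is large), then transfers control of $\nu$ back from the graph domain with only an $\varepsilon^{1/2}$ loss (Lemma \ref{carlesonnormofomegabounded}). On the graphs, the functions $f_i(x,t)=\int_0^{3/4}\gamma_{\Phi_i}(x,t,r)\,r^{-1}dr$ are uniformly integrable with respect to Lebesgue measure on the base plane, so Arzel\`a--Ascoli plus dominated convergence, combined with vanishing Reifenberg flatness forcing $\Phi_\infty\equiv 0$, kills the full integral over $r\in(0,3/4]$, including the small-$r$ range your argument cannot reach. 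To repair your proof you would need an analogous substitute for the step ``$\nu_i(\Delta_1\times[0,\delta])$ is small uniformly in $i$,'' and that substitute is the substantive content of the paper's proof rather than something recoverable from the Carleson norm alone.
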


Propositions \ref{vanishingreifenbergflat} and \ref{weakconvergenceinpseudoblowups} show that the assumptions of Proposition \ref{chordarctovanishingchordarc} are satisfied and therefore Proposition \ref{chordarctovanishingchordarc} implies Theorem \ref{loghvmoisvanishingchordarc} (restricting to $K \subset \subset \{t < t_0\}$ in Proposition \ref{chordarctovanishingchordarc} implies Theorem \ref{loghvmoisvanishingchordarc} in the finite pole setting).  

In the elliptic case, Proposition \ref{chordarctovanishingchordarc} is also true but the proof is substantially simpler (see the proof beginning on page 366 in \cite{kenigtoro}). This is due to the fact (mentioned in the introduction) that the growth of the ratio $ \frac{\sigma(\Delta_r(Q,\tau))}{r^{n+1}}$ controls the oscillation of $\hat{n}$ (see, e.g. Theorem 2.1 in \cite{kenigtoroduke}). However, as we also alluded to before, the behaviour of $\frac{\sigma(\Delta_r(Q,\tau))}{r^{n+1}}$ does not give information about the Carleson measure $\nu$; see the example at the end of \cite{hlnbigpieces} in which $\sigma(\Delta_r(Q,\tau)) \equiv r^{n+1}$ but $\nu$ is not a Carleson measure. So we cannot hope that the methods in \cite{kenigtoro} can be adapted to prove Proposition \ref{chordarctovanishingchordarc} above. We also mention that the previous example in \cite{hlnbigpieces} shows that Proposition \ref{chordarctovanishingchordarc} is not true without the {\it a priori} assumption that $\nu$ is a Carleson measure (i.e. that the domain is parabolic uniformly rectifiable). 

When $\Omega  = \{(x,x_n, t)\mid x_n \geq \psi(x,t)\}$ and $\psi \in \Lip(1, 1/2)$ with $D^{1/2}_t \psi \in \mathrm{BMO}(\mathbb R^{n+1})$ (see the introduction of \cite{hlncaloricmeasure} for precise definitions), Nystr\"om, in \cite{nystromgraphdomains}, showed that vanishing Reifenberg flatness implies the vanishing Carleson condition.  To summarize his argument, for any $r_i \downarrow 0, (Q_i,\tau_i) \in \partial \Omega \cap K$ we can write \begin{equation}\label{nuoftheblowup2}r_i^{-n-1}\nu(C_{r_i}(Q_i,\tau_i) \times [0,r_i]) \lesssim \int_0^1 \int_{\{(x, t)\mid |x| \leq 1, |t| \leq 1\}} \frac{\gamma_i((y, \psi_i(y,s),s),r)}{r} dyds dr,\end{equation}  where $\psi_i(x, t) \colonequals \frac{\psi(r_ix + q_i, r_i^2t + \tau_i)}{r_i}$ and $\gamma_i$ is defined as in equation \eqref{whatisnu} but with respect to the graph of $\psi_i$.  By vanishing Reifenberg flatness, $\gamma_i(-, -, r)\downarrow 0$ pointwise and the initial assumptions on $\psi$ imply that $\{\gamma_i/r\}$ is uniformly integrable. Hence, we can apply the dominated convergence theorem to get the desired result. 

The argument above relies on the fact that, for a graph domain, $\sigma_i = \sqrt{1+\nabla \psi_i} dyds \lesssim dyds$, where the implicit constant in $\lesssim$ is independent of $i$.  In general, $\Omega$ need not be a graph domain and, although $\sigma_i \rightharpoonup \sigma$ and $\gamma_i \rightarrow 0$ pointwise, we cannot, {\it a priori}, control the integral of $\gamma_id\sigma_i$. Instead, for each $i$, we will approximate $\Omega$, near $(Q_i,\tau_i)$ and at scale $r_i$, by a graph domain and then adapt the preceeding argument.  Our first step is to approximate $\Omega_i$ by graphs whose $\Lip(1,1/2)$ and Carleson measure norms are bounded independently of $i$. The proof follows closely that of Theorem 1 in \cite{hlnbigpieces}, which shows that parabolic chord arc domains contain big pieces of graphs of $f\in\Lip(1, 1/2)$ with $D^{1/2}_t f \in \mathrm{BMO}$.  However, we don't need to bound the $\mathrm{BMO}$ norm of $D^{1/2}_t \psi_i$ so the quantities we focus on are different. 

\begin{lem}\label{adaptedbigpiecesofgraphs}
Let $\Omega$ satisfy the conditions of Proposition \ref{chordarctovanishingchordarc}. Also let $r_i \downarrow 0$ and $(Q_i,\tau_i) \in K\cap \partial \Omega$. Then, for every $\varepsilon > 0$, there exists an $i_0 \equiv i_0(\varepsilon, K) > 1$ where $i > i_0$ implies the existence of a $\psi_i \in \Lip(1,1/2)(\R^{n-1}\times \R)$ such that:
\begin{enumerate}
\item $\sup_i \|\psi_i\|_{\Lip(1,1/2)} \leq C \equiv C(n, \varepsilon) < \infty$. 
\item Let $P_i \equiv P((Q_i,\tau_i), r_i)$ be the plane which best approximates $\Omega$ at scale $r_i$ around $(Q_i,\tau_i)$. Define $\tilde{\Omega}_i$ be the domain above the graph of $\psi_i$ over $P((Q_i,\tau_i),r_i)$. Then \begin{equation}\label{getsalmosteverything}
\sigma(\Delta_{r_i}(Q_i,\tau_i) \backslash \partial \tilde{\Omega}_i)  < \varepsilon r_i^{n+1}
\end{equation}
\item $D[C_{r_i}(Q_i,\tau_i) \cap \partial \tilde{\Omega}_i, P_i\cap C_{r_i}(Q_i,\tau_i)] \leq C(n) D[C_{r_i}(Q_i,\tau_i)\cap \partial \Omega, C_{r_i}(Q_i,\tau_i) \cap P_i]$.
\item If $\tilde{\nu}_i$ is the Carleson measure defined as in equation \eqref{whatisnu} but with respect to $\tilde{\Omega}_i$ then \begin{equation}\label{tildenusarebounded}\tilde{\nu}_i(C_{r_i}(Q_i,\tau_i)\times [0, r_i]) \leq K(n,\varepsilon, \|\nu\|)r_i^{n+1}\end{equation}
\end{enumerate}
\end{lem}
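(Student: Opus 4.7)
The plan is to rescale everything to unit scale via the parabolic dilation $(X,t)\mapsto(r_iX+Q_i, r_i^2t+\tau_i)$, apply a quantitative version of the big-pieces-of-Lipschitz-graphs construction of \cite{hlnbigpieces} at scale $1$, and finally rescale back. Define $\Omega_i$ as in Definition \ref{parabolicpseudoblowup}. Since $\partial\Omega$ is Ahlfors regular with constant $M$ and $\nu$ has Carleson norm $\|\nu\|_+$, each $\Omega_i$ inherits the same bounds. By vanishing Reifenberg flatness, the Reifenberg constant of $\Omega_i$ at unit scale equals $\theta((Q_i,\tau_i),r_i)$, which tends to $0$ along $K$. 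Thus the $\Omega_i$ form a family of parabolic chord arc domains, uniformly in $i$, that are becoming arbitrarily flat.

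At unit scale I would construct $\tilde\psi_i$ over the best-fit plane $\tilde{P}_i$ for $\Omega_i$ around $(0,0)$ using the stopping-time argument underlying \cite{hlnbigpieces}. Set
\begin{equation*}
E_i^\lambda \colonequals \left\{(Q,\tau) \in \Delta_1(0,0)\cap \partial\Omega_i : \int_0^1 \gamma_i((Q,\tau),s)\,\frac{ds}{s} \leq \lambda \right\},
\end{equation*}
where $\gamma_i$ is the analogue of \eqref{gammainuniformrectifiability} for $\Omega_i$. The Carleson bound $\nu_i(C_1(0,0)\times[0,1]) \leq \|\nu\|_+$ together with Chebyshev yields $\sigma(\Delta_1(0,0)\setminus E_i^\lambda) \leq \|\nu\|_+/\lambda$, so choosing $\lambda=\lambda(\varepsilon,\|\nu\|_+)$ large enough forces this complement to have measure $<\varepsilon$. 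At every point of $E_i^\lambda$ the boundary $\partial\Omega_i$ is well approximated by $n$-planes at all scales in a Carleson-summed sense, so the HLN coronization fits $E_i^\lambda$ into the graph of a $\Lip(1,1/2)$ function $\tilde\psi_i$ over $\tilde{P}_i$, with $\|\tilde\psi_i\|_{\Lip(1,1/2)}$ and $\|D_t^{1/2}\tilde\psi_i\|_{\mathrm{BMO}}$ bounded by $C(n,\varepsilon,\|\nu\|_+)$. Rescaling by $r_i$ produces $\psi_i$.

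Properties (1) and (2) are immediate from the construction. For (3), the graph of $\tilde\psi_i$ contains a big piece of $\partial\Omega_i\cap\Delta_1(0,0)$, which by vanishing Reifenberg flatness lies within $\theta_i\to 0$ of $\tilde{P}_i$; the controlled $\Lip(1,1/2)$ norm prevents the remainder of the graph from deviating from $\tilde{P}_i$ by more than a fixed multiple of $\theta_i$, after which rescaling gives the stated Hausdorff estimate in terms of $D[C_{r_i}(Q_i,\tau_i)\cap\partial\Omega, C_{r_i}(Q_i,\tau_i)\cap P_i]$. For (4), with $\tilde\psi_i\in\Lip(1,1/2)$ and $D_t^{1/2}\tilde\psi_i\in \mathrm{BMO}$ uniformly in $i$, a direct computation (as in the proof that graph domains are parabolic uniformly rectifiable, cf.\ Section 3 of \cite{hlncaloricmeasure}) bounds the Carleson norm of $\tilde\nu_i$ by $K(n,\varepsilon,\|\nu\|_+)$.

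The main obstacle I expect is the quantitative bookkeeping in the HLN coronization: their Theorem 1 is stated with a single fixed big-piece proportion, while here we need the proportion to be $(1-\varepsilon)$-close to $1$ at the cost of enlarging the $\Lip(1,1/2)$ and $\mathrm{BMO}$ constants in a way that depends only on $n$, $\varepsilon$ and $\|\nu\|_+$. This requires tracing through their stopping-time construction, tracking how the constant in the Lipschitz approximation degrades as $\lambda$ grows, and verifying that the graph can be taken to lie over the Reifenberg-best plane $\tilde{P}_i$ (rather than over an arbitrary plane produced by the construction) by using that $\theta((Q_i,\tau_i),r_i)\to 0$. Once this parameterization is in hand, properties (2)--(4) drop out cleanly from the scaling.
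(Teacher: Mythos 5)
There is a genuine gap at the heart of your construction. You define the good set by a Chebyshev/stopping-time condition on the square function, $E_i^\lambda=\{\int_0^1\gamma_i\,ds/s\le\lambda\}$, and then assert that ``the HLN coronization fits $E_i^\lambda$ into the graph of a $\Lip(1,1/2)$ function over $\tilde P_i$'' with constants depending only on $n,\varepsilon,\|\nu\|_+$ and with big-piece proportion $1-\varepsilon$. Neither half of this assertion is justified, and neither follows from tracing constants in \cite{hlnbigpieces}: (a) a pointwise bound $\int_0^1\gamma\,ds/s\le\lambda$ with $\lambda$ large (your $\lambda\sim\|\nu\|_+/\varepsilon$) does not make the good set a graph over the \emph{fixed} plane $P_i$ with controlled $\Lip(1,1/2)$ constant, because the approximating planes at individual scales may tilt by as much as $\sqrt{\lambda}$ and may rotate from scale to scale; (b) Theorem 1 of \cite{hlnbigpieces} produces a fixed big-piece proportion, and the proportion cannot be pushed to $1-\varepsilon$ from uniform rectifiability alone -- the extra hypotheses of the lemma (vanishing Reifenberg flatness and $\sigma(\Delta_r(Q,\tau))/r^{n+1}\to1$) are exactly what make this possible, and your argument never actually uses the density hypothesis in building the graph. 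Relatedly, for (4) you invoke a uniform $\mathrm{BMO}$ bound on $D_t^{1/2}\tilde\psi_i$, which is the hardest part of the HLN construction and is precisely what the paper's proof is designed to avoid.

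The paper's proof organizes things differently. The bad set is defined by a projection condition: $E=\{(P,\eta)\in\Delta_R:\exists\rho<R,\ \mathcal H^n(p(\Delta_\rho(P,\eta)))\le\theta\rho^{n+1}\}$, with $p$ the orthogonal projection onto $P_i$ and $\theta\simeq c(n)\varepsilon$. A Vitali covering argument plus the hypothesis $\sigma(\Delta_\rho)\simeq(1\pm\varepsilon^2)\rho^{n+1}$ and Reifenberg flatness shows $\sigma(E)\le\varepsilon R^{n+1}/2$, and the same projection lower bound at points of $F=\Delta_R\setminus E$ forces the local approximating planes to be quantitatively transverse to $e_n$, which is what yields the $\Lip(1,1/2)$ bound $C(n,\varepsilon)$ over the fixed plane $P_i$ (equation \eqref{trigcomputation}). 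The Carleson measure enters only afterwards, via Markov's inequality, to discard a further set of measure $\le\varepsilon^{n+1}R^{n+1}$ and obtain $F_1$ on which $\int_0^{2R}\gamma\,dr/r\lesssim\varepsilon^{-(n+1)}\|\nu\|$; the graph is then produced by a Whitney extension off $p(F_1)$, and the Carleson bound \eqref{tildenusarebounded} is proved directly by comparing $\tilde\gamma$ with $\gamma$ plus Whitney-cube error terms and a Taylor expansion over the cubes -- no $\mathrm{BMO}$ control of $D_t^{1/2}\psi_i$ is needed. To repair your proposal you would need to replace the square-function stopping time by such a projection/density argument (or supply a genuinely new proof that the good set you chose is a graph over $P_i$ capturing $(1-\varepsilon)$ of the measure), and to prove (4) without the $\mathrm{BMO}$ input.
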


\begin{proof}
Let $\varepsilon > 0$ and $r_i \downarrow 0, (Q_i,\tau_i) \in K\cap \partial \Omega$. By the condition on $\sigma$ there exists an $i_1 > 0$ such that  $ (1-\varepsilon^2)\rho^{n+1}< \sigma(\Delta_\rho(P, \eta)) < (1+\varepsilon^2)\rho^{n+1}$ for all $\rho < 2r_{i_1}, (P,\eta) \in \partial \Omega \cap K$. There is also an $i_2$ such $\rho < r_{i_2}$ implies that $\Delta_{\rho}(P,\eta)$ is contained in a $\varepsilon^2 \rho$ neighborhood of some $n$-plane which contains a line parallel to the $t$-axis. Let $i_0(\varepsilon) = \max\{i_1, i_2\}$ and $i > i_0$. 

Henceforth, we will work at scale $r_i$ and so, for ease of notation, let $r_i \equiv R,(Q_i, \tau_i) \equiv (0,0)$ and $P \equiv P((0,0), R) \equiv \{x_n = 0\}$. If $\tilde{D} \equiv \frac{1}{R}D[C_R(0,0) \cap P, C_R(0,0) \cap \partial \Omega]$, then, by assumption, $\tilde{D} \leq \varepsilon^2$. Let $p: \mathbb R^{n+1}\rightarrow P$ be the orthogonal projection, i.e.  $p(Y,s) \colonequals (y, 0, s)$.  Fix a $\theta \in (0,1)$ to be choosen later (depending on $n, \varepsilon$) and define \begin{equation}\label{defofE} E = \{(P,\eta) \in \Delta_{R}(0,0) \mid \exists \rho < R,\; \mathrm{s.t.},\; \mathcal H^n(p(\Delta_\rho(P,\eta))) \leq \theta \rho^{n+1}\}.\end{equation}

The Vitali covering lemma gives $(P_i,\eta_i) \in E$, such that $E \subset \bigcup_i \Delta_{\rho_i}(P_i, \eta_i) \subset \Delta_{2R}(0,0)$, the $C_{\rho_i/5}(P_i, \eta_i)$ are pairwise disjoint and $\mathcal H^n(p(\Delta_{\rho_i}(P_i,\eta_i))) \leq \theta \rho_i^{n+1}$. Then $$\mathcal H^n(p(E)) \leq \sum_i \mathcal H^n(p(\Delta_{\rho_i}(P_i, \eta_i))) \leq \theta\sum_i \rho_i^{n+1}$$$$ \leq 5^{n+1}(1-\varepsilon^2)^{-1}\theta\sum_i \sigma(\Delta_{\rho_i/5}(P_i, \eta_i)) \leq 10^{n+1}(1-\varepsilon^2)^{-1}(1+\varepsilon^2) \theta R^{n+1}.$$ 

If $F \colonequals \Delta_R(0,0)\backslash E$ then $\sigma(F) \geq \mathcal H^n(p(F)) \geq \mathcal H^n(p(\Delta_R(0,0))\backslash p(E)) \geq (1-\varepsilon^2)R^{n+1} - \sigma(p(E))$ by Reifenberg flatness. This implies $\sigma(E)\leq  (1+\varepsilon^2)R^{n+1} - \sigma(F)  \leq 2\varepsilon^2 R^{n+1} + 10^{n+1}(1-\varepsilon^2)^{-1}(1+\varepsilon^2) \theta R^{n+1}$. So if $\theta = 10^{-(n+1)} \varepsilon/100$ then $\sigma(E) \leq \varepsilon R^{n+1}/2$ (as long is $\varepsilon > 0$ is sufficiently small). 

We want to show that $F$ is the graph of a Lipschitz function over $P$. Namely, that if $(Y,s), (Z,t) \in F$ then $\|(Y,s) - (Z,t)\| \leq C(|y-z| + |s-t|^{1/2})$. Let $\rho \colonequals 2(|y-z| + |s-t|^{1/2})$ and note that if $\rho \geq R$ then the flatness of $\Omega$ at scale $R$ implies the desired estimate.   Write $Z = Z' + Z''$ and $Y = Y' + Y''$ where $(Y',s), (Z',t)$ are the projections of $(Y,s)$ and $(Z,t)$ on $P((Y,s),\rho)$. By vanishing Reifenberg flatness $|Y''|,|Z''| \leq \varepsilon^2 \rho$. We can write \begin{equation}\label{trigcomputation}Y'-Z' -((Y'-Z')\cdot e_n)e_n  = p(Y'-Z') \Rightarrow |Y'-Z'| \leq \frac{|p(Z'-Y')|}{\min_{\hat{v} \in P((Y,s),\rho)} |\hat{v} -(\hat{v}\cdot e_n)e_n|}.\end{equation}  Define $\gamma = \min_{\hat{v} \in P((Y,s),\rho)} |\hat{v} -(\hat{v}\cdot e_n)e_n|$. Combine the above estimates to obtain $$\|(Y,s) - (Z,t)\| \leq |s-t|^{1/2} +|Y''-Z''| + |Y'-Z'| \leq \rho+ 2\varepsilon^2\rho +\gamma^{-1} |p(Z'-Y')| \leq (1+ 2\varepsilon^2 + \gamma^{-1}) \rho.$$ It remains only to bound $\gamma$ from below. As $p(C_\rho(Y,s)\cap P((Y,s),\rho))$ is a convex body in $P$, equation \eqref{trigcomputation} implies $\mathcal H^n(p(C_\rho(Y,s)\cap P((Y,s),\rho))) \leq c\gamma \rho^{n+1}$ for some constant $c$ (depending only on dimension).   As $C_\rho(Y,s)\cap \partial \Omega$ is well approximated by $C_\rho(Y,s)\cap P((Y,s),\rho)$ it must be the case that $\mathcal H^n(p(C_\rho(Y,s)\cap \partial \Omega)) \leq c(\gamma + \varepsilon^2)\rho^{n+1}$. As $(Y,s) \in F$ we know $(c\gamma +\varepsilon^2) \geq \theta = c(n) \varepsilon \Rightarrow \gamma \geq \tilde{c} \varepsilon$. In particular we have shown that for $(Y,s), (Z,t) \in F$ that \begin{equation}\label{lipnormbounded}\|(Y,s) - (Z,t)\| \leq C(n,\varepsilon) (|y-z| + |s-t|^{1/2}).\end{equation}

In order to eventually get the bound, \eqref{tildenusarebounded}, on the Carleson norm we need to shrink $F$ slightly, to $F_1$. $\Omega$ is a parabolic regular domain so if $$f(P, \eta) \colonequals \int_0^{2R} \gamma(P,\eta,r) r^{-1}dr,\; (P,\eta) \in C_R(0,0)$$ then $$\int_{C_{2R}(0,0)} f(P,\eta)d\sigma(P,\eta) \leq \|\nu\|(2R)^{n+1}.$$ Hence, by Markov's inequality, $$\sigma(\{(P,\eta) \in C_{2R}(0,0) \mid f(P,\eta) \geq (2\varepsilon^{-1})^{n+1}\|\nu\|\} ) \leq \varepsilon^{n+1} R^{n+1}.$$ Let $F_1 = F\backslash \{(P,\eta) \in C_{2R}(0,0) \mid f(P,\eta) \geq (2\varepsilon^{-1})^{n+1}\|\nu\|\} $. It is clear that equation \eqref{lipnormbounded} holds for $(Y,s), (Z,t)\in  F_1$ and that \begin{equation}\label{fisnottoobiginf1} f(P,\eta) \leq 2^{n+1}\varepsilon^{-(n+1)}\|\nu\|, \forall (P,\eta) \in F_1.\end{equation} Finally, we have the estimate \begin{equation}\label{notmuchoutsideofF1}\sigma(C_R(0,0)\cap \partial \Omega \backslash F_1) \leq \varepsilon R^{n+1}.\end{equation}

At this point we are ready to construct $\psi$ using a Whitney decomposition of $\{x_n =0\}$. Let $\psi^*$ be such that if $(y,0,s) \in p(F_1)$ then $(y, \psi^*(y,s), s) \in F_1$.  Let $Q_i \colonequals Q_{\rho_i}(\hat{x}_i,\hat{t}_i) \subset \{x_n = 0\}$ be such that \begin{enumerate}
\item $\{x_n = 0\} \backslash p(F_1) = \bigcup \overline{Q}_i$.
\item Each $Q_i$  is centered at $(\hat{x}_i,\hat{t}_i)$ with side length $2\rho_i$ in the spacial directions and $2\rho_i^2$ in the time direction.
\item $Q_i\cap Q_j = \emptyset$ for all $i\neq j$
\item $10^{-10n}d(Q_i, p(F_1)) \leq \rho_i \leq 10^{-8n}d(Q_i,p(F_1))$. 
\end{enumerate}
Then let $v_i$ be a partition of unity subordinate to $Q_i$. Namely,
\begin{enumerate}[(I)]
\item $\sum_i v_i \equiv 1$ on $\R^n \backslash p(F_1)$. 
\item $v_i \equiv 1$ on $\frac{1}{2}Q_i$ and $v_i$ is supported on the double of $Q_i$. 
\item $v_i \in C^\infty(\R^n)$ and $\rho_i^{\ell} |\partial^\ell_x v_i| + \rho_i^{2\ell}|\partial^\ell_t v_i| \leq c(\ell,n)$ for $\ell = 1,2,...$
\end{enumerate}
For each $i$ there exists $(x_i, t_i) \in p(F_1)$ such that $$d_i \colonequals d(p(F_1), Q_i) = d((x_i, t_i), Q_i).$$ Finally let $\Lambda = \{i\mid \overline{Q}_i \cap C_{2R}(0,0) \neq \emptyset\}$ and define \begin{equation}\label{definitionofpsi}
\psi(y,s) \colonequals \begin{cases} \psi^*(y,s),\; (y,s) \in p(F_1)\\
\sum_{i\in \Lambda} (\psi^*(x_i, t_i) + \tilde{D} d_i)v_i(y,s),\; (y,s) \in \R^n \backslash p(F_1)
\end{cases}
\end{equation}
where, as before, $\tilde{D} = \frac{1}{R}D[C_R(0,0) \cap P, C_R(0,0) \cap \partial \Omega] \leq \varepsilon^2$. 

Let $\widetilde{\Omega}$ be the graph of $\psi$ over $\{x_n = 0\}$ and recall the conditions we want $\psi$ and $\widetilde{\Omega}$ to satisfy. Condition (2) is a consequence of equation \eqref{notmuchoutsideofF1}. Condition (3) follows as $|\psi| \leq C(n) \tilde{D}$. 

It remains to show Condition (1): $|\psi(y, s) - \psi(z, t)| \leq C(n,M,\varepsilon)(|y-z| + |s-t|^{1/2})$. Equation \eqref{lipnormbounded} says this is true when $(y,s), (z,t) \in p(F_1)$. When $(y,s) \in p(F_1)$ and $(z,t) \notin p(F_1)$ we can estimate $$|\psi(y, s) - \psi(z, t)| \leq \sum_{\{i\in \Lambda\mid (z,t) \in 2Q_i\}} v_i(z,t)|\psi^*(y,s) - \psi^*(x_i,t_i)| +C(n) \varepsilon^2 d((z,t), p(F_1))$$ as $v_i(z,t) \neq 0$ implies $d_i \leq 10 d((z,t), p(F_1))$. Apply the triangle inequality to conclude \begin{equation}\label{boundingdifferenceinpsis}\begin{aligned} &|\psi(y, s) - \psi(z, t)| \leq C(n)\varepsilon^2\left(|y-z| + |s-t|^{1/2}\right)+ \sum_{\{i\in \Lambda \mid (z,t) \in 2Q_i\}} |y-x_i| + |s-t_i|^{1/2} \\ &\leq C(n,\varepsilon)\left(|y-z| + |s-t|^{1/2}\right) + \sum_{\{i\in \Lambda \mid (z,t) \in 2Q_i\}} |z-x_i| + |t-t_i|^{1/2}\\ &\leq C(n,\varepsilon)\left(|y-z| + |s-t|^{1/2} + \sum_{\{i\in \Lambda \mid (z,t) \in 2Q_i\}} d_i \right)\\
&\leq C(n,\varepsilon) |y-z| + |s-t|^{1/2}.\end{aligned}\end{equation} In the above, we used that $|{\{i\in \Lambda \mid (z,t) \in 2Q_i\}}| \leq C$ and, if $(z,t) \in Q_i$ that $d_i \leq C(n)d((z,t), p(F_1))$.  From now on we write, $a\lesssim b$ if there is a constant $C$, (which can depend on $\varepsilon$, the dimension and the parabolic uniform regularity constants of $\Omega$) such that $a \leq Cb$. 

The last case is if $(y,s), (z,t) \notin p(F_1)$. When $\max\{d((y,s), p(F_1)), d((z,t), p(F_1))\} \leq \|(y,s)- (z,t)\|$ estimate $|\psi(y, s) - \psi(z, t)| \leq |\psi(y,s) - \psi(\tilde{y}, \tilde{s})| + |\psi(\tilde{y},\tilde{s}) - \psi(\tilde{z}, \tilde{t})| + |\psi(\tilde{z}, \tilde{t}) - \psi(z, t)|$ where $(\tilde{y},\tilde{s})$ is the closest point in $p(F_1)$ to $(y,s)$ and similarly $(\tilde{z},\tilde{t})$. The Lipschitz bound is then a trivial consequence of the fact that  $d((\tilde{y},\tilde{s}),(\tilde{z}, \tilde{t})) \leq d((y,s), p(F_1))+ d((z,t), p(F_1)) + \|(y,s)- (z,t)\| \leq 3\|(y,s)- (z,t)\|$ and the above analysis.

Now assume $\min\{d((y,s), p(F_1)), d((z,t), p(F_1))\} \geq \|(y,s)- (z,t)\|$. Recall that if $(y,s) \in 2Q_i$ then $d_i \leq C(n) d((y,s), p(F_1)) \leq \|(y,s)- (z,t)\|$. Similarly if $i, j$ are such that $(y,s) \in 2Q_i$ and $(z,t) \in 2Q_j$ then $\|(x_i, t_i) - (x_j,t_j)\| \leq C(n) \|(y,s)- (z,t)\|$. Then we can write \begin{equation}\label{distancesarebig}\begin{aligned} |\psi(y, s) - \psi(z, t)| =& \left|\sum_{i \in \Lambda} (\psi^*(x_i,t_i) + \tilde{D}d_i)v_i(y,s) - \sum_{j \in \Lambda} (\psi^*(x_j,t_j) + \tilde{D}d_j)v_j(z,t)\right|\\
\leq& \sum_{i,j \in \Lambda} |\psi^*(x_i,t_i) - \psi^*(x_j, t_j)| + \tilde{D}|d_i - d_j|)v_i(y,s)v_j(z,t)\\
\leq& \sum_{\{i,j \in \Lambda\mid (y,s) \in 2Q_i, (z,t) \in 2Q_j\}} C(n,\varepsilon) \|(x_i,t_i) - (x_j, t_j)\|  + C(n)\|(y,s)- (z,t)\|\\
\leq& C(n,\varepsilon) \|(y,s) - (z, t)\| .
\end{aligned}\end{equation}
In the above we use that $|\{i,j \in \Lambda\mid (y,s) \in 2Q_i, (z,t) \in 2Q_j\}| \leq C(n)$ and equation \eqref{lipnormbounded}.

Finally, we may assume, without loss of generality, that $d((y,s), p(F_1)) \leq  \|(y,s)- (z,t)\| \leq d((z,t), p(F_1))$. Then \begin{equation}\label{onedistancesisbig}|\psi(y, s) - \psi(z, t)| \leq \sum_{i\in \Lambda} ((\psi^*(x_i, t_i) -\psi(y,s))  + \tilde{D} d_i) |v_i(y,s) - v_i(z,t)|\end{equation} as $\sum \psi(y,s)(v_i(y,s) - v_i(z,t)) = 0$. Arguing as in equation \eqref{boundingdifferenceinpsis}, $|\psi^*(x_i, t_i) -\psi(y,s)| \leq C(n,\varepsilon)\|(y,s)-(x_i,t_i)\|$. As before, if $(y,s) \in 4Q_i$ then $\|(y,s)-(x_i,t_i)\| \leq \tilde{C}(n)d_i \leq c(n)d((y,s), p(F_1)) \leq c(n) \|(y,s)- (z,t)\|$. If $(y,s) \notin 4Q_i$ then we may assume $(z,t) \in 2Q_i$ (or else $v_i(y,s) - v_i(z,t) = 0$). So $\|(y,s)-(x_i,t_i)\| \leq \|(y,s) - (z,t)\| + \|(z,t) - (x_i,t_i)\| \leq \|(y,s) - (z,t)\|  + c(n) \rho_i \leq \tilde{c}(n) \|(y,s) - (z,t)\|$ (as $\|(y,s) - (z,t)\| \geq c'(n) \rho_i$). Either way, $\|(y,s)-(x_i,t_i)\|, d_i \leq C(n)\|(y,s) - (z,t)\|$. Hence,  $$|\psi(y, s) - \psi(z, t)| \leq C(n,\varepsilon) \|(y,s) - (z,t)\| \sum_{i\in \Lambda} |v_i(y,s) - v_i(z,t)|  \leq C(n,\varepsilon)\|(y,s) - (z,t)\|.$$

It remains only to estimate the Carleson norm of $\tilde{\nu}$. Our first claim in this direction is that if $(Y,s) \in \partial \tilde{\Omega} \cap C_{2R}(0,0)$ then \begin{equation}\label{distancetoomega}
d((Y,s), \partial \Omega) \leq C(n, M, \varepsilon) d((y,s), p(F_1)).
\end{equation}
Indeed, if $(\tilde{y},\tilde{s})\in p(F_1)$ be the point in $p(F_1)$ closest to $(y,s)$ then, $$d((Y,s), \partial \Omega)  \leq d((y,s), p(F_1)) + |\psi(y,s) - \psi(\tilde{y},\tilde{s})| \leq C(n,M,\varepsilon)  d((y,s), p(F_1))$$ by the boundedness of $\psi$'s $\Lip(1,1/2)$ norm. 

Define $\Gamma_i$ to be the graph of $\psi$ over $Q_i$, and, for $r > 0, (X,t) \in F_1$, define $\xi(X,t, r) \colonequals \{i\mid \overline{\Gamma}_i \cap C_r(X,t) \neq \emptyset\}$. By equation \eqref{distancetoomega} and standard covering theory there are constants $k \equiv k(n, M, \varepsilon), \tilde{k} \equiv \tilde{k}(n,M, \varepsilon)$ such that $\overline{\Gamma}_i \subset \bigcup_{j} C_{kd_i}(Z_{i,j},\tau_{i,j})\subset C_{\tilde{k}r}(X,t)$ where $(Z_{i,j},\tau_{i,j}) \in \partial \Omega$ and the $C_{kd_i/5}(Z_{i,j},\tau_{i,j})$ is  disjoint from $C_{kd_i/5}(Z_{i,\ell},\tau_{i,\ell})$ for $j \neq \ell$. For any $(Z,\tau) \in \Gamma_i\cap C_{kd_i}(Z_{i,j},\tau_{i,j})$ and any $n$-plane $\hat{P}$ containing a line parallel to the $t$-axis we have \begin{equation}\label{distancedominatedbynotgraph}d((Z,\tau),\hat{P})^2 \leq C(n)\left(\min_{(Y,s) \in C_{kd_i}(Z_{i,j},\tau_{i,j})\cap \partial \Omega} d((Y,s), \hat{P})^2 + k^2d_i^2\right).\end{equation}

Define $\tilde{\gamma}$ as in equation \eqref{gammainuniformrectifiability} but with respect to $\tilde{\Omega}$. For any $(X,t) \in F_1$, equation \eqref{distancedominatedbynotgraph} gives:  $$\tilde{\gamma}(X,t,r) \leq \frac{1}{r^{n+3}} \left(\int_{C_r(X,t)\cap F_1}d((Z,\tau), P)^2 d\sigma+ \sum_{i\in \xi}\sum_{j} \int_{C_{kd_i}(Z_{i,j},\tau_{i,j})\cap \partial \tilde{\Omega}} d((Z,\tau), P)^2 d\tilde{\sigma} \right)$$$$\stackrel{Eq. \eqref{distancedominatedbynotgraph}}{\leq} \gamma(X,t,r) + C(n,k,\varepsilon)\sum_{i\in \xi} \left(\frac{d_i}{r}\right)^{n+3} + \frac{C(n)}{r^{n+3}}\sum_{i\in \xi} \sum_j \int_{\Delta_{kd_i/5}(Z_{i,j},\tau_{i,j})} d((Z,\tau), P)^2 d\sigma.$$ As $Q_i$ can be adjacent to at most $c(n)$ many other $Q_k$ we can be sure that $C_{kd_i/5}(Z_{i,j},\tau_{i,j})$ intersects at most $\tilde{c}(n)$ other $C_{kd_\ell/5}(Z_{\ell,j},\tau_{\ell,j})$. Additionally, the $C_{kd_i/5}(Z_{i,j},\tau_{i,j}) \subset C_{\tilde{k}r}(X,t)$ for all $i,j$. Hence, we can control $\tilde{\gamma}$ on $F_1$: 

\begin{equation}\label{dominateonthegraphbyoff}
\tilde{\gamma}(X,t,r) \leq c(M,n,\varepsilon)\left(\sum_{i\in \xi} (d_i/r)^{n+3} + \gamma(X,t, \tilde{k}r)\right),\; \forall (X,t) \in F_1.
\end{equation}

Integrating equation \eqref{dominateonthegraphbyoff} over $F_1$ and then in $r$ from $[0, R]$, allows us to conclude \begin{equation}\label{tildenuonfone}\tilde{\nu}(F_1 \times [0,R]) \leq \|\nu\|(\tilde{k}R)^{n+1} +  \int_{(x,t) \in p(F_1)}\int_0^R r^{-1}\sum_{i\in \xi(X,t, r)} \left(\frac{d_i}{r}\right)^{n+3}dr dxdt.\end{equation} Note that $i \in \xi(X,t,r)$ implies that $Q_i \cap C_r(X,t) \neq \emptyset \Rightarrow r \geq d((x,t), Q_i) \geq d_i$.  Therefore, $$\int_{(x,t) \in p(F_1)}\int_0^R r^{-1}\sum_{i\in \xi(X,t, r)} \left(\frac{d_i}{r}\right)^{n+3}dr dxdt  \leq \int_{(x,t) \in p(F_1)} \sum_{i\in \xi(X,t, 2R)} 1 dxdt \leq C(n)R^{n+1},$$ as $F_1 \subset C_R(0,0)$. Putting this together with equation \eqref{tildenuonfone} gives us \begin{equation}\label{donetilenuonf}
\tilde{\nu}(F_1 \times [0,R]) \leq C(\|\nu\|, n, \varepsilon) R^{n+1}
\end{equation}

If $(x,t) \in Q_i$ (i.e. $(X,t) \subset C_R(0,0)\cap \partial \tilde{\Omega} \backslash F_1$) then approximation by affine functions and a Taylor series expansion yields \begin{equation}\label{taylorseriesestimate}\tilde{\gamma}(X,t,r) \leq c(n,M,\varepsilon) r^2 d_i^{-2},\; (x,t) \in Q_i\; \forall r \leq 8d_i, \end{equation}  (see \cite{hlnbigpieces} pp 367, for more details).  When $r \geq 8d_i$ we can lazily estimate $\tilde{\gamma}(X,t,r) \lesssim \tilde{\gamma}(X_i, t_i, \tilde{k}r)$ where $(X_i, t_i) = (x_i, \psi(x_i, t_i),t_i)$ and $(x_i, t_i)\in p(F_1)$ such that $d(p(F_1), Q_i) = d((x_i,t_i), Q_i)$ (as in the definition of $\psi$). This is because $C_r(X,t) \subset C_{\tilde{k}r}(X_i,t_i)$ (where $\tilde{k}$ is as above). Hence, \begin{equation}\label{estoffF1forbigr} \begin{aligned} &\int_{8d_i}^R\tilde{\gamma}(X,t,r) r^{-1}dr \lesssim \int_{8d_i}^R \tilde{\gamma}(X_i, t_i, \tilde{k}r) r^{-1}dr\\ &\stackrel{Eq. \eqref{dominateonthegraphbyoff}}{\lesssim} \int_{8d_i}^R \gamma(X_i, t_i, \tilde{k}^2r)r^{-1}dr +  \int_{8d_i}^Rr^{-1}\sum_{j\in \xi(X_i, t_i,r)} (d_j/r)^{n+3}dr,\; \forall (X,t) \in \Gamma_i.\end{aligned}\end{equation} Note that $(X_i,t_i)\in F_1$, thus $\int_0^R \gamma(X_i, t_i, \tilde{k}^2r)r^{-1}dr \lesssim 2^{n+1}\varepsilon^{-n-1}\|\nu\|$. As before, $j\in \xi(X_i, t_i,r) \Rightarrow r \geq d_j$. So we can bound $$\int_{8d_i}^Rr^{-1}\sum_{j\in \xi(X_i, t_i,r)} (d_j/r)^{n+3}dr \leq c(n)\sum_{j\in \xi(X_i, t_i,2R)} 1.$$ If we combine these estimates and integrate over $\Gamma_i$ we get $$\int_{\Gamma_i}\int_{8d_i}^R\tilde{\gamma}(X,t,r) r^{-1}drd\tilde{\sigma}  \leq C(\|\nu\|, \varepsilon, n)\left(\tilde{\sigma}(\Gamma_i)+  \int_{\Gamma_i} |\{j\in \xi(X_i, t_i,2R)\}| d\tilde{\sigma}\right).$$

Use equation \eqref{taylorseriesestimate} to bound the integral for small $r$ and sum over all $Q_i$s to obtain: \begin{equation}\label{nuofff1}
\tilde{\nu}((C_R(0,0)\backslash F_1) \times [0,R]) \lesssim R^{n+1} + \sum_{j\in \xi(0,0, 2R)} \tilde{\sigma}(\Gamma_i) \lesssim R^{n+1}.
\end{equation}
 
Combine equations \eqref{donetilenuonf} and \eqref{nuofff1} to obtain $$\tilde{\nu}(C_R(0,0)\times [0,R]) \leq C(n, \varepsilon, \|\nu\|) R^{n+1}.$$
\end{proof}

We now want to control the Carleson norm of $\Omega$ by that of the graph domain.

\begin{lem}\label{carlesonnormofomegabounded}
Let $\Omega$ be a parabolic uniformly rectifiable domain and let $\Psi$ be a $\Lip(1,1/2)$ function such that \begin{equation}\label{getsalmosteverything2}
\sigma(C_{1}(0,0)\cap \partial \Omega \backslash \partial \tilde{\Omega})  < \varepsilon,
\end{equation}
where $\tilde{\Omega}$ is the domain above the graph of $\Psi$ over some $n$-plane $\overline{P}$ which contains a line parallel to the $t$-axis.

Then, if $\tilde{\nu}$ is defined as in \eqref{carlesonmeasure} but associated $\partial \tilde{\Omega}$ we have \begin{equation}\label{nucontrolledbynutilde} \nu(C_{1/2}(0,0) \times [0,1/2]) \leq c(\tilde{\nu}(C_{3/4}(0,0) \times [0,3/4])  + \varepsilon^{1/2}),\end{equation} where $c> 0$ depends on $n, \|\nu\|$ and the Ahlfors regularity constant of $\Omega$. 
\end{lem}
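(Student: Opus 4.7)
The plan is to compare $\nu$ with $\tilde\nu$ pointwise on the common part of the boundaries, $G \colonequals \partial\Omega \cap \partial\tilde\Omega$, and to bound the mass contributed by the ``bad set'' $B \colonequals \partial\Omega \setminus \partial\tilde\Omega$ (and its $\eta$-neighborhood for a threshold $\eta \sim \varepsilon$) using the \emph{a priori} Carleson norm $\|\nu\|_+$ together with the hypothesis $\sigma(B) < \varepsilon$.

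First I would establish a pointwise comparison: for $(Q,\tau) \in G$ and $r \in (0, 1/2]$, plugging a minimizing $n$-plane $P$ for $\tilde\gamma(Q,\tau, r)$ into the infimum defining $\gamma(Q,\tau, r)$, splitting the surface integral over $\partial\Omega \cap C_r(Q,\tau)$ into its $G$ and $B$ parts, and using $\sigma|_G = \tilde\sigma|_G$ together with the crude bound $d((X,t), P)^2 \leq C r^2$ for $(X,t) \in C_r(Q,\tau)$, yields
\[
\gamma(Q,\tau, r) \leq \tilde\gamma(Q,\tau, r) + C r^{-n-1}\sigma(B \cap C_r(Q,\tau)).
\]
In particular, if $C_r(Q,\tau) \cap B = \emptyset$ the correction vanishes and $\gamma \leq \tilde\gamma$ at that scale.

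Next I would decompose $\nu(C_{1/2}(0,0) \times [0, 1/2]) = I_G + I_B$ according to whether the base point lies in $G \cap C_{1/2}(0,0)$ or $B \cap C_{1/2}(0,0)$, and in each case split the $r$-integration at a threshold $\eta \in (0, 1/2]$ to be optimized. For $I_B$, a Vitali cover of $B \cap C_{1/2}(0,0)$ by parabolic cylinders of radius $\eta$ (whose number is at most $C \sigma(B)/\eta^{n+1}$ by Ahlfors regularity), combined with the Carleson bound $\nu(C_\eta \times [0,\eta]) \leq \|\nu\|_+ \eta^{n+1}$, controls the small-scale contribution by $C\|\nu\|_+ \varepsilon$; on $r \in [\eta, 1/2]$, the uniform estimate $\gamma \leq CM$ (from Ahlfors regularity) gives $\leq C \varepsilon \log(1/\eta)$. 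For $I_G$, at large scales the pointwise estimate yields a main term bounded by $\tilde\nu(C_{1/2}(0,0)\times[0,1/2]) \leq \tilde\nu(C_{3/4}(0,0)\times[0,3/4])$, plus a correction bounded via Fubini and Ahlfors regularity by $C \varepsilon \log(1/\eta)$. At small scales I would partition the base-point integration: on $G \setminus N_\eta(B)$ the correction vanishes for all $r \leq \eta$, leaving only a $\tilde\nu$-contribution; on $G \cap N_\eta(B)$ I would bound the $\nu$-Carleson mass directly by a Vitali cover, using $\sigma(N_\eta(B)) \leq C \sigma(B) \leq C \varepsilon$, to obtain $\leq C \|\nu\|_+ \varepsilon$. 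Choosing $\eta = \varepsilon$ and using $\varepsilon \log(1/\varepsilon) \leq \varepsilon^{1/2}$ for $\varepsilon$ small collects every error contribution into $C(\|\nu\|_+) \varepsilon^{1/2}$, producing the claimed bound.

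The main technical obstacle is estimating the small-scale contribution to $I_G$: a direct use of the pointwise error picks up a logarithmic divergence from the $r^{-1}$ weight in the Carleson integral. This is resolved by invoking the \emph{a priori} Carleson bound $\|\nu\|_+$ on the small-measure set $G \cap N_\eta(B)$ in place of the pointwise correction --- the step that forces $c$ in the conclusion to depend on $\|\nu\|_+$ as well as on $n$ and the Ahlfors regularity constant of $\Omega$.
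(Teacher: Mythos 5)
Your pointwise comparison on $G=\partial\Omega\cap\partial\tilde\Omega$ and your large-scale bookkeeping (Fubini giving an error $\lesssim \sigma(B)\log(1/\eta)$, and $\gamma\leq CM$ from Ahlfors regularity) are fine, but the step you yourself identify as the crux --- the small-scale contribution from base points in $B$ and in $G\cap N_\eta(B)$ --- does not work as written. Two specific claims fail. First, the Vitali count ``number of $\eta$-cylinders $\leq C\sigma(B)/\eta^{n+1}$'' is not a consequence of Ahlfors regularity: lower regularity bounds $\sigma(\Delta_{\eta/5}(x_j))$ from below, but those balls need not carry any $B$-mass, so the count is only $\leq C\eta^{-(n+1)}$, and the resulting bound is $O(\|\nu\|_+)$, not $O(\|\nu\|_+\varepsilon)$ (take $B$ a union of tiny scattered pieces of total measure $<\varepsilon$ forming an $\eta$-net of $\Delta_{1/2}$). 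Second, $\sigma(N_\eta(B))\leq C\sigma(B)$ is false for the same kind of example; since $B=\partial\Omega\setminus\partial\tilde\Omega$ is merely a relatively open set of small measure, its $\eta$-neighborhood can have measure of order one. More fundamentally, the Carleson hypothesis controls $\int_{\Delta_\rho}\int_0^\rho\gamma\,r^{-1}dr\,d\sigma$ only over full surface balls; it does not localize to subsets of small measure, so ``invoke $\|\nu\|_+$ on the small set'' cannot by itself convert $\sigma(S)<\varepsilon$ into $\int_S\int_0^\eta\gamma\,r^{-1}dr\,d\sigma\lesssim\|\nu\|_+\varepsilon$. The function $f(Q,\tau)=\int_0^{1/2}\gamma(Q,\tau,r)r^{-1}dr$ is only BMO-like, not bounded, and its integral over a small set needs a genuine John--Nirenberg-type input.

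The paper supplies exactly the two ingredients your outline is missing. (i) It does not work with $B$ itself but with the super-level set of the Hardy--Littlewood maximal function $M_\sigma(\chi_B)$ at height $\sqrt\varepsilon$: the enlarged bad set still has measure $\lesssim\sqrt\varepsilon$, while on its complement $E$ every ball contains at most a $\sqrt\varepsilon$-fraction of bad points; combined with a parabolic Whitney decomposition of $E^c$, the correction to the pointwise comparison takes the form $\sum_i (r_i/\rho)^{n+3}$ over Whitney boxes, whose decay in $\rho$ kills the $r^{-1}$ weight (each box contributes $\lesssim (r_i/d(\overline Q_i,(Q,\tau)))^{n+3}$ after the $\rho$-integration, and then $\lesssim r_i^{n+1}$ after integrating in $(Q,\tau)$, with $\sum_i r_i^{n+1}\lesssim\sqrt\varepsilon$). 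Your crude error $r^{-n-1}\sigma(B\cap C_r)$ carries no such decay, which is precisely why you hit the logarithm and then have nothing valid to fall back on. (ii) For base points in the bad set, the paper uses Cauchy--Schwarz, $\int_{\mathrm{bad}}f\,d\sigma\leq\sigma(\mathrm{bad})^{1/2}\|f\|_{L^2(C_{1/2})}$, and proves $f\in\mathrm{BMO}$ with norm $\lesssim\|\nu\|$ (splitting $f$ at scale $\rho$ and using Ahlfors regularity to compare $\gamma(P,\eta,r)$ with $\gamma(Q,\tau,r)$ for $r\geq\rho$), so that $\|f\|_{L^2}$ is controlled and the bad contribution is $\lesssim\varepsilon^{1/4}$. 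Without these two devices --- the maximal-function refinement of the good set plus the Whitney-scale decay, and the BMO/$L^2$ bound on $f$ --- the small-scale bad-set terms in your decomposition are not controlled, so the proposal has a genuine gap rather than a repairable technicality.
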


\begin{proof}
This proof follows closely the last several pages of \cite{hlncaloricmeasure}. For ease of notation let $\overline{P} = \{x_n =0\}$, so that $\partial \tilde{\Omega} = \{(y,\Psi(y,s),s)\}$. Let $\tilde{\chi}$ be the characteristic function of $\Delta_1(0,0) \backslash \partial \tilde{\Omega}$. The Hardy-Littlewood maximal function of $\tilde{\chi}$ with respect to $\sigma$ is
$$M_\sigma(\tilde{\chi})(Y,s) =\sup_{\rho > 0} \frac{\sigma(C_\rho(Y,s)\cap \partial\Omega\cap C_1(0,0)\backslash \partial \tilde{\Omega})}{\sigma(\partial \Omega \cap C_\rho(Y,s))},\; (Y,s) \in \partial \Omega.$$

The Hardy-Littlewood maximal theorem states $$\sigma(\{(Y,s)\mid M_\sigma(\tilde{\chi})(Y,s) \geq \sqrt{\varepsilon}\}) \leq C(n) \frac{\|\tilde{\chi}\|_{L^1}}{\sqrt{\varepsilon}} \leq C(n) \sqrt{\varepsilon}.$$ As such, there exists a compact set $E \subset \partial \tilde{\Omega} \cap \Delta_1(0,0)$, such that $M_\sigma(\tilde{\chi})(Y,s) \leq \sqrt{\varepsilon}$  for all $(Y,s) \in E$ and \begin{equation}\label{sizewithoutE}\sigma(\Delta_1(0,0)\backslash E) \leq \varepsilon +C(n)\sqrt{\varepsilon} < \tilde{C}(n)\sqrt{\varepsilon}.\end{equation}

Let $\{Q_i\}$ be a Whitney decomposition of $\mathbb R^{n+1}\backslash E$. That is to say,
\begin{enumerate}
\item $Q_i \colonequals Q_{r_i}(P_i,\eta_i)$ is a parallelogram whose cross section at any time is a cube of side length $2r_i$ centered at $P_i$ and whose length (in the time direction) is $2r_i^2$, centered around the time $\eta_i$.
\item $Q_i \cap Q_j = \emptyset, i \neq j$
\item $10^{-10n}d(Q_i, E) \leq r_i \leq 10^{-5n}d(Q_i, E)$, 
\item For each $i, \{j \mid \overline{Q_j}\cap \overline{Q}_i\neq \emptyset\}$ has cardinality at most $c$
\item $\R^{n+1}\backslash E = \bigcup \overline{Q_i}$. 
\end{enumerate}

For $(Q,\tau) \in E$ and $0 < \rho < 1/2$, let $\xi(Q,\tau, \rho) = \{i \mid \overline{Q}_i\cap \Delta_{\rho}(Q,\tau)\neq \emptyset\}$. We claim \begin{equation}\label{boundongamma} \gamma(Q,\tau, \rho) \leq c(n) \left(\tilde{\gamma}(Q,\tau, \frac{3}{2}\rho) +  \sum_{i \in \xi(Q,\tau, \rho) } \left(\frac{r_i}{\rho}\right)^{n+3}\right).
\end{equation}

To wit, let $P$ be a plane containing a line parallel to the $t$ axis such that $\tilde{\gamma}(Q,\tau, 3\rho/2)$ is achieved by $P$. By definition $$\tilde{\gamma}(Q,\tau, \frac{3}{2}\rho) = \left(\frac{3}{2}\right)^{-n-3} \rho^{-n-3} \int_{\partial \tilde{\Omega}\cap C_{3\rho/2}(Q,\tau)} d((Y,s), P)^2d\tilde{\sigma}(Y,s).$$ On the other hand \begin{equation}\label{towardsaboundongamma}\begin{aligned}\gamma(Q,\tau,\rho) \leq& \rho^{-n-3}\left(\int_{E\cap C_{\rho}(Q,\tau)} d((Y,s), P)^2d\sigma + \int_{\Delta_\rho(Q,\tau) \backslash E} d((Y,s), P)^2d\sigma\right)\\ \leq& \tilde{\gamma}(Q,\tau,\rho) + \rho^{-n-3}\sum_{i\in \xi} \int_{\overline{Q}_i\cap \Delta_\rho(Q,\tau)} d((Y,s), P)^2d\sigma,\end{aligned}\end{equation} as $d\sigma = d\tilde{\sigma}$ on $E$. 

Note that the parabolic diameter of $Q_i$ is $\leq c(n) r_i$. Hence if $$\xi_1(Q,\tau,\rho) \colonequals \{i\in \xi(Q,\tau,\rho)\mid \exists (Y,s) \in \overline{Q}_i\; \mathrm{s.t.}\; d((Y,s), P) < r_i\}$$ then $d((Y,s), P) \leq c'(n)r_i$ for all $(Y,s) \in \overline{Q}_i$ and all $i\in \xi_1(Q,\tau,\rho)$. Therefore, \begin{equation}\label{estimateinxi1}\rho^{-n-3}\sum_{i\in \xi_1(Q,\tau,\rho)} \int_{\overline{Q}_i\cap \Delta_\rho(Q,\tau)} d((Y,s), P)^2d\sigma \leq C(n,M) \sum_{i\in \xi_1(Q,\tau,\rho)} (r_i/\rho)^{n+3}.\end{equation}

If $i \in \xi(Q,\tau,\rho)\backslash \xi_1(Q,\tau,\rho)$ let $(Y_i^*,s_i^*) \in \overline{Q_i}$ be such that $d(\overline{Q}_i, E) = d((Y_i^*,s_i^*), E) \equalscolon \delta_i$. By the triangle inequality, $\sup_{(Y,s) \in \overline{Q}_i} d((Y,s), P) \leq d((Y_i^*,s_i^*), P) + c(n)r_i \leq c'(n)d((Y_i^*,s_i^*), P)$ (because $i \notin \xi_1(Q,\tau,\rho)$). This implies, $$\int_{\overline{Q}_i\cap\Delta_\rho(Q,\tau)} d((Y,s), P)^2d\sigma(Y,s) \leq c(n)r_i^{n+1}d((Y_i^*,s_i^*), P)^2.$$

The distance between $(Y_i^*,s_i^*)$ and $E$ is $\delta_i$, as such, $ C_{\delta_i/9}(\tilde{Y}_i,\tilde{s}_i)\subset C_{10\delta_i/9}(Y_i^*,s_i^*)$ for some $(\tilde{Y}_i,\tilde{s}_i)\in E\subset \partial \tilde{\Omega}$. Furthermore, recall $\delta_i \simeq r_i$, hence, $\tilde{\sigma}(C_{\delta/9}(\tilde{Y}_i,\tilde{s}_i)\cap \partial \tilde{\Omega}) \geq c(n) \delta_i^{n+1} \geq c'r_i^{n+1}$. Arguing as above,  $d((Y,s),P) +c(n) \delta _i\geq d((Y_i^*, s_i^*), P)$ for any $(Y,s) \in C_{10\delta_i/9}(Y_i^*,s_i^*)$. Putting all of this together, \begin{equation*}\begin{aligned}&\sum_{i\in \xi(Q,\tau,\rho)\backslash \xi_1(Q,\tau,\rho)}\int_{\overline{Q}_i\cap\Delta_\rho(Q,\tau)} d((Y,s), P)^2d\sigma \leq \sum_{i\in \xi(Q,\tau,\rho)\backslash \xi_1(Q,\tau,\rho)}c(n)r_i^{n+1}d((Y_i^*,s_i^*), P)^2\\ &\leq \sum_{i\in \xi(Q,\tau,\rho)\backslash \xi_1(Q,\tau,\rho)}c(n)\left(\int_{C_{\delta_i/9}(\tilde{Y}_i,\tilde{s}_i)\cap \partial \tilde{\Omega}} d((Y,s), P)^2 d\tilde{\sigma} + \delta_i^2r_i^{n+1}\right).\end{aligned}\end{equation*} 

Observe that, if $i \in \xi(Q,\tau,\rho)$ then, $\delta_i \leq \rho$ and, therefore, $\bigcup_{i\in \xi(Q,\tau,\rho)\backslash \xi_1(Q,\tau,\rho)} C_{\delta_i/9}(\tilde{Y}_i,\tilde{s}_i) \subset C_{3\rho/2}(Q,\tau)$. Furthermore for each $i \in \xi(Q,\tau, \rho)$, $\#\{j\in \xi(Q,\tau,\rho)\backslash \xi_1(Q,\tau,\rho)\mid \overline{C}_{\delta_i/9}(\tilde{Y}_i,\tilde{s}_i) \cap \overline{C}_{\delta_j/9}(\tilde{Y}_j,\tilde{s}_j)\} < c(n)$. Plugging these estimates into the offset equation above yields \begin{equation}\label{estimatenotinxione}\rho^{-n-3}\sum_{i \in \xi\backslash \xi_1} \int_{\overline{Q}_i\cap \Delta_\rho(Q,\tau)} d((Y,s), P)^2d\sigma \leq c(n) \left(\tilde{\gamma}(Q,\tau, \frac{3}{2}\rho) + \sum_{i\in \xi\backslash \xi_1} \left(\frac{r_i}{\rho}\right)^{n+3}\right).\end{equation} Our claim, equation \eqref{boundongamma}, follows from equations \eqref{towardsaboundongamma}, \eqref{estimateinxi1} and  \eqref{estimatenotinxione}.

By definition, if $i \in \xi(Q,\tau, \rho)$ then $\rho \geq d(\overline{Q}_i, (Q,\tau))$. Integrate equation \eqref{boundongamma} in $\rho$ from $0$ to $1/2$ and over $(Q,\tau) \in E\cap C_{1/2}(0,0)$ to obtain \begin{equation}\label{estimateforE}\begin{aligned}  &\nu((E\cap C_{1/2}(0,0))\times [0,1/2]) \leq C(\tilde{\nu}((E\cap C_{3/4}(0,0))\times [0,3/4])\\ &+ \int_{E\cap C_{1/2}(0,0)}\left(\sum_{i\in \xi(Q,\tau,1/2)} \int_{d(\overline{Q}_i, (Q,\tau))}^{1/2} (r_i/\rho)^{n+3} \rho^{-1}d\rho\right)d\sigma(Q,\tau). \end{aligned}\end{equation} Here, and for the rest of the proof, $C$ will refer to a constant which may depend on the dimension, $\|\nu\|$ and the Ahlfors regularity constant of $\Omega$ but not on $\Psi$ or $\varepsilon$. 
  
Evaluate the integral in $\rho$, $$\int_{E\cap C_{1/2}(0,0)}\left(\sum_{i\in \xi(Q,\tau,1/2)} \int_{d(\overline{Q}_i, (Q,\tau))}^{1/2} (r_i/\rho)^{n+3} \rho^{-1}d\rho\right)d\sigma \leq C\sum_{i\in \xi(0,0,1)}\int_{E\cap C_{1/2}(0,0)}\left(\frac{r_i}{d(\overline{Q}_i, (Q,\tau))}\right)^{n+3} d\sigma.$$

For every $\lambda \geq c(n)r_i$, let $$E_\lambda \colonequals \{(Q,\tau) \in E\mid d(\overline{Q}_i, (Q,\tau)) \leq \lambda\}.$$  Trivially $E_\lambda = \{(Q,\tau) \in E\mid r_i/d(\overline{Q}_i, (Q,\tau)) \geq r_i/\lambda\}$. By construction of the Whitney decomposition, $\mathrm{diam}(Q_i) \leq c(n) \lambda$, so Ahlfors regularity implies $$\sigma(E_\lambda) \leq C\lambda^{n+1}.$$ Recall that $r_i/d(Q_i, (Q,\tau)) \leq r_i/\delta_i \leq 10^{-5n}$. Let $\gamma = r_i/\lambda$ and evaluate, \begin{equation*}\begin{aligned}\int_{E\cap C_{1/2}(0,0)}\left(\frac{r_i}{d(\overline{Q}_i, (Q,\tau))}\right)^{n+3} d\sigma &\leq \int_0^{10^{-5n}} (n+3)\gamma^{n+2}\sigma(\{(Q,\tau) \in E\mid r_i/d(\overline{Q}_i, (Q,\tau)) \geq \gamma\})d\gamma\\ &\leq C(n) \int_0^1\gamma^{n+2} r_i^{n+1}\gamma^{-n-1}d\gamma\leq c(n)r_i^{n+1}.\end{aligned}\end{equation*} 

Now recall, that $Q_i \in \xi(0,0,3/4)$ form a cover of $\Delta_{3/4}(0,0)\backslash E$. In light of equation \eqref{sizewithoutE}, $\sum_{i\in \xi(0,0,3/4)} r_i^{n+1} \leq c(n)\sigma(\Delta_1(0,0) \backslash E) \leq C\sqrt{\varepsilon}$. Which allows us to bound, $$\int_{E\cap C_{1/2}(0,0)}\left(\sum_{i\in \xi(Q,\tau,1/2)} \int_{d(\overline{Q}_i, (Q,\tau))}^{1/2} (r_i/\rho)^{n+3} \rho^{-1}d\rho\right)d\sigma \leq C \sqrt{\varepsilon}.$$ Plugging the above inequality into equation \eqref{estimateforE} proves \begin{equation}\label{donewithEpart}\nu((E\cap C_{1/2}(0,0))\times [0,1/2]) \leq C(\tilde{\nu}((E\cap C_{3/4}(0,0))\times [0,3/4])  + \sqrt{\varepsilon}).\end{equation}

It remains to estimate $\nu((\Delta_{1/2}(0,0)\backslash E)\times [0,1/2])$. By the Cauchy-Schwartz inequality \begin{equation}\label{estimateoffE}\begin{aligned}&\nu((\Delta_{1/2}(0,0)\backslash E)\times [0,1/2]) = 
\int_{\Delta_{1/2}(0,0)}\chi_{E^c}(Q,\tau) \left(\int_0^{1/2} \gamma(Q,\tau,\rho) \rho^{-1}d\rho\right) d\sigma(Q,\tau)\\ &\leq\sqrt{\sigma(\Delta_{1/2}(0,0)\backslash E)} \left(\int_{\Delta_{1/2}(0,0)} \left(\int_0^{1/2} \gamma(Q,\tau,\rho) \rho^{-1}d\rho\right)^2 d\sigma(Q,\tau)\right)^{1/2}.\end{aligned}\end{equation}

As above, let $f(Q,\tau) \colonequals \int_0^{1/2} \gamma(Q,\tau,\rho) \rho^{-1}d\rho$. We claim that $f(Q,\tau) \in \mathrm{BMO}(C_{1/2}(0,0))$.  Let $C_{\rho}(P,\eta) \subset C_{1/2}(0,0)$ with $(P,\eta) \in \partial \Omega$. Define $k \equiv k(\rho, P,\eta) = \int_\rho^{1/2} \gamma(P,\eta,r)r^{-1}dr$. Additionally, let $f_1(Q,\tau) \colonequals\int_0^\rho \gamma(Q,\tau,r)r^{-1}dr$ and $f_2(Q,\tau) = f(Q,\tau) - f_1(Q,\tau)$ for $(Q,\tau) \in C_\rho(P,\eta)$. By the triangle inequality,
\begin{equation}\label{showingitsBMO1}\begin{aligned}
\int_{\Delta_{\rho}(P,\eta)} |f(Q,\tau) - k| d\sigma &\leq \int_{\Delta_{\rho}(P,\eta)} f_1(Q,\tau) d\sigma + \int_{\Delta_\rho(P,\eta)} |f_2(Q,\tau) - k|d\sigma\\
&\leq \|\nu\| \rho^{n+1} + \int_{\Delta_\rho(P,\eta)} |f_2(Q,\tau) - k|d\sigma.
\end{aligned}
\end{equation}

If $(Q,\tau) \in \Delta_\rho(P,\eta)$ then $\sigma((\Delta_r(Q,\tau) \cup \Delta_r(P,\eta)) \backslash (\Delta_r(Q,\tau)\cap \Delta_r(P,\eta))) \leq C\rho^{n+1}$ (by Ahlfors regularity).  Therefore, 
 $|\gamma(P,\eta, r) - \gamma(Q,\tau, r)| \leq C \frac{\rho^{n+1}}{r^{n+1}}$. Hence, $$\begin{aligned}\int_{C_\rho(P,\eta)} |f_2(Q,\tau) - k|d\sigma(Q,\tau) \leq& \int_{C_\rho(P,\eta)} \int_\rho^{1/2}|\gamma(P,\eta, r) - \gamma(Q,\tau, r)|\frac{dr}{r} d\sigma(Q,\tau)\\ \leq& C \rho^{n+1} \int_\rho^{1/2} \frac{\rho^{n+1}}{r^{n+1}}r^{-1}dr \leq C \rho^{n+1}.\end{aligned}$$ Together with equation \eqref{showingitsBMO1}, this proves $\|f(Q,\tau)\|_{\mathrm{BMO}(C_{1/2}(0,0))} \leq C$. 

Let $k_{1/2} \colonequals \fint_{C_{1/2}(0,0)} f(Q,\tau)d\sigma$ (hence $k_{1/2} \leq \|\nu\|$). By the definition of $f(Q,\tau) \in \mathrm{BMO}$,  \begin{equation}\label{bmoisgood} \begin{aligned}\int_{C_{1/2}(0,0)} |f(Q,\tau) - k_{1/2}|^2 d\sigma \leq& c(n)\|f(Q,\tau)\|^2_{\mathrm{BMO}(C_{1/2}(0,0))} \Rightarrow\\ \int_{C_{1/2}(0,0)} |f(Q,\tau)|^2 d\sigma(Q,\tau) \leq& c(n)(\|f(Q,\tau)\|^2_{\mathrm{BMO}(C_{1/2}(0,0))}  + \|\nu\|^2).\end{aligned}\end{equation} Combine equations \eqref{estimateoffE} and \eqref{bmoisgood} to produce \begin{equation}\label{outsideEitissmall}
\nu(\{[\partial \Omega\cap  C_{1/2}(0,0)\backslash E]\times [0,1/2]\}) \leq C\varepsilon^{1/4}
\end{equation} which, with \eqref{donewithEpart}, is the desired result. 
\end{proof}

We are now ready to prove Proposition \ref{chordarctovanishingchordarc}, and by extension, complete the proof of Theorem \ref{loghvmoisvanishingchordarc}.

\begin{proof}[Proof of Proposition \ref{chordarctovanishingchordarc}]
Fix an $\varepsilon > 0$ and let $r_i \downarrow 0$ and $(Q_i,\tau_i) \in \partial \Omega \cap K$ for any compact set $K$. For each $i$, apply Lemma \ref{adaptedbigpiecesofgraphs} inside of $C_{r_i}(Q_i,\tau_i)$. This produces a sequence of functions, $\{\psi_i\}$, with bounded Lipschitz norms whose graphs are good approximations to $C_{r_i}(Q_i,\tau_i)\cap \partial \Omega$. We write, for ease of notation, $P_i \equiv P((Q_i,\tau_i), r_i)$. As there is no harm in a rotation (and we will be considering each $i$ seperately) we may assume that $P_i \equiv \{x_n =0\}$. We can define $\Phi_i(x,t)\colonequals \frac{1}{r_i}\psi_i(r_ix+q_i, r_i^2t + \tau_i) - \frac{(Q_i)_n}{r_i}$. Then, after a rotation which possibly depends on $i$, $\Omega_i$ and $\Phi_i$ satisfy the requirements of Lemma \ref{carlesonnormofomegabounded}. In particular, there exists an $i_0(\varepsilon) > 0$ such that for $i \geq i_0$, \begin{equation}\label{boundedateachscale}
\frac{\nu(\Delta_{r_i/2}(Q_i,\tau_i)\times [0,r_i/2])}{r_i^{n+1}}  \leq K_1(n,\|\nu\|,\varepsilon)\int_0^{3/4} \int_{C_{3/4}(0, 0)} \gamma_{\Phi_i}(x,t,r) dxdt\frac{dr}{r} + K_2(n,\|\nu\|)\varepsilon^{1/2}. 
\end{equation} It is important to note that while both constants above can depend on the dimension, $\|\nu\|$ and the Ahlfors regularity of $\Omega$, only $K_1(n, \|\nu\|, \varepsilon)$ will depend on $\varepsilon$ and both constants are independent of $i$. 

Conclusion (4) of Lemma \ref{adaptedbigpiecesofgraphs} implies that $f_i(x,t) \colonequals \int_0^{3/4}\gamma_{\Phi_i}(x,t,r) r^{-1}dr$ is uniformly integrable on $C_{3/4}(0,0) \cap \{x_n = 0\}$. Furthermore the $\Phi_i$ are uniformly bounded in the $\Lip(1,1/2)$ norm so by the Arzel\`a-Ascoli theorem there is some $\Phi_\infty$ such that $\Phi_i \Rightarrow \Phi_\infty$. It follows that $f_i(x,t) \rightarrow f_\infty(x,t) \colonequals  \int_0^{3/4}\gamma_{\Phi_\infty}(x,t,r) r^{-1}dr$. Thus, the dominated convergence theorem implies \begin{equation}\label{boundedlimsup}\limsup_{i\rightarrow \infty} \frac{\nu(C_{r_i/2}(Q_i,\tau_i) \times [0,r_i/2]) }{r_i^{n+1}}\leq K_1(n, \|\nu\|,\varepsilon) \int_{C_{3/4}(0, 0)}  f_\infty(x,t) dxdt + K_2(n,\|\nu\|) \varepsilon^{1/2}.\end{equation} On the other hand, condition (3) in Lemma \ref{adaptedbigpiecesofgraphs} and vanishing Reifenberg flatness tells us that the graph of $\Phi_i$ in the cylinder $C_{1}(0,0)$ is contained in increasingly smaller neighborhoods of $P_i$. Hence, $\Phi_\infty(x,t) \equiv 0$ inside of $C_1(0,0)$, and $f_\infty(x,t) \equiv 0$ in $C_{1/2}(0,0)$. Plugging this into equation \eqref{boundedlimsup} yields the bound,  \begin{equation}\label{limitofnuislessthanepsilon}
\limsup_{i\rightarrow \infty} \frac{1}{r_i^{n+1}}\nu(\Delta_{r_i/2}(Q_i,\tau_i) \times [0,r_i/2])  \leq K_2(n, \|\nu\|) \varepsilon^{1/2}.
\end{equation}
Since $\varepsilon$ is arbitrarily small the result follows.
\end{proof}

\section{Initial H\"older Regularity}\label{initialholderregularity}

We turn our attention to proving Proposition \ref{loghholdercontinuous} and assume that $\log(h)$ (or $\log(k^{(X_0,t_0)})$) is H\"older continuous. As before, $\Omega$ will be a $\delta$-Reifenberg flat parabolic regular domain. We will state and prove all the results in the infinite pole setting, however, almost no modifications are needed for kernels with a finite pole. 

This section is devoted to proving an initial H\"older regularity result:

\begin{prop}\label{initialholderregularityprop}
Let $\Omega\subset \mathbb R^{n+1}$ be a parabolic regular domain with $\log(h) \in \mathbb C^{\alpha, \alpha/2}(\mathbb R^{n+1})$ for $\alpha \in (0,1)$.  There is a $\delta_n > 0$ such that if $\delta_n \geq \delta > 0$ and $\Omega$ is $\delta$-Reifenberg flat then $\Omega$ is a $\mathbb C^{1+\alpha, (1+\alpha)/2}(\mathbb R^{n+1})$ domain. 
\end{prop}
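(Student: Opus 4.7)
The plan is to promote the qualitative vanishing Reifenberg flatness guaranteed by Theorem \ref{loghvmoisvanishingchordarc} into the quantitative estimate
\begin{equation*}
\theta((Q,\tau),r) \leq C r^{\alpha}, \qquad (Q,\tau)\in \partial\Omega,\; 0<r\leq r_{0},
\end{equation*}
and then deduce $C^{1+\alpha,(1+\alpha)/2}$ regularity of $\partial\Omega$ from a standard summation/graph-representation argument. Since $C^{\alpha,\alpha/2}\subset \mathrm{VMO}$, Theorem \ref{loghvmoisvanishingchordarc} applies and $\Omega$ is vanishing chord arc; in particular every pseudo-blowup $\Omega_{\infty}$ of $\Omega$ is the upper half-space with $u_{\infty}=x_{n}^{+}$ and $h_{\infty}\equiv 1$ by Proposition \ref{vanishingreifenbergflat}. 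The H\"older hypothesis gives the extra rigidity needed to turn this qualitative flatness into an actual rate.

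The core step is a flatness-improvement lemma of the following shape: there exist $\lambda\in(0,1/2)$, $\delta_{1}>0$ and $C_{0}>0$, depending only on $n$, $\alpha$ and $\|\log h\|_{C^{\alpha,\alpha/2}}$, such that whenever $\Omega$ is $\eta$-Reifenberg flat at $(Q,\tau)$ at scale $r$ with $\eta\leq \delta_{1}$, we have
\begin{equation*}
\theta((Q,\tau),\lambda r) \leq \tfrac{1}{2}\,\theta((Q,\tau),r) + C_{0} r^{\alpha}.
\end{equation*}
This is proved by contradiction and compactness: if it failed, one would find sequences $(Q_{i},\tau_{i})$, $r_{i}\downarrow 0$ with $\eta_{i}:=\theta((Q_{i},\tau_{i}),r_{i})\to 0$ along which the improvement is violated. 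Form the pseudo-blowups $\Omega_{i}$, $u_{i}$, $h_{i}$ of Definition \ref{parabolicpseudoblowup}. The H\"older continuity of $\log h$ forces $h_{i}\to 1$ uniformly on any compact set, and then Lemma \ref{limitsexistforblowups}, Proposition \ref{maingradientbound} and Lemma \ref{hatinfinityisgreaterthan1} put the limit $\Omega_{\infty}$, $u_{\infty}$ into the hypotheses of Theorem \ref{poissonkernel1impliesflat}, so $\Omega_{\infty}=\{x_{n}>0\}$ and $u_{\infty}=x_{n}^{+}$. Rescaling the deviation $\theta$ and passing to the limit gives the flatness improvement on $\Omega_{\infty}$ trivially (since $\Omega_{\infty}$ is a half-space, $\theta$ is identically zero on it), contradicting the assumed violation. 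Iterating this lemma geometrically and summing the $r^{\alpha}$ error terms produces the desired bound $\theta((Q,\tau),r)\leq C r^{\alpha}$ uniformly in $(Q,\tau)$.

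Once this decay is in hand, the planes $L((Q,\tau),r)$ of best approximation at consecutive dyadic scales differ by angles controlled by $r^{\alpha}$ (via the triangle inequality in the Grassmannian of time-parallel $n$-planes, using that $\partial\Omega\cap C_{r}(Q,\tau)$ is $\theta((Q,\tau),r)r$-close to each of them). Therefore the Cauchy sequence $\{L((Q,\tau),2^{-k})\}$ converges to a unique tangent plane $T_{(Q,\tau)}$ with $\mathrm{dist}(L((Q,\tau),r),T_{(Q,\tau)})\lesssim r^{\alpha}$, and the map $(Q,\tau)\mapsto \vec{n}(Q,\tau)$ is $C^{\alpha,\alpha/2}$ on $\partial\Omega$. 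A now-standard argument (choose local coordinates aligned with $T_{(Q,\tau)}$, use $\delta$-Reifenberg flatness to see $\partial\Omega$ as a graph, and integrate the H\"older bound on $\vec{n}$) shows that $\partial\Omega$ is locally the graph of a function in $C^{1+\alpha,(1+\alpha)/2}$, giving the proposition. The main obstacle is the compactness step in the flatness-improvement lemma: one must verify that the convergence of the $h_{i}$ to $1$ is strong enough (uniform on compact subsets of $\partial\Omega_{\infty}$) to push the hypotheses of Theorem \ref{poissonkernel1impliesflat} through, which is where the H\"older scaling on $\log h$ rather than mere VMO is essential.
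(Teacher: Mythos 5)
Your overall skeleton (decay of flatness $\Rightarrow$ H\"older normal $\Rightarrow$ $\mathbb C^{1+\alpha,(1+\alpha)/2}$ graph) is the right shape, but the proof of your key flatness-improvement lemma has a genuine gap. You propose to prove
$\theta((Q,\tau),\lambda r)\leq \tfrac12\theta((Q,\tau),r)+C_0r^\alpha$
by contradiction: blow up along violating sequences, invoke Theorem \ref{poissonkernel1impliesflat} to see the limit is a half-space, and conclude ``trivially, since $\theta\equiv 0$ on the half-space.'' This does not produce a contradiction. The negated inequality reads $\theta((Q_i,\tau_i),\lambda r_i)>\tfrac12\theta((Q_i,\tau_i),r_i)+C_ir_i^\alpha$, and under the blowup \emph{every} term here tends to zero; Hausdorff convergence of $\Omega_i$ to $\{x_n>0\}$ only gives $\theta((Q_i,\tau_i),\lambda r_i)=o(1)$, i.e.\ exactly Proposition \ref{vanishingreifenbergflat} (vanishing Reifenberg flatness), which is compatible with the negated inequality. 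To win the multiplicative factor $\tfrac12$ you must compare two quantities that both vanish, which forces you to normalize the deviation by $\theta_i$ (equivalently by the flatness parameter $\sigma$) and pass to the limit in the \emph{normalized} deviations. That linearization is precisely what the paper does: the functions $w_k=(u_k-(1+\tau_k)h(0,0)x_n)/(h(0,0)\sigma_k)$ of Lemma \ref{holderboundaryvaluesofw} converge to a solution $w$ of the adjoint heat equation whose Neumann data vanish (Lemma \ref{holdernormalderivativeiszero}), so the limiting boundary graph $f$ is smooth (Corollary \ref{holderfhashighregularity}), and smoothness of $f$ --- not the half-space classification --- is what yields the improvement in Lemma \ref{currentholderflatonbothsidesisextraflatonzeroside}.

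A second, related defect is that your scheme has no mechanism coupling the size of the flatness to the oscillation of $h$ at scale $r$. The H\"older deviation of $h_i$ from its value at the base point is $O(r_i^\alpha)$ and disappears entirely in your limit, so the exponent $\alpha$ cannot re-emerge from the compactness argument; moreover the improvement step is only valid when the kernel oscillation is quadratically small compared to the flatness (the hypothesis $\kappa\leq\sigma_\theta\sigma^2$ in Lemma \ref{currentholderflatonbothsidesisextraflatonzeroside}, and the condition $\mathrm{osc}_{\Delta_{s\rho}}h\leq\kappa s^\alpha h$ tracked through Corollary \ref{improveflatnessandgradient} together with the bound $\theta\geq c_\theta^{\alpha/2}$, which guarantees the hypothesis survives each iteration). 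Your two-term inequality, even granted, would also need the improvement factor to beat $\lambda^\alpha$ in the Campanato iteration to deliver the rate $r^\alpha$ rather than some smaller exponent; this can be arranged, but only if the lemma's constants can be tuned, which again requires the quantitative linearization rather than soft compactness. The final step (flatness decay $Cr^\alpha$ with time-parallel planes implies a $\mathbb C^{1+\alpha,(1+\alpha)/2}$ graph) is fine, and note the paper also needs a gradient bound of the form $|\nabla u|\leq h(Q,\tau)+Cr^{\min\{3/4,\alpha\}}$ (Lemma \ref{holdergradientbound}) to start and sustain the iteration --- an ingredient absent from your outline.
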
 

We follow closely the structure and exposition of Appendix \ref{proofofflatprop}, occasionally dealing with additional complications introduced by the H\"older condition. We should also mention that this section is strongly influenced by the work of Andersson and Weiss in \cite{anderssonweiss} and Alt and Caffarelli in \cite{altcaf}. To begin, we introduce flatness conditions (these are in the vein of  Definitions \ref{strongcurrentflatness} and \ref{weakcurrentflatness} but adapted to the H\"older regularity setting).

First, ``current flatness" (compare to Definition 7.1 in \cite{altcaf}). 
\begin{defin}\label{holdercurrentflatness}
For $0 < \sigma_i \leq 1, \kappa > 0$ we say that $u \in HCF(\sigma_1, \sigma_2, \kappa)$ in $C_\rho(X,t)$ in the direction $\nu \in \mathbb S^{n-1}$ if for $(Y,s) \in C_\rho(X,t)$
\begin{itemize}
\item $(X, t) \in \partial \{u > 0\}$
\item $u((Y,s)) = 0$ whenever $(Y-X)\cdot \nu \leq -\sigma_1 \rho$
\item $u((Y,s)) \geq  h(X,t)\left((Y-X)\cdot \nu-\sigma_2\rho\right)$ whenever  $(Y-X)\cdot \nu - \sigma_2\rho \geq 0$.
\item $|\nabla u(Y,s)| \leq h(X,t)(1+\kappa)$
\item $\mathrm{osc}_{(Q,\tau) \in \Delta_\rho(X,t)} h(Q,\tau) \leq \kappa h(X,t)$. 
\end{itemize}
\end{defin}

In some situations we will not have a estimate on the growth of $u$ in the positive side. Thus ``weak current flatness": 
\begin{defin}\label{weakholdercurrentflatness}
For $0 < \sigma_i \leq 1, \kappa > 0$ we say that $u \in \widetilde{HCF}(\sigma_1, \sigma_2, \kappa)$ in $C_\rho(X,t)$ in the direction $\nu \in \mathbb S^{n-1}$ if for $(Y,s) \in C_\rho(X,t)$
\begin{itemize}
\item $(X, t) \in \partial \{u > 0\}$
\item $u((Y,s)) = 0$ whenever $(Y-X)\cdot \nu \leq -\sigma_1 \rho$
\item $u((Y,s)) \geq  0$ whenever  $(Y-X)\cdot \nu - \sigma_2\rho \geq 0$.
\item $|\nabla u(Y,s)| \leq h(X,t)(1+\kappa)$
\item $\mathrm{osc}_{(Q,\tau) \in \Delta_\rho(X,t)} h(Q,\tau) \leq \kappa h(X,t)$. 
\end{itemize}
\end{defin}

Finally, in our proofs we will need to consider functions satisfying a ``past flatness" condition (first introduced for constant $h$ in \cite{anderssonweiss}, Definition 4.1). 

\begin{defin}\label{holderpastflatness}
For $0 < \sigma_i \leq 1, \kappa > 0$ we say that $u \in HPF(\sigma_1, \sigma_2, \kappa)$ in $C_\rho(X,t)$ in the direction $\nu \in \mathbb S^{n-1}$ if for $(Y,s) \in C_\rho(X,t)$
\begin{itemize}
\item $(X, t-\rho^2) \in \partial \Omega$
\item $u((Y,s)) = 0$ whenever $(Y-X)\cdot \nu \leq -\sigma_1 \rho$
\item $u((Y,s)) \geq  h(X,t-\rho^2)\left((Y-X)\cdot \nu-\sigma_2\rho\right)$ whenever  $(Y-X)\cdot \nu - \sigma_2\rho \geq 0$.
\item $|\nabla u(Y,s)| \leq h(X,t-\rho^2)(1+\kappa)$
\item $\mathrm{osc}_{(Q,\tau) \in \Delta_\rho(X,t)} h(Q,\tau) \leq \kappa h(X,t-\rho^2)$. 
\end{itemize}
\end{defin}

Proposition \ref{initialholderregularityprop} will be straightfoward once we prove three lemmas. The first two allow us to conclude greater flatness on a particular side given flatness on the other (they are analogues of Lemmas \ref{centeredflatonzerosideisflatonpositiveside} and \ref{centeredflatonbothsidesisextraflatonzeroside} in the H\"older setting). We will postpone their proofs until later subsections. 

\begin{lem}\label{currentholderflatonzerosideisflatonpositiveside}
 Let $0 < \kappa \leq \sigma \leq \sigma_0$ where $\sigma_0$ depends only on dimension. If $u \in \widetilde{HCF}(\sigma, 1/2,\kappa)$ in $C_\rho(Q,\tau)$ in the direction $\nu$, then there is a constant $C_1 > 0$ (depending only on dimension) such that $u\in HCF(C_1\sigma, C_1\sigma, \kappa)$ in $C_{\rho/2}(Q,\tau)$ in the direction $\nu$.
\end{lem}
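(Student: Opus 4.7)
The plan is to obtain current flatness from weak flatness by matching a Lipschitz upper bound for $u$ (automatic from the gradient bound) with a Lipschitz lower bound produced via parabolic Harnack chains and the boundary comparison theorems of Section \ref{preliminaryestimates}.

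I would first observe that the only content of $HCF(C_1\sigma,C_1\sigma,\kappa)$ not already in $\widetilde{HCF}(\sigma,1/2,\kappa)$ is the linear lower bound
\[
u(Y,s) \;\ge\; h(Q,\tau)\bigl((Y-Q)\cdot\nu - C_1\sigma\rho/2\bigr),
\qquad (Y-Q)\cdot\nu \ge C_1\sigma\rho/2.
\]
The gradient bound, the $h$-oscillation bound, and the vanishing of $u$ on $\{(Y-Q)\cdot\nu\le-\sigma\rho\}$ transfer immediately to the smaller cylinder $C_{\rho/2}(Q,\tau)$ provided $C_1\ge 2$. Extending $u$ by $0$ to $\Omega^c$ and integrating the gradient bound along segments parallel to $\nu$ from the vanishing hyperplane gives the matching upper estimate
\[
u(Y,s)\le h(Q,\tau)(1+\kappa)\bigl((Y-Q)\cdot\nu+\sigma\rho\bigr)^+
\qquad\text{in}\; C_\rho(Q,\tau),
\]
which traps $\partial\Omega\cap C_\rho(Q,\tau)$ above the hyperplane and will be used as the ceiling in the final comparison.

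To produce the lower bound, fix a forward corkscrew point $A^+\equiv A^+_{\rho/4}(Q,\tau)\in\Omega$ and apply Lemma~\ref{comparisontheorem} together with the oscillation hypothesis $\mathrm{osc}_{\Delta_\rho(Q,\tau)} h\le\kappa\, h(Q,\tau)$. The caloric measure $\omega(\Delta_{\rho/8}(Q,\tau))$ is at least $c_n(1-\kappa) h(Q,\tau)\rho^{n+1}$ by direct integration of $h d\sigma$, while Lemma~\ref{comparisontheorem} returns this mass as $\rho^n u(A^+)$ up to dimensional factors, giving $u(A^+)\ge c_n h(Q,\tau)\rho$. I would then propagate this interior lower bound via the parabolic Harnack principle (Lemma~\ref{biggeratNTApoint} iterated along a chain inside the time-space cone $T^+_{A,\rho}(Q,\tau)$) to any reference point $(Y_0,s_0)\in C_{\rho/2}(Q,\tau)$ with $(Y_0-Q)\cdot\nu=\rho/2$, obtaining $u(Y_0,s_0)\ge c'_n h(Q,\tau)\rho$. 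Finally, comparing $u$ with the model adjoint-caloric function $\tilde u(Y,s) = h(Q,\tau)((Y-Q)\cdot\nu)^+$ through Lemmas~\ref{quotienttheorem} and~\ref{growhtofquotientatboundary} gives a quotient estimate $|u/\tilde u-1|\le C(\kappa+\sigma)$ on $C_{\rho/2}(Q,\tau)$; using $\kappa\le\sigma$, this rearranges into the target linear lower bound for a dimensional $C_1$.

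The principal obstacle is the parabolic anisotropy: the backward Harnack inequality (Lemma~\ref{backwardsharnack}), the doubling property (Lemma~\ref{doublingmeasure}), and the quotient theorem (Lemma~\ref{quotienttheorem}) all require that the auxiliary points sit inside the appropriate cones $T^{\pm}_{A,r}(Q,\tau)$, so the Harnack chain connecting $A^+$ to $(Y_0,s_0)$ must be routed with care to ensure that its length, and hence the accumulated Harnack constant, remain dimensional. The assumption $\kappa\le\sigma\le\sigma_0$ is what closes the argument: it guarantees both that the $(1\pm\kappa)$ perturbations arising from the $h$-oscillation stay below the flatness defect $\sigma$ in the final comparison, and that $C_1\sigma_0\le 1/2$ so that the conclusion is nontrivial inside $C_{\rho/2}(Q,\tau)$.
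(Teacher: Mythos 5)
There is a genuine gap in the final step, which is exactly where the hard part of the lemma lives. The only thing to prove beyond what $\widetilde{HCF}(\sigma,1/2,\kappa)$ already gives is the lower bound $u(Y,s)\ge h(Q,\tau)\bigl((Y-Q)\cdot\nu-C_1\sigma\rho\bigr)$, i.e. a linear bound \emph{with slope exactly} $h(Q,\tau)$ up to an additive error of order $\sigma\rho$. Your proposed route cannot produce this. First, Lemmas \ref{quotienttheorem} and \ref{growhtofquotientatboundary} require both functions to be nonnegative adjoint-caloric solutions vanishing on $C_{2r}(Q,\tau)\cap\partial\Omega$ and satisfying a backward Harnack inequality; the model function $\tilde u=h(Q,\tau)((Y-Q)\cdot\nu)^+$ vanishes on a hyperplane that is only within $\sim\sigma\rho$ of $\partial\Omega$, not on $\partial\Omega$ itself, and is not even adjoint-caloric across that hyperplane, so the quotient theorems simply do not apply to the pair $(u,\tilde u)$. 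Second, even where they do apply, those lemmas give comparability with a fixed constant $c(\lambda)\ge 1$ and a H\"older bound on the \emph{oscillation} of the quotient; they never yield an estimate of the form $|u/\tilde u-1|\le C(\kappa+\sigma)$. Likewise, your interior bound via Corollary \ref{comparisonformeasureatinfinity} and Harnack chains (note the lower bound comes out at the backward corkscrew $A^-$, not $A^+$) only gives $u\gtrsim c_n h(Q,\tau)\rho$ with a small dimensional constant $c_n$, which is far weaker than a lower bound with slope $h(Q,\tau)$. In short, nothing in the proposal converts the qualitative positivity of weak flatness into the sharp-slope linear bound.

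The paper closes this gap by a sliding-barrier and touching-point argument rather than by comparison theorems. One slides a smooth bump of the hyperplane $\{x_n=-\sigma\rho\}$ until it touches $\partial\Omega$ at a point $(X_0,t_0)$ in the past part of the cylinder, solves for an adjoint-caloric barrier $v\ge u$ in the region $D$ above the perturbed surface with linear boundary data, and estimates $|\nabla v-(1+\sigma)h(Q,\tau)e_n|\le K\sigma h(Q,\tau)$ by boundary regularity for $D$. The free boundary condition enters through Lemma \ref{holdernormalderivativeatregularpoint}: at the touching point the normal derivative of $v$ is at least $h(X_0,t_0)\ge(1-\kappa)h(Q,\tau)$. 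Assuming $u\le v-\tilde K\sigma h(Q,\tau)x_n$ on some spatial disc, a second barrier $w$ together with Hopf's lemma forces $h(X_0,t_0)\le(1-2\sigma)h(Q,\tau)$, contradicting $\kappa\le\sigma$; the Harnack inequality then propagates the resulting pointwise bound to give $u\ge(1+\sigma)h(Q,\tau)(x_n-C\sigma\rho)$ on the half-cylinder, which is the missing lower bound. This use of the touching point, the normal-derivative lemma, and Hopf's lemma is the essential mechanism, and it is absent from your argument; without it the oscillation hypothesis on $h$ is never actually used to pin the slope.
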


\begin{lem}\label{currentholderflatonbothsidesisextraflatonzeroside}
 Let $\theta \in (0,1)$ and assume that $u \in HCF(\sigma, \sigma, \kappa)$ in $C_\rho(Q,\tau)$ in the direction $\nu$. There exists a constant $0 < \sigma_\theta < 1/2$ such that if $\sigma < \sigma_\theta$ and $\kappa \leq \sigma_\theta \sigma^2$ then $u \in \widetilde{HCF}(\theta \sigma, \theta \sigma, \kappa)$ in $C_{c(n)\rho \theta}(Q,\tau)$ in the direction $\overline{\nu}$ where $|\overline{\nu} - \nu| \leq C(n) \sigma$. Here $\infty > C(n), c(n) >0$ are constants depending only on dimension.
\end{lem}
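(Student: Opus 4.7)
The plan is to run the classical compactness-and-blow-up (``improvement of flatness'') argument of Caffarelli, as adapted to parabolic free boundary problems in Andersson--Weiss \cite{anderssonweiss}. At scale $\sigma$ the problem should linearize into a boundary value problem on a half-space whose regularity theory forces quadratic decay, and thus a rotated, smaller flatness at a smaller scale.

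I argue by contradiction. Fix $\theta\in(0,1)$ and suppose no such $\sigma_\theta$ exists. Then I extract sequences $\sigma_j\downarrow 0$, $\kappa_j\leq \sigma_\theta \sigma_j^2$, base points $(Q_j,\tau_j)$, scales $\rho_j$, directions $\nu_j$, domains $\Omega_j$, Green functions $u_j$, and Poisson kernels $h_j$, with $u_j\in HCF(\sigma_j,\sigma_j,\kappa_j)$ in $C_{\rho_j}(Q_j,\tau_j)$ in direction $\nu_j$, while $u_j\notin \HCF(\theta\sigma_j,\theta\sigma_j,\kappa_j)$ in $C_{c(n)\rho_j\theta}(Q_j,\tau_j)$ in any direction $\bar\nu$ with $|\bar\nu-\nu_j|\leq C(n)\sigma_j$. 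Translating, parabolically rescaling $(Y,s)\mapsto(Y/\rho_j,\,s/\rho_j^2)$, rotating, and dividing $u_j$ by $\rho_j h_j(Q_j,\tau_j)$ reduce to the normalized setting $(Q_j,\tau_j)=(0,0)$, $\rho_j=1$, $\nu_j=e_n$, $h_j(0,0)=1$, with all items of Definition \ref{holdercurrentflatness} preserved.

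Next I introduce the linearization $v_j(Y,s):=\sigma_j^{-1}\bigl(u_j(Y,s)-y_n^+\bigr)$. The two-sided flatness combined with $|\nabla u_j|\leq 1+\kappa_j$ yields $\|v_j\|_{L^\infty(\{|y_n|\geq\sigma_j\}\cap C_{1/2})}\leq 1+C\kappa_j/\sigma_j=1+o(1)$, using $\kappa_j/\sigma_j\leq \sigma_\theta\sigma_j$. In $\Omega_j\cap\{u_j>0\}$ the function $v_j$ satisfies the adjoint heat equation, so interior parabolic estimates plus Arzel\`a--Ascoli give a subsequential locally uniform limit $v_\infty$ on $\{y_n>0\}\cap C_{1/2}(0,0)$ that is adjoint-caloric there. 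The main obstacle is identifying the correct boundary condition for $v_\infty$ on $\{y_n=0\}$: linearizing the Bernoulli condition $|\nabla u_j|=h_j$ on $\partial\Omega_j$, together with $u_j=0$ there, should produce in the limit the homogeneous Neumann condition $\partial_{y_n}v_\infty\equiv 0$ on $\{y_n=0\}\cap C_{1/2}$. Here the hypothesis $\kappa_j\leq \sigma_\theta\sigma_j^2$ is essential: it guarantees that the Hölder oscillation of $h_j$ contributes at order $o(\sigma_j)$, so the limiting boundary condition is the clean Neumann one. Lemma \ref{growthattheboundary} then supplies the growth control needed to handle the transition layer $\{|y_n|\leq\sigma_j\}$ in which $v_j$ is not a priori bounded.

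Once the Neumann condition is in hand, even reflection across $\{y_n=0\}$ extends $v_\infty$ to an adjoint-caloric function on $C_{1/2}(0,0)$, and classical parabolic interior Schauder theory yields, near the origin,
\[
 \bigl|v_\infty(Y,s)-v_\infty(0,0)-y'\cdot\tau\bigr|\leq C_0\bigl(|y'|+|y_n|+|s|^{1/2}\bigr)^2
\]
for some $\tau\in\R^{n-1}$ with $|\tau|\leq C(n)$. I then choose $c(n)>0$ small enough that $C_0 c(n)\leq\theta/4$, set $\bar\nu_j:=(-\sigma_j\tau,1)/|(-\sigma_j\tau,1)|$ (so $|\bar\nu_j-e_n|\leq C(n)\sigma_j$), and undo $u_j=y_n^++\sigma_j v_j$. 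The quadratic decay of $v_\infty$, local uniform convergence $v_j\to v_\infty$, and the $o(\sigma_j\theta)$ error from $\kappa_j\leq \sigma_\theta\sigma_j^2$ together yield the required $\HCF(\theta\sigma_j,\theta\sigma_j,\kappa_j)$ statement in $C_{c(n)\theta}(0,0)$ in direction $\bar\nu_j$ for all sufficiently large $j$, contradicting the construction of the sequence. Unwinding the initial normalization returns the conclusion at the original scale $\rho$.
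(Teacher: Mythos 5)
Your outline follows the same architecture as the paper's proof (contradiction, linearization $v_j=\sigma_j^{-1}(u_j-y_n^+)$, adjoint-caloric limit with Neumann data, even reflection, quadratic decay, tilt by $\sigma_j\nabla f(0,0)$), but the two places where you say the least are exactly where all of the work lies, and as written they are gaps. First, you derive the boundary condition $\partial_{y_n}v_\infty=0$ by ``linearizing the Bernoulli condition $|\nabla u_j|=h_j$ on $\partial\Omega_j$.'' In this setting there is no pointwise Bernoulli condition to linearize: $\partial\{u_j>0\}$ is only a parabolic chord arc boundary, $\nabla u_j$ has merely a.e.\ nontangential limits, and the free boundary condition is encoded only weakly through $\int_{\{u_j>0\}}u_j(\Delta\varphi-\partial_t\varphi)\,dXdt=\int_{\partial\{u_j>0\}}h_j\varphi\,d\sigma$. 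Extracting the Neumann condition from this requires the measure-theoretic argument of Lemma \ref{holdernormalderivativeiszero}: comparing $\mu(\partial\{u_k>0\}\cap Z^-(\sigma_k g))$ with $\mu(\Sigma_k)$ from above and below (using Ahlfors regularity, the divergence theorem on time slices, and the coarea/finite-perimeter justification of the integrations by parts) and concluding $\int_{\Sigma_k}|\partial_n w_k|\to 0$; it is precisely here that the bounds $|\nabla u_k|\le(1+\kappa_k)h(0,0)$, $\mathrm{osc}\,h_k\le\kappa_k h(0,0)$ and the hypothesis $\kappa_k/\sigma_k^2\to 0$ enter quantitatively, through one-sidedness $\partial_n w_k\le 0$ and the error term $C(\sigma_k+\kappa_k/\sigma_k)$ in \eqref{limitingtozero}. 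Citing Lemma \ref{growthattheboundary} for ``the transition layer'' does not substitute for this step.

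Second, you never identify the boundary values of $v_\infty$, and without that the final step does not close. The conclusion you need is positional — where $\{u_k=0\}$ and $\{u_k>0\}$ sit relative to a tilted plane — so you must know that the rescaled free boundary graphs $f_k^{\pm}$ converge to a continuous limit $f$ and that $v_\infty(x,0,t)=-f(x,t)$ (the paper's Lemmas \ref{holderblowupfunctionisagraph} and \ref{holderboundaryvaluesofw}). Establishing this requires the companion flatness-transfer lemma in its ``past'' form (Lemma \ref{holderflatonzerosideisflatonpositiveside}), applied at the small scales $S\sigma_k$, together with the monotonicity $\partial_n w_k\le 0$ and a barrier comparison in $C_{1-\delta}(0,0)\cap\{x_n>0\}$; none of this is automatic from locally uniform convergence of $v_j$ in the open half space, since $v_j$ carries no a priori control up to $\{y_n=0\}$ and the free boundary position is exactly what is at stake. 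Once these two steps are supplied, your concluding tilt $\bar\nu_j=(-\sigma_j\nabla_x f(0,0),1)/|(-\sigma_j\nabla_x f(0,0),1)|$ and the choice of $c(n)$ match the paper's final argument, but as it stands the proposal asserts rather than proves the core of the lemma.
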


The third lemma is an adaptation of Proposition \ref{maingradientbound} and tells us that $|\nabla u(X,t)|$ is bounded above by $h(Q,\tau)$ as $(X,t)$ gets close to $(Q,\tau)$. 

\begin{lem}\label{holdergradientbound}
Let $u, \Omega, h$ be as in Proposition \ref{initialholderregularityprop}. Then there exists a constant $C > 0$, which is uniform in $(Q,\tau) \in \partial \Omega$ on compacta, such that for all $r < 1/4$,  $$\sup_{(X,t) \in  C_r(Q,\tau)} |\nabla u(X,t)|  \leq h(Q,\tau) + Cr^{\min\{3/4, \alpha\}}.$$
\end{lem}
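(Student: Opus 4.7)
The plan is to adapt the proof of Proposition \ref{maingradientbound} by exploiting the H\"older regularity of $\log h$ to extract a quantitative decay rate. After translating so that $(Q,\tau) = (0,0)$, I apply Lemma \ref{gradientrepresentation} at the fixed scale $R = 2$: since $r < 1/4$, the hypothesis $\|(X,t)\| \leq R/2$ is satisfied for any $(X,t) \in C_r(0,0)$, and one obtains
\[
|\nabla u(X,t)| \leq \int_{\Delta_4(0,0)} h(P,\eta)\, d\hat{\omega}^{(X,t)}(P,\eta) + Cr^{3/4}.
\]
Since $\hat{\omega}^{(X,t)}$ has total mass at most $1$ and $|h(P,\eta) - h(0,0)| \leq C\|(P,\eta)\|^\alpha$ on $\Delta_4$, the problem reduces to establishing the dyadic moment estimate
\[
\int_{\Delta_4(0,0)} \|(P,\eta)\|^\alpha \, d\hat{\omega}^{(X,t)}(P,\eta) \leq Cr^{\min\{\alpha,\, 3/4\}}.
\]

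The key observation is that $\hat{\omega}^{(X,t)}$ concentrates near $(0,0)$ when $(X,t) \in C_r(0,0)$. I decompose $\Delta_4 = \bigsqcup_{k \geq -1} A_k$ with $A_k = \Delta_{2^{-k+1}}\setminus \Delta_{2^{-k}}$, so that $\|\cdot\|^\alpha \lesssim 2^{-k\alpha}$ on $A_k$. For each $k$ satisfying $2^{-k} > 2r$, the auxiliary function $w(Y,s) := \hat{\omega}^{(Y,s)}(\partial\Omega \setminus \Delta_{2^{-k}}(0,0))$ is a non-negative adjoint caloric function on $C_{2^{-k}}(0,0)\cap\Omega$, bounded by $1$, that vanishes continuously on $\Delta_{2^{-k}}(0,0)$. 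Applying Lemma \ref{growthattheboundary} with $\varepsilon = 1/4$ (permissible once $\delta$ is small enough) then yields
\[
\hat{\omega}^{(X,t)}(A_k) \leq w(X,t) \leq C(r \cdot 2^k)^{3/4}.
\]
For the remaining annuli (with $2^{-k} \leq 2r$) I use the trivial estimate $\hat{\omega}^{(X,t)}(A_k) \leq 1$.

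Summing $\sum_k 2^{-k\alpha}\hat{\omega}^{(X,t)}(A_k)$ and splitting at $k_0 := \lceil \log_2(1/r)\rceil$, the \emph{near} contribution (from $k \geq k_0$) is a geometric series of total $O(r^\alpha)$, while the \emph{far} contribution equals $Cr^{3/4}\sum_{k<k_0} 2^{k(3/4-\alpha)}$. A case analysis on the sign of $3/4 - \alpha$ yields: for $\alpha < 3/4$ the geometric series is dominated by its last term $\sim r^{-(3/4-\alpha)}$, producing $Cr^\alpha$; for $\alpha > 3/4$ the series converges to a constant, producing $Cr^{3/4}$. In either case the combined bound is $Cr^{\min\{\alpha, 3/4\}}$, and combining with the residual $Cr^{3/4} \leq Cr^{\min\{\alpha, 3/4\}}$ (valid for $r < 1$) completes the argument. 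The borderline case $\alpha = 3/4$ can be absorbed by using $\varepsilon$ slightly smaller than $1/4$ in Lemma \ref{growthattheboundary}.

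The main technical subtlety will be verifying that the auxiliary function $w$ meets the hypotheses of Lemma \ref{growthattheboundary}, in particular that it vanishes continuously on $\Delta_{2^{-k}}(0,0)$; this is a standard consequence of the regularity of adjoint caloric measure in Reifenberg flat domains (see Appendix \ref{appendix:caloricmeasure}). Uniformity of the constant $C$ in $(Q,\tau)$ on compact subsets of $\partial\Omega$ follows from the uniformity of the constants in Lemmas \ref{gradientrepresentation} and \ref{growthattheboundary}, combined with the locally uniform H\"older norm of $\log h$.
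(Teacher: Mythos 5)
Your proposal is correct and follows essentially the same route as the paper: apply Lemma \ref{gradientrepresentation} at a fixed large scale, split the boundary integral dyadically, control each annulus via Lemma \ref{growthattheboundary} applied to $1-\hat{\omega}^{(X,t)}(\Delta_{2^{-k}}(Q,\tau))$ (which is exactly your $w$), and run the same geometric-series case analysis at $\alpha=3/4$, handled as in the paper by taking $\varepsilon$ slightly below $1/4$. The only cosmetic differences are your choice $R=2$ versus the paper's $R\gg 1$ and your phrasing of the measure decay through $\hat{\omega}^{(X,t)}(\partial\Omega\setminus\Delta_{2^{-k}})$, which is the same quantity.
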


\begin{proof}
Fix an $R >> 1$ and $(Q,\tau) \in \partial \Omega$.  Lemma \ref{gradientrepresentation} says that there is a uniform constant $C > 0$ such that, for any $(X,t) \in \Omega$ with $\|(X,t) - (Q,\tau)\| \leq R/2$, \begin{equation}\label{gradientbound2}
|\nabla u(X,t)| \leq \int_{\Delta_{2R}(Q,\tau)} h(P,\eta) d\hat{\omega}^{(X,t)}(P,\eta) + C\frac{\|(X,t) -(Q,\tau)\|^{3/4}}{R^{1/2}}.
\end{equation}

Let $\|(X,t) - (Q,\tau)\| \leq r$ and let $k_0\in \mathbb N$ be such that $2^{-k_0-1} \leq r < 2^{-k_0}$. The inequality \begin{equation}\label{nottoomuchtoofar}1-\widehat{\omega}^{(X,t)}(C_{2^{-j}}(Q,\tau)) \leq C2^{3j/4} r^{3/4}, \forall j < k_0-2,\end{equation} follows from applying Lemma \ref{growthattheboundary} to $1-\widehat{\omega}^{(Y,s)}(C_{2^{-j}}(Q,\tau))$. We can write \begin{equation}\label{dyadicgradientbound}\begin{aligned} &\int_{\Delta_{2R}(Q,\tau)} h(P,\eta) d\hat{\omega}^{(X,t)}(P,\eta) \leq h(Q,\tau) + C\|h\|_{\mathbb C^{\alpha, \alpha/2}}\int_{\Delta_{4r}(Q,\tau)} (4r)^\alpha d\hat{\omega}^{(X,t)}\\ &+C\|h\|_{\mathbb C^{\alpha, \alpha/2}}\left(\int_{\Delta_{2R}(Q,\tau) \backslash \Delta_1(Q,\tau)} d\hat{\omega}^{(X,t)} +\sum_{j=0}^{k_0-2}\int_{\Delta_{ 2^{-j}}(Q,\tau)\backslash \Delta_{2^{-(j+1)}}(Q,\tau)}2^{-j\alpha}d\hat{\omega}^{(X,t)}\right).\end{aligned}\end{equation} We may bound $$\begin{aligned} \hat{\omega}^{(X,t)}(\Delta_{2R}(Q,\tau)\backslash \Delta_1(Q,\tau)) &\leq 1- \widehat{\omega}^{(X,t)}(\Delta_1(Q,\tau))\stackrel{eq. \eqref{nottoomuchtoofar}}{\leq} C_R r^{3/4}\\
\hat{\omega}^{(X,t)}(\Delta_{ 2^{-j}}(Q,\tau)\backslash \Delta_{2^{-(j+1)}}(Q,\tau)) &\leq 1-\widehat{\omega}^{(X,t)}(\Delta_{2^{-j}}(Q,\tau)) \stackrel{eq. \eqref{nottoomuchtoofar}}{\leq} C2^{3j/4} r^{3/4}.\end{aligned}$$ Plug these estimates into equation \eqref{dyadicgradientbound} to obtain \begin{equation}\label{estimateeachshell}
 \int_{\Delta_{2R}(Q,\tau)} h(P,\eta) d\hat{\omega}^{(X,t)}(P,\eta) \leq h(Q,\tau) + Cr^\alpha + C_R r^{3/4}+ Cr^{3/4}\sum_{j=0}^{[\log_2(r^{-1})]} 2^{j(3/4-\alpha)}.
\end{equation}

If $\alpha > 3/4$ then we can let the sum above run to infinity and evaluate to get the desired result. If $\alpha < 3/4$ then the geometric sum above evaluates to $\approx \frac{r^{\alpha-3/4}-1}{2^{3/4-\alpha}-1} \leq 4r^{\alpha-3/4}$. Plug this into estimate \eqref{estimateeachshell} and we are done. Finally, if $\alpha = 3/4$ then we may apply Lemma \ref{growthattheboundary} with $\varepsilon$ just slightly less than $1/4$ to get a version of equation \eqref{nottoomuchtoofar} with a different exponent, which will allow us to repeat the above argument without issue. 
\end{proof}

These three results allow us to iteratively improve the flatness of the free boundary.

\begin{cor}\label{improveflatnessandgradient}
 For every $\theta \in (0,1)$ there is a $\sigma_{n,\alpha} > 0$  and a constant $c_\theta \in (0,1)$, which depends only on $\theta, \alpha$ and $n$, such that if $u \in \widetilde{HCF}(\sigma, 1/2, \kappa)$ in $C_\rho(Q,\tau)$ in direction $\nu$ then $u \in \widetilde{HCF}(\theta \sigma, \theta\sigma, \theta^2\kappa)$ in $C_{c_\theta \rho}(Q,\tau)$ in direction $\overline{\nu}$ as long as $\sigma \leq \sigma_{n,\alpha}$, $\kappa \leq \sigma_{n,\alpha} \sigma^2$ and $\mathrm{osc}_{\Delta_{s\rho}(0,0)} h \leq \kappa s^\alpha h(0,0)$. Furthermore $\overline{\nu}$ satisfies $|\overline{\nu} - \nu| \leq C\sigma$ where $C$ depends only on dimension. Finally, there is constant $\tilde{C} > 1$, which depends only on $n$, and a number $\gamma \in (0,1)$, which depends only on $n,\alpha$, such that  $\tilde{C} c_\theta^\gamma \geq \theta \geq c_\theta^{\alpha/2}$. 
\end{cor}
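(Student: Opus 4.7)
My plan is to derive the corollary by composing Lemmas \ref{currentholderflatonzerosideisflatonpositiveside} and \ref{currentholderflatonbothsidesisextraflatonzeroside} and then exploiting the H\"older decay of $h$ to upgrade the $\kappa$-parameter to $\theta^2\kappa$ at the new scale. I would first invoke Lemma \ref{currentholderflatonzerosideisflatonpositiveside} to pass from $u \in \HCF(\sigma, 1/2, \kappa)$ in $C_\rho(Q,\tau)$ to $u \in HCF(C_1\sigma, C_1\sigma, \kappa)$ in $C_{\rho/2}(Q,\tau)$ in direction $\nu$, which is allowed as soon as $\sigma_{n,\alpha} \leq \sigma_0$ (absorbing $\kappa \leq \sigma$). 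I would then apply Lemma \ref{currentholderflatonbothsidesisextraflatonzeroside} at scale $\rho/2$ with internal improvement parameter $\mu := c_n\theta^{2/\alpha}$, where $c_n = c_n(n,\alpha) > 0$ is to be chosen small. Since $\theta \in (0,1)$ and $2/\alpha \geq 2$, taking $c_n \leq 1/C_1$ forces $\mu C_1 \leq \theta^{2/\alpha} \leq \theta$, so the lemma produces
\[
u \in \HCF(\mu C_1\sigma, \mu C_1\sigma, \kappa) \subset \HCF(\theta\sigma, \theta\sigma, \kappa)
\]
in $C_{c_\theta \rho}(Q,\tau)$ along a direction $\overline{\nu}$ with $|\overline{\nu}-\nu| \leq C(n)C_1\sigma \leq C\sigma$, where $c_\theta := c(n)\mu/2 = (c(n)c_n/2)\,\theta^{2/\alpha}$. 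The smallness hypotheses $C_1\sigma < \sigma_\mu$ and $\kappa \leq \sigma_\mu(C_1\sigma)^2$ required by Lemma \ref{currentholderflatonbothsidesisextraflatonzeroside} are absorbed into a further shrinking of $\sigma_{n,\alpha}$ (which consequently may depend on $\theta$ through $\mu$).

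The crucial step is to trade the output $\kappa$ for $\theta^2\kappa$ in the new class, and this is where the assumed H\"older decay $\mathrm{osc}_{\Delta_{s\rho}(0,0)}h \leq \kappa s^\alpha h(0,0)$ enters. Setting $s = c_\theta$ yields
\[
\mathrm{osc}_{\Delta_{c_\theta\rho}(Q,\tau)}h \;\leq\; \kappa\,c_\theta^\alpha\, h(Q,\tau) \;=\; \bigl(c(n)c_n/2\bigr)^{\alpha}\,\theta^{2}\,\kappa\, h(Q,\tau),
\]
which is at most $\theta^2\kappa\,h(Q,\tau)$ once $c_n$ is further required to satisfy $(c(n)c_n/2)^\alpha \leq 1$, verifying the oscillation clause of Definition \ref{weakholdercurrentflatness} at scale $c_\theta\rho$. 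The matching gradient bound $|\nabla u| \leq h(Q,\tau)(1+\theta^2\kappa)$ on $C_{c_\theta\rho}(Q,\tau)$ is delivered by Lemma \ref{holdergradientbound}, which gives $\sup_{C_{c_\theta\rho}(Q,\tau)}|\nabla u| \leq h(Q,\tau) + C(c_\theta\rho)^{\min\{3/4,\alpha\}}$; since $h$ is bounded below by a positive constant on compacta, the correction term is dominated by $\theta^2\kappa\,h(Q,\tau)$ once $\rho$ is small, which is again enforced by a final smallness constraint on $\sigma_{n,\alpha}$.

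The numerical relations $\tilde{C}c_\theta^\gamma \geq \theta \geq c_\theta^{\alpha/2}$ then follow immediately with $\gamma := \alpha/2$: from $c_\theta = (c(n)c_n/2)\theta^{2/\alpha}$ one computes $c_\theta^{\alpha/2} = (c(n)c_n/2)^{\alpha/2}\theta$, so the right inequality reduces to the constraint $(c(n)c_n/2)^{\alpha/2} \leq 1$ already imposed and the left inequality holds with $\tilde{C} := (c(n)c_n/2)^{-\alpha/2}$. I expect the main obstacle to be not the mechanical concatenation of the two lemmas but the bookkeeping of smallness constants: the H\"older exponent $\alpha$ forces $c_\theta$ to scale like $\theta^{2/\alpha}$ rather than like $\theta$, which is exactly what produces the $c_\theta^{\alpha/2}$-type dependence in the final estimate and will control the geometric rate of improvement when the corollary is iterated in the proof of Proposition \ref{initialholderregularityprop}.
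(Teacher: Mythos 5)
There is a genuine gap, and it sits exactly at the one clause of the conclusion that the two flatness lemmas do not give you: the upgrade of the gradient bound from $|\nabla u|\leq (1+\kappa)h(Q,\tau)$ to $|\nabla u|\leq (1+\theta^2\kappa)h(Q,\tau)$ on the smaller cylinder. Lemma \ref{currentholderflatonbothsidesisextraflatonzeroside} outputs the class $\widetilde{HCF}(\cdot,\cdot,\kappa)$ with the \emph{same} $\kappa$, and your attempt to buy the factor $\theta^2$ from Lemma \ref{holdergradientbound} does not work as written. That lemma gives an additive error, $\sup_{C_r(Q,\tau)}|\nabla u|\leq h(Q,\tau)+Cr^{\min\{3/4,\alpha\}}$, and to absorb it you need $C(c_\theta\rho)^{\min\{3/4,\alpha\}}\leq \theta^2\kappa\,h(Q,\tau)$. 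No hypothesis of the corollary provides this: the threshold $\sigma_{n,\alpha}$ constrains $\sigma$ and $\kappa$, not $\rho$, and shrinking $\kappa$ makes the required inequality harder, not easier — so the sentence ``once $\rho$ is small, which is again enforced by a final smallness constraint on $\sigma_{n,\alpha}$'' is not a legitimate move. Moreover, even if one smuggles such a relation in through the setup of Proposition \ref{initialholderregularityprop} (where $\rho_0$ is chosen with $C\rho_0^{\alpha}\lesssim\kappa_0$), the corollary is then iterated with $(\sigma,\kappa,\rho)\mapsto(\theta^m\sigma,\theta^{2m}\kappa,c_\theta^m\rho)$, and your mechanism requires $c_\theta^{\min\{3/4,\alpha\}}\leq\theta^2$ to self-propagate; with your $c_\theta\sim\theta^{2/\alpha}$ this holds for $\alpha\leq 3/4$ but fails for $\alpha>3/4$, where the additive error $C(c_\theta^m\rho_0)^{3/4}$ decays strictly slower than $\theta^{2m}\kappa_0$.

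The paper's proof improves $\kappa$ by a purely interior PDE argument that uses only the flatness-class data: $U=\max\{|\nabla u|-h(Q,\tau),0\}$ is adjoint-subcaloric, vanishes on the part of the parabolic boundary where $u\equiv 0$ (the slab $\{x_n\leq-\sigma\}$), and is at most $\kappa h(Q,\tau)$ elsewhere; comparing with the adjoint-caloric function having boundary data $\kappa h(Q,\tau)\chi_{\{x_n\geq-\sigma\}}$ and invoking the Harnack inequality gives $U\leq(1-c)\kappa h(Q,\tau)$ on the half-cylinder, with $c$ dimensional. This yields a fixed dimensional improvement factor $\theta_0=\sqrt{1-c}$ per step, and the corollary is then obtained by \emph{iterating} the pair of lemmas with the $\theta_0$-scale parameter $\theta_1$ (dimensional, $\alpha$-dependent) roughly $\theta/\log$-many times, which is also what produces the relation $c_\theta^{\alpha/2}\leq\theta\leq\tilde{C}c_\theta^{\gamma}$ with $\sigma_{n,\alpha}$ and $\tilde C$ independent of $\theta$; your single-shot application with $\mu\sim\theta^{2/\alpha}$ instead makes the smallness threshold $\theta$-dependent and, more importantly, leaves the $\kappa$-improvement unproved. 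To repair your argument you would need to insert the subcaloric comparison step (or an equivalent interior estimate deducible from the hypotheses alone) before concluding membership in $\widetilde{HCF}(\theta\sigma,\theta\sigma,\theta^2\kappa)$.
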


\begin{proof}
We may assume that $\rho = 1, (Q,\tau) = (0,0)$ and $\nu = e_n$. By Lemma \ref{currentholderflatonzerosideisflatonpositiveside} we know that $u \in HCF(C_1\sigma, C_1\sigma,  \kappa)$ in $C_{1/2}(0,0)$ in direction $e_n$. Let $\theta_1 \in (0,1)$ be chosen later (to depend on the dimension and $\alpha$), and set $\sigma_{n,\alpha} \colonequals \sigma_{\theta_1}/C_1$ where $\sigma_{\theta_1}$ is the constant given by Lemma \ref{currentholderflatonbothsidesisextraflatonzeroside}.  Then if $\sigma < \sigma_{n,\alpha}$ and $\kappa \leq \sigma_{n,\alpha} \sigma^2$, Lemma \ref{currentholderflatonbothsidesisextraflatonzeroside} implies $u \in \widetilde{HCF}(C_1\theta_1 \sigma, C_1\theta_1\sigma, \kappa)$ in $C_{\tilde{c}\theta_1}(0,0)$ in the direction $\nu_1$ where $|\nu_1 - e_n| \leq C(n)\sigma$.

We turn to improving the bound on $\nabla u$. Observe that $U = \max\{|\nabla u(X,t)| - h(0,0), 0\}$ is an adjoint-subcaloric function in $C_1(0,0)$. Let $V$ be the solution to the adjoint heat equation such that $V = \kappa h(0,0) \chi_{x_n \geq -\sigma}$ on the adjoint parabolic boundary of $C_1(0,0)$. That $u \in \widetilde{HCF}(\sigma, 1/2, \kappa)$ implies $U \leq V$ on $\partial_p C_1(0,0)$. The maximum theorem and Harnack inequality then imply $U \leq V \leq (1-c)\kappa h(0,0)$ on all of $C_{1/2}(0,0)$, where $c$ depends only on dimension. Furthermore, by assumption, $$\mathrm{osc}_{\Delta_{\tilde{c}\theta_1}(0,0)} h \leq \kappa (\tilde{c}\theta_1)^\alpha h(0,0).$$ Hence, if $\theta_0 = \sqrt{1-c}$ and $\theta_1 = \min\{\theta_0/C_1, \theta_0^{2/\alpha}/\tilde{c}\}$ we have that $u \in \widetilde{HCF}(\theta_0\sigma, \theta_0 \sigma, \theta_0^2\kappa)$ in $C_{\tilde{c}\theta_1}(0,0)$ in direction $\nu_1$. 

Iterate this scheme $m$ times to get that $u \in \widetilde{HCF}(\theta_0^m \sigma, \theta_0^m\sigma, \theta_0^{2m}\kappa)$ in $C_{\tilde{c}^m \theta_1^m}(0,0)$ in the direction $\nu_m$ where $|e_n - \nu_m| \leq C \sigma \sum_{j=0}^{\infty} \theta_0^j \leq C(n) \sigma$. Let $m$ be large so that $\theta_0^m \leq \theta \leq \theta_0^{m-1}$. Then, since $\theta_0, \tilde{c}, \theta_1$ are constants which depend only on the dimension, $n$, and $\alpha$, we see that $(\tilde{c}\theta_1)^m = c_\theta$ where $c_\theta$ depends on $\theta, n, \alpha$. By the definition of $\theta_1$, there is some $\chi \geq 2/\alpha > 1$ (but which depends only on $n,\alpha$) such that $\tilde{c}\theta_1 = \theta_0^\chi$. Then $$(c_\theta)^{\alpha/2} \leq (c_\theta)^{1/\chi} = (\tilde{c}\theta_1)^{\frac{m}{\chi}} = \theta_0^m \leq \theta \leq \theta_0^{m-1} = \frac{1}{\theta_0} \theta_0^m = \frac{1}{\theta_0} c_\theta^{1/\chi}.$$

Letting $\tilde{C} = \frac{1}{\theta_0}$ and $\gamma = \frac{1}{\chi}$ implies that \begin{equation}\label{cthetanottoosmall}
c_\theta^{\alpha/2} \leq \theta \leq \tilde{C} c_\theta^{\gamma}.
\end{equation}
which are the desired bounds on $c_\theta$. 
\end{proof}

Proposition \ref{initialholderregularityprop} then follows from a standard argument:

\begin{proof}[Proof of Proposition \ref{initialholderregularityprop}]
We want to apply Corollary \ref{improveflatnessandgradient} iteratively. But before we can start the iteration, we must show that the hypothesis of that result are satisfied. 

By Lemma \ref{holdergradientbound} and that fact that $\log(h)$ is H\"older continuous, there exists a constant $C >0$ for any compact set $K$ such that $\forall (Q,\tau) \in \partial \Omega \cap K$ and $1/4 > \rho > 0$, \begin{equation}\label{oscillationbounds}\begin{aligned}
|\nabla u(X,t)| \leq& h(Q,\tau) + C\rho^{\alpha/2}\\
\mathrm{osc}_{\Delta_{s\rho}(Q,\tau)} h \leq&  C h(Q,\tau) s^\alpha \rho^{\alpha},\; \forall s\in (0,1].
\end{aligned}
\end{equation} 
Fix a compact set $K$ and a $\sigma_0 \leq \sigma_{n,\alpha}$ (where $\sigma_{n,\alpha}$ is as in Corollary \ref{improveflatnessandgradient}). As $\Omega$ is vanishing Reifenberg flat,  there exists an $R \colonequals R_{\sigma, K} > 0$ with the property that for all $\rho < R$ and $(Q,\tau) \in \partial \Omega \cap K$, there is a plane $P$ (containing a line parallel to the time axis and going through $(Q,\tau)$) such that $$D[C_\rho(Q,\tau)\cap P; C_\rho(Q,\tau) \cap \partial \Omega] \leq \rho \sigma.$$ 

Then fix a $\kappa_0 \leq  \sigma_{n,\alpha} \sigma_0^2$. Obviously, we can choose $\rho_0$ small enough (and smaller than $R_{\sigma, K}$ above) such that $h(Q,\tau) + C\rho_0^{\alpha/2} \leq (1+\kappa_0)h(Q,\tau)$ and $Ch(Q,\tau)\rho_0^\alpha \leq \kappa_0$. Furthermore, this $\rho_0$ can be choosen uniformly over all $(Q,\tau) \in K$. These observations,  combined with equation \eqref{oscillationbounds}, means for $0 < \rho \leq \rho_0$, $u \in \widetilde{HCF}(\sigma, \sigma, \kappa_0)$ in $C_\rho(Q,\tau)$ for some direction $\nu$.  

Then for any $(P,\eta) \in C_{\rho_0}(Q,\tau)$ there is a $\nu_0(P,\eta)$ such that $u\in \widetilde{HCF}(\sigma, \sigma, \kappa_0)$ in $C_{\rho_0}(P,\eta)$ in the direction $\nu_0(P,\eta)$. Let $\theta \in (0,1)$ and apply Corollary \ref{improveflatnessandgradient} $m$ times to get that $u \in \widetilde{HCF}(\theta^m \sigma, \theta^m \sigma, \theta^{2m}\kappa_0)$ in $C_{c_\theta^m \rho_0}(P,\eta)$ in the direction $\nu_m(P,\eta)$. We should check that the conditions of Corollary \ref{improveflatnessandgradient} are fulfilled at every step. In particular, that for any $m$, we have $\mathrm{osc}_{\Delta_{sc_\theta^m\rho_0}(P,\eta)} h \leq \theta^{2m}\kappa_0 h(P,\eta).$ Indeed, $$\mathrm{osc}_{\Delta_{sc_\theta^m\rho_0}(P,\eta)} h  \leq Ch(P,\eta)(s\rho_0 c_\theta^m)^\alpha \leq \kappa_0 h(P,\eta) c_\theta^{2m\alpha/2} \leq \kappa_0h(P,\eta)\theta^{2m}.$$ The last inequality above follows from equation \eqref{cthetanottoosmall}, and the penultimate one follows from the definition of $\rho_0$. 

Letting $m \rightarrow \infty$ it is clear that $\partial \Omega$ has a normal vector $\nu(P,\eta)$ at every $(P,\eta) \in C_{\rho_0}(Q,\tau)$ and $|\nu_m(P,\eta) - \nu(P,\eta)| \leq C_\theta \theta^m \sigma$ . Furthermore, if $(P', \eta') \in \Delta_{\rho_0c_\theta^m}(P, \eta) \backslash \Delta_{\rho_0 c_\theta^{m+1}}(P,\eta)$ then $|\nu_m(P,\eta) - \nu_m(P',\eta')| \leq C \theta^m \sigma$. Hence, $|\nu(P,\eta) - \nu(P',\eta')| \leq C\theta^m \sigma$. By equation \eqref{cthetanottoosmall} we know that $C\sigma \theta^m \leq C\theta^m \leq \tilde{C}c_\theta^{\gamma m}$.  Let $\beta \in (0,1)$ be such that $\beta (m+1) = \gamma m$. Hence, $|\nu(P,\eta) - \nu(P', \eta')| \leq C\|(P, \eta) - (P', \eta')\|^\beta$ which is the desired result. 
\end{proof}

\subsection{Flatness of the zero side implies flatness of the positive side: Lemma \ref{currentholderflatonzerosideisflatonpositiveside}}

Before we begin we need two technical lemmas. The first allows us to conclude regularity in the time dimension given regularity in the spatial dimensions. 

\begin{lem}\label{iflipschitzthenregularintime}
If $f$ satisfies the (adjoint)-heat equation in $\mathcal O$ and is zero outside $\mathcal O$ then $$\|f\|_{\mathbb C^{1,1/2}(\R^{n+1})} \leq c \|\nabla f\|_{L^\infty(\mathcal O)},$$ where $0 < c < \infty$ depends only on the dimension. \end{lem}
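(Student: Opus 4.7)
The plan is to establish the two components of the parabolic $\mathbb C^{1,1/2}$ semi-norm separately: the spatial Lipschitz bound and the temporal $1/2$-H\"older bound. Write $M = \|\nabla f\|_{L^\infty(\mathcal O)}$.

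For the spatial direction, fix a time $t$ and two spatial points $X,Y$. Parametrize the segment $[X,Y]$ and decompose the parameter interval into maximal subintervals along which the parametrized point lies in $\mathcal O$ or in $\mathcal O^c$. Since $f$ is continuous and vanishes off $\mathcal O$ (in particular on $\partial\mathcal O$), $f$ vanishes at the endpoints of every subinterval exiting $\mathcal O$, so those subintervals contribute nothing. On each inside subinterval the fundamental theorem of calculus together with the hypothesis $|\nabla f|\le M$ gives a contribution of at most $M$ times the length; summing yields $|f(X,t)-f(Y,t)|\le M|X-Y|$.

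For the time direction, the key observation is that each $\partial_i f$ is itself (adjoint)-caloric in $\mathcal O$ and satisfies $|\partial_i f|\le M$. Applying the standard interior parabolic gradient estimate to $\partial_i f$ on a cylinder $C_{\rho}(Y,\tau)\subset\mathcal O$ of radius $\rho = d_p((Y,\tau),\partial\mathcal O)/2$ gives $|\nabla\partial_i f(Y,\tau)|\le cM/\rho$; summing in $i$, one gets $|\partial_\tau f(Y,\tau)|=|\Delta f(Y,\tau)|\le cnM/d_p((Y,\tau),\partial\mathcal O)$ at every $(Y,\tau)\in\mathcal O$.

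Given $(X,t),(X,s)$ with $r=\sqrt{|t-s|}$, I split into two cases. In the \emph{interior} case, when the segment $\{(X,\tau):\tau\in[s,t]\}$ stays at parabolic distance $\geq r$ from $\partial\mathcal O$, the above estimate yields $|\partial_\tau f|\le cnM/r$ along the segment, so integrating in $\tau$ gives $|f(X,t)-f(X,s)|\le cnM\cdot(t-s)/r=cnMr$. The main obstacle is the \emph{boundary} case, where the segment passes within parabolic distance $r$ of $\partial\mathcal O$. Here one reduces to proving $|f(X,t)|,|f(X,s)|\le cMr$ individually: if some $(X,\tau^*)$ is within parabolic distance $r$ of a point $(Q,\sigma)\in\partial\mathcal O$, then $f(Q,\sigma)=0$, and combining this with the spatial Lipschitz bound on each time slice (obtaining a boundary point at time $t$ within spatial distance $\lesssim r$) controls $|f(X,t)|$. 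This last step is the delicate one; I would carry it out by a dyadic iteration that alternates the spatial Lipschitz estimate with the interior time-derivative estimate on shrinking cylinders accumulating at $\partial\mathcal O$, producing a telescoping geometric sum that bounds $|f(X,t)|$ by $cMr$. Combining both cases yields the $1/2$-H\"older estimate in time and hence the lemma.
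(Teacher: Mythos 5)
Your spatial Lipschitz argument, your pointwise estimate $|\partial_\tau f|=|\Delta f|\le c_nM/d_p(\cdot,\partial\mathcal O)$ (differentiating the equation and applying interior estimates to the caloric functions $\partial_i f$), and the interior case of the time bound are all fine. The genuine gap is the boundary case, which you reduce to $|f(X,t)|\le cMr$ when the vertical segment comes within parabolic distance $r$ of $\partial\mathcal O$. The parenthetical step ``obtaining a boundary point at time $t$ within spatial distance $\lesssim r$'' is false in general: the nearest boundary point may be displaced purely in time (e.g.\ $\partial\mathcal O$ locally a piece of a slice $\{\tau=\sigma\}$ with $|\sigma-t|\le r^2$ and no boundary near $X$ at times around $t$), in which case the spatial Lipschitz bound on the slice $t$ gives nothing. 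The fallback you sketch -- iterate the pointwise bound on shrinking cylinders accumulating at $\partial\mathcal O$ -- does not close as stated: all the $1$-Lipschitz property of $d_p(\cdot,\partial\mathcal O)$ gives along the segment is $d_p((X,\tau'),\partial\mathcal O)\ge d-\sqrt{|t-\tau'|}$, and $\int \bigl(d-\sqrt{|t-\tau'|}\bigr)^{-1}d\tau'$ diverges logarithmically as the segment approaches the boundary, so integrating $M/d_p$ cannot by itself produce $cMr$. To make a telescoping geometric sum appear one needs a genuine dichotomy (nearest boundary point mostly spatial versus mostly temporal), a jump to the \emph{time slice of that boundary point} (not the slice $t$) in the spatial case, and a quantified geometric decay of scales when stepping a fixed fraction of the distance toward a mostly temporal boundary point; none of this is in your proposal, and the incorrect parenthetical indicates the actual difficulty (no same-time vanishing point in general) was not confronted.

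For comparison, the paper avoids the pointwise estimate altogether: using $f_\tau=\pm\Delta f$ and the divergence theorem it bounds the \emph{spatial-ball average} $\fint_{B'((X,\tau'),r)}f_\tau$ by $C_nM/r$, an estimate that only requires the single-slice ball $B'((X,\tau'),r)$ to lie in $\mathcal O$, not a parabolic neighborhood, and hence does not degenerate when the boundary is close in time but not in space. It then runs a stopping time: let $t^*$ be the first time the ball $B'((X,a),r)$ leaves $\mathcal O$; before $t^*$ the averaged bound together with $|f-\fint_{B'}f|\le Mr$ controls the time increment, and at $t^*$ the ball contains a point of $\mathcal O^c$ at that very time, so $|f(X,t^*)|\le Mr$ by spatial Lipschitz continuity. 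Your route could in principle be completed, but only after supplying the dichotomy and decay described above, which is considerably more work than this averaging-plus-stopping-time argument.
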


\begin{proof}
It suffices to show that for any $(X,t), (X,s) \in \mathcal O$ we have $|f(X,t) - f(X,s)| \leq C|s-t|^{1/2}$ where $C$ does not depend on $X, t$ or $s$. Assume $s > t$ and let $r \equiv \sqrt{s-t}$. Before our analysis we need a basic estimate: \begin{equation}\label{timederivative}
\left|\fint_{B'((X,t),r)} f_t dX\right| = \left| \fint_{B'((X,t),r)} \Delta f dX\right| = \left| \frac{c_n}{r} \fint_{\partial B'((X,t),r)} \nabla
 f \cdot \nu \right| \leq \frac{C_n\|\nabla f\|_{L^\infty(\mathcal O)}}{r}\end{equation} as long as $ B'((X,t),r) \colonequals \{(Y,t)\mid |Y-X| \leq r\} \subset \mathcal O$.

There are two cases:

\noindent {\bf Case 1:} $\{(Y, \tau) \mid |Y-X| \leq r, t \leq \tau \leq s\} \subset \mathcal O$. By Lipschitz continuity, $$\left|f(X,t) -\fint_{B'((X,t),r)} f(Y,t) dY\right| \leq C\|\nabla f\|_{L^\infty}r.$$ Note that by Fubini's theorem and the mean value theorem there is a $\tilde{t} \in [t,s]$ such that $$\begin{aligned}\left|\fint_{B'((X,t),r)} f(Y,t) dY- \fint_{B'((X,s),r)} f(Y,s) dY\right| =& \left|\fint_{\{|Y-X| \leq r\}}\int_t^s \partial_\tau f(Y,\tau)d\tau dY\right|\\ =& (s-t)\left|\fint_{B'((X,\tilde{t}),r)} f_\tau(Y,\tilde{t}) dY\right|.\end{aligned}$$ We may combine the two equations above to conclude, \begin{equation*}\begin{aligned}|f(X,t)-f(X,s)| &\leq C\|\nabla f\|_{L^\infty}r+ (s-t) \left|\fint_{B'((X,\tilde{t}),r)} f_\tau(Y,\tilde{t})dY\right|\\ &\stackrel{\mathrm{eqn}\; \eqref{timederivative}}{\leq} C\|\nabla f\|_{L^\infty}(r+  \frac{(s-t)}{r}) = C\|\nabla f\|_{L^\infty(\mathcal O)}\sqrt{s-t}.\end{aligned}\end{equation*}

\noindent {\bf Case 2:} $\{(Y, \tau) \mid |Y-X| \leq r, t \leq \tau \leq s\} \not\subset \mathcal O$. If neither $B'((X,t),r)$ or $B'((X,s),r)$ are contained in $\mathcal O$ then $|f(X,t) - f(X,s)| \leq |f(X,t)| + |f(X,s)| \leq \|\nabla f\|_{L^\infty} r$ by the Lipschitz continuity of $u$. Therefore, without loss of generality we may assume $B'((X,t),r) \subset \mathcal O$ . Let $t \leq t^* \leq s$ be such that $t^* = \inf_{t \leq a} B'((X,a),r) \not\subset \mathcal O$. Since $B'((X,a),r)\subset \mathcal O$ is an open condition, we can argue as in Case 1 so that, $|f(X,t)-f(X,t^*)| \leq C\|\nabla f\|_{L^\infty(\mathcal O)}(t^*-t)/r \leq C\|\nabla f\|_{L^\infty(\mathcal O)}\sqrt{s-t}$. On the other hand, as $B'((X,t^*), r) \not\subset \mathcal O$ we know $|f(X,t^*)| \leq  \|\nabla f\|_{L^\infty}|s-t|^{1/2}$.  Therefore, $|f(X,t)|\leq C \|\nabla f\|_{L^\infty}|s-t|^{1/2}$. Arguing similarly at $s$ we are done.
\end{proof}

This second lemma allows us to bound from below the normal derivative of a solution at a smooth point of $\partial \Omega$. 

\begin{lem}\label{holdernormalderivativeatregularpoint}
Let $(Q,\tau)\in \partial \Omega$ be such that there exists a tangent ball (in the Euclidean sense) $B$ at $(Q,\tau)$ contained in $\overline{\Omega}^c$. Then $$\limsup_{\Omega \ni (X,t) \rightarrow (Q,\tau)} \frac{u(X,t)}{d((X,t), B)} \geq h(Q,\tau).$$
\end{lem}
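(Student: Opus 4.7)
The plan is to perform a parabolic blow-up at $(Q,\tau)$, identify the limit as a half-space via the classification of flat blow-ups (Theorem \ref{poissonkernel1impliesflat}), and use the tangent-ball constraint to fix its orientation. First I would set $u_k(X,t) := u(r_kX+Q, r_k^2 t+\tau)/r_k$ on $\Omega_k := \{(X,t) : (r_kX+Q, r_k^2t+\tau) \in \Omega\}$ for a sequence $r_k\downarrow 0$. Then $u_k$ is the (adjoint-caloric) Green function of $\Omega_k$ with pole at infinity, and its Poisson kernel $h_k(X,t) = h(r_kX+Q, r_k^2 t+\tau)$ tends uniformly on compacta to the constant $h(Q,\tau)$ by continuity of $h$ at $(Q,\tau)$. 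Lemma \ref{holdergradientbound} supplies a uniform Lipschitz bound $\|\nabla u_k\|_{L^\infty(C_R(0,0))} \leq h(Q,\tau)+o(1)$, so after passing to a subsequence and invoking Lemma \ref{limitsexistforblowups}, $\Omega_k \to \Omega_\infty$ in the Hausdorff sense and $u_k \to u_\infty$ uniformly on compacta, where $u_\infty$ is the Green function of $\Omega_\infty$ at infinity with $|\nabla u_\infty| \leq h(Q,\tau)$ and Poisson kernel identically $h(Q,\tau)$ on $\partial\Omega_\infty$.

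Next I would exploit the tangent ball $B = B((Y_0,s_0), R_0)$ with $(Y_0,s_0) = (Q,\tau) - R_0 \hat{n}$, where $\hat{n} = (\hat n_X, \hat n_t)$ is the outward Euclidean unit normal to $\partial B$ at $(Q,\tau)$ (so $\hat{n}$ points into $\Omega$). A direct calculation shows that under the parabolic rescaling, the preimage of $B$ converges, on compacta, to the half-space $\{X\cdot \hat{n}_X < 0\}$ provided $\hat{n}_X\neq 0$. Because $\Omega$ is $\delta$-Reifenberg flat, the approximating Reifenberg planes at $(Q,\tau)$ contain the time direction (Definition \ref{reifenbergflat}); one-sided touching of $\partial B$ with $\partial \Omega$ then forces $\hat{n}_t = 0$ and $|\hat{n}_X| = 1$. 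Consequently $\Omega_\infty \subset H^+ := \{X\cdot\hat{n}_X \geq 0\}$ with $0 \in \partial\Omega_\infty$. Normalizing $\tilde{u}_\infty := u_\infty/h(Q,\tau)$, we have $|\nabla \tilde u_\infty|\leq 1$ and Poisson kernel $\equiv 1$ on $\partial\Omega_\infty$, and $\Omega_\infty$ is parabolic regular and $4\delta$-Reifenberg flat (Lemma \ref{limitsexistforblowups}). Theorem \ref{poissonkernel1impliesflat} then identifies $\Omega_\infty$ as a half-space through the origin; the inclusion $\Omega_\infty\subset H^+$ pins down which one, giving $\Omega_\infty = H^+$ and $u_\infty(X,t) = h(Q,\tau)(X\cdot\hat{n}_X)^+$.

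To extract the limsup statement, I would pick any $X_0$ with $X_0\cdot\hat{n}_X > 0$ and any $t_0\in\R$, and set $(X_k,t_k) := (r_kX_0 + Q, r_k^2 t_0+\tau)$. For large $k$ these lie in $\Omega$, and uniform convergence yields
\[
\frac{u(X_k,t_k)}{r_k} = u_k(X_0,t_0) \longrightarrow u_\infty(X_0,t_0) = h(Q,\tau)\, X_0\cdot\hat{n}_X.
\]
A short computation of the parabolic distance to $B$ (using $\hat n_t = 0$, so the infimum over $(Y,s)\in B$ is achieved, to leading order, at a point in the time slice $s=t_k$) shows $d((X_k,t_k),B)/r_k \to X_0\cdot\hat{n}_X$. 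Dividing then gives the ratio $h(Q,\tau)$ along this sequence, establishing the claimed $\limsup$.

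The main obstacle I anticipate is the argument of the second paragraph: cleanly verifying that $\hat n_t = 0$ and that the parabolic preimage of the Euclidean ball $B$ actually converges to a genuine half-space (rather than a degenerate slab). Both rely on combining the Reifenberg plane requirement (which forces a time direction) with the one-sided touching of $\partial B$ and $\partial \Omega$ at $(Q,\tau)$. A secondary nuisance is that Theorem \ref{poissonkernel1impliesflat} has the borderline hypothesis $|h_\infty|\ge 1$, which here holds only with equality ($h_\infty \equiv 1$ after rescaling); I expect the theorem's proof to accommodate this case, but it warrants checking.
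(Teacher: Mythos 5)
Your overall strategy (blow up at $(Q,\tau)$, classify the limit via Theorem \ref{poissonkernel1impliesflat}, compute the ratio along an explicit sequence) can be made to work, but two of your key steps are asserted rather than proved, and one of them is false as stated. First, the claim that Reifenberg flatness plus one-sided touching forces $\hat n_t=0$ and $|\hat n_X|=1$ does not follow: the separation condition \eqref{seperationequation} only controls the boundary at linear order $\delta\rho$ in $C_\rho(Q,\tau)$, whereas a tilted tangent ball protrudes past the time line through $(Q,\tau)$ only by $O(\rho^2)$ (its time slice at $\tau+s$ reaches only $\sim |s|\,|\hat n_t|/|\hat n_X|$ beyond $Q$), so no contradiction with flatness arises and $\hat n_t\neq 0$ cannot be excluded. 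What Reifenberg flatness does rule out is the degenerate case $\hat n_X=0$: then the slices at $|t-\tau|=\rho^2$ are spatial balls of radius $\sim\sqrt{R_0}\,\rho$ centered at $Q$ lying in $\overline{\Omega}^c$, which violates \eqref{seperationequation} for small $\delta$. Fortunately your argument only needs $\hat n_X\neq 0$: under parabolic rescaling the time component enters at order $r_k$, so $B$ blows up to the spatial half-space determined by $\hat u:=\hat n_X/|\hat n_X|$ regardless of $\hat n_t$, and for the final ratio you only need the one-sided estimate $d((X_k,t_k),B)\le r_k\,X_0\cdot\hat u+O(r_k^2)$ coming from the same-time cross-section. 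Second, "Poisson kernel identically $h(Q,\tau)$ on $\partial\Omega_\infty$" is not a consequence of $h_k\to h(Q,\tau)$ uniformly, because $\omega_k\rightharpoonup\omega_\infty$ does not by itself identify the density of $\omega_\infty$ with respect to $\sigma_\infty$ (the surface measures $\sigma_k$ are not known to converge; that is the content of Proposition \ref{weakconvergenceinpseudoblowups}). What you actually need for Theorem \ref{poissonkernel1impliesflat} and for the slope bound is only the lower bound $h_\infty\ge h(Q,\tau)$ $\sigma_\infty$-a.e., and this does follow, but via the lower semicontinuity $\liminf_k\int\varphi\,d\sigma_k\ge\int\varphi\,d\sigma_\infty$ established inside the proof of Lemma \ref{limitsexistforblowups} (or the divergence-theorem argument of Lemma \ref{hatinfinityisgreaterthan1}); this step must be supplied, since it is exactly the kind of boundary identification the lemma is about.

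For comparison, the paper's proof is much lighter and bypasses both issues: it takes a sequence $(X_k,t_k)$ realizing the limsup $\ell$, rescales parabolically at the nearest points $(Y_k,s_k)\in B$ with scale $r_k=d((X_k,t_k),B)$, notes that the definition of $\ell$ forces the limit $u_0$ to satisfy $u_0\le \ell\,y_n$ in the limiting half-space, and then tests $\int u_k(\Delta\varphi-\partial_t\varphi)=\int h_k\varphi\,d\sigma$ against positive $\varphi$ and integrates $\ell x_n^+$ by parts to get $\ell\ge h(Y_k,s_k)-Cr_k^\alpha\to h(Q,\tau)$; no classification of the blow-up, no identification of $h_\infty$, and no convergence of surface measures is needed. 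Your use of Theorem \ref{poissonkernel1impliesflat} is not circular (its proof rests on the independently proved Appendix Lemma \ref{normalderivativeatregularpoint}), but that appendix lemma is precisely the $h\ge 1$ analogue of the present statement, proved by this direct argument; so your route deploys the paper's heaviest theorem where a half-page computation suffices, and it is only complete once the two gaps above are filled.
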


\begin{proof}
Without loss of generality set $(Q,\tau) = (0,0)$ and let $(X_k, t_k) \in \Omega$ be a sequence that achieves the supremum, $\ell$. Let $(Y_k, s_k) \in B$ be such that $d((X_k,t_k), B) = \|(X_k,t_k)-(Y_k, s_k)\| \equalscolon r_k$. Define $u_k(X,t) \colonequals \frac{u(r_kX + Y_k, r_k^2t + s_k)}{r_k}$, $\Omega_k \colonequals \{(Y,s)\mid Y= (X -Y_k)/r_k, s = (t-s_k)/r_k^2,\; \mathrm{s.t.}\; (X,t)\in \Omega\}$ and $h_k(X,t) \colonequals h(r_kX + Y_k, r_k^2t + s_k)$. Then \begin{equation}\int_{\R^{n+1}} u_k (\Delta \phi - \partial_t \phi)dXdt = \int_{\partial \Omega_k} h_k \phi d\sigma.\end{equation}

As $k\rightarrow \infty$ we can guarentee that $(r_kX + Y_k, r_k^2t + s_k) \in C_{1/100}(0,0)$. Apply Lemma \ref{holdergradientbound} to conclude that, for $(X,t) \in C_1(0,0)$, $$|\nabla u_k(X,t)| = |\nabla u(r_kX + Y_k, r_k^2t + s_k)| \leq Ch(0,0) + \left(\frac{1}{100}\right)^{\beta}.$$ In particular, the $u_k$ are uniformly Lipschitz continuous. By Lemma \ref{iflipschitzthenregularintime} the $u_k$ are bounded uniformly in $\mathbb C^{1,1/2}$. Therefore, perhaps passing to a subsequence, $u_k \rightarrow u_0$ uniformly on compacta. In addition, as there exists a tangent ball at $(0,0)$, $\Omega_k \rightarrow \{x_n >0\}$ in the Hausdorff distance norm (up to a rotation). We may assume, passing to a subsequence, that $\frac{X_k - Y_k}{r_k} \rightarrow Z_0, \frac{t_k - s_k}{r_k^2}\rightarrow t_0$ with $(Z_0, t_0) \in C_1(0,0)\cap \{x_n > 0\}$ and $u_0(Z_0, t_0) = \ell$. Furthermore, by the definition of supremum, for any $(Y,s) \in \{x_n > 0\}$ we have \begin{equation}\begin{aligned} u_0(Y,s) =& \lim_{k\rightarrow \infty} u(r_kY + y_k, r_k^2s + s_k)/r_k \\ \leq& \lim_{k\rightarrow \infty} \ell \frac{\mathrm{pardist}((r_kY + y_k, r_k^2s + s_k), B)}{r_k}\\ =& \lim_{k\rightarrow \infty} \ell \mathrm{pardist}((Y,s), B_k)\\ =& \ell y_n, \end{aligned}\end{equation}where $B_k$ is defined like $\Omega_k$ above.

Let $\phi \in C_0^{\infty}(\mathbb R^{n+1})$ be positive, then \begin{equation}\label{holderu0bound}\begin{aligned} \int_{\{x_n > 0\}} \ell x_n (\Delta \phi - \partial_t \phi)dXdt &\geq \int_{\{x_n > 0\}} u_0(X,t)  (\Delta \phi - \partial_t \phi)dXdt\\ &= \lim_{k \rightarrow \infty} \int_{\Omega_k} u_k(X,t) (\Delta \phi - \partial_t \phi)dXdt \\&=  \lim_{k\rightarrow \infty} \int_{\partial \Omega_k} h_k \phi d\sigma.\end{aligned}\end{equation} Integrating by parts yields \begin{equation*}\begin{aligned} \ell \int_{\{x_n = 0\}} \phi dx dt &=  \int_{\{x_n > 0\}} \ell x_n  (\Delta \phi - \partial_t \phi)dXdt \\ &\stackrel{eqn.\; \eqref{holderu0bound}}{\geq} \lim_{k\rightarrow \infty}\int_{\partial \Omega_k} h_k \phi d\sigma\\ &\geq \lim_{k\rightarrow \infty} \left(\inf_{(P,\eta) \in \supp \phi} h(r_kP+Y_k, r_k^2\eta+ s_k)\right) \int_{\{x_n =0\}} \phi dxdt\end{aligned}\end{equation*} Hence,  $\ell \geq \lim_{k\rightarrow \infty} h(Y_k, s_k) - Cr_k^\alpha,$ by the H\"older continuity of $h$.  As $(Y_k, s_k) \rightarrow (Q,\tau)$ and $r_k \downarrow 0$ the desired result follows. 
\end{proof}

We will first show that for ``past flatness",  flatness on the positive side gives flatness on the zero side.

\begin{lem}\label{holderflatonzerosideisflatonpositiveside}[Compare with Lemma 5.2 in \cite{anderssonweiss}]
Let $0 < \kappa \leq \sigma/4 \leq \sigma_0$ where $\sigma_0$ depends only on dimension. Then if $u\in HPF(\sigma, 1, \kappa)$ in $C_\rho(\tilde{X},\tilde{t})$ in the direction $\nu$, there is a constant $C$ such that $u \in HPF(C\sigma, C\sigma, 3\kappa)$ in $C_{\rho/2}(\tilde{X}+\alpha \nu, \tilde{t})$ in the direction $\nu$ for some $|\alpha| \leq C\sigma \rho$. 
\end{lem}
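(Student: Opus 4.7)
After translating and rotating we may assume $(\tilde{X},\tilde{t})=(0,0)$, $\nu=e_n$, $\rho=1$, so that $(0,-1)\in\partial\Omega$ and, writing $h_0\colonequals h(0,-1)$, the hypotheses read: $u=0$ on $\{y_n\leq -\sigma\}\cap C_1$, $u(Y,s)\geq h_0(y_n-1)$ on $\{y_n\geq 1\}\cap C_1$, $|\nabla u|\leq h_0(1+\kappa)$ in $C_1$, and $\mathrm{osc}_{\Delta_1(0,0)}h\leq\kappa h_0$. The target is a lower bound $u(Y,s)\geq h_0(1-\mathrm{osc})\cdot(y_n-\alpha-C\sigma)$ whenever $y_n\geq \alpha+C\sigma$, $(Y,s)\in C_{1/2}(\alpha e_n,0)$, for some $|\alpha|\leq C\sigma$ (the upper bound and oscillation conditions transfer immediately, with $\kappa$ replaced by $3\kappa$, from the inclusion $C_{1/2}(\alpha e_n,0)\subset C_1(0,0)$ combined with the H\"older control on $h$ and the assumption $\kappa\leq\sigma/4$).

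The first step is a straightforward upper bound: integrating $|\nabla u|\leq h_0(1+\kappa)$ from the zero set gives
\begin{equation*}
u(Y,s)\leq h_0(1+\kappa)(y_n+\sigma)_+\quad \text{on } C_1(0,0).
\end{equation*}
Since $u(Y,s)\geq h_0(y_n-1)$ in the region $\{y_n\geq 1\}$, the two bounds pin down $u$ at the ``top'' of the cylinder, and at that top $u/h_0$ differs from $y_n$ by at most $O(\sigma+\kappa)=O(\sigma)$. The main task is to drag this information down near the free boundary. This is where past flatness enters through the anchor point $(0,-1)\in\partial\Omega$.

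The second and central step is the construction of a barrier. One runs a sliding argument with the translated half-plane solutions $v_\beta(Y,s)=h_0(y_n-\beta)_+$, which are (adjoint-)caloric in their positive phase; they are genuine subsolutions in the sense that the associated free boundary problem has Poisson kernel identically $h_0$. Starting from $\beta$ so large that $v_\beta\leq u$ everywhere in $C_1$ (possible by the upper bound just derived) we decrease $\beta$ and ask for the first contact. Non-degeneracy on the positive side (the lower bound in the HPF hypothesis, valid at $\beta=1$) shows the contact is nontrivial. A contact point in the interior of $\{u>0\}$ is excluded by the strong maximum principle since $\Delta v_\beta-\partial_s v_\beta=\partial_s u-\Delta u=0$ there and $v_\beta\leq u$. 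A contact point on $\partial\Omega$ is excluded by Lemma \ref{holdernormalderivativeatregularpoint}: at such a contact the hyperplane $\{y_n=\beta\}$ is a tangent hyperplane to $\overline{\Omega}^c$, so the normal derivative of $u$ from the $\Omega$ side is at least $h$ at that boundary point, which by the oscillation bound is at least $h_0(1-\kappa)$, while the normal derivative of $v_\beta$ equals $h_0$; any strict contact would violate Hopf. Hence the supremum of $\beta$ with $v_\beta\leq u$ can only be attained by geometric saturation, which forces $\beta$ to be within $O(\sigma)$ of $-\sigma$. (The parabolic boundary of the cylinder must also be handled; here one uses the top/side estimate of Step 1 together with a small shrinkage from $C_1$ to a slightly smaller cylinder, which is where the factor $1/2$ and the shift $\alpha$ in the conclusion come from.)

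The third step collects the output: the sliding gives $u(Y,s)\geq h_0(y_n-\beta_*)_+$ for some $\beta_*$ with $|\beta_*+\sigma|\leq C\sigma$, i.e.\ $\beta_*=-\sigma+O(\sigma)$, on a slightly smaller cylinder. Recentering this cylinder at $\alpha e_n$ with $\alpha\colonequals\beta_*+C\sigma\in[-C\sigma,C\sigma]$ yields the conclusion $u\in HPF(C\sigma,C\sigma,3\kappa)$ in $C_{1/2}(\alpha e_n,0)$ after accounting for the $\kappa\to 3\kappa$ loss mentioned above (the oscillation of $h$ is measured in a slightly larger ball and against a slightly different base value, explaining the harmless factor $3$). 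The hard part of the argument is the sliding step: one must verify that the admissible-$\beta$ set is nonempty (upper bound), open (regularity of $u$ and the strong maximum principle), and closed (compactness and Lemma \ref{holdernormalderivativeatregularpoint}), and that the degeneracy on the parabolic-past boundary of $C_1$, which is where the ``past'' in HPF matters, does not obstruct the comparison; it is there that shrinking from $C_1$ to $C_{1/2}$ is used, and the time variable is controlled through Lemma \ref{iflipschitzthenregularintime} together with the lateral bound from $|\nabla u|\leq h_0(1+\kappa)$.
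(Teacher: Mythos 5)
There is a genuine gap in the central ``sliding'' step, and it is not a repairable technicality: the comparison $v_\beta\leq u$ with the translated planar solutions $v_\beta(Y,s)=h_0(y_n-\beta)_+$ cannot be maintained once $\beta$ drops below $1$. To propagate $v_\beta\leq u$ into the cylinder you need $v_\beta\leq u$ on the parabolic boundary of the region where you compare, but the hypothesis $u\in HPF(\sigma,1,\kappa)$ (with $\sigma_2=1$) gives a lower bound for $u$ only where $(Y-X)\cdot\nu\geq\rho$, i.e.\ essentially nowhere in the open cylinder; on the lateral and initial faces of $C_1$ you know nothing about $u$ from below except $u\geq 0$. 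Your Step 1 estimate is an \emph{upper} bound for $u$, which is useless for keeping $v_\beta$ below $u$, and shrinking to a smaller cylinder does not help: the lateral boundary of the smaller cylinder lies in the interior of $C_1$, where interior lower bounds for $u$ are precisely what the lemma is supposed to produce, so the argument becomes circular. In addition, your exclusion of a free-boundary contact point ``by Hopf'' does not go through quantitatively: at such a point Lemma \ref{holdernormalderivativeatregularpoint} only gives $\partial_\nu u\geq h\geq(1-\kappa)h_0$, while $\partial_\nu v_\beta=h_0$, and since $h$ may lie \emph{below} $h_0$ by as much as $\kappa h_0$ there is no contradiction between touching from below and these two slopes.

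The paper's proof is built exactly to get around these two obstructions. It does not compare $u$ with a sliding family; instead it perturbs the slab $\{x_n>-\sigma\}$ by a smooth bump to produce a domain $D\supset\Omega\cap C_1$ touching $\partial\Omega$ at a point $(X_0,t_0)$ with $t_0\in[-1,-15/16]$ (this is where the past anchor $(0,-1)\in\partial\Omega$ is used), and compares $u$ from \emph{above} with the adjoint-caloric barrier $v$ in $D$ whose lateral data is $(1+\sigma)h_0(\sigma+x_n)$ -- boundary data that only requires the upper bound $|\nabla u|\leq(1+\kappa)h_0$. The lower bound for $u$ is then obtained by contradiction: assuming $u\leq v-\tilde K\sigma(1+\sigma)h_0x_n$ on a spatial disc at time $s_0$, one introduces a second caloric barrier $w$ with data $x_n$ on that disc, and at the touching point $(X_0,t_0)$ combines Lemma \ref{holdernormalderivativeatregularpoint}, the gradient estimate $-\partial_\nu v\leq(1+K\sigma)(1+\sigma)h_0$, and Hopf's lemma for $w$ to force $h(X_0,t_0)\leq(1-2\sigma)h_0$, contradicting $\mathrm{osc}\,h\leq\kappa h_0\leq\sigma h_0/4$; the parabolic Harnack inequality then converts the resulting pointwise information at time $s_0$ into a lower bound at slightly earlier times, and $|\nabla u|\leq(1+\kappa)h_0$ extends it down to $x_n\geq C\sigma$. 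If you want to salvage your write-up, you would need to replace the sliding step by an argument of this type (a one-sided barrier in a perturbed domain, a quantitative Hopf comparison with a gain of size $\tilde K\alpha\sigma$ beating the loss $K\sigma+\kappa$, and a Harnack step in time); as written, the proposal's key step fails.
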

 
 \begin{proof}
Let $(\tilde{X},\tilde{t})= (0,0), \rho = 1$ and $\nu = e_n$. First we will construct a regular function which touches $\partial \Omega$ at one point.  

Define $$\eta(x, t) = e^{\frac{16(|x|^2 + |t+1|)}{16(|x|^2 + |t+1|)- 1}}$$ for $16(|x|^2 + |t+1|) < 1$ and $\eta(x,t) \equiv 0$ otherwise. Let $D \colonequals \{(x, x_n, t) \in C_1(0,0) \mid x_n > -\sigma + s\eta(x,t)\}$. Now pick $s$ to be the largest such constant that $C_1(0,0) \cap \Omega \subset D$. As $(0,-1) \in \partial \{u > 0\}$, there must be a touching point $(X_0, t_0) \in \partial D \cap \partial \Omega \cap \{-1\leq t \leq -15/16\}$ and $s \leq \sigma$. 

Define the barrier function $v$ as follows:
\begin{equation}
\begin{aligned}
\Delta v + \partial_tv =0 &\: \mathrm{in}\; D, \\
v = 0 &\; \mathrm{in}\; \partial_p D \cap C_1(0,0)\\
v = h(0,-1)(1+\sigma)(\sigma +x_n)&\; \mathrm{in}\; \partial_p D \cap \partial C_1(0,0).
\end{aligned}
\end{equation}

Note that on $\partial_p D\cap C_1(0)$ we have $u = 0$ because $D$ contains the positivity set. Also, as $|\nabla u| \leq h(0,-1)(1+\kappa) \leq h(0,-1)(1+\sigma)$, it must be the case that $u(X,t) \leq h(0,-1)(1+\sigma)\max\{0, \sigma + x_n\}$ for all $(X,t) \in C_1(0,0)$. As $v \geq u$ on $\partial_pD$ it follows that $v \geq u$ on all of $D$ (by the maximum principle for subadjoint-caloric functions). We now want to estimate the normal derivative of $v$ at $(X_0,t_0)$. To estimate from below, apply Lemma \ref{holdernormalderivativeatregularpoint}, \begin{equation}\label{lowerbfondvholder}h(X_0,t_0) \leq \limsup_{(X,t) \rightarrow (X_0,t_0)} \frac{u(X,t)}{\mathrm{pardist}((X,t), B)} \leq -\partial_\nu v(X_0,t_0)\end{equation} where $\nu$ is the normal pointing out of $D$ at $(X_0,t_0)$ and $B$ is the tangent ball at $(X_0,t_0)$ to $D$ contained in $D^c$.

To estimate from above, first consider $F(X,t) \colonequals (1+\sigma)h(0,-1)(\sigma + x_n)-v$.  On $\partial_pD$, \begin{equation}\label{holdersandwichv}-(1+\sigma)h(0,-1)\sigma \leq v- (1+\sigma)h(0,-1)x_n \leq (1+\sigma)h(0,-1)\sigma\end{equation} thus (by the maximum principle) $0 \leq F(X,t) \leq 2(1+\sigma)h(0,-1)\sigma$. As $\partial D$ is piecewise smooth domain, standard parabolic regularity gives $\sup_{D} |\nabla F(X,t)| \leq K(1+\sigma)h(0,-1)\sigma$. Note, since $s \leq \sigma$, that $-\sigma + s\eta(x,t)$ is a function whose $\mathrm{Lip}(1,1)$ norm is bounded by a constant. Therefore, $K$ does not depend on $\sigma$. 

Hence,  \begin{equation}\label{holdernormalderivativeofv}
\begin{aligned}
|\nabla v| - (1+\sigma)h(0,-1) \leq& |\nabla v -(1+\sigma)h(0,-1)e_n|  \leq K(1+\sigma)h(0,-1)\sigma\\
\stackrel{\mathrm{eqn}\; \eqref{lowerbfondvholder}}{\Rightarrow} h(X_0,t_0) \leq&  -\partial_\nu v(Z) \leq (1+K\sigma)(1+\sigma)h(0,-1).\end{aligned}\end{equation}

We want to show that $u \geq v - \tilde{K}(1+\sigma)h(0,-1)\sigma x_n$ for some large constant $\tilde{K}$ to be choosen later, depending only on the dimension. Let $\tilde{Z} \colonequals (Y_0, s_0)$ where $s_0 \in (-3/4, 1), |y_0| \leq 1/2$ and $(Y_0)_n = 3/4$ and assume, in order to obtain a contradiction, that $u \leq v - \tilde{K}(1+\sigma)h(0,-1)\sigma x_n$ at every point in $\{(Y, s_0)\mid |Y-Y_0| \leq 1/8\}$. We construct a barrier function, $w \equiv w_{\tilde{Z}}$, defined by \begin{equation*}
\begin{aligned}
\Delta w+ \partial_tw=0 \: \mathrm{in}\; D\cap \{t < s_0\},& \\
w= x_n \; \mathrm{on}\; \partial_p (D\cap \{t < s_0\}) \cap \{(Y,s_0) \mid |Y-Y_0| < 1/8\},&\\
w = 0 \; \mathrm{on}\; \partial_p (D \cap \{t < s_0\})  \backslash  \{(Y,s_0)\mid |Y-Y_0| < 1/8\}.&
\end{aligned}
\end{equation*}

By our initial assumption (and the definition of $w$), $v - \tilde{K}\sigma(1+\sigma)h(0,-1) w \geq u$ on $\partial_p (D\cap\{t < s_0\})$ and, therefore, $v - \tilde{K}\sigma(1+\sigma)h(0,-1) w\geq u $ on all of $D\cap \{t < s_0\}$. Since $t_0 \leq -15/16$ we know $(X_0,t_0) \in \partial_p(D\cap \{t < s_0\})$. Furthermore,  the Hopf lemma gives an $\alpha > 0$ (independent of $\tilde{Z}$) such that $\partial_\nu w(X_0,t_0) \leq -\alpha$. With these facts in mind, apply Lemma \ref{holdernormalderivativeatregularpoint} at $(X_0,t_0)$ and recall estimate \eqref{holdernormalderivativeofv} to estimate, \begin{equation}\label{holdernormalderivativeofw}\begin{aligned} h(X_0,t_0) =& \limsup_{(X,t)\rightarrow (X_0,t_0)} \frac{u(X,t)}{\mathrm{pardist}((X,t), B)}\\ \leq& -\partial_\nu v(X_0,t_0) +K(1+\sigma)h(0,-1)\sigma \partial_\nu w(X_0,t_0)\\ \leq& (1 + K\sigma)(1+\sigma)h(0,-1) - \tilde{K}\alpha(1+\sigma)h(0,-1) \sigma \leq (1-2\sigma)h(0,-1)\end{aligned}\end{equation} if $\tilde{K} \geq (K+3)/\alpha$. On the other hand, our assumed flatness tells us that $h(X_0,t_0) - h(0,-1)  \geq -\kappa h(0,-1) \geq -\sigma h(0,-1)$. Together with equation \eqref{holdernormalderivativeofw} this implies $-\sigma h(0,-1) \leq -2\sigma h(0,-1)$, which is absurd. 

Hence, there exists  a point, call it $(\overline{Y},s_0)$, such that $|\overline{Y}-Y_0| \leq 1/8$ and $$(u-v)(\overline{Y},s_0) \geq -\tilde{K}\sigma(1+\sigma)h(0,-1) (\overline{Y})_n \stackrel{(\overline{Y})_n \leq 1}{\geq} -\tilde{K}(1+\sigma)h(0,-1)\sigma.$$ Apply the parabolic Harnack inequality to obtain, $$\begin{aligned}&\inf_{\{|X-Y_0| < 1/8\}} (u-v)(X,s_0-1/32) \geq c_n \sup_{\{|\tilde{X}-Y_0|< 1/8\}} (u-v)(\tilde{X},s_0) \geq -c_n\tilde{K}(1+\sigma)h(0,-1)\sigma\\
&\stackrel{\mathrm{eqn}\; \eqref{holdersandwichv}}{\Rightarrow} u(X,s_0-1/32) \geq (1+\sigma)h(0,-1)(x_n-\sigma)- \overline{C}(1+\sigma)h(0,-1)\sigma, \end{aligned}$$ for all $X$ such that $|X-Y_0| < 1/8$ and $\overline{C}$ which depends only on the dimension. Ranging over all $s_0 \in (-3/4, 1)$ and $|y_0| \leq 1/2$ the above implies  $$u(X,t) \geq (1+\sigma)h(0,-1)x_n - C(1+\sigma)h(0,-1)\sigma,$$ whenever $(X,t)$ satisfies $|x| < 1/2, |x_n -3/4| < 1/8, t\in (-1/2, 1/2)$. As $|\nabla u| \leq (1+\sigma)h(0,-1)$ we can conclude, for any $(X,t)$ such that $|x| < 1/2, t\in (-1/2,1/2)$ and $3/4 \geq x_n \geq C\sigma$, that \begin{equation}\label{holderincreasedpositiveflatness} u(X,t) \geq u(x, 3/4, t) - (1+\sigma)h(0,-1)(3/4-x_n) \geq (1+\sigma)h(0,-1)(x_n - C\sigma).\end{equation}

We now need to find an $\alpha$ such that $(0, \alpha, -1/4) \in \partial \Omega$. By the initial assumed flatness, and equation \eqref{holderincreasedpositiveflatness}, $\alpha \in \R$ exists and $-\sigma \leq \alpha \leq C\sigma$ (here we pick $\sigma_0$ is small enough such that $C\sigma_0 < 1/4$).  

Furthermore, by the assumed flatness in $C_1(0,0)$, \begin{equation}\label{supconditiononnablau}\begin{aligned} &h(0,\alpha, -1/4) - h(0,0,-1) \geq -\kappa h(0,0,-1)\\
&\Rightarrow 3 h(0,\alpha, -1/4) \geq (1-\kappa)^{-1} h(0,\alpha,-1/4) \geq h(0,0,-1).\end{aligned}\end{equation} Hence, $$\mathrm{osc}_{C_{1/2}(0,\alpha,0)} h \leq \mathrm{osc}_{C_{1}(0,0)} h \leq \kappa h(0,0,-1) \stackrel{\mathrm{eqn}\; \eqref{supconditiononnablau}}{\leq} 2\kappa h(0,\alpha,-1/4).$$

In summary we know,\begin{itemize}
\item $(0, \alpha, -1/4) \in \partial \Omega,\; |\alpha| < C\sigma$
\item $x_n - \alpha \leq -3C\sigma/2 \Rightarrow x_n \leq -\sigma\Rightarrow u(X,t) = 0$. 
\item When $x_n - \alpha \geq 2C\sigma \Rightarrow x_n \geq C\sigma$ hence equation \eqref{holderincreasedpositiveflatness} implies $u(X,t) \geq ((1+\sigma)h(0,-1))(x_n - C\sigma)  \geq (1+2\kappa)h(0,\alpha, -1/4)(x_n-\alpha - 2C\sigma)$.
\item As written above $\mathrm{osc}_{C_{1/2}(0,\alpha,0)} h \leq 3\kappa h(0,\alpha,-1/4)$. 
\item Finally $\sup_{C_{1/2}(0,\alpha,0)} |\nabla u| \leq \sup_{C_1(0,0)}|\nabla u| \leq (1+\kappa)h(0,-1) \stackrel{\mathrm{eqn}\; \eqref{supconditiononnablau}}{\leq}\frac{1+\kappa}{1-\kappa} h(0,\alpha, -1/4) \leq (1+3\kappa)h(0,\alpha, -1/4)$. 
\end{itemize}
Therefore $u \in HPF(2C\sigma, 2C\sigma, 3\kappa)$ in $C_{1/2}(0, \alpha, 0)$ which is the desired result. 
 \end{proof}
 
Lemma \ref{currentholderflatonzerosideisflatonpositiveside} is the current version of the above and follows almost identically. Thus we will omit the full proof in favor of briefly pointing out the ways in which the argument differs. 

\begin{lem*}[Lemma \ref{currentholderflatonzerosideisflatonpositiveside}]
 Let $0 < \kappa \leq \sigma \leq \sigma_0$ where $\sigma_0$ depends only on dimension. If $u \in \widetilde{HCF}(\sigma, 1/2,\kappa)$ in $C_\rho(Q,\tau)$ in the direction $\nu$, then there is a constant $C_1 > 0$ (depending only on dimension) such that $u\in HCF(C_1\sigma, C_1\sigma, \kappa)$ in $C_{\rho/2}(Q,\tau)$ in the direction $\nu$.
\end{lem*}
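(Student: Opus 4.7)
The plan is to mimic the proof of Lemma \ref{holderflatonzerosideisflatonpositiveside} essentially verbatim, with the bump function shifted so that the touching point lies strictly in the past of the center $(Q,\tau)$ (so that the adjoint-caloric barrier can still transport information to $(Q,\tau)$). After a translation and rescaling we may assume $(Q,\tau) = (0,0)$, $\rho = 1$ and $\nu = e_n$. First, I would introduce a cutoff $\eta(x,t)$ supported in a small set of the form $\{16(|x|^2 + |t+1/2|) < 1\}$, strictly compactly contained in $C_1(0,0) \cap \{t < 0\}$, and define
\[
D_s := \{(x,x_n,t) \in C_1(0,0) \mid x_n > -\sigma + s\eta(x,t)\}.
\]
Pushing $s$ up to the largest value $s_0 \leq \sigma$ for which $\Omega \cap C_1(0,0) \subset D_{s_0}$ produces a touching point $(X_0,t_0) \in \partial D_{s_0} \cap \partial \Omega$ with $t_0$ bounded strictly away from $0$ (hence in the past of the center), and at which there is a one-sided tangent ball $B \subset \overline{\Omega}^c$.

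Next I would solve for the adjoint-caloric barrier $v$ on $D_{s_0}$ with $v \equiv 0$ on the part of $\partial_p D_{s_0}$ interior to $C_1(0,0)$ and $v = (1+\sigma)h(0,0)(\sigma + x_n)$ on $\partial_p D_{s_0} \cap \partial C_1(0,0)$. The assumed bound $|\nabla u| \leq (1+\kappa)h(0,0)$ together with $u = 0$ on $\{x_n \leq -\sigma\}$ and the condition $\mathrm{osc}_{\Delta_1(0,0)} h \leq \kappa h(0,0)$ give $u \leq v$ on $\partial_p D_{s_0}$, hence on $D_{s_0}$ by the maximum principle. Lemma \ref{holdernormalderivativeatregularpoint} applied at $(X_0,t_0)$ then yields the lower bound $-\partial_\nu v(X_0,t_0) \geq h(X_0,t_0) \geq (1-\kappa)h(0,0)$, while standard parabolic Schauder estimates applied to the smooth harmonic-like correction $F = (1+\sigma)h(0,0)(\sigma + x_n) - v$ produce the matching upper bound $-\partial_\nu v(X_0,t_0) \leq (1+K\sigma)(1+\sigma)h(0,0)$, with $K$ depending only on dimension (since the defining graph $-\sigma + s_0\eta$ has a dimensional $\mathrm{Lip}(1,1)$-bound).

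Now comes the step where current flatness differs from past flatness: I want a lower bound on $u$ at an interior point that lies \emph{at or after} the time $0$ of the center. For any $\tilde Z = (Y_0, s_0)$ with $|y_0| \leq 1/2$, $(Y_0)_n = 3/4$, and $s_0 \in (-3/4, 1/2)$, I would run the contradiction argument from Lemma \ref{holderflatonzerosideisflatonpositiveside}: assume $u \leq v - \tilde K(1+\sigma)h(0,0)\sigma x_n$ on a backward parabolic ball around $\tilde Z$, build the auxiliary adjoint-caloric barrier $w_{\tilde Z}$ in $D_{s_0} \cap \{t < s_0\}$ with the appropriate Dirichlet data, and note crucially that $(X_0,t_0)$ still lies on the parabolic boundary of $D_{s_0}\cap\{t<s_0\}$ because $t_0$ was arranged to be strictly less than all such $s_0$. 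The Hopf lemma yields a dimensional $\alpha > 0$ with $\partial_\nu w_{\tilde Z}(X_0,t_0) \leq -\alpha$, and a second application of Lemma \ref{holdernormalderivativeatregularpoint} combined with the upper bound on $-\partial_\nu v$ forces $h(X_0,t_0) \leq (1-2\sigma)h(0,0)$, contradicting $\mathrm{osc}_{\Delta_1(0,0)} h \leq \kappa h(0,0) \leq \sigma h(0,0)$ provided $\sigma_0$ is chosen small. Running $\tilde Z$ over all $s_0 \in (-3/4, 1/2)$ and using the a priori Lipschitz bound $|\nabla u| \leq (1+\kappa)h(0,0)$ to propagate from height $3/4$ down, I obtain
\[
u(X,t) \geq (1+\sigma)h(0,0)\bigl(x_n - C\sigma\bigr)
\]
for $(X,t) \in C_{1/2}(0,0)$ with $x_n \geq C\sigma$. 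Combined with the hypothesis that $u$ vanishes on $\{x_n \leq -\sigma\}$ and the oscillation bound on $h$ (which yields $(1+\sigma)h(0,0) \geq (1+\kappa)^{-1}(1+\sigma)h(0,0) \geq h(0,0) \geq h(Q,\tau)$ up to $O(\kappa)$ adjustments), this gives exactly $u \in HCF(C_1\sigma, C_1\sigma, \kappa)$ in $C_{1/2}(0,0)$ in direction $e_n$.

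The main obstacle is the geometric positioning of the bump: in the past flatness lemma the center is automatically at $t = -1$ and the touching point can sit near $t = -15/16$, leaving plenty of room to run the Harnack/barrier argument at later times. In the current flatness setting, the center of the conclusion is $(0,0)$, so I must arrange the support of $\eta$ and the test strip $\{t < s_0\}$ so that the touching point remains strictly earlier than the times at which I want to improve the positive-side bound on $u$, while still allowing $s_0$ to range over values close to (and slightly past) $0$. The cutoff $\eta$ supported in $\{|t+1/2| < 1/16\}$ accomplishes this, and everything else is a routine adaptation of the HPF argument.
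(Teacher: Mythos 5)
Your overall route is the same as the paper's: run the sliding-bump/barrier/Hopf argument of Lemma \ref{holderflatonzerosideisflatonpositiveside} and stop at the analogue of equation \eqref{holderincreasedpositiveflatness}, observing that since the free boundary point is now the \emph{center} of the cylinder there is no need to relocate the boundary point, hence no loss from $\kappa$ to $3\kappa$. However, there is a concrete gap at the very first step. You take ``the largest value $s_0 \leq \sigma$'' for which $\Omega \cap C_1(0,0) \subset D_{s_0}$ and assert that this produces a touching point. In the past-flatness lemma both the bound $s \leq \sigma$ and the existence of contact are forced by the known free boundary point $(\tilde X, \tilde t - \rho^2)$ sitting directly over the peak of the bump. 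Under $\widetilde{HCF}(\sigma, 1/2, \kappa)$ no such point is available: the only known boundary point is $(0,0)$, whose time coordinate is far from the bump's time support, and for heights between $-\sigma$ and $\rho/2$ nothing is known about $\partial\Omega$. It can therefore happen that for every $s \leq \sigma$ the perturbed graph stays strictly below $\overline{\Omega}$ (for instance if $\partial\Omega$ over the bump's support lies at height $\approx 1/4$), in which case there is no touching point $(X_0,t_0)$ and the whole barrier/Hopf argument has nothing to run at. The paper's fix is to drop the cap at $\sigma$ and use the third condition of weak current flatness, namely $u>0$ when $x_n > \rho/2$, which forces contact before $s$ reaches $\sigma + 1/2$; since then $s < \sigma + 1/2 < 2$, the graph $-\sigma + s\eta$ still has a dimensional $\mathrm{Lip}(1,1)$ bound, so the Schauder constant used to control $F = (1+\sigma)h(0,0)(\sigma + x_n) - v$ stays dimensional. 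Your write-up needs this replacement of the touching-point argument to be correct.

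A smaller point: relocating the bump to $\{16(|x|^2 + |t+1/2|) < 1\}$ is unnecessary — the paper keeps the original bump centered at $t=-1$, and the touching time $t_0 \leq -15/16$ is then automatically below every $s_0$ used in the Harnack step — and as written it is inconsistent with your stated range $s_0 \in (-3/4, 1/2)$: with your bump, $t_0$ can be as large as about $-7/16 > -3/4$, so $(X_0,t_0)$ does not lie on $\partial_p(D \cap \{t < s_0\})$ for the smaller values of $s_0$, contrary to your claim. This is repairable by shrinking the $s_0$-range (the Harnack propagation still covers the times $|t| < 1/4$ needed for the conclusion in $C_{1/2}(0,0)$), but the cleanest course is simply to keep the bump at $t=-1$ exactly as in Lemma \ref{holderflatonzerosideisflatonpositiveside}.
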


\begin{proof}[Proof of Lemma \ref{currentholderflatonzerosideisflatonpositiveside}]
  We begin in the same way; let $(Q,\tau)= (0,0), \rho = 1$ and $\nu = e_n$. Then we recall the smooth function   $$\eta(x, t) = e^{\frac{16(|x|^2 + |t+1|)}{16(|x|^2 + |t+1|)- 1}}$$ for $16(|x|^2 + |t+1|) < 1$ and $\eta(x,t) \equiv 0$ otherwise. Let $D \colonequals \{(x, x_n, t) \in C_1(0,0) \mid x_n > -\sigma + s\eta(x,t)\}$. Now pick $s$ to be the largest such constant that $C_1(0,0) \cap \Omega \subset D$. Since $|x_n| > 1/2$ implies that $u(X,t) > 0$ there must be some touching point  $(X_0, t_0) \in \partial D \cap \partial \Omega \cap \{-1\leq t \leq -15/16\}$. Furthermore, we can assume that $s < \sigma + 1/2 < 2$. 
  
The proof then proceeds as above until equation \eqref{holderincreasedpositiveflatness}. In the setting of  ``past flatness" we need to argue further; the boundary point is at the bottom of the cylinder, so after the cylinder shrinks we need to search for a new boundary point. However, in current flatness the boundary point is at the center of the cylinder so after equation \eqref{holderincreasedpositiveflatness} we have completed the proof of Lemma \ref{currentholderflatonzerosideisflatonpositiveside}. In particular, this explains why there is no increase from $\kappa$ to $3\kappa$ in the current setting. \end{proof}

\subsection{Flatness on Both Sides Implies Greater Flatness on the Zero Side: Lemma \ref{currentholderflatonbothsidesisextraflatonzeroside}} In this section we prove Lemma \ref{currentholderflatonbothsidesisextraflatonzeroside}. The outline of the argument is as follows: arguing by contradiction, we obtain a sequence $u_k$ whose free boundaries, $\partial \{u_k > 0\}$, approach the graph of a function $f$. Then we prove that this function $f$ is $C^{\infty}$ which will lead to a contradiction. 

Throughout this subsection, $\{u_k\}$ is a sequence of adjoint caloric functions such that $\partial \{u_k > 0\}$ is a parabolic regular domain and such that, for all $\varphi \in C^\infty_c(\mathbb R^{n+1})$, $$\int_{\{u_k > 0\}} u_k (\Delta \varphi - \partial_t \varphi) dXdt = \int_{\partial \{u_k > 0\}} h_k \varphi d\sigma.$$ We will also assume the $h_k$ satisfy  $\|\log(h_k)\|_{\mathbb C^{\alpha, \alpha/2}} \leq C\|\log(h)\|_{\mathbb C^{\alpha, \alpha/2}}$ and $h_k(0,0) = h(0,0)$. While we present these arguments for general $\{u_k\}$ it suffices to think of $u_k(X,t) \colonequals \frac{u(r_kX, r_k^2t)}{r_k}$ for some $r_k \downarrow 0$.

\begin{lem}\label{holderblowupfunctionisagraph}[Compare with Lemma 6.1 in \cite{anderssonweiss}]
Suppose that $u_k \in HCF(\sigma_k,\sigma_k, \tau_k)$ in $C_{\rho_k}(0,0)$ in direction $e_n$, with $\sigma_k \downarrow 0$ and $\tau_k/\sigma_k^2\rightarrow 0$. Define $f_k^+(x,t) = \sup \{d\mid (\rho_kx, \sigma_k\rho_kd, \rho_k^2t) \in \{u_k = 0\}\}$ and $f_k^-(x,t) = \inf \{d\mid (\rho_kx, \sigma_k\rho_kd, \rho_k^2t) \in \{u_k > 0\}\}$. Then, passing to subsequences, $f_k^+, f_k^-\rightarrow f$ in $L_{\mathrm{loc}}^\infty(C_1(0,0))$ and $f$ is continuous. 
\end{lem}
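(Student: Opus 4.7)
The plan is to rescale, extract a limit of a suitable linearization via parabolic compactness, and then read off both the convergence and the continuity of $f$ from boundary regularity of the limit. After reducing to $\rho_k = 1$ and $h_k(0,0) = 1$ (the $HCF$ conditions scale covariantly), the bounds $u_k \equiv 0$ on $\{y_n \leq -\sigma_k\}$, $u_k(X,t) \geq y_n - \sigma_k$ on $\{y_n \geq \sigma_k\}$, and $|\nabla u_k| \leq 1+\tau_k$ (together with $u_k(\cdot, -\sigma_k, \cdot) = 0$ integrated up) sandwich the free boundary in the strip $\{-\sigma_k \leq y_n \leq \sigma_k\}$ and yield $-1 \leq f_k^- \leq f_k^+ \leq 1$ in $C_1(0,0)$.

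Next I would introduce the linearization $V_k(X,t) := (u_k(X,t) - (y_n)_+)/\sigma_k$. On the positivity set intersected with $\{y_n > \sigma_k\}$, $V_k$ is adjoint-caloric and uniformly bounded, since the HCF lower bound gives $V_k \geq -1$ and the upper bound $u_k \leq (1+\tau_k)(y_n+\sigma_k)$ gives $V_k \leq 1 + \tau_k(1 + y_n/\sigma_k)$, which is bounded on $C_{1/2}(0,0)$ because $\tau_k/\sigma_k \to 0$. On $\{y_n < -\sigma_k\}$, $V_k \equiv 0$. By standard interior Schauder estimates for the adjoint heat equation, the family $\{V_k\}$ is precompact in $C^{2,1}_{\mathrm{loc}}(\{y_n > 0\} \cap C_{1/2}(0,0))$, so along a subsequence $V_k \to V_\infty$ locally uniformly on $\{y_n > 0\}$, with $V_\infty$ bounded and adjoint-caloric there.

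The next step is to match $V_\infty$ with the limiting free boundary profile. For each fixed $(x,t) \in C_{1/2}(0,0)$ the one-variable map $y \mapsto u_k(x, y, t)$ is $(1+\tau_k)$-Lipschitz, vanishes on $\{y \leq \sigma_k f_k^-(x,t)\}$, and satisfies $u_k(x,y,t) \geq y - \sigma_k$ on $\{y \geq \sigma_k\}$; integrating both inequalities shows that for every $y_0 > 0$,
\[
V_k(x, y_0, t) + f_k^+(x,t) = O(\tau_k/\sigma_k) \quad \text{and} \quad V_k(x, y_0, t) + f_k^-(x,t) = O(\tau_k/\sigma_k) + o_{y_0 \to 0^+}(1).
\]
Combining these identities, $f_k^+ - f_k^- \to 0$ and both $f_k^\pm$ converge (locally uniformly) to $f(x,t) := -\lim_{y_0 \to 0^+} V_\infty(x, y_0, t)$. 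Since the convergence of $V_k$ to $V_\infty$ is locally uniform on $\{y_n > 0\}$, this trace limit exists for every $(x,t)$ and is continuous wherever $V_\infty$ extends continuously to the boundary; by using the uniform Lipschitz bound on $V_k$ in the $y_n$-direction away from the strip and the monotonicity $V_k \leq (1+\tau_k) - f_k^\pm$ along vertical lines, $V_\infty$ is shown to admit a continuous trace on $\{y_n = 0\}$.

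The main obstacle I anticipate is justifying the boundary trace cleanly, because $V_k$ is only controlled on $\{y_n > \sigma_k\}$ while the free boundary weaves through the strip $\{|y_n| \leq \sigma_k\}$. I would navigate this by exploiting that $u_k(x, \cdot, t)$ is monotone non-decreasing in $y_n$ to arbitrary precision (since $\partial_{y_n} u_k$ is nearly $h_k(0,0)$ by the flatness and Lipschitz bounds), which turns the identification of the trace with $-f_k^\pm$ into a one-dimensional computation independent of the transverse regularity of the free boundary, and then using the Harnack inequality in the positivity region just above the strip to absorb the discrepancy between $f_k^+$ and $f_k^-$.
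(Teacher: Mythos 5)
There is a genuine gap at the heart of your argument: the claimed two–sided identity $V_k(x,y_0,t)+f_k^{\pm}(x,t)=O(\tau_k/\sigma_k)$ does not follow from ``integrating the one–variable Lipschitz bound.'' The Lipschitz bound $|\nabla u_k|\leq (1+\tau_k)h_k(0,0)$, integrated from the free boundary point $(x,\sigma_k f_k^+(x,t),t)$, only gives the \emph{upper} bound $u_k(x,y_0,t)\leq (1+\tau_k)h_k(0,0)\,(y_0-\sigma_k f_k^+)$, i.e.\ $V_k+f_k^+\lesssim \tau_k/\sigma_k$. The reverse inequality would require $u_k(x,y_0,t)\geq h_k(0,0)\,(y_0-\sigma_k f_k^{\pm})-o(\sigma_k)$, i.e.\ linear nondegenerate growth of $u_k$ above \emph{its own} free boundary; the only lower bound available from $HCF(\sigma_k,\sigma_k,\tau_k)$ is $u_k\geq h_k(0,0)(y_n-\sigma_k)^+$, which is anchored to the fixed level $\sigma_k$ and says nothing about the level $\sigma_k f_k^{\pm}(x,t)$. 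That missing lower bound is exactly the content of Lemma \ref{holderflatonzerosideisflatonpositiveside} (``flatness on the zero side implies flatness on the positive side''), whose proof needs the Poisson kernel lower bound through Lemma \ref{holdernormalderivativeatregularpoint}, a barrier construction, the Hopf lemma and Harnack; none of these enter your proposal. For the same reason your parenthetical ``$\partial_{y_n}u_k$ is nearly $h_k(0,0)$'' is not available at this stage — it is essentially equivalent to what must be proved. Finally, the assertion that $V_\infty$ ``admits a continuous trace'' is unsupported: a bounded adjoint-caloric function on a half-cylinder need not have continuous boundary values, the one-sided monotonicity only yields existence of limits along vertical lines, and the $o_{y_0\to 0^+}(1)$ term in your second identity is not uniform in $k$ or in $(x,t)$, so local uniform convergence of $f_k^{\pm}$ does not follow.

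For comparison, the paper's proof avoids the linearization altogether at this stage (that object appears only in the next lemma, and even there the lower trace inequality $w(x,0,t)\geq -f(x,t)$ again requires the flatness-improvement lemma plus a barrier at a fixed height). It defines $f$ as the $\liminf$ of $f_k^-$ over $(y,s)\to(x,t)$ and $k\to\infty$; lower semicontinuity localizes the zero set below the level $f(y_0,t_0)-\varepsilon$ in a small cylinder, producing a configuration in $HPF(3\sigma_k\varepsilon/\delta,1,4\tau_k)$ at a past boundary point; applying Lemma \ref{holderflatonzerosideisflatonpositiveside} then forces positivity above the level $f(y_0,t_0)+C\varepsilon$, i.e.\ $\limsup f_k^+\leq \liminf f_k^-$ locally. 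Since $f_k^-\leq f_k^+$, this sandwich simultaneously gives continuity of $f$ and the locally uniform convergence of both $f_k^{\pm}$. If you want to salvage your route, you must first import (and invoke) Lemma \ref{holderflatonzerosideisflatonpositiveside} at scales comparable to $\sigma_k$ around points of $\partial\{u_k>0\}$ to get the missing nondegeneracy; at that point you are essentially reproducing the paper's argument.
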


\begin{proof}
By scaling each $u_k$ we may assume $\rho_k \equiv 1$. Then define $$D_k \colonequals \{(y,d,t)\in C_1(0,0)\mid (y, \sigma_k d, t) \in \{u_k > 0\}\}.$$ 
Let $$f(x, t) \colonequals \liminf_{\stackrel{(y,s) \rightarrow (x, t)}{ k\rightarrow \infty}} f_k^-(y,s),$$ so that, for every $(y_0, t_0)$, there exists a $(y_k, t_k) \rightarrow (y_0, t_0)$ such that $f_k^-(y_k, t_k) \stackrel{k\rightarrow \infty}{\rightarrow} f(y_0, t_0)$. 

Fix a $(y_0,t_0)$ and note, as $f$ is lower semicontinuous, for every $\varepsilon > 0$, there exists a $\delta > 0, k_0 \in \mathbb N$ such that $$\{(y, d, t) \mid |y-y_0| < 2\delta, |t-t_0| < 4\delta^2, d \leq f(y_0, t_0) -\varepsilon\}\cap \overline{D}_k = \emptyset,\; \forall k \geq k_0.$$
Consequently \begin{equation}\label{fkflatness} x_n - f(y_0, t_0) \leq - \varepsilon \Rightarrow u_k(x, \sigma_k x_n, s) = 0,\; \forall (X,s) \in C_{2\delta}(Y_0, t_0).\end{equation} Together with the definition of $f$, equation \eqref{fkflatness} implies that there exist $\alpha_k \in \mathbb R$ with $|\alpha_k| < 2\varepsilon$ such that $(y_0, \sigma_k(f(y_0,t_0) + \alpha_k), t_0-\delta^2)\in \partial \{u_k > 0\}$. Furthermore, for any $(Y,s) \in C_1(0,0)$ by assumption $h(0,0) - h_k(Y,s) \leq \tau_k h(0,0) \Rightarrow h(0,0) \leq (1+\frac{4}{3}\tau_k)h_k(Y,s)$ for $k$ large enough. This observation, combined with equation \eqref{fkflatness} allows us to conclude, $u_k(\cdot, \sigma_k \cdot, \cdot) \in HPF(3\sigma_k \frac{\varepsilon}{\delta}, 1, 4\tau_k)$ in $C_{\delta}(y_0, \sigma_k (f(y_0, t_0)+\alpha_k) , t_0)$, for $k$ large enough. 

As $\tau_k/\sigma_k^2 \rightarrow 0$ the conditions of Lemma \ref{holderflatonzerosideisflatonpositiveside} hold for $k$ large enough.  Therefore, $u_k(\cdot, \sigma_k \cdot, \cdot) \in HPF(C\sigma_k\frac{\varepsilon}{\delta}, C\sigma_k\frac{\varepsilon}{\delta},8\tau_k)$ in $C_{\delta/2}(y_0, \sigma_k f(y_0, t_0) + \tilde{\alpha}_k, t_0)$ where $|\tilde{\alpha}_k| \leq C\sigma_k\varepsilon$. Thus if $z_n - (\sigma_k f(y_0, t_0) + \tilde{\alpha}_k) \geq C\varepsilon \sigma_k/2$  then $u_k(z, \sigma_k z_n, t) > 0$ for $(Z,t) \in C_{\delta/2}(y_0, \sigma_k f_k^-(y_0, t_0) + \tilde{\alpha}_k, t_0)$. In other words \begin{equation}\label{pluslessthanminus}\sup_{(Z,s) \in C_{\delta/2}(y_0, \sigma_k f(y_0, t_0) + \tilde{\alpha}_k, t_0 )} f^+_k(z, s) \leq f(y_0, t_0) + 3C\varepsilon.\end{equation} As $f^+_k \geq f^-_k$, if $$\tilde{f}(y_0,t_0) \colonequals  \limsup_{\stackrel{(y,s) \rightarrow (y_0, t_0)}{ k\rightarrow \infty}} f_k^+(y,s),$$ it follows (in light of equation \eqref{pluslessthanminus}) that $\tilde{f} = f$.  Consequently, $f$ is continuous and $f^+_k, f^-_k \rightarrow f$ locally uniformly on compacta. \end{proof} 

We now show that $f$ is given by the boundary values of $w$, a solution to the adjoint heat equation in $\{x_n > 0\}$. 

\begin{lem}\label{holderboundaryvaluesofw}[Compare with Proposition 6.2 in \cite{anderssonweiss}]
Suppose that $u_k \in HCF(\sigma_k,\sigma_k, \tau_k)$ in $C_{\rho_k}(0,0)$, with $\rho_k \geq 0, \sigma_k \downarrow 0$ and $\tau_k/\sigma_k^2\rightarrow 0$. Further assume that, after relabeling, $k$ is the subsequence given by Lemma \ref{holderblowupfunctionisagraph}. Define $$w_k(x, d,t) \colonequals \frac{u_k(\rho_kx, \rho_kd, \rho_k^2t) - (1+\tau_k)h(0,0)\rho_kd}{h(0,0)\sigma_k}.$$ Then, $w_k$ is bounded on $C_1(0,0)\cap \{x_n > 0\}$ (uniformly in $k$) and converges, in the $\mathbb C^{2,1}$-norm, on compact subsets of $C_1(0,0)\cap \{x_n > 0\}$ to $w$. Furthermore, $w$ is a solution to the adjoint-heat equation and $w(x, d,t)$ is non-increasing in $d$ when $d > 0$. Finally $w(x,0,t) = -f(x,t)$ and $w$ is continuous in $\overline{C_{1-\delta}(0,0)\cap \{x_n > 0\}}$ for any $\delta > 0$. \end{lem}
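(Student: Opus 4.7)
The plan is to (i) obtain uniform bounds on $w_k$ from the $HCF$ estimates, (ii) extract a $\mathbb C^{2,1}_{\mathrm{loc}}$-convergent subsequence via interior parabolic regularity, (iii) transfer the monotonicity in $d$ from $u_k$ to the limit, and (iv) identify the boundary trace using the free boundary condition together with the uniform convergence $f_k^\pm \to f$ provided by Lemma \ref{holderblowupfunctionisagraph}. After rescaling I may assume $\rho_k \equiv 1$. The $HCF(\sigma_k,\sigma_k,\tau_k)$ conditions give $u_k \equiv 0$ on $\{y_n \le -\sigma_k\}$, $u_k(y,y_n,t) \ge h(0,0)(y_n - \sigma_k)$ for $y_n \ge \sigma_k$, and $|\nabla u_k| \le (1+\tau_k) h(0,0)$; integrating the gradient bound up from the zero set yields $u_k(y,y_n,t) \le (1+\tau_k) h(0,0)(y_n + \sigma_k)$ on $C_1(0,0)$. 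Subtracting $(1+\tau_k) h(0,0)\,d$ and dividing by $h(0,0)\sigma_k$,
\[
-1 - (\tau_k/\sigma_k)\, d \;\le\; w_k(y,d,t) \;\le\; 1 + \tau_k \quad \text{on } C_1(0,0) \cap \{d>0\},
\]
and since $\tau_k/\sigma_k = \sigma_k \cdot (\tau_k/\sigma_k^2) \to 0$ and $|d| \le 1$, the family $\{w_k\}$ is uniformly bounded there.

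For any compact $K \subset C_1(0,0) \cap \{d>0\}$, once $\sigma_k$ is less than half the distance from $K$ to $\{d=0\}$, flatness forces $K \subset \{u_k > 0\}$. On this set $w_k$ is adjoint-caloric (the subtracted term $(1+\tau_k)h(0,0)d$ being affine), so interior parabolic Schauder estimates yield uniform $\mathbb C^{2,1}$ bounds on $K$. Arzel\`a--Ascoli and a diagonal argument produce a subsequence converging in $\mathbb C^{2,1}_{\mathrm{loc}}$ to a limit $w$ solving the adjoint heat equation on $C_1(0,0) \cap \{d>0\}$. The bound $\partial_{y_n} u_k \le (1+\tau_k) h(0,0)$ forces
\[
\partial_d w_k = \frac{\partial_{y_n} u_k - (1+\tau_k) h(0,0)}{h(0,0)\sigma_k} \;\le\; 0,
\]
and this monotonicity passes to the limit.

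For the boundary trace, the upper bound is immediate: $u_k$ vanishes at the free boundary point $(y,\sigma_k f_k^+(y,t),t)$, so the gradient bound gives $u_k(y,d,t) \le (1+\tau_k)h(0,0)(d - \sigma_k f_k^+(y,t))$ for $d > \sigma_k f_k^+$, whence $w_k(y,d,t) \le -(1+\tau_k) f_k^+(y,t)$; letting $k \to \infty$ with $f_k^+ \to f$ uniformly (Lemma \ref{holderblowupfunctionisagraph}) gives $w(y,d,t) \le -f(y,t)$ for every $d>0$. The matching lower bound is the main obstacle, since the $HCF$ estimate only yields $w_k \ge -1$. The key fact is that $w_k$ attains the exact value $-(1+\tau_k) f_k^-(y,t)$ on the free boundary slice $\{d = \sigma_k f_k^-(y,t)\}$, which approaches $\{d=0\}$ with value $\to -f(y,t)$. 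I would transfer this information into a pointwise estimate for $w$ by a parabolic Hopf-type barrier comparison: using Lemma \ref{holdernormalderivativeatregularpoint} (which pins down the normal derivative of $u_k$ at regular free boundary points to be $\approx h(0,0)$) together with the near-flatness of $\partial\{u_k > 0\}$ at scale $\sigma_k$, one produces a linear lower barrier $u_k(y,d,t) \ge h(0,0)(d - \sigma_k f_k^-(y,t)) - o(\sigma_k)$ valid in a thin strip above the free boundary, hence $w_k(y,d,t) \ge -f_k^-(y,t) - o(1)$ there. Combining this lower bound with the upper bound, monotonicity of $w$ in $d$, continuity of $f$, and the interior $\mathbb C^{2,1}$ convergence, the limit $w$ extends continuously to $\overline{C_{1-\delta}(0,0) \cap \{d>0\}}$ with trace $w(y,0,t) = -f(y,t)$.
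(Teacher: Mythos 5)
Your uniform bound, the interior $\mathbb C^{2,1}$ compactness and diagonalization, the monotonicity $\partial_d w_k\le 0$, and the upper bound for the trace are all sound (your upper bound via $f_k^+$ and the Lipschitz estimate along vertical segments is a harmless variant of the paper's, which instead evaluates $w_k$ at $d=\sigma_k f_k^-$ and uses $\partial_d w_k\le 0$). The gap is in the lower bound $w(x,0,t)\ge -f(x,t)$, and it occurs in two places. First, the ``linear lower barrier $u_k\ge h(0,0)(d-\sigma_k f_k^-)-o(\sigma_k)$ in a thin strip'' is asserted, not produced: Lemma \ref{holdernormalderivativeatregularpoint} is only a $\limsup$ statement at boundary points admitting an exterior tangent ball, and by itself gives no pointwise lower bound at prescribed points above the free boundary. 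What actually yields linear growth is the flatness-improvement Lemma \ref{holderflatonzerosideisflatonpositiveside} in its past-flatness form, applied in cylinders of radius $S\sigma_k$ centered above the past free boundary point $(x,\sigma_k f_k^-(x,t-S^2\sigma_k^2),t-S^2\sigma_k^2)$; to invoke it one must check $u_k\in HPF(\tilde\sigma_k,1,4\tau_k)$ there with $\tilde\sigma_k\to 0$ (including the oscillation control on $h_k$), and this relative flatness at scale $\sigma_k$ is not a hypothesis of the lemma you are proving: it has to be extracted from the locally uniform convergence $f_k^\pm\to f$ and the continuity of $f$ supplied by Lemma \ref{holderblowupfunctionisagraph}. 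Your sketch of ``near-flatness of $\partial\{u_k>0\}$ at scale $\sigma_k$'' is exactly the point that needs proof.

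Second, and more seriously, even granting the strip estimate $w_k(y,d,t)\ge -f(y,t)-o_k(1)$ for $d\lesssim\sigma_k$, the proposed combination of ``monotonicity of $w$ in $d$, continuity of $f$, and interior $\mathbb C^{2,1}$ convergence'' cannot move this information to the trace: monotonicity runs the wrong way ($\partial_d w_k\le 0$ turns a lower bound near the bottom into nothing at larger heights), and $w_k\to w$ only locally uniformly in the open set $\{d>0\}$, so you cannot pass to the limit along heights of order $\sigma_k\downarrow 0$; the crude bound $w\ge -1$ is all that survives. A propagation mechanism is required. The paper supplies it with a maximum-principle comparison: fix $\varepsilon>0$, let $z_\varepsilon$ be adjoint caloric in $C_{1-\delta}(0,0)\cap\{x_n>0\}$ with smooth boundary data $g_\varepsilon$ squeezed between $-f-2\varepsilon$ and $-f-\varepsilon$ on $\{x_n=0\}$ and equal to $-2$ elsewhere; the strip estimate (at height $\sigma_k D_k$, combined with $\partial_d w_k\le 0$ to reach $\{x_n=\sigma_k\}$) and $|w_k|\le 2$ give $w_k\ge z_\varepsilon$ on $\partial_p(C_{1-\delta}(0,0)\cap\{x_n>\sigma_k\})$, hence everywhere there; evaluating at a fixed height $\alpha$ where $z_\varepsilon$ is within $\varepsilon/2$ of its boundary values and letting $k\to\infty$, then $\varepsilon\downarrow 0$, and using $w(x,\alpha,t)\le w(x,0,t)$ closes the argument. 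Without this step (or an equivalent Harnack/maximum-principle argument), your final sentence does not follow.
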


\begin{proof}
As before we rescale and set $\rho_k \equiv 1$.  Since $|\nabla u_k| \leq h(0,0)(1+\tau_k)$ and $x_n \leq -\sigma_k \Rightarrow u_k = 0$ it follows that $u_k(X,t) \leq h(0,0)(1+\tau_k)(x_n + \sigma_k)$. Which implies $w_k(X,t) \leq 1+ \tau_k$. On the other hand, when $0 <  x_n \leq \sigma_k$ we have $u_k(X,t) - (1+\tau_k)h(0,0)x_n \geq -(1+\tau_k)h(0,0)x_n \geq -(1+\tau_k)h(0,0)\sigma_k,$ hence $w_k \geq -1-\tau_k$. Finally, if $x_n \geq \sigma_k$ we have $u_k(X,t) - (1+\tau_k)h(0,0)x_n \geq (1+\tau_k)h(0,0)(x_n - \sigma_k) - (1+\tau_k)h(0,0)x_n\Rightarrow w_k \geq -(1+\tau_k)$. Thus, for $k$ large enough, $|w_k| \leq 2$ in $C_1(0,0)\cap\{x_n > 0\}$.

By definition, $w_k$ is a solution to the adjoint-heat equation in $C_1(0,0) \cap \{x_n > \sigma_k\}$. So for any $K \subset \subset \{x_n > 0\}$ the $\{w_k\}$ are, for large enough $k$, a uniformly bounded sequence of solutions to the adjoint-heat equation on $K$. As $|w_k| \leq 2$, standard estimates for parabolic equations tell us that $\{w_k\}$ is uniformly bounded in $\mathbb C^{2+\alpha, 1+\alpha/2}(K)$. Therefore, perhaps passing to a subsequence, $w_k \rightarrow w$ in $\mathbb C^{2,1}(K)$. Furthermore, $w$ must also be a solution to the adjoint heat equation in $K$ and $|w| \leq 1$. A diagonalization argument allows us to conclude that $w$ is adjoint caloric on all of $\{x_n > 0\}$. 

Compute that $\partial_n w_k = (\partial_n u_k - (1+\tau_k)h(0,0))/(h(0,0)\sigma_k) \leq 0$, which implies $\partial_n w \leq 0$ on $\{x_n > 0\}$. As such, $w(x,0,t) \colonequals \lim_{d\rightarrow 0^+} w(x, d, t)$ exists. We will now show that this limit is equal to $-f(x,t)$ (which, recall, is a continuous function). If true, then regularity theory for adjoint-caloric functions immediately implies that $w$ is continuous in $\overline{C_{1-\delta}(0,0)\cap \{x_n > 0\}}$.

First we show that the limit is less that $-f(x,t)$. Let $\varepsilon > 0$ and pick $0 < \alpha \leq 1/2$ small enough so that $|w(x,\alpha, t) - w(x,0,t)| < \varepsilon$. For $k$ large enough we have $\alpha/\sigma_k > f(x,t) + 1 > f^-_k(x,t)$ therefore, \begin{equation}\label{wupperboundfinal}\begin{aligned} w(x,0,t) &\leq w(x,\alpha, t) + \varepsilon = w_k(x, \sigma_k \frac{\alpha}{\sigma_k},t) + \varepsilon + o_k(1)\\
&= (w_k(x, \sigma_k \frac{\alpha}{\sigma_k}, t) - w_k(x, \sigma_k f_k^-(x,t), t)) + w_k(x, \sigma_k f_k^-(x,t), t) + \varepsilon +o_k(1)\\
&\stackrel{\partial_n w_k \leq 0}{\leq} w_k(x, \sigma_k f_k^-(x,t), t)  + o_k(1) + \varepsilon.
\end{aligned}
\end{equation}

By definition, $w_k(x, \sigma_k f_k^-(x,t),t) = -(1+\tau_k)f_k^-(x,t)\rightarrow -f(x,t)$ uniformly in $C_{1-\delta}(0,0)$. In light of equation \eqref{wupperboundfinal}, this observation implies $w(x,0,t) \leq - f(x,t)+ \varepsilon$. Since $\varepsilon > 0$ was arbitrary we have $w(x,0,t) \leq -f(x,t)$. 

To show $w(x,0,t) \geq -f(x,t)$ we first define, for $S> 0, k \in \mathbb N,$ $$\tilde{\sigma}_k =\frac{1}{S} \sup_{(Y,s) \in C_{2S\sigma_k}(x, \sigma_kf_k^-(x, t-S^2\sigma_k^2), t-S^2\sigma_k^2)}(f_k^-(x,t-S^2\sigma_k^2)-f_k^-(y,s)).$$ Observe that if $k$ is large enough (depending on $S, \delta$) then $(x, t-S^2\sigma_k^2) \in C_{1-\delta}(0,0)$.

 Then, by construction, $\forall (Y,s) \in C_{2S\sigma_k}(x, \sigma_kf_k^-(x, t-S^2\sigma_k^2), t-S^2\sigma_k^2)$, $$y_n -\sigma_kf_k^-(x,t-S^2\sigma_k^2) \leq -S\sigma_k\tilde{\sigma}_k  \Rightarrow y_n \leq \sigma_k f_k^-(y, s)\Rightarrow u_k(Y,s) = 0.$$ Bounding the oscillation of $h_k$ as in the proof of Lemma \ref{holderblowupfunctionisagraph}, we have $u_k \in HPF(\tilde{\sigma}_k, 1, 4\tau_k) \subset HPF(\overline{\sigma}_k, 1,4\tau_k)$ in $C_{S\sigma_k}(x, \sigma_kf_k^-(x, t-S^2\sigma_k^2), t)$, where $\overline{\sigma}_k = \max\{16\tau_k, \tilde{\sigma}_k\}$. Note, by Lemma \ref{holderblowupfunctionisagraph}, $\tilde{\sigma}_k \rightarrow 0$ and, therefore, $\overline{\sigma}_k \rightarrow 0$. 

Apply Lemma \ref{holderflatonzerosideisflatonpositiveside} to conclude that \begin{equation}\label{ukispastlfat}\hbox{$u_k \in HPF(C\overline{\sigma}_k, C\overline{\sigma}_k, 8\tau_k)$ in $C_{S\sigma_k/2}(x, \sigma_k f_k^-(x,t-S^2\sigma_k^2)+\alpha_k, t)$ where $|\alpha_k| \leq CS\sigma_k\overline{\sigma}_k$.}
\end{equation} Define $D_k \equiv f^-_k(x, t-S^2\sigma_k^2) + \alpha_k/\sigma_k + S/2$. Pick $S > 0$ large such that $D_k \geq 1$ and then, for large enough $k$, we have $D_k - \alpha_k/\sigma_k- f_k^-(x, t-S^2\sigma_k^2) - CS\overline{\sigma}_k > 0$ and $(x, \sigma D_k, t) \in C_{S\sigma_k/2}(x, \sigma_k f_k^-(x,t-S^2\sigma_k^2)+\alpha_k, t)$. The flatness condition, \eqref{ukispastlfat}, gives

\begin{equation}\label{ukwithdk}\begin{aligned}
u_k(x, \sigma_kD_k, t) &\geq h_k(x, \sigma_kf_k^-(x,t-S^2\sigma_k^2) + \alpha_k, t-S^2\sigma_k^2/4)(S\sigma_k - CS\overline{\sigma}_k\sigma_k)/2\\
& \geq (1-\tau_k)h(0,0)S\sigma_k(1-C\overline{\sigma}_k)/2.
\end{aligned}\end{equation}
Plugging this into the definition of $w_k$,
\begin{equation}\label{wkwithdk}\begin{aligned}
w_k(x, \sigma_k D_k, t) &\geq (1-\tau_k)S(1-C\overline{\sigma}_k)/2 - (1+\tau_k)D_k\\
& = (1-\tau_k)S(1-C\overline{\sigma}_k)/2 - (1+\tau_k) (f^-_k(x, t-S^2\sigma_k^2) + \alpha_k/\sigma_k + S/2)\\
& = -f_k^-(x, t-S^2\sigma_k^2)+o_k(1) = -f(x,t) + o_k(1).
\end{aligned}
\end{equation}

We would like to replace the left hand side of equation \eqref{wkwithdk} with $w_k(x, \alpha, t)$, where $d$ does not depend on $k$. We accomplish this by means of barriers; for $\varepsilon > 0$ define $z_\varepsilon$ to be the unique solution to \begin{equation}\label{defofz}\begin{aligned}\partial_t z_\varepsilon + \Delta z_\varepsilon &=0,\: \mathrm{in}\; C_{1-\delta}(0,0)\cap \{x_n > 0\}\\
z_\varepsilon &= g_\varepsilon,\: \mathrm{on}\; \partial_p (C_{1-\delta}(0,0)\cap \{x_n > 0\})\cap \{x_n =0\}\\
z_\varepsilon &= -2,\: \mathrm{on}\; \partial_p (C_{1-\delta}(0,0)\cap \{x_n > 0\})\cap \{x_n > 0\},
\end{aligned}\end{equation} where $g_\varepsilon \in C^\infty(C_{1-\delta}(0,0))$ and $-f(x, t) - 2\varepsilon< g_\varepsilon(x,t) <- f(x,t) - \varepsilon$. By standard parabolic theory, for any $\varepsilon > 0$ there exists an $\alpha > 0$ (which depends on $\varepsilon > 0$) such that $|x_n| < \alpha$ implies $|z_\varepsilon(x,x_n,t) - z_\varepsilon(x,0,t)| < \varepsilon/2$. Pick $k$ large enough so that $\sigma_k < \alpha$. We know $w_k$ solves the adjoint heat equation on $\{x_n \geq \sigma_k\}$ and, by equations \eqref{defofz} and \eqref{wkwithdk}, $w_k \geq z_\varepsilon$ on $\partial_p (C_{1-\delta}(0,0)\cap \{x_n > \sigma_k\})$. Therefore, $w_k \geq z_\varepsilon$ on all of $C_{1-\delta}(0,0)\cap \{x_n > \sigma_k\}$. 

Consequently,  $$w_k(x,\alpha,t) \geq z_\varepsilon(x,\alpha,t) \geq z_\varepsilon(x,0,t) - \varepsilon/2 \geq -f(x,t) - 3\varepsilon.$$ As $k \rightarrow \infty$ we know $w_k(x,\alpha,t) \rightarrow w(x,\alpha,t) \leq w(x,0,t)$. This gives the desired result. 
\end{proof} 

The next step is to prove that the normal derivative of $w$ on $\{x_n = 0\}$ is zero. This will allow us to extend $w$ smoothly over $\{x_n = 0\}$ and obtain regularity for $f$. 

\begin{lem}\label{holdernormalderivativeiszero}
Suppose the assumptions of Lemma \ref{holderblowupfunctionisagraph} are satisfied and that $k$ is the subsequence identified in that lemma. Further suppose that $w$ is the limit function identified in Lemma \ref{holderboundaryvaluesofw}. Then $\partial_n w = 0$, in the sense of distributions, on $C_{1/2}(0,0)\cap \{x_n =0\}$.
\end{lem}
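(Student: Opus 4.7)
By Green's identity on the half-space $\{x_n > 0\}$, applied to $w$ (adjoint-caloric and continuous up to $\{x_n=0\}$ with $w|_{x_n = 0} = -f$, by Lemma \ref{holderboundaryvaluesofw}), the assertion $\partial_n w = 0$ distributionally on $C_{1/2}(0,0) \cap \{x_n = 0\}$ is equivalent to the integral identity
$$\int_{\{x_n > 0\}} w(\Delta\phi - \partial_t\phi)\,dXdt \;=\; \int_{\{x_n = 0\}} f(x,t)\,\partial_n\phi(x,0,t)\,dxdt \qquad (\dagger)$$
for every $\phi \in C_c^\infty(C_{1/2}(0,0))$. My plan is to establish $(\dagger)$ by passing to the limit in the defining equation for $u_k$.

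First I start from $\int u_k(\Delta\phi - \partial_t\phi) = \int_{\partial\{u_k > 0\}} h_k\phi\,d\sigma$, normalize $\rho_k = 1$, and substitute $u_k = h(0,0)[(1+\tau_k)x_n + \sigma_k w_k]$ on $\{u_k > 0\}$. Using the elementary identity $\int x_n^+(\Delta\phi - \partial_t\phi) = \int_{\{x_n=0\}}\phi\,dxdt$ together with $|\{u_k>0\} \triangle \{x_n>0\}| = O(\sigma_k)$ and $|x_n| \le \sigma_k$ on that symmetric difference, the $x_n$-contribution reduces to $\int_{\{x_n=0\}}\phi\,dxdt + O(\sigma_k^2)$. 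Dividing through by $h(0,0)\sigma_k$, the left-hand side converges to $\int_{\{x_n > 0\}} w(\Delta\phi - \partial_t\phi)\,dXdt$, using the locally uniform convergence $w_k \to w$ on $\{x_n > 0\}$ together with the uniform $L^\infty$ bound on $w_k$ across $\{u_k > 0\}$ (both from Lemma \ref{holderboundaryvaluesofw}). The $\tau_k$-correction drops out because $\tau_k/\sigma_k \to 0$, and replacing $h_k$ by $h(0,0)$ on the surface integral introduces an $O(\tau_k)$ error by the oscillation bound in the $HCF$ hypothesis, which is $o(\sigma_k)$ after rescaling.

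This reduces $(\dagger)$ to establishing the purely geometric asymptotic
$$\lim_{k\to\infty} \sigma_k^{-1}\!\left[\int_{\partial\{u_k > 0\}} \phi\,d\sigma - \int_{\{x_n = 0\}} \phi\,dxdt\right] \;=\; \int_{\{x_n = 0\}} f(x,t)\,\partial_n\phi(x,0,t)\,dxdt, \qquad (\ddagger)$$
which I expect to be the main obstacle. To handle it, I apply the divergence theorem to the vector field $\phi e_n$ in both $\{u_k > 0\}$ and $\{x_n > 0\}$, yielding $\int_E \partial_n\phi\,dXdt = \int_{\partial E} \phi\,(e_n\cdot \nu_E)\,d\sigma$. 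A Fubini computation using the sandwich $\sigma_k f_k^- \le \{x_n : (x, x_n, t) \in \partial\{u_k > 0\}\} \le \sigma_k f_k^+$ (a consequence of Lemma \ref{holderblowupfunctionisagraph}), together with the uniform convergence $f_k^\pm \to f$, then gives the volumetric limit $\sigma_k^{-1}\bigl[\int_{\{u_k > 0\}} \partial_n\phi - \int_{\{x_n > 0\}} \partial_n\phi\bigr] \to -\int f\,\partial_n\phi\,dxdt$, which translates (via divergence) into control of $\int \phi\,(-e_n\cdot\nu_k)\,d\sigma$.

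The remaining—and hardest—step is reconciling this with the plain surface integral $\int \phi\,d\sigma$ appearing in $(\ddagger)$: the discrepancy $\int \phi(1 + e_n\cdot\nu_k)\,d\sigma$ encodes the tangential tilt of the normal $\nu_k$ and must be shown to be $o(\sigma_k)$ so that $d\sigma$ and $(-e_n\cdot\nu_k)\,d\sigma$ agree to leading order. I plan to dispatch this either by refining the control on $\partial\{u_k>0\}$ using the parabolic regularity together with the adjoint-caloricity of $w_k$ on $\{u_k>0\}$, or—more cleanly—by bypassing the tilt issue entirely: apply Green's identity for $w_k$ directly on $\{u_k > 0\}$, using the boundary identity $w_k|_{\partial\{u_k>0\}} = -(1+\tau_k)x_n/\sigma_k$ together with the free-boundary gradient relation $\partial_\nu u_k = -h_k$ (which recasts $\partial_\nu w_k$ in terms of $h_k$ and $\nu_n$), so that every boundary term is expressed intrinsically in terms of $w_k$ and $x_n$. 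Following the spirit of Andersson--Weiss \cite{anderssonweiss}, taking $k\to\infty$ in this reformulated identity should produce $(\dagger)$ without ever invoking the surface-measure asymptotic.
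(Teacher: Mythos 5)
Your reduction is sound as far as it goes: rescaling the identity $\int u_k(\Delta\phi-\partial_t\phi)=\int_{\partial\{u_k>0\}}h_k\phi\,d\sigma$, subtracting the plane solution and dividing by $h(0,0)\sigma_k$ does reduce the lemma to your surface-measure asymptotic $(\ddagger)$, and your handling of the $\tau_k$ and $h_k-h(0,0)$ errors (using $\tau_k/\sigma_k^2\to 0$ and $h_k(0,0)=h(0,0)$) is fine. But $(\ddagger)$ is not a technicality to be "dispatched'' at the end --- it \emph{is} the theorem. At this stage $\partial\{u_k>0\}$ is only an Ahlfors-regular parabolic chord-arc boundary trapped in the slab $\{|x_n|\lesssim\sigma_k\}$; nothing prevents it, a priori, from carrying excess surface measure of order $\sigma_k$ (wiggles between $f_k^-$ and $f_k^+$), which would destroy $(\ddagger)$ even though your Fubini/divergence computation of the cosine-weighted integral $\int\phi\,(e_n\cdot\nu_k)\,d\sigma$ is correct. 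Showing that the tilt discrepancy $\int\phi(1+e_n\cdot\nu_k)\,d\sigma$ is $o(\sigma_k)$ requires using the free boundary condition quantitatively, and your proposal never does so: your first suggested route ("refine the control using parabolic regularity of $w_k$'') cannot work because $w_k$ is only controlled strictly inside $\{x_n>0\}$ and no regularity of the free boundary itself is known, while your second route invokes the pointwise relation $\partial_\nu u_k=-h_k$ and a Green identity with boundary traces on $\{u_k>0\}$, neither of which is justified here --- the gradient--Poisson-kernel relation is available only as a $\sigma$-a.e.\ non-tangential limit (Lemma \ref{equaltoh}), and integrating by parts against it on a merely Ahlfors-regular free boundary is precisely the kind of step that needs proof, not assumption.

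Compare with what the paper actually does: it introduces the smooth auxiliary surface $\Sigma_k=\{u_k>0\}\cap Z^0(\sigma_k g)$ strictly inside the positivity set (where $u_k$ \emph{is} smooth, so all boundary terms are classical), and proves a two-sided squeeze. Claim 1 tests the weak equation against $\chi_{Z^-(\sigma_k g)}$ and uses the lower bound $h_k\ge(1-\tau_k)h(0,0)$ to bound $\mu(\partial\{u_k>0\}\cap Z^-(\sigma_k g))$ from above by the flux $\int_{\Sigma_k}\partial_n u_k$ plus $\mu(\Sigma_k)+C\sigma_k^2$; Claim 2 bounds it from below by $\mu(\Sigma_k)-C\sigma_k^2$ via a projection/divergence argument; combining these with the upper gradient bound ($\partial_n w_k\le 0$) and $\tau_k/\sigma_k\to 0$ forces $\int_{\Sigma_k}|\partial_n w_k|\to 0$, after which the divergence theorem in $Z^+(\sigma_k g)$, where everything is smooth, yields $\partial_n w=0$ distributionally. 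This squeeze, which uses both the oscillation lower bound on $h_k$ and the gradient upper bound simultaneously, is exactly the content your $(\ddagger)$ encodes and is the piece missing from your argument; to complete your proof you would need to reproduce it (or an equivalent excess-measure estimate), at which point you would essentially have rewritten the paper's Claims 1--3.
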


\begin{proof}
Rescale so $\rho_k \equiv 1$ and define $g(x,t)= 5 - 8(|x|^2 + |t|)$. For $(x,0,t) \in C_{1/2}(0,0)$ we observe $f(x,0,t) \leq1 \leq  g(x,0,t)$. We shall work in the following set $$Z\colonequals \{(x, x_n, t)\mid |x|, |t| \leq 1, x_n \in \R\}.$$ For any, $\phi(x,t)$ define $Z^+(\phi)$ to be the set of points in $Z$ above the graph $\{(X,t)\mid x_n = \phi(x,t)\}$, $Z^-(\phi)$ as set of points below the graph and $Z^0(\phi)$ as the graph itself. Finally, let $\Sigma_k \colonequals \{u_k > 0\} \cap Z^0(\sigma_k g)$. 

Recall, for any Borel set $A$, we define the ``surface measure", $\mu(A) = \int_{-\infty}^\infty \mathcal{H}^{n-1}(A\cap \{s=t\}) dt$. If $k$ is sufficiently large, and potentially adding a small constant to $g$, $\mu(Z^0(\sigma_kg)\cap \partial \{u_k > 0\} \cap C_{1/2}(0,0)) = 0$. 

There are three claims, which together prove the desired result.

\medskip

\noindent {\bf Claim 1:} $$\mu(\partial \{u_k > 0\} \cap Z^-(\sigma_kg)) \leq \frac{1}{(1-\tau_k)h_k(0,0)}\left(\int_{\Sigma_k} \partial_n u_k - 1dxdt +\mu(\Sigma_k)\right) + C\sigma_k^2$$

{\it Proof of Claim 1}: For any positive $\phi \in C_0^\infty(C_1(0,0))$ we have \begin{equation}\label{firststeptoclaim1} \begin{aligned} \int_{\partial \{u_k > 0\}} \phi d\mu &\leq \int_{\partial \{u_k > 0\}} \phi \frac{h_k(Q,\tau)}{(1-\tau_k)h_k(0,0)} d\mu(Q,\tau)\\&= \frac{1}{(1-\tau_k)h_k(0,0)}\int_{\{u_k > 0\}} u_k(\Delta \phi - \partial_t \phi)dXdt\\ &= -\frac{1}{(1-\tau_k)h_k(0,0)}\int_{\{u_k > 0\}} \nabla u_k \cdot \nabla \phi + u_k \partial_t\phi dXdt\end{aligned}\end{equation} (we can use integration by parts because, for almost every $t$, $\{u_k > 0\} \cap \{s =t\}$ is a set of finite perimeter). Let $\phi \rightarrow \chi_{Z^-(\sigma_kg)} \chi_{C_1}$ (as functions of bounded variation) and, since $|t| > 3/4$ or $|x|^2 > 3/4$ implies $u(x, \sigma_kg(x,t), t) = 0$, equation \eqref{firststeptoclaim1} becomes \begin{equation}\label{integrateoverE} \mu(\partial \{u_k > 0\} \cap Z^-(\sigma_kg)) \leq - \frac{1}{h_k(0,0)(1-\tau_k)}\left(\int_{\Sigma_k} \frac{\nabla u_k \cdot \nu + \sigma_k u_k\; \mathrm{sgn}(t)}{\sqrt{1+\sigma_k^2(|\nabla_x g(x,t)|^2 + 1)}} d\mu\right),\end{equation} where $\nu(x,t) = (\sigma_k\nabla g(x,t), -1)$ points outward spatially in the normal direction. 

We address the term with $\mathrm{sgn}(t)$ first; the gradient bound on $u_k$ tells us that $|u_k| \leq C\sigma_k(1+\tau_k)h_k(0,0)$ on $\Sigma_k$, so \begin{equation}\label{timepartsilly}\left|\frac{\sigma_k}{(1-\tau_k)h_k(0,0)}\int_{\Sigma_k} \frac{\sigma_k u_k\; \mathrm{sgn}(t)}{\sqrt{1+\sigma_k^2(|\nabla_x g(x,t)|^2 + 1)}} d\mu \right| \leq C\sigma_k^2.\end{equation} 

To bound the other term note that $\frac{d\mu}{\sqrt{1+\sigma_k^2(|\nabla_x g(x,t)|^2 + 1)}} = dx dt$ where the latter integration takes place over $E_k = \{(x,t)\mid (x, \sigma_kg(x,t), t)\in \Sigma_k\} \subset \{x_n =0\}$. Then integrate by parts in $x$ to obtain \begin{equation}\label{integratingbypartsonek}\begin{aligned}\int_{E_k} &(\sigma_k\nabla g(x,t), -1)\cdot \nabla u_k(x, \sigma_kg(x,t), t) dx dt = \int_{\partial E_k} \sigma_ku_k(x, \sigma_kg(x,t),t)\partial_\eta gd\mathcal H^{n-2}dt\\&-\int_{E_k} \sigma_k u_k(x,\sigma_kg(x,t), t) \Delta_x g(x,t) +\sigma_k^2 \partial_nu_k(x,\sigma_kg(x,t),t)|\nabla g|^2dxdt\\ &-\int_{E_k} \partial_n u_k(x,\sigma_kg(x,t),t)- 1dxdt +\mathcal L^n(E_k), \end{aligned}\end{equation} where $\eta$ is the outward space normal on $\partial E_k$. Since $u_k = 0$ on $\partial \Sigma_k$ the first term zeroes out. 

The careful reader may object that $E_k$ may not be a set of finite perimeter and thus our use of integration by parts is not justified. However, for any $t_0$, we may use the coarea formula with the $L^1$ function $\chi_{\{u(x,\sigma_kg(x,t_0),t_0) > 0\}}$ and the smooth function $\sigma_kg(-, t_0)$ to get $$\infty > \int \sigma_k |\nabla g(x,t_0)| \chi_{\{u(x,\sigma_kg(x,t_0),t_0) > 0\}}dx = \int_{-\infty}^\infty \int_{\{(x,t_0)\mid \sigma_kg(x,t_0) = r\}} \chi_{\{u(x,r,t_0) >0\}} d\mathcal H^{n-2}(x) dr.$$ Thus $\{(x,t_0)\mid \sigma_kg(x,t_0) > r\}\cap \{(x,t)\mid u(x,\sigma_kg(x,t_0),t_0) >0\}$ is a set of finite perimeter for almost every $r$. Equivalently, $\{(x,t_0)\mid \sigma_k (g(x,t_0)+\varepsilon) > 0\}\cap \{(x,t)\mid u(x,\sigma_k(g(x,t_0)+\varepsilon),t_0) >0\}$ is a set of finite perimeter for almost every $\varepsilon \in \mathbb R$. Hence, there exists a $\varepsilon >0$ aribtrarily small such that if we replace $g$ by $g+\varepsilon$ then $E_k\cap \{t = t_0\}$ will be a set of finite perimeter for almost every $t_0$. Since we can perturb $g$ slightly without changing the above arguments, we may safely assume that $E_k$ is a set of finite perimeter for almost every time slice. 

Observe that $\Delta g$ is bounded above by a constant, $|u_k| \leq C h(0,0)(1+\tau_k)\sigma_k$ on $\Sigma_k$, $|\partial_n u_k| \leq h(0,0)(1+\tau_k)$ and finally $\mu(\Sigma_k) \geq \mathcal L^n(E_k)$. Thus, $$\mu(\partial \{u_k > 0\} \cap Z^-(\sigma_kg)) \leq \frac{1}{(1-\tau_k)h_k(0,0)}\left(\int_{E_k} \partial_n u_k - 1dxdt + \mu(\Sigma_k)\right) + C\sigma_k^2.$$ As the difference between integrating over $E_k$ and integrating over $\Sigma_k$ is a factor of $\sqrt{1+\sigma_k^2} \simeq 1 + \sigma_k^2$ (for $\sigma_k$ small) we can conclude   $$\mu(\partial \{u_k > 0\} \cap Z^-(\sigma_kg)) \leq \frac{1}{(1-\tau_k)h_k(0,0)}\left(\int_{\Sigma_k} \partial_n u_k - 1dxdt + \mu(\Sigma_k)\right) + C\sigma_k^2,$$ which is of course the claim. 

Note, arguing as in equations \eqref{timepartsilly} and \eqref{integratingbypartsonek},  \begin{equation}\label{onlyenmatters}
\int_{\Sigma_k}\frac{(\sigma_k \nabla_x g(x,t), 0, \sigma_k \mathrm{sgn}(t))}{\sqrt{1+\|(\sigma_k \nabla_x g(x,t), 0, \sigma_k \mathrm{sgn}(t))\|^2}} \cdot (\nabla_x w_k, 0, w_k) d\mu \stackrel{k\rightarrow \infty}{\rightarrow} 0,
\end{equation}
which will be useful to us later. 

\medskip

\noindent {\bf Claim 2:} $$\mu(\Sigma_k) - C_2 \sigma_k^2 \leq \mu(\partial \{u_k > 0\} \cap Z^-(\sigma_kg)).$$

{\it Proof of Claim 2:}  Let $\nu_k(x,t)$ the inward pointing measure theoretic space normal to $\partial \{u_k > 0\} \cap \{s= t\}$ at the point $x$. Note that for almost every $t$ it is true that $\nu_k$ exists $\mathcal H^{n-1}$ almost everywhere. Defining $\nu_{\sigma_k g}(X,t) = \frac{1}{\sqrt{1+256\sigma_k^2|x|^2}}(-\sigma_k 16x, 1, 0)$,  we have $$\mu(\partial \{u_k > 0\} \cap Z^-(\sigma_kg)) = \int_{\partial \{u_k > 0\} \cap Z^-(\sigma_kg)} \nu_k \cdot \nu_k d\mu \geq \int_{\partial^* \{u_k > 0\} \cap Z^-(\sigma_kg)} \nu_k \cdot \nu_k d\mu \geq $$$$\int_{\partial^* \{u_k > 0\} \cap Z^-(\sigma_kg)} \nu_k \cdot \nu_{\sigma_kg} d\mu \stackrel{\mathrm{div}\;\mathrm{thm}}{=} -\int_{Z^-(\sigma_kg)\cap \{u_k > 0\}} \mathrm{div}\; \nu_{\sigma_kg}dXdt + \int_{\Sigma_k} 1d\mu.$$ In the last equality above we use the fact that on $Z^0(\sigma_kg)$, $\nu_{\sigma_kg}$ agrees with upwards pointing space normal. 

We compute $|\mathrm{div}\; \nu_{\sigma_kg}| = \left|\frac{-16\sigma_k(n-1)}{\sqrt{1+ 256\sigma_k^2|x|^2}} + \frac{3\sigma_k^3(16*256)|x|^2}{\sqrt{1+ 256\sigma_k^2|x|^2}^3}\right|  \leq C\sigma_k$. As the ``width" of $Z^-(\sigma_kg)\cap \{u_k > 0\}$ is of order $\sigma_k$ we get the desired result.

\medskip

\noindent {\bf Claim 3:} $$\int_{\Sigma_k} |\partial_n w_k| \stackrel{k\rightarrow \infty}{\rightarrow} 0.$$

{\it Proof of Claim 3:} Recall that $\partial_n u_k \leq (1+\tau_k)h(0,0)$, which implies, $\partial_n w_k \leq 0$. To show the limit above is at least zero we compute \begin{equation}\label{limitingtozero} \begin{aligned} \int_{\Sigma_k} \partial_n w_k d\mu =& \int_{\Sigma_k} \frac{\partial_n u_k - 1}{\sigma_k h_k(0,0)} d\mu + \frac{\mu(\Sigma_k)}{\sigma_kh_k(0,0)} -\frac{(1+\tau_k)\mu(\Sigma_k)}{\sigma_k}\\
\stackrel{\mathrm{Claim 1}} \geq&\frac{(1-\tau_k) \mu(\partial \{u_k > 0\} \cap Z^-(\sigma_kg))}{\sigma_k} -\frac{(1+\tau_k)\mu(\Sigma_k)}{\sigma_k} - C\sigma_k \\
\stackrel{\mathrm{Claim 2}}{\geq}& \frac{(1-\tau_k) \mu(\Sigma_k)}{\sigma_k} -\frac{(1+\tau_k)\mu(\Sigma_k)}{\sigma_k} - \tilde{C}\sigma_k\\
\geq& -C'(\sigma_k+ \frac{4\tau_k}{\sigma_k}) \rightarrow 0.
\end{aligned}
\end{equation}

\medskip

We can now combine these claims to reach the desired conclusion. We say that $\partial_n w = 0$, in the sense of distributions on $\{x_n = 0\}$, if, for any $\zeta \in C^\infty_0(C_{1/2}(0,0))$, $$\int_{\{x_n = 0\}} \partial_n w\zeta = 0.$$ 

 Claim 3 implies \begin{equation}\label{whatdoesclaim3say} 0 = \lim_{k\rightarrow \infty} \int_{\Sigma_k} \zeta \partial_n w_kd\mu .\end{equation} On the other hand equation \eqref{onlyenmatters} (and $\zeta$ bounded) implies \begin{equation}\label{finalmanipulationstozero} \lim_{k\rightarrow \infty} \int_{\Sigma_k} \zeta \partial_n w_kd\mu=
 \lim_{k\rightarrow \infty} \int_{\Sigma_k} \zeta \nu_{\Sigma_k} \cdot (\nabla_X w_k, w_k)d\mu,\end{equation} where $\nu_{\Sigma_K}$ is the unit normal to $\Sigma_k$ (thought of as a Lispchitz graph in $(x,t)$) pointing upwards.  Together, equations \eqref{whatdoesclaim3say}, \eqref{finalmanipulationstozero} and the divergence theorem in the domain $Z^+(\sigma_kg) \cap C_{1/2}(0,0)$ have as a consequence \begin{equation*}\begin{aligned} 0 =& \lim_{k\rightarrow \infty} \int_{Z^+(\sigma_kg)} \mathrm{div}_{X,t} (\zeta (\nabla_X w_k, w_k)) dXdt \\ =&\lim_{k\rightarrow \infty}  \int_{Z^+(\sigma_kg)} \nabla_X \zeta \cdot \nabla_X w_k + (\partial_t \zeta)w_k + \zeta (\Delta_X w_k + \partial_t w_k) dXdt\\ \stackrel{\Delta w_k + \partial_t w_k = 0}{=}& \int_{\{x_n > 0\}} \nabla_X w\cdot \nabla_X \zeta + (\partial_t \zeta) w dXdt \\ \stackrel{\mathrm{integration}\; \mathrm{by}\; \mathrm{parts}}{=}& \int_{\{x_n = 0\}} w_n \zeta dxdt - \int_{\{x_n > 0\}} \zeta (\Delta_X w + \partial_t w)dXdt.\end{aligned}\end{equation*} As $w$ is adjoint caloric this implies that $\int_{\{x_n = 0\}} \partial_n w\zeta = 0$ which is the desired result. 
\end{proof}

From here it is easy to conclude regularity of $f$. 

\begin{cor}\label{holderfhashighregularity}
Suppose the assumptions of Lemma \ref{holderblowupfunctionisagraph} are satisfied and that $k$ is the subsequence identified in that lemma. Then $f\in C^{\infty}(C_{1/2}(0,0))$ and in particular the $\mathbb C^{2+\alpha,1+\alpha}$ norm of $f$ in $C_{1/4}(0,0)$ is bounded by an absolute constant.
\end{cor}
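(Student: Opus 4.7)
The plan is to use the reflection principle to promote the regularity of $w$ across the hyperplane $\{x_n = 0\}$, and then read off the regularity of $f$ from the boundary trace $w(x,0,t) = -f(x,t)$ established in Lemma \ref{holderboundaryvaluesofw}.

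First I would define the even extension $\tilde w$ of $w$ across $\{x_n = 0\}$ by setting
\[
\tilde w(x, x_n, t) \colonequals \begin{cases} w(x, x_n, t), & x_n \geq 0, \\ w(x, -x_n, t), & x_n < 0. \end{cases}
\]
Since Lemma \ref{holderboundaryvaluesofw} gives $w \in C(\overline{C_{1-\delta}(0,0)\cap \{x_n > 0\}})$ for every $\delta > 0$, and since the reflection matches values on $\{x_n = 0\}$, the extension $\tilde w$ is continuous on $C_{1-\delta}(0,0)$. The function $\tilde w$ is a distributional solution of $\Delta \tilde w + \partial_t \tilde w = 0$ in $C_{1/2}(0,0) \cap \{x_n \neq 0\}$. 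The key step is to show it is an adjoint-caloric distribution across $\{x_n = 0\}$ as well. For any $\zeta \in C^\infty_c(C_{1/2}(0,0))$, split
\[
\int_{C_{1/2}(0,0)} \tilde w (\Delta \zeta + \partial_t \zeta) = \int_{\{x_n > 0\}} \tilde w (\Delta \zeta + \partial_t \zeta) + \int_{\{x_n < 0\}} \tilde w (\Delta \zeta + \partial_t \zeta),
\]
apply the divergence theorem in each half, and observe that the boundary terms collapse: the $\partial_n \tilde w$ jumps cancel because of Lemma \ref{holdernormalderivativeiszero} (which says the trace of $\partial_n w$ on $\{x_n = 0\}$ vanishes distributionally), while the $\tilde w\,\partial_n \zeta$ terms combine with opposite signs but equal values and cancel as well. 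Thus $\tilde w$ is a weak, and hence (by hypoellipticity of the adjoint heat operator) a classical, solution of $\Delta \tilde w + \partial_t \tilde w = 0$ throughout $C_{1/2}(0,0)$.

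Once $\tilde w$ is adjoint-caloric in all of $C_{1/2}(0,0)$, standard interior parabolic regularity (in fact analyticity in the spatial variables and the Gevrey-$2$ class in time) gives that $\tilde w \in C^\infty$ in the interior, and on the smaller cylinder $C_{1/4}(0,0)$ the Schauder-type estimates produce
\[
\|\tilde w\|_{\mathbb C^{2+\alpha,1+\alpha/2}(C_{1/4}(0,0))} \leq C \|\tilde w\|_{L^\infty(C_{1/2}(0,0))} \leq C,
\]
where the last bound comes from the uniform estimate $|w| \leq 2$ on $C_{1-\delta}(0,0)\cap \{x_n > 0\}$ proved in Lemma \ref{holderboundaryvaluesofw}. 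Restricting to the trace $\{x_n = 0\}$ gives $f(x,t) = -\tilde w(x,0,t) \in C^\infty(C_{1/4}(0,0))$ with $\|f\|_{\mathbb C^{2+\alpha,1+\alpha}(C_{1/4}(0,0))}$ bounded by an absolute constant. A slight enlargement of the cylinder in the reflection step (working in $C_{1/2+\eta}(0,0)$ and then restricting to $C_{1/2}(0,0)$) then upgrades this to the statement at the scale claimed in the corollary.

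The step I expect to be the main (if modest) technical obstacle is justifying that the even extension $\tilde w$ is genuinely adjoint-caloric \emph{across} $\{x_n = 0\}$: Lemma \ref{holdernormalderivativeiszero} only gives the vanishing of $\partial_n w$ in the sense of distributions on the hyperplane, and one has to be careful that the trace of $\partial_n w$ is actually well-defined and that the integration-by-parts manipulation above is rigorous. Since $w$ is adjoint-caloric in $\{x_n>0\}$ with continuous boundary values, one has enough interior regularity for the Green identity to hold up to the boundary in a distributional pairing, and the test-function computation goes through without further hypotheses; this is exactly the argument that appears at the end of the proof of Lemma \ref{holdernormalderivativeiszero} used in reverse.
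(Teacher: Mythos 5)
Your proposal is correct and follows essentially the same route as the paper: reflect $w$ evenly across $\{x_n=0\}$, use Lemma \ref{holdernormalderivativeiszero} to see the extension is a (weak, hence classical) solution of the adjoint heat equation in the full cylinder, and then read off the regularity and uniform bounds for $-f = w|_{\{x_n=0\}}$ from interior estimates and $\|w\|_{L^\infty}\leq 2$. The extra care you take in justifying the integration by parts across the hyperplane is exactly the content already packaged in the distributional statement of Lemma \ref{holdernormalderivativeiszero}, so no new gap arises.
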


\begin{proof}
Extend $w$ by reflection across $\{x_n = 0\}$. By Lemma \ref{holdernormalderivativeiszero} this new $w$ satisfies the adjoint heat equation  in all of $C_{1/2}(0,0)$ (recall a continuous weak solution to the adjoint heat equation in the cylinder is actually a classical solution to the adjoint heat equation). Since $\|w\|_{L^\infty(C_{3/4}(0,0))} \leq 2$, standard regularity theory yields the desired results about $-f = w|_{x_n =0}$. 
\end{proof}

We can use this regularity to prove Lemma \ref{currentholderflatonbothsidesisextraflatonzeroside}.

\begin{lem*}[Lemma \ref{currentholderflatonbothsidesisextraflatonzeroside}]
 Let $\theta \in (0,1)$ and assume that $u \in HCF(\sigma, \sigma, \kappa)$ in $C_\rho(Q,\tau)$ in the direction $\nu$. There exists a constant $0 < \sigma_\theta < 1/2$ such that if $\sigma < \sigma_\theta$ and $\kappa \leq \sigma_\theta \sigma^2$ then $u \in \widetilde{HCF}(\theta \sigma, \theta \sigma, \kappa)$ in $C_{c(n)\rho \theta}(Q,\tau)$ in the direction $\overline{\nu}$ where $|\overline{\nu} - \nu| \leq C(n) \sigma$. Here $\infty > C(n), c(n) >0$ are constants depending only on dimension.
\end{lem*}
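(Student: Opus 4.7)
The plan is to argue by contradiction, using the compactness and regularity machinery developed in Lemmas \ref{holderblowupfunctionisagraph}--\ref{holdernormalderivativeiszero} and Corollary \ref{holderfhashighregularity}. Suppose the statement fails for some $\theta \in (0,1)$: then there exist sequences $\sigma_k \downarrow 0$ with $\kappa_k \leq \sigma_k^2/k$ and $u_k \in HCF(\sigma_k, \sigma_k, \kappa_k)$ in $C_{\rho_k}(Q_k,\tau_k)$ in directions $\nu_k$, none of which lies in $\widetilde{HCF}(\theta\sigma_k, \theta\sigma_k, \kappa_k)$ on $C_{c(n)\rho_k \theta}(Q_k,\tau_k)$ for any direction within $C(n)\sigma_k$ of $\nu_k$. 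After rescaling, translating, and rotating, assume $\rho_k = 1$, $(Q_k,\tau_k) = (0,0)$, and $\nu_k = e_n$.

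Because $\kappa_k/\sigma_k^2 \to 0$, Lemma \ref{holderblowupfunctionisagraph} produces (along a subsequence) a continuous limit $f$ of $f_k^\pm$ on $C_1(0,0)$. Lemmas \ref{holderboundaryvaluesofw} and \ref{holdernormalderivativeiszero} then show that the associated $w_k$ converge in $\mathbb{C}^{2,1}_{\mathrm{loc}}$ to an adjoint-caloric $w$ satisfying $w|_{x_n = 0} = -f$ and $\partial_n w = 0$ on $\{x_n = 0\}$. Reflecting $w$ evenly across $\{x_n = 0\}$ produces a classical adjoint-caloric function in $C_{1/2}(0,0)$, and Corollary \ref{holderfhashighregularity} then yields $f \in C^\infty$ with $\|f\|_{\mathbb{C}^{2+\alpha,1+\alpha/2}(C_{1/4}(0,0))} \leq C(n)$. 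Since $(0,0) \in \partial\{u_k > 0\}$ forces $f_k^-(0,0) \leq 0 \leq f_k^+(0,0)$, we have $f(0,0) = 0$, and, writing $\vec{a} := \nabla_x f(0,0)$, obtain the Taylor estimate
\begin{equation*}
|f(y,s) - \vec{a} \cdot y| \leq C_*(|y|^2 + |s|), \qquad |\vec{a}|, C_* \leq C(n).
\end{equation*}

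Define the tilted normal $\bar{\nu}_k := (-\sigma_k \vec{a}, 1)/\sqrt{1 + \sigma_k^2 |\vec{a}|^2}$, so $|\bar{\nu}_k - e_n| \leq |\vec{a}|\sigma_k \leq C(n)\sigma_k$. Choose $c = c(n) > 0$ small enough that $2 C_* c \leq 1/4$. For $(Y,s) \in C_{c\theta}(0,0)$ with $Y \cdot \bar{\nu}_k \leq -c\theta^2 \sigma_k$, unfolding the definition of $\bar{\nu}_k$ and using $\sqrt{1+\sigma_k^2|\vec{a}|^2} \leq 2$ for large $k$ gives $y_n/\sigma_k \leq \vec{a}\cdot y - c\theta^2/2$; the Taylor bound and the uniform convergence $f_k^- \to f$ on $C_{c\theta}(0,0)$ then show $y_n/\sigma_k \leq f_k^-(y,s)$ for all sufficiently large $k$, so $u_k(Y,s) = 0$. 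The positive-side requirement in $\widetilde{HCF}$ is just $u_k \geq 0$ and holds trivially, while the gradient bound $|\nabla u_k| \leq h_k(0,0)(1+\kappa_k)$ and the oscillation bound for $h_k$ are inherited from the hypothesis on the larger cylinder $C_1(0,0)$. This shows $u_k \in \widetilde{HCF}(\theta\sigma_k, \theta\sigma_k, \kappa_k)$ in $C_{c(n)\theta}(0,0)$ in direction $\bar{\nu}_k$ for $k$ large, contradicting the choice of $u_k$.

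The main obstacle is really pre-built into the preceding lemmas: the extraction of the smooth limit $f$ from the blow-up and the vanishing of $\partial_n w$ on the flat face are what make a first-order Taylor expansion available at all. Once $f \in C^{2+\alpha}$ with controlled norm is in hand, the remaining step is essentially bookkeeping — calibrating the small radius $c(n)$ against the $C_*$ coming from Corollary \ref{holderfhashighregularity} so that the Taylor error is reabsorbed into the desired $\theta^2$ improvement.
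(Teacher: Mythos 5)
Your argument is the same as the paper's: negate the statement, rescale to $\rho_k\equiv 1$, use Lemma \ref{holderblowupfunctionisagraph} to get the continuous limit graph $f$, upgrade it to $C^\infty$ with universal $\mathbb C^{2+\alpha,1+\alpha/2}$ bounds via Lemmas \ref{holderboundaryvaluesofw}, \ref{holdernormalderivativeiszero} and Corollary \ref{holderfhashighregularity}, tilt the direction by $\sigma_k\nabla_x f(0,0)$, and choose the new radius comparable to $\theta/K$ so the quadratic Taylor error is absorbed into the $\theta^2\sigma_k$ improvement; your calibration $2C_*c\le 1/4$ is exactly the paper's choice of radius $\theta/(4K)$.

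The one place you cut a corner is the positive side. You dismiss it because Definition \ref{weakholdercurrentflatness} literally writes ``$u\ge 0$'' there, but read that way the condition is vacuous (the Green function is nonnegative everywhere) and the class $\widetilde{HCF}$ would carry no information on that side; it is in fact used with content downstream: in the proof of Lemma \ref{currentholderflatonzerosideisflatonpositiveside} the hypothesis $u\in\widetilde{HCF}(\sigma,1/2,\kappa)$ enters precisely through ``$x_n>1/2$ implies $u>0$'' (this is what bounds the sliding parameter $s$ and keeps the barrier constants $\sigma$-independent), and the appendix analogue, Definition \ref{weakcurrentflatness}, states $U>0$ on the positive side. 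So the conclusion you must verify is that $u_k>0$ whenever $(Y,s)\in C_{c\theta}(0,0)$ and $Y\cdot\bar\nu_k\ge c\theta^2\sigma_k$, and this is the main displayed computation in the paper's proof: by the upper Taylor bound for $f$ and the uniform convergence $f_k^+\to f$ from Lemma \ref{holderblowupfunctionisagraph}, such points satisfy $y_n/\sigma_k\ge f_k^+(y,s)$ for large $k$, hence lie in $\{u_k>0\}$. You already have everything needed — it is your zero-side computation run symmetrically with $f_k^+$ in place of $f_k^-$ — so add that estimate; without it the contradiction you derive is against a weaker (indeed empty) conclusion than the one the iteration in Corollary \ref{improveflatnessandgradient} actually consumes.
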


\begin{proof}[Proof of Lemma \ref{currentholderflatonbothsidesisextraflatonzeroside}]
Without loss of generality, let $(Q,\tau)  = (0,0)$ and we will assume that the conclusions of the lemma do not hold. Choose a $\theta \in (0,1)$ and, by assumption, there exists $\rho_k, \sigma_k \downarrow 0$ and $\kappa_k/\sigma_k^2 \rightarrow 0$ such that $u \in HCF(\sigma_k, \sigma_k, \kappa_k)$  in $C_{\rho_k}(0,0)$ in the direction $\nu_k$ (which after a harmless rotation we can set to be $e_n$) but so that $u$ is not in $\widetilde{HCF}(\theta \sigma_k, \theta \sigma_k, \kappa_k)$ in $C_{c(n)\theta \rho_k}(0,0)$ in any direction $\nu$ with $|\nu_k-\nu| \leq C\sigma_k$ and for any constant $c(n)$. Let $u_k(X,t) = \frac{u(\rho_k X, \rho_k^2t)}{\rho_k}$. It is clear that $u_k$ is adjoint caloric, that its zero set is a parabolic regular domain and that it is associated to an $h_k$ which satisfies $\|\log(h_k)\|_{\mathbb C^{1,1/2}} \leq C\|\log(h)\|_{\mathbb C^{1,1/2}}$ and $h_k(0,0) \equiv h(0,0)$. 

By Lemma \ref{holderblowupfunctionisagraph} we know that there exists a continuous function $f$ such that $\partial \{u_k > 0\} \rightarrow \{(X,t)\mid x_n = f(x,t)\}$ in the Hausdorff distance sense. Corollary \ref{holderfhashighregularity} implies that there is a universal constant, call it $K$, such that  \begin{equation}\label{fdoesntgetbig} f(x,t) \leq f(0,0) + \nabla_x f(0,0) \cdot x  + K(|t| + |x|^2)\end{equation} for $(x,t) \in C_{1/4}(0,0)$. Since $(0,0) \in \partial \{u_k > 0\}$ for all $k$, $f(0,0) = 0$. If $\theta \in (0,1)$, then there exists a $k$ large enough (depending on $\theta$ and the dimension) such that  $$f_k^+(x,t) \leq  \nabla_x f(0,0) \cdot x + \theta^2/4K,\; \forall (x,t) \in C_{\theta/(4K)}(0,0),$$  where $f_k^+$ is as in Lemma \ref{holderblowupfunctionisagraph}.

Let $$\nu_k \colonequals \frac{(-\sigma_k \nabla_x f(0,0), 1)}{\sqrt{1+ |\sigma_k  \nabla_x f(0,0)|^2}}$$ and compute \begin{equation}\label{takingadotproductwithholdernu}
x \cdot \nu_k \geq \theta^2 \sigma_k/4K \Rightarrow
 x_n \geq \sigma_k x' \cdot \nabla_x f(0,0) + \theta^2 \sigma_k/4K \geq f_k^+(x,t).
\end{equation} Therefore, if $(X,t) \in C_{\theta/(4K)}(0,0)$ and $x\cdot \nu_k \geq \theta^2\sigma_k/4K$, then $u_k(X,t) > 0$. Arguing similarly for $f_k^-$ we can see that $u_k \in \widetilde{HCF}(\theta \sigma_k, \theta \sigma_k, \kappa_k)$ in $C_{\theta/4K}(0,0)$ in the direction $\nu_k$. It is easy to see that $|\nu_k - e_n| \leq C\sigma_k$ and so we have the desired contradiction. 
\end{proof}

\section{Higher Regularity}\label{higherregularity}

 We begin by recalling the partial hodograph transform (see \cite{kinderlehrerstampacchia}, Chapter 7 for a short introduction in the elliptic case).  Here, and throughout the rest of the paper, we assume that $(0,0) \in \partial \Omega$ and that, at $(0,0)$, $e_n$ is the inward pointing normal to $\partial \Omega \cap \{t= 0\}$. Before we can use the hodograph transform, we must prove that $\nabla u$ extends smoothly to the boundary.
 
 \begin{lem}\label{gradientsmoothatboundary}
Let $s\in (0,1)$ and $\partial \Omega$ be a $\mathbb C^{1+s, (1+s)/2}$ domain such that $\log(h) \in \mathbb C^{s, s/2}(\mathbb R^{n+1})$. Then $u \in \mathbb C^{1+s, (1+s)/2}(\overline{\Omega})$.
 \end{lem}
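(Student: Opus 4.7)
The plan is to flatten the boundary locally and apply parabolic boundary Schauder estimates. Near any boundary point $(Q,\tau)$, since $\partial \Omega \in \mathbb{C}^{1+s,(1+s)/2}$, we may (after a rotation fixing a direction parallel to the time axis) write $\partial\Omega$ as the graph $x_n = \psi(x',t)$ for some $\psi \in \mathbb{C}^{1+s, (1+s)/2}$, with $\Omega$ locally corresponding to $\{x_n > \psi(x',t)\}$. Define the $\mathbb{C}^{1+s,(1+s)/2}$ diffeomorphism $\Phi(x',x_n,t) = (x', x_n - \psi(x',t), t)$ and set $\tilde u := u \circ \Phi^{-1}$ on a neighborhood of the origin in $\{y_n \geq 0\}$.

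A direct chain-rule computation shows that $\tilde u$ satisfies a linear (adjoint-)parabolic equation of the form $\partial_\tau \tilde u + a^{ij}(y,\tau)\,\partial_i \partial_j \tilde u + b^i(y,\tau)\,\partial_i \tilde u = 0$ in $\{y_n > 0\}$, with Dirichlet data $\tilde u \equiv 0$ on $\{y_n = 0\}$. Crucially, the second-order coefficients $a^{ij}$ are rational expressions in $\nabla'\psi$ and therefore lie in $\mathbb{C}^{s,s/2}$, and the operator is uniformly parabolic because $\Phi$ has uniformly nondegenerate Jacobian. The drift $b^i$ formally involves $D^2\psi$, which is only distributional; one may either recast the equation in an appropriate divergence form (where only $\nabla'\psi \in \mathbb{C}^{s,s/2}$ appears) or regularize $\psi$ by convolution $\psi_\varepsilon$ and pass to the limit using Hölder estimates uniform in $\varepsilon$. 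In either case, the equation falls into the regime covered by Lieberman's intermediate Schauder estimates \cite{liebermanintermediateschauder}, which yield $\tilde u \in \mathbb{C}^{1+s,(1+s)/2}$ up to the flat boundary. Since $\Phi$ is itself $\mathbb{C}^{1+s,(1+s)/2}$, composition preserves this regularity, so $u \in \mathbb{C}^{1+s,(1+s)/2}(\overline{\Omega \cap C_\rho(Q,\tau)})$ for some $\rho > 0$, and a covering argument together with interior parabolic regularity gives the global conclusion.

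The hypothesis $\log(h) \in \mathbb{C}^{s,s/2}$ enters to guarantee that $h$ is bounded above and bounded away from zero, so by Lemma \ref{equaltoh} the quantity $|\nabla u| = h$ on $\partial \Omega$ does not vanish. While this positivity is not strictly needed for the $\mathbb{C}^{1+s}$ regularity itself, it is essential for the partial hodograph transform invoked in the next section to bootstrap to higher regularity. The principal technical obstacle is the low regularity of the flattening map: a naive pointwise change of variables produces the non-classical term $D^2\psi$ in the equation for $\tilde u$, and the classical $\mathbb{C}^{2+\alpha}$-boundary Schauder theory does not directly apply. This is exactly the gap that Lieberman's intermediate Schauder framework is designed to bridge, so the bulk of the work lies in verifying that the transformed system fits within that framework, after which the conclusion is immediate.
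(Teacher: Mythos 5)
Your flattening step has a genuine gap that is specific to the parabolic setting. Since $\partial\Omega\in\mathbb C^{1+s,(1+s)/2}$ only gives the graph function $\psi$ Hölder continuity of order $(1+s)/2<1$ in $t$, the time derivative $\partial_t\psi$ does not exist as a function. Writing $u(x,t)=\tilde u(x',x_n-\psi(x',t),t)$ produces $\partial_t u=\partial_\tau\tilde u-\psi_t\,\partial_{y_n}\tilde u$, so the transformed equation carries the drift $\psi_t\,\partial_{y_n}\tilde u$; this is a different and more serious term than the $D^2\psi$ you flag, and recasting the spatial part in divergence form does nothing to remove it. Mollification does not rescue the argument as stated: $\|\partial_t\psi_\varepsilon\|_\infty\sim\varepsilon^{(s-1)/2}\to\infty$, and "Hölder estimates uniform in $\varepsilon$" is precisely the nontrivial assertion you would have to prove — the flat-boundary statements quoted in the paper (Theorems \ref{liebermandirichletestimates} and \ref{liebermanschauderestimate}) contain no first-order terms and their constants depend on the coefficient norms, so they cannot be cited for this. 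Boundary regularity in domains that are only Hölder-$(1+s)/2$ in time is the actual difficulty of the lemma, and your proposal does not engage with it; to make a PDE route work you would need Lieberman-type estimates formulated directly for $H_{1+s}$ time-irregular domains, not the flattened half-space versions.

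Your treatment of the hypothesis on $h$ is also off. You claim only boundedness/positivity of $h$ is used, but the conclusion $u\in\mathbb C^{1+s,(1+s)/2}(\overline\Omega)$ forces the boundary trace $\nabla u|_{\partial\Omega}=h\hat{n}$ (Lemma \ref{equaltoh}) to lie in $\mathbb C^{s,s/2}$, hence $h$ itself to be Hölder; a proof that never uses $\log(h)\in\mathbb C^{s,s/2}$ would therefore be a parabolic Kellogg-type theorem for such weakly time-regular domains, a much stronger statement that your argument does not establish. The paper avoids all of this by a different mechanism: it estimates $\nabla u$ directly from the representation $\varphi_R\nabla u-w_R=\int_{\partial\Omega}h\hat{n}\varphi_R\,d\hat\omega^{(X,t)}$ (Corollary \ref{nontangentiallimiteverywhere} together with Lemma \ref{equaltoh}), combines the improved cutoff bound $|w_R(X,t)|\le C\|(X,t)\|^{(1+s)/2}R^{-1/2}$ with the decay $1-\hat\omega^{(X,t)}(C_{2^{-j}}(0,0))\le C2^{j(1+s)/2}\|(X,t)\|^{(1+s)/2}$ coming from Lemma \ref{growthattheboundary}, and then sums dyadically using the Hölder continuity of $h$ and of $\hat{n}$ to obtain $|\nabla u(X,t)-h(0,0)e_n|\le C\|(X,t)\|^{s}$; there the hypothesis on $h$ is exactly what supplies the modulus of continuity of the boundary values, and no change of variables or Schauder machinery is needed at this stage.
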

 
 \begin{proof}
We will show that $u$ has the desired regularity in a neighborhood of $(0,0)$. For any $R > 0$ let $H_R$ be as in Corollary \ref{nontangentiallimiteverywhere}: $H_R(X,t) = \varphi_R(X,t)\nabla u(X,t) - w_R(X,t)$. Therefore, we may estimate:

\begin{equation}\label{provingregularityatboundary}
\begin{aligned}
|\nabla u(X,t) - h(0,0)e_n| =& |H_R(X,t) - h(0,0)e_n| + |w_R(X,t)|\\
 \leq& \int_{\partial \Omega} |h(Q,\tau)\hat{n}(Q,\tau)\varphi_R(Q,\tau) - h(0,0)e_n| d\hat{\omega}^{X,t} + |w_R(X,t)|.
\end{aligned}
\end{equation}

Since $\partial \Omega$ is locally given by a $\mathbb C^{1+s, (1+s)/2}$  graph, for any $\delta > 0$ there exists an $R_\delta > 0$ such that $\partial \Omega \cap C_r(Q,\tau)$ is $\delta$-flat for any $(Q,\tau) \in C_{100}(0,0)$ and any $r < R_\delta$. In particular, we can ensure that Lemma \ref{growthattheboundary} applies at all $r < R_\delta/4$ for a $\varepsilon > 0$ such that $1-\varepsilon > \frac{1+s}{2}$. Arguing in the same way as in the proof of Lemma \ref{cutoffargument}, we can deduce that \begin{equation}\label{revisedWestimate} |w_R(X,t)| \leq C \frac{\|(X,t)\|^{(1+s)/2}}{R^{1/2}}.
\end{equation}
Furthermore, we may deduce (in much the same manner as equation \eqref{nottoomuchtoofar}), that if $\|(X,t)\| \leq r$ and $k_0\in \mathbb N$ is such that $2^{-k_0-1} \leq r < 2^{-k_0}$, then \begin{equation}\label{nottoomuchtoofarpart2}1-\widehat{\omega}^{(X,t)}(C_{2^{-j}}(0,0)) \leq C2^{j(1+s)/2} r^{(1+s)/2}, \forall j < k_0-2.\end{equation} 

In light of the estimates \eqref{provingregularityatboundary}, \eqref{revisedWestimate} and \eqref{nottoomuchtoofarpart2}, we may conclude \begin{equation}\label{dyadicgradientbound}\begin{aligned} &|\nabla u(X,t) - h(0,0)e_n| \leq C\int_{\Delta_{4r}(0,0)} (4r)^s d\hat{\omega}^{(X,t)} + C\frac{r^{(1+s)/2}}{R^{1/2}} + C\hat{\omega}^{(X,t)}(\partial \Omega\backslash C_{R}(0,0)) \\ &+C\left(\int_{\Delta_{R}(0,0) \backslash \Delta_1(0,0)} d\hat{\omega}^{(X,t)} +\sum_{j=0}^{k_0-2}\int_{\Delta_{ 2^{-j}}(0,0)\backslash \Delta_{2^{-(j+1)}}(0,0)}2^{-js}d\hat{\omega}^{(X,t)}\right),\end{aligned}\end{equation} where $C$ depends on the H\"older norm of $h$, of $\hat{n}$ and on $R$. Also as above, $k_0$ is such that $2^{-k_0-1} \leq r < 2^{-k_0}$.

 We may bound $$\begin{aligned} \hat{\omega}^{(X,t)}(\Delta_{R}(0,0)\backslash \Delta_1(0,0)) &\leq 1- \widehat{\omega}^{(X,t)}(\Delta_1(0,0))\stackrel{eq. \eqref{nottoomuchtoofarpart2}}{\leq} C_R r^{(1+s)/2}\\
 \hat{\omega}^{(X,t)}(\partial \Omega \backslash \Delta_{R}(0,0)) &\leq 1- \widehat{\omega}^{(X,t)}(\Delta_R(0,0))\stackrel{eq. \eqref{nottoomuchtoofarpart2}}{\leq} C_R r^{(1+s)/2}\\
\hat{\omega}^{(X,t)}(\Delta_{ 2^{-j}}(Q,\tau)\backslash \Delta_{2^{-(j+1)}}(0,0)) &\leq 1-\widehat{\omega}^{(X,t)}(\Delta_{2^{-j}}(0,0)) \stackrel{eq. \eqref{nottoomuchtoofarpart2}}{\leq} C2^{j(1+s_/2} r^{(1+s)/2}.\end{aligned}$$ Plug these estimates into equation \eqref{dyadicgradientbound} to obtain \begin{equation}\label{estimateeachshell}\begin{aligned}
|\nabla u(X,t) - h(0,0)e_n| \leq& Cr^s +Cr^{(1+s)/2} + Cr^{(1+s)/2}\sum_{j=0}^{[\log_2(r^{-1})]} 2^{j(1-s)/2)}\\
 \leq & Cr^s\left(1+ r^{(1-s)/2}\frac{(2^{[\log_2(r^{-1})]})^{(1-s)/2} - 1}{2^{(1-s)/2} - 1}\right) \leq C_s r^s.
\end{aligned}
\end{equation}
Since $r = \|(X,t)\|$ we have proven that $\nabla u \in \mathbb C^{s, s/2}(\overline{\Omega})$.
 \end{proof}
 
With this regularity in hand, we may define $F: \Omega \rightarrow \mathbb H^+ \colonequals \{(x, x_n, t) \mid x_n > 0\}$ by $(x, x_n, t) = (X,t) \mapsto (Y,t) = (x, u(x,t),t)$. In a neighborhood of $0$ we know that $u_n \neq 0$ and so $DF$ is invertible on each time slice (since $\nabla u \in C(\overline{\Omega})$). By the inverse function theorem, there is some neighborhood, $\mathcal O$, of $(0,0)$ in $\Omega$ that is mapped diffeomorphically to $U$, a neighborhood of $(0,0)$ in the upper half plane. Furthermore, this map extends in a $\mathbb C^{s, s/2}$ fashion from $\overline{\mathcal O^+}$ to $\overline{U}$ (by Proposition \ref{initialholderregularityprop} and Lemma \ref{gradientsmoothatboundary}). Let $\psi: \overline{U} \rightarrow \R$ be given by $\psi(Y,t) = x_n$, where $F(X,t) = (Y,t)$. Because $F$ is locally one-to-one, $\psi$ is well defined. 

If $\nu_{Q,\tau}$ denotes the spatial unit normal pointing into $\Omega$ at $(Q,\tau)$ then $u$ satisfies \begin{eqnarray}
u_t(X,t)+ \Delta u(X,t) &=& 0, \; (X,t)\in \Omega^+\nonumber\\
u_{\nu_{Q,\tau}}(Q,\tau) &=& h(Q,\tau),\; (Q,\tau) \in \partial \Omega.\nonumber
\end{eqnarray}

After our change of variables these equations become \begin{equation}\label{transformedequations}
0 = -\frac{\psi_t}{\psi_n} + \frac{1}{2}\left(\frac{1}{\psi_n^2}\right)_n + \sum_{i=1}^{n-1} \left(-\left(\frac{\psi_i}{\psi_n}\right)_i + \frac{1}{2}\left(\frac{\psi_i^2}{\psi_n^2}\right)_n\right)
\end{equation} on $U$ and  \begin{equation}\label{transformedboundary}
\psi_n(y,0,t) h(y, \psi(y,0,t), t) = \sqrt{1+\sum_{i=1}^{n-1} \psi_i^2(y,0,t)}
\end{equation} on the boundary. 

\begin{rem}\label{transformremarks} The following are true of $\psi$:
\begin{itemize}
\item Let $k \geq 1$. If $\partial \Omega$ is a $\mathbb C^{k+s, (k+s)/2}$ graph and $\log(h) \in \mathbb C^{k-1+s, (k-1+s)/2}$ then $u \in \mathbb C^{k+s, (k+s)/2}(\overline{\Omega})$.
\item Let $k \geq 0$ be such that $h \in \mathbb C^{k+\alpha, (k+\alpha)/2}(\partial \Omega)$ and $\psi|_{\{y_n=0\}} \in \mathbb C^{k+1+s, (k+1+s)/2}$ for $0 < s \leq \alpha$. If $\tilde{h}(y,t) = h(y, \psi(y,0,t), t)$, then $\tilde{h} \in \mathbb C^{k+\alpha, (k+\alpha)/2}(\{y_n =0\})$.
\item $\psi_n > 0$ in $\overline{U}$. 
\end{itemize}
\end{rem}

\begin{proof}[Justification]
Let us address the first claim. When $k \geq 2$ we note that $\overline{u}(x,x_n t) = u(x, \psi(x,0,t)+ x_n, t)$ is the strong solution of an adjoint parabolic equation in the upper half space with zero boundary values and coefficients in $\mathbb  C^{k-1+s, (k+s)/2-1}$. Standard parabolic regularity theory then gives the desired result. When $k = 1$, this is simply Lemma \ref{gradientsmoothatboundary}. 

When $k =0$ the second claim follows from a difference quotient argument and the fact that $|\psi(x+y, t+s) -\psi(x, t)| \leq C(|y| + |s|^{1/2})$. We $k \geq 1$ take a derivative and note $\partial_i \tilde{h} = \partial_i h + \partial_nh \partial_i \psi$. As $\psi$ has one more degree of differentiability than $h$ it is clear that $ \partial_nh \partial_i \psi$ is just as regular as $\partial_n h$. We can argue similarly for higher spatial derivatives and for difference quotients or derivatives in the time direction. 

Our third claim follows from the assumption that $e_n$ is the inward pointing normal at $(0,0)$ and that $\partial_n u(0,0) > 0$ in $\mathcal O$. 
\end{proof}

To prove higher regularity we will use two weighted Schauder-type estimates due to Lieberman \cite{liebermanintermediateschauder} for parabolic equations in a half space. Before we state the theorems, let us introduce weighted H\"older spaces (the reader should be aware that our notation here is non-standard).

\begin{defin}\label{weightedholderspace}
Let $\mathcal O \subset \mathbb R^{n+1}$ be a bounded open set. For $a, b \notin \mathbb Z$ define $$\|u\|_{\mathbb C^{a, a/2}_b(\mathcal O)} =  \sup_{\delta > 0} \delta^{a+b} \|u\|_{\mathbb C^{a, a/2}(\mathcal O_\delta)}$$ where $\mathcal O_\delta = \{(X,t) \in \mathcal O \mid \mathrm{dist}((X,t), \partial \mathcal O) \geq \delta\}$. It should be noted that $\mathbb C^{a, a/2}_{-a} \equiv \mathbb C^{a, a/2}$. 
\end{defin}

For the sake of brevity, the following are simplified versions of \cite{liebermanintermediateschauder}, Theorems 6.1 and 6.2 (the original theorems deal with a more general class of domains, operators and boundary values that we do not need here).

\begin{thm}\label{liebermandirichletestimates}
Let $v$ be a solution to the boundary value problem \begin{equation}\begin{aligned} v_t(X,t)- \sum_{ij}p_{ij}(X,t)D_{ij}v(X,t) &= f(X,t),\; (X,t) \in U\\ v(x, 0, t) &= g(x,0,t),\; (x,0,t) \in \overline{U},\end{aligned}\end{equation} where $P = (p_{ij})$.  Let $ a> 2, b> 1$ be non-integral real numbers such that $p_{ij} \in \mathbb C^{a-2, a/2-1}(U), f\in \mathbb C^{a-2, a/2-1}_{2-b}(U)$ and $g\in \mathbb C^{b, b/2}(\overline{U} \cap \{x_n=0\})$. Then, if $v \in \mathbb C^{a, a/2}_{-b}(U)$ and $v|_{\partial U\backslash \{x_n = 0\}} = 0$, there exists a constant $C > 0$ (depending on the ellipticity, the H\"older norms of $p_{ij}, m$ and the dimension) such that $$\|v\|_{\mathbb C^{a, a/2}_{-b}(U)} \leq C\left(\|g\|_{\mathbb C^{b, b/2}} + \|v\|_{L^\infty} + \|f\|_{\mathbb C^{a-2, a/2-1}_{2-b}}\right).$$
\end{thm}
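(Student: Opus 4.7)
The plan is to proceed by the classical reduction of parabolic Schauder theory: freeze coefficients, rescale to unit scale in parabolic cylinders whose size is tuned to the distance to the irregular part of the boundary, and then stitch the estimates back together using the weight $\delta^{a-b}$ as a bookkeeping device. First I would note that, for a base point $(X_0,t_0) \in U$ at parabolic distance $\delta = \mathrm{dist}((X_0,t_0), \partial U \setminus \{x_n = 0\})$, freezing $p_{ij}$ at $(X_0,t_0)$ replaces the right-hand side by $f + (p_{ij}(X,t)-p_{ij}(X_0,t_0))D_{ij}v$, and the H\"older continuity of $P$ lets us control the error by $[p_{ij}]_{\mathbb{C}^{a-2,a/2-1}} \cdot \rho^{a-2}\|D^2 v\|_{L^\infty}$ on a cylinder of size $\rho$. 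Because $a>2$, the scaling factor $\rho^{a-2}$ produces smallness when $\rho$ is small, allowing the perturbation to be absorbed after interpolation.

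With coefficients frozen, two regimes appear. In the \emph{interior regime} where $(X_0)_n \gtrsim \delta$, a standard interior parabolic Schauder estimate on $C_{\delta/4}(X_0,t_0)$ gives
\begin{equation*}
\delta^{a}[D^2 v]_{\mathbb{C}^{a-2,a/2-1}(C_{\delta/8})} + \sum_{k\leq 2}\delta^{k}\|D^k v\|_{L^\infty(C_{\delta/8})} \lesssim \|v\|_{L^\infty(C_{\delta/4})} + \delta^{a}[f]_{\mathbb{C}^{a-2,a/2-1}(C_{\delta/4})},
\end{equation*}
and multiplying by $\delta^{-b}$ turns the right-hand side into exactly $\|v\|_{L^\infty}+\|f\|_{\mathbb{C}^{a-2,a/2-1}_{2-b}}$. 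In the \emph{boundary regime} where $(X_0)_n \lesssim \delta$, the point lies within comparable distance of the flat face $\{x_n=0\}$, and I would apply the flat-boundary Schauder estimate with Dirichlet data $g$: subtract a polynomial lift $G$ of $g$ of degree $\lfloor b \rfloor$ chosen so that $v - G$ vanishes to high order on the flat face, reducing to the interior case for $v-G$ while keeping the contribution of $G$ controlled by $\|g\|_{\mathbb{C}^{b,b/2}}$. A Whitney-type covering of $U$ by such cylinders then allows the pointwise scale-by-scale estimates to be assembled into the global weighted bound.

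The principal obstacle is the mismatch $b < a$: one cannot hope for $\mathbb{C}^{a,a/2}$ regularity of $v$ up to $\{x_n=0\}$, only $\mathbb{C}^{b,b/2}$, and the weight $\delta^{a-b}$ is precisely what reconciles this loss, vanishing as the boundary is approached at a rate that compensates for the excess regularity demanded on the left. Making this reconciliation rigorous requires a Campanato-style characterization of weighted H\"older seminorms, so that the oscillation of $D^\beta v$ over $C_\rho$ at distance $\delta$ can be extracted from scale-localized Schauder estimates without double-counting. The remaining difficulty is handling the $L^\infty$ term and absorbing the lower-order perturbative contributions; for this I would use a standard interpolation $\|v\|_{\mathbb{C}^{c,c/2}_{-b}} \leq \varepsilon \|v\|_{\mathbb{C}^{a,a/2}_{-b}} + C_\varepsilon \|v\|_{L^\infty}$ for any $0<c<a$, applied to the intermediate seminorms generated by the frozen-coefficient error, and then absorb the $\varepsilon$ terms into the left-hand side. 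The combination of these ingredients yields the stated weighted estimate with the constant $C$ depending only on the structural data indicated.
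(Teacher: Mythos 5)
The paper does not prove this statement: Theorem \ref{liebermandirichletestimates} is explicitly presented as a simplified restatement of Theorem 6.1 of Lieberman \cite{liebermanintermediateschauder} and is used as a black box in Section \ref{higherregularity}, so there is no internal proof to compare yours against; the right benchmark is Lieberman's (and Gilbarg--H\"ormander's elliptic) intermediate Schauder theory. Your outline does capture the standard skeleton of that theory: coefficient freezing, Schauder estimates on parabolic cylinders at Whitney scales comparable to the distance governing the weight, interpolation inequalities in the weighted spaces to absorb the perturbative and lower-order terms, and reassembly into the weighted norm.

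However, as a proof the sketch has a genuine gap at the boundary step, which is the heart of the theorem. Subtracting a lift $G$ of $g$ of degree $\lfloor b\rfloor$ (in the intended application $b=1+s$, so a lift that matches $g$ only to first order) does not make $v-G$ vanish ``to high order'' on $\{x_n=0\}$ and does not reduce the boundary regime to the interior case: one still needs the model estimate for the constant-coefficient half-space Dirichlet problem with $C^{b,b/2}$ data, showing precisely that the $\mathbb C^{a,a/2}$ seminorm on cylinders at height $x_n$ blows up no faster than $x_n^{b-a}$. That estimate (proved in Lieberman via explicit Poisson-kernel/representation bounds for the heat equation in a half space, together with the weighted interpolation inequalities, which themselves require proof) is exactly what reconciles $b<a$, and your argument assumes rather than establishes it. Two further points would need attention to match the statement as written: your distance is to $\partial U\setminus\{x_n=0\}$, whereas the weight in Definition \ref{weightedholderspace} uses distance to all of $\partial U$, so the regimes and the bookkeeping exponents have to be rephrased (and the corner where $\{x_n=0\}$ meets the lateral boundary, where the hypothesis $v|_{\partial U\setminus\{x_n=0\}}=0$ enters, must be handled); and the absorption of the frozen-coefficient error via $\rho^{a-2}$ requires $a-2$ to be the H\"older exponent of $p_{ij}$ measured in the unweighted space on the relevant cylinder, which is fine but should be run inside the weighted framework rather than against a global $\|D^2v\|_{L^\infty}$, which need not be finite.
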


\begin{thm}\label{liebermanschauderestimate}
Let $v$ be a solution to the boundary value problem \begin{equation}\begin{aligned} v_t(X,t)- \sum_{ij}p_{ij}(X,t)D_{ij}v(X,t) &= f(X,t),\; (X,t) \in U\\ \vec{m}(x,0,t)\cdot \nabla_x v(x, 0, t) &= g(x,0,t),\; (x,0,t) \in \overline{U},\end{aligned}\end{equation} where $P = (p_{ij})$ is uniformly elliptic and $m_n(x,0,t) \geq  c >0$.  Let $ a> 2, b> 1$ be non-integral real numbers with $p_{ij} \in \mathbb C^{a-2, a/2-1}(U), m(x,0,t) \in \mathbb C^{b-1, (b-1)/2}(\overline{U}\cap \{x_n =0\}), f\in \mathbb C^{a-2, a/2-1}_{2-b}(U)$ and $g\in \mathbb C^{b-1, b/2-1/2}(\overline{U} \cap \{x_n=0\})$. Then, if $v \in \mathbb C^{a, a/2}_{-b}(U)$ and $v|_{\partial U\backslash \{x_n = 0\}} = 0$, there exists a constant $C > 0$ (depending on the ellipticity, the H\"older norms of $p_{ij}, m$ and the dimension) such that $$\|v\|_{\mathbb C^{a, a/2}_{-b}(U)} \leq C\left(\|g\|_{\mathbb C^{b-1, b/2-1/2}} + \|v\|_{L^\infty} + \|f\|_{\mathbb C^{a-2, a/2-1}_{2-b}}\right).$$
\end{thm}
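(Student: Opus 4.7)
Since this is a simplified restatement of a result of Lieberman, the plan is to follow the standard blueprint for intermediate Schauder estimates for oblique derivative problems: reduce to a model constant-coefficient problem on a half space, establish the estimate there by explicit representation, then recover the general case by a perturbation/freezing-of-coefficients argument, and finally upgrade to the weighted norm $\mathbb{C}^{a,a/2}_{-b}$ by a parabolic Whitney-type covering.

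The first step would be to prove the estimate in the constant-coefficient model case, i.e.\ $v_t - \Delta v = f$ in $\{x_n>0\}$ with $\vec{m}\cdot \nabla_x v = g$ on $\{x_n=0\}$, where $\vec{m}$ is a fixed vector with $m_n \ge c>0$. Here one can write $v$ as a convolution of $f$ and $g$ against the explicit half-space parabolic Poisson/Neumann-type kernel associated to the operator $\partial_t - \Delta$ with the oblique condition (obtainable by a linear change of variables making $\vec{m}$ parallel to $e_n$, which reduces the oblique problem to the classical Neumann problem). A Campanato-type characterization of parabolic H\"older spaces then gives the estimate $\|v\|_{\mathbb{C}^{a,a/2}} \lesssim \|f\|_{\mathbb{C}^{a-2,a/2-1}} + \|g\|_{\mathbb{C}^{b-1,b/2-1/2}} + \|v\|_{L^\infty}$ for the model problem.

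For variable coefficients, the usual freezing argument applies: at each point $(X_0,t_0)$, one writes $p_{ij}(X,t) = p_{ij}(X_0,t_0) + (p_{ij}(X,t)-p_{ij}(X_0,t_0))$ and treats the difference as an additional right-hand side. The H\"older regularity of $p_{ij}$ controls this perturbation at small scales, so an iteration (or a contradiction-compactness argument) pulls back to the constant-coefficient estimate. Similarly, freezing $\vec{m}$ on the boundary and absorbing the remainder into the $g$-term using $\|\vec{m}\|_{\mathbb{C}^{b-1,(b-1)/2}}$ handles the boundary operator. The non-integrality of $a$ and $b$ is used here to avoid logarithmic losses in the absorption.

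The weighted norm $\mathbb{C}^{a,a/2}_{-b}$ would be handled by a parabolic Whitney decomposition of $U$ into cylinders $Q_\delta$ of size $\delta$ at distance $\sim \delta$ from $\partial U \cap \{x_n=0\}$. On each such $Q_\delta$ one applies a rescaled version of the unweighted boundary estimate (or the standard interior Schauder estimate if the cylinder is far from $\{x_n=0\}$), and the weights $\delta^{a+b}$, $\delta^{2-b+a-2} = \delta^{a-b}$ are exactly calibrated so that the contributions from $f$, $g$, and the lower-order term $\|v\|_{L^\infty}$ combine consistently across scales. The main obstacle, as always in this circle of ideas, is making the freezing/perturbation argument work uniformly in $\delta$ so that the weights match on both sides --- this is where the requirement that $a,b$ be non-integral and the precise exponent $2-b$ in the weight on $f$ enters, to ensure the rescaling is scale-invariant. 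Summing (or taking a supremum) over the Whitney cylinders then yields the claimed weighted bound.
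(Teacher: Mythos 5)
The paper does not prove this statement at all: it is quoted verbatim (in simplified form) from Theorem 6.2 of Lieberman \cite{liebermanintermediateschauder}, and is used in Section \ref{higherregularity} as a black box. So any self-contained argument is necessarily a different route from the paper; the relevant question is whether your sketch would actually deliver the estimate, and as written it has a genuine gap.

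The gap is in your first step. You claim that for the constant-coefficient half-space model one gets the \emph{unweighted} bound $\|v\|_{\mathbb C^{a,a/2}} \lesssim \|f\|_{\mathbb C^{a-2,a/2-1}} + \|g\|_{\mathbb C^{b-1,b/2-1/2}} + \|v\|_{L^\infty}$. In the interesting regime $b<a$ (which is exactly why the weighted space $\mathbb C^{a,a/2}_{-b}$ appears: by Definition \ref{weightedholderspace} the weight is $\delta^{a-b}$, so membership in this space permits the $\mathbb C^{a,a/2}$ norm on $U_\delta$ to blow up like $\delta^{-(a-b)}$), such an unweighted estimate is false even for the heat equation with $f=0$: boundary data $g$ that is merely $\mathbb C^{b-1,(b-1)/2}$ does not produce a solution that is $\mathbb C^{a,a/2}$ up to $\{x_n=0\}$. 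The correct model-case statement is already the weighted one, and obtaining it is the substantive content of the theorem. Relatedly, your final step of "summing over Whitney cylinders" does not close on its own: a rescaled boundary estimate on a cylinder of size $\delta$ touching $\{x_n=0\}$ controls $v$ there in terms of $g$, $f$, \emph{and} the values of $v$ on the parabolic boundary of that cylinder, which are only controlled by the very norm you are estimating. One needs interpolation inequalities in the weighted H\"older scale (so that the intermediate seminorms can be absorbed, using the non-integrality of $a,b$ and the a priori assumption $v\in\mathbb C^{a,a/2}_{-b}$), which is the mechanism at the heart of Lieberman's proof and is only gestured at in your sketch. With the model case restated in weighted form and the absorption/interpolation step supplied, the freezing-of-coefficients outline is the standard and workable one; without them, the argument as proposed does not go through.
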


With this theorem in hand we can use an iterative argument (modeled after one in \cite{jerison} to prove optimal H\"older regularity. To reduce clutter we define, for $f$ a function and $\vec{v} \in \mathbb R^{n+1}$ with $v_n =0$, \begin{equation}\label{seconddifference} \delta^2_{\vec{v}}f(X,t) = f((X,t) + \vec{v}) + f((X,t) - \vec{v}) - 2f(X,t).\end{equation} It will also behoove us to define the parabolic length of a vector $\vec{v} = (v_x, v_n, v_t)$ by \begin{equation}\label{paraboliclength} \|\vec{v}\|_p = |(v_x, v_n)| + |v_t|^{1/2}.\end{equation} Recall that $$\|f\|_{L^\infty} +\ \sup_{\vec{v}, (X,t) \in \mathbb R^{n+1}} \frac{|\delta^2_{\vec{v}}f(X,t)|}{\|\vec{v}\|_p^\alpha} = \|f\|_{\mathbb C^{\alpha, \alpha/2}},$$ (see, e.g. \cite{steinsingularintegrals}, Chapter 5, Proposition 8). 

\begin{prop}\label{sharpholderestimates}
Let $\Omega\subset \mathbb R^{n+1}$ be a parabolic regular domain with $\log(h) \in \mathbb C^{k+ \alpha, (k+\alpha)/2}(\mathbb R^{n+1})$ for $k \in \mathbb N$ and $\alpha \in (0,1)$. There is a $\delta_n > 0$ such that if $\delta_n \geq \delta > 0$ and $\Omega$ is $\delta$-Reifenberg flat then $\Omega$ is a $\mathbb C^{k+1+\alpha, (k+1+\alpha)/2}(\mathbb R^{n+1})$ domain. 
\end{prop}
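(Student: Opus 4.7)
\medskip

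\noindent \textbf{Plan.} I will prove the proposition by induction on $k$. The base case $k=0$ is Proposition \ref{initialholderregularityprop} combined with Lemma \ref{gradientsmoothatboundary}, which gives that $\partial \Omega$ is locally $\mathbb C^{1+\alpha, (1+\alpha)/2}$ and that $u\in \mathbb C^{1+\alpha, (1+\alpha)/2}(\overline \Omega)$, so that the hodograph function $\psi$ is in $\mathbb C^{1+\alpha, (1+\alpha)/2}$ up to the boundary of $U$ (by the third bullet of Remark \ref{transformremarks}, $\psi_n$ is bounded away from zero and the change of variables is non-degenerate). For the inductive step assume that, whenever $\log h \in \mathbb C^{j-1+\alpha,(j-1+\alpha)/2}$ for $1\leq j \leq k$, we have $\psi \in \mathbb C^{j+\alpha, (j+\alpha)/2}$; I will bootstrap to the next level using Lieberman's Schauder estimate (Theorem \ref{liebermanschauderestimate}).

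\medskip

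\noindent \textbf{Linearization via tangential difference quotients.} Rewrite the transformed equation \eqref{transformedequations} in the quasilinear form
\begin{equation*}
\psi_t - \sum_{i,j=1}^{n} a_{ij}(\nabla \psi)\psi_{ij} = b(\nabla\psi),
\end{equation*}
where $a_{ij}$ and $b$ are smooth functions of their arguments on the relevant range of $\nabla\psi$ (using $\psi_n>0$), and the matrix $(a_{ij})$ is uniformly elliptic on compacta of $U$. For a tangential direction $\ell\in\{1,\dots,n-1,t\}$ (with the usual parabolic scaling for $\ell=t$) and small $h$, set $v^h:= \delta^h_{e_\ell}\psi / |h|^{s}$ (or an analogous second difference) for an appropriate exponent $s$, and write the linearized equation satisfied by $v^h$:
\begin{equation*}
v^h_t - \sum_{i,j} a_{ij}(\nabla\psi)\,v^h_{ij} = F^h(X,t),
\end{equation*}
where $F^h$ involves $\delta^h_{e_\ell} a_{ij}(\nabla\psi)$ against $\psi_{ij}$ and $\delta^h_{e_\ell} b(\nabla\psi)$. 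Similarly, differencing the boundary relation \eqref{transformedboundary} produces an oblique condition
\begin{equation*}
\vec m(y,0,t)\cdot \nabla_Y v^h(y,0,t) = g^h(y,0,t),
\end{equation*}
where the coefficient vector $\vec m$ depends on $\psi_i$ and $h(y,\psi(y,0,t),t)$, and $g^h$ involves difference quotients of $h$ evaluated on the boundary together with $\psi$. Crucially, the normal component of $\vec m$ is $\tilde h(y,t)>0$, so Lieberman's hypothesis $m_n\geq c>0$ holds.

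\medskip

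\noindent \textbf{Bootstrap via Schauder estimates.} By the inductive hypothesis (and the second bullet of Remark \ref{transformremarks}, which promotes $\tilde h$ to the same regularity as $h$ once $\psi|_{\{y_n=0\}}$ is smoother than $h$), the coefficients $a_{ij}(\nabla\psi)$ lie in $\mathbb C^{k-1+\alpha,(k-1+\alpha)/2}$, $\vec m$ lies in $\mathbb C^{k-1+\alpha,(k-1+\alpha)/2}$ on $\{y_n=0\}$, and the forcing and boundary data $F^h,g^h$ (or their limits as $h\to 0$) lie in appropriate weighted H\"older spaces uniformly in $h$. Apply Theorem \ref{liebermanschauderestimate} with $a = k+1+\alpha$ and $b = k+\alpha$; the resulting uniform estimate
\begin{equation*}
\|v^h\|_{\mathbb C^{a,a/2}_{-b}(U)}\leq C\bigl(\|g^h\|_{\mathbb C^{b-1,(b-1)/2}} + \|v^h\|_{L^\infty} + \|F^h\|_{\mathbb C^{a-2,a/2-1}_{2-b}}\bigr)
\end{equation*}
passes to the limit $h\to 0$, yielding $\partial_\ell \psi \in \mathbb C^{k+\alpha,(k+\alpha)/2}$ for every tangential $\ell$. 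Once all tangential derivatives are controlled, the PDE itself recovers the missing $\psi_{nn}$ regularity by algebraic solution (using $\psi_n>0$), so $\psi\in \mathbb C^{k+1+\alpha,(k+1+\alpha)/2}$ in $\overline U$, which is exactly the statement that $\partial\Omega$ is $\mathbb C^{k+1+\alpha,(k+1+\alpha)/2}$.

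\medskip

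\noindent \textbf{Main obstacle.} The principal technical nuisance is the careful accounting of regularity at the boundary: the coefficient $\tilde h$ in the boundary condition is the composition $h(y,\psi(y,0,t),t)$, so gains in $\psi|_{\{y_n=0\}}$ must be used twice over, once to promote $\tilde h$ to the same order of smoothness as $h$ and again to feed into the Schauder estimate. A related subtlety is that the parabolic weight in $\partial_t$ requires one to run the difference-quotient argument along the half-integer time scale and to verify that the forcing $F^h$ has the right weighted growth near $\{y_n=0\}$, which uses the interior Schauder estimates to express $\psi_{nn}$ in terms of lower-order quantities and thereby keeps $\|F^h\|_{\mathbb C^{a-2,a/2-1}_{2-b}}$ finite. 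Once these bookkeeping issues are settled, the induction closes and the proposition follows.
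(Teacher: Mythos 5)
Your overall architecture (hodograph transform, tangential differences, Lieberman's oblique-derivative Schauder estimate, induction on $k$) is the same as the paper's, and for $k\geq 2$ your inductive step is essentially what the paper does once $\psi\in\mathbb C^{2+\alpha,1+\alpha/2}$ is in hand. The genuine gaps are in the low-order cases, which is exactly where the content of this proposition lies. First, your base case takes Proposition \ref{initialholderregularityprop} at the sharp exponent $\alpha$, but its proof only delivers $\mathbb C^{1+\beta,(1+\beta)/2}$ for some $\beta=\beta(n,\alpha)$ that is in general strictly smaller than $\alpha$ (the exponent comes from $\tilde C c_\theta^{\gamma}\geq\theta\geq c_\theta^{\alpha/2}$); the sharp exponent at level $k=0$ is precisely what the $k=0$ part of the paper's proof of Proposition \ref{sharpholderestimates} establishes, by a Jerison-type iteration in which each application of Theorem \ref{liebermanschauderestimate} is run at low order ($a=2+\eta$, $b=1+s$) and improves the exponent only from $s$ to roughly $\min\{\alpha,2s-\eta\}$. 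Starting your induction from the literal statement of Proposition \ref{initialholderregularityprop} therefore assumes away the main step.

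Second, your one-shot application of Theorem \ref{liebermanschauderestimate} at top order $a=k+1+\alpha$, $b=k+\alpha$ is not justified as stated. That theorem is an a priori estimate: it requires $v^h\in\mathbb C^{a,a/2}_{-b}(U)$ beforehand, and this membership must be produced (the paper does it by applying Theorem \ref{liebermandirichletestimates} on the slabs $\{x_n\geq\delta\}$ and letting $\delta\downarrow 0$, yielding the weighted bound used in \eqref{aprioriestimate} and its low-order analogue $\psi\in\mathbb C^{2+s,1+s/2}_{-1-s}$); you assert but do not verify the corresponding uniform bounds on $F^h$ and $g^h$. The verification fails in the time direction: $\tilde h$ has only parabolic regularity $(k+\alpha)/2$ in $t$, so a time-difference quotient normalized so as to converge to $\partial_t\psi$ has boundary data that are not uniformly bounded in the class $\mathbb C^{b-1,(b-1)/2}$, while normalizing by the parabolic length makes the quotient tend to zero and yields no limit statement. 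The paper circumvents this by never passing to derivative limits: it estimates second differences $\delta^2_{\varepsilon\vec v}D_i\psi$ directly in powers of $\|\varepsilon\vec v\|_p$, using the interpolation inequalities \eqref{htildeestimate} and \eqref{htildeestimatebutbetter} to quantify how much regularity of $\tilde h$ a difference can "see," converting weighted interior bounds into boundary H\"older bounds via \eqref{weightedgivesboundary}--\eqref{iterationstep}, and accepting a sub-optimal gain per step that is then iterated until the exponent reaches $\alpha$ (first $\mathbb C^{2+\gamma}$ for some $\gamma$, and only then the classical-Schauder-type bootstrap you describe). Without these ingredients your inductive step does not close at $k=0$ and $k=1$.
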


\begin{proof}
Let us first prove the theorem for $k = 0$. By Proposition \ref{initialholderregularityprop} and Remark \ref{transformremarks} we know that $\psi \in \mathbb C^{1 + s, (1+s)/2}(\overline{U})$. For any vector $\vec{v} \in \mathbb R^{n+1}$ with $v_n =0$ define $w^{\varepsilon}(X,t) = \psi((X, t) + 2\varepsilon \vec{v}) - \psi((X,t))$. It is then easy to check that $w^{\varepsilon}(X,t)$ satisfies the following oblique derivative problem: \begin{equation}\label{differencequotientequation} \begin{aligned} -\partial_t w^{\varepsilon}(X,t)-\sum_{ij}p_{ij}((X,t)+\varepsilon\vec{v})D_{ij}w^{\varepsilon}(X,t) &= f^{\varepsilon}(X,t),\; (X,t) \in U\\ \vec{m}(y,0,t)\cdot \nabla w^{\varepsilon}(y, 0, t) &= g^{\varepsilon}(y,0,t),\; (x,0,t) \in \overline{U}.\end{aligned}\end{equation} Where \begin{equation*}\begin{aligned}(p_{ij}(X,t)) &= \left( \begin{array}{ccccc}
-1 & 0 & 0 &\ldots & \frac{\psi_1(X, t)}{\psi_n(X, t)} \\
0 & -1 & 0 &\ldots & \frac{\psi_2(X,t)}{\psi_n(X,t)} \\
\vdots & 0 & \ddots & \ldots & \vdots \\
\frac{\psi_1(X, t)}{\psi_n(X, t)} & \ldots & \frac{\psi_i(X, t)}{\psi_n(X, t)} & \ldots & -\left(\frac{\sqrt{1+ \sum_{i=1}^{n-1} \psi_i(X, t)^2}}{\psi_n(X, t)}\right)^2  \end{array} \right),\\ 
\vec{m}_j(y,0,t) &= \int_0^1 \frac{\partial}{\partial \psi_j}G(\nabla_X\psi(y,0,t) + s(\nabla_X\psi((y,0,t) +\varepsilon\vec{v}) - \nabla_X\psi(y,0,t)))ds,\\
G(\nabla_X \psi) &= \frac{\sqrt{1+\sum_{i=1}^{n-1}\psi_i^2}}{\psi_n},\\
f^{\varepsilon}(X,t)&= \sum_{ij}(p_{ij}(X, t) - p_{ij}((X,t)+\varepsilon \vec{v}))D_{ij}\psi(X,t),\\
g^{\varepsilon}(y, 0,t)&= \tilde{h}((y,0,t)+\varepsilon\vec{v}) - \tilde{h}(y, 0, t).
\end{aligned}
\end{equation*}

It is a consequence of $\psi_n > 0$ (see Remark \ref{transformremarks}) that $(p_{ij})$ is uniformly elliptic and $m_n(y,0,t) \geq c  > 0$ is uniformly in $(y,t)$. Furthermore, $p_{ij}(X+\varepsilon{v},t)$ and $m(y+\varepsilon\vec{v},0,t)$ are $C^{s, s/2}$ H\"older continuous, uniformly in $\varepsilon, \vec{v}$.  

To apply Theorem \ref{liebermanschauderestimate} we need that $w^{\varepsilon}$ is in a weighted H\"older space. Apply Theorem \ref{liebermandirichletestimates} with $a = 2+s, b = 1+s$ to $\psi$ in the space $U \cap \{x_n \geq \delta\}$ and let $\delta\downarrow 0$ to obtain the {\it a priori} estimate that $\psi \in \mathbb C^{2+s, 1+s/2}_{-1-s}(U)$ (note that $\psi|_{x_n = \delta}$ is uniformly in $\mathbb C^{1+s, (1+s)/2}$ by Remark \ref{transformremarks}). As such,  $w^{\varepsilon} \in \mathbb C^{2+\eta, 1+\eta/2}_{-1-s}$, uniformly in $\varepsilon > 0$ for any $0 < \eta << s$.

We will now compute the H\"older norm of $g^{\varepsilon}$ and the weighted H\"older norm of $f^{\varepsilon}$. For any $(x,0,t),(y,0,r)\in \overline{U},$ \begin{equation}\label{htildeestimate} \begin{aligned} 2\|\tilde{h}\|_{\mathbb C^{\alpha, \alpha/2}} (|x-y|^{s} +|t-r|^{s/2}) \varepsilon^{\alpha-s} \geq&\\
 \min\{2(|x-y|^\alpha + |t-r|^{\alpha/2})\|\tilde{h}\|_{\mathbb C^\alpha}, 2\varepsilon^\alpha\|\tilde{h}\|_{\mathbb C^{\alpha}}\}\geq&\\ |\tilde{h}(x,0,t) -\tilde{h}((x,0,t)+\varepsilon\vec{v}) - \tilde{h}(y,0,r) +\tilde{h}((y,0,r)+\varepsilon\vec{v})|.&\end{aligned}\end{equation} Thus $\|\tilde{h}(-)-\tilde{h}(-+\varepsilon\vec{v})\|_{\mathbb C^{s, s/2}}\leq C\varepsilon^{\alpha-s}$. 
 
For $d > 0$ we have that $$|f^{\varepsilon}(x, d, t) - f^{\varepsilon}(y,d, r)| \leq |p_{ij}((x,d,t) + \varepsilon\vec{v}) - p_{ij}(x,d,t) - p_{ij}((y,d,r)+ \varepsilon \vec{v}) + p_{ij}(y,d,r)||D_{ij}\psi(x,d,t)| $$$$+ |p_{ij}((y,d,r)+\varepsilon \vec{v}) - p_{ij}(y,d,r)||D_{ij}\psi(x,d,t) - D_{ij}\psi(y,d,r)|.$$ Arguing as in equation \eqref{htildeestimate} we can bound the first term on the right hand side by $$
d^{s-\eta-1}\|\psi\|_{\mathbb C^{2+\eta, 1+\eta/2}_{-1-s}(U)}(|x-y|^\eta + |t-r|^{\eta/2})\|\nabla \psi\|_{\mathbb C^{s, s/2}}\varepsilon^{s-\eta}.$$ The second term is more straightforward and can be bounded by $$\|\nabla \psi\|_{\mathbb C^{s, s/2}} \varepsilon^s d^{s-\eta-1}\|\psi\|_{\mathbb C^{2+\eta, 1+\eta/2}_{-1-s}(U)}(|x-y|^\eta + |t-r|^{\eta/2}).$$ Therefore $\|f^{\varepsilon}\|_{\mathbb C^{\eta, \eta/2}_{1-s}} \leq C(\varepsilon^{s-\eta} + \varepsilon^s)$. 

We may apply Theorem \ref{liebermanschauderestimate} to obtain \begin{equation}\label{weightedwepsilonestimate}\|w^{\varepsilon}\|_{\mathbb C^{2+\eta, 1+\eta/2}_{-1-s}} \leq C(\varepsilon^{s-\eta}+ \varepsilon^s + \varepsilon^{\alpha-s} + \varepsilon)\end{equation} (as $\|w^\varepsilon\|_{L^\infty} \leq C\varepsilon$). The reader may be concerned that $w^{\varepsilon}$ is not zero on the boundary of $U$ away from $\{x_n = 0\}$. However, we can rectify this by multiplying with a cutoff function and hiding the resulting error on the right hand side (see, e.g. the proof of Theorem 6.2 in \cite{adn1}).

We claim that if $w^{\varepsilon} \in \mathbb C^{2+\eta, 1+\eta/2}_{-1-s}$ then in fact $w_\varepsilon|_{\{x_n = 0\}}\in \mathbb C^{1+s, (1+s)/2}$ for any $\eta > 0$. For any $i = 1,...,n-1$, the fundamental theorem of calculus gives (recall the notation in equation \eqref{seconddifference} and \eqref{paraboliclength}) \begin{equation}\label{weightedgivesboundary}\begin{aligned} &|\delta^2_{\vec{v}}D_iw^{\varepsilon}(x, 0 ,t)| \leq |\delta^2_{\vec{v}} D_iw^{\varepsilon}(x, \|\vec{v}\|_p, t)| + \int_0^{\|\vec{v}\|_p}|\delta^2_{\vec{v}}w_{in}^{\varepsilon}(x, r, t)| dr\\ & \leq C\|w^{\varepsilon}\|_{\mathbb C^{2+\eta, 1+\eta/2}_{-1-s}}\|\vec{v}\|_p^{-1+s-\eta}\|\vec{v}\|_p^{1+\eta}+ C\|w^{\varepsilon}\|_{\mathbb C^{2+\eta, 1+\eta/2}_{-1-s}}\int_0^{\|\vec{v}\|_p} \frac{\|\vec{v}\|_p^{\eta}}{r^{1-(s-\eta)}}dr \\ &\leq C_{s,\eta}\|w^{\varepsilon}\|_{\mathbb C^{2+\eta, 1+\eta/2}_{-1-s}} \|\vec{v}\|_p^{s}.\end{aligned}\end{equation} 

Therefore,  for $i = 1,...,n-1$ we have \begin{equation}\label{iterationstep} \begin{aligned} |\delta^2_{\varepsilon \vec{v}} D_i\psi(x,0,t)| =&  |D_i w^\varepsilon((x,0,t)) - D_i w^{\varepsilon}((x,0,t)-\varepsilon\vec{v})|\\ &\leq \|D_x w^{\varepsilon}\|_{\mathbb C^{s, s/2}} \|\varepsilon \vec{v}_x\|_p^{s}\\ &\stackrel{\mathrm{eqn}\; \eqref{weightedgivesboundary}}{\leq} C_{s,\eta}\|w^\varepsilon\|_{\mathbb C^{2+\eta, 1+\eta/2}_{-1-s}} \|\varepsilon \vec{v}_x\|_p^{s}\\
&\stackrel{\mathrm{eqn}\;\eqref{weightedwepsilonestimate}}{\leq} C_{s,\eta}(\varepsilon^{s-\eta} + \varepsilon^{\alpha-s}+ \varepsilon + \varepsilon^{s})\|\varepsilon\vec{v}\|_p^{s}.\end{aligned}\end{equation} If $\|\vec{v}\|_p = 1$ and points either completely in a spacial or the time direction then we can conclude that $\psi \in \mathbb C^{1+\beta, (1+\beta)/2}(\overline{U}\cap \{x_n =0\})$, where $\beta = \min\{\alpha, 2s-\eta\}$. As such, $\partial \Omega$ is a $\mathbb C^{1+\beta, (1+\beta)/2}$ domain and, invoking Remark \ref{transformremarks}, we can conclude that $\psi \in \mathbb C^{1+\beta, (1+\beta)/2}(\overline{U})$. Repeat this argument until $\beta = \alpha$ to get optimal regularity.  

 When $k =1$ (that is $\log(h) \in \mathbb C^{1+\alpha, (1+\alpha)/2}$) we want to show that $\psi \in \mathbb C^{2+s, 1+s/2}$ for some $s$. Then we will invoke classical Schauder theory. We can argue almost exactly as above, except that equation \eqref{htildeestimate} cannot detect regularity in $\log(h)$ above $\mathbb C^{1,1/2}$. The argument above tells us that $\psi \in \mathbb C^{1+s, (1+s)/2}$ for any $s < 1$, thus, $\tilde{h} \in \mathbb C^{1+\alpha, (1+\alpha)/2}$. 

Let $\vec{\xi}, \vec{v} \in \mathbb R^{n+1}$ be such that $\xi_n, v_n = 0$. Then, in the same vein as equation \eqref{htildeestimate}, we can estimate \begin{equation}\label{htildeestimatebutbetter} \begin{aligned} |\delta^2_{\vec{\xi}}\tilde{h}((x,0,t)+\vec{v}) - \delta^2_{\vec{\xi}}\tilde{h}((x,0,t))|\leq& \|h\|_{\mathbb C^{1+\alpha, (1+\alpha)/2}} \min\{2\|\vec{\xi}\|_p^{1+\alpha}, 3\|\vec{v}\|_p\}\\ \leq& 3\|h\|_{\mathbb C^{1+\alpha, (1+\alpha)/2}}\|\vec{\xi}\|_p^s\|\vec{v}\|_p^{1-\frac{s}{1+\alpha}}.\end{aligned}\end{equation}
 Consequently, $\|\tilde{h}(-)-\tilde{h}(-+\vec{v})\|_{\mathbb C^{s, s/2}}\leq C\|\vec{v}\|_p^{1-\frac{s}{1+\alpha}}$. We may then repeat the argument above until we reach equation \eqref{weightedwepsilonestimate}, which now reads \begin{equation}\label{weightedwepsilonestimatepart2}
 \|w^\varepsilon\|_{\mathbb C^{2+\eta, 1+\eta/2}_{-1-s}} \leq C(\varepsilon^{s-\eta} + \varepsilon^s + \varepsilon^{1-\frac{s}{1+\alpha}} + \varepsilon).
 \end{equation}
 
Resume the argument until equation \eqref{iterationstep}, which is now, \begin{equation}\label{iterationstep2}
|\delta_{\varepsilon \vec{v}}^2D_i\psi(x,0,t)| \leq C_{s,\eta}(\varepsilon^{s-\eta} + \varepsilon^s + \varepsilon^{1-\frac{s}{1+\alpha}} + \varepsilon)\|\varepsilon \vec{v}\|^s_p.
\end{equation}
If $\|\vec{v}\|_p =1$ and points either completely in the spacial or time direction then we can conclude that $\psi\in \mathbb C^{1+\beta, (1+\beta)/2}(\{x_n =0\})$ where $\beta =  \min\{2s-\eta, 1+s, 1+s-\frac{s}{1+\alpha}\}$. Pick $\eta, s$ such that $2s-\eta > 1$ and we have that there is some $\gamma \in (0,1)$ such that $\psi\in \mathbb C^{2+\gamma, 1+\gamma/2}(\{x_n =0\})$.  Remark \ref{transformremarks} ensures that  \begin{equation}\label{aprioriestimate} \psi \in \mathbb C^{2+\gamma,1+ \gamma/2}(\overline{U}),\end{equation} for some $\gamma \in (0,1)$.

Now that we have the {\it a priori} estimate \eqref{aprioriestimate}, we may apply Theorem \ref{liebermanschauderestimate} to $w^\varepsilon$, the solution of equation \eqref{differencequotientequation}, but with $a = 2+\beta, b = 2+\beta$.  In this form, Theorem \ref{liebermanschauderestimate} comports with classical Schauder theory.  An iterative argument one similar to the above, but substantially simpler, yields optimal regularity. In fact, we can use the same iterative argument to prove that $\psi \in \mathbb C^{k+\alpha,(k+ \alpha)/2}$ given $\psi \in \mathbb C^{k-1+\alpha,(k-1+ \alpha)/2}$. Thus the full result follows. 
\end{proof}

We have almost completed a proof of Theorem \ref{loghholdercontinuous}--we need only to discuss what happens when $\log(h)$ is analytic.  However,  the anisotropic nature of the heat equation makes analyticity the wrong notion of regularity to consider here. Instead we recall the second Gevrey class:

\begin{defin}\label{gevrydef}
We say that a function $F(X,t)$ is analytic in $X$ and of the second Gevrey class in $t$ if there are constants $C, \kappa$ such that $$|D_X^\ell \partial_t^m F| \leq C\kappa^{|\ell| + 2m}(|\ell| +2m)!.$$
\end{defin}

Let us recall Theorem \ref{loghholdercontinuous}:
\begin{thm*}[Theorem \ref{loghholdercontinuous}]
Let $\Omega\subset \mathbb R^{n+1}$ be a parabolic regular domain with $\log(h) \in \mathbb C^{k+\alpha, (k+\alpha)/2}(\mathbb R^{n+1})$ for $k \geq 0$ and $\alpha \in (0,1)$.  There is a $\delta_n > 0$ such that if $\delta_n \geq \delta > 0$ and $\Omega$ is $\delta$-Reifenberg flat then $\Omega$ is a $\mathbb C^{k+1+\alpha, (k+1+\alpha)/2}(\mathbb R^{n+1})$ domain. 

Furthermore, if $\log(h)$ is analytic in $X$ and in the second Gevrey class in $t$ then, under the assumptions above, we can conclude that $\Omega$ is the region above the graph of a function which is analytic in the spatial variables and in the second Gevrey class in $t$. Similarly, if $\log(h) \in C^{\infty}$ then $\partial \Omega$ is locally the graph of a $C^\infty$ function.
\end{thm*}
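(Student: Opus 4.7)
\medskip

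\noindent\textbf{Proof proposal for Theorem \ref{loghholdercontinuous}.}

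The $C^\infty$ claim is almost immediate from what has already been done: if $\log(h) \in C^\infty(\mathbb{R}^{n+1})$, then $\log(h) \in \mathbb{C}^{k+\alpha, (k+\alpha)/2}$ for every $k \in \mathbb{N}$ and every $\alpha \in (0,1)$. Applying Proposition \ref{sharpholderestimates} for each such $k$ shows that $\partial \Omega$ is locally a $\mathbb{C}^{k+1+\alpha, (k+1+\alpha)/2}$ graph for every $k$, hence is locally the graph of a $C^\infty$ function of $(x,t)$. The first part of the theorem is exactly Propositions \ref{initialholderregularityprop} and \ref{sharpholderestimates}, so the only substantive content left is the Gevrey/analytic regularity.

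For the analytic/Gevrey claim, I would start from the partial hodograph transform introduced before Remark \ref{transformremarks}, which locally realizes $\partial \Omega$ as the graph of $x_n = \psi(y,0,t)$ for a function $\psi$ defined in a neighborhood $\overline{U}$ of $(0,0)$ in the closed upper half space. By the $C^\infty$ part already proved, $\psi \in C^\infty(\overline{U})$. The function $\psi$ solves the quasilinear parabolic equation \eqref{transformedequations} in $U$ together with the fully nonlinear oblique boundary condition \eqref{transformedboundary}, and by Remark \ref{transformremarks} the right-hand side $\tilde{h}(y,t) := h(y,\psi(y,0,t),t)$ inherits the analytic-in-$y$, Gevrey-2-in-$t$ regularity from $h$. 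Since $\psi_n > 0$ on $\overline{U}$, the nonlinearity on the boundary is nondegenerate in the $y_n$-derivative, so the Kinderlehrer--Nirenberg scheme of \cite{kinderlehrernirenberganalyticity} is applicable. The plan is therefore to establish that $\psi$ itself is analytic in $y$ and of the second Gevrey class in $t$ in $\overline{U}$, which, restricted to $\{y_n = 0\}$, gives the desired regularity of $\partial \Omega$.

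The core of the Kinderlehrer--Nirenberg method is to differentiate the PDE and the boundary condition and bound $\delta_{\vec v}^2 \psi$ and higher differences of derivatives using the weighted Schauder estimates of Theorems \ref{liebermandirichletestimates} and \ref{liebermanschauderestimate}, in the same spirit as Proposition \ref{sharpholderestimates}. Specifically, for tangential directions $\vec v = (v_x, 0, 0)$ or $(0,0, v_t)$, one inductively establishes estimates of the form
\begin{equation*}
\|D_y^\ell \partial_t^m \psi\|_{\mathbb{C}^{2+\alpha, 1+\alpha/2}_{-1-\alpha}(U')} \leq C\, \kappa^{|\ell| + 2m}\, (|\ell| + 2m)!,
\end{equation*}
for $(y,t)$-tangential derivatives of order $(|\ell|, m)$, on a slightly smaller domain $U' \subset U$. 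The parabolic nature of the problem requires that a $t$-derivative count as two spatial derivatives, which is exactly the reason Gevrey class $2$, rather than analyticity, is the correct regularity in $t$. Once tangential Gevrey bounds are in place, normal ($y_n$) derivatives are recovered from the equation: solving \eqref{transformedequations} for $\psi_{nn}$ expresses it as a polynomial expression in lower-order and tangential derivatives divided by powers of $\psi_n$, and an inductive argument converts tangential Gevrey bounds into full Gevrey bounds, producing the constants $C, \kappa$ of Definition \ref{gevrydef}.

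The main obstacle, and the place where care is needed, is the combinatorial bookkeeping: one must track how the constants in the weighted Schauder estimates interact with the Faa di Bruno expansion of derivatives of the nonlinear boundary condition \eqref{transformedboundary} and of the coefficients $p_{ij}$ appearing in \eqref{differencequotientequation}. Each differentiation produces a sum of products of lower-order derivatives, and one must verify that the factorial growth on the right-hand side is absorbed by $(|\ell|+2m)!$ and not by a worse combinatorial factor; this is a standard but delicate induction where one fixes a shrinking sequence of subdomains $U_j \subset U$ and distributes a small loss of domain at each differentiation step. Since the coefficients in the transformed equation are rational functions of $\nabla \psi$ (with denominator $\psi_n$ bounded away from zero) and since $\tilde h$ has exactly the Gevrey-in-$t$, analytic-in-$y$ regularity required, the Kinderlehrer--Nirenberg induction goes through essentially as in the stationary case, with the single modification that $\partial_t$ carries weight $2$ throughout. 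Once this inductive estimate is closed, restricting to $\{y_n = 0\}$ yields the claimed Gevrey-analytic graph representation for $\partial \Omega$, completing Theorem \ref{loghholdercontinuous}.
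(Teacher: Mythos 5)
Your proposal is correct and follows essentially the same route as the paper: the H\"older part is exactly Propositions \ref{initialholderregularityprop} and \ref{sharpholderestimates}, the $C^\infty$ case follows by iterating them, and the analytic/Gevrey case is handled through the hodograph-transformed problem \eqref{transformedequations}--\eqref{transformedboundary} for $\psi$ with $\psi_n>0$ and $\tilde h$ inheriting the regularity of $h$. The only difference is that where you propose to re-run the Kinderlehrer--Nirenberg induction (weighted Schauder estimates plus the Faa di Bruno bookkeeping with $\partial_t$ counted twice), the paper simply invokes their Theorem 1 as stated in Theorem \ref{gevreyclasstheorem}, noting that the remarks following it (their equation 2.11) extend the result from Dirichlet to the oblique boundary condition satisfied by $\psi$ --- an observation you would also need to justify applying their scheme to \eqref{transformedboundary}.
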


It is clear that Proposition \ref{sharpholderestimates} implies the above theorem except for the statement when $\log(h)$ analytic in $X$ and second Gevrey class in $t$. This follows from a theorem of Kinderlehrer and Nirenberg \cite{kinderlehrernirenberganalyticity}, Theorem 1, which we present (in a modified version, below).

\begin{thm}\label{gevreyclasstheorem}[Modified Theorem 1 in \cite{kinderlehrernirenberganalyticity}]
Let $v \in C^\infty(\overline{U})$ be a solution to $$\begin{aligned} -F(D^2v, Dv, v, X,t) + v_t = 0\; &(X,t) \in U\\
v_n = \Phi(\partial_1v,....,\partial_{n-1}v, v, x, t),\; &(x,0,t) \in \overline{U}\cap \{x_n=0\}\end{aligned}$$ where $(F_{v_{ij}})$ is a positive definite form. Assume that $F$ is analytic in $D^2v, Dv$ and $\Phi$ is analytic in $\partial_x v$ and $v$.  If $F, \Phi$ are analytic in $X$ and in the second Gevrey class in $t$ then $v$ is analytic in $X$ and in the second Gevrey class in $t$. 
\end{thm}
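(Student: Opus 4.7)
The strategy is to imitate the classical Kinderlehrer--Nirenberg argument (\cite{kinderlehrernirenberganalyticity}, Theorem 1) for fully nonlinear elliptic systems, but replacing the isotropic Euclidean order $|\ell|+m$ by the \emph{parabolic order} $|\ell|+2m$ throughout. Since by our regularity hypothesis $v \in C^\infty(\overline{U})$, the task is purely quantitative: produce constants $C, \kappa < \infty$, uniform on compact subsets $K \subset \overline{U}$, such that
\begin{equation*}
\|D_X^\ell \partial_t^m v\|_{L^\infty(K)} \leq C \kappa^{|\ell|+2m}\,(|\ell|+2m)!
\end{equation*}
for every multi-index $\ell$ and every $m \geq 0$. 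I will work in a family of nested half-cylinders $U_\rho \subset U$ shrinking toward a fixed interior compact set, with the familiar ``loss of neighborhood'' weights $(\rho-\rho')^{-N}$ controlling each Schauder-type step.

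The first main step is a \emph{tangential estimate}. Let $\mathcal{T}$ denote the linear span of $\partial_{x_1},\ldots,\partial_{x_{n-1}}$ (order one) together with $\partial_t$ (order two). Differentiating the PDE and the oblique boundary condition by an operator $D_\tau \in \mathcal{T}$ yields a new system of exactly the same structural form, whose new nonlinearities are analytic polynomials in the derivatives of $v$ with coefficients that are themselves analytic in $X$ and second--Gevrey in $t$ (by hypothesis on $F, \Phi$). Iterating, any composition $D_\tau^p$ produces, via Fa\`a di Bruno, an analytic expression in derivatives of $v$ whose total parabolic order is $p$. Invoking Theorem \ref{liebermanschauderestimate} on the nested pair $U_{\rho'} \subset U_\rho$ gives Schauder-type bounds of the schematic form
\begin{equation*}
(\rho-\rho')^{p}\,\|D_\tau^p v\|_{\mathbb{C}^{2+\alpha,1+\alpha/2}(U_{\rho'})} \leq C\left( \|D_\tau^p v\|_{L^\infty(U_\rho)} + \text{Fa\`a di Bruno terms} \right).
\end{equation*}
The Fa\`a di Bruno terms are sums, indexed by partitions $p=i_1+\cdots+i_k$, of products $\prod_j \|D_\tau^{i_j} v\|$. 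An induction in $p$, together with the combinatorial identity $\sum_{i_1+\cdots+i_k=p} \prod i_j! \leq C^p\,p!$, then yields $\|D_\tau^p v\|_{L^\infty(U_{\rho'})} \leq M^{p+1} p! (\rho-\rho')^{-2p}$ for a universal $M$. Since $\partial_t \in \mathcal{T}$ counts as parabolic order two, this is precisely the Gevrey-2 bound in the tangential directions.

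The second main step is to convert tangential control into control of the normal derivative $\partial_n^k v$. The positive-definiteness of $(F_{v_{ij}})$ gives $F_{v_{nn}} \neq 0$, so by the analytic implicit function theorem we may solve the PDE for the single entry $v_{nn}$ and obtain $v_{nn} = G(v, Dv, \{v_{ij}\}_{(i,j)\neq(n,n)}, v_t, X, t)$ with $G$ analytic in its functional arguments and analytic/Gevrey-2 in $(X,t)$. Applying $\partial_n^{k-2}$ to both sides and expanding via Fa\`a di Bruno expresses $\partial_n^k v$ as an analytic polynomial in derivatives of $v$ each of which has strictly fewer than $k$ normal derivatives. Induction on the number of normal derivatives, fed by the tangential Gevrey bound from the previous paragraph and the combinatorial identity above, delivers the desired estimate for arbitrary mixed derivatives.

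The main obstacle will be the bookkeeping in this final induction, specifically verifying that the parabolic weight $|\ell|+2m$ is preserved by the Fa\`a di Bruno sums when $\partial_t$ and $\partial_{x_i x_j}^2$ are differentiated together. The key point is that in the expression for $v_{nn}$, the argument $v_t$ contributes weight $2$ per derivative while $v_{ij}$ contributes weight $2$ per pair as well, so differentiating the composition $G(\cdots)$ once in any tangential direction raises the weight by exactly that direction's parabolic order. With this accounting, the combinatorial estimate closes and the constants $M, \kappa$ remain bounded uniformly in the total parabolic order, giving the claimed Gevrey-2 regularity of $v$.
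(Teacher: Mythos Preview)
The paper does not actually prove this theorem. It is stated as a citation of Theorem~1 in \cite{kinderlehrernirenberganalyticity}, with the remark immediately following that the original is written for Dirichlet data but ``the remarks after the theorem (and, especially, equation 2.11 there) show that the result applies to Neumann conditions also.'' So there is nothing to compare your proposal against in the paper itself; the paper treats this as a black box imported from the literature.

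That said, your outline is a faithful sketch of the Kinderlehrer--Nirenberg method: tangential Gevrey bounds via iterated Schauder estimates with Fa\`a di Bruno bookkeeping, followed by recovery of normal derivatives by solving the equation for $v_{nn}$ using ellipticity ($F_{v_{nn}}\neq 0$). This is the right architecture. A few cautions if you intend to flesh it out. First, the Schauder input you want here is the classical one with loss-of-domain weights, not the weighted estimate of Theorem~\ref{liebermanschauderestimate}; the latter is tuned for low-regularity coefficients and does not directly give the clean $(\rho-\rho')^{-p}$ scaling your induction needs. Second, the oblique boundary condition $v_n = \Phi(\partial_1 v,\ldots,\partial_{n-1}v,v,x,t)$ must be differentiated tangentially along with the interior equation, and you need the analogous Schauder estimate for oblique problems at each step---this is exactly what equation~2.11 in \cite{kinderlehrernirenberganalyticity} provides and what the paper is pointing to. Third, your combinatorial claim $\sum_{i_1+\cdots+i_k=p}\prod i_j! \le C^p p!$ is the right flavor but needs the precise majorant-series formulation from \cite{kinderlehrernirenberganalyticity} to close; the raw inequality as stated is not quite correct without further structure on the sum.
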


 In \cite{kinderlehrernirenberganalyticity}, Theorem 1 is stated for Dirichlet boundary conditions. But the remarks after the theorem (and, especially, equation 2.11 there) show that the result applies to Neumann conditions also. Finally, it is easy to see that if $\log(h)$ is analytic in $X$ and in the second Gevrey class in $t$ then $\psi$ satisfies the hypothesis of Theorem \eqref{gevreyclasstheorem}. This  finishes the proof of Theorem \ref{loghholdercontinuous}. 
 
\appendix

\section{Classification of ``Flat Blowups"}\label{proofofflatprop} A key piece of the blowup argument is a classification of ``flat blowups"; Theorem \ref{poissonkernel1impliesflat}.  The proof of this theorem follows from two lemmas which are modifications of results in Andersson and Weiss \cite{anderssonweiss}. Before we begin, let us try to clarify the relationship between this work and that of \cite{anderssonweiss}.

As was mentioned in the introduction, the results in \cite{anderssonweiss} are for solutions in the sense of domain variations to a problem arising in combustion. Although this is the natural class of solutions to consider when studying their problem (see the introduction in \cite{weissregularity}), the definition of these solutions is quite complex and it is unclear whether the parabolic Green function at infinity satisfies it. For example, neither the integral bounds on the growth of time and space derivatives (see the first condition in Definition 6.1 in \cite{weissregularity}) nor the monotonicity formula (see the second condition in Definition 6.1 in \cite{weissregularity}), clearly hold {\it a priori} for functions $u$ which satisfy the conditions of Theorem \ref{poissonkernel1impliesflat}.

 Upon careful examination of \cite{anderssonweiss} we identified which properties of solutions in the sense of domain variations were crucial to the proof. The first of these was the following ``representation" formula which holds in the sense of distributions for almost every time, $t_0$, (see Theorem 11.1 in \cite{weissregularity}) \begin{equation}\label{awrepresentation}\Delta u - \partial_t u|_{t_0} = \mathcal H^{n-1}|_{R(t_0)} + 2\theta(t_0, -)\mathcal H^{n-1}|_{\Sigma_{**}(t_0)} + \lambda(t)|_{\Sigma_z(t_0)}.\end{equation} Without going too deep into details, we should think of $R(t_0) \subset \partial \{u > 0\}$ as boundary points which are best behaved. In particular, blowups at these points are plane solutions with slope 1. $\Sigma_{**}(t_0)$ points are also regular in the sense that the set is countable rectifiable and the blowups are planes, but the slope of the blowup solution is $2\theta(t_0, -) < 1$. $\Sigma_z(t_0)$ consists of singular points at which blowups may not be planes.  We observe that this situation is much more complicated than the elliptic one in \cite{altcaf}, but, as is pointed out in the introduction \cite{weissregularity}, this is an unavoidable characteristic of solutions to the combustion problem. 

Lemma \ref{normalderivativeatregularpoint} (which corresponds to Lemma 5.1 in \cite{anderssonweiss} and is a parabolic version of Lemma 4.10 in \cite{altcaf}) bounds from below the slope of blowup solutions at points which have an exterior tangent ball. For solutions in the sense of domain variation, points in $\Sigma_{**}(t_0)$ complicate matters and thus more information than is given by \eqref{awrepresentation} is needed. Indeed, it is mentioned in \cite{anderssonweiss} that defining solutions merely as those which satisfy \eqref{awrepresentation} would not be sufficient to implement their approach. However, in our setting, Lemma \ref{basicallyhbiggerthan1} says that $k\geq 1$ at almost every point on the free boundary which suffices to show that any blowup at a regular point must have slope at least 1. 

Another property of solutions in the sense of domain variations which is critical to \cite{anderssonweiss} is that $|\nabla u(X,t)|^2$ always approaches a value less than or equal to 1 as $(X,t)$ approaches the free boundary (Lemma 8.2 in \cite{anderssonweiss}). In our setting we know that $|\nabla u| \leq 1$ everywhere (by Proposition \ref{maingradientbound}) and so we need not worry. That $|\nabla u| \leq 1$ (along with Lemma \ref{iflipschitzthenregularintime}, proven in Section \ref{initialholderregularity} above), implies that blowups of $u$ are precompact in the $\Lip(1,1/2)$ norm--which is another property of domain variations that is used in \cite{anderssonweiss}. Finally, it is important to the arguments in \cite{anderssonweiss} that the set $R(t_0)$ is rectifiable (e.g. in order to apply integration by parts).  In the setting of Proposition \ref{poissonkernel1impliesflat}, Ahlfors regularity lets us apply integration by parts as well (albeit, we must be more careful. See, e.g., the proof of Lemma \ref{normalderivativeiszero} below). 

By finding appropriate substitutes for the relevant properties of domain variations (as described above) we were able to prove that the results of \cite{anderssonweiss} apply mostly unchanged to $u, k, \Omega$ which satisfy the hypothesis of Proposition \ref{poissonkernel1impliesflat}. However, we needed to make an additional modification, as the conclusions of Proposition \ref{poissonkernel1impliesflat} are global whereas the main theorem in \cite{anderssonweiss} is a local regularity result (see Corollary 8.5 there). In particular, Theorem 5.2 and Lemma 8.1 in \cite{anderssonweiss} roughly state that if a solution is flat in a certain sense in a cylinder, then it is even flatter in another sense in a smaller cylinder {\it whose center has been translated in some direction}. This translation occurs because the considered cylinders contain a free boundary point that is centered in the space variables but in the ``past" in the time coordinate. We want to improve flatness at larger and larger scales, so we cannot allow our cylinders to move in this manner (otherwise our cylinder might drift off to infinity). 

To overcome this issue, we introduce the concept of ``current flatness" (see Definition \ref{strongcurrentflatness}). However, the parabolic equation is anisotropic, so centering our cylinder in time means that we have to accept weaker results. This leads to the notion of ``weak current flatness" (Definition \ref{weakcurrentflatness}). Unfortunately, the qualitative nature of weak flatness is not always sufficient so we still need to prove some results for ``past flatness" (as introduced in \cite{anderssonweiss}). We also remark that this idea of ``current flatness" could be used in the setting of domain variations to analyze the global properties of those solutions.

Let us recall Theorem \ref{poissonkernel1impliesflat};

\begin{thm*}[Theorem \ref{poissonkernel1impliesflat}]
Let $\Omega_\infty$ be a $\delta$-Reifenberg flat parabolic regular domain with Green function at infinity $u_\infty$ and associated parabolic Poisson kernel $h_\infty$ (i.e.  $h_\infty = \frac{d\omega_\infty}{d\sigma}$). Furthermore, assume that $|\nabla u_\infty| \leq 1$ in $\Omega_\infty$ and $|h_\infty| \geq 1$ for $\sigma$-almost every point on $\partial \Omega_\infty$. There exists a $\delta_n > 0$ such that if $\delta_n \geq \delta > 0$ we may conclude that, after a potential rotation and translation, $\Omega_\infty = \{(X,t)\mid x_n > 0\}$. 
\end{thm*}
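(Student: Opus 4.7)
The plan is to adapt the Andersson--Weiss iterative flatness improvement scheme, whose structure in the H\"older-regular regime is already laid out in Lemmas \ref{currentholderflatonzerosideisflatonpositiveside} and \ref{currentholderflatonbothsidesisextraflatonzeroside} above. Here only the qualitative bounds $|\nabla u_\infty| \leq 1$ and $h_\infty \geq 1$ a.e.\ are available, so I will introduce variants of current flatness and weak current flatness stripped of any oscillation condition on $h$: roughly, $u_\infty \in CF(\sigma_1,\sigma_2)$ in $C_\rho(Q,\tau)$ in direction $\nu$ will ask only for one-sided vanishing $\{u_\infty = 0\}$ to the left of a $\sigma_1 \rho$-slab and the growth $u_\infty(Y,s) \geq ((Y-Q)\cdot \nu - \sigma_2\rho)^+$ to its right.

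The input is Reifenberg flatness: for $\delta \leq \delta_n$ we obtain, after a rotation at each $(Q,\tau) \in \partial \Omega_\infty$ and each scale $\rho > 0$, weak current flatness of $u_\infty$ in $C_\rho(Q,\tau)$ with constants comparable to $\delta$. The theorem then rests on two ping-pong steps: (i) weak flatness on the zero side upgrades to genuine two-sided current flatness at a half-scale, proved by erecting a supersolution barrier on a dumbbell-shaped domain enveloping $\Omega_\infty$ and invoking the Hopf lemma together with $h_\infty \geq 1$ at the touching point, exactly as in Lemma \ref{holderflatonzerosideisflatonpositiveside} but with the hypothesis $h \geq 1$ replacing both the upper and the oscillation bounds used there; and (ii) two-sided current flatness at scale $\rho$ implies weak flatness with a strictly smaller constant at a fractionally smaller scale and a slightly rotated direction, proved by a blowup--contradiction argument in which a sequence $u_k$ of renormalized solutions has free boundaries converging (via Lemma \ref{iflipschitzthenregularintime} and the gradient bound \ref{maingradientbound}) to the graph of a continuous function $f$, with $w_k = (u_k - (1+\tau_k)h(0,0)x_n)/(h(0,0)\sigma_k)$ tending locally uniformly to an adjoint-caloric $w$ on the half-space whose Neumann trace vanishes on $\{x_n =0\}$; reflection yields a global smooth $w$, hence $f \in C^\infty$, contradicting the presumed flatness deficit. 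The a.e.\ nature of $h \geq 1$ will be absorbed, as in Lemma \ref{hatinfinityisgreaterthan1}, by using the inequality only under integral pairings and weak limits.

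Composing (i) and (ii) yields an iterable improvement of the form: weak flatness with constant $\sigma$ at scale $\rho$ implies weak flatness with constant $\theta\sigma$ at scale $c_\theta\rho$, in some nearby direction $\overline{\nu}$ with $|\overline{\nu}-\nu|\leq C\sigma$. Because the initial Reifenberg flatness holds at \emph{every} scale, iterating at a fixed boundary point $(Q,\tau)$ drives the flatness constant to zero at every scale simultaneously. This forces $\partial \Omega_\infty$ to have a unique tangent plane $\nu(Q,\tau)$ attained exactly at every scale, and the accumulated rotation bound $\sum \theta^k \sigma < \infty$ gives continuity of $\nu$. A connectedness argument on the Gauss map (using again that the rotation between successive scales is summable) then propagates $\nu$ across the whole of $\partial \Omega_\infty$, so $\partial \Omega_\infty$ is a single hyperplane containing a line parallel to the time axis. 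After a rotation and translation $\partial\Omega_\infty = \{x_n = 0\}$, and the conditions $|\nabla u_\infty| \leq 1$, $h_\infty \geq 1$ together with the unique solvability of the infinite-pole Dirichlet problem pin down $u_\infty(X,t) = x_n^+$.

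The main obstacle, flagged in the appendix introduction, is precisely that the Andersson--Weiss construction centers cylinders at a \emph{past} boundary point $(0,-\rho^2)$, so iterating upward in scale sends the reference point to $t = -\infty$; this is harmless for their local regularity result but destroys the global conclusion we need. The remedy is to work with \emph{current} flatness throughout, at the cost of accepting only one-sided growth estimates (``weak'' flatness) on the positive side because of parabolic anisotropy---this is the mismatch that forces the two-lemma ping-pong rather than a one-shot improvement. A secondary technical difficulty is that in the Andersson--Weiss setting the identity $\Delta u - \partial_t u = \mathcal{H}^{n-1}\lfloor R(t_0) + 2\theta \mathcal{H}^{n-1}\lfloor \Sigma_{**}(t_0) + \lambda$ furnishes the lower bound on blowup slopes; here I will substitute Lemma \ref{hatinfinityisgreaterthan1}, which shows that $h \geq 1$ a.e., to ensure that blowup plane solutions at flat points have slope at least one, and combine with $|\nabla u_\infty| \leq 1$ to pin the slope to exactly one.
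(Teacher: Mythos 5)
Your proposal follows essentially the same route as the paper's proof in Appendix \ref{proofofflatprop}: the same current/weak-current flatness dichotomy, the same two improvement lemmas (a barrier-plus-Hopf argument exploiting $h_\infty \geq 1$ at a touching point, as in Lemma \ref{centeredflatonzerosideisflatonpositiveside}, and the blowup-to-a-smooth-graph contradiction via a limit $w$ with vanishing Neumann trace and reflection, as in Lemma \ref{centeredflatonbothsidesisextraflatonzeroside}), composed iteratively starting from arbitrarily large scales. The only deviation is your closing Gauss-map/connectedness step, which is superfluous: once the iteration (run from starting scales $\rho \to \infty$) forces the flatness constant to vanish at every fixed scale at a single boundary point, the boundary already coincides with one fixed plane in every cylinder, which is exactly how the paper concludes.
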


We define three notions of ``flatness" for solutions. The definition of ``past flatness" is taken from Andersson and Weiss \cite{anderssonweiss} (who in turn adapted it from the corresponding elliptic definitions in \cite{altcaf}). As mentioned above, we also introduce two types of ``current flatness". The first type is quantitative and we call it ``strong current flatness."
\begin{defin}\label{strongcurrentflatness}
For $0 < \sigma_i \leq 1/2$ we say that $U \in CF(\sigma_1, \sigma_2)$ in $C_\rho(Q,\tau)$ in the direction $\nu \in \mathbb S^{n-1}$ if 
\begin{itemize}
\item $(Q, \tau) \in \partial \{U > 0\}$
\item $U((Y,s)) = 0$ whenever $(Y-Q)\cdot \nu \leq -\sigma_1 \rho$ and $(Y,s) \in C_\rho(Q,\tau)$. 
\item $U((Y,s)) \geq  (Y-Q)\cdot \nu-\sigma_2\rho$ whenever  $(Y-Q)\cdot \nu - \sigma_2\rho \geq 0$ and $(Y,s) \in C_\rho(Q,\tau)$.
\end{itemize}
\end{defin}

The parabolic nature of our problem means that it behooves us to introduce a ``past" version of this flatness:
\begin{defin}\label{pastflatness}
For $0 < \sigma_i \leq 1$ we say that $u \in PF(\sigma_1, \sigma_2)$ in $C_\rho(X,t)$ in the direction $\nu \in \mathbb S^{n-1}$ if for $(Y,s) \in C_\rho(X,t)$
\begin{itemize}
\item $(X, t-\rho^2) \in \partial \{U > 0\}$
\item $U((Y,s)) = 0$ whenever $(Y-X)\cdot \nu \leq -\sigma_1 \rho$
\item $U((Y,s)) \geq  (Y-X)\cdot \nu-\sigma_2\rho$ whenever  $(Y-X)\cdot \nu - \sigma_2\rho \geq 0$.
\end{itemize}
\end{defin}
 
Our final notion of flatness is qualitative and weaker than strong current flatness. We call it ``weak current flatness." 

\begin{defin}\label{weakcurrentflatness}
For $0 < \sigma_i \leq 1/2$ we say that $U \in \CF(\sigma_1, \sigma_2)$ in $C_\rho(Q,\tau)$ in the direction $\nu \in \mathbb S^{n-1}$ if 
\begin{itemize}
\item $(Q, \tau) \in \partial \{U > 0\}$
\item $U((Y,s)) = 0$ whenever $(Y-Q)\cdot \nu \leq -\sigma_1 \rho$
\item $U((Y,s)) > 0$ whenever  $(Y-Q)\cdot \nu > \sigma_2\rho $.
\end{itemize}
\end{defin}

We may now state our two main lemmas, the first of which allows us to conclude flatness on the positive side of $\partial \{u_\infty > 0\}$ given flatness on the zero side. 

\begin{lem}\label{centeredflatonzerosideisflatonpositiveside}[``Current" version of Theorem 5.2 in \cite{anderssonweiss}]
Let $\Omega, u_\infty$ satisfy the assumptions of Proposition \ref{poissonkernel1impliesflat}. Furthermore, assume $u_\infty \in \CF(\sigma, 1/2)$ in $C_\rho(Q,\tau)$ in the direction $\nu$. Then there is a constant $K_1 > 0$ (depending only on dimension) such that $u_\infty \in CF(K_1\sigma, K_1\sigma)$ in $C_{\rho/2}(Q,\tau)$ in the direction $\nu$.
\end{lem}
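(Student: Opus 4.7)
The plan is to adapt the barrier construction of Andersson--Weiss underlying Theorem 5.2 of \cite{anderssonweiss} to the current-flatness setting, just as Lemma \ref{currentholderflatonzerosideisflatonpositiveside} above adapts it to the H\"older current-flatness setting, but using only the hypotheses $|\nabla u_\infty| \leq 1$ and $h_\infty \geq 1$ $\sigma$-a.e.\ in place of pointwise oscillation control on $h$. After normalizing $(Q,\tau) = (0,0)$, $\rho = 1$, $\nu = e_n$, the weak flatness gives $u_\infty \equiv 0$ on $\{x_n \leq -\sigma\} \cap C_1(0,0)$ and $u_\infty > 0$ on $\{x_n > 1/2\} \cap C_1(0,0)$, and the gradient bound forces $u_\infty(Y,s) \leq (y_n + \sigma)_+$. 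Introduce the bump $\eta(x,t) = \exp\!\bigl(\tfrac{16(|x|^2+|t+1|)}{16(|x|^2+|t+1|)-1}\bigr)$ supported in $\{16(|x|^2 + |t+1|) < 1\}$, the perturbed domain $D = \{x_n > -\sigma + s\eta(x,t)\} \cap C_1(0,0)$, and choose $s \in [0,1]$ maximal with $\Omega_\infty \cap C_1(0,0) \subset D$, producing a touching point $(X_0,t_0) \in \partial D \cap \partial \Omega_\infty$ with $t_0 \in [-1,-15/16]$.

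Build the upper barrier $v$ as the unique adjoint-caloric function on $D$ with $v\equiv 0$ on $\partial_p D \cap C_1(0,0)$ and $v = \sigma + x_n$ on $\partial_p D \cap \partial C_1(0,0)$. The comparison $u_\infty \leq (x_n + \sigma)_+$ on $\partial_p D$ and the maximum principle yield $u_\infty \leq v$ in $D$, while $F = (\sigma + x_n) - v$ is nonnegative, bounded by $2\sigma$, and vanishes on the smooth piece of $\partial D$. Since the graph $-\sigma + s\eta$ has $\mathrm{Lip}(1,1)$ norm bounded independently of $\sigma$, standard parabolic regularity up to the boundary gives $\sup_D |\nabla F| \leq K\sigma$ for a dimensional $K$, and in particular $|\nabla v(X_0,t_0)| \leq 1 + K\sigma$.

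The main new input is the matching lower bound at the touching point. I will first prove the analog of Lemma \ref{holdernormalderivativeatregularpoint} valid under the sole hypothesis $h_\infty \geq 1$ $\sigma$-a.e.: whenever $\partial \Omega_\infty$ admits an exterior Euclidean tangent ball $B$ at $(P,\eta)$,
\begin{equation*}
\limsup_{\Omega_\infty \ni (X,t) \to (P,\eta)} \frac{u_\infty(X,t)}{d((X,t), B)} \;\geq\; 1.
\end{equation*}
The argument duplicates the blow-up scheme of Lemma \ref{holdernormalderivativeatregularpoint}: extract a sequence realizing the supremum $\ell$, rescale to obtain $u_k$ uniformly Lipschitz from $|\nabla u_\infty| \leq 1$ and then uniformly $\mathbb C^{1,1/2}$ via Lemma \ref{iflipschitzthenregularintime}, pass to a locally uniform limit $u_0 \leq \ell\, y_n^+$ on $\{y_n > 0\}$, and test the distributional identity $\int u_k(\Delta - \partial_s)\varphi = \int h_k\varphi\,d\sigma_k$ against nonnegative $\varphi \in C_c^\infty$. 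The rescaled boundaries converge in Hausdorff distance to $\{y_n = 0\}$, Ahlfors regularity identifies the weak-$\ast$ limit of $\sigma_k$ with $\mathcal H^{n-1}|_{\{y_n=0\}}\otimes dt$, and $h_k \geq 1$ $\sigma_k$-a.e.\ gives $\liminf \int h_k \varphi\,d\sigma_k \geq \int \varphi$, forcing $\ell \geq 1$ after integrating $\ell\, y_n^+$ by parts. Applied at $(X_0,t_0)$, this yields $1 \leq -\partial_\nu v(X_0,t_0) \leq 1 + K\sigma$.

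The remainder is the Andersson--Weiss contradiction step: for each reference point $\tilde Z = (Y_0, s_0)$ with $s_0 \in (-3/4, 1)$, $|y_0| \leq 1/2$, $(Y_0)_n = 3/4$, assume $u_\infty < v - \tilde K\sigma x_n$ on the disc $\{|Y-Y_0|<1/8,\,s=s_0\}$ and introduce the auxiliary adjoint-caloric barrier $w = w_{\tilde Z}$ on $D \cap \{t < s_0\}$ with $w = x_n$ on that disc and $w = 0$ elsewhere on the parabolic boundary. The maximum principle yields $u_\infty \leq v - \tilde K\sigma w$, and Hopf's lemma produces a dimensional $\alpha > 0$ with $\partial_\nu w(X_0,t_0) \leq -\alpha$. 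Combining,
\begin{equation*}
1 \;\leq\; -\partial_\nu v(X_0, t_0) + \tilde K \sigma\, \partial_\nu w(X_0, t_0) \;\leq\; 1 + K\sigma - \tilde K \alpha \sigma,
\end{equation*}
which is impossible once $\tilde K$ exceeds a dimensional constant. Hence $u_\infty \geq v - \tilde K\sigma x_n$ at some point on every such slice, and a parabolic Harnack chain for $u_\infty - v$ (using $|\nabla u_\infty| \leq 1$ to control boundary contributions) propagates the bound to the whole region $C_{1/2}(0,0) \cap \{x_n \geq K_1\sigma/2\}$, producing $u_\infty(X,t) \geq x_n - K_1\sigma/2$ there. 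Combined with $u_\infty \equiv 0$ on $\{x_n \leq -K_1\sigma/2\}$ from the hypothesis, this is exactly $u_\infty \in CF(K_1\sigma, K_1\sigma)$ in $C_{1/2}(0,0)$ in direction $e_n$ for a dimensional $K_1$. The principal obstacle is the lower-bound step: we have no pointwise information on $h_\infty$, only its $\sigma$-a.e.\ lower bound, so the derivation of $\limsup u_\infty/d \geq 1$ at a regular point requires the weak-$\ast$ limit machinery sketched above in place of the H\"older-continuity argument used in Lemma \ref{holdernormalderivativeatregularpoint}.
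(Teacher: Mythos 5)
Your proposal is correct and follows essentially the same route as the paper: the same bump-function/touching-point construction, the same upper barrier $v$ with the normal-derivative lower bound at the contact point (proved, as in the paper's Lemma \ref{normalderivativeatregularpoint}, using only $|\nabla u_\infty|\leq 1$ and $h_\infty\geq 1$ $\sigma$-a.e.\ via a blow-up and the distributional identity), the same Hopf-lemma contradiction with the auxiliary barrier $w$, and the same Harnack-plus-Lipschitz propagation, ending at the analogue of equation \eqref{increasedpositiveflatness} with no re-centering since the boundary point is already the center of the cylinder. This matches the paper's proof of Lemma \ref{centeredflatonzerosideisflatonpositiveside} (via Lemma \ref{flatonzerosideisflatonpositiveside}) in all essentials.
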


The second lemma provides greater flatness on the zero-side under the assumption of flatness on both sides. 

\begin{lem}\label{centeredflatonbothsidesisextraflatonzeroside}[``Current" version of Lemma 8.1 in \cite{anderssonweiss}]
Let $u_\infty, \Omega$ satisfy the assumptions of Proposition \ref{poissonkernel1impliesflat} and assume, for some $\sigma, \rho > 0$ that $u_\infty \in CF(\sigma,\sigma)$ in $C_\rho(Q,\tau)$ in the direction $\nu$. For $\theta\in (0,1)$, there exists a constant $1/2 > \sigma_\theta > 0$ which depends only on $\theta, n$, such that if $0 <\sigma < \sigma_\theta$ then $u_\infty \in \CF(\theta\sigma, \theta\sigma)$ in $C_{K_2\theta \rho}(0, 0)$ in the direction $\tilde{\nu}$, where $\tilde{\nu}$ satisfies $|\tilde{\nu} - \nu| < K_3 \sigma$. Here $0 < K_2, K_3 < \infty$ are constants depending only on the dimension.
\end{lem}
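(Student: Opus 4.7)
The plan is to mirror the strategy of Lemma \ref{currentholderflatonbothsidesisextraflatonzeroside} but in the setting of \emph{exact} solutions rather than H\"older-perturbed ones, obtaining only \emph{weak} current flatness $\widetilde{CF}$ in the conclusion (which reflects the loss of one quantitative degree of freedom that comes from re-centering in a parabolic cylinder, as opposed to translating in time as in the ``past flatness'' versions). First I would argue by contradiction: normalize $(Q,\tau)=(0,0)$, $\nu = e_n$, $\rho = 1$, and suppose there exist sequences $\sigma_k \downarrow 0$ and solutions $u_k$ satisfying the hypotheses of Proposition \ref{poissonkernel1impliesflat} with $u_k \in CF(\sigma_k,\sigma_k)$ in $C_1(0,0)$ in direction $e_n$, but for which the conclusion fails in every cylinder $C_{K_2\theta}(0,0)$ and every direction within $K_3\sigma_k$ of $e_n$.

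Next I would extract the free-boundary graphs. Define $f_k^\pm$ as in Lemma \ref{holderblowupfunctionisagraph}: $f_k^+(x,t) = \sup\{d : (x,\sigma_k d, t) \in \{u_k = 0\}\}$ and $f_k^-(x,t) = \inf\{d : (x,\sigma_k d,t) \in \{u_k > 0\}\}$. The upper bound $|\nabla u_k| \leq 1$ and the lower bound $h_k \geq 1$ (which is what survives of the gradient control from Lemma \ref{hatinfinityisgreaterthan1} in the limit) together with Lemma \ref{centeredflatonzerosideisflatonpositiveside} applied at arbitrary interior points of the free boundary let me rerun the proof of Lemma \ref{holderblowupfunctionisagraph} (where one only used that $\tau_k/\sigma_k^2 \to 0$) to get locally uniform convergence $f_k^\pm \to f$ with $f$ continuous. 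Then, setting $w_k(x,d,t) = (u_k(x, \sigma_k d, t) - \sigma_k d)/\sigma_k$, the uniform Lipschitz bound and interior parabolic regularity yield $w_k \to w$ in $\mathcal C^{2,1}_{\mathrm{loc}}$ on $\{x_n > 0\}$, where $w$ is adjoint caloric, $\partial_n w \leq 0$, and $w(x,0,t) = -f(x,t)$ (the last identity via the barrier argument of Lemma \ref{holderboundaryvaluesofw}, using Lemma \ref{centeredflatonzerosideisflatonpositiveside} and past-flatness style barriers since we no longer have the H\"older oscillation term to worry about).

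The main obstacle, and the heart of the proof, is showing $\partial_n w = 0$ in the distributional sense on $C_{1/2}(0,0) \cap \{x_n = 0\}$. I would follow the three-claim structure of Lemma \ref{holdernormalderivativeiszero}, testing against $g(x,t) = 5 - 8(|x|^2 + |t|)$ and $\Sigma_k = \{u_k > 0\} \cap Z^0(\sigma_k g)$. The one upper bound uses the representation
\begin{equation*}
\int_{\partial\{u_k>0\}} \phi \, d\mu = \int_{\{u_k>0\}} u_k(\Delta \phi - \partial_t \phi)\, dX dt \Big/ 1
\end{equation*}
combined with $h_k \geq 1$ to control $\mu(\partial\{u_k>0\} \cap Z^-(\sigma_k g))$ by $\int_{\Sigma_k}(\partial_n u_k - 1) + \mu(\Sigma_k) + O(\sigma_k^2)$; the matching lower bound uses the divergence theorem with $\nu_{\sigma_k g}$ against the reduced boundary, exactly as in Claim 2 of Lemma \ref{holdernormalderivativeiszero}. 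These pin down $\int_{\Sigma_k} \partial_n w_k \, d\mu \to 0$, which combined with equation \eqref{onlyenmatters} and the divergence theorem on $Z^+(\sigma_k g)$ delivers the desired distributional identity. Ahlfors regularity and $h_k \geq 1$ are the substitutes for the H\"older-continuous $h$ and its oscillation bounds used in Section \ref{initialholderregularity}.

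With $\partial_n w = 0$ at hand, reflection across $\{x_n=0\}$ produces an adjoint caloric extension, so $w$, and hence $f$, is $C^\infty$ with $\mathcal C^{2+\alpha,1+\alpha/2}$-norm bounded by a universal constant. Since $(0,0) \in \partial\{u_k>0\}$ forces $f(0,0) = 0$, a second-order Taylor expansion of $f$ at the origin provides a vector $\nabla_x f(0,0)$ and a universal quadratic error $K$; setting
\begin{equation*}
\tilde{\nu}_k = \frac{(-\sigma_k \nabla_x f(0,0), 1)}{\sqrt{1 + \sigma_k^2 |\nabla_x f(0,0)|^2}}
\end{equation*}
and arguing exactly as in equation \eqref{takingadotproductwithholdernu}, one checks that for $k$ large enough $u_k \in \widetilde{CF}(\theta\sigma_k, \theta\sigma_k)$ in $C_{\theta/(4K)}(0,0)$ in direction $\tilde{\nu}_k$, with $|\tilde{\nu}_k - e_n| \leq K_3 \sigma_k$. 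This contradicts the contradiction hypothesis with $K_2 = 1/(4K)$ and finishes the proof. The qualitative nature of $\widetilde{CF}$ in the conclusion (rather than $CF$) is precisely what allows us to avoid the growth-from-below hypothesis on the positive side; the price of centering the cylinder in time is that we only recover the sign information, not a linear lower bound, on the positive side.
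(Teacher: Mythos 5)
Your proposal is correct and follows essentially the same route as the paper: a compactness/contradiction argument in which the rescaled free boundaries converge to a continuous graph $f$, the normalized functions $w_k$ converge to an adjoint caloric $w$ with $w(x,0,t)=-f(x,t)$ and $\partial_n w=0$ on $\{x_n=0\}$, so that reflection gives $f$ smooth with universal $\mathbb C^{2+\alpha,1+\alpha}$ bounds, and the Taylor expansion of $f$ at the origin produces the tilted direction $\tilde\nu_k$ yielding $\widetilde{CF}(\theta\sigma_k,\theta\sigma_k)$ in $C_{\theta/(4K)}(0,0)$, contradicting the assumption. The only difference is organizational: the paper packages the intermediate steps as Lemmas \ref{blowupfunctionisagraph}, \ref{boundaryvaluesofw}, \ref{normalderivativeiszero} and Corollary \ref{fhashighregularity} (with the graph convergence run through the past-flatness Lemma \ref{flatonzerosideisflatonpositiveside}), whereas you reprove them inline.
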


Our proof that these two lemmas imply that $\Omega_\infty$ is a half-space is based on the analogous result in the elliptic setting proven by Kenig and Toro in \cite{kenigtoro2}. 

\begin{proof}[Proof of the Proposition \ref{poissonkernel1impliesflat} assuming the two lemmas]
Pick $\theta' \in (0,1)$ small enough so that $\max\{\theta', K_1^2\theta', K_2\theta'/4\} < 1/2$. Then let $\delta_n < \min\{1/2, \sigma_{\theta'}/K_1\}$. Here, and through the rest of this proof, $K_1, K_2, K_3$ and $\sigma_{\theta'}$ are as in Lemmas \ref{centeredflatonzerosideisflatonpositiveside} and \ref{centeredflatonbothsidesisextraflatonzeroside}. 

Assume, without loss of generality, that $(0,0) \in \partial \Omega$. For every $\rho > 0$, there is an $n$-plane, $V(\rho)$, containing a line parallel to the $t$-direction, such that $\frac{1}{\rho}D[V(\rho)\cap C_\rho(0,0), \partial \Omega \cap C_\rho(0,0)] < \delta$. Let $\nu_\rho$ be the unit normal vector to $V(\rho)$ correctly oriented so that if $(X,t) \in C_\rho(0,0)$ and $\left\langle X, \nu_\rho \right\rangle \leq -\delta \rho$ then $(X,t) \in \Omega^c$ (similarly, $(X,t) \in C_\rho(0,0), \left\langle X, \nu_\rho \right\rangle \geq \delta \rho$ implies $(X,t) \in \Omega$). Translated into the language of weak current flatness, $u_\infty \in \CF(\delta, \delta)$ in $C_\rho(0,0)$ in the direction $\nu_\rho$. 

Apply Lemma \ref{centeredflatonzerosideisflatonpositiveside} so that $u_\infty \in CF(K_1\delta, K_1\delta)$ in $C_{\rho/2}(0,0)$ in the direction $\nu_\rho$. Then Lemma \ref{centeredflatonbothsidesisextraflatonzeroside} implies that $u_\infty \in \CF(K_1\theta' \delta, K_1\theta'\delta)$ in $C_{K_2\theta' \rho/2}(0,0)$ in the direction $\nu^{(1)}_\rho$ where $|\nu^{(1)}_\rho - \nu_\rho| < K_1K_3\delta$. Returning to Lemma \ref{centeredflatonzerosideisflatonpositiveside} yields $u_\infty \in CF(K_1^2\theta' \delta, K_1^2\theta' \delta)$ in $C_{K_2 \theta' \rho/4}(0,0)$ in the direction $\nu^{(1)}_\rho$. Note that $\theta, \delta_n$ were chosen small enough to justify repeating this procedure arbitrarily many times. After $m$ iterations we have shown $u_\infty \in CF(\theta^m \delta, \theta^m \delta)$ in $C_{\eta^m \rho}(0,0)$ in the direction $\nu^{(m)}_\rho$ where $\eta \equiv K_2\theta'/4 < 1/2$. Additionally, for $m \geq 1$, $|\nu^{(m)}_\rho - \nu^{(m+1)}_\rho| < K_3\theta^m\delta$. From now on we will abuse notation and refer to all constants which only depend on the dimension by $C$. 

Define $\overline{\nu}_\rho \colonequals \lim_{m\rightarrow \infty} \nu^{(m)}_\rho$ and compute \begin{equation}\label{closenormals}|\overline{\nu}_\rho - \nu^{(m)}_\rho| < C \delta \theta^m \frac{1}{1-\theta} < C\delta \theta^m.\end{equation} For any $(P,\xi) \in C_\rho(0,0)$; there is some $m$ such that $(P,\xi) \in C_{\eta^m\rho}(0,0)$ but $(P,\xi) \notin C_{\eta^{m+1}\rho}(0,0)$. As $u_\infty \in CF(\theta^m \delta, \theta^m \delta)$ in the direction $\nu^{(m)}_\rho$ we can conclude \begin{equation}\label{flatatscalem}\begin{aligned} &\left\langle P, \nu^{(m)}_\rho\right\rangle \leq -\theta^m \delta \eta^m\rho \Rightarrow u_\infty(P,\xi) = 0\\
&\left\langle P, \nu^{(m)}_\rho \right\rangle \geq \theta^m \delta \eta^m\rho \Rightarrow u_\infty(P,\xi) > 0. \end{aligned} \end{equation}

We may write $\left\langle P, \overline{\nu}_\rho\right\rangle = \left\langle P, \nu_\rho^{(m)}\right\rangle + \left\langle P, \overline{\nu}_\rho - \nu^{(m)}_\rho\right\rangle$ and estimate, using equation \eqref{closenormals},  $| \left\langle P, \overline{\nu}_\rho - \nu^{(m)}_\rho\right\rangle| < C \delta \theta^m \eta^m \rho$. Then equation \eqref{flatatscalem} implies, \begin{equation}\label{flatatscalem2}\begin{aligned} &\left\langle P, \overline{\nu}_\rho\right\rangle \leq -C\theta^m \delta \eta^m\rho \Rightarrow u_\infty(P,\xi) = 0\\
&\left\langle P, \overline{\nu}_\rho \right\rangle \geq C \theta^m \delta \eta^m\rho \Rightarrow u_\infty(P,\xi) > 0. \end{aligned} \end{equation}
Hence, $$\frac{1}{\eta^m\rho} D[\Lambda(\rho)\cap C_{\eta^m \rho}(0,0), \partial \Omega \cap C_{\eta^m\rho}(0,0)] < C \theta^m \delta,$$ where $\Lambda(\rho)$ is the $n$-hyperplane containing $(0,0)$ that is perpendicular to $\overline{\nu}_\rho$. 

If $\eta^{m+1} \rho\leq s < \eta^m \rho$ one computes $$\frac{1}{s}D[\Lambda(\rho) \cap C_{s}(0,0), \partial \Omega\cap C_s(0,0)] \leq \frac{1}{\eta} D[\Lambda(\rho) \cap C_{\eta^m \rho}(0,0), C_{\eta^m\rho}(0,0)\cap \partial \Omega] \leq C \delta \frac{\theta^m}{\eta}.$$ As $\theta, \eta < 1$ we can write $\theta = \eta^\beta$ for some $\beta > 0$ and the above becomes \begin{equation}\label{distanceatstor}
\frac{1}{s}D[\Lambda(\rho)\cap C_s(0,0), \partial \Omega \cap C_s(0,0)]   \leq C_\eta \delta \left(\frac{s}{\rho}\right)^\beta, \forall s < \rho.
\end{equation}

Pick any $\rho_j \rightarrow \infty$. By compactness, $\Lambda(\rho_j) \rightarrow \Lambda_\infty$ in the Hausdorff distance (though we may need to pass to a subsequence that the limit plane may depend on the subsequence chosen). Then for any $s> 0$, equation \eqref{distanceatstor} implies $$\frac{1}{s}D[\Lambda_\infty\cap C_s(0,0), \partial \Omega\cap C_s(0,0)] = 0.$$ We conclude that $\partial \Omega = \Lambda_\infty$, and that $\Lambda_\infty$ is, in fact, independent of $\{\rho_j\}$. After a rotation and translation $\Omega = \{(X,t)\mid x_n > 0\}$. \end{proof}

\subsection{Flatness of the zero side implies flatness of the positive side: Lemma \ref{centeredflatonzerosideisflatonpositiveside}}

Before we begin we need two technical lemmas. The first allows us to conclude regularity in the time direction given regularity in the spatial directions.  We stated and proved this Lemma in Section \ref{initialholderregularity} so we will just state it again here.

\begin{lem*}
If $f$ satisfies the (adjoint)-heat equation in $\mathcal O$ and is zero outside $\mathcal O$ then $$\|f\|_{\mathbb C^{1.1/2}(\R^{n+1})} \leq c \|\nabla f\|_{L^\infty(\mathcal O)},$$ where $0 < c < \infty$ depends only on the dimension. \end{lem*}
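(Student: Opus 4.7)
The plan is to reduce the $\mathbb C^{1,1/2}$ bound to a quantitative estimate on temporal oscillations of the form $|f(X,t)-f(X,s)|\leq C\|\nabla f\|_{L^\infty(\mathcal O)}|s-t|^{1/2}$, since the spatial Lipschitz bound on $f$ is given by hypothesis (and extends globally because $f\equiv 0$ outside $\mathcal O$). Fixing $t<s$, I set $r=\sqrt{s-t}$ and analyze the two points $(X,t)$ and $(X,s)$ through spatial averages on balls $B'((X,\tau),r)=\{(Y,\tau):|Y-X|\leq r\}$ at each time slice.

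The first key step is the PDE-based identity that controls $\partial_t f$ in an averaged sense: whenever $B'((X,\tau),r)\subset\mathcal O$, the heat equation and the divergence theorem give
\[
\left|\fint_{B'((X,\tau),r)} f_t\,dY\right|
=\left|\fint_{B'((X,\tau),r)} \Delta f\,dY\right|
=\frac{c_n}{r}\left|\fint_{\partial B'((X,\tau),r)}\nabla f\cdot\nu\,d\mathcal H^{n-1}\right|
\leq \frac{C_n\|\nabla f\|_{L^\infty(\mathcal O)}}{r}.
\]
The second key step is to pass from spatial averages back to pointwise values: the Lipschitz bound on $f$ gives $|f(X,\tau)-\fint_{B'((X,\tau),r)} f\,dY|\leq C\|\nabla f\|_{L^\infty}r$, and Fubini together with the mean value theorem yields a time $\tilde t\in[t,s]$ with $|\fint_{B'((X,t),r)}f-\fint_{B'((X,s),r)}f|=(s-t)|\fint_{B'((X,\tilde t),r)} f_t|$.

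I would then split into two cases. In the \emph{interior case}, where the whole space-time slab $\{(Y,\tau):|Y-X|\leq r,\ t\leq\tau\leq s\}$ lies in $\mathcal O$, the two steps above combine to give $|f(X,t)-f(X,s)|\leq C\|\nabla f\|_{L^\infty}(r+(s-t)/r)=C\|\nabla f\|_{L^\infty}\sqrt{s-t}$. In the \emph{boundary case}, the cylinder escapes $\mathcal O$; here I pick the first time $t^*\in[t,s]$ at which $B'((X,t^*),r)\not\subset\mathcal O$. Up to $t^*$ the slab is contained in $\mathcal O$, so the interior argument controls $|f(X,t)-f(X,t^*)|$, and at $t^*$ the ball touches the complement of $\mathcal O$, so the global Lipschitz bound (f=0 outside $\mathcal O$) gives $|f(X,t^*)|\leq \|\nabla f\|_{L^\infty}r$. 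An analogous argument controls $|f(X,s)|$, and the triangle inequality closes the case.

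The main obstacle will be the boundary case, since one must make sure that (i) $\mathcal O$ is open enough that ``$B'((X,\tau),r)\subset\mathcal O$'' is indeed an open condition in $\tau$ (so that $t^*$ is well-defined and the interior argument applies on $[t,t^*)$), and (ii) the global Lipschitz extension of $f$ by zero produces no extra contribution when the ball only partially touches $\mathcal O^c$. Both points are handled cleanly by the openness of $\mathcal O$ and the vanishing of $f$ on $\mathcal O^c$, but they are the only subtlety; once handled, assembling the cases gives the asserted constant depending only on dimension.
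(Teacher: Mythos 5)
Your proposal is correct and follows essentially the same argument as the paper: the same averaged identity $\fint f_t = \fint \Delta f$ controlled via the divergence theorem, the same passage from spatial averages to pointwise values by the Lipschitz bound together with Fubini and the mean value theorem, and the same case split between the full slab lying in $\mathcal O$ and the stopping-time argument at the first $t^*$ where the ball exits $\mathcal O$, with the vanishing of $f$ outside $\mathcal O$ giving $|f(X,t^*)|\leq \|\nabla f\|_{L^\infty}r$. The only cosmetic difference is that the paper treats separately the sub-case where neither $B'((X,t),r)$ nor $B'((X,s),r)$ lies in $\mathcal O$, which your stopping-time formulation absorbs automatically.
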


This second lemma allows us to bound from below the normal derivative of a solution at a smooth point of $\partial \Omega_\infty$.  For ease of notation we will drop the subscript $\infty$ from $u_\infty, \Omega_\infty$ and $h_\infty$. However, all these results are proven with the same assumptions as Theorem \ref{poissonkernel1impliesflat}. 

\begin{lem}\label{normalderivativeatregularpoint}
Let $(Q,\tau)\in \partial \Omega$ be such that there exists a tangent ball (in the Euclidean sense) $B$ at $(Q,\tau)$ contained in $\overline{\Omega}^c$. Then $$\limsup_{\Omega \ni (X,t) \rightarrow (Q,\tau)} \frac{u(X,t)}{d((X,t), B)} \geq 1.$$
\end{lem}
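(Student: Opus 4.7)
The plan is to follow the blowup argument used for Lemma \ref{holdernormalderivativeatregularpoint}, but to replace the Hölder continuity of $h$ in that proof with the qualitative lower bound $h_\infty \geq 1$ supplied by the hypotheses of Theorem \ref{poissonkernel1impliesflat}. Normalize so that $(Q,\tau)=(0,0)$ and let $\ell$ denote the limsup; choose a sequence $(X_k,t_k)\in\Omega_\infty$ along which the limsup is attained, and let $(Y_k,s_k)\in B$ be a closest point to $(X_k,t_k)$, with $r_k:=\|(X_k,t_k)-(Y_k,s_k)\|$. Since $B$ is tangent to $\partial\Omega_\infty$ at $(0,0)$ and $(X_k,t_k)\to(0,0)$, we automatically get $(Y_k,s_k)\to(0,0)$ and $r_k\downarrow 0$.

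Rescale by setting $u_k(X,t):=u_\infty(r_kX+Y_k,\,r_k^2t+s_k)/r_k$, and similarly define the rescaled domains $\Omega_k$ and kernels $h_k(P,\eta)=h_\infty(r_kP+Y_k,\,r_k^2\eta+s_k)$, so that $u_k$ is adjoint-caloric in $\Omega_k$ with $\int u_k(\Delta\phi-\partial_t\phi)=\int h_k\phi\,d\sigma_k$ for every $\phi\in C_c^\infty$. The hypothesis $|\nabla u_\infty|\leq 1$ yields $|\nabla u_k|\leq 1$, so Lemma \ref{iflipschitzthenregularintime} gives uniform $\mathbb C^{1,1/2}$ bounds; after extracting a subsequence, $u_k\to u_0$ locally uniformly, with $u_0$ adjoint-caloric in $\{x_n>0\}$ and vanishing on $\{x_n=0\}$ (the exterior tangent-ball assumption ensures $\Omega_k\to\{x_n>0\}$ in the Hausdorff distance, after a rotation). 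By the choice of $(X_k,t_k)$ and the definition of $\ell$, one checks the pointwise domination $u_0(Y,s)\leq\ell\, y_n$ on the half-space, because $d((r_kY+Y_k,r_k^2s+s_k),B)/r_k\to y_n$ uniformly on compacta.

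Pick any nonnegative $\phi\in C_c^\infty(\mathbb R^{n+1})$ and integrate by parts:
\[
\ell\int_{\{x_n=0\}}\phi\,dx\,dt=\int_{\{x_n>0\}}\ell\,y_n(\Delta\phi-\partial_t\phi)\,dY\,ds\geq\int_{\{x_n>0\}}u_0(\Delta\phi-\partial_t\phi)\,dY\,ds=\lim_{k\to\infty}\int_{\partial\Omega_k}h_k\phi\,d\sigma_k.
\]
Since $h_\infty\geq 1$ $\sigma$-almost everywhere, the same lower bound holds for $h_k$ on $\partial\Omega_k$, so the right-hand side is at least $\liminf_k\int_{\partial\Omega_k}\phi\,d\sigma_k$. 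The lower-semicontinuity estimate from the proof of Lemma \ref{hatinfinityisgreaterthan1} (equation \eqref{messingaroundwithnormals}), applied with $e=e_n$ and combined with $\chi_{\Omega_k}\to\chi_{\{x_n>0\}}$ in $L^1_{\mathrm{loc}}$, yields $\liminf_k\int_{\partial\Omega_k}\phi\,d\sigma_k\geq\int_{\{x_n=0\}}\phi\,dx\,dt$. Letting $\phi$ concentrate on a neighborhood of $\{x_n=0\}$ then forces $\ell\geq 1$.

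The main technical obstacle will be the pointwise domination $u_0(Y,s)\leq\ell\,y_n$: since $\ell$ is a limsup rather than a genuine limit, one has to confirm that the rescaled distance to $B$ converges uniformly on compacta to $y_n$, and that the defining limsup bound transfers cleanly through the local uniform convergence $u_k\to u_0$. Everything else is softer: we have no quantitative control on $h_\infty$ beyond the a.e.\ lower bound, so the argument avoids any appeal to pointwise continuity and instead extracts the needed estimate by pairing $h_\infty\geq 1$ against positive test functions and using the divergence-theorem/lower-semicontinuity machinery already set up in Section \ref{blowupanalysis}.
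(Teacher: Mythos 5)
Your proposal is correct and follows essentially the same route as the paper's own proof: blow up along a sequence realizing the limsup, use $|\nabla u_\infty|\leq 1$ together with Lemma \ref{iflipschitzthenregularintime} for compactness, dominate the limit by $\ell\,y_n$, and pair against nonnegative test functions using $h_\infty\geq 1$ to force $\ell\geq 1$. The only difference is cosmetic: you justify the step $\liminf_k\int_{\partial\Omega_k}\phi\,d\sigma_k\geq\int_{\{x_n=0\}}\phi\,dx\,dt$ explicitly via the divergence-theorem argument of \eqref{messingaroundwithnormals}, a point the paper leaves implicit in its final inequality.
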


\begin{proof}
Without loss of generality set $(Q,\tau) = (0,0)$ and let $(X_k, t_k) \in \Omega$ be a sequence that achieves the supremum, $\ell$. Let $(Y_k, s_k) \in B$ be such that $d((X_k,t_k), B) = \|(X_k,t_k)-(Y_k, s_k)\| \equalscolon r_k$. Define $u_k(X,t) \colonequals \frac{u(r_kX + Y_k, r_k^2t + s_k)}{r_k}$, $\Omega_k \colonequals \{(Y,s)\mid Y= (X -Y_k)/r_k, s = (t-s_k)/r_k^2,\; \mathrm{s.t.}\; (X,t)\in \Omega\}$ and $h_k(X,t) \colonequals h(r_kX + Y_k, r_k^2t + s_k)$. Then $$\int_{\R^{n+1}} u_k (\Delta \phi - \partial_t \phi)dXdt = \int_{\partial \Omega_k} h_k \phi d\sigma.$$

By assumption, the $u_k$ have uniform Lipschitz bound 1. Thus Lemma \ref{iflipschitzthenregularintime} implies that the $u_k$ are bounded uniformly in $\mathbb C^{1,1/2}$.  Therefore, perhaps passing to a subsequence, $u_k \rightarrow u_0$ uniformly on compacta. In addition, as there exists a tangent ball at $(0,0)$, $\Omega_k \rightarrow \{x_n >0\}$ in the Hausdorff distance norm (up to a rotation). We may assume, passing to a subsequence, that $\frac{X_k - Y_k}{r_k} \rightarrow Z_0, \frac{t_k - s_k}{r_k^2}\rightarrow t_0$ with $(Z_0, t_0) \in C_1(0,0)\cap \{x_n > 0\}$ and $u_0(Z_0, t_0) = \ell$. Furthermore, by the definition of supremum, for any $(Y,s) \in \{x_n > 0\}$ we have \begin{equation}\begin{aligned} u_0(Y,s) =& \lim_{k\rightarrow \infty} u(r_kY + y_k, r_k^2s + s_k)/r_k \\ \leq& \lim_{k\rightarrow \infty} \ell \frac{\mathrm{pardist}((r_kY + y_k, r_k^2s + s_k), B)}{r_k}\\ =& \lim_{k\rightarrow \infty} \ell \mathrm{pardist}((Y,s), B_k)\\ =& \ell y_n, \end{aligned}\end{equation}where $B_k$ is defined like $\Omega_k$ above.

Let $\phi \in C_0^{\infty}(\mathbb R^{n+1})$ be positive, then \begin{equation}\label{u0bound}\begin{aligned} \int_{\{x_n > 0\}} \ell x_n (\Delta \phi - \partial_t \phi)dXdt &\geq \int_{\{x_n > 0\}} u_0(X,t)  (\Delta \phi - \partial_t \phi)dXdt\\ &= \lim_{k \rightarrow \infty} \int_{\Omega_k} u_k(X,t) (\Delta \phi - \partial_t \phi)dXdt \\&=  \lim_{k\rightarrow \infty} \int_{\partial \Omega_k} h_k \phi d\sigma.\end{aligned}\end{equation} Integrating by parts yields \begin{equation*}\begin{aligned} \ell \int_{\{x_n = 0\}} \phi dx dt &=  \int_{\{x_n > 0\}} \ell x_n (\Delta \phi - \partial_t \phi)dXdt \\ &\stackrel{eqn.\; \eqref{u0bound}}{\geq}\lim_{k\rightarrow \infty} \int_{\partial \Omega_k} h_k \phi d\sigma\\ &\stackrel{h_k \geq 1}{\geq} \lim_{k\rightarrow \infty}  \int_{\{x_n =0\}} \phi dxdt\end{aligned}\end{equation*} Hence,  $\ell \geq 1$, the desired result. 
\end{proof}

We will first show that for ``past flatness",  flatness on the positive side gives flatness on the zero side.

\begin{lem}\label{flatonzerosideisflatonpositiveside}[Compare with Lemma 5.2 in \cite{anderssonweiss}]
Let $0 < \sigma \leq \sigma_0$ where $\sigma_0$ depends only on dimension. Then if $u\in PF(\sigma, 1)$ in $C_\rho(\tilde{X},\tilde{t})$ in the direction $\nu$, there is a constant $C$ such that $u \in PF(C\sigma, C\sigma)$ in $C_{\rho/2}(\tilde{X}+\alpha \nu, \tilde{t})$ in the direction $\nu$ for some $|\alpha| \leq C\sigma \rho$. 
\end{lem}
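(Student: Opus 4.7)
The plan is to adapt the barrier/touching-point argument used in Lemma \ref{holderflatonzerosideisflatonpositiveside} to the constant-slope setting, where the assumptions of Theorem \ref{poissonkernel1impliesflat} ($|\nabla u_\infty| \leq 1$ and $h_\infty \geq 1$ $\sigma$-a.e.) replace the H\"older-continuous Poisson kernel. Normalize so that $(\tilde X, \tilde t) = (0,0)$, $\rho = 1$ and $\nu = e_n$; then $(0,-1) \in \partial\{u > 0\}$, $u \equiv 0$ on $\{x_n \leq -\sigma\} \cap C_1(0,0)$, and $|\nabla u| \leq 1$ in $C_1(0,0)$.

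First I would introduce a smooth bump $\eta$ supported in $\{16(|x|^2 + |t+1|) < 1\}$ and consider the family of domains $D_s = \{x_n > -\sigma + s\eta(x,t)\} \cap C_1(0,0)$. Choose $s \in [0,\sigma)$ maximal so that $\Omega \cap C_1(0,0) \subset D_s$; because $(0,-1) \in \partial\{u>0\}$, there is a touching point $(X_0,t_0) \in \partial D_s \cap \partial\Omega$ lying in $\{-1 \leq t \leq -15/16\}$, and $\partial D_s$ has an interior tangent ball there contained in $D_s^c \subset \Omega^c$. Next I would define the adjoint-caloric barrier $v$ in $D_s$ with boundary values $0$ on $\partial_p D_s \cap C_1(0,0)$ and $(1+\sigma)(\sigma + x_n)$ on $\partial_p D_s \cap \partial C_1(0,0)$; since $|\nabla u| \leq 1$ forces $u \leq (\sigma + x_n)^+$ on the relevant boundary pieces, the maximum principle yields $u \leq v$ throughout $D_s$.

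The key quantitative step is to sandwich the normal derivative of $v$ at $(X_0,t_0)$. From below, Lemma \ref{normalderivativeatregularpoint} gives $1 \leq -\partial_\nu v(X_0,t_0)$. From above, the auxiliary function $F = (1+\sigma)(\sigma + x_n) - v$ satisfies $0 \leq F \leq 2(1+\sigma)\sigma$ on $\partial_p D_s$ and solves the adjoint heat equation in a piecewise smooth domain whose geometry is controlled independently of $\sigma$; standard parabolic regularity gives $|\nabla F| \leq K\sigma$, hence $-\partial_\nu v \leq (1+K\sigma)(1+\sigma)$. I would then assume for contradiction that $u \leq v - \tilde K \sigma x_n$ throughout a small disk $\{|Y - Y_0| < 1/8\} \cap \{s = s_0\}$ at some height $(Y_0)_n = 3/4$, and build a second barrier $w$ that is adjoint-caloric in $D_s \cap \{t < s_0\}$ with data $x_n$ on that disk and $0$ elsewhere. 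Since $(X_0,t_0)$ lies on the past-parabolic boundary of this region, the Hopf lemma produces a dimensional constant $\alpha_0 > 0$ with $\partial_\nu w(X_0,t_0) \leq -\alpha_0$. Applying Lemma \ref{normalderivativeatregularpoint} once more to $u$ at $(X_0,t_0)$ yields $1 \leq (1+K\sigma)(1+\sigma) - \tilde K \alpha_0 \sigma$, which is false for $\tilde K$ chosen large (depending only on dimension) and $\sigma \leq \sigma_0$ small.

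The contradiction forces the existence of some $(\overline Y, s_0)$ in the disk with $u(\overline Y, s_0) \geq v(\overline Y, s_0) - C\sigma \geq (\overline Y)_n - C\sigma$, which the parabolic Harnack inequality upgrades to $u(X, s_0 - 1/32) \geq x_n - C\sigma$ on a fixed interior ball; ranging over $s_0$ and interpolating across the slab $\{C\sigma \leq x_n \leq 3/4\}$ using $|\nabla u| \leq 1$ gives $u(X,t) \geq x_n - C\sigma$ on $C_{1/2}(0,0) \cap \{x_n \geq C\sigma\}$. Combined with the initial zero-side flatness, this forces a boundary point $(0,\alpha,-1/4) \in \partial\Omega$ with $|\alpha| \leq C\sigma$, which is exactly what is needed to conclude $u \in PF(C\sigma, C\sigma)$ in $C_{1/2}(0,\alpha, 0)$ in direction $e_n$.

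The main obstacle I expect is verifying that the barrier estimate $|\nabla F| \leq K\sigma$ at $(X_0,t_0)$ holds with a constant independent of $\sigma$ and of which touching point arises; this requires that the perturbed obstacle $-\sigma + s\eta$ has controlled $\Lip(1,1/2)$ norm uniformly in $s \leq \sigma$, and that $(X_0,t_0)$ is not too close to the corner $\partial C_1(0,0) \cap \partial D_s$ (guaranteed by the support of $\eta$, which localizes the touching point in $\{t \leq -15/16\}$). Everything else is a direct translation of the H\"older-setting argument, with the H\"older oscillation condition on $h$ replaced by the pointwise bound $h \geq 1$ used through Lemma \ref{normalderivativeatregularpoint} and the gradient bound $|\nabla u| \leq 1$ used to compare $u$ to the barrier on $\partial_p D_s$.
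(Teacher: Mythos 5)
Your proposal is correct and follows essentially the same route as the paper's proof in Appendix A: the same perturbed obstacle $-\sigma + s\eta$, maximal touching point in $\{-1\le t\le -15/16\}$, the barrier $v$ sandwiched via Lemma \ref{normalderivativeatregularpoint} and the estimate $|\nabla F|\le K\sigma$, the Hopf-lemma contradiction with the second barrier $w$, and the Harnack/gradient interpolation to relocate the boundary point $(0,\alpha,-1/4)$. The only deviation is cosmetic: you carry over the boundary data $(1+\sigma)(\sigma+x_n)$ from the H\"older version, whereas the paper uses $(\sigma+x_n)$ since $|\nabla u|\le 1$; this only changes the constant in $-\partial_\nu v\le (1+K\sigma)(1+\sigma)$ and does not affect the argument.
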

 
 \begin{proof}
Let $(\tilde{X},\tilde{t})= (0,0), \rho = 1$ and $\nu = e_n$. First we will construct a regular function which touches $\partial \Omega$ at one point.  

Define $$\eta(x, t) = e^{\frac{16(|x|^2 + |t+1|)}{16(|x|^2 + |t+1|)- 1}}$$ for $16(|x|^2 + |t+1|) < 1$ and $\eta(x,t) \equiv 0$ otherwise. Let $D \colonequals \{(x, x_n, t) \in C_1(0,0) \mid x_n > -\sigma + s\eta(x,t)\}$. Now pick $s$ to be the largest such constant that $C_1(0,0) \cap \Omega \subset D$. As $(0,-1) \in \partial \{u > 0\}$, there must be a touching point $(X_0, t_0) \in \partial D \cap \partial \Omega \cap \{-1\leq t \leq -15/16\}$ and $s \leq \sigma$.

Define the barrier function $v$ as follows:
\begin{equation}
\begin{aligned}
\Delta v + \partial_tv =0 &\: \mathrm{in}\; D, \\
v = 0 &\; \mathrm{in}\; \partial_p D \cap C_1(0,0)\\
v = (\sigma +x_n)&\; \mathrm{in}\; \partial_p D \cap \partial C_1(0,0).
\end{aligned}
\end{equation}

Note that on $\partial_p D\cap C_1(0)$ we have $u = 0$ because $D$ contains the positivity set. Also, as $|\nabla u| \leq 1$, it must be the case that $u(X,t) \leq \max\{0, \sigma + x_n\}$ for all $(X,t) \in C_1(0,0)$. Since, $v \geq u$ on $\partial_pD$ it follows that $v \geq u$ on all of $D$ (by the maximum principle for subadjoint-caloric functions). We now want to estimate the normal derivative of $v$ at $(X_0,t_0)$. To estimate from below, apply Lemma \ref{normalderivativeatregularpoint}, \begin{equation}\label{lowerboundforvnormal}1 \leq \limsup_{(X,t) \rightarrow (X_0,t_0)} \frac{u(X,t)}{\mathrm{pardist}((X,t), B)} \leq -\partial_\nu v(X_0,t_0)\end{equation} where $\nu$ is the normal pointing out of $D$ at $(X_0,t_0)$ and $B$ is the tangent ball at $(X_0,t_0)$ to $D$ contained in $D^c$.

To estimate from above, first consider $F(X,t) \colonequals (\sigma + x_n)-v$.  On $\partial_pD$, $$-\sigma \leq x_n - v \leq \sigma$$ thus (by the maximum principle) $0 \leq F(X,t) \leq 2\sigma$. As $\partial D$ is piecewise smooth domain, standard parabolic regularity gives $\sup_{D} |\nabla F(X,t)| \leq K\sigma$. Note, since $s\leq \sigma$, that $-\sigma + s\eta(x,t)$ is a function whose $\Lip(1,1)$ norm is bounded by a constant. Therefore, $K$ does not depend on $\sigma$.

Hence, \begin{equation}\label{normalderivativeofv}\begin{aligned} |\nabla v|-1 \leq& |\nabla v - e_n|  \leq K\sigma\\ \stackrel{\mathrm{eqn}\; \eqref{lowerboundforvnormal}}{\Rightarrow} 1 \leq&  -\partial_\nu v(Z) \leq 1+K\sigma.\end{aligned}\end{equation}

We want to show that $u \geq v - \tilde{K}\sigma x_n$ for some large constant, $\tilde{K}$, to be choosen later depending only on the dimension. Let $\tilde{Z} \colonequals (Y_0,s_0)$, where $s_0 \in (-3/4, 1), |y_0| \leq 1/2$ and $(Y_0)_n = 3/4$, and assume, in order to obtain a contradiction, that $u \leq v-\tilde{K}\sigma x_n$ at every point in $\{(Y,s_0)\mid |Y-Y_0| \leq 1/8\}$. We construct a barrier function, $w \equiv w_{\tilde{Z}}$, defined by \begin{equation*}
\begin{aligned}
\Delta w+ \partial_tw=0 \: \mathrm{in}\; D\cap \{t < s_0\},& \\
w= x_n \; \mathrm{on}\; \partial_p (D\{t < s_0\}) \cap \{(Y,s_0) \mid |Y-Y_0| < 1/8\},&\\
w = 0 \; \mathrm{on}\; \partial_p (D \cap \{t < s_0\})  \backslash  \{(Y,s_0)\mid |Y-Y_0| < 1/8\}.&
\end{aligned}
\end{equation*}

By our initial assumption (and the definition of $w$), $u \leq v-\tilde{K}\sigma x_n$ on $\partial_p(D\cap \{t < s_0\})$ and, therefore, $u\leq v-\tilde{K}\sigma x_n$ on all of $D \cap \{t < s_0\}$. Since $t_0 \leq -15/16$ we know $(X_0,t_0) \in \partial_p(D\cap \{t < s_0\})$. Furthermore,  the Hopf lemma gives an $\alpha > 0$ (independent of $\tilde{Z}$) such that $\partial_\nu w(X_0,t_0) \leq -\alpha$. With these facts in mind,  apply Lemma \ref{normalderivativeatregularpoint} at $(X_0,t_0)$ and recall equation \eqref{normalderivativeofv} to estimate, \begin{equation}\label{usinghopftocontradict} \begin{aligned}1 =& \limsup_{(X,t)\rightarrow (X_0,t_0)} \frac{u(X,t)}{\mathrm{pardist}((X,t), B)}\\ \leq& -\partial_\nu v(X_0,t_0) +\tilde{K}\sigma \partial_\nu w(X_0,t_0)\\ \leq& (1 + K\sigma)- \tilde{K}\alpha \sigma.\end{aligned}\end{equation} Setting $\tilde{K} \geq K/\alpha$ yields the desired contradiction. 

Hence, there exists  a point, call it $(\overline{Y},s_0)$, such that $|\overline{Y}-Y_0| \leq 1/8$ and $$(u-v)(\overline{Y},s_0) \geq -\tilde{K}\sigma(\overline{Y})_n \stackrel{(\overline{Y})_n \leq 1}{\geq} -\tilde{K}\sigma.$$ Apply the parabolic Harnack inequality to obtain,  $$\begin{aligned} \inf_{\{|X-Y_0| < 1/8\}} (u-v)(X,s_0-1/32) &\geq c_n\sup_{\{|\tilde{X}-Y_0|< 1/8\}} (u-v)(\tilde{X},s_0) \geq -c_n\tilde{K}\sigma\\
\stackrel{v \geq x_n -\sigma}{\Rightarrow}u(X,s_0-1/32) &\geq x_n-\sigma - c_n\tilde{K}\sigma,\end{aligned}$$  for all $X$ such that $|X-Y_0| < 1/8$. Ranging over all $s_0 \in (-3/4, 1)$ and $|y_0| \leq 1/2$, the above implies $$u(X,t) \geq x_n - C\sigma,$$ whenever $(X,t)$ satisfies $|x| < 1/2, |x_n -3/4| < 1/8, t\in (-1/2, 1/2)$ and for some constant $C > 0$ which depends only on the dimension. As $|\nabla u| \leq 1$ we can conclude, for any $(X,t)$ such that $|x| < 1/2, t\in (-1/2,1/2)$ and $3/4 \geq x_n \geq C\sigma$, that \begin{equation}\label{increasedpositiveflatness} u(X,t) \geq u(x, 3/4, t) - (3/4-x_n) \geq (x_n - C\sigma).\end{equation}

We now need to find an $\alpha$ such that $(0, \alpha, -1/4) \in \partial \Omega$. By the initial assumed flatness, and equation \eqref{increasedpositiveflatness}, $\alpha \in \R$ exists and $-\sigma \leq \alpha \leq C\sigma$ (here we pick $\sigma_0$ is small enough such that $C\sigma_0 < 1/4$).  

In summary we have shown that,\begin{itemize}
\item $(0, \alpha, -1/4) \in \partial \Omega,\; |\alpha| < C\sigma$
\item $x_n - \alpha \leq -3C\sigma/2 \Rightarrow x_n \leq -\sigma\Rightarrow u(X,t) = 0$. 
\item When $x_n - \alpha \geq 2C\sigma \Rightarrow x_n \geq C\sigma$, hence equation \eqref{increasedpositiveflatness}) implies $u(X,t) \geq (x_n - C\sigma)  \geq (x_n-\alpha - 2C\sigma)$.
\end{itemize}
Therefore $u \in PF(2C\sigma, 2C\sigma)$ in $C_{1/2}(0, \alpha, 0)$ which is the desired result. 
 \end{proof}
 
Lemma \ref{centeredflatonzerosideisflatonpositiveside} is the current version of the above and follows almost identically. Thus we will omut the full proof in favor of briefly pointing out the ways in which the argument differs.  

\begin{lem*}[Lemma \ref{centeredflatonzerosideisflatonpositiveside}]
Let $\Omega, u_\infty$ satisfy the assumptions of Proposition \ref{poissonkernel1impliesflat}. Furthermore, assume $u_\infty \in \CF(\sigma, 1/2)$ in $C_\rho(Q,\tau)$ in the direction $\nu$. Then there is a constant $K_1 > 0$ (depending only on dimension) such that $u_\infty \in CF(K_1\sigma, K_1\sigma)$ in $C_{\rho/2}(Q,\tau)$ in the direction $\nu$.
\end{lem*}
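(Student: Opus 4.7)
The strategy is to run the argument of Lemma \ref{flatonzerosideisflatonpositiveside} essentially verbatim, taking advantage of the fact that the hypothesis of Lemma \ref{flatonzerosideisflatonpositiveside} is almost the same as that of the current lemma, with the crucial difference that now the distinguished boundary point sits at the center $(0,0)$ of the cylinder rather than at the past face. The proof of Lemma \ref{flatonzerosideisflatonpositiveside} needed the strong positive-side lower bound $u(Y,s) \geq (Y-X)\cdot\nu - \rho$ only to localize $(X_0,t_0)\in \partial D\cap \partial\Omega$ in the past time slab, and, more importantly, the barrier construction only ever used the inequality $|\nabla u|\le 1$ together with the fact that $u \equiv 0$ on the negative side. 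In the current setting we have those same ingredients plus $\{(Y,s)\in C_\rho(0,0): y_n > \rho/2\}\subset \{u>0\}$, which suffices to run the same construction.

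Normalize $(Q,\tau)=(0,0)$, $\rho=1$, $\nu=e_n$. With $\eta(x,t)=\exp\!\bigl(16(|x|^2+|t+1|)/(16(|x|^2+|t+1|)-1)\bigr)$ on $\{16(|x|^2+|t+1|)<1\}$ and $\eta\equiv 0$ otherwise, define $D=\{(x,x_n,t)\in C_1(0,0):x_n>-\sigma+s\eta(x,t)\}$ and take $s$ maximal with $\Omega\cap C_1(0,0)\subset D$. The assumption $y_n>1/2\Rightarrow u_\infty(Y,s)>0$ ensures $s\le \sigma+1/2<2$ and gives a touching point $(X_0,t_0)\in \partial D\cap \partial\Omega\cap\{-1\le t\le -15/16\}$. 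Solve the adjoint heat equation for the barrier $v$ on $D$ with $v=0$ on $\partial_p D\cap C_1(0,0)$ and $v=\sigma+x_n$ on $\partial_p D\cap \partial C_1(0,0)$; the inequality $u_\infty\le \max(0,\sigma+x_n)$ (from $|\nabla u_\infty|\le 1$ and vanishing on the zero side) yields $u_\infty\le v$ on all of $D$ by the maximum principle. A Schauder estimate for the harmonic auxiliary $F=(\sigma+x_n)-v$, which is bounded by $2\sigma$ on $\partial_p D$, produces $|\nabla v - e_n|\le K\sigma$, so combining with Lemma \ref{normalderivativeatregularpoint} at $(X_0,t_0)$ gives $1\le -\partial_\nu v(X_0,t_0)\le 1+K\sigma$.

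Next comes the Hopf-type step: for $\tilde Z=(Y_0,s_0)$ with $s_0\in(-3/4,1)$, $|y_0|\le 1/2$, $(Y_0)_n=3/4$, if one assumes $u_\infty\le v-\tilde K\sigma x_n$ everywhere in $\{|Y-Y_0|\le 1/8\}\times\{s_0\}$ one builds an adjoint-caloric $w$ on $D\cap\{t<s_0\}$ with $w=x_n$ on that slice and zero elsewhere on the parabolic boundary, applies the maximum principle to $v-\tilde K\sigma w - u_\infty$, and then uses the Hopf lemma for $w$ at $(X_0,t_0)$ together with the lower bound from Lemma \ref{normalderivativeatregularpoint} to deduce $1\le 1+K\sigma -\tilde K\alpha\sigma$, which is impossible once $\tilde K\ge K/\alpha$. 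Hence some $(\overline Y,s_0)$ satisfies $(u_\infty-v)(\overline Y,s_0)\ge -\tilde K\sigma$; a parabolic Harnack inequality pushed forward a quantitative amount of time, combined with $v\ge x_n-\sigma$ and $|\nabla u_\infty|\le 1$, yields $u_\infty(X,t)\ge x_n-C\sigma$ for all $(X,t)\in C_{1/2}(0,0)$ with $x_n\ge C\sigma$.

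At this point the proof of Lemma \ref{flatonzerosideisflatonpositiveside} had to look for a new free boundary point $(0,\alpha,-1/4)$ because its distinguished point $(0,0,-1)$ lay at the past face. Here $(0,0)$ itself already lies on $\partial\{u_\infty>0\}$ at the center of $C_{1/2}(0,0)$, so combining $u_\infty\equiv 0$ for $x_n\le -\sigma$ with $u_\infty(X,t)\ge x_n-C\sigma$ for $x_n\ge C\sigma$ inside $C_{1/2}(0,0)$ is by definition the statement $u_\infty\in CF(K_1\sigma,K_1\sigma)$ in $C_{1/2}(0,0)$ in direction $e_n$, with $K_1=\max(1,C)$. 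The main obstacle in carrying this plan out is not the structural argument (which mirrors Lemma \ref{flatonzerosideisflatonpositiveside}) but rather the anisotropic Harnack bookkeeping needed to propagate the pointwise bound at $(\overline Y,s_0)$ forward through the full slab $\{|x|<1/2,\;|t|<1/2,\;C\sigma\le x_n\le 3/4\}$ while keeping $C$ independent of $\sigma$; this is handled exactly as in the past-flatness proof by combining the Harnack chain with the Lipschitz estimate $|\nabla u_\infty|\le 1$.
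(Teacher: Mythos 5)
Your proposal is correct and follows essentially the same route as the paper: the paper's proof of Lemma \ref{centeredflatonzerosideisflatonpositiveside} likewise reruns the past-flatness argument of Lemma \ref{flatonzerosideisflatonpositiveside}, using the hypothesis that $y_n>\rho/2$ forces $u_\infty>0$ to bound $s\le\sigma+1/2<2$ and to produce the touching point, and then stops at the analogue of equation \eqref{increasedpositiveflatness} because the distinguished boundary point is already the center $(0,0)$, so no new boundary point or translation is needed. The only trivial adjustment is in the constant bookkeeping: since the flatness parameters in Definition \ref{strongcurrentflatness} are measured relative to the radius $\rho/2$, one should take $K_1\geq\max\{2,2C\}$ rather than $\max\{1,C\}$ so that $y_n\le-K_1\sigma\cdot\tfrac{\rho}{2}$ implies $y_n\le-\sigma\rho$ and similarly on the positive side.
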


\begin{proof}[Proof of Lemma \ref{centeredflatonzerosideisflatonpositiveside}]
    We begin in the same way; let $(Q,\tau)= (0,0), \rho = 1$ and $\nu = e_n$. Then we recall the smooth function   $$\eta(x, t) = e^{\frac{16(|x|^2 + |t+1|)}{16(|x|^2 + |t+1|)- 1}}$$ for $16(|x|^2 + |t+1|) < 1$ and $\eta(x,t) \equiv 0$ otherwise. Let $D \colonequals \{(x, x_n, t) \in C_1(0,0) \mid x_n > -\sigma + s\eta(x,t)\}$. Now pick $s$ to be the largest such constant that $C_1(0,0) \cap \Omega \subset D$. Since $|x_n| > 1/2$ implies that $u(X,t) > 0$ there must be some touching point  $(X_0, t_0) \in \partial D \cap \partial \Omega \cap \{-1\leq t \leq -15/16\}$. Furthermore, we can assume that $s < \sigma + 1/2 < 2$. 
  
The proof then proceeds as above until equation \eqref{increasedpositiveflatness}. In the setting of  ``past flatness" we need to argue further; the boundary point is at the bottom of the cylinder, so after the cylinder shrinks we need to search for a new boundary point. However, in current flatness the boundary point is at the center of the cylinder, so after equation \eqref{holderincreasedpositiveflatness} we have completed the proof of Lemma \ref{centeredflatonzerosideisflatonpositiveside}. \end{proof}

\subsection{Flatness on Both Sides Implies Greater Flatness on the Zero Side: Lemma \ref{centeredflatonbothsidesisextraflatonzeroside}} In this section we prove Lemma \ref{centeredflatonbothsidesisextraflatonzeroside}. The outline of the argument is as follows: arguing by contradiction, we obtain a sequence $u_k$ whose free boundaries $\partial \{u_k > 0\}$ approaches the graph of a function $f$. Then we prove that this function $f$ is $C^{\infty}$ and this smoothness leads to a contradiction. 

Throughout this subsection, $\{u_k\}$ is a sequence of adjoint caloric functions such that $\partial \{u_k > 0\}$ is a parabolic regular domain and such that, for all $\varphi \in C^\infty_c(\mathbb R^{n+1})$, $$\int_{\{u_k > 0\}} u_k (\Delta \varphi - \partial_t \varphi) dXdt = \int_{\partial \{u_k > 0\}} h_k \varphi d\sigma.$$ We will also assume the $h_k \geq 1$ at $\sigma$-a.e. point on $\partial \{u_k > 0\}$ and $|\nabla u_k| \leq 1$. While we present these arguments for general $\{u_k\}$ it suffices to think of $u_k(X,t) \colonequals \frac{u(r_kX, r_k^2t)}{r_k}$ for some $r_k \downarrow 0$. 

\begin{lem}\label{blowupfunctionisagraph}[Compare with Lemma 6.1 in \cite{anderssonweiss}]
Suppose that $u_k \in CF(\sigma_k,\sigma_k)$ in $C_{\rho_k}(0,0)$, in direction $e_n$, with $\sigma_k \downarrow 0$. Define $f_k^+(x,t) = \sup \{d\mid (\rho_kx, \sigma_k\rho_kd, \rho_k^2t) \in \{u_k = 0\}\}$ and $f_k^-(x,t) = \inf \{d\mid (\rho_kx, \sigma_k\rho_kd, \rho_k^2t) \in \{u_k > 0\}\}$. Then, passing to subsequences, $f_k^+, f_k^-\rightarrow f$ in $L_{\mathrm{loc}}^\infty(C_1(0,0))$ and $f$ is continuous. 
\end{lem}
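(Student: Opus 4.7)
The plan is to follow the strategy of Lemma \ref{holderblowupfunctionisagraph}, but in place of the H\"older oscillation control on $h$ we use only $h_k \geq 1$ and $|\nabla u_k| \leq 1$; in place of the H\"older past-flatness result we use Lemma \ref{flatonzerosideisflatonpositiveside}. After the parabolic rescaling $X \mapsto \rho_k X, t \mapsto \rho_k^2 t$ we may assume $\rho_k \equiv 1$. Strong current flatness in direction $e_n$ gives $-1 \leq f_k^-(x,t) \leq f_k^+(x,t) \leq 1$ on $C_1(0,0)$, so the functions
\[
f(x,t) \;\colonequals\; \liminf_{\substack{(y,s) \to (x,t)\\k \to \infty}} f_k^-(y,s), \qquad \tilde f(x,t) \;\colonequals\; \limsup_{\substack{(y,s) \to (x,t)\\k \to \infty}} f_k^+(y,s)
\]
are well defined, bounded, and satisfy $f \leq \tilde f$, with $f$ lower semicontinuous and $\tilde f$ upper semicontinuous. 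The entire content of the lemma is the opposite inequality $\tilde f \leq f$: once this holds pointwise, $f = \tilde f$ is continuous, and the locally uniform convergence $f_k^\pm \to f$ follows from a standard semicontinuity / compactness argument (exactly as at the end of the proof of Lemma \ref{holderblowupfunctionisagraph}).

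Fix $(y_0, t_0) \in C_1(0,0)$ and $\varepsilon > 0$. By the definition of $f$ and lower semicontinuity there exist $\delta > 0$ and $k_0 \in \mathbb{N}$ such that for all $k \geq k_0$ one has $u_k(x, \sigma_k x_n, s) = 0$ whenever $x_n \leq f(y_0,t_0) - \varepsilon$ and $(x,s) \in C_{2\delta}(y_0,t_0)$. Moreover, again by the definition of $f$, we may choose $k_0$ larger if necessary so that for $k \geq k_0$ there exists $\alpha_k \in \mathbb{R}$ with $|\alpha_k| \leq 2\varepsilon$ and
\[
\bigl(y_0,\,\sigma_k(f(y_0,t_0) + \alpha_k),\,t_0 - \delta^2\bigr) \;\in\; \partial\{u_k > 0\}.
\]
Rescaling the $x_n$ coordinate by $\sigma_k$, this precisely says that the function $u_k$ lies in $PF\!\left(3\sigma_k \varepsilon/\delta,\,1\right)$ in the cylinder $C_\delta\bigl(y_0, \sigma_k(f(y_0,t_0)+\alpha_k), t_0\bigr)$ in direction $e_n$ (the upper bound on the positive side being automatic from $|\nabla u_k| \leq 1$ and the hypothesis on the zero side). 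Since $\sigma_k \to 0$, for large $k$ the smallness hypothesis of Lemma \ref{flatonzerosideisflatonpositiveside} is satisfied.

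Applying Lemma \ref{flatonzerosideisflatonpositiveside} upgrades this to
\[
u_k \in PF\!\left(C\sigma_k \tfrac{\varepsilon}{\delta},\; C\sigma_k \tfrac{\varepsilon}{\delta}\right) \quad \text{in } C_{\delta/2}\bigl(y_0,\; \sigma_k f(y_0,t_0) + \tilde\alpha_k,\; t_0\bigr)
\]
for some $|\tilde\alpha_k| \leq C\sigma_k \varepsilon$. Translating back through the definition of $f_k^+$, this forces $u_k(x, \sigma_k x_n, s) > 0$ as soon as $x_n - f(y_0,t_0) \geq 3C\varepsilon$ and $(x,s)$ lies in a fixed neighborhood of $(y_0,t_0)$, which gives
\[
\sup_{(z,s) \in C_{\delta/2}(y_0, t_0)} f_k^+(z,s) \;\leq\; f(y_0,t_0) + 3C\varepsilon
\]
for all $k$ sufficiently large. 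Hence $\tilde f(y_0,t_0) \leq f(y_0,t_0) + 3C\varepsilon$; letting $\varepsilon \downarrow 0$ gives $\tilde f = f$ on $C_1(0,0)$, which completes the argument as indicated above.

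The main obstacle is the step that produces an actual free boundary point at scale $\sigma_k$ near $(y_0, \sigma_k f(y_0,t_0), t_0)$ and that verifies the hypotheses of Lemma \ref{flatonzerosideisflatonpositiveside} there. This is a parabolic subtlety: past flatness requires the boundary point to sit at the bottom of the cylinder in time, which is why we shift to $t_0 - \delta^2$ to find the touching point, then recenter. The bookkeeping of the resulting shift $\tilde\alpha_k = o(1)$ in the vertical coordinate is what ensures that the improved flatness translates into the bound on $f_k^+$ at $(y_0, t_0)$ itself, rather than at a drifting sequence of points.
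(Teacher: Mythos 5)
Your proposal is correct and follows essentially the same route as the paper's own proof: rescale to $\rho_k\equiv 1$, take $f$ as the $\liminf$ of $f_k^-$, use lower semicontinuity to locate a past free-boundary point at $(y_0,\sigma_k(f(y_0,t_0)+\alpha_k),t_0-\delta^2)$, verify $PF(3\sigma_k\varepsilon/\delta,1)$, and apply Lemma \ref{flatonzerosideisflatonpositiveside} to bound $f_k^+$ from above and conclude $\tilde f=f$. The only cosmetic difference is that you make the upper envelope $\tilde f$ and the reduction to $\tilde f\leq f$ explicit up front, which the paper introduces only at the end.
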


\begin{proof}
By scaling each $u_k$ we may assume $\rho_k \equiv 1$. Then define $$D_k \colonequals \{(y,d,t)\in C_1(0,0)\mid (y, \sigma_k d, t) \in \{u_k > 0\}\}.$$ Let $$f(x, t) \colonequals \liminf_{\stackrel{(y,s) \rightarrow (x, t)}{ k\rightarrow \infty}} f_k^-(y,s),$$ so that, for every $(y_0, t_0)$, there exists a $(y_k, t_k) \rightarrow (y_0, t_0)$ such that $f_k^-(y_k, t_k) \stackrel{k\rightarrow \infty}{\rightarrow} f(y_0, t_0)$. 

Fix a $(y_0,t_0)$ and note, as $f$ is lower semicontinuous, for every $\varepsilon > 0$, there exists a $\delta > 0, k_0 \in \mathbb N$ such that $$\{(y, d, t) \mid |y-y_0| < 2\delta, |t-t_0| < 4\delta^2, d \leq f(y_0, t_0) -\varepsilon\}\cap \overline{D}_k = \emptyset,\; \forall k \geq k_0.$$
Consequently \begin{equation}\label{vmofkflatness} x_n - f(y_0, t_0) \leq - \varepsilon \Rightarrow u_k(x, \sigma_k x_n, s) = 0,\; \forall (X,s) \in C_{2\delta}(Y_0, t_0).\end{equation} Together with the definition of $f$, equation \eqref{vmofkflatness} implies that there exist $\alpha_k \in \mathbb R$ with $|\alpha_k| < 2\varepsilon$ such that $(y_0, \sigma_k(f(y_0,t_0) + \alpha_k), t_0-\delta^2)\in \partial \{u_k > 0\}$. This observation, combined with equation \eqref{vmofkflatness} allows us to conclude, $u_k(\cdot, \sigma_k \cdot, \cdot) \in PF(3\sigma_k \frac{\varepsilon}{\delta}, 1)$ in $C_{\delta}(y_0, \sigma_k (f(y_0, t_0)+\alpha_k) , t_0)$, for $k$ large enough. 

As $\tau_k/\sigma_k^2 \rightarrow 0$ the conditions of Lemma \ref{flatonzerosideisflatonpositiveside} hold for $k$ large enough.  Thus, $u_k(\cdot, \sigma_k \cdot, \cdot) \in PF(C\sigma_k\frac{\varepsilon}{\delta}, C\sigma_k\frac{\varepsilon}{\delta})$ in $C_{\delta/2}(y_0, \sigma_k f(y_0, t_0) + \tilde{\alpha}_k, t_0)$ where $|\tilde{\alpha}_k| \leq C\sigma_k\varepsilon$. Thus if $z_n - (\sigma_k f(y_0, t_0) + \tilde{\alpha}_k) \geq C\varepsilon \sigma_k/2$  then $u_k(z, \sigma_k z_n, t) > 0$ for $(Z,t) \in C_{\delta/2}(y_k, \sigma_k f_k^-(y_k, t_k) + \alpha, t_k + \delta^2)$. In other words \begin{equation}\label{vmopluslessthanminus}\sup_{(Z,s) \in C_{\delta/2}(y_0, \sigma_k f(y_0, t_0) + \tilde{\alpha}_k, t_0 )} f^+_k(z, s) \leq f(y_0, t_0) + 3C\varepsilon.\end{equation} As $f^+_k \geq f^-_k$, if $$\tilde{f}(y_0,t_0) \colonequals  \limsup_{\stackrel{(y,s) \rightarrow (y_0, t_0)}{ k\rightarrow \infty}} f_k^+(y,s),$$ it follows (in light of equation \eqref{vmopluslessthanminus} that $\tilde{f} = f$.  Consequently, $f$ is continuous and $f^+_k, f^-_k \rightarrow f$ locally uniformly on compacta. \end{proof} 

We now show that $f$ is given by the boundary values of $w$, a solution to the adjoint heat equation in $\{x_n > 0\}$. 

\begin{lem}\label{boundaryvaluesofw}[Compare with Proposition 6.2 in \cite{anderssonweiss}]
Suppose that $u_k \in CF(\sigma_k,\sigma_k)$ in $C_{\rho_k}(0,0)$, in direction $e_n$ with $\rho_k \geq 0, \sigma_k \downarrow 0$. Further assume that, after relabeling, $k$ is the subsequence given by Lemma \ref{blowupfunctionisagraph}. Define $$w_k(x, d,t) \colonequals \frac{u_k(\rho_kx, \rho_kd, \rho_k^2t) - \rho_kd}{\sigma_k}.$$ Then, $w_k$ is bounded on $C_1(0,0)\cap \{x_n > 0\}$ (uniformly in $k$) and converges, in the $\mathbb C^{2,1}$-norm, on compact subsets of $C_1(0,0)\cap \{x_n > 0\}$ to $w$. Furthermore, $w$ is a solution to the adjoint-heat equation and $w(x, d,t)$ is non-increasing in $d$ when $d > 0$. Finally $w(x,0,t) = -f(x,t)$ and $w$ is continuous in $\overline{C_{1-\delta}(0,0)\cap \{x_n > 0\}}$ for any $\delta > 0$. \end{lem}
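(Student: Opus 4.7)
The plan is to mirror the proof of Lemma \ref{holderboundaryvaluesofw} above, with the simplifications afforded by the current setting: there are no factors of $h(0,0)$ or $\tau_k$ to track, and the hypothesis $|\nabla u_k|\le 1$ replaces the gradient bound $|\nabla u_k|\le(1+\tau_k)h(0,0)$. After rescaling, I may assume $\rho_k\equiv 1$ throughout.

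First I would establish the uniform bound $|w_k|\le 1$ on $C_1(0,0)\cap\{x_n>0\}$ by the same three-case splitting as in Lemma \ref{holderboundaryvaluesofw}: the Lipschitz estimate together with the vanishing of $u_k$ for $x_n\le -\sigma_k$ gives $u_k(X,t)\le x_n+\sigma_k$ and hence $w_k\le 1$; for $x_n\ge\sigma_k$ the strong current-flatness lower bound $u_k\ge x_n-\sigma_k$ yields $w_k\ge -1$; and for $0<x_n<\sigma_k$, positivity of $u_k$ forces $w_k\ge -x_n/\sigma_k\ge -1$. Since $u_k>0$ on $\{x_n>\sigma_k\}$ by strong current flatness, $u_k$ is adjoint-caloric there and hence so is $w_k$. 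On every compact $K\subset\subset C_1(0,0)\cap\{x_n>0\}$ the sequence $\{w_k\}$ is eventually a uniformly bounded family of adjoint-caloric functions; interior parabolic Schauder estimates yield compactness in $\mathbb C^{2,1}_{\mathrm{loc}}$, and a diagonal extraction produces an adjoint-caloric limit $w$ with $|w|\le 1$. The monotonicity $\partial_n w\le 0$ follows by passing to the limit in $\partial_n w_k=(\partial_n u_k-1)/\sigma_k\le 0$.

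Next I would identify the boundary values $w(x,0,t)=-f(x,t)$. The upper bound is straightforward: for $\varepsilon>0$, pick $\alpha>0$ with $|w(x,\alpha,t)-w(x,0,t)|<\varepsilon$; for $k$ large, $\alpha/\sigma_k>f_k^-(x,t)$, so monotonicity gives $w_k(x,\alpha,t)\le w_k(x,\sigma_k f_k^-(x,t),t)$, and the latter equals $-f_k^-(x,t)$ since $u_k$ vanishes there. Passing $k\to\infty$ (using $f_k^-\to f$ locally uniformly from Lemma \ref{blowupfunctionisagraph}) and then $\varepsilon\downarrow 0$ yields $w(x,0,t)\le -f(x,t)$. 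The matching lower bound is the main obstacle. Following the HCF argument, I would set
$$\tilde\sigma_k=\tfrac{1}{S}\sup_{(Y,s)\in C_{2S\sigma_k}(x,\sigma_k f_k^-(x,t-S^2\sigma_k^2),t-S^2\sigma_k^2)}\bigl(f_k^-(x,t-S^2\sigma_k^2)-f_k^-(y,s)\bigr),$$
which tends to $0$ by the continuity of $f$ and uniform convergence $f_k^-\to f$. Then $u_k\in PF(\tilde\sigma_k,1)$ in the corresponding past cylinder, so Lemma \ref{flatonzerosideisflatonpositiveside} upgrades this to $PF(C\tilde\sigma_k,C\tilde\sigma_k)$ in a smaller cylinder centered at a nearby point $(x,\sigma_k f_k^-(x,t-S^2\sigma_k^2)+\alpha_k,t)$ with $|\alpha_k|\le CS\sigma_k\tilde\sigma_k$. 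Choosing $D_k=f_k^-(x,t-S^2\sigma_k^2)+\alpha_k/\sigma_k+S/2$ and picking $S$ large enough that $(x,\sigma_k D_k,t)$ lies on the positive side of this past flatness, the flatness inequality gives $u_k(x,\sigma_k D_k,t)\ge S\sigma_k(1-C\tilde\sigma_k)/2$, which upon substitution into the definition of $w_k$ yields $w_k(x,\sigma_k D_k,t)\ge -f(x,t)+o_k(1)$.

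Finally I would push this pointwise estimate from height $\sigma_k D_k$ down to $x_n=0$ via barriers: let $z_\varepsilon$ solve the adjoint heat equation on $C_{1-\delta}(0,0)\cap\{x_n>0\}$ with smooth boundary data $g_\varepsilon$ on $\{x_n=0\}$ satisfying $-f-2\varepsilon<g_\varepsilon<-f-\varepsilon$, and equal to $-2$ on the remainder of the parabolic boundary. Standard parabolic theory supplies an $\alpha>0$ with $|z_\varepsilon(x,x_n,t)-z_\varepsilon(x,0,t)|<\varepsilon/2$ whenever $x_n<\alpha$. For $k$ large (so that $\sigma_k<\alpha$), the estimate from the previous paragraph together with $|w_k|\le 1$ guarantees $w_k\ge z_\varepsilon$ on the parabolic boundary of $C_{1-\delta}(0,0)\cap\{x_n>\sigma_k\}$, hence everywhere by the maximum principle. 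Passing $k\to\infty$ and then $x_n\downarrow 0$ yields $w(x,0,t)\ge z_\varepsilon(x,0,t)-\varepsilon/2\ge -f(x,t)-3\varepsilon$, and letting $\varepsilon\downarrow 0$ completes the lower bound. Continuity of $w$ on $\overline{C_{1-\delta}(0,0)\cap\{x_n>0\}}$ then follows from classical boundary regularity for the adjoint heat equation with continuous boundary data $-f$.
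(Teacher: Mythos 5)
Your proposal is correct and follows essentially the same route as the paper's own proof: the three-case bound $|w_k|\le 1$, interior parabolic estimates plus diagonal extraction, $\partial_n w_k\le 0$ for monotonicity and the upper bound $w(x,0,t)\le -f$, the $\tilde\sigma_k$/past-flatness argument via Lemma \ref{flatonzerosideisflatonpositiveside} to get the estimate at height $\sigma_k D_k$, and the barrier $z_\varepsilon$ for the matching lower bound, with boundary continuity from standard regularity. No substantive differences from the argument in the paper.
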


\begin{proof}
As before we rescale and set $\rho_k \equiv 1$.  Since $|\nabla u_k| \leq 1$ and $x_n \leq -\sigma_k \Rightarrow u_k = 0$ it follows that $u_k(X,t) \leq (x_n + \sigma_k)$. Which implies $w_k(X,t) \leq 1+ \tau_k$. On the other hand when $0 <  x_n \leq \sigma_k$ we have $u_k(X,t) - x_n \geq -x_n \geq -\sigma_k$ which means $w_k \geq -1$. Finally, if $x_n \geq \sigma_k$ we have $u_k(X,t) - x_n \geq (x_n - \sigma_k) - x_n\Rightarrow w_k \geq -1$. Thus, $|w_k| \leq 1$ in $C_1(0,0)\cap\{x_n > 0\}$.

By definition, $w_k$ is a solution to the adjoint-heat equation in $C_1(0,0) \cap \{x_n > \sigma_k\}$. So for any $K \subset \subset \{x_n > 0\}$ the $\{w_k\}$ are, for large enough $k$, a uniformly bounded sequence of solutions to the adjoint-heat equation on $K$. As $|w_k| \leq 1$, standard estimates for parabolic equations tell us that $\{w_k\}$ is uniformly bounded in $\mathbb C^{2+\alpha, 1+\alpha/2}(K)$. Therefore, perhaps passing to a subsequence, $w_k \rightarrow w$ in $\mathbb C^{2,1}(K)$. Furthermore, $w$ is also be a solution to the adjoint heat equation in $K$ and $|w| \leq 1$. A diagonalization argument allows us to conclude that $w$ is adjoint caloric on all of $\{x_n > 0\}$. 

Compute that $\partial_n w_k = (\partial_n u_k - 1)/(\sigma_k) \leq 0$, which implies, $\partial_n w \leq 0$ on $\{x_n > 0\}$. As such, $w(x,0,t) \colonequals \lim_{d\rightarrow 0^+} w(x, d, t)$ exists. We will now show that this limit is equal to $-f(x,t)$. If true, then regularity theory for adjoint-caloric functions implies that $w$ is continuous is $\overline{C_{1-\delta}\cap \{x_n > 0\}}$.

First we show that the limit is less than $-f(x,t)$. Let $\varepsilon > 0$ and pick $0 < \alpha \leq 1/2$ small enough so that $|w(x,\alpha,t) - w(x,0,t)| < \varepsilon$. For $k$ large enough we have $\alpha/\sigma_k > f(x,t) + 1 > f_k^-(x,t)$, therefore, 

\begin{equation}\label{upperboundonwregular}\begin{aligned}
&w(x,0,t) \leq w(x,\alpha, t) + \varepsilon = w_k(x, \sigma_k\frac{\alpha}{\sigma_k}, t) + \varepsilon + o_k(1)\\
& = \left(w_k(x, \sigma_k\frac{\alpha}{\sigma_k}, t) - w_k(x, \sigma_kf_k^-(x,t), t)\right) + w_k(x, \sigma_k f_k^-(x,t),t) + \varepsilon + o_k(1)\\
&\stackrel{\partial_n w_k \leq 0}{\leq} w_k(x, \sigma_kf_k^-(x,t), t) + o_k(1) + \varepsilon.
\end{aligned}
\end{equation}

By definition, $w_k(x, \sigma_k f_k^-(x,t),t) = -f_k^-(x,t)\rightarrow -f(x,t)$ uniformly in $C_{1-\delta}(0,0)$. In light of equation \eqref{upperboundonwregular}, this observation implies $w(x,0,t) \leq - f(x,t) + \varepsilon$. Since $\varepsilon > 0$ is arbitrary, we have $w(x,0,t) \leq -f(x,t)$. 

To show $w(x,0,t) \geq -f(x,t)$ we first define, for $S > 0, k\in \mathbb N$, $$\tilde{\sigma}_k =\frac{1}{S} \sup_{(Y,s) \in C_{2S\sigma_k}(x, \sigma_kf_k^-(x, t-S^2\sigma_k^2), t-S^2\sigma_k^2)}(f_k^-(x,t-S^2\sigma_k^2)-f_k^-(y,s)).$$ Observe that if $k$ is large enough (depending on $S,\delta$), then $(x, t-S^2\sigma_k^2) \in C_{1-\delta}(0,0)$. By construction, $\forall (Y,s) \in C_{2S\sigma_k}(x, \sigma_k f_k^-(x, t-S^2\sigma_k^2), t-S^2\sigma_k^2)$, $$y_n - \sigma_kf_k^-(x,t-S^2\sigma_k^2) \leq -S\sigma_k \tilde{\sigma}_k \Rightarrow y_n \leq \sigma_kf_k^-(y,s) \Rightarrow u_k(Y,s) = 0.$$
 In other words, $u_k \in PF(\tilde{\sigma}_k, 1)$ in $C_{S\sigma_k}(x, \sigma_kf_k^-(x, t-S^2\sigma_k^2), t)$. Note, by Lemma \ref{blowupfunctionisagraph}, $\tilde{\sigma}_k \rightarrow 0$. 

Apply Lemma \ref{flatonzerosideisflatonpositiveside}, to conclude that \begin{equation}\label{ukispastflatregular}
\hbox{$u_k \in PF(C\tilde{\sigma}_k, C\tilde{\sigma}_k)$ in $C_{S\sigma_k/2}(x, \sigma_k f_k^-(x,t-S^2\sigma_k^2)+\alpha_k, t)$ where $|\alpha_k| \leq CS\sigma_k\tilde{\sigma}_k$.}
\end{equation}

Define $D_k \equiv f_k^-(x,t-S^2\sigma_k^2) + \alpha_k/\sigma_k + S/2$. Pick $S > 0$ large such that $D_k \geq 1$ and then, for large enough $k$, we have $D_k - \alpha_k/\sigma_k - f_k^-(x,t-S^2\sigma_k^2) - CS\tilde{\sigma}_k > 0$ and $(x, \sigma_k D_k, t) \in C_{S\sigma_k/2}(x, \sigma_k f_k^-(x,t-S^2\sigma_k^2)+\alpha_k, t)$. As such, the flatness condition, \eqref{ukispastflatregular}, gives

\begin{equation}\label{ukwithdkregular}\begin{aligned}
u_k(x,\sigma_kD_k, t) &\geq \left(\sigma_k D_k -\sigma_k f_k^-(x,t-S^2\sigma_k^2)-\alpha_k -CS\tilde{\sigma}_k\sigma_k/2\right)^+\\
&= \frac{S\sigma_k}{2}\left(1-C\tilde{\sigma}_k\right).
\end{aligned}
\end{equation}

Plugging this into the definition of $w_k$, 

\begin{equation}\label{wkwithdkregular}\begin{aligned}
w_k(x, \sigma_k D_k, t) \geq& \frac{S}{2}(1-C\tilde{\sigma}_k) - D_k\\
=&  \frac{S}{2}(1-C\tilde{\sigma}_k) - (f_k^-(x,t-S^2\sigma_k^2) + \alpha_k/\sigma_k + S/2)\\
=& -f_k^-(x,t-S^2\sigma_k^2) + o_k(1) = -f(x,t) + o_k(1).
\end{aligned}
\end{equation}

We would like to replace the left hand side of equation \eqref{wkwithdkregular} with $w_k(x,\alpha, t)$ where $\alpha$ does not depend on $k$. We accomplish this by means of barriers; for $\varepsilon > 0$ define $z_\varepsilon$ to be the unique solution to \begin{equation}\label{vmodefofz}\begin{aligned}\partial_t z_\varepsilon + \Delta z_\varepsilon &=0,\: \mathrm{in}\; C_{1-\delta}(0,0)\cap \{x_n > 0\}\\
z_\varepsilon &= g_\varepsilon,\: \mathrm{on}\; \partial_p (C_{1-\delta}(0,0)\cap \{x_n > 0\})\cap \{x_n =0\}\\
z_\varepsilon &= -2,\: \mathrm{on}\; \partial_p (C_{1-\delta}(0,0)\cap \{x_n > 0\})\cap \{x_n > 0\},
\end{aligned}\end{equation} where $g_\varepsilon \in C^\infty(C_{1-\delta}(0,0))$ and $-f(x, t) - 2\varepsilon< g_\varepsilon(x,t) <- f(x,t) - \varepsilon$. By standard parabolic theory for any $\varepsilon > 0$ there exists an $\alpha > 0$ (which depends on $\varepsilon > 0$) such that $|x_n| < \alpha$ implies $|z_\varepsilon(x,x_n,t) - z_\varepsilon(x,0,t)| < \varepsilon/2$. Pick $k$ large enough so that $\sigma_k < \alpha$. We know $w_k$ solves the adjoint heat equation on $\{x_n \geq \sigma_k\}$ and, by equations \eqref{vmodefofz} and \eqref{wkwithdkregular}, $w_k \geq z_\varepsilon$ on $\partial_p (C_{1-\delta}(0,0)\cap \{x_n > \sigma_k\})$. Therefore, $w_k \geq z_\varepsilon$ on all of $C_{1-\delta}(0,0)\cap \{x_n > \sigma_k\}$. 

Consequently,  $$w_k(x,\alpha,t) \geq z_\varepsilon(x,\alpha,t) \geq z_\varepsilon(x,0,t) - \varepsilon/2 \geq -f(x,t) - 3\varepsilon.$$ As $k \rightarrow \infty$ we know $w_k(x,\alpha,t) \rightarrow w(x,\alpha,t) \leq w(x,0,t)$. This gives the desired result. 
\end{proof} 

The next step is to prove that the normal derivative of $w$ on $\{x_n =0\}$ is zero. This will allow us to extend $w$ smoothly over $\{x_n = 0\}$ and obtain regularity for $f$. 

\begin{lem}\label{normalderivativeiszero}
Suppose the assumptions of Lemma \ref{blowupfunctionisagraph} are satisfied and that $k$ is the subsequence identified in that lemma. Further suppose that $w$ is the limit function identified in Lemma \ref{boundaryvaluesofw}. Then $\partial_n w = 0$, in the sense of distributions, on  $\{x_n = 0\}\cap  C_{1/2}(0,0)$.
\end{lem}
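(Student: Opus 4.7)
The plan is to adapt the proof of Lemma \ref{holdernormalderivativeiszero} to the current setting, where the two crucial pieces of information we have about $h_k$ are the lower bound $h_k \geq 1$ at $\sigma$-a.e.\ point of $\partial\{u_k>0\}$ (from the hypothesis of Theorem \ref{poissonkernel1impliesflat}) and the universal gradient bound $|\nabla u_k|\le 1$. The absence of an oscillation hypothesis on $h_k$ is actually a simplification: instead of having to sandwich $h_k(Q,\tau)$ between $(1\pm \tau_k)h_k(0,0)$, we just use $\phi \le h_k \phi$ pointwise for positive test functions $\phi$.

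First I would set up the geometric scaffolding exactly as in Lemma \ref{holdernormalderivativeiszero}: fix the cutoff $g(x,t)=5-8(|x|^2+|t|)$, work in the strip $Z$, and introduce $Z^\pm(\sigma_k g), Z^0(\sigma_k g)$ together with $\Sigma_k = \{u_k>0\}\cap Z^0(\sigma_k g)$. For a positive $\phi\in C^\infty_c(C_1(0,0))$, the bound $h_k\ge 1$ gives $\int_{\partial\{u_k>0\}}\phi\,d\mu \le \int_{\partial\{u_k>0\}}\phi h_k\,d\mu$, and the integral representation of $h_k\,d\mu$ combined with integration by parts produces an analogue of Claim 1, namely
\begin{equation*}
\mu\bigl(\partial\{u_k>0\}\cap Z^-(\sigma_k g)\bigr)\le \int_{\Sigma_k}\bigl(\partial_n u_k-1\bigr)\,dx\,dt+\mu(\Sigma_k)+C\sigma_k^2,
\end{equation*}
after taking $\phi\to \chi_{Z^-(\sigma_k g)}\chi_{C_1}$, controlling the spurious $\sigma_k u_k\,\mathrm{sgn}(t)$ boundary term by the Lipschitz bound $|u_k|\le C\sigma_k$ on $\Sigma_k$, and integrating by parts in $x$ exactly as in \eqref{integratingbypartsonek}. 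As in the H\"older case, care must be taken to justify the integration by parts via the coarea formula, but this argument goes through unchanged.

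Next I would obtain the reverse inequality (Claim 2): using $\nu_{\sigma_k g}=(-16\sigma_k x, 1,0)/\sqrt{1+256\sigma_k^2|x|^2}$ as a competitor vector field and the divergence theorem inside $Z^-(\sigma_k g)\cap\{u_k>0\}$, one has $\mu(\Sigma_k)-C\sigma_k^2\le \mu\bigl(\partial\{u_k>0\}\cap Z^-(\sigma_k g)\bigr)$, because $|\mathrm{div}\,\nu_{\sigma_k g}|\lesssim \sigma_k$ and the strip has width of order $\sigma_k$. Chaining the two claims and dividing by $\sigma_k$ gives the crucial estimate
\begin{equation*}
\int_{\Sigma_k}\partial_n w_k\,d\mu=\int_{\Sigma_k}\frac{\partial_n u_k-1}{\sigma_k}\,d\mu\;\ge\;-C\sigma_k\longrightarrow 0,
\end{equation*}
while the one-sided bound $\partial_n w_k\le 0$ (from $\partial_n u_k\le 1$) forces $\int_{\Sigma_k}|\partial_n w_k|\,d\mu\to 0$. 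This is the analogue of Claim 3, and is simpler here because there is no $\tau_k/\sigma_k$ error to juggle.

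To conclude, take $\zeta\in C^\infty_c(C_{1/2}(0,0))$ and compute, using $\int_{\Sigma_k}\zeta\partial_n w_k\,d\mu = \int_{\Sigma_k}\zeta\,\nu_{\Sigma_k}\cdot(\nabla_X w_k, w_k)\,d\mu + o_k(1)$ (the $o_k(1)$ absorbs the tangential contribution, bounded by $C\sigma_k$ as in \eqref{onlyenmatters}), the divergence theorem on $Z^+(\sigma_k g)\cap C_{1/2}(0,0)$, and the fact that $w_k$ is adjoint-caloric wherever $x_n>\sigma_k$. Passing to the limit with the $\mathbb C^{2,1}$-convergence from Lemma \ref{boundaryvaluesofw} and a final integration by parts gives $\int_{\{x_n=0\}}\partial_n w\cdot\zeta\,dx\,dt=0$, which is the desired distributional identity. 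The main technical obstacle will be the same one as in the H\"older case, namely verifying that $\Sigma_k$ is rectifiable and that the set $E_k = \{(x,t):(x,\sigma_kg(x,t),t)\in\Sigma_k\}$ has finite perimeter so that integration by parts is legal; the coarea-formula perturbation argument used after \eqref{integratingbypartsonek} applies verbatim, exploiting only Ahlfors regularity of $\partial\{u_k>0\}$.
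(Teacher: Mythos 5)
Your proposal is correct and follows essentially the same route as the paper's proof: the same cutoff $g$, the same sets $Z^{\pm}(\sigma_k g)$ and $\Sigma_k$, the same three claims (with the Claim 1/Claim 2 sandwich divided by $\sigma_k$ to get $\int_{\Sigma_k}|\partial_n w_k|\to 0$), and the same final divergence-theorem argument on $Z^+(\sigma_k g)$ using the $\mathbb C^{2,1}$ convergence of $w_k$. Your observation that the argument simplifies relative to the H\"older case because only $h_k\geq 1$ and $|\nabla u_k|\leq 1$ are needed (no $\tau_k$ bookkeeping) is exactly how the paper's appendix version proceeds.
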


\begin{proof}
Rescale so $\rho_k \equiv 1$ and define $g(x,t)= 5 - 8(|x|^2 + |t|)$. For $(x,0,t) \in C_{1/2}(0,0)$ we observe $f(x,0,t) \leq 1 \leq g(x,0,t)$. We shall work in the following set $$Z\colonequals \{(x, x_n, t)\mid |x|, |t| \leq 1, x_n \in \R\}.$$ For any $\phi(x,t)$, define $Z^+(\phi)$ to be the set of points in $Z$ above the graph $\{(X,t)\mid x_n = \phi(x,t)\}$, $Z^-(\phi)$ as set of points below the graph and $Z^0(\phi)$ as the graph itself. Finally, let $\Sigma_k \colonequals \{u_k > 0\} \cap Z^0(\sigma_k g)$. 

Recall, for any Borel set $A$, we define the ``surface measure", $\mu(A) = \int_{-\infty}^\infty \mathcal{H}^{n-1}(A\cap \{s=t\}) dt$. If $k$ is sufficiently large, and potentially adding a small constant to $g$, $\mu(Z^0(\sigma_kg)\cap \partial \{u_k > 0\} \cap C_{1/2}(0,0)) = 0$. 

There are three claims, which together prove the desired result.

\medskip

\noindent {\bf Claim 1:} $$\mu(\partial \{u_k > 0\} \cap Z^-(\sigma_kg)) \leq \int_{\Sigma_k} \partial_n u_k - 1dxdt +\mu(\Sigma_k)+ C\sigma_k^2$$

{\it Proof of Claim 1}: For any positive $\phi \in C_0^\infty(C_1(0,0))$ we have \begin{equation}\label{vmofirststeptoclaim1} \begin{aligned} \int_{\partial \{u_k > 0\}} \phi d\mu \leq \int_{\partial \{u_k > 0\}} \phi d\mu =& \int_{\{u_k > 0\}} u_k(\Delta \phi - \partial_t \phi)dXdt\\  =& -\int_{\{u_k > 0\}} \nabla u_k \cdot \nabla \phi + u_k \partial_t\phi\end{aligned}\end{equation} (we can use integration by parts because, for almost every $t$, $\{u_k > 0\} \cap \{s =t\}$ is a set of finite perimeter). Let $\phi \rightarrow \chi_{Z^-(\sigma_kg)} \chi_{C_1}$ (as functions of bounded variation) and, since $|t| > 3/4$ or $|x|^2 > 3/4$ implies $u(x, \sigma_kg(x,t), t) = 0$, equation \eqref{firststeptoclaim1} becomes \begin{equation}\label{vmointegrateoverE} \mu(\partial \{u_k > 0\} \cap Z^-(\sigma_kg)) \leq - \int_{\Sigma_k} \frac{\nabla u_k \cdot \nu + \sigma_k u_k\; \mathrm{sgn}(t)}{\sqrt{1+\sigma^2_k(|\nabla_xg(x,t)|^2 + 1)}} d\mu,\end{equation} where $\nu(x,t) = (\sigma_k\nabla g(x,t), -1)$ points outward spatially in the normal direction. 

We address the term with $\mathrm{sgn}(t)$ first; the gradient bound on $u_k$ tells us that $|u_k| \leq C\sigma_k$ on $\Sigma_k$, so \begin{equation}\label{vmotimepartsilly}\left|\sigma_k\int_{\Sigma_k} \frac{u_k\; \mathrm{sgn}(t)}{\sqrt{1+\sigma^2_k(|\nabla_xg(x,t)|^2 + 1)}} d\mu\right| \leq C\sigma_k^2.\end{equation} 

To bound the other term, note that $\frac{d\mu}{\sqrt{1+\sigma^2_k(|\nabla_xg(x,t)|^2 + 1)}} = dxdt$ where the latter integration takes place over $E_k = \{(x,t) \mid (x, \sigma_kg(x,t), t) \in \Sigma_k\} \subset \{x_n = 0\}$. Then integrate by parts in $x$ to obtain  \begin{equation}\label{vmointegratingbypartsonek}\begin{aligned}&\int_{E_k} (\sigma_k\nabla g(x,t), -1)\cdot \nabla u_k(x, \sigma_kg(x,t), t) dx dt = \int_{\partial E_k} \sigma_ku_k(x,\sigma_kg(x,t),t)\partial_\eta gd\mathcal H^{n-2}dt\\&-\int_{E_k} \sigma_k u_k (x,\sigma_kg(x,t),t)\Delta g(x,t) +\sigma_k^2 \partial_nu_k(x,\sigma_kg(x,t),t)|\nabla g|^2dxdt\\
&-\int_{E_k}\partial_n u_k(x,\sigma_kg(x,t),t)- 1dxdt +\mathcal L^n(E_k), \end{aligned}\end{equation} where $\eta$ is the outward space normal on $\partial E_k$. Since $u_k(x,\sigma_kg(x,t),t) = 0$ for $(x,t) \in \partial E_k$ the first term zeroes out. 

The careful reader may object that $E_k$ may not be a set of finite perimeter and thus our use of integration by parts is not justified. However, for any $t_0$, we may use the coarea formula with the $L^1$ function $\chi_{\{u(x,\sigma_kg(x,t_0),t_0) > 0\}}$ and the smooth function $\sigma_kg(-, t_0)$ to get $$\infty > \int \sigma_k |\nabla g(x,t_0)| \chi_{\{u(x,\sigma_kg(x,t_0),t_0) > 0\}}dx = \int_{-\infty}^\infty \int_{\{(x,t_0)\mid \sigma_kg(x,t_0) = r\}} \chi_{\{u(x,r,t_0) >0\}} d\mathcal H^{n-2}(x) dr.$$ Thus $\{(x,t_0)\mid \sigma_kg(x,t_0) > r\}\cap \{(x,t)\mid u(x,\sigma_kg(x,t_0),t_0) >0\}$ is a set of finite perimeter for almost every $r$. Equivalently, $\{(x,t_0)\mid \sigma_k (g(x,t_0)+\varepsilon) > 0\}\cap \{(x,t)\mid u(x,\sigma_k(g(x,t_0)+\varepsilon),t_0) >0\}$ is a set of finite perimeter for almost every $\varepsilon \in \mathbb R$. Hence, there exists a $\varepsilon >0$ aribtrarily small such that if we replace $g$ by $g+\varepsilon$ then $E_k\cap \{t = t_0\}$ will be a set of finite perimeter for almost every $t_0$. Since we can perturb $g$ slightly without changing the above arguments, we may safely assume that $E_k$ is a set of finite perimeter for almost every time slice. 

Observe that $\Delta g$ is bounded above by a constant, $|u_k| \leq C\sigma_k$ on $\Sigma_k$, $|\partial_n u_k| \leq 1$ and finally $\mu(\Sigma_k) \geq \mathcal L^n(E_k)$. Hence, $$\mu(\partial \{u_k > 0\} \cap Z^-(\sigma_kg)) \leq \int_{E_k} \partial_n u_k - 1dxdt + \mu(\Sigma_k) + C\sigma_k^2.$$ Note integrating over $E_k$ is the same as integrating over $\Sigma_k$ modulo a factor of $\sqrt{1+|\sigma_k\nabla_x g|^2} \simeq 1+ \sigma_k^2$.  As $ |\partial_n u_k - 1|$ is bounded we may rewrite the above as $$\mu(\partial \{u_k > 0\} \cap Z^-(\sigma_kg)) \leq \int_{\Sigma_k} \partial_n u_k - 1d\mu + \mu(\Sigma_k) + C\sigma_k^2,$$ which is the desired inequality.

Before moving on to Claim 2, observe that arguing as in equations \eqref{vmotimepartsilly} and \eqref{vmointegratingbypartsonek},  \begin{equation}\label{vmoonlyenmatters}
\int_{\Sigma_k}\frac{(\sigma_k \nabla_x g(x,t), 0, \sigma_k \mathrm{sgn}(t))}{\sqrt{1+\|(\sigma_k \nabla_x g(x,t), 0, \sigma_k \mathrm{sgn}(t))\|^2}} \cdot (\nabla_x w_k, 0, w_k) d\mu \stackrel{k\rightarrow \infty}{\rightarrow} 0,
\end{equation}
which will be useful to us later. 

\medskip

\noindent {\bf Claim 2:} $$\mu(\Sigma_k) - C_2 \sigma_k^2 \leq \mu(\partial \{u_k > 0\} \cap Z^-(\sigma_kg)).$$

{\it Proof of Claim 2:}  Let $\nu_k(x,t)$ the inward pointing measure theoretic space normal to $\partial \{u_k > 0\} \cap \{s= t\}$ at the point $x$. For almost every $t$ it is true that $\nu_k$ exists $\mathcal H^{n-1}$ almost everywhere. Defining $\nu_{\sigma_k g}(X,t) = \frac{1}{\sqrt{1+256\sigma_k^2|x|^2}}(-\sigma_k16x, 1,0)$, we have $$\mu(\partial \{u_k > 0\} \cap Z^-(\sigma_kg)) = \int_{\partial \{u_k > 0\} \cap Z^-(\sigma_kg)} \nu_k \cdot \nu_k d\mu \geq \int_{\partial^* \{u_k > 0\} \cap Z^-(\sigma_kg)} \nu_k \cdot \nu_k d\mu \geq $$$$\int_{\partial^* \{u_k > 0\} \cap Z^-(\sigma_kg)} \nu_k \cdot \nu_{\sigma_kg} d\mu \stackrel{\mathrm{div}\;\mathrm{thm}}{=} -\int_{Z^-(\sigma_kg)\cap \{u_k > 0\}} \mathrm{div}\; \nu_{\sigma_kg}dXdt + \int_{\Sigma_k} 1d\mu.$$ 

We compute $|\mathrm{div}\; \nu_{\sigma_kg}| = \left|\frac{-16\sigma_k(n-1)}{\sqrt{1+ 256\sigma_k^2|x|^2}} + \frac{3\sigma_k^3(16*256)|x|^2}{\sqrt{1+ 256\sigma_k^2|x|^2}^3}\right|  \leq C\sigma_k$. Since the ``width" of $Z^-(\sigma_kg)\cap \{u_k > 0\}$ is of order $\sigma_k$, the claim follows. 

\medskip

\noindent {\bf Claim 3:} $$\int_{\Sigma_k} |\partial_n w_k| \stackrel{k\rightarrow \infty}{\rightarrow} 0.$$

{\it Proof of Claim 3:} We first notice that $\partial_n u_k \leq 1$ and, therefore, $\partial_n w_k \leq 0$. To show the limit above is at least zero we compute \begin{equation}\label{vmolimitingtozero} \begin{aligned} \int_{\Sigma_k} \partial_n w_k d\mu =& \int_{\Sigma_k} \frac{\partial_n u_k - 1}{\sigma_k } d\mu \\
\stackrel{\mathrm{Claim 1}} \geq&\frac{\mu(\partial \{u_k > 0\} \cap Z^-(\sigma_kg))}{\sigma_k} - \frac{\mu(\Sigma_k)}{\sigma_k} - C\sigma_k \\
\stackrel{\mathrm{Claim 2}}{\geq}& \frac{\mu(\Sigma_k)}{\sigma_k} -\frac{\mu(\Sigma_k)}{\sigma_k} - C\sigma_k\rightarrow 0.\\
\end{aligned}
\end{equation}

\medskip

We can now combine these claims to reach the desired conclusion. We say that $\partial_n w = 0$, in the sense of distributions on $\{x_n = 0\}$, if, for any $\zeta \in C^\infty_0(C_{1/2}(0,0))$, $$\int_{\{x_n = 0\}} \partial_n w\zeta = 0.$$ 

By claim 3 \begin{equation}\label{vmowhatdoesclaim3say} 0 = \lim_{k\rightarrow \infty} \int_{\Sigma_k} \zeta \partial_n w_k .\end{equation} On the other hand equation \eqref{vmoonlyenmatters} (and $\zeta$ bounded) implies \begin{equation}\label{vmofinalmanipulationstozero} \lim_{k\rightarrow \infty} \int_{\Sigma_k} \zeta \partial_n w_k =\lim_{k\rightarrow \infty} \int_{\Sigma_k} \zeta \nu_{\Sigma_k} \cdot (\nabla w_k, w_k),\end{equation} where $\nu_{\Sigma_K}$ is the unit normal to $\Sigma_k$ (thought of as a Lispchitz graph in $(x,t)$) pointing upwards.  Together, equations \eqref{vmowhatdoesclaim3say}, \eqref{vmofinalmanipulationstozero} and the divergence theorem in the domain $Z^+(\sigma_kg) \cap C_{1/2}(0,0)$ have as a consequence \begin{equation*}\begin{aligned} 0 =& \lim_{k\rightarrow \infty} \int_{Z^+(\sigma_kg)} \mathrm{div}_{X,t} (\zeta (\nabla_X w_k, w_k)) dXdt \\ =&\lim_{k\rightarrow \infty}  \int_{Z^+(\sigma_k)} \nabla_X \zeta \cdot \nabla_X w_k + (\partial_t \zeta)w_k + \zeta (\Delta_X w_k + \partial_t w_k) dXdt\\ \stackrel{\Delta w + \partial_t w = 0}{=}& \int_{\{x_n > 0\}} \nabla_X w\cdot \nabla_X \zeta + (\partial_t \zeta) w_k dXdt \\ \stackrel{\mathrm{integration}\; \mathrm{by}\; \mathrm{parts}}{=}& \int_{\{x_n = 0\}} w_n \zeta dxdt - \int_{\{x_n > 0\}} \zeta (\Delta_X w + \partial_t w).\end{aligned}\end{equation*} As $w$ is a solution to the adjoint heat equation this implies that $\int_{\{x_n = 0\}} \partial_n w\zeta = 0$ which is the desired result. 
\end{proof}

From here it is easy to conclude regularity of $f$. 

\begin{cor}\label{fhashighregularity}
Suppose the assumptions of Lemma \ref{blowupfunctionisagraph} are satisfied and that $k$ is the subsequence identified in that lemma. Then $f\in C^{\infty}(C_{1/2}(0,0))$ and in particular the $\mathbb C^{2+\alpha,1+\alpha}$ norm of $f$ in $C_{1/4}$ is bounded by an absolute constant.
\end{cor}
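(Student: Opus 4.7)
The plan is to exploit the Neumann-type boundary condition $\partial_n w = 0$ on $\{x_n = 0\}\cap C_{1/2}(0,0)$ established in Lemma \ref{normalderivativeiszero} and extend $w$ across the hyperplane by even reflection. Concretely, I would define
\[
\widetilde{w}(x, x_n, t) \colonequals \begin{cases} w(x, x_n, t), & x_n \geq 0, \\ w(x, -x_n, t), & x_n < 0, \end{cases}
\]
on the full cylinder $C_{1/2}(0,0)$. Because $w\in C(\overline{C_{1/2}(0,0)\cap\{x_n>0\}})$ by Lemma \ref{boundaryvaluesofw}, the extension $\widetilde{w}$ is continuous on $C_{1/2}(0,0)$.

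Next I would verify that $\widetilde{w}$ is a distributional (and hence classical) solution of the adjoint heat equation on all of $C_{1/2}(0,0)$. Given any test function $\varphi\in C_c^\infty(C_{1/2}(0,0))$, splitting the integral $\int \widetilde{w}(\Delta\varphi+\partial_t\varphi)$ into the two half-cylinders and integrating by parts (using $\Delta w+\partial_t w=0$ on each side together with the vanishing of $\partial_n w$ on $\{x_n=0\}$ in the distributional sense given by Lemma \ref{normalderivativeiszero}) yields $\int \widetilde{w}(\Delta\varphi+\partial_t\varphi)=0$. Since $\widetilde{w}$ is bounded and a bounded weak solution of the adjoint heat equation in a cylinder is automatically a classical $C^\infty$ solution, $\widetilde{w}\in C^\infty(C_{1/2}(0,0))$, with $\mathbb C^{2+\alpha,1+\alpha/2}$ norm on $C_{1/4}(0,0)$ controlled by $\|\widetilde{w}\|_{L^\infty(C_{3/8}(0,0))}\le 2$ by standard interior Schauder estimates.

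Finally, since Lemma \ref{boundaryvaluesofw} identifies $w(x,0,t) = -f(x,t)$, I conclude
\[
f(x,t) = -\widetilde{w}(x, 0, t) \in C^\infty(C_{1/2}(0,0)),
\]
and the universal bound on $\|\widetilde{w}\|_{\mathbb C^{2+\alpha,1+\alpha/2}(C_{1/4}(0,0))}$ transfers to $\|f\|_{\mathbb C^{2+\alpha,1+\alpha/2}(C_{1/4}(0,0))}$. The only subtle point is checking that the distributional identity $\partial_n w=0$ from Lemma \ref{normalderivativeiszero} (which was proved by testing against $\zeta\in C_c^\infty(C_{1/2}(0,0))$) is precisely the identity needed to make the two integration-by-parts terms cancel when showing $\widetilde{w}$ is weakly adjoint-caloric across $\{x_n=0\}$; this is routine once one writes out the reflection and changes variables $x_n\mapsto -x_n$ on the lower half. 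No further argument is required, and the regularity of $f$ is as strong as the interior regularity of bounded adjoint-caloric functions.
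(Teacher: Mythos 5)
Your proposal is correct and is essentially the paper's own argument: even reflection of $w$ across $\{x_n=0\}$, using the distributional identity $\partial_n w = 0$ from Lemma \ref{normalderivativeiszero} to see the reflected function is a weak (hence classical) solution of the adjoint heat equation in all of $C_{1/2}(0,0)$, followed by interior parabolic regularity and the identification $w(x,0,t)=-f(x,t)$ from Lemma \ref{boundaryvaluesofw}. Your extra care in writing out the reflection change of variables and the cancellation in the integration by parts is a harmless elaboration of the same route.
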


\begin{proof}
Extend $w$ by reflection across $\{x_n = 0\}$. By Lemma \ref{normalderivativeiszero} this new $w$ satisfies the adjoint heat equation  in all of $C_{1/2}(0,0)$ (recall a continuous weak solution to the adjoint heat equation in the cylinder is actually a classical solution to the adjoint heat equation). Since $\|w\|_{L^\infty(C_{3/4}(0,0))} \leq 2$, standard regularity theory yields the desired results about $-f = w|_{x_n =0}$. 
\end{proof}

We can use this regularity to prove Lemma \ref{centeredflatonbothsidesisextraflatonzeroside}.

\begin{proof}[Proof of Lemma \ref{centeredflatonbothsidesisextraflatonzeroside}]
Without loss of generality let $(Q,\tau)  = (0,0)$ and we will assume that the conclusions of the lemma do not hold. Choose a $\theta \in (0,1)$ and, by assumption, there exists $\rho_k$ and  $\sigma_k \downarrow 0$ such that $u \in CF(\sigma_k, \sigma_k)$  in $C_{\rho_k}(0,0)$ in the direction $\nu_k$ (which after a harmless rotation we can set to be $e_n$) but so that $u$ is not in $\widetilde{CF}(\theta \sigma_k, \theta \sigma_k)$ in $C_{c(n)\theta \rho_k}(0,0)$ in any direction $\nu$ with $|\nu_k-\nu| \leq C\sigma_k$ and for any constant $c(n)$. Let $u_k = \frac{u(\rho_k X, \rho_k^2t)}{\rho_k}$. It is clear that $u_k$ is adjoint caloric, that its zero set is a parabolic regular domain, that $|\nabla u_k| \leq 1$ and $h_k \geq 1$.

By Lemma \ref{blowupfunctionisagraph} we know that there exists a continuous function $f$ such that  $\partial \{u_k > 0\} \rightarrow \{(X,t)\mid x_n = f(x,t)\}$ in the Hausdorff distance sense. Corollary \ref{fhashighregularity} implies that there is a universal constant, call it $K$, such that \begin{equation}\label{vmofdoesntgetbig} f(x,t) \leq f(0,0) + \nabla_x f(0,0) \cdot x  + K(|t| + |x|^2)\end{equation} for $(x,t) \in C_{1/4}(0,0)$. Since $(0,0) \in \partial \{u_k > 0\}$ for all $k$, $f(0,0) = 0$. If $\theta \in (0,1)$, then there exists a $k$ large enough (depending on $\theta$ and the dimension) such that $$f_k^+(x,t) \leq  \nabla_x f(0,0) \cdot x + \theta^2/4K,\; \forall (x,t) \in C_{\theta/(4K)}(0,0),$$ where $f_k^+$ is as in Lemma \ref{blowupfunctionisagraph}. Let $$\nu_k \colonequals \frac{(-\sigma_k \nabla_x f(0,0), 1)}{\sqrt{1+ |\sigma_k  \nabla_x f(0,0)|^2}}$$ and compute \begin{equation}\label{changingthedirecitonregular}x \cdot \nu_k \geq \theta^2 \sigma_k/4K \Rightarrow x_n \geq \sigma_k x' \cdot \nabla_x f(0,0) + \theta^2 \sigma_k/4K \geq f^+_k(x,t).\end{equation} Therefore, if $(X,t) \in  C_{\theta/(4K)}(0,0)$ and $x\cdot \nu_k \geq \theta^2 \sigma_k/4K$, then $u_k(X,t) > 0$. Arguing similarly for $f_k^-$ we can see that $u_k \in \widetilde{CF}(\theta \sigma_k, \theta \sigma_k)$ in $C_{\theta/4K}(0,0)$ in the direction $\nu_k$. It is easy to see that $|\nu_k - e_n| \leq C\sigma_k$ and so we have the desired contradiction. 
\end{proof}

\section{Non-tangential limit of $F(Q,\tau)$: proof of Lemma \ref{equaltoh}}\label{appendix:nontangentiallimit}

If $\partial \Omega$ is smooth and $F(Q,\tau)$ is the non-tangential limit of $\nabla u(Q,\tau)$ then $F(Q,\tau) = h(Q,\tau)\hat{n}(Q,\tau)$. In this section we prove this is true when $\partial \Omega$ is a parabolic chord arc domain; this is Lemma \ref{equaltoh}. Easy modifications of our arguments will give the finite pole result, Lemma \ref{equaltohfinitepole}. Before we begin, we establish two geometric facts about parabolic regular domains which will be useful.  The first is on the existence of ``tangent planes" almost everywhere. 

\begin{lem}\label{prdomainshavetangentplanes}
Let $\Omega$ be a parabolic regular domain. For $\sigma$-a.e. $(Q,\tau) \in \partial \Omega$ there exists a $n$-plane $P\equiv P(Q,\tau)$ such that $P$ contains a line parallel to the $t$ axis and such that for any $\varepsilon > 0$ there exists a $r_\varepsilon > 0$ where $$0 < r < r_\varepsilon \Rightarrow \left|\left\langle \hat{n}(Q,\tau), Z-Q\right\rangle \right| < r\varepsilon,\; \forall (Z,t) \in \partial \Omega \cap C_r(Q,\tau).$$
\end{lem}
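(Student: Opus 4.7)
The plan is to upgrade the averaged $L^2$-flatness encoded by $\gamma$ to pointwise one-sided $L^\infty$-flatness at $\sigma$-almost every boundary point, using $\delta$-Reifenberg flatness as the rigidity input. Since $\nu$ is a Carleson measure, Fubini's theorem gives $\int_0^1 \gamma(Q,\tau,r)\,r^{-1}\,dr < \infty$ for $\sigma$-a.e. $(Q,\tau) \in \partial\Omega$, so $\gamma(Q,\tau,r) \to 0$ along a sequence of dyadic scales $r_k \downarrow 0$.

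The key step is a quantitative geometric lemma: there exist constants $C(n), \delta_0 > 0$ such that if $\Omega$ is $\delta_0$-Reifenberg flat and $\gamma(Q,\tau,r) \leq \eta^{2(n+3)}$, then every (near-)minimizer $P_r$ in the definition of $\gamma(Q,\tau,r)$ satisfies
\[
\sup_{(Z,s) \in \partial \Omega \cap C_{r/2}(Q,\tau)} d((Z,s),P_r) \leq C(n)\,\eta\,r.
\]
I would prove this by a Chebyshev--Reifenberg argument: Chebyshev bounds the $\sigma$-measure of $\{(Z,s) \in \partial\Omega \cap C_r(Q,\tau): d((Z,s),P_r) > \eta r\}$ by $\eta^{-2}r^{n+3}\gamma(Q,\tau,r) \lesssim \eta^{2n+4}r^{n+1}$. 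If some $(Z_0,s_0) \in \partial \Omega \cap C_{r/2}(Q,\tau)$ had $d((Z_0,s_0),P_r) > C_0\eta r$, then Reifenberg flatness at scale $\eta r$ around $(Z_0,s_0)$ would exhibit a set of $\sigma$-measure $\gtrsim (\eta r)^{n+1}$ inside $\partial \Omega \cap C_{\eta r}(Z_0,s_0)$ entirely at distance $\geq C_0\eta r/2$ from $P_r$, contradicting the Chebyshev bound for $C_0$ large (depending only on $n$ and the Ahlfors constant $M$).

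Combining these facts, for $\sigma$-a.e. $(Q,\tau)$ there are scales $r_k \downarrow 0$ and planes $P_{r_k}$ through $(Q,\tau)$ containing the time-axis direction, with one-sided deviation from $\partial\Omega$ on $C_{r_k/2}(Q,\tau)$ of order $o(r_k)$. Since each such plane is parametrized by a unit vector in $\mathbb{R}^n$, compactness yields a subsequential limit $P(Q,\tau)$. To promote this to every small scale $r$, I would run a Cauchy-type argument: if two planes (containing the time axis and passing through $(Q,\tau)$) each one-sided approximate $\partial\Omega$ to within $o(\rho)$ on $C_\rho(Q,\tau)$, then their unit spatial normals must differ by $o(1)$, since Ahlfors regularity forces the boundary to occupy spatial directions that cannot simultaneously lie close to two planes of non-negligible angular separation. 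This shows $P_r \to P(Q,\tau)$ as $r \downarrow 0$ and yields the single tangent plane claimed in the statement, since the stated inequality $|\langle \hat n(Q,\tau),Z-Q\rangle|<r\varepsilon$ is exactly the assertion that $d((Z,t),P(Q,\tau))<r\varepsilon$.

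The main obstacle is the geometric lemma in the second paragraph: translating $L^2$-smallness of $\gamma$ into a pointwise $L^\infty$ bound requires Reifenberg flatness to detect and exclude isolated spikes that Chebyshev alone cannot see. Once this is in place, the remaining compactness and Cauchy arguments are routine, with the parabolic anisotropy causing no real trouble because all admissible planes contain a direction parallel to the time axis (making the space of such planes compact and finite-dimensional) and the parabolic distance to such a plane agrees with the Euclidean distance in the spatial slice.
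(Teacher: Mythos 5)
Your overall strategy is sound and, in its first half, is essentially the paper's: the paper does not reprove the $L^2\to L^\infty$ upgrade but simply quotes the inequality $\gamma_\infty(Q,\tau,r)^{n+3}\leq 16^{n+3}\gamma(Q,\tau,2r)$ from \cite{hlnbigpieces} (their equation (2.2)), where $\gamma_\infty$ is the sup-distance version of $\gamma$; your Chebyshev-plus-lower-Ahlfors argument is the standard proof of exactly that inequality, so this part is fine. Two remarks on the remainder. First, a small quantitative point: the statement requires flatness at \emph{every} small scale, so you need $\gamma(Q,\tau,r)\to 0$ as $r\downarrow 0$, not merely along some dyadic subsequence; this does follow from $\int_0^1\gamma(Q,\tau,r)\,r^{-1}dr<\infty$ once you note the trivial doubling $\gamma(Q,\tau,r)\leq 2^{n+3}\gamma(Q,\tau,2r)$, but your "Cauchy-type argument" by itself only controls the directions of the approximating planes, not the flatness at scales lying between widely separated good scales, so this observation should be made explicit. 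Second, and more substantively, your uniqueness step genuinely differs from the paper's. You force uniqueness by the two-plane angular-separation argument (which is correct: covering the intersection of the two slabs by parabolic cylinders of radius $\sim\varepsilon\rho/\theta$ and using upper Ahlfors regularity against the lower bound gives $\theta\leq C(n,M)\varepsilon$), whereas the paper identifies every subsequential limit plane with $\hat{n}(Q,\tau)^\perp$, where $\hat{n}(Q,\tau)$ is the \emph{measure-theoretic} space normal supplied by finite-perimeter theory on a.e.\ time slice. This matters because the lemma's conclusion is phrased in terms of $\hat{n}(Q,\tau)$ in that measure-theoretic sense, and the subsequent results (Corollary \ref{densityofsigma}, Lemma \ref{equaltoh}) use precisely that identification (Lebesgue points of $\hat{n}$, weak limits of $\hat{n}_r\sigma_r$). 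Your construction produces a unique geometric tangent plane but never ties its normal to $\hat{n}(Q,\tau)$; the missing link is easy to supply — the Reifenberg separation property plus the one-sided squeezing forces $\chi_{\Omega_{r,(Q,\tau)}}\to\chi_{\{x_n>0\}}$ in $L^1_{\mathrm{loc}}$ for the half-space bounded by your plane, so at points where the measure-theoretic normal exists it must coincide with the normal of $P(Q,\tau)$ — but as written this identification is asserted rather than proved, and it is the one ingredient of the paper's proof your route does not replace.
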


\begin{proof}
For $(Q,\tau) \in \partial \Omega, r > 0$ define $$\gamma_\infty(Q,\tau, r) = \inf_P \sup_{(Z,\eta) \in C_r(Q,\tau)\cap \partial \Omega} \frac{d((Z,\eta), P)}{r}$$ where the infinum is taken over all $n$-planes through $(Q,\tau)$ with a line parallel to the $t$-axis. Let $P(Q,\tau, r)$ be a plane which achieves the minimum.  For $(Q,\tau) \in \partial \Omega, r > 0$ we have $$\gamma_\infty(Q,\tau, r)^{n+3} \leq 16^{n+3}\gamma(Q,\tau, 2r)$$ (see \cite{hlnbigpieces}, equation (2.2)). 

Parabolic uniform rectifiability demands that $\frac{\gamma(Q,\tau, 2r)}{2r}$ be integrable at zero for $\sigma$-a.e. $(Q,\tau)$. Thus it is clear that $\gamma_\infty(Q,\tau, r) \stackrel{r\downarrow 0}{\rightarrow} 0$ for $\sigma$-a.e. $(Q,\tau)$. Let $r_j \downarrow 0$ and $P_j \colonequals P(Q,\tau, r_j)$, which approximate $\partial \Omega$ near $(Q,\tau)$ increasingly well. Passing to a subsequence, compactness implies $P_j \rightarrow P_\infty(Q,\tau)$. Our lemma is proven if $P_\infty(Q,\tau)$ does not depend on the sequence (or subsequence) chosen. 

If $K$ is any compact set then for $\mathcal L^1$-a.e. $t$, $\partial \Omega \cap K \cap \{s = t\}$ is a set of locally finite perimeter.  The theory of sets of locally finite perimeter (see e.g. \cite{evansandgariepy}) says that for $\mathcal H^{n-1}$-a.e. point on this time slice there exists a unique measure theoretic space-normal. Therefore, for $\sigma$-a.e. $(Q,\tau) \in \partial \Omega$ there is a measure theoretic space normal $\hat{n} \colonequals \hat{n}((Q,\tau))$ (this vector is normal to the time slice as opposed to the whole surface). Let $(Q,\tau)$ be a point both with a measure theoretic normal and such that $\gamma_\infty(Q,\tau, r) \rightarrow 0$. We claim $P_\infty(Q,\tau) = \hat{n}(Q,\tau)^\perp$, and thus is independent of the sequence $r_j \downarrow 0$.

As $\sup_{(Z,\eta) \in C_{r_j}(Q,\tau)\cap \partial \Omega}\frac{d((Z,\eta), P(Q,\tau, r_j))}{r_j}\rightarrow 0$ it follows that $\sup_{(Z,\tau) \in C_{r_j}(Q,\tau)\cap \partial \Omega}\frac{d((Z,\tau), P(Q,\tau, r_j))}{r_j}\rightarrow 0$. Since a point-wise tangent plane must also be a measure theoretic tangent plane, $P_j \rightarrow \hat{n}^\perp$. 
 \end{proof}

 A consequence of the above lemma is a characterization of the infinitesimal behavior of $\sigma$. 
 
 \begin{cor}\label{densityofsigma}
 Let $\Omega$ be a parabolic chord regular domain. For $\sigma$-a.e. $(Q,\tau) \in \partial \Omega$ and any $\varepsilon > 0$, we can choose an $R \equiv R(\varepsilon, Q, \tau) > 0$ such that for $r< R$, $$\left|\frac{\sigma(\Delta_r(Q,\tau))}{r^{n+1}} - 1\right| < \varepsilon.$$
 \end{cor}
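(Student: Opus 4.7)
The plan is to combine the pointwise tangent plane given by Lemma \ref{prdomainshavetangentplanes} with Reifenberg flatness and Ahlfors regularity, working slicewise in time. Fix $(Q,\tau)$ at which Lemma \ref{prdomainshavetangentplanes} applies; after translation and rotation take $(Q,\tau)=(0,0)$ and $P=\{x_n=0\}$. Given $\varepsilon>0$ pick $r_\varepsilon>0$ so that $\partial\Omega\cap C_r(0,0)\subset\{|x_n|<\varepsilon r\}$ for $r<r_\varepsilon$. Writing $B'_r\colonequals\{X\in\mathbb R^n:|X|<r\}$, the normalization of $\sigma$ from the introduction gives
\[
\sigma(\Delta_r(0,0)) \;=\; \frac{1}{2\omega_{n-1}}\int_{-r^2}^{r^2}\mathcal H^{n-1}\!\bigl(\partial\Omega\cap\{s=t\}\cap B'_r\bigr)\,dt,
\]
so it suffices to show that the slice integrand equals $(1+o_\varepsilon(1))\omega_{n-1}r^{n-1}$ for $\mathcal L^1$-a.e.\ $t\in(-r^2,r^2)$, uniformly in $t$ and in $r<r_\varepsilon$.

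Each spatial slice $\Omega\cap\{s=t\}$ inherits $\delta$-Reifenberg flatness in $\mathbb R^n$ from the parabolic Reifenberg flatness of $\Omega$ (the closest boundary point in parabolic distance lies at the same time, as noted in the proof of Lemma \ref{limitsexistforblowups}). Combined with the slab containment this gives that the spatial Hausdorff distance from $\partial\Omega\cap\{s=t\}\cap B'_r$ to $P\cap B'_r$ is $O((\varepsilon+\delta)r)$.

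For the lower bound, the $1$-Lipschitz orthogonal projection $\pi(x,x_n,t)=(x,0,t)$ onto $P$ satisfies $\pi(\partial\Omega\cap\{s=t\}\cap B'_r)\supset P\cap B'_{(1-C\varepsilon)r}$ by spatial Reifenberg flatness, yielding $\mathcal H^{n-1}(\text{slice}\cap B'_r)\geq(1-C\varepsilon)^{n-1}\omega_{n-1}r^{n-1}$. For the upper bound, outside a small exceptional subset of $P\cap B'_r$, spatial Reifenberg flatness plus slab containment force $\partial\Omega\cap\{s=t\}$ to be the graph of an $O(\varepsilon)$-Lipschitz function over $P\cap B'_r$, contributing $(1+O(\varepsilon))\omega_{n-1}r^{n-1}$; the exceptional contribution is handled, after integrating in $t$, by parabolic Ahlfors regularity and Fubini. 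Sending $\varepsilon\downarrow 0$ completes the proof.

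The main obstacle is the upper bound: in general a set of locally finite perimeter confined to a thin slab can have arbitrarily large $\mathcal H^{n-1}$ measure, and it is the near-Lipschitz-graph structure of each slice, extracted from spatial Reifenberg flatness together with the slab containment, and the smallness of the exceptional set where this structure fails, that ultimately pin the density down to $1$.
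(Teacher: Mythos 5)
Your lower bound is fine (projection onto the tangent plane, as the paper itself notes for Ahlfors regularity), and the observation that each time slice inherits spatial Reifenberg flatness is correct. But the upper bound, which you rightly identify as the real content, rests on a claim that does not follow from your hypotheses: spatial Reifenberg flatness together with containment in the slab $\{|x_n|<\varepsilon r\}$ does \emph{not} force the slice to be an $O(\varepsilon)$-Lipschitz graph over $P\cap B'_r$ off a small exceptional set. Reifenberg flatness carries no rectifiability or measure-upper-bound information: a $\delta$-Reifenberg flat set can wiggle at every scale (snowflake-type examples), so a slice confined to a thin slab and $\delta$-flat at all smaller scales can still have $\mathcal H^{n-1}$ measure in $B'_r$ far exceeding $(1+C\varepsilon)\omega_{n-1}r^{n-1}$. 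Nor can the excess be absorbed ``after integrating in $t$, by parabolic Ahlfors regularity and Fubini'': Ahlfors regularity only bounds $\sigma(\Delta_r)$ by $Mr^{n+1}$ with a fixed constant $M$, whereas you need the error to be $\varepsilon r^{n+1}$; there is no mechanism in your argument that makes the exceptional contribution small. The additional requirement that the slice estimate hold ``uniformly in $t$'' is likewise unobtainable, since individual slices of a parabolic Ahlfors regular set need not be Ahlfors regular.

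The missing ingredient is a genuinely measure-theoretic input at the point $(Q,\tau)$ beyond the existence of a tangent plane. The paper supplies it through the measure-theoretic spatial normal: since $\hat n\in L^1_{\mathrm{loc}}(d\sigma)$, $\sigma$-a.e.\ point is a Lebesgue point of $\hat n$, so $\fint_{\Delta_r(Q,\tau)}\hat n\,d\sigma\to e_n$. On the other hand, the $L^1_{\mathrm{loc}}$ convergence of $\chi_{\Omega_{r,(Q,\tau)}}$ to $\chi_{\{x_n>0\}}$ (from your tangent plane) plus the slicewise divergence theorem gives weak-$*$ convergence of the vector measures $\hat n_r\sigma_r$ to $e_n\,\sigma|_{\{x_n=0\}}$, hence $\int_{C_1(0,0)}\langle\hat n_r,e_n\rangle\,d\sigma_r\to 1$. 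Writing this integral as $\sigma_r(C_1(0,0))\,\langle\fint\hat n_r\,d\sigma_r,\,e_n\rangle$ and using the Lebesgue point property, one concludes $\sigma_r(C_1(0,0))=r^{-n-1}\sigma(\Delta_r(Q,\tau))\to 1$; the sharp upper bound comes precisely from the cancellation encoded in the averaged normal being a unit vector in the limit. To repair your slicewise approach you would need an equivalent input (e.g.\ quantitative control of the oscillation of the normal at $(Q,\tau)$), at which point you are essentially reproducing the paper's argument in a more cumbersome form.
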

 
 \begin{proof}
 Let $\Omega_{r,(Q,\tau)} = \{(P,\eta)\mid (Q+rP, r^2\eta+\tau) \in \Omega\}$.  Lemma \ref{prdomainshavetangentplanes} tells us that, for any compact set $K$ containing $0$, after a rotation which depends on $(Q,\tau)$, we have $K \cap \Omega_{r,(Q,\tau)} \rightarrow K \cap \{(X,t)\mid x_n > 0\}$ in the Hausdorff distance sense. In particular $\chi_{\Omega_{r,(Q,\tau)}} \equalscolon \chi_r \rightarrow \chi_{\{x_n > 0\}}$ in $L^1_{\mathrm{loc}}$. This convergence immediately gives, using the divergence theorem (on each time slice), that $\hat{n}_r\sigma_r \colonequals \hat{n}_r\sigma|_{\partial \Omega_{r,(Q,\tau)}}$ converges weakly to $e_n\sigma|_{\{x_n > 0\}}$ (here $\hat{n}_r$ is the measure theoretic space normal to $\partial \Omega_{r,(Q,\tau)}$). 
 
 $\hat{n}(Q,\tau) \in L_{\mathrm{loc}}^1(d\sigma)$, therefore, $\sigma$-a.e. $(Q,\tau) \in \partial \Omega$ is a Lebesgue point for $\hat{n}(Q,\tau)$.  Assume that $(Q,\tau)$ is a Lesbesgue point and that the tangent plane at $(Q,\tau)$ is $\{x_n =0\}$. Then, $$\lim_{r\downarrow 0} \fint_{\Delta_r(Q,\tau)}  \hat{n}d\sigma = e_n \Leftrightarrow \lim_{r\downarrow 0} \fint_{C_1(0,0)\cap \partial \Omega_{r, (Q,\tau)}} \hat{n}_r d\sigma_r = e_n.$$ Weak convergence implies (recall that $\sigma(C_1(0,0)\cap \{x_n =0\}) = 1$) $$ \liminf_{r\downarrow 0} \frac{1}{\sigma_r(C_1(0,0)\cap \partial \Omega_{r, (Q,\tau)})} \leq 1 \leq \limsup_{r\downarrow 0} \frac{1}{\sigma_r(C_1(0,0)\cap \partial \Omega_{r,(Q,\tau)})}.$$
 
As $C_1(0,0)$ is a set of continuity for $\sigma|_{\{x_n = 0\}}$ we can conclude that $\lim_{r\downarrow 0} \sigma_r(C_1(0,0)\cap \partial \Omega_{r, (Q,\tau)}) = \sigma(C_1(0,0)\cap \{x_n = 0\}) = 1$. Remembering $\sigma_r(C_1(0,0)\cap \partial \Omega_{r, (Q,\tau)}) = \frac{1}{r^{n+1}}\sigma(\Delta_{r}(Q,\tau))$ (see \cite{evansandgariepy}, Chapter 5.7, pp. 202-204 for more details) the result follows. 
\end{proof}

Our proof of Lemma \ref{equaltoh} is in two steps; first, we show that $F(Q,\tau)$ points in the direction of the measure theoretic space normal $\hat{n}(Q,\tau)$. Here we follow very closely the proof of Lemma 3.2 in \cite{kenigtorotwophase}.

\begin{lem}\label{onlynormaldirection}
For $\sigma$-a.e. $(Q,\tau)\in \partial \Omega$ we have $F(Q,\tau) = \left\langle F(Q,\tau), \vec{n}(Q,\tau)\right\rangle \vec{n}(Q,\tau)$. 
\end{lem}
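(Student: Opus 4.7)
The plan is to blow up at a good boundary point $(Q,\tau)$ and apply a parabolic Liouville theorem to identify the blowup of $u$ as a linear function of the normal coordinate. This will immediately force the nontangential limit $F(Q,\tau)$ to point along $\hat n(Q,\tau)$.

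First I would fix $(Q,\tau)$ in the intersection of the $\sigma$-full-measure sets where the nontangential limit $F(Q,\tau)$ exists (Corollary~\ref{nontangentiallimiteverywhere}), where the pointwise tangent plane provided by Lemma~\ref{prdomainshavetangentplanes} exists, and where $(Q,\tau)$ is a Lebesgue point of $\hat n$ (so Corollary~\ref{densityofsigma} also applies). After translating and rotating, I may assume $(Q,\tau)=(0,0)$, $\hat n(0,0)=e_n$, and the tangent plane at $(0,0)$ is $\{x_n=0\}$. Next I define the parabolic blowups $v_r(X,t) := r^{-1} u(rX, r^2 t)$, each adjoint-caloric on $\Omega_r := \{(X,t): (rX, r^2t)\in\Omega\}$ and vanishing on $\partial\Omega_r$. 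Lemma~\ref{gradientrepresentation} on a fixed scale $R$ gives $|\nabla u|\leq M$ on a neighborhood of $(0,0)$, so $|\nabla v_r|\leq M$ on any fixed compact set of $\mathbb R^{n+1}$ once $r$ is small; combined with $v_r(0,0)=0$ and the time regularity of Lemma~\ref{iflipschitzthenregularintime}, this yields a uniform parabolic Lipschitz bound on $\{v_r\}$, and the tangent-plane property (combined with $\delta$-Reifenberg flatness to pick the correct side) forces $\partial\Omega_r\to\{x_n=0\}$ locally in Hausdorff distance. Interior parabolic Schauder estimates then make $\{v_r\}$ precompact in $C^2_{\mathrm{loc}}(\{x_n>0\})$, so along a subsequence $r_k\downarrow 0$ I obtain $v_{r_k}\to v_\infty$, with $v_\infty$ adjoint-caloric in $\{x_n>0\}$, $v_\infty(0,0)=0$, $|\nabla v_\infty|\leq M$, and $v_\infty\equiv 0$ on $\{x_n=0\}$ (the latter by continuity and the fact that $v_{r_k}$ vanishes on $\partial\Omega_{r_k}$, which Hausdorff-converges to the reflecting plane).

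The central step is Liouville. I would extend $v_\infty$ by odd reflection across $\{x_n=0\}$ to obtain $\tilde v_\infty$ on $\mathbb R^{n+1}$. Because $v_\infty$ vanishes on $\{x_n=0\}$, its tangential spatial and time derivatives there are zero, and the adjoint-heat equation then forces $\partial^2_{nn}v_\infty|_{x_n=0}\equiv 0$; hence $\tilde v_\infty$ is $C^2$ across the reflection plane and adjoint-caloric on all of $\mathbb R^{n+1}$. Combining the gradient bound with Lemma~\ref{iflipschitzthenregularintime} and $\tilde v_\infty(0,0)=0$ gives $|\tilde v_\infty(X,t)|\leq C(|X|+|t|^{1/2})$, so the standard Liouville theorem for entire (adjoint-)caloric functions of polynomial parabolic growth forces $\tilde v_\infty(X,t)=a\cdot X$ for some constant vector $a$; oddness in $x_n$ then gives $a=a_n e_n$, i.e.\ $v_\infty(X,t)=a_n x_n$ in $\{x_n>0\}$.

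Finally, I would fix any $(X_0,t_0)$ with $(X_0)_n>0$. The $C^2_{\mathrm{loc}}$ convergence in $\{x_n>0\}$ gives $\nabla v_{r_k}(X_0,t_0)\to a_n e_n$. On the other hand, $\partial\Omega_{r_k}\to\{x_n=0\}$ implies $\delta(r_k X_0, r_k^2 t_0)\geq \tfrac12 r_k (X_0)_n$ for large $k$, so $(r_k X_0, r_k^2 t_0)\in\Gamma_\alpha(0,0)$ for some $\alpha$ depending only on $(X_0,t_0)$, and the nontangential limit yields $\nabla u(r_k X_0, r_k^2 t_0)\to F(0,0)$. Equating the two limits gives $F(0,0)=a_n e_n = \langle F(0,0),\hat n(0,0)\rangle\hat n(0,0)$, which is the claim. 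The main obstacle I anticipate is the Liouville step: verifying the $C^2$ matching of $\tilde v_\infty$ across $\{x_n=0\}$ rigorously and then converting the parabolic Lipschitz bound into the correct polynomial-growth class in which a global Liouville theorem for the adjoint heat equation applies.
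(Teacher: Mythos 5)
Your overall blowup-plus-Liouville strategy is plausible, but the step on which everything rests is not justified: the claim that Lemma \ref{gradientrepresentation} ``on a fixed scale $R$ gives $|\nabla u|\leq M$ on a neighborhood of $(0,0)$,'' hence a uniform parabolic Lipschitz bound for $v_r(X,t)=r^{-1}u(rX,r^2t)$ on fixed compacta. Lemma \ref{gradientrepresentation} bounds $|\nabla u(X,t)|$ by $\int_{\Delta_{2R}(0,0)}h\,d\hat{\omega}^{(X,t)}+C\|(X,t)\|^{3/4}R^{-1/2}$, and the first term is \emph{not} uniformly bounded as $(X,t)$ approaches arbitrary boundary points in the shrinking cylinders $C_{Kr}(0,0)$: as $(X,t)\to(P,\eta)\in\partial\Omega$ the measure $\hat{\omega}^{(X,t)}$ concentrates near $(P,\eta)$, and the integral is only controlled by a truncated maximal function of $h$, which in this setting lies merely in $L^2_{\mathrm{loc}}(d\sigma)$ (this is exactly why Lemma \ref{nontangetiallybounded} only yields $N^R_\alpha(|\nabla u|)\in L^2_{\mathrm{loc}}(d\sigma)$, and why the uniform bound $|\nabla u_\infty|\le 1$ in the paper is obtained only for the \emph{normalized} blowup via the VMO decomposition in Proposition \ref{maingradientbound}). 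Without this uniform Lipschitz bound, the three places where you use it all collapse: compactness of $\{v_r\}$ up to the boundary, the continuous vanishing of $v_\infty$ on $\{x_n=0\}$, and, most importantly, the linear parabolic growth $|\tilde v_\infty(X,t)|\le C(|X|+|t|^{1/2})$ that your Liouville theorem requires. Also note that your list of ``good points'' (nontangential limit of $\nabla u$, tangent plane, Lebesgue point of $\hat n$) contains no hypothesis on $h$ at $(Q,\tau)$, so there is no quantity available to produce such a bound.

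The strategy can be repaired, but it requires exactly the ingredient the paper uses instead of a gradient bound: choose $(Q,\tau)$ additionally with $M_1(h)(Q,\tau)<\infty$ (a $\sigma$-full set), and control $u$ itself near $(Q,\tau)$ by caloric-measure estimates, namely $u(Y,s)\le c\,u(A^-_{4\rho}(Q,\tau))\le c\,\rho^{-n}\omega(\Delta_{2\rho}(Q,\tau))\le C M_1(h)(Q,\tau)\rho$ for $\rho=\|(Y,s)-(Q,\tau)\|$, via Lemmas \ref{biggeratNTApoint} and \ref{comparisonformeasureatinfinity}, doubling and Ahlfors regularity. This sublinear sup bound (which is precisely Claim 1 in the paper's proof) gives local $L^\infty$ bounds on $v_r$, hence interior Schauder compactness in $\{x_n>0\}$, boundary equicontinuity through Lemma \ref{growthattheboundary}, and the linear growth needed for your reflection/Liouville step; the final identification $F(0,0)=a_ne_n$ then goes through since $(r_kX_0,r_k^2t_0)$ stays in a fixed nontangential cone at $(0,0)$. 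For comparison, the paper avoids any Liouville theorem: it works at a fixed small scale, averages $\langle F(Q,\tau),Y_2-Y_1\rangle$ over a disc parallel to the tangent plane, and uses the Egoroff-type sets $E(\eta)$ together with the $M_1(h)$ bound on $u$ to show the tangential part of $F$ is $O(\varepsilon)$ — a purely quantitative argument at scale $r$, whereas your route, once the sup bound is in place, buys a cleaner conceptual limit statement at the cost of a classification theorem for entire caloric functions.
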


\begin{proof}
Let $(Q,\tau) \in \partial \Omega$ be a point of density for $F$ and $M_1(h)$, be such that there is a tangent plane at $(Q,\tau)$ (in the sense of Lemma \ref{prdomainshavetangentplanes}), satisfy $F(Q,\tau), M_1(h)(Q,\tau) < \infty$ and be such that $\nabla u$ converges non-tangentially to $F$ at $(Q,\tau)$. 

In order to discuss the above conditions in a quantative fashion we introduce, for $\varepsilon, \eta > 0$:  \begin{equation}\label{thedeltas}
\begin{aligned}
\delta_\varepsilon(r) &\colonequals \frac{1}{r^{n+1}} \sigma(\{ (P, \zeta) \in C_{2r}(Q,\tau) \cap \partial \Omega \mid |F(P,\zeta) - F(Q,\tau)| > \varepsilon\})\\
\delta'(r) &\colonequals \frac{1}{r^{n+1}} \sigma(\{(P, \zeta) \in C_{2r}(Q,\tau) \cap \partial \Omega \mid M_1(h)(P,\zeta) \geq 2M_1(h)(Q,\tau)\})\\
\delta_\eta''(r) &\colonequals \frac{1}{r^{n+1}}\sigma((P, \zeta) \in C_{2r}(Q,\tau) \cap \partial \Omega\backslash E(\eta)).\\
\end{aligned}
\end{equation}
To define $E(\eta)$, first, for any $\varepsilon > 0$ and $\lambda > 0$, let $$H(\lambda, \varepsilon)\colonequals \{(P,\zeta) \in \partial \Omega \mid |F(P,\zeta) - \nabla u(X,t)| < \varepsilon,\; \forall (X,t) \in \Gamma_{10}^\lambda(P,\zeta)\}.$$

By Corollary \ref{nontangentiallimiteverywhere}, for each $\varepsilon > 0$ and $\sigma$-a.e. $(P,\zeta) \in \partial \Omega$ there is some $\lambda$ such that $(P,\zeta) \in H(\lambda,\varepsilon)$. Arguing as in the proof of Egoroff's theorem (see, e.g., Theorem 3, Chapter 1.2 in \cite{evansandgariepy}), for any $\eta > 0$ we can find a $\lambda(\varepsilon, \eta)$ such that $\sigma(\partial \Omega\backslash H(\lambda(\varepsilon,\eta), \varepsilon)) < \eta$.  Let $\varepsilon_n = 2^{-n}$ and $\eta_n = 2^{-n-1}\eta$ to obtain $\lambda_n \colonequals \lambda(\varepsilon_n, \eta_n)$ as above. Then define $E(\eta) = \bigcap_{n \geq 0} H(\lambda_n, \varepsilon_n)$. Note, that $\sigma(\partial \Omega \backslash \bigcup_{\eta > 0} E(\eta)) = 0,$ as such, for $\sigma$-a.e. $(Q,\tau) \in \partial \Omega$ there is some $\eta > 0$ such that $(Q,\tau)$ is a density point for $E(\eta)$. At those points (for the relevant $\eta$) we have $\delta_\eta''(r) \rightarrow 0$. 

$(Q,\tau)$ is a point of density for $M_1(h), F$, hence $\delta'(r), \delta_\varepsilon(r) \rightarrow 0$ for any $\varepsilon > 0$. Additionally, $\partial \Omega$ has a tangent plane at $(Q,\tau)$, so there exists an $n$-plane $V$ (containing a line parallel to the $t$-axis), a function $\ell(r)$ and $R, \eta > 0$  such that \begin{equation}\label{definitionofell}\begin{aligned}
\lim_{r\downarrow 0} \frac{\ell(r)}{r} =& 0\\
\sup_{(P,\zeta) \in C_{2r}(Q,\tau)\cap \partial \Omega} d((P,\zeta), V\cap C_{2r}(Q,\tau)) \leq& \ell(r)\\
\sigma(C_{\ell(r)}(P,\zeta)) \geq 2[\delta_\varepsilon(r) + \delta'(r) + \delta_\eta''(r)]r^{n+1},\; &\forall (P,\zeta) \in \partial \Omega \cap C_{2r}(Q,\tau).
\end{aligned}
\end{equation}

For ease of notation, assume that $(Q,\tau) = (0,0)$ and $V$ (the tangent plane at $(Q,\tau)$) is $\{x_n = 0\}$. Let $D(r) \colonequals \{(x,x_n,\tau) \mid |x| < r, x_n = \frac{1}{2}C_0\ell(r)\}$ where $C_0$ is a large constant satisfying the following constraint: 
\begin{center}
If $(Y,0) \in D(r)$ and $(P,\zeta) \in C_{2r}(0,0)$ are such that $\|(y, 0, 0) - (p, 0, \zeta)\| \leq 2\ell(r)$ then $D(r) \cap C_{\ell(r)}(Y,0) \subset \Gamma_{10}^{C_0\ell(r)}(P,\zeta)$.
\end{center}
We make the following claim, whose proof, for the sake of continuity, will be delayed until later. 

\medskip

\noindent {\bf Claim 1:} {\it Under the assumptions above, if $r > 0$ is small enough, $(Y,0)\in D(r)$, we have: \begin{equation}\label{boundonthedisc}
|u(Y,0)| \leq CM_1(h)(0,0)\ell(r)
\end{equation}
}

The general strategy is to take two points $(Y_1,0), (Y_2,0)\in D(r)$ and estimate $\left\langle F(0,0), Y_2-Y_1\right\rangle$ in terms of $u(Y_2,0) - u(Y_1,0)$. Define $R(Y_1, Y_2) =  u(Y_2,0) - u(Y_1,0) - \left\langle F(0,0), Y_2-Y_1\right\rangle.$ Equations \eqref{definitionofell} and \eqref{boundonthedisc} imply that, for $r > 0$ small, we have $|u(Y_1,0)|, |u(Y_2,0)| < CM_1(h)(0,0)\ell(r) < C\varepsilon r$. Therefore, in order to show that $\left\langle F(0,0), Y_2-Y_1\right\rangle$ is small, it suffices to show that $R(Y_1, Y_2)$ is small.

Write $u(Y_2, 0) - u(Y_1,0) = \int_0^1 \left\langle \nabla u(Y_1 + \theta(Y_2-Y_1), 0), Y_2-Y_1\right\rangle d\theta$ which implies \begin{equation}\label{estimateonR}
|R(Y_1,Y_2)| \leq |Y_2-Y_1|\int_0^1 |\nabla u(Y_1 + \theta(Y_2-Y_1), 0) - F(0,0)| d\theta.
\end{equation}

If we define $I(r) = \frac{1}{r} \fint_{D(r)}\fint_{D(r)} |R(Y_1,Y_2)| dY_1dY_2$, then Fubini's theorem and equation \eqref{estimateonR} yields \begin{equation}\label{estimateonI}
I(r) \leq C_n \fint_{D(r)} |\nabla u(X,0) - F(0,0)| dX
\end{equation}
(for more details see \cite{kenigtoro}, Appendix A.2). We now arrive at our second claim: 

\medskip

\noindent {\bf Claim 2:}  {\it For $(Y,0) \in D(r)$ we have $|\nabla u(Y,0) - F(0,0)| < 2\varepsilon$. }

\medskip

Claim 2 immediately implies that $I(r) < C\varepsilon$, which, as $|u| \leq C\varepsilon r$ on $D(r)$, gives \begin{equation}\label{estimateofaveragedf}
\fint_{D(r)}\fint_{D(r)} |\left\langle F(0,0), Y_2-Y_1\right\rangle| dY_1dY_2 \leq rI(r) + 2C\varepsilon r< C\varepsilon r.
\end{equation}
Pick any direction $e \perp e_n$ such that $e \in \R^n$ (i.e. has no time component) and let $M \colonequals |\left\langle F(0,0), e\right\rangle|$. Now consider the cone of directions $\tilde{\Gamma}$ in $\mathbb S^{n-2}$ (i.e. perpendicular to both the time direction and $e_n$) such that $|\left\langle F(0,0), \tilde{e} \right\rangle|\geq M/2$ for $\tilde{e} \in \tilde{\Gamma}$. Note that $\mathcal H^{n-2}(\tilde{\Gamma})/\mathcal H^{n-2}(\mathbb S^{n-2}) = c_n$ a constant depending only on dimension. Thus from equation \eqref{estimateofaveragedf} we obtain $$C\varepsilon r > \fint_{D(r)}\fint_{D(r)} |\left\langle F(0,0), Y_2-Y_1\right\rangle| dY_1dY_2 \geq$$$$ \frac{c(n)}{r^{n-1}}\fint_{D(r)\cap \{(Y,t)\mid |y| < r/2\}} \int_0^{r/2} \int_{\theta \in \tilde{\Gamma}}\left\langle F(0,0), \rho \theta\right\rangle \rho^{n-2}d\mathbb S^{n-2}d\rho \geq \tilde{C}Mr$$  which of course implies that $M \leq C\varepsilon$. In other words $\left\langle F(0,0), x'\right\rangle \leq C\varepsilon |x'|$ for any $x'\in \R^{n-1}$, which is the desired result. \end{proof}

\begin{proof}[Proof of Claim 1]
We have $(Y,0) \in D(r)$ and want to show that $|u(Y,0)| \leq C M_1(h)(0,0)\ell(r)$. As $\partial \Omega$ is well approximated by $\{x_n = 0\}$ in $C_{2r}(0,0)$, there must be a $(P,0) \in \partial \Omega \cap C_{3r/2}(0,0)$ such that $p = y$, and, hence, $|p_n| < \ell(r)$.  Let $C(Y) \colonequals C_{\ell(r)}(P, 0)$. Note that equation \eqref{definitionofell} implies $\sigma(C(Y)) \geq 2 \delta'(r) r^{n+1}$. Given the definition of $\delta'(r)$ we can conclude the existence of $(\tilde{P}, \tilde{\zeta}) \in C(Y)\cap \partial \Omega$ such that $M_1(h)(\tilde{P},\tilde{\zeta}) < 2M_1(h)(0,0)$. Furthermore $(\tilde{P},\tilde{\zeta}) \in C_{2r}(0,0)$ and $\|(\tilde{p},0, \tilde{\zeta}) - (y,0, 0)\| < 2\ell(r)$. 

By the condition on $C_0$ and the aperture of the cone we can conclude that $(Y,0) \in \Gamma_{10}^{C_0\ell(r)}(\tilde{P},\tilde{\zeta})$. Then, arguing as in Lemma \ref{nontangetiallybounded} (i.e. using Lemma \ref{biggeratNTApoint}, the backwards Harnack inequality for the Green's function, $\|(Y,0)-(\tilde{P},\tilde{\zeta})\| \sim C_0\ell(r)$, Lemma \ref{comparisonformeasureatinfinity} and that $\omega$ is doubling) we can conclude $$u((Y,0)) \leq C u(A^-_{C\ell(r)}(\tilde{P},\tilde{\zeta})) \leq$$$$ \frac{C}{\ell(r)^n} \omega(C_{C\ell(r)}(\tilde{P},\tilde{\zeta})) \leq C\ell(r) \fint_{C_{c\ell(r)}(\tilde{P},\tilde{\zeta})} h(Z,t) d\sigma(Z,t) \leq C\ell(r)M_1(h)(\tilde{P},\tilde{\zeta}).$$ As $M_1(h)(\tilde{P},\tilde{\zeta}) \leq 2M_1(h)(0,0)$ we are done. 
\end{proof}

\begin{proof}[Proof of Claim 2]
We want to show that for $(Y,0) \in D(r)$ we have $|\nabla u(Y,0) - F(0,0)| < 2\varepsilon$. Arguing exactly like in Claim 1 produces a $(P,0) \in \partial \Omega$ and then $C(Y)$. This time we use equation \eqref{definitionofell} to give the bound $\sigma(C(Y)) \geq 2(\delta''(r)+\delta_\varepsilon(r))r^{n+1}$. We can then conclude that there exists a $(\tilde{P}, \tilde{\zeta}) \in C(Y)\cap \partial \Omega$ such that $(\tilde{P},\tilde{\zeta}) \in E(\eta, R)$ and $|F(\tilde{P},\tilde{\zeta}) - F(0,0)| < \varepsilon$. 

Recall that $(\tilde{P},\tilde{\zeta}) \in E(\eta) \Rightarrow (\tilde{P},\tilde{\zeta}) \in H(\lambda_n, 2^{-n})$ for all $n$. So pick $n$ large enough that $2^{-n} < \varepsilon$ and then $r$ small enough so that $C_0\ell(r) < \lambda_n$. Thus $|\nabla u(Y,0) - F(0,0)| < |\nabla u(Y,0) - F(\tilde{P},\tilde{\zeta})| + |F(\tilde{P},\tilde{\zeta}) - F(0,0)| < 2\varepsilon$.
\end{proof}

We now want to show that $|F(Q,\tau)| = h(Q,\tau)$ $d\sigma$-almost everywhere.  Here, again, we follow closely the approach of Kenig and Toro (\cite{kenigtorotwophase}, Lemma 3.4)  who prove the analogous elliptic result. One difference here is that the time and space directions scale differently. To deal with this difficulty, we introduce a technical lemma. 

\begin{lem}\label{anisotropicmaximalfunction}
Let $1 < p < \infty$ and $g\in L^p_{\mathrm{loc}}(d\sigma)$. Then $$\frac{1}{\sigma(\Delta((Q,\tau), r,s))}\int_{\Delta((Q,\tau),r,s)\cap \partial \Omega} gd\sigma \stackrel{s,r\downarrow 0}{\rightarrow} g(Q,\tau)$$  for $\sigma$-a.e. $(Q,\tau)\in \partial \Omega.$ (Here, and from now on, $C((Q,\tau),r,s) = \{(X,t)\mid |X-Q| \leq r, |t-\tau| < s^{1/2}\}$ and $\Delta((Q,\tau), r,s) = C((Q,\tau),r,s)\cap \partial \Omega$.) 
\end{lem}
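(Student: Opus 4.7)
The proof will follow the classical strategy for anisotropic (Jessen--Marcinkiewicz--Zygmund--type) differentiation theorems. First I would reduce to continuous $g$: for $g \in C_c(\partial \Omega)$, uniform continuity forces the averages over $\Delta((Q,\tau), r, s)$ to converge to $g(Q,\tau)$ pointwise as $r, s \downarrow 0$ at every point of $\partial \Omega$. Since continuous functions are dense in $L^p_{\mathrm{loc}}(d\sigma)$, the general case will follow from a weak-type $(p,p)$ bound on the strong maximal operator
$$M^* g(Q,\tau) \colonequals \sup_{r,s>0} \frac{1}{\sigma(\Delta((Q,\tau),r,s))}\int_{\Delta((Q,\tau),r,s)}|g|\,d\sigma$$
via the standard decomposition $g = g_\varepsilon + h_\varepsilon$ with $g_\varepsilon$ continuous and $\|h_\varepsilon\|_p < \varepsilon$.

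To bound $M^*$, the plan is to dominate it by a composition of a spatial (time-slice) maximal function and a temporal (1-D) maximal function. Using the disintegration $d\sigma = d\sigma_t\, dt$ from Section \ref{introduction} and the local Ahlfors $(n-1)$-regularity $\sigma_t(B'(Q,r) \cap \partial \Omega_t) \simeq r^{n-1}$ (valid uniformly in $t$ on compacta, by $\delta$-Reifenberg flatness of $\partial\Omega$), I would write
$$\frac{1}{\sigma(\Delta((Q,\tau),r,s))}\int_{\Delta((Q,\tau),r,s)}|g|\,d\sigma
 = \frac{\int_{|t-\tau|<s^{1/2}} \int_{B'(Q,r)\cap\partial\Omega_t} |g|\, d\sigma_t\, dt}{\int_{|t-\tau|<s^{1/2}} \sigma_t(B'(Q,r)\cap\partial\Omega_t)\, dt}
 \leq C\, \frac{1}{2s^{1/2}}\int_{|t-\tau|<s^{1/2}} M_{\mathrm{sp}} g(Q,t)\, dt,$$
where $M_{\mathrm{sp}} g(Q,t) \colonequals \sup_{\rho>0} \frac{1}{\sigma_t(B'(Q,\rho)\cap\partial\Omega_t)}\int_{B'(Q,\rho)\cap\partial\Omega_t} |g(\cdot,t)|\, d\sigma_t$ is the Hardy--Littlewood maximal function on the slice $(\partial\Omega_t,\sigma_t)$. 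Taking suprema in $r,s$ then yields $M^* g(Q,\tau) \leq C\, M_{\mathrm{tm}}\!\big[t\mapsto M_{\mathrm{sp}} g(Q,t)\big](\tau)$, where $M_{\mathrm{tm}}$ denotes the one-dimensional Hardy--Littlewood maximal function in $t$, acting on the zero-extension of the integrand off $\partial\Omega$.

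For the $L^p$ estimate, $M_{\mathrm{sp}}$ is bounded on $L^p(d\sigma)$ in a straightforward way: each slice $(\partial\Omega_t,\sigma_t)$ is a space of homogeneous type with doubling constants uniform in $t$ on compacta, so Hardy--Littlewood on each slice plus $d\sigma=d\sigma_t\,dt$ gives $\|M_{\mathrm{sp}} g\|_{L^p(d\sigma)} \leq C \|g\|_{L^p(d\sigma)}$. To then bound $M_{\mathrm{tm}} \circ M_{\mathrm{sp}}$ on $L^p(d\sigma)$, I plan to work locally: at any $(Q_0,\tau_0)\in\partial\Omega$, small $\delta$-Reifenberg flatness together with Ahlfors regularity lets us parameterize $\partial\Omega$ near $(Q_0,\tau_0)$ as a continuous graph $(x,\psi(x,t),t)$ over the best-approximating $n$-plane through $(Q_0,\tau_0)$ (which contains the $t$-axis), with $d\sigma \simeq dx\, dt$ on the parameter box. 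In these coordinates the composite operator is comparable to the classical strong rectangular maximal function on $(\R^{n-1}\times\R,\, dx\, dt)$, whose $L^p$-boundedness for $p>1$ is precisely the Jessen--Marcinkiewicz--Zygmund theorem. A partition of unity promotes this to a global $L^p(d\sigma)$ bound, and the usual Chebyshev + density argument closes the proof of a.e.\ convergence.

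The main technical obstacle is exactly this final $L^p$ bound on $M_{\mathrm{tm}} \circ M_{\mathrm{sp}}$: because $d\sigma$ is not a product measure on $\R^n\times\R$, one cannot directly apply Fubini to reduce the temporal maximal operator to a 1-D Hardy--Littlewood estimate at fixed $Q$. The local graph reduction afforded by Reifenberg flatness is what bridges this gap, converting the question into a classical Euclidean strong-maximal inequality; everything downstream of that reduction is standard.
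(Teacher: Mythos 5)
Your overall route -- reduce to a density argument plus an $L^p$ bound for the strong (rectangle) maximal operator, obtained by iterating a spatial slice maximal function with a one-dimensional temporal one, i.e. the Jessen--Marcinkiewicz--Zygmund scheme -- is exactly what the paper has in mind: its proof is a one-line appeal to Zygmund \cite{zygmund} together with the homogeneous-type structure of $(\partial\Omega,\sigma)$. The problem is that the two steps you use to transfer the Euclidean theorem to $\partial\Omega$ are precisely the ones that do not follow from the stated hypotheses. First, the claim that $\sigma_t(B'(Q,r)\cap\partial\Omega_t)\simeq r^{n-1}$ uniformly in $t$ ``by $\delta$-Reifenberg flatness'' is only half true: the lower bound does follow from the separation property \eqref{seperationequation} (each slice must project onto a full $(n-1)$-disk), but Reifenberg flatness gives no upper measure bound whatsoever (a $\delta$-Reifenberg flat slice can have locally infinite $\mathcal H^{n-1}$-measure), and the parabolic Ahlfors regularity of $\sigma$ only controls the time-averaged slice measure $\int_{|t-\tau|<r^2}\sigma_t(B'(Q,r))\,dt\lesssim r^{n+1}$, not each individual slice. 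Your displayed inequality bounding the rectangle average by $\frac{C}{2s^{1/2}}\int_{|t-\tau|<s^{1/2}}M_{\mathrm{sp}}g(Q,t)\,dt$ needs $\sup_t \sigma_t(B'(Q,r))\lesssim \fint_{|t'-\tau|<s^{1/2}}\sigma_{t'}(B'(Q,r))\,dt'$ uniformly in $r,s$; without that, the pointwise domination $M^*g\le C\,M_{\mathrm{tm}}\big[M_{\mathrm{sp}}g\big]$ fails.

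Second, the final reduction assumes $\partial\Omega$ is locally a continuous graph over the best-approximating plane with $d\sigma\simeq dx\,dt$. A $\delta$-Reifenberg flat chord-arc boundary need not be a graph over any fixed plane (the approximating planes can rotate as the scale decreases), and, more importantly, what Ahlfors regularity gives is comparability of $\sigma$ (or its projection) with Lebesgue measure on \emph{parabolic cylinders}, whereas the strong maximal inequality requires comparability on arbitrarily eccentric boxes of dimensions $r\times s^{1/2}$ -- an essentially product-measure statement that is exactly the issue you set out to resolve, not something Reifenberg flatness plus Ahlfors regularity hand you. So the bridge you invoke is itself the gap: to close it one must either extract quantitative information from the chord-arc (Carleson measure / uniform rectifiability) hypothesis about how $\sigma$ behaves on thin rectangles, or run the differentiation argument in a way that only uses the doubling of $\sigma$ on parabolic surface balls (for instance, covering $\Delta((Q,\tau),r,s)$ by comparably sized parabolic balls and handling the two parameters separately), rather than quoting the Euclidean strong maximal theorem through an unproved product identification.
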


\begin{proof}
Follows from the work of Zygmund, \cite{zygmund}, and the fact that $(\partial \Omega, \sigma)$ is a space of homogenous type.
\end{proof}

\begin{proof}[Proof of Lemma \ref{equaltoh}]
We will prove the theorem for all $(Q,\tau) \in \partial \Omega$ for which Proposition \ref{onlynormaldirection} holds, such that there is a tangent plane to $\partial \Omega$ at $(Q,\tau)$ and such that \begin{equation}\label{goodpointconditions2}
\begin{aligned}
\lim_{r\downarrow 0} \frac{\sigma(\Delta_r(Q,\tau))}{r^{n+1}} &= 1\\
\lim_{r,s \downarrow 0}\fint_{C((Q,\tau),r,s)\cap \partial \Omega} h d\sigma&= h(Q,\tau)\\
\lim_{r\downarrow 0} \fint_{C_r(Q,\tau)\cap \partial \Omega} F d\sigma &= F(Q,\tau)\\
\lim_{r\downarrow 0} \fint_{C_r(Q,\tau)\cap \partial \Omega} M_1(h)d\sigma &= M_1(h)(Q,\tau)\\
\lim_{\stackrel{(X,t)\rightarrow (Q,\tau)}{(X,t) \in \Gamma(Q,\tau)}} \nabla u(X,t) &= F(Q,\tau)\\
M_1(h)((Q,\tau)), F(Q,\tau), h(Q,\tau) &< \infty.
\end{aligned}
\end{equation}
That this is $\sigma$-a.e. point follows from Proposition \ref{onlynormaldirection}, Lemmas \ref{prdomainshavetangentplanes}, \ref{densityofsigma}, \ref{anisotropicmaximalfunction} and \ref{nontangentiallimiteverywhere},  and $F, h, M_1(h)\in L^2_{\mathrm{loc}}(d\sigma)$. 

$\partial \Omega \cap \{s = t\}$ is a set of locally finite perimeter for almost every $t$, hence, for any $\phi \in C_c^\infty(\R^{n+1})$, \begin{equation}\label{bypartsinu}\int_{\partial \Omega} \phi h d\sigma = \int_{\Omega} u(\Delta \phi - \phi_t)dXdt =  - \int_{\Omega} \nabla \phi \cdot \nabla u + u \phi_tdXdt. \end{equation} Let $\rho_1, \rho_2 > 0$ and set $\phi(X,t) = \zeta(|X-Q|/\rho_1)\xi(|t-\tau|/\rho_2^2)$. We calculate that $\nabla \phi(X,t) = \xi(|t-\tau|/\rho_2^2) \zeta'(|X-Q|/\rho_1) \frac{X-Q}{|X-Q|\rho_1}$ and also that$\frac{d}{d\rho_1} \zeta(|X-Q|/\rho_1) = -\frac{|X-Q|}{\rho_1^2} \zeta'(|X-Q|/\rho_1)$. Together this implies \begin{equation}\label{equationforzeta}
-\nabla \phi(X,t) =  \xi(|t-\tau|/\rho_2^2)\rho_1\frac{X-Q}{|X-Q|^2}\frac{d}{d\rho_1} \zeta(|X-Q|/\rho_1).
\end{equation}
Similarly $\partial_t \phi(X,t) = \zeta(|X-Q|/\rho_1)\xi'(|t-\tau|/\rho_2^2) \frac{\mathrm{sgn}(t-\tau)}{\rho_2^2}$ and $\frac{d}{d\rho_2}\xi(|t-\tau|/\rho_2^2) = -\frac{2|t-\tau|}{\rho_2^3} \xi'(|t-\tau|/\rho_2^2)$, therefore, 
\begin{equation}\label{equationforxi}
-\partial_t \phi(X,t) = \zeta(|X-Q|/\rho_1) \frac{\rho_2}{2(t-\tau)}\frac{d}{d\rho_2}\xi(|t-\tau|/\rho_2^2).
\end{equation}

Plugging equations \eqref{equationforzeta}, \eqref{equationforxi} into equation \eqref{bypartsinu} and letting $\xi, \zeta$ approximate $\chi_{[0,1]}$ we obtain $$\int_{\partial \Omega\cap C((Q,\tau),\rho_1,\rho_2)} hd\sigma = \rho_1\frac{d}{d\rho_1}\int_{\Omega\cap C((Q,\tau), \rho_1,\rho_2)} \left\langle \nabla u(X,t), \frac{X-Q}{|X-Q|^2}\right\rangle dXdt +$$$$ \rho_2\frac{d}{d\rho_2} \int_{\Omega\cap C((Q,\tau), \rho_1,\rho_2)} \frac{u(X,t)}{2(t-\tau)}dXdt.$$ Differentiating under the integral and then integrating $\rho_1, \rho_2$ from $0$ to $\rho>0$ yields \begin{equation}\label{afterintegration}\begin{aligned}\underbrace{\int_0^\rho \int_0^\rho \int_{\partial \Omega\cap C((Q,\tau),\rho_1, \rho_2)} hd\sigma d\rho_1d\rho_2}_{(I)}  =&\\ \underbrace{\int_0^\rho\int_{\Omega \cap C((Q,\tau), \rho,\rho_2)} \left\langle \nabla u, \frac{X-Q}{|X-Q|}\right\rangle dXdt d\rho_2}_{(II)} +&\underbrace{\int_0^\rho \int_{\Omega \cap C((Q,\tau), \rho_1, \rho)} u dXdt d\rho_1}_{(III)}.\end{aligned}\end{equation}

For any $\varepsilon > 0$ we will prove that there is a $\delta > 0$ such that if $\rho < \delta$ we have $$\begin{aligned} &|\mathrm{(III)}| < \varepsilon \rho^{n+3}\\
&|\mathrm{(I)}-\frac{1}{3n} h(Q,\tau) \rho^{n+3}|  < \varepsilon \rho^{n+3}\\
&|\mathrm{(II)} - \left\langle F(Q,\tau), \hat{n}(Q,\tau)\right\rangle \frac{1}{3n} \rho^{n+3}| < \varepsilon \rho^{n+3},\end{aligned}$$ which implies the desired result. Note that throughout the proof the constants may seem a little odd due to our initial normalization of Hausdorff measure.

\medskip

{\bf Analysis of (III):} $u$ is continuous in $\overline{\Omega}$, hence for any $\varepsilon' > 0$ there is a $\delta = \delta(\varepsilon') > 0$ such that if $\delta > \rho$ then $(X,t) \in C_\rho(Q,\tau) \Rightarrow u(X,t) < \varepsilon'$. It follows that, $|\mathrm{(III)}| \leq C\varepsilon' \int_0^\rho \rho_1^n\rho^2d\rho_1 = \varepsilon \rho^{n+3}$, choosing $\varepsilon'$ small enough. 

\medskip

{\bf Analysis of (I):} $(Q,\tau)$ is a point of density for $h$, so for any $\varepsilon' > 0$ there exists a $\delta > 0$ such that if $\rho < \delta$ then $$|(I) -  h(Q,\tau)\int_0^\rho \int_0^\rho \sigma(\Delta((Q,\tau), \rho_1,\rho_2))d\rho_1d\rho_2| < \varepsilon' \int_0^\rho \int_0^\rho \sigma(\Delta((Q,\tau), \rho_1, \rho_2))d\rho_1d\rho_2.$$ Switching the order of integration, $\int_0^\rho \int_0^\rho \sigma(\Delta((Q,\tau), \rho_1, \rho_2))d\rho_1d\rho_2 = \int_{\Delta_\rho(Q,\tau)} (\rho - |X-Q|)(\rho - \sqrt{|t-\tau|})d\sigma$. Consider the change of coordinates $X = \rho Y + Q$ and $t = s\rho^2 + \tau$. As $\partial \Omega$ has a tangent plane, $V$, at $(Q,\tau)$ the set $\{(Y,s)\mid (X,t) \in C_\rho(Q,\tau)\cap \partial \Omega\}$ converges (in the Hausdorff distance sense) to $C_1(0,0)\cap V$. Therefore, $$\frac{1}{\rho^{n+3}}\int_{\Delta_\rho(Q,\tau)} (\rho - |X-Q|)(\rho - \sqrt{|t-\tau|})d\sigma \stackrel{\rho\downarrow 0}{\rightarrow}\int_{|y| < 1, |s| <1} (1-|y|)(1-\sqrt{s})dyds = \frac{1}{3n}.$$
Which, together with the above arguments, yields the desired inequality.

\medskip

{\bf Analysis of (II):} Writing $\nabla u(X,t) = (\nabla u(X,t) - F(Q,\tau)) + F(Q,\tau)$ we obtain $$\mathrm{(II)} = \underbrace{\int_0^\rho \int_{\Omega \cap C((Q,\tau), \rho,\rho_2)} \left\langle \nabla u(X,t) - F(Q,\tau), \frac{X-Q}{|X-Q|}\right\rangle dXdt d\rho_2}_{\mathrm{(E)}} $$$$+ |F(Q,\tau)|\int_0^\rho\int_{\Omega \cap C((Q,\tau), \rho,\rho_2)} \left\langle \hat{n}(Q,\tau), \frac{X-Q}{|X-Q|}\right\rangle dXdt d\rho_2.$$

In the second term above, divide the domain of integration into points within $\varepsilon' \rho$ of the tangent plane $V$ at $(Q,\tau)$ and those distance more than $\varepsilon'\rho$ away. By the Ahlfors regularity of $\partial \Omega$, the former integral will have size $< C\varepsilon' \rho^{n+3}$. The latter (without integrating in $\rho_2$) is $$\int_{\Omega \cap C((Q,\tau), \rho,\rho_2)\cap \{(X,t)\mid \left\langle X-Q, \hat{n}(Q,\tau)\right\rangle \geq \varepsilon' \rho\}} \left\langle \hat{n}(Q,\tau), \frac{X-Q}{|X-Q|}\right\rangle dXdt.$$ Again we change variables so that $X = \rho Y + Q$ and $t = s\rho^2 + \tau$, and recall that, under this change of variables, our domain $\Omega$ converges to a half space. Arguing as in our analysis of (I), a simple computation (see equation 3.72 in \cite{kenigtorotwophase}) yields $$\left|\int_{\Omega \cap C((Q,\tau), \rho,\rho_2)\cap \{(x,t)\mid \left\langle x-Q, \hat{n}(Q,\tau)\right\rangle \geq \varepsilon' \rho\}} \left\langle \hat{n}(Q,\tau), \frac{x-Q}{|x-Q|}\right\rangle dxdt-\frac{\rho_2^2 \rho^n}{n}\right| <  C\varepsilon' \rho^n\rho_2^2.$$ Integration in $\rho_2$ gives $$|\mathrm{(II)} - |F(Q,\tau)|\frac{1}{3n}\rho^{n+3}| < C\varepsilon'\rho^{n+3}+|\mathrm{(E)}|.$$ The desired result then follows if we can show $|(E)| < \varepsilon' \rho^{n+3}$. To accomplish this we argue exactly as in proof of Lemma 3.4, \cite{kenigtorotwophase} but include the arguments here for completeness. 

We first make the simple estimate $$|(E)| \leq \rho \left|\int_{\Omega \cap C_\rho(Q,\tau)} \left\langle \nabla u(X,t) - F(Q,\tau), \frac{X-Q}{|X-Q|}\right\rangle dXdt\right|.$$  If $(X,t) \in C_\rho(Q,\tau) \cup \{(X,t)\mid \left\langle X-Q, \hat{n}(Q,\tau)\right\rangle \geq 4\varepsilon' \rho \}$ then  $\delta(X,t) > 2\varepsilon'\rho$ for small enough $\rho$ (because we have a tangent plane at $(Q,\tau)$). On the other hand we have $\varepsilon' \|(X,t) - (Q,\tau)\| \leq 2\varepsilon' \rho < \delta(X,t)$ which implies that $(X,t)$ is in some fixed non-tangential region of $(Q,\tau)$. 

By the definition of non-tangential convergence (which says we have convergence for all cones of all apertures), for any $\eta > 0$ if we make $\rho > 0$ even smaller we have  $|\nabla u(X,t) - F(Q,\tau)| < \eta$. Therefore, $$\mathrm{(E)} \leq C\eta \rho^{n+3} + \rho\int_{C_\rho(Q,\tau)\cap \Omega \cap \{(X,t)\mid |\left\langle X-Q, \hat{n}(Q,\tau)\right\rangle| \leq 4\varepsilon' \rho\}} |\nabla u(X,t)| + |F(Q,\tau)|dXdt.$$

Standard parabolic regularity results imply that $|\nabla u(X,t)| \leq C\frac{u(X,t)}{\delta(X,t)}$. As the closest point to $(X,t)$ on $\partial \Omega$ is in $C_{2\rho}(Q,\tau)$ we may apply Lemma \ref{growthattheboundary} and then Lemma \ref{biggeratNTApoint} to get $|\nabla u(X,t)| \leq C\frac{u(A^-_{8\rho}(Q,\tau))}{\sqrt{\delta(X,t)\rho}}$.  Continue arguing as in Lemma \ref{nontangetiallybounded} to conclude $$(X,t) \in C_\rho(Q,\tau)\cap \Omega \cap \{\frac{\rho}{2^{i+1}} < \delta(X,t) \leq \frac{\rho}{2^i}\} \Rightarrow |\nabla u(X,t)| \leq C2^{i/2} M_1(h)(Q,\tau).$$

Let $i_0 \geq 1$ be such that $\frac{1}{2^{i_0 + 1}} < 4\varepsilon' < \frac{1}{2^{i_0}}$ and recall $|F(Q,\tau)| < \infty$ to obtain,
$$\rho\int_{C_\rho(Q,\tau)\cap \Omega \cap \{(X,t)\mid |\left\langle X-Q, \hat{n}(Q,\tau)\right\rangle| \leq 4\varepsilon' \rho\}} |\nabla u(X,t)| + |F(Q,\tau)|dXdt $$$$< C\varepsilon' \rho^{n+3} + \rho\sum_{i = i_0}^\infty \int_{C_\rho(Q,\tau)\cap \Omega \cap \{\frac{\rho}{2^{i+1}} < \delta(X,t) \leq \frac{\rho}{2^i}\}}|\nabla u(X,t)| dX dt$$$$< C\varepsilon'\rho^{n+3} + C\rho M_1(h)(Q,\tau) \sum_{i=i_0}^\infty 2^{i/2} |C_\rho(Q,\tau)\cap \Omega \cap \{\frac{\rho}{2^{i+1}} < \delta(X,t) \leq \frac{\rho}{2^i}\}|.$$

A covering argument (using the Ahlfors regularity of $\partial \Omega$) yields $$|C_\rho(Q,\tau)\cap \Omega \cap \{\frac{\rho}{2^{i+1}} < \delta(X,t) \leq \frac{\rho}{2^i}\}| \leq C2^{i(n+1)} \left(\frac{\rho}{2^i}\right)^{n+2} \leq C\frac{\rho^{n+2}}{2^i}.$$ As $\sum_{i = i_0}^\infty 2^{-i/2} < C\sqrt{\varepsilon'}$ the desired result follows.
 \end{proof}
 
 \section{Caloric Measure at $\infty$}\label{appendix:caloricmeasure}
 
 We recall the existence and uniquness of the Green function and caloric measure with pole at infinity (done by Nystr\"om \cite{nystrom1} ). We also establish some estimates in the spirit of Section \ref{preliminaryestimates}. 
  
 \begin{lem}\label{existenceanduniquenesspoleatinfinity1}[Lemma 14 in \cite{nystrom1}]
 Let $\Omega \subset \R^{n+1}$ be an unbounded $\delta$-Reifenberg flat domain, with $\delta > 0$ small enough (depending only on dimension), and $(Q,\tau) \in \partial \Omega$. There exists a unique function $u$ such that,
 \begin{equation}\label{greensfunctionatinfinity1}
\begin{aligned}
u(X,t) > 0, \;&(X,t)\in \Omega \\
u(X,t) = 0, \;&(X,t)\in \R^{n+1}\backslash \Omega\\
\Delta u(X,t)+ u_t(X,t) = 0, \;& (X,t) \in \Omega\\
u(A^-_1(Q,\tau)) = 1.
\end{aligned}
\end{equation} 
Furthermore $u$ satisfies a backwards Harnack inequality at any scale with constant $c$ depending only on dimension and $\delta > 0$ (see Lemma \ref{backwardsharnack}).
 \end{lem}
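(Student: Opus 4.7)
The plan is to construct $u$ as a suitable limit of normalized finite-pole Green functions whose poles recede to infinity in the time direction, and then establish uniqueness via the quotient estimate Lemma \ref{growhtofquotientatboundary}.

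Concretely, fix a sequence $(X_k,t_k) \in \Omega$ with $t_k - \tau \to +\infty$ chosen so that, for every fixed $R \geq 1$, the corkscrew point $A^-_R(Q,\tau)$ eventually lies in the time-space cone $T^-_{A,R}(Q,\tau)$ relative to $(X_k,t_k)$ (for instance $X_k \equiv Q$, $t_k \equiv \tau + k^2$ works). Set
\[
u_k(Y,s) := \frac{G(X_k,t_k,Y,s)}{G(X_k,t_k,A^-_1(Q,\tau))}.
\]
Each $u_k$ is adjoint-caloric in $\Omega \setminus \{(X_k,t_k)\}$, vanishes on $\partial\Omega$, is positive in $\Omega$, and satisfies $u_k(A^-_1(Q,\tau)) = 1$. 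The key step is to obtain uniform bounds on any compact $K \subset \overline{\Omega}$: once $k$ is large, the pole $(X_k,t_k)$ lies above $K$ in time, so Lemma \ref{quotienttheorem} applied to $u_k$ and $u_\ell$ yields $c^{-1} \leq u_k(Y,s)/u_\ell(Y,s) \leq c$ on $K$, with $c$ independent of $k,\ell$. Combined with the normalization this produces uniform upper and lower bounds.

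Interior parabolic regularity then gives equicontinuity on compacta of $\Omega$, while Lemma \ref{growthattheboundary} gives uniform H\"older continuity up to $\partial\Omega$. A diagonal argument yields a subsequence $u_k \to u$ locally uniformly on $\overline{\Omega}$, with $u$ adjoint-caloric, positive in $\Omega$, vanishing on $\partial\Omega$, and normalized at $A^-_1(Q,\tau)$. Since each $u_k$ satisfies a backwards Harnack inequality at scales well below the distance to its pole (by Lemma \ref{backwardsharnack}), and backwards Harnack passes to locally uniform limits, $u$ itself satisfies the backwards Harnack inequality at every scale with constant depending only on $n$ and $\delta$.

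For uniqueness, suppose $u$ and $v$ both satisfy \eqref{greensfunctionatinfinity1}. Both functions satisfy the backwards Harnack inequality by the construction step (or, for $v$, by the same compactness argument applied to any sequence of finite-pole approximants). Lemma \ref{growhtofquotientatboundary} then gives, for every $(Q',\tau') \in \partial \Omega$ and every pair of scales $\rho \leq r/2$,
\[
\left| \frac{u(X,t)\,v(A^\pm_r(Q',\tau'))}{u(A^\pm_r(Q',\tau'))\,v(X,t)} - 1 \right| \leq c\bigl(\rho/r\bigr)^{\gamma}
\]
on $C_\rho(Q',\tau') \cap \Omega$. Letting $r \to \infty$ along Harnack chains connecting $A^\pm_r(Q',\tau')$ to $A^-_1(Q,\tau)$ and invoking the normalization $u = v = 1$ there, the ratio $u/v$ is forced to be identically $1$ on $\Omega$.

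The main obstacle is Step 2, the uniform two-sided comparison of the $u_k$ on arbitrarily large compacta; this is where the geometry of $\delta$-Reifenberg flat domains enters decisively, through the existence of corkscrew points at every scale and the validity of the boundary comparison Lemma \ref{quotienttheorem} with constants that are uniform as the pole recedes. A secondary subtlety is the unboundedness of $\Omega$ in the uniqueness argument, which forces one to iterate the quotient estimate across a sequence of balls of increasing radius rather than applying it once.
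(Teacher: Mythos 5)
Your construction (normalized Green functions $u_k = G(X_k,t_k,\cdot,\cdot)/G(X_k,t_k,A^-_1(Q,\tau))$ with poles receding in time, uniform two-sided bounds via Lemmas \ref{backwardsharnack} and \ref{quotienttheorem}, compactness, then uniqueness via the quotient oscillation estimate) is exactly the kind of argument the paper defers to: the paper's own proof is essentially a citation of Lemma 14 in \cite{nystrom1}, preceded by one substantive observation. That observation is precisely where your uniqueness step has a genuine gap. To apply Lemma \ref{quotienttheorem} and Lemma \ref{growhtofquotientatboundary} to a pair $u,v$ of solutions of \eqref{greensfunctionatinfinity1} at every scale $r$, you need \emph{both} of them to satisfy a backwards Harnack inequality with a uniform constant, and your justification for the arbitrary competitor $v$ --- ``by the same compactness argument applied to any sequence of finite-pole approximants'' --- is circular: an arbitrary solution of \eqref{greensfunctionatinfinity1} is not known a priori to arise as such a limit; that is essentially what uniqueness is supposed to establish. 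The paper closes this by noting that \emph{any} function satisfying \eqref{greensfunctionatinfinity1} obeys the backwards Harnack inequality at every scale, with constant depending only on $n$ and $\delta$, by running the proof of Lemma 3.11 in \cite{hlncaloricmeasure} directly on such a function; without that (or some substitute), your uniqueness argument does not get off the ground.

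Two smaller points. First, each $u_k$ is not positive on all of $\Omega$: $G(X_k,t_k,Y,s)$ vanishes for $s>t_k$, so positivity holds only on $\Omega\cap\{s<t_k\}$, and positivity of the limit on all of $\Omega$ requires a separate Harnack-chain argument (nonnegativity plus $u(A^-_1(Q,\tau))=1$ propagated backwards in time). Likewise, your uniform comparison on a compact $K$ needs the intermediate step $u_k(A^\pm_r(Q,\tau))\simeq_{C(r)} u_k(A^-_1(Q,\tau))=1$, which again uses chains together with the backwards Harnack inequality for the $u_k$ at scales below $t_k$. Second, your displayed inequality misquotes Lemma \ref{growhtofquotientatboundary}: that lemma bounds the oscillation of $u/v$ between two points of $C_\rho(Q',\tau')\cap\Omega$, not between a point of $C_\rho$ and the corkscrew $A^\pm_r(Q',\tau')$. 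The correct use is to fix $\rho$, let $r\to\infty$ (with the scale-uniform backwards Harnack constant) to conclude $u/v$ is constant on $C_\rho(Q',\tau')\cap\Omega$, hence on $\Omega$, and then the normalization at $A^-_1(Q,\tau)$ forces the constant to be $1$; no Harnack chains to $A^-_1(Q,\tau)$ are needed at that stage.
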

 
 \begin{proof}
 Without loss of generality let $(Q,\tau) = (0,0)$ and for ease of notation write $A \equiv A^-_1(0,0)$. Any $u$ which satisfies equation \eqref{greensfunctionatinfinity1} also satisfies a backwards Harnack inequality at any scale with constant $c$ depending only on dimension and $\delta > 0$ (see the proof of Lemma 3.11 in \cite{hlncaloricmeasure}). The proof then follows as in \cite{nystrom1}, Lemma 14. 
 \end{proof}

 \begin{cor}\label{existenceanduniquenesspoleatinfinity2}[Lemma 15 in \cite{nystrom1}]
 Let $\Omega, u$ be as in Lemma \ref{existenceanduniquenesspoleatinfinity1}. There exists a unique Radon measure $\omega$, supported on $\partial \Omega$ satisfying \begin{equation}\label{defofmeasureatinfinity} \int_{\partial \Omega} \varphi d\omega = \int_{\Omega} u(\Delta \varphi - \partial_t\varphi),\; \forall \varphi \in C_c^\infty(\R^{n+1}).
 \end{equation} 
 \end{cor}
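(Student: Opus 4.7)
The plan is to construct $\omega$ by defining a positive linear functional on $C_c^\infty(\mathbb R^{n+1})$ whose support lies on $\partial\Omega$, and then invoking the Riesz representation theorem; this follows the scheme in \cite{nystrom1}, Lemma 15.

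Define $L:C_c^\infty(\mathbb R^{n+1})\to\mathbb R$ by $L(\varphi):=\int_\Omega u(\Delta\varphi-\partial_t\varphi)\,dYds$. First I would check that $\mathrm{supp}\,L\subset\partial\Omega$. Given $\varphi\in C_c^\infty$ with $\mathrm{supp}\,\varphi\cap\partial\Omega=\emptyset$, decompose $\varphi=\varphi_1+\varphi_2$ with $\mathrm{supp}\,\varphi_1\subset\subset\Omega$ and $\mathrm{supp}\,\varphi_2\subset\mathbb R^{n+1}\setminus\overline\Omega$. The $\varphi_2$-integral vanishes since $u\equiv 0$ off $\Omega$, and two integrations by parts inside $\Omega$ give $\int_\Omega u(\Delta\varphi_1-\partial_t\varphi_1)=\int_\Omega(\Delta u+\partial_t u)\varphi_1=0$, using that $u$ is adjoint-caloric in $\Omega$.

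The next step is positivity. From Lemma \ref{existenceanduniquenesspoleatinfinity1} (and its proof in \cite{nystrom1}), $u$ arises as a locally uniform limit $u=\lim_k u_k$ in $\overline\Omega$, where $u_k(Y,s):=G(X_k,t_k,Y,s)/G(X_k,t_k,A^-_1(Q,\tau))$ for some sequence $(X_k,t_k)\in\Omega$ escaping to infinity. For nonnegative $\varphi\in C_c^\infty$ and all $k$ sufficiently large that $(X_k,t_k)\notin\mathrm{supp}\,\varphi$, the finite-pole identity \eqref{fpgreenfunction} applied to $u_k$ yields
\[
\int_\Omega u_k(\Delta\varphi-\partial_t\varphi)\,dYds=\frac{\omega^{(X_k,t_k)}(\varphi)}{G(X_k,t_k,A^-_1(Q,\tau))}\geq 0.
\]
The backwards Harnack inequality recorded in Lemma \ref{existenceanduniquenesspoleatinfinity1}, combined with standard interior parabolic estimates, gives uniform $L^\infty_{\mathrm{loc}}$ bounds on $u_k$ across $\mathrm{supp}\,\varphi$; dominated convergence then lets me pass to the limit and conclude $L(\varphi)\geq 0$.

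The conclusion is standard: $L$ is a positive linear functional on $C_c^\infty(\mathbb R^{n+1})$, and the bound $|L(\varphi)|\leq\|\varphi\|_\infty L(\psi_K)$ for any bump $\psi_K\geq\chi_{\mathrm{supp}\,\varphi}$ allows extension to $C_c(\mathbb R^{n+1})$ by density, after which the Riesz representation theorem produces a unique positive Radon measure $\omega$ on $\mathbb R^{n+1}$ representing $L$. The support property forces $\mathrm{supp}\,\omega\subset\partial\Omega$, and uniqueness of $\omega$ satisfying \eqref{defofmeasureatinfinity} is immediate from the density of $C_c^\infty$ in $C_c$ in sup-norm. The only subtlety is justifying the passage to the limit in the positivity step, which reduces to producing uniform local bounds on $u_k$; once the backwards Harnack inequality from Lemma \ref{existenceanduniquenesspoleatinfinity1} is in hand, this is routine.
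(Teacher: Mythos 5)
Your proof is correct, and it takes a mildly different route to existence than the one the paper intends. The paper delegates existence to Nystr\"om's Lemma 15, which (as the paper itself recalls in the proof of Corollary \ref{comparisonformeasureatinfinity}) constructs $\omega$ as a weak-$*$ limit of the normalized finite-pole caloric measures $\hat{\omega}_k \colonequals \omega^{(X_k,t_k)}/G(X_k,t_k,A^-_1(Q,\tau))$; that construction needs uniform bounds on $\hat{\omega}_k(C_R(Q,\tau))$ on compacta (via Lemma \ref{comparisontheorem}) before one can extract a limit and pass to the limit in the finite-pole identity. You instead apply the Riesz representation theorem directly to the functional $\varphi \mapsto \int_\Omega u(\Delta\varphi - \partial_t\varphi)$, using the approximating $u_k$ from the proof of Lemma \ref{existenceanduniquenesspoleatinfinity1} only to verify positivity (and your restriction to $k$ with $(X_k,t_k)\notin \mathrm{supp}\,\varphi$ is exactly the right care to take with the identity in \eqref{fpgreenfunction}); the support computation and the extension to $C_c$ are routine and correctly done. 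What each approach buys: yours is more economical, since no mass bounds or compactness extraction are needed, only positivity plus the locally uniform convergence $u_k \to u$; the paper's weak-limit construction, on the other hand, hands you for free the fact that $\omega$ is the weak limit of the $\hat{\omega}_k$, which is used later (left-hand inequality in Corollary \ref{comparisonformeasureatinfinity}) — under your construction that fact is recovered from the uniqueness statement, since any weak-$*$ limit of the $\hat{\omega}_k$ also satisfies \eqref{defofmeasureatinfinity}, so nothing is lost. Your uniqueness argument (density of $C_c^\infty$ in $C_c$) is the same as the paper's testing against approximate indicators of cylinders.
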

 
 \begin{proof}
 Uniqueness is immediate from equation \eqref{defofmeasureatinfinity}; for any $(Q,\tau) \in \partial \Omega$ and $r > 0$ let $\varphi$ approximate $\chi_{C_r(Q,\tau)}(X,t)$. Existence follows as in Lemma 15 in \cite{nystrom1}.
 \end{proof}
 
 \begin{lem}\label{doublingformeasureatinfinity}[Lemma \ref{doublingmeasure} for the caloric measure at infinity]
Let $u, \Omega, \omega$, be as in Corollary \ref{existenceanduniquenesspoleatinfinity2}. There is a universal constant $c > 0$ such that for all $(Q,\tau) \in \partial \Omega, r > 0$ we have $$\omega(\Delta_{2r}(Q,\tau)) \leq c \omega(\Delta_r(Q,\tau)).$$
\end{lem}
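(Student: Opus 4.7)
The plan is to realize $\omega$ as a weak limit of suitably normalized finite-pole caloric measures whose poles escape to future infinity, and then transfer the doubling property from Lemma \ref{doublingmeasure} to $\omega$ by passing to the limit. Fix a reference boundary point $(Q_0,\tau_0)\in\partial\Omega$ and set $A_0\colonequals A^-_1(Q_0,\tau_0)$, so that the normalization in Lemma \ref{existenceanduniquenesspoleatinfinity1} is $u(A_0)=1$. For each large $R>0$ let $X_R\colonequals A^+_R(Q_0,\tau_0)$ and define
$$u_R(Y,s)\colonequals\frac{G(X_R,Y,s)}{G(X_R,A_0)},\qquad \omega_R\colonequals\frac{\omega^{X_R}}{G(X_R,A_0)}.$$
The construction in \cite{nystrom1}, Lemmas 14--15 (whose content is recalled in Lemma \ref{existenceanduniquenesspoleatinfinity1} and Corollary \ref{existenceanduniquenesspoleatinfinity2}) shows that $u_R\to u$ locally uniformly on $\overline{\Omega}$ and that the Radon measures $\omega_R$ converge weakly to $\omega$ on $\partial\Omega$; the representation \eqref{defofmeasureatinfinity} is precisely what identifies the weak limit as $\omega$ via duality against $C^\infty_c(\R^{n+1})$ test functions.

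Next I would use the finite-pole doubling. Fix $(Q,\tau)\in\partial\Omega$ and $r>0$. The $t$-coordinate of $X_R$ exceeds $\tau_0+R^2/100$ and its spatial distance from $Q_0$ is at most $R$, so for $R$ large depending only on $r$ and $d((Q,\tau),(Q_0,\tau_0))$ one has $t_R-\tau\geq R^2/200\geq 4r^2$ and $|X_R-Q|^2\leq 3R^2$. Hence $X_R\in T^+_{A,r}(Q,\tau)$ for a fixed $A\equiv A(n)\geq 100$ independent of $R$, and Lemma \ref{doublingmeasure} applied to the pole $X_R$ yields, after dividing by $G(X_R,A_0)$,
\begin{equation}\label{eq:prelimdoubling}
\omega_R(\Delta_{2r}(Q,\tau))\leq c\,\omega_R(\Delta_r(Q,\tau)),
\end{equation}
with $c=c(A)$ independent of $R$.

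The remaining step is to let $R\to\infty$ in \eqref{eq:prelimdoubling}. Because $\Delta_{2\rho}(Q,\tau)$ is relatively open in $\partial\Omega$ and $\overline{\Delta_\rho(Q,\tau)}\subset\Delta_{r}(Q,\tau)$ for $\rho<r$, the Portmanteau theorem (open-set and compact-set forms of weak convergence of Radon measures) gives
$$\omega(\Delta_{2\rho}(Q,\tau))\leq\liminf_{R\to\infty}\omega_R(\Delta_{2\rho}(Q,\tau))\leq c\,\limsup_{R\to\infty}\omega_R(\overline{\Delta_\rho(Q,\tau)})\leq c\,\omega(\Delta_r(Q,\tau))$$
for every $\rho<r$. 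Sending $\rho\uparrow r$ and invoking monotone continuity of the Radon measure $\omega$ on the nested union $\Delta_{2r}(Q,\tau)=\bigcup_{\rho<r}\Delta_{2\rho}(Q,\tau)$ gives $\omega(\Delta_{2r}(Q,\tau))\leq c\,\omega(\Delta_r(Q,\tau))$ for all $r>0$ and all $(Q,\tau)\in\partial\Omega$.

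The main technical point is justifying $\omega_R\rightharpoonup\omega$ and the uniformity in $R$ of the constant $A$ controlling the time-space cone containing $X_R$; once these are in place, the argument is a clean transfer via weak convergence. Alternatively, one can avoid the limiting procedure entirely by proving a comparison-type estimate $c^{-1}r^n u(A^+_r(Q,\tau))\leq\omega(\Delta_{r/2}(Q,\tau))\leq c\,r^n u(A^-_r(Q,\tau))$ directly from \eqref{defofmeasureatinfinity} by testing against suitable cutoffs and applying Lemma \ref{growthattheboundary}, and then combining with the backwards Harnack inequality for $u$ already recorded in Lemma \ref{existenceanduniquenesspoleatinfinity1}; this yields the conclusion along the same lines as the proof of Lemma \ref{doublingmeasure} in the finite-pole setting, but is somewhat longer.
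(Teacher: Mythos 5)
Your proposal is correct and takes essentially the same route as the paper: the paper's proof simply defers to Lemma 15 of \cite{nystrom1}, which realizes $\omega$ as a weak limit of finite-pole caloric measures normalized by $G(X_R,A^-_1(Q_0,\tau_0))$ and transfers the doubling of Lemma \ref{doublingmeasure} through the limit, exactly as you do. Your verification that $X_R\in T^+_{A,r}(Q,\tau)$ with $A$ uniform in $R$ and your Portmanteau argument (open versus closed surface balls, then $\rho\uparrow r$) just make explicit the details the citation leaves implicit.
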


\begin{proof}
Follows as in Lemma 15 in \cite{nystrom1}. 
\end{proof}
 
 \begin{cor}\label{comparisonformeasureatinfinity}[Lemma \ref{comparisontheorem} for caloric measure and Green's function at infinity]
 Let $u, \Omega, \omega$ be as in Corollary \ref{existenceanduniquenesspoleatinfinity2}. There is a universal constant $c > 0$ such that for all $(Q,\tau) \in \partial \Omega, r > 0$ we have \begin{equation}\label{uislikeomegaatinfinity}
 c^{-1}r^n u(A^+_r(Q,\tau)) \leq \omega(\Delta_{r/2}(Q,\tau)) \leq cr^n u(A^-_r(Q,\tau))
 \end{equation}
 \end{cor}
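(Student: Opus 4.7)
The plan is to derive this corollary from its finite-pole counterpart, Lemma \ref{comparisontheorem}, by a limiting argument along the sequence of poles used to construct $u$ and $\omega$. By the uniqueness in Lemma \ref{existenceanduniquenesspoleatinfinity1} (and the associated existence proof of Nystr\"om cited there), the infinite-pole Green function is obtained as a locally uniform limit
\[
u(Y,s) = \lim_{k \to \infty} \frac{G(X_k, t_k, Y, s)}{G(X_k, t_k, A^-_1(0,0))}
\]
for a sequence $(X_k, t_k) \in \Omega$ escaping to infinity in the adjoint-time direction, and by Corollary \ref{existenceanduniquenesspoleatinfinity2} the corresponding normalized caloric measures converge weakly to $\omega$. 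Fix $(Q,\tau) \in \partial \Omega$ and $r > 0$. Since $(X_k, t_k)$ eventually recedes from $C_r(Q,\tau)$ in the past-time cone sense, there is $k_0$ such that $(X_k, t_k) \in T^+_{A,r}(Q,\tau)$ for all $k \ge k_0$ and a uniform $A \ge 100$ depending only on dimension and $\delta$.

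For each such $k$, Lemma \ref{comparisontheorem} applies at the finite pole, giving
\[
c^{-1} r^n G(X_k, t_k, A^+_r(Q,\tau)) \;\le\; \omega^{(X_k,t_k)}(\Delta_{r/2}(Q,\tau)) \;\le\; c r^n G(X_k, t_k, A^-_r(Q,\tau)),
\]
with $c = c(A)$ uniform in $k$. Dividing throughout by the normalizer $G(X_k, t_k, A^-_1(0,0))$ and passing to the limit, the outer terms converge to $c^{-1} r^n u(A^+_r(Q,\tau))$ and $cr^n u(A^-_r(Q,\tau))$ respectively by the locally uniform convergence of the normalized Green functions. This yields the desired inequality once we verify that the middle term has limit $\omega(\Delta_{r/2}(Q,\tau))$.

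The main (and only nontrivial) obstacle is this last passage to the limit, since weak convergence of Radon measures only ensures the correct behavior on sets whose boundary has measure zero. To handle it, I would approximate $\chi_{C_{r/2}(Q,\tau)}$ from above and below by compactly supported smooth functions $\varphi_\varepsilon^\pm$ supported in $C_{r/2 + \varepsilon}(Q,\tau)$ and $C_{r/2 - \varepsilon}(Q,\tau)$, respectively, and test against the defining distributional identity \eqref{defofmeasureatinfinity}. Taking $\varepsilon \to 0$ after passing to the limit in $k$ gives
\[
\omega(\Delta_{r/2-\varepsilon}(Q,\tau)) \;\le\; \liminf_k \frac{\omega^{(X_k,t_k)}(\Delta_{r/2}(Q,\tau))}{G(X_k,t_k,A^-_1(0,0))} \;\le\; \limsup_k (\cdots) \;\le\; \omega(\Delta_{r/2+\varepsilon}(Q,\tau)).
\]
The doubling property of $\omega$ from Lemma \ref{doublingformeasureatinfinity} then shows $\omega(\Delta_{r/2 + \varepsilon}) - \omega(\Delta_{r/2 - \varepsilon}) \to 0$ as $\varepsilon \to 0$ (since by doubling $\omega$ has no atoms on $\partial \Delta_{r/2}$ after an arbitrarily small perturbation of $r$, and the inequality we are aiming for is invariant under such perturbations by the same doubling reasoning). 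This closes the argument.
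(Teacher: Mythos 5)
Your limiting argument is essentially the paper's own proof of the left-hand inequality: the paper likewise writes $u$ as a locally uniform limit of normalized finite-pole Green functions and $\omega$ as the weak limit of the correspondingly normalized caloric measures, applies Lemma \ref{comparisontheorem} at larger and larger scales, and passes to the limit. For the right-hand inequality the paper argues more directly, testing the identity \eqref{defofmeasureatinfinity} against a cutoff $\chi_{C_{r/2}(Q,\tau)} \leq \varphi \leq \chi_{C_r(Q,\tau)}$ with $|\Delta\varphi|, |\partial_t\varphi| \lesssim r^{-2}$ and invoking Lemma \ref{biggeratNTApoint}; your limit argument also works on that side, and in fact needs less care there, since only the inner approximation $\varphi^-_\varepsilon$ and continuity from below of $\omega$ are required, with no boundary-mass issue at all.

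The step that is wrong as written is the final claim that doubling forces $\omega(\Delta_{r/2+\varepsilon}(Q,\tau)) - \omega(\Delta_{r/2-\varepsilon}(Q,\tau)) \to 0$. As $\varepsilon \downarrow 0$ this difference converges to the $\omega$-measure of the set $\{(P,\eta) \in \partial\Omega \mid \|(P,\eta)-(Q,\tau)\| = r/2\}$, and a doubling measure can perfectly well charge a fixed such ``sphere''; doubling excludes point masses but not this kind of lower-dimensional concentration. Fortunately the corollary does not need it. For the lower bound your limsup inequality gives $c^{-1}r^n u(A^+_r(Q,\tau)) \leq \omega(\overline{\Delta_{r/2}(Q,\tau)})$, and doubling is used exactly to absorb the closed ball: $\omega(\overline{\Delta_{r/2}(Q,\tau)}) \leq \omega(\Delta_r(Q,\tau)) \leq c\,\omega(\Delta_{r/2}(Q,\tau))$ by Lemma \ref{doublingformeasureatinfinity}, which yields \eqref{uislikeomegaatinfinity} with a worse but still universal constant --- this is precisely what the paper's phrase ``that $\omega$ is doubling implies the desired result'' means. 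If you instead insist on the radius-perturbation route (choosing a nearby radius whose sphere is $\omega$-null, which holds for all but countably many radii and needs no doubling), transferring the inequality back to the original $r$ requires comparing $u(A^\pm_{r'}(Q,\tau))$ with $u(A^\pm_{r}(Q,\tau))$, and that comparison comes from the Harnack and backwards Harnack inequalities for $u$ (Lemma \ref{existenceanduniquenesspoleatinfinity1}), not from doubling.
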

 
 \begin{proof}
 The inequality on the right hand side follows from equation \eqref{defofmeasureatinfinity}; let $\chi_{C_{r/2}(Q,\tau)}(X,t) \leq \varphi(X,t) \leq \chi_{C_r(Q,\tau)}(X,t)$ and $|\Delta \varphi|, |\partial_t \varphi| < C/r^2$. Inequality then follows from Lemma \ref{biggeratNTApoint}. 
 
 The left hand side is more involved: as in the proof of Lemmas  \ref{existenceanduniquenesspoleatinfinity1} and \ref{existenceanduniquenesspoleatinfinity2} we can write $u$ as the uniform limit of $u_j$s (multiples of Green's functions with finite poles) and $\omega$ as the weak limit of $\omega_j$s (multiplies of caloric measures with finite poles) which satisfy Lemma \ref{comparisontheorem} at $(Q,\tau)$ for larger and larger scales. Taking limits gives that $$c^{-1}r^nu(A^+_r(Q,\tau)) \leq \omega(\overline{\Delta_{r/2}(Q,\tau)}).$$ That $\omega$ is doubling implies the desired result. 
 \end{proof}
 
 \begin{prop}\label{hlnformeasureatinfinity}[see Theorem 1 in \cite{hlncaloricmeasure}]
 Let $\Omega$ be a parabolic regular domain with Reifenberg constant $\delta > 0$. There is some $\overline{\delta} = \overline{\delta}(M, \|\nu\|_+) > 0$ such that if $\delta < \overline{\delta}$ then $\omega \in A^{\infty}(d\sigma)$. That is to say, there exists a $p >1$ and a constant $c = c(n, p) > 0$ such that $\omega$ satisfies a reverse Harnack inequality with exponent $p$ and constant $c$ at any $(Q,\tau) \in \partial \Omega$ and at any scale $r > 0$.
 \end{prop}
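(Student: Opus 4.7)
The plan is to deduce Proposition \ref{hlnformeasureatinfinity} from its finite-pole counterpart, Theorem 1 of \cite{hlncaloricmeasure}, via a weak-limit argument based on the construction of $u$ and $\omega$ already recorded in this appendix. Recall from Lemma \ref{existenceanduniquenesspoleatinfinity1} and Corollary \ref{existenceanduniquenesspoleatinfinity2} that $u$ and $\omega$ arise (along a subsequence) as the limits of rescaled finite-pole objects $u_j = u^{(X_j,t_j)}/c_j$ and $\omega_j = \omega^{(X_j,t_j)}/c_j$, where $c_j = u^{(X_j,t_j)}(A^-_1(Q_0,\tau_0))$ for some fixed $(Q_0,\tau_0) \in \partial \Omega$, and the poles $(X_j,t_j)$ are taken with $t_j \to \infty$ in such a way that for any compact set $K \subset \partial \Omega$ and any $R > 0$, eventually $(X_j,t_j) \in T^+_{100,R}(Q,\tau)$ for every $(Q,\tau) \in K$. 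Writing $k_j = d\omega_j / d\sigma = k^{(X_j,t_j)}/c_j$, we have $\omega_j \rightharpoonup \omega$ weakly as Radon measures.

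First I would fix $(Q,\tau) \in \partial \Omega$ and $r > 0$. For $j$ sufficiently large the hypotheses of Theorem 1 of \cite{hlncaloricmeasure} are satisfied at $(Q,\tau)$ and scale $r$ with pole $(X_j,t_j)$, yielding an exponent $p > 1$ and constant $c \geq 1$ with
\begin{equation*}
\fint_{\Delta_{2r}(Q,\tau)} k_j^p \, d\sigma \leq c \left(\fint_{\Delta_r(Q,\tau)} k_j\, d\sigma\right)^p.
\end{equation*}
A key point requiring verification is that $p$ and $c$ are uniform in $j$: the proof in \cite{hlncaloricmeasure} proceeds through the quantitative geometric lemmas (doubling, backwards Harnack, comparison) whose constants depend only on $n$, $M$, $\|\nu\|_+$, the Reifenberg constant $\delta$, and the cone parameter $A$ (fixed at $100$ here), so this uniformity should fall out of a direct inspection. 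For the right-hand side, the weak convergence $\omega_j \rightharpoonup \omega$ gives $\omega_j(\Delta_r(Q,\tau)) \to \omega(\Delta_r(Q,\tau))$ for all but countably many $r$; the exceptional $r$ are handled by monotonicity in $r$ together with doubling (Lemma \ref{doublingformeasureatinfinity}). For the left-hand side, the uniform reverse H\"older bound shows that $\{k_j\}$ is bounded in $L^p(\Delta_{2r}(Q,\tau), d\sigma)$; extracting a weakly convergent subsequence $k_j \rightharpoonup \tilde k$ in $L^p$ and testing against continuous functions identifies $\tilde k = h$ $\sigma$-a.e., whereupon weak lower semicontinuity of the $L^p$ norm yields
\begin{equation*}
\int_{\Delta_{2r}(Q,\tau)} h^p\, d\sigma \leq \liminf_j \int_{\Delta_{2r}(Q,\tau)} k_j^p\, d\sigma.
\end{equation*}
Combining these two convergences produces the desired reverse H\"older inequality for $h = d\omega/d\sigma$, uniformly in $(Q,\tau)$ and $r$.

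The main obstacle is the uniformity of the constants $p$ and $c$ across the approximating sequence of poles. Although this uniformity is expected in light of how the constants enter the finite-pole proof, it must be checked carefully that none of the estimates in \cite{hlncaloricmeasure} hide a dependence on the distance from the pole to the boundary. Should such a dependence arise, an alternative is to repeat the proof of Theorem 1 of \cite{hlncaloricmeasure} verbatim, replacing the finite-pole Green function and caloric measure with $u$ and $\omega$ throughout: the geometric inputs needed (Lemmas \ref{growthattheboundary}, \ref{biggeratNTApoint}, \ref{comparisonformeasureatinfinity}, and \ref{doublingformeasureatinfinity}) are all already available in the infinite-pole setting, which is precisely the remark made by the author when stating the proposition.
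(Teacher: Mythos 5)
Your primary argument is correct in outline but takes a genuinely different route from the paper. The paper's proof is simply to rerun the proof of Theorem 1 in \cite{hlncaloricmeasure} verbatim with the pole-at-infinity toolbox assembled in this appendix (the doubling property of Lemma \ref{doublingformeasureatinfinity}, the comparison estimate of Corollary \ref{comparisonformeasureatinfinity}, and the fact that $u$ satisfies a backwards Harnack inequality at every scale, Lemma \ref{existenceanduniquenesspoleatinfinity1}) substituted for their finite-pole counterparts -- i.e.\ exactly the fallback you mention in your last sentence. Your main route instead treats the finite-pole theorem as a black box and passes to the limit along the approximating sequence $\omega_j = \omega^{(X_j,t_j)}/c_j \rightharpoonup \omega$ from the construction in \cite{nystrom1}. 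That works: the uniformity you flag is indeed available, because the constants $p$ and $c$ in Theorem 1 of \cite{hlncaloricmeasure} depend only on $n$, $M$, $\|\nu\|_+$, $\delta$ and the cone aperture $A$ (not on the distance from the pole to the boundary), the normalization by $c_j$ is harmless since the reverse H\"older inequality is invariant under scaling the density, and since the poles escape to infinity in time every pair $(Q,\tau)$, $r$ is eventually admissible, so all scales are covered. The remaining points you would need to record are routine: adjust $r$ slightly (or use Lemma \ref{doublingformeasureatinfinity}) so that $\omega(\partial C_r(Q,\tau)) = 0$ and $\sigma(\partial C_{2r}(Q,\tau)\cap\partial\Omega) = 0$, and note that testing the weak $L^p$ limit against continuous functions supported in the open cylinder identifies $\omega = \tilde k\,\sigma$ there, which in fact gives you $\omega \ll \sigma$ and $h \in L^p_{\mathrm{loc}}$ as a byproduct rather than as an input. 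What each approach buys: the paper's route avoids any limiting procedure and any uniformity discussion, at the cost of re-checking the whole $A_\infty$ machinery of \cite{hlncaloricmeasure} with the infinite-pole estimates; yours keeps that machinery untouched but requires the (true, and easily inspected) pole-independence of its constants plus the soft measure-theoretic bookkeeping above.
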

 
 \begin{proof}[Proof Sketch]
 The proof follows exactly as in \cite{hlncaloricmeasure}, with Lemma \ref{doublingformeasureatinfinity}, Corollary \ref{comparisonformeasureatinfinity} and the fact that the Green function at infinity satisfies the strong Harnack inequality substituting for the corresponding facts for the Green function with a finite pole. 
 \end{proof}
 
 \section{Boundary behaviour of caloric functions in parabolic Reifenberg flat domains}\label{fatouetcforparaboliccase}

In this appendix we prove some basic facts about the boundary behviour of caloric functions in parabolic Reifenberg flat domains, culminating in an analogue of Fatou's theorem (Lemma \ref{nontangentiallimitsdae}) and a representation formula for adjoint caloric functions with integrable non-tangential maximal functions (Proposition \ref{generalrepresentationtheorem}). Often the theorems and proofs mirror those in the elliptic setting; in these cases we follow closely the presentation of Jerison and Kenig (\cite{jerisonandkenig}) and Hunt and Wheeden (\cite{huntwheedenboundaryvalues} and \cite{huntwheeden}).

In the elliptic setting, the standard arguments rely heavily on the fact that harmonic measure (on, e.g. NTA domains) is doubling. Unfortunately, we do not know if caloric measure is doubling for parabolic NTA domains. However, in Reifenberg flat domains, Lemma \ref{doublingmeasure} tells us that caloric measure is doubling in a certain sense. Using this, and other estimates in Section \ref{preliminaryestimates}, we can follow Hunt and Wheeden's argument to show that the Martin boundary of a Reifenberg flat domain is equal to its topological boundary. The theory of Martin Boundaries (see Martin's original paper \cite{martin} or Part 1 Chapter 19 in Doob \cite{doob}) then allows us to conclude the following  representation formula for bounded caloric functions.

\begin{lem}\label{nontangentiallimitsdae}
Let $\Omega$ be a parabolic $\delta$-Reifenberg flat domain with $\delta > 0$ small enough. Then for any $(X_0,t_0) \in \Omega$ the adjoint-Martin boundary of $\Omega$ relative to $(X_0,t_0)$ is all of $\partial \Omega \cap \{t  > t_0\}$. Furthermore, for any bounded solution to the adjoint-heat equation, $u$, and any $s > t_0$ there exists a $g(P,\eta) \in L^\infty(\partial \Omega)$ such that \begin{equation}\label{boundedrepresentation}u(Y,s) = \int_{\partial \Omega} g(P,\eta)K^{(X_0,t_0)}(P,\eta, Y,s) d\hat{\omega}^{(X_0,t_0)}(P,\eta).\end{equation} Here $K^{(X_0,t_0)}(P,\eta, Y,s) \equiv \frac{d\hat{\omega}^{(Y,s)}}{d\hat{\omega}^{(X_0,t_0)}}(P,\eta)$ which exists for all $s > t_0$ and $\omega^{(X_0,t_0)}$-a.e. $(P,\eta) \in \partial \Omega$ by the Harnack inequality. 

Finally, if $u(Y,s) = \int_{\partial \Omega} g(P,\eta)K^{(X_0,t_0)}(P,\eta, Y,s) d\hat{\omega}^{(X_0,t_0)}(P,\eta)$, then $u$ has non-tangential limit $g(Q,\tau)$ for $d\hat{\omega}^{(X_0,t_0)}$-almost every $(Q,\tau) \in \partial \Omega$.
\end{lem}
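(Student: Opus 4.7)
The plan is to follow the Martin boundary approach of Hunt and Wheeden (\cite{huntwheedenboundaryvalues}, \cite{huntwheeden}), with the boundary estimates of Section \ref{preliminaryestimates} serving as parabolic substitutes for their elliptic inputs. For the first assertion, I would fix $(Q,\tau) \in \partial \Omega$ with $\tau > t_0$ and consider the quotient $(Z,\xi) \mapsto G(Z,\xi,Y,s)/G(Z,\xi,X_0,t_0)$ as $(Z,\xi) \in \Omega$ approaches $(Q,\tau)$. Both numerator and denominator are positive caloric functions of $(Z,\xi)$ that vanish on a surface neighborhood of $(Q,\tau)$ (once $(Y,s)$ and $(X_0,t_0)$ are fixed away from $(Q,\tau)$), and in a Reifenberg flat domain both satisfy the backwards Harnack inequality of Lemma \ref{backwardsharnack}. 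Lemma \ref{growhtofquotientatboundary} then produces H\"older convergence of the quotient to a limit $K^{(X_0,t_0)}(Q,\tau,Y,s)$, which is positive and adjoint-caloric in $(Y,s)$; Lemma \ref{growthattheboundary} forces this limit to vanish nontangentially at every other boundary point, identifying it as a minimal Martin function. For $(Q,\tau)$ with $\tau \le t_0$, by contrast, $G(Z,\xi,X_0,t_0)$ vanishes in a whole forward time-slab near $(Q,\tau)$, so such points carry no $\hat{\omega}^{(X_0,t_0)}$-mass and cannot be Martin points.

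This identification, combined with Doob's integral representation theorem (\cite{doob}, Part~1, Ch.~19), produces for each bounded adjoint-caloric $u$ a finite signed Radon measure $\mu$ on $\partial \Omega$ with
\[
u(Y,s) = \int_{\partial \Omega} K^{(X_0,t_0)}(P,\eta,Y,s)\, d\mu(P,\eta).
\]
Since the constant function $1$ corresponds to $\mu = \hat{\omega}^{(X_0,t_0)}$, a standard comparison argument (together with the maximum principle applied to $\|u\|_\infty \pm u$) forces $\mu \ll \hat{\omega}^{(X_0,t_0)}$ with density $g \in L^\infty(\partial \Omega, d\hat{\omega}^{(X_0,t_0)})$ of norm at most $\|u\|_\infty$, yielding \eqref{boundedrepresentation}.

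For the nontangential convergence, I would adapt the Hunt--Wheeden differentiation argument. The key pointwise input is that whenever $(Y,s) \in \Gamma_\alpha(Q,\tau)$ with $r = \|(Y,s)-(Q,\tau)\|$ small, the point $(Y,s)$ lies in a time-space cone of $(Q,\tau)$ with aperture depending only on $\alpha$, so that Lemmas \ref{comparisontheorem}, \ref{backwardsharnack}, and \ref{doublingmeasure} combine to give a dyadic estimate of the form
\[
K^{(X_0,t_0)}(P,\eta,Y,s) \le C \sum_{j \ge 0} 2^{-j}\, \frac{\chi_{\Delta_{2^j r}(Q,\tau)}(P,\eta)}{\hat{\omega}^{(X_0,t_0)}(\Delta_{2^j r}(Q,\tau))},
\]
with geometric decay outside a bounded neighborhood. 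This dominates the nontangential maximal function of $u$ by a Hardy--Littlewood-type maximal operator of $g$ with respect to $\hat{\omega}^{(X_0,t_0)}$; invoking a differentiation theorem on $(\partial \Omega, \hat{\omega}^{(X_0,t_0)})$ then yields $u(Y,s) \to g(Q,\tau)$ nontangentially at $\hat{\omega}^{(X_0,t_0)}$-a.e.\ $(Q,\tau)$.

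The main obstacle I expect is the conditional nature of the doubling in Lemma \ref{doublingmeasure}: $\hat{\omega}^{(X_0,t_0)}$ is only known to double at $(Q,\tau)$ when $(X_0,t_0)$ lies in a suitable backward time-space cone of $(Q,\tau)$, so the usual space-of-homogeneous-type maximal theorem does not apply off the shelf. Setting up both the pointwise kernel bound above and the differentiation theorem will require uniformity estimates valid only when the pole and base point are in the correct relative time configuration, and I anticipate using Harnack chains inside $\Omega$ to relocate $(X_0,t_0)$ to auxiliary well-placed reference poles at each scale, verifying that these adjustments are compatible with the restriction $s > t_0$ stipulated in the statement.
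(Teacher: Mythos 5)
Your overall route is the same as the paper's: use the boundary Harnack estimates (Lemmas \ref{backwardsharnack}, \ref{quotienttheorem}, \ref{growhtofquotientatboundary}) to extend the Green-function quotient continuously up to $\partial \Omega \cap \{t > t_0\}$, invoke the Martin/Doob representation for bounded adjoint-caloric functions, and run a Hunt--Wheeden style differentiation argument for the non-tangential limits; your dyadic kernel bound and your worry about the conditional doubling (doubling only holds at scales small compared to $\tau - t_0$) correspond exactly to the three estimates the paper lists for that last step.

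The genuine gap is the identification of the Martin kernel with the Radon--Nikodym derivative. What Martin/Doob hands you is a representation against $V^{(X_0,t_0)}(Q,\tau,Y,s) = \lim_{(Z,\xi)\to(Q,\tau)} G(Z,\xi,Y,s)/G(Z,\xi,X_0,t_0)$, whereas the lemma is stated with $K^{(X_0,t_0)} = \frac{d\hat{\omega}^{(Y,s)}}{d\hat{\omega}^{(X_0,t_0)}}$, and you use the equality $V = K$ silently in two places: when you assert that the constant function $1$ is represented by the measure $\hat{\omega}^{(X_0,t_0)}$ (this is precisely the statement $\int V \, d\hat{\omega}^{(X_0,t_0)} = \hat{\omega}^{(Y,s)}(\partial \Omega) = 1$), and when you apply the caloric-measure estimates of Lemmas \ref{comparisontheorem}, \ref{backwardsharnack} and \ref{doublingmeasure} to the kernel in the representation formula. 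Since the constant-$1$ claim is the linchpin of your $\|u\|_{\infty} \pm u$ argument, your absolute-continuity step is circular as written. The paper avoids this by proving $\mu_u \ll \hat{\omega}^{(X_0,t_0)}$ directly and quantitatively: Lemmas \ref{comparisontheorem} and \ref{backwardsharnack} give $\hat{\omega}^{(X_0,t_0)}(\Delta_r(Q,\tau)) \, V^{(X_0,t_0)}(Q',\tau', A^-_r(Q,\tau)) \geq \gamma$ uniformly for $(Q',\tau') \in \Delta_{r/4}(Q,\tau)$, whence $\mu_u(\Delta_{r/4}(Q,\tau)) \leq C\gamma^{-1}\|u\|_{\infty}\,\hat{\omega}^{(X_0,t_0)}(\Delta_{r/4}(Q,\tau))$; it then proves $V = K$ as a separate step, applying Martin's uniqueness-of-distributions theorem to the adjoint-caloric functions $U(Y,s) = \hat{\omega}^{(Y,s)}(\Delta_r(Q,\tau))$ (through Martin's functions $U_E$ for $E$ a closed surface ball) to show that the representing density of $U$ is $\chi_{\Delta_r(Q,\tau)}$, so that $\int_{\Delta_r(Q,\tau)} V \, d\hat{\omega}^{(X_0,t_0)} = \hat{\omega}^{(Y,s)}(\Delta_r(Q,\tau))$ for every surface ball. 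You need either this identification or the paper's direct quantitative absolute-continuity estimate before your comparison argument and your differentiation argument can be carried out; a related, smaller point is that vanishing of the limit kernel at all other boundary points does not by itself establish minimality in Martin's sense, which is also tied to this uniqueness step.
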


\begin{proof}
Recall that the Martin boundary $\partial^M \Omega$ of $\Omega$ with respect to $(X_0,t_0)$ is the largest subset of $\partial \Omega$ such that the Martin kernel $V^{(X_0,t_0)}(X,t,Y,s) \colonequals \frac{G(X,t,Y,s)}{G(X,t,X_0,t_0)}$ has a continuous extension $V^{(X_0,t_0)} \in C(\partial^M \Omega \cap \{(X,t) \in \Omega \mid t > t_0\} \times \{(Y,s) \in \Omega \mid s > t_0\})$. Martin's representation theorem (see the theorem on page 371 of \cite{doob}) states that for any bounded solution to the adjoint-heat equation, $u$, there exists a measure, $\mu_u$, such that $u(Y,s) = \int_{\partial^M \Omega} V^{(X_0, t_0)}(Q,\tau, Y,s) d\mu_u(Q,\tau)$ where $\partial^M\Omega$ is the Martin boundary of $\Omega$. 

That $V^{(X_0, t_0)}(Q,\tau, Y,s)$ exists for all $\tau, s > t_0$ (and is, in fact, H\"older continuous in $(Q,\tau)$ for $\tau > s$) follows from Lemmas \ref{quotienttheorem} and \ref{growhtofquotientatboundary}. When $s > \tau$ it is clear that $V^{(X_0,t_0)}(Q,\tau, Y,s) = 0$. So indeed the Martin boundary is equal to the whole boundary (after time $t_0$).  

We will now prove, for a bounded solution $u$ to the adjoint-heat equation, $\mu_u << \hat{\omega}^{(X_0,t_0)}$ on any compact $K \subset \subset \{t > t_0\}$. To prove this, first assume that $u$ is positive (if not, add a constant to $u$ to make it positive). Let $(Q,\tau) \in \partial \Omega$ be such that $\tau > t_0$. Then there exists an $A \geq 100$ and an $r_0 > 0$ such that for all $r < r_0$ we have $(X_0,t_0) \in T_{A,r}^-(Q,\tau)$. Applying Lemmas \ref{comparisontheorem} and \ref{backwardsharnack} and observing that $\lim_{(X,t)\rightarrow (Q',\tau') \in \Delta_{r/4}(Q,\tau)} \omega^{(X,t)}(\Delta_{r/2}(Q,\tau)) = 1$ we can conclude that there is some constant $\gamma > 0$ such that $$\hat{\omega}^{(X_0,t_0)}(\Delta_r(Q,\tau)) V^{(X_0,t_0)}(Q',\tau', A^-_r(Q,\tau)) \geq \gamma,$$ for all $(Q',\tau') \in \Delta_{r/4}(Q,\tau)$. 

It follows that, \begin{equation}\label{absolutelycont}\begin{aligned} \frac{\mu_u(\Delta_{r/4}(Q,\tau))}{\hat{\omega}^{(X_0,t_0)}(\Delta_{r/4}(Q,\tau))} \leq& C \frac{\mu_u(\Delta_{r/4}(Q,\tau))}{\hat{\omega}^{(X_0,t_0)}(\Delta_{r}(Q,\tau))}\\ \leq& C\gamma^{-1} \int_{\Delta_{r/4}(Q,\tau)}V^{(X_0,t_0)}(Q',\tau', A^-_r(Q,\tau))d\mu_u(Q',\tau')\\ \leq& C\gamma^{-1}u(A^-_r(Q,\tau)) \leq C\gamma^{-1}\|u\|_{L^\infty}.\end{aligned}\end{equation} Therefore, there is a $g_u(P,\eta) =\frac{d\mu_u}{d\hat{\omega}^{(X_0,t_0)}}$ such that (by Martin's representation theorem) \begin{equation}\label{representationwithv} u(Y,s) = \int_{\partial \Omega} g_u(P,\eta)V^{(X_0,t_0)}(P,\eta, Y,s) d\hat{\omega}^{(X_0,t_0)}(P,\eta).\end{equation}

Assume that $V^{(X_0, t_0)}(Q,\tau, Y,s) = K^{(X_0,t_0)}(Q,\tau, Y,s)$. Then equation \eqref{representationwithv} is equation \eqref{boundedrepresentation}. The existence of a non-tangential limit follows from a standard argument (see e.g. \cite{huntwheedenboundaryvalues}) which requires three estimates. First, that $\omega^{(X_0,t_0)}$ is doubling, which we know is true after some scale for any point $(Q,\tau)$ with $\tau > t_0$. Second we need, for $(Q_0,t_0) \in \partial \Omega, r > 0$, $$\lim_{(X,t) \rightarrow (Q_0,t_0)} \sup_{(Q,\tau) \notin \Delta_r(Q_0,t_0)} K^{(X_0,t_0)}(Q,\tau, X,t) = 0.$$ This follows from Harnack chain estimates and Lemma \ref{growthattheboundary} (see the proof of Lemma 4.15 in \cite{jerisonandkenig} for more details). Finally, for some $\alpha > 0$, which depends on the flatness of $\Omega$, we want $$K^{(X_0,t_0)}(P,\eta, A^-_{4R}(Q,\tau)) \leq \frac{C2^{-\alpha j}}{\omega^{(X_0,t_0)}(\Delta_{2^{-j}R}(Q,\tau))},$$ for all $(P,\eta)\in \Delta_{R2^{-j}}(Q,\tau) \backslash \Delta_{R2^{-j-1}}(Q,\tau)$ and for values of $R$ small. This follows from Lemmas \ref{growthattheboundary} and Lemmas \ref{comparisontheorem} (see \cite{jerisonandkenig}, Lemma 4.14 for more details). 

So it suffices to show that $V^{(X_0, t_0)}(Q,\tau, Y,s) = K^{(X_0,t_0)}(Q,\tau, Y,s)$. Fix $r > 0$, $(Q,\tau)\in \partial \Omega$ and consider the adjoint-caloric function $U(Y,s) = \hat{\omega}^{(Y,s)}(\Delta_r(Q,\tau))$. By the Martin representation theorem and equation \eqref{absolutelycont} there is a function $g \equiv g_{Q,\tau, r}$ such that \begin{equation}\label{absolutelycontrep}U(Y,s) = \int V^{(X_0,t_0)}(P,\eta,Y,s) g(P,\eta) d\hat{\omega}^{(X_0,t_0)}.\end{equation} We are going to show that $g(P,\eta) = \chi_{\Delta_r(Q,\tau)}$. If true then, by the definition of caloric measure, we conclude $$\int_{\Delta_r(Q,\tau)} V^{(X_0,t_0)}(P,\eta,Y,s)  d\hat{\omega}^{(X_0,t_0)}= \hat{\omega}^{(Y,s)}(\Delta_r(Q,\tau)) = \int_{\Delta_r(Q,\tau)} K^{(X_0,t_0)}(P,\eta, Y,s) d\hat{\omega}^{(X_0,t_0)},$$ for all surface balls. It would follow that  $V = K$. 

For a closed $E \subset \partial \Omega$, following the notation of \cite{martin} Section 3, let $U_E$ be the unique adjoint-caloric function in $\partial \Omega$ given as the limit inferior of super adjoint-caloric functions which agree with $U$ on open sets, $\mathcal O$, containing $E$ and which are adjoint-caloric on $\Omega\backslash \overline{\mathcal O}$ with zero boundary values on $\partial \Omega \backslash \mathcal O$.  In a $\delta$-Reifenberg flat domain (where the Martin boundary agrees with and has the same topology as the topological boundary) and if $E = \overline{\Delta_\rho(P,\eta)}$, for some $(P,\eta) \in \partial \Omega$ and $\rho > 0$, it is easy to compute that $U_E(Y,s) = \hat{\omega}^{(Y,s)}(E)$. By the uniqueness of distributions (Theorem III on page 160 in \cite{martin}) it must be the case that $$U_E(Y,s) = \int_E V^{(X_0,t_0)}(P,\eta, Y,s)g(P,\eta) d\hat{\omega}^{(X_0,t_0)},$$ where $g$ is as in equation \eqref{absolutelycontrep}. If $(Y,s) = (X_0,t_0)$ and $E = \overline{\Delta_\rho(P_0,\eta_0)} \subset \Delta_r(Q,\tau)$ then the above equation becomes $$\hat{\omega}^{(X_0,t_0)}(\overline{\Delta_\rho(P_0,\eta_0)}) = \hat{\omega}^{(X_0,t_0)}(\Delta_\rho(P_0,\eta_0)) = \int_{\Delta_\rho(P_0,\eta_0)} g(P,\eta) d\hat{\omega}^{(X_0,t_0)}.$$ Letting $(P_0,\eta_0)$ and $\rho > 0$ vary it is clear that $g(P,\eta) = \chi_{\Delta_r(Q,\tau)}$ and we are done. 
\end{proof}

Our approach differs most substantially from the elliptic theory in the construction of sawtooth domains. In particular, Jones' argument using ``pipes" (\cite{jonesadv}, see also Lemma 6.3 in \cite{jerisonandkenig}) does not obviously extend to the parabolic setting. The crucial difference is that parabolic Harnack chains move forward through time, whereas elliptic Harnack chains are directionless.  The best result in the parabolic context is the work of Brown \cite{brown}, who constructed sawtooth domains inside of $\mathrm{Lip}(1,1/2)$-graph domains. Our argument below, which is in the same spirit as Brown's, works for $\delta$-Reifenberg flat domains. Before the proof we make the following observation. 

\begin{rem}\label{closestatsametimepoint}
Let $\Omega$ be a $\delta$-Reifenberg flat domain and let $(X,t) \in \Omega$. If  $(P,\eta) \in \partial \Omega$ such that $$\|(P,\eta) - (X,t)\| = \delta_\Omega(X,t) \colonequals \inf_{(Q,\tau) \in \partial \Omega} \|(Q,\tau) - (X,t)\|$$ then $\eta = t$. That is, every ``closest" point to $(X,t)$ has time coordinate $t$. 
\end{rem}

\begin{proof}[Justification]
By Reifenberg flatness, for any $(P,\eta) \in \partial \Omega$ the point $(P,t)$ is within distance $\delta |t-\eta|^{1/2}$ of $\partial \Omega$. Then $\delta_\Omega(X,t) \leq |P-X| + \delta |t-\eta|^{1/2} < \|(P,\eta) - (X,t)\|.$
\end{proof}

We are now ready to construct sawtooth domains. Recall, for $\alpha > 0$ and $F\subset \partial \Omega$ closed, we define $S_\alpha(F) = \{(X,t) \in \partial \Omega \mid \exists (Q,\tau) \in F,\; \mathrm{s.t.}\; (X,t) \in \Gamma_\alpha(Q,\tau)\}$. 

 \begin{lem}\label{parabolicsawtoothdomains}
Let $\Omega$ be a $(\delta^{10})$-Reifenberg flat parabolic NTA domain and $F \subset \partial \Omega \cap C_{s}(0,0)$ be a closed set.  There is a universal constant $c \in (0,1)$ such that if $c > \delta > 0$ then $S_\alpha(F)$ is a parabolic $\delta$-Reifenberg flat domain for almost every $\alpha \geq\alpha_0(\delta) > 0$. Furthermore, if $(X,t) \in S_\alpha(F)$ then, on $F$, $\hat{\omega}_{S_\alpha(F)}^{(X,t)} << \hat{\omega}^{(X,t)} << \hat{\omega}_{S_\alpha(F)}^{(X,t)}.$ Here $\hat{\omega}_{S_\alpha(F)}^{(X,t)}$ is the adjoint-caloric measure of $S_\alpha(F)$ with a pole at $(X,t)$. 
 \end{lem}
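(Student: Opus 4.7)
The plan is to build $S_\alpha(F)$ by taking a union of non-tangential cones (truncated at a uniform scale) over $F$ and to verify the two conclusions by separating the behavior of $\partial S_\alpha(F)$ into the ``base'' part lying in $F$ and the ``lateral'' part coming from the boundaries of individual cones. Throughout, the fact that $\Omega$ itself is $\delta^{10}$-Reifenberg flat is used as a reservoir of flatness that we can afford to degrade by powers of $\delta$ as we estimate.

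First, I would establish the Reifenberg flatness of $S_\alpha(F)$. Fix $(Q,\tau) \in \partial S_\alpha(F)$ and $R > 0$. There are essentially two cases. If $(Q,\tau) \in F$ at scale $R$, then $\partial \Omega$ is already $\delta^{10} R$-approximated by a time-parallel $n$-plane $L$, and by the definition of the cone union we only add points on the ``interior'' side of $L$, so $\partial S_\alpha(F) \cap C_R(Q,\tau)$ is sandwiched between $L$ and $L + c\delta^{10} R \hat n$. If $(Q,\tau)$ lies on a lateral face of a cone $\Gamma_\alpha(Q_0,\tau_0)$ with $(Q_0,\tau_0) \in F$, then, by choosing the scale $R$ smaller than $d(Q,\tau)/2$ (where $d$ is the distance along the cone from $(Q_0,\tau_0)$), the surface of that cone gives a smooth approximating hyperplane near $(Q,\tau)$, while the surfaces of competing cones are a definite (comparable to $R$) distance away for a.e. $\alpha$. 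A Sard-type argument, applied to the level sets of the function $\alpha \mapsto \alpha(Q,\tau) \equiv \frac{\|(Q,\tau)-(Q_0,\tau_0)\|}{\delta(Q,\tau)}-1$ as $(Q,\tau)$ ranges over the competing cone boundaries, shows that for almost every $\alpha$ the cones meet transversally, which is what forces the exclusion of a measure-zero set of apertures. Piecing these cases together and using Remark \ref{closestatsametimepoint} to identify time coordinates of closest boundary points yields $\delta$-Reifenberg flatness of $S_\alpha(F)$ provided $\alpha$ is chosen large enough so that the cone-opening angle dominates $\delta^{10}$.

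Next, I would address the absolute continuity statement. Since $S_\alpha(F) \subset \Omega$ and $F \subset \partial \Omega \cap \partial S_\alpha(F)$, the maximum principle for the adjoint heat equation applied to $\hat{\omega}^{(\cdot)}(E)$ for $E \subset F$ Borel immediately gives $\hat{\omega}^{(X,t)}_{S_\alpha(F)}(E) \leq \hat{\omega}^{(X,t)}(E)$; this is the easy direction. For the reverse inequality, I would use the NTA structure of $\Omega$ to construct, for any surface ball $\Delta_r(Q,\tau) \subset F$ with $(Q,\tau) \in F$, a backward-in-time corkscrew point $A^-_r(Q,\tau)$ that lies in $S_\alpha(F)$ (this requires $\alpha$ chosen large relative to the NTA constant, and uses that closest boundary points share a time slice via Remark \ref{closestatsametimepoint}). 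Lemma \ref{comparisontheorem}, applied in both $\Omega$ and in $S_\alpha(F)$, then compares each of $\hat{\omega}^{(X,t)}(\Delta_{r/2}(Q,\tau))$ and $\hat{\omega}^{(X,t)}_{S_\alpha(F)}(\Delta_{r/2}(Q,\tau))$ to $r^n G(\cdot, A^-_r(Q,\tau))$ (for the respective Green functions). Finally, a Harnack chain argument within $S_\alpha(F)$ connecting the pole $(X,t)$ to $A^-_r(Q,\tau)$ shows the two Green-function values are comparable, after which the doubling Lemma \ref{doublingmeasure} gives mutual comparability on every sufficiently small surface ball in $F$, hence mutual absolute continuity.

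The main obstacle will be precisely the Harnack-chain step in $S_\alpha(F)$: in the parabolic setting we must connect the pole $(X,t)$ to $A^-_r(Q,\tau)$ by a chain of cylinders contained in $S_\alpha(F)$ that also satisfies the forward-in-time restriction required by the (adjoint) heat equation. In a general Reifenberg flat parabolic NTA domain this is not automatic, which is precisely the issue that prevents a naive adaptation of Jones' elliptic ``pipes'' construction; the resolution is to enlarge $\alpha$ sufficiently so that the cone corresponding to $(Q,\tau)$ is wide enough that, together with an NTA Harnack chain in $\Omega$ itself, we may push the chain into $S_\alpha(F)$ one link at a time, paying an aperture cost at each step but only finitely many times (controlled by the NTA constants). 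This is also where the choice $\alpha \geq \alpha_0(\delta)$ enters, and it is the technical heart of the argument.
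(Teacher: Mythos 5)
Your proposal has two genuine gaps, one in each half. For the Reifenberg flatness of $S_\alpha(F)$ away from $F$, you argue that near a point on a ``lateral face'' the boundary of the single cone $\Gamma_\alpha(Q_0,\tau_0)$ is a smooth hypersurface giving the approximating plane, with competing cones a definite distance away for a.e.\ $\alpha$ by a Sard/transversality argument. This does not work: the lateral boundary $\{\|(X,t)-(Q_0,\tau_0)\|=(1+\alpha)\delta(X,t)\}$ is defined through the distance to the merely Reifenberg flat set $\partial\Omega$ and through the non-smooth parabolic distance, so it is not smooth and is no flatter than $\partial\Omega$ itself; moreover $S_\alpha(F)$ is a union over a \emph{continuum} of vertices in $F$, so at a typical boundary point infinitely many cones compete and no transversality statement can isolate one of them (Sard also requires smoothness you do not have). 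The paper's mechanism is different: on $\partial S_\alpha(F)\setminus F$ one has $d_F=(1+\alpha)\delta_\Omega$, so $\delta_\Omega$ is $\tfrac{1}{1+\alpha}$-Lipschitz there; hence at scales $\rho\ll(1+\alpha)\delta_\Omega$ the sawtooth boundary is an approximate level set of $\delta_\Omega$, which by the $\delta^{10}$-flatness of $\Omega$ is trapped in a thin slab parallel to the local approximating plane (this requires $1+\alpha\gtrsim \delta^{-1}$, not merely aperture dominating $\delta^{10}$, and the two-sided bounds at the intermediate scales $\delta^2 R\lesssim\rho\lesssim R/\delta$ and $\rho\le\delta^2R$ are exactly where the work lies). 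Sard's theorem enters only topologically, to rule out ``holes'' so that one-sided plane estimates yield the two-sided separation in Definition \ref{reifenbergflat}; relatedly, near points of $F$ the sawtooth boundary is only within $\sim \rho/(1+\alpha)+C\delta^{10}\rho$ of the plane, not within $c\delta^{10}\rho$ as you claim.

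For the mutual absolute continuity, the easy direction via the maximum principle is fine, but your reverse direction is based on comparing $\hat\omega^{(X,t)}(\Delta_{r}(Q,\tau))$ and $\hat\omega^{(X,t)}_{S_\alpha(F)}(\Delta_r(Q,\tau))$ through Lemma \ref{comparisontheorem} in both domains plus a Harnack chain, i.e.\ on showing $G_\Omega\lesssim G_{S_\alpha(F)}$ at corkscrew points of $F$-centered balls. That comparability is false in general: $G_\Omega$ does not vanish on the lateral part of $\partial S_\alpha(F)$, which enters every cylinder centered at a point of $F$, so the boundary comparison principle (Lemma \ref{quotienttheorem}) cannot be invoked, and the two caloric measures are mutually absolutely continuous on $F$ without any uniform two-sided bound on the Radon--Nikodym derivative; so doubling (Lemma \ref{doublingmeasure}) plus ball comparability is not an available route. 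The paper instead runs the Jerison--Kenig lower-function (Perron) argument: every $(Y,s)\in\partial S_\alpha(F)\setminus F$ satisfies $\hat\omega^{(Y,s)}(\partial\Omega\setminus F)\ge C$ (a surface ball of radius $\approx\alpha\delta(Y,s)$ about its nearest boundary point misses $F$), so if $E\subset F$ has $\hat\omega^{(X_0,t_0)}_{S_\alpha(F)}(E)=0$, any lower function $\Phi$ for $E$ in $\Omega$ satisfies $\Phi\le 1-C$ on the lateral boundary, whence $\Phi-1+C$ is a lower function for $E$ in $S_\alpha(F)$ and $\hat\omega^{(Y,s)}(E)\le 1-C$ throughout $S_\alpha(F)$; by the Fatou-type Lemma \ref{nontangentiallimitsdae} the nontangential limit of $\hat\omega^{(Y,s)}(E)$ equals $1$ at $\hat\omega^{(X_0,t_0)}$-a.e.\ point of $E$, forcing $\hat\omega^{(X_0,t_0)}(E)=0$. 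In particular no Harnack chain inside the sawtooth is needed, so the difficulty you identify as the ``technical heart'' is not the actual obstruction, while the two gaps above are.
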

 
 \begin{proof}
To prove that $S_\alpha(F)$ is $\delta$-Reifenberg flat first consider $(Q,\tau) \in F \subset \partial S_\alpha(F)$ and $\rho > 0$. Let $V$ be an $n$-plane through $(Q,\tau)$ containing a vector in the time direction such that $D[\partial \Omega \cap C_{2\rho}(Q,\tau), V\cap C_{4\rho}(Q,\tau)] \leq 2\delta^{10} \rho$. If $(X,t) \in \partial S_\alpha(F) \cap C_{\rho}(Q,\tau)$ then (for $\alpha \geq 1$) $(P,t) \in C_{2\rho}(Q,\tau)\cap \partial \Omega$ where $(P,t) \in \partial \Omega$ satisfies $\delta_\Omega(X,t) = \mathrm{dist}((P,t), (X,t))$. We can compute that $$\mathrm{dist}((X,t), V) \leq \delta_\Omega(X,t) + \mathrm{dist}((P,t), V) \leq \frac{\rho}{1+\alpha} + 4\rho \delta^{10} \leq \rho \delta/11$$ for $\delta < 1/2$  and $1+\alpha \geq 20/\delta$. 

One might object that the above is a one sided estimate, and that Reifenberg flatness requires a two sided estimate. However, Saard's theorem tells us that, for almost every $\alpha$, $S_\alpha(F)$ is a closed set such that $\mathbb R^{n+1}\backslash S_\alpha(F)$ is disjoint union of two open sets. Since this argument rules out the presence of ``holes" in $S_\alpha(F)$, our one sided estimates are enough to conclude Reifenberg flatness. Hence, $D[C_\rho(Q,\tau)\cap \partial S_\alpha(F), C_\rho(Q,\tau) \cap V] \leq \rho \delta/11$ for almost every $\alpha \geq \frac{20}{\delta} - 1$. 

We need to also show that $\partial S_\alpha(F)$ is flat at points not in $F$. Let $(Q,\tau) \in \partial S_\alpha(F) \backslash F$ and $R \colonequals \delta_\Omega(Q,\tau)$. Our proof of has four cases, depending on the scale, $\rho$, for which we are trying to show $\partial S_\alpha(F)$ is flat. 
\medskip

\noindent {\bf Case 1:} $\rho \geq \frac{(1+\alpha)R}{10}$. Observe that $C_{\rho}(Q,\tau) \subset C_{11\rho}(P,\eta)$ for some $(P,\eta) \in F$. The computation above then implies that $D[C_{\rho}(Q,\tau)\cap \partial S_{\alpha}(F), C_\rho(Q,\tau)\cap \{L + (Q,\tau)-(P,\eta)\}] \leq \rho \delta$ where $L \equiv L(P,\eta, 11\rho)$, is the plane through $(P,\eta)$ which best approximates $C_{11\rho}(P,\eta)\cap \partial \Omega$. 

\medskip

\noindent {\bf Case 2:} $\frac{5}{\delta}R \leq \rho < \frac{(1+\alpha)R}{10}$. We should note that this case may be vacuous for certain values of $\alpha$ (i.e, if $1+\alpha < \frac{50}{\delta}$).  Without loss of generality let $(Q,\tau) = (Q,0)$ and let $(0,0)\in \partial \Omega$ be a point in $\partial \Omega$ closest to $(Q,0)$ (which is at time zero by Remark \ref{closestatsametimepoint}). If $L(0,0,4\rho)$ is the plane which best approximates $\partial \Omega$ at $(0,0)$ for scale $4\rho$, we will prove that $D[C_{\rho}(Q,0)\cap \partial S_\alpha(F), C_{\rho}(Q,0)\cap \{L(0,0,4\rho)+ Q\}] \leq \delta \rho$.

We may assume $L(0,0,4\rho) = \{x_n =0 \}$. Note that $\delta_F(-)$ is a 1-Lipschitz function. Thus, if $(Z_1, t_1), (Z_2, t_2)\in C_{\rho}((Q,0)) \cap \partial S_\alpha(F)$ then $|\delta_\Omega(Z_1,t_1) - \delta_\Omega(Z_2, t_2)| < \frac{2\rho}{1+\alpha} < \frac{R}{5}$. We may conclude that $\delta_\Omega(Y,s) < 2R$ for all $(Y,s) \in C_\rho(Q,0)\cap S_\alpha(F)$ and, therefore, if $(P,s)\in \partial \Omega$ is a point in $\partial \Omega$ closest to $(Y,s)$ then $(P,s) \in C_{2\rho}(0,0)$. By Reifenberg flatness, $$||y_n| -4\rho\delta^{10}| \leq ||y_n| - |p_n|| \leq \delta_\Omega(Y,s) \leq 2R \leq \frac{2\rho \delta}{5} \Rightarrow |y_n| \leq \frac{3\rho\delta}{5}.$$ On the other hand $q_n \leq |Q| = R \leq \frac{\delta \rho}{5}$. Therefore, $|y_n - q_n| \leq |y_n| + |q_n| \leq \frac{3\rho\delta}{5}+ \frac{\rho\delta}{5} \leq \rho \delta,$ the desired result. 

\medskip

\noindent {\bf Case 3:} $\delta^2R \leq \rho < \frac{5}{\delta}R$. Let $(X,t)$ be the point in $\partial S_{\alpha}(F)\cap C_{\rho}(Q,\tau)$ which is furthest from $\partial \Omega$ and set $\delta_\Omega(X,t) = \tilde{R}$.  We may assume that $(X,t) = (0, \tilde{R}, 0)$ and $(0,0)$ is a point in $\partial \Omega$ which minimizes the distance to $(X,t)$. We will show that $D[C_{\rho}(Q,\tau) \cap \partial S_\alpha(F), C_{\rho}(Q,\tau) \cap \{x_n =\tilde{R}\} ] < \rho\delta/2$, which of course implies that $D[C_\rho(Q,\tau)\cap \partial S_\alpha(F), C_\rho(Q,\tau)\cap \{x_n =q_n\}] < \delta \rho$. 

First we prove that $L(0,0, 4\rho)$, the plane which best approximates $\partial \Omega$ at $(0,0)$ for scale $4\rho$, is close to $\{x_n = 0\}$. If $\theta$ is the minor angle between the two planes then the law of cosines (and the fact that $\delta_\Omega((0,\tilde{R},0)) = \tilde{R}$) produces $$(\tilde{R} - 4\rho\delta^{10})^2 \leq L^2 + \tilde{R}^2 -2L\tilde{R}\sin(\theta) \Rightarrow 2L\tilde{R}\sin(\theta)\leq L^2 + 8\rho\tilde{R}\delta^{10}$$ for any $L \leq 2\rho$. If $L \equiv \delta^4 \tilde{R}$ then $$2\delta^4\tilde{R}^2\sin(\theta) \leq \delta^8\tilde{R}^2 + 8\rho\tilde{R}\delta^{10} \stackrel{\delta\rho < 5\tilde{R}}{\Rightarrow} 2\delta^4\tilde{R}^2\sin(\theta) \leq \frac{3\delta^8 \tilde{R}^2}{2}.$$ For small enough $\delta$, we conclude $\theta < \delta^4$. 

Therefore, for any $(Y,s)\in C_\rho(Q,\tau)\cap S_\alpha(F)$ the distance between $(Y,s)$ and  $L(0,0,4\rho)$ is $\leq y_n + 2\delta^4\rho$. On the other hand, $\Omega$ is $(\delta^{10})$-Reifenberg flat so that means $\delta_\Omega(Y,s) \leq y_n + 2\delta^4\rho + 4\delta^{10}\rho \leq y_n + 3\delta^4\rho$. Recall, from above, that $\delta_F(-)$ is a 1-Lipschitz function. Therefore, if $(Z_1, t_1), (Z_2, t_2)\in C_{\rho}((Q,\tau)) \cap \partial S_\alpha(F)$ then $|\delta_\Omega(Z_1,t_1) - \delta_\Omega(Z_2, t_2)| < \frac{2\rho}{1+\alpha} < \frac{\delta \rho}{10}$ if we pick $1+\alpha \geq 20/\delta$. An immediate consequence of this observation is that $\tilde{R} < 2R$ (and thus $\delta^4\tilde{R} \leq 2\rho$ above). This also implies that $\delta_\Omega(Y,s) \geq \tilde{R} - \frac{\delta \rho}{10}$. Therefore, $$\tilde{R} - \frac{\delta \rho}{10} \leq y_n + 3\delta^4\rho \Rightarrow \tilde{R} - \frac{\delta \rho}{2} \leq y_n,$$ which is one half of the desired result. 

 On the other hand, it might be that there is a $(Y,s) \in C_{\rho}(Q,\tau) \cap \partial S_\alpha(F)$ such that $y_n > \tilde{R} + \rho\delta/2$. Arguing similarly to above we can see that $L(0,0, 4\tilde{R} + 4\rho)$ satisfies $D[C_1(0,0)\cap L(0,0,4\tilde{R} + 4\rho), C_1(0,0) \cap \{x_n =0\}] < 2\delta^4$. But if $(P,s)$ is the point on $\partial \Omega$ closest to $(Y,s)$ it must be the case that $(P,s) \in \partial \Omega \cap C_{2\tilde{R} + 2\rho}(0,0)$. Therefore, $p_n < 4\delta^4(\tilde{R} + \rho) + 4\delta^{10}(\tilde{R} + \rho) < 5\delta^4(\tilde{R} + \rho) < 6\delta^2\rho$. Of course this implies that $\|(Y,s) - (P,s)\| \geq \tilde{R}+ \rho\delta/2 - 6\delta^2\rho > \tilde{R}$ for $\delta$ small enough. This is a contradiction as no point in $C_\rho(Q,\tau)\cap \partial S_\alpha(F)$ can be a distance greater than $\tilde{R}$ from $\partial \Omega$. 
 
 \medskip
 
 \noindent {\bf Case 4:} $\rho \leq \delta^2R$. Again let $(X,t)$ be the point in $\partial S_{\alpha}(F)\cap C_{\rho}(Q,\tau)$ which is furthest from $\partial \Omega$. Let $\delta_\Omega(X,t) = \tilde{R}$ and without loss of generality, $(X,t) = (0, \tilde{R}, 0)$ and $(0,0)$ is the point in $\Omega$ closest to $(X,t)$. We will show that $D[C_{\rho}(Q,\tau) \cap \partial S_\alpha(F), C_{\rho}(Q,\tau) \cap \{x_n =\tilde{R}\} ] < \rho\delta/2$, which of course implies that $D[C_\rho(Q,\tau)\cap \partial S_\alpha(F), C_\rho(Q,\tau)\cap \{\{x_n =0\} + q_n\}] < \delta \rho$. 
 
 Assume, in order to obtain a contradiction, that there is a $(Y,s) \in C_\rho(Q,\tau) \cap \partial S_\alpha(F)$ such that $y_n < \tilde{R} - \rho\delta/2$ (we will do the case when $y_n \geq \tilde{R} + \rho\delta/2$ shortly). Examine the triangle made by $(Y,s), (0,\tilde{R},0)$ and the origin. The condition on $y_n$ implies that the cosine of the angle between the segments $\overline{(0,\tilde{R},0)(Y,s)}$ and $\overline{(0,\tilde{R},0)(0,0)}$ times the length of the segment between $(Y,s)$ and $(0,\tilde{R},0)$ must be at least $\rho\delta/2$. Consequently, by the law of cosines \begin{equation}\label{lawofconsinesynsmall}|Y|^2 \leq 4\rho^2 + \tilde{R}^2 -\tilde{R}\rho\delta.\end{equation} When $1+\alpha \geq 30/\delta$ and $\delta < 1/10$ it is easy to see that \begin{equation}\label{thisisterrible3} \begin{aligned} (\tilde{R}-2\delta^{10}\rho-\frac{2\rho}{1+\alpha})^2 &\geq (\tilde{R} - \frac{\delta\rho}{10})^2 \geq \tilde{R}^2 - \frac{\delta\rho\tilde{R}}{5}\\ &\geq \tilde{R}^2  - \frac{\delta\rho\tilde{R}}{5}(5- 20\delta) = \tilde{R}^2 + 4\delta^2\rho\tilde{R}-\delta\rho\tilde{R}\end{aligned}\end{equation}. 
 
Equations \eqref{thisisterrible3} and \eqref{lawofconsinesynsmall} give $|Y| < (\tilde{R}-2\delta^{10}\rho-\frac{2\rho}{1+\alpha})$. Hence, Reifenberg flatness implies $\delta_\Omega(Y,s) \leq \|(Y,s) - (0,s)\| +\delta_\Omega(0,s) \leq (\tilde{R} -2\delta^{10}\rho- \frac{2\rho}{1+\alpha}) + 2\delta^{10}\rho \leq \tilde{R} - \frac{2\rho}{1+\alpha},$ which, as we saw in {\bf Case 2}, is a contradiction.  
  
 Finally, it might be that there is a  $(Y,s) \in C_\rho(Q,\tau) \cap \partial S_\alpha(F)$ such that $y_n > \tilde{R} + \rho\delta/2$. Let $(P,s)$ be the point in $\partial \Omega$ closest to $(Y,s)$ and note that $(Y,s) \in C_{3\tilde{R}}(0,0)$. Let $L(0,0, 5\tilde{R})$ be the plane that best approximates $\partial \Omega$ at $(0,0)$ for scale $5\tilde{R}$. If $\theta$ is the angle between this plane and $\{x_n = 0\}$ then $\delta_\Omega((0,\tilde{R}, 0)) = \tilde{R}$ implies that $$\tilde{R} \leq \tilde{R}\sin(\pi/2-\theta) + 5\delta^{10}\tilde{R} \Rightarrow (1-5\delta^{10}) < \cos(\theta) \Rightarrow \theta < \delta^4.$$ 
 
Therefore, $p_n < 3\tilde{R}\delta^4 + 5\tilde{R}\delta^{10} < 4\tilde{R}\delta^4$. Thus, if $\beta$ is the angle between the segment from $Y$ to $P$ and the segment from $(0,\tilde{R})$ to $Y$ it must be that $\beta < \frac{\pi}{2} - \delta/4 + 10 \delta^4$. The law of cosines (on the triangle with vertices $(0, \tilde{R},0), (P,0), (Y,0)$) gives \begin{equation}\label{lawofcosinesrandp} |(0, \tilde{R})- P|^2 \leq 4\rho^2 + \tilde{R}^2 - 4\rho \tilde{R}\sin (\delta/4- 10\delta^4) < 4\rho^2 + \tilde{R}^2 + 40\rho \tilde{R}\delta^4 - \delta\rho\tilde{R}/2.\end{equation} 

Note that \begin{equation}\label{thisisterrible} 8\rho+80\tilde{R}\delta^2+8\tilde{R}\delta^{10} \leq \delta \tilde{R}\end{equation} because $\rho \leq \delta^2\tilde{R}$ by assumption and we can let $\delta < 1/100$. With this in mind we can estimate \begin{equation}\label{thisisterrible2}\begin{aligned} 4\rho^2 + \tilde{R}^2 + 40\rho \tilde{R}\delta^4 - \delta\rho\tilde{R}/2 &< (\tilde{R}-2\delta^{10}\rho)^2 + (4\delta^{10}\rho\tilde{R} +40\rho\tilde{R}\delta^4 + 4\rho^2 - \delta\rho\tilde{R}/2)\\
\stackrel{eq.\; \eqref{thisisterrible}}{\leq}& (\tilde{R}-2\delta^{10}\rho)^2 + (\rho/2)(\delta \tilde{R}) - \delta\rho\tilde{R}/2 = (\tilde{R}-2\delta^{10}\rho)^2.\end{aligned}\end{equation} 

Combine equation \eqref{thisisterrible2} with equation \eqref{lawofcosinesrandp} to conclude that $|(0, \tilde{R})- P| \leq (\tilde{R}-2\delta^{10}\rho)$. On the other hand, by Reifenberg flatness, $(P,0)$ is distance $< 2\delta^{10}\rho$ from a point on $\partial \Omega$. Hence, by the triangle inequality,  $\delta_{\Omega}((0,\tilde{R}, 0)) < \tilde{R}$ a contradiction. 

\medskip

Finally, we need to show the mutual absolute continuity of the two adjoint-caloric measures. In this we follow very closely the proof of Lemma 6.3 in \cite{jerisonandkenig}. The maximum principle implies that $\hat{\omega}_{S_\alpha(F)}^{(X_0,t_0)} << \hat{\omega}^{(X_0,t_0)}$, for any $(X_0,t_0) \in S_\alpha(F)$. Now take any $E\subset F$ such that $\hat{\omega}_{S_\alpha(F)}^{(X_0,t_0)}(E) = 0$. First we claim that there is a constant $1 > C > 0$ (depending on $\alpha, n, \Omega$) such that $\hat{\omega}^{(Y,s)}(\partial \Omega \backslash F)  > C$ for all $(Y,s) \in \partial S_{\alpha}(F)\backslash F$. Indeed, let $(Q,s) \in \partial \Omega$ be a point in $\partial \Omega$ closest to $(Y,s)$. Then there is a constant $C(\alpha, n) > 0$ such that $\hat{\omega}^{(Y,s)}(C_{\alpha\delta(Y,s)}(Q,s)) \geq C(\alpha, n),$ (see equation 3.9 in \cite{hlncaloricmeasure}). As $C_{\alpha\delta(Y,s)}(Q,s)\cap F = \emptyset$ (by the triangle inequality) the claim follows.

Armed with our claim we recall that a {\it lower function} $\Phi(X,t)$, for a set $E \subset \partial \Omega$, is a subsolution to the adjoint heat equation in $\Omega$ such that $\limsup_{(X,t) \rightarrow (Q,\tau) \in \partial \Omega} \Phi(X,t) \leq \chi_{E}(Q,\tau)$. Potential theory tells us that $\hat{\omega}^{(Y,s)}(E) = \sup_\Phi \Phi(Y,s)$ where the supremum is taken over all lower functions for $E$ in $\Omega$. By our claim, $\Phi(X,t) \leq \hat{\omega}^{(X,t)}(E) \leq 1-C$ for $(X,t)\in \partial S_\alpha(F)\backslash F$ for any lower function, $\Phi$, of $E$ in $\Omega$. In particular, $\Phi(X,t) - 1 + C$ is a lower function for $E$ inside of $S_\alpha(F)$. Therefore, $$\sup_{\Phi} \Phi(X,t) - 1 + C \leq \hat{\omega}_{S_\alpha(F)}^{(X,t)}(E) = 0 \Rightarrow \Phi(X,t) \leq 1-C,\; \forall (X,t) \in S_\alpha(F).$$ This in turn implies that $\hat{\omega}^{(Y,s)}(E) \leq 1-C$ for every $(Y,s) \in S_\alpha(F)$. By Lemma \ref{nontangentiallimitsdae}, the non-tangential limit of $\hat{\omega}^{(Y,s)}(E)$ must be equal to $1$ for $d\hat{\omega}^{(X_0,t_0)}$-almost every point in $E$. Therefore, $\hat{\omega}^{(X_0,t_0)}(E) = 0$ and we have shown mutual absolute continuity. 
\end{proof}

From here, a representation theorem for solutions to the adjoint heat equation with integrable non-tangential maximal function follows just as in the elliptic case. For details, see the proof of Lemma A.3.2 in \cite{kenigtoro}.
 
 \begin{lem}\label{generalrepresentationtheorem}[Compare with Lemma A.3.2 in \cite{kenigtoro}]
 Let $\Omega, \delta$ be as in Lemma \ref{parabolicsawtoothdomains} above, and let $u$ be a solution to the adjoint heat equation on $\Omega$. Assume also that for some $\alpha > 0$ and all $(X,t) \in \Omega,\; N^\alpha(u) \in L^1(d\hat{\omega}^{(X,t)})$.  Then there is some $g \in L^1(d\hat{\omega}^{(X,t)})$ such that $$u(X,t) = \int_{\partial \Omega} g(P,\eta) d\hat{\omega}^{(X,t)}(P,\eta).$$
 \end{lem}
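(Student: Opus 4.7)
My plan is to mimic the elliptic proof of Lemma A.3.2 in \cite{kenigtoro}, replacing Hunt-Wheeden type boundary behavior with the results we have in hand: the representation theorem for \emph{bounded} adjoint-caloric functions on Reifenberg flat domains (Lemma \ref{nontangentiallimitsdae}) together with the parabolic sawtooth construction (Lemma \ref{parabolicsawtoothdomains}). Fix a reference pole $(X_0,t_0) \in \Omega$. Since $N^\alpha(u) \in L^1(d\hat{\omega}^{(X_0,t_0)})$, I can choose a nested sequence of closed sets $F_k \subset \{N^\alpha(u) \leq k\} \cap \partial \Omega$ with $\hat{\omega}^{(X_0,t_0)}(\partial \Omega \setminus F_k) < 1/k$. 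For almost every aperture $\alpha' > \max(\alpha, \alpha_0(\delta))$, Lemma \ref{parabolicsawtoothdomains} guarantees that $S_{\alpha'}(F_k)$ is a $\delta$-Reifenberg flat parabolic NTA domain on which $u$ is a bounded (by $k$) adjoint-caloric function.

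Next I would extract boundary values. Applying Lemma \ref{nontangentiallimitsdae} inside $S_{\alpha'}(F_k)$ gives a bounded boundary trace $g_k$ and the representation $u(X,t) = \int_{\partial S_{\alpha'}(F_k)} g_k \, d\hat{\omega}^{(X,t)}_{S_{\alpha'}(F_k)}$, with $u$ converging nontangentially to $g_k$ at $\hat{\omega}^{(X_0,t_0)}_{S_{\alpha'}(F_k)}$-a.e.\ boundary point of $S_{\alpha'}(F_k)$. By the mutual absolute continuity on $F_k$ from Lemma \ref{parabolicsawtoothdomains} and the fact that nontangential cones in $\Omega$ of aperture strictly less than $\alpha'$ at points of $F_k$ lie inside the sawtooth, these limits pass to nontangential limits in the original $\Omega$ and are independent of $k$; call the common limit $g$. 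Taking $k \to \infty$, $g$ is defined $\hat{\omega}^{(X_0,t_0)}$-a.e.\ on $\partial \Omega$, and the pointwise bound $|g| \leq N^\alpha(u)$ yields $g \in L^1(d\hat{\omega}^{(X_0,t_0)})$.

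For the representation formula, I would split
\begin{equation*}
u(X,t) = \int_{F_k} g \, d\hat{\omega}^{(X,t)}_{S_{\alpha'}(F_k)} + \int_{\partial S_{\alpha'}(F_k) \setminus F_k} g_k \, d\hat{\omega}^{(X,t)}_{S_{\alpha'}(F_k)}
\end{equation*}
and pass to the limit $k \to \infty$ for each fixed $(X,t) \in \Omega$. Since the $S_{\alpha'}(F_k)$ form an increasing exhaustion of $\Omega$ modulo a null set, the boundary pieces $\partial S_{\alpha'}(F_k) \setminus F_k$ recede into the interior and $g_k$ on them is controlled by $N^\alpha(u)$ at nearby points of $F_k^c$, so that term should vanish. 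The first term should tend to $\int_{\partial \Omega} g \, d\hat{\omega}^{(X,t)}$ by showing $\hat{\omega}^{(X,t)}_{S_{\alpha'}(F_k)}(E \cap F_k) \to \hat{\omega}^{(X,t)}(E)$ weakly as $k \to \infty$ for Borel $E \subset \partial \Omega$; this follows from mutual absolute continuity on $F_k$ plus a monotonicity argument on the exhausting sawtooths (as in the elliptic case). Dominated convergence against the bound $|g| \leq N^\alpha(u) \in L^1$ then yields the representation.

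The main technical obstacle will be the convergence $\hat{\omega}^{(X,t)}_{S_{\alpha'}(F_k)}|_{F_k} \to \hat{\omega}^{(X,t)}|_{\partial \Omega}$, since Lemma \ref{parabolicsawtoothdomains} only asserts mutual absolute continuity, not equality, of the two measures on $F_k$. To handle this I expect to use a soft argument: define $v(X,t) = \int_{\partial \Omega} g \, d\hat{\omega}^{(X,t)}$, verify that $v$ is adjoint-caloric with nontangential limit $g$ $\hat{\omega}^{(X_0,t_0)}$-a.e., and that $v$ is bounded on each sawtooth $S_{\alpha'}(F_k)$ by Harnack chains inside the sawtooth (comparing $\hat{\omega}^{(X,t)}$ with $\hat{\omega}^{(X_0,t_0)}$ at controlled distance from $\partial \Omega$). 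Then both $u$ and $v$ are bounded adjoint-caloric functions on $S_{\alpha'}(F_k)$ with identical nontangential limits on $F_k$; applying Lemma \ref{nontangentiallimitsdae} to the difference $w = u - v$ (whose extra boundary values on $\partial S_{\alpha'}(F_k) \setminus F_k$ contribute an error that vanishes as $k \to \infty$ by the recession argument above) forces $u(X,t) = v(X,t)$ for every $(X,t) \in \Omega$.
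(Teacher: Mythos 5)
Your overall route is the one the paper itself intends -- the paper's ``proof'' is just a pointer to Lemma A.3.2 of \cite{kenigtoro}, to be run with Lemma \ref{parabolicsawtoothdomains} (sawtooths, mutual absolute continuity) and Lemma \ref{nontangentiallimitsdae} (bounded representation and Fatou) -- and your skeleton matches it. However, two steps as you have written them would not survive. First, the selection of $F_k$ and the boundedness claims on the sawtooth are off. Choosing $F_k\subset\{N^\alpha(u)\le k\}$ does not make $u$ bounded by $k$ on $S_{\alpha'}(F_k)$ when $\alpha'>\alpha$: points of the sawtooth are only known to lie in the \emph{wide} cones $\Gamma_{\alpha'}(Q,\tau)$ over $F_k$, so you need $N^{\alpha'}(u)\le k$ on $F_k$, which requires the standard aperture-comparison estimate (finiteness a.e.\ of $N^{\alpha'}(u)$ given $N^{\alpha}(u)\in L^1$, via a covering argument and the doubling of $\hat{\omega}$ from Lemma \ref{doublingmeasure}) before you can pick $F_k$. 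Worse, the boundedness of $v(X,t)=\int g\,d\hat{\omega}^{(X,t)}$ on $S_{\alpha'}(F_k)$ ``by Harnack chains \dots at controlled distance from $\partial\Omega$'' fails outright: the sawtooth touches $\partial\Omega$ along $F_k$, so its points come arbitrarily close to the boundary, and there $|v|$ is governed by the Hardy--Littlewood maximal function of $g$ with respect to $\hat{\omega}^{(X_0,t_0)}$, which your choice of $F_k$ does not control. The standard repair is to build the control into $F_k$ (take $F_k$ inside $\{N^{\alpha'}(u)\le k\}\cap\{M_{\hat{\omega}}g\le k\}$), or to bypass $v$ altogether: apply the bounded representation only to $u$ on the sawtooth and compare measures directly, using the maximum principle to get $\hat{\omega}^{(X,t)}_{S_{\alpha'}(F_k)}\le \hat{\omega}^{(X,t)}$ on Borel subsets of $F_k$, together with the Claim proved inside Lemma \ref{parabolicsawtoothdomains} (that $\hat{\omega}^{(Y,s)}(\partial\Omega\setminus F_k)\ge c$ at lateral points), which gives $\hat{\omega}^{(X,t)}_{S_{\alpha'}(F_k)}\bigl(\partial S_{\alpha'}(F_k)\setminus F_k\bigr)\le C\,\hat{\omega}^{(X,t)}(\partial\Omega\setminus F_k)$.

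Second, your bookkeeping for the lateral term does not close. On $\partial S_{\alpha'}(F_k)\setminus F_k$ you only know $|u|\le k$, and with $\hat{\omega}^{(X_0,t_0)}(\partial\Omega\setminus F_k)<1/k$ the product $k\cdot\hat{\omega}(\partial\Omega\setminus F_k)$ is merely bounded, not vanishing, so ``that term should vanish'' needs an actual estimate. Either make precise the quantitative bound $\int_{\partial S_{\alpha'}(F_k)\setminus F_k}|u|\,d\hat{\omega}^{(X,t)}_{S_{\alpha'}(F_k)}\le C\int_{\partial\Omega\setminus F_k}N^{\alpha'}(u)\,d\hat{\omega}^{(X,t)}$ (cover the lateral boundary by surface balls centered at nearby points of $\partial\Omega\setminus F_k$ and use Lemmas \ref{comparisontheorem} and \ref{doublingmeasure}), after which the right-hand side tends to $0$ by absolute continuity of the integral of $N^{\alpha'}(u)\in L^1(d\hat{\omega}^{(X,t)})$, or choose $F_k$ shrinking fast enough that $k\,\hat{\omega}^{(X_0,t_0)}(\partial\Omega\setminus F_k)\to 0$. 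With these repairs your argument coincides with the one the paper cites.
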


\end{document}